\renewcommand{\theta}{\uptheta}
\renewcommand{\iota}{\upiota}
\renewcommand{\alpha}{\upalpha}
\renewcommand{\beta}{\upbeta}
\renewcommand{\gamma}{\upgamma}
\renewcommand{\delta}{\updelta}
\renewcommand{\zeta}{\upzeta}
\renewcommand{\pi}{\uppi\hspace{0.05em}}
\renewcommand{\xi}{\upxi}
\renewcommand{\chi}{\upchi}
\renewcommand{\sigma}{\upsigma}
\renewcommand{\Lambda}{\Uplambda}
\renewcommand{\Gamma}{\Upgamma}
\renewcommand{\phi}{\upphi}
\renewcommand{\nu}{\upnu}
\renewcommand{\tau}{\uptau}
\renewcommand{\mu}{\upmu}
\renewcommand{\eta}{\upeta}
\newtheorem{theorem}{Theorem}[section]
\newtheorem{thmx}{Theorem}
\newtheorem{proposition}[theorem]{Proposition}
\newtheorem{lemma}[theorem]{Lemma}
\newtheorem{conjecture}[theorem]{Conjecture}
\newtheorem{corollary}[theorem]{Corollary}
\theoremstyle{definition}
\newtheorem{definition}[theorem]{Definition}
\newtheorem{assumption}[theorem]{Assumption}
\theoremstyle{remark}
\newtheorem{example}[theorem]{Example}
\newtheorem{remark}[theorem]{Remark}
\renewcommand{\AA}{\BoA}
\newcommand{\CC}{\BoC}
\DeclareMathOperator{\B}{B\!}
\DeclareMathOperator{\Yang}{\mathbf{Y}}
\newcommand{\kac}{\mathtt{a}}
\newcommand{\dd}{\mathbf{d}}
\newcommand{\ttt}{\mathbf{t}}
\DeclareMathOperator{\reldim}{\mathrm{reldim}}
\newcommand{\ee}{\mathbf{e}}
\DeclareMathOperator{\vect}{vect}
\newcommand{\JH}{\mathtt{JH}}
\DeclareMathOperator{\IH}{IH}
\let \ol=\overline
\let \ul=\underline
\DeclareMathOperator{\triv}{\scriptscriptstyle{triv}}
\newcommand{\ff}{\mathbf{f}}
\DeclareMathOperator{\shuff}{\mathtt{sh}}
\newcommand{\lazy}{\bm{e}}
\DeclareMathOperator{\opp}{op}
\DeclareMathOperator{\MO}{\mathtt{MO}}
\DeclareMathOperator{\Tang}{T^*\!\!}
\newcommand{\NN}{\BoN}
\newcommand{\PP}{\mathbb{P}}
\newcommand{\QQ}{\mathbb{Q}}
\newcommand{\ZZ}{\BoZ}
\newcommand{\Msp}{\mathcal{M}}
\newcommand{\ICS}{\mathcal{IC}}
\newcommand{\RR}{\BoR}
\DeclareMathOperator{\Tens}{T}
\DeclareMathOperator{\sst}{-ss}
\DeclareMathOperator{\cyc}{cyc}
\DeclareMathOperator{\Free}{\mathbf{Free}}
\DeclareMathOperator{\Hom}{Hom}
\DeclareMathOperator{\End}{End}
\DeclareMathOperator{\Ext}{Ext}
\DeclareMathOperator{\Gr}{\mathbf{Gr}}
\DeclareMathOperator{\T}{T}
\DeclareMathOperator{\Vect}{Vect}
\DeclareMathOperator{\Eu}{Eu}
\DeclareMathOperator{\BM}{BM}
\newcommand{\fg}{\mathfrak{g}}
\newcommand{\fn}{\mathfrak{n}}
\DeclareMathOperator{\Nak}{\mathbf{N}}
\DeclareMathOperator{\NakMod}{\mathbb{N}}
\DeclareMathOperator{\supp}{supp}
\DeclareMathOperator{\Grp}{Grp}
\DeclareMathOperator{\Image}{Im}
\DeclareMathOperator{\codim}{codim}
\DeclareMathOperator{\Perv}{\mathbf{Perv}}
\DeclareMathOperator{\IC}{IH}
\DeclareMathOperator{\Sym}{\mathbf{Sym}}
\DeclareMathOperator{\op}{op}
\DeclareMathOperator{\Spec}{Spec}
\DeclareMathOperator{\Gl}{GL}
\DeclareMathOperator{\UEA}{\mathbf{U}}
\DeclareMathOperator{\id}{id}
\DeclareMathOperator{\Jac}{Jac}
\DeclareMathOperator{\Tr}{Tr}
\DeclareMathOperator{\pt}{pt}
\DeclareMathOperator{\Char}{char}
\DeclareMathOperator{\rk}{rk}
\DeclareMathOperator{\vir}{vir}
\DeclareMathOperator{\Ob}{Ob}
\DeclareMathOperator{\Ho}{\mathcal{H}}
\DeclareMathOperator{\HO}{\mathbf{H}}
\DeclareMathOperator{\K}{K}
\DeclareMathOperator{\sw}{\mathbf{sw}}
\DeclareMathOperator{\Coha}{\mathcal{A}}
\DeclareMathOperator{\HCoha}{\HO\!\mathcal{A}}
\DeclareMathOperator{\NS}{NS}
\DeclareMathOperator{\TS}{\mathtt{TS}}
\newcommand{\vDelta}{\bm{\Delta}}
\newcommand{\vmult}{\bm{m}}
\newcommand{\vact}{\bm{a}}
\newcommand{\vtau}{\bm{\tau}}
\DeclareMathOperator{\BoMo}{BM}
\DeclareMathOperator{\dimred}{\mathtt{dr}}
\DeclareMathOperator{\Lie}{\mathbf{Lie}}
\DeclareMathOperator{\Res}{\mathrm{Res}}
\DeclareMathOperator{\Diag}{\Upxi}
\DeclareMathOperator{\BPS}{\mathtt{BPS}}
\newcommand{\cdotsh}{\!\cdot\!}
\newcommand\ldotsh{\makebox[1em][c]{.\hfil.\hfil.}}
\newcommand{\BPSh}{\mathcal{BPS}}
\newcommand{\Dub}{\mathcal{D}}
\newcommand{\BoN}{\mathbf{N}}
\newcommand{\BoD}{\mathbf{D}}
\newcommand{\BoQ}{\mathbf{Q}}
\newcommand{\BoC}{\mathbf{C}}
\newcommand{\BoZ}{\mathbf{Z}}
\newcommand{\BoA}{\mathbf{A}}
\newcommand{\BoR}{\mathbf{R}}
\newcommand{\BoF}{\mathbf{F}}
\newcommand{\CF}{\mathcal{F}}
\newcommand{\CH}{\mathcal{H}}
\newcommand{\CG}{\mathcal{G}}
\newcommand{\CL}{\mathcal{L}}
\newcommand{\CM}{\mathcal{M}}
\newcommand{\CV}{\mathcal{V}}
\newcommand{\Fn}{\mathfrak{n}}
\newcommand{\Fm}{\mathfrak{m}}
\newcommand{\Ft}{\mathfrak{t}}
\newcommand{\FA}{\mathfrak{A}}
\newcommand{\FB}{\mathfrak{B}}
\newcommand{\FC}{\mathfrak{C}}
\newcommand{\FM}{\mathfrak{M}}
\newcommand{\FP}{\mathfrak{P}}
\newcommand{\FS}{\mathfrak{S}}
\newcommand{\FZ}{\mathfrak{Z}}
\newcommand{\Fp}{\mathfrak{p}}
\newcommand{\Fg}{\mathfrak{g}}
\DeclareMathOperator{\wt}{\mathbf{w}}
\newcommand{\SA}{\mathscr{A}}
\newcommand{\SC}{\mathscr{C}}
\newcommand{\SG}{\mathscr{G}}
\newcommand{\SL}{\mathscr{L}}
\newcommand{\SR}{\mathscr{R}}
\newcommand{\fh}{\mathfrak{h}}
\newcommand{\fp}{\mathfrak{p}}
\newcommand{\phip}[1]{{}^{\mathfrak{p}}\!\phi_{#1}}
\DeclareMathOperator{\EU}{\mathfrak{EU}}
\DeclareMathOperator{\CoHA}{\mathcal{A}}
\DeclareMathOperator{\Abel}{Ab}
\DeclareMathOperator{\Stab}{Stab}
\DeclareMathOperator{\CritStab}{CStab}
\DeclareMathOperator{\Ch}{\mathfrak{C}}
\DeclareMathOperator{\fr}{\mathtt{fr}}
\DeclareMathOperator{\Tt}{\mathbb{T}}
\DeclareMathOperator{\At}{\mathbb{A}}
\DeclareMathOperator{\rmat}{\mathbf{r}}
\DeclareMathOperator{\FNak}{\mathfrak{N}}
\DeclareMathOperator{\iso}{iso}
\DeclareMathOperator{\Frac}{Frac}
\DeclareMathOperator{\cochar}{cochar}
\DeclareMathOperator{\E}{\mathsf{E}}
\newcommand{\fz}{\mathfrak{z}}
\newcommand{\Sh}[1]{\mathcal{#1}}
\DeclareMathOperator{\dAtt}{Att}
\newcommand{\Att}[1]{\dAtt_{\mathfrak{#1}}}
\newcommand{\StabC}[1]{{\Stab}_{\mathfrak{#1}}}
\DeclareMathOperator{\loc}{\mathtt{loc}}
\DeclareMathOperator{\gr}{gr}
\tikzset{
    labl/.style={anchor=south, rotate=90, inner sep=.5mm}
}
\tikzset{
    labl2/.style={anchor=south, rotate=-90, inner sep=.5mm}
}
\title[Okounkov's conjecture via BPS Lie algebras]{Okounkov's conjecture via BPS Lie algebras}
\author{Tommaso Maria Botta and Ben Davison}
\date{21st December 2023}
\address{T. M. Botta: Department of Mathematics, Columbia University, MC 4418, 2990 Broadway, New York, USA}
\email{tommaso.botta@columbia.eduemail}
\address{B. Davison: School of Mathematics, University of Edinburgh, Edinburgh EH9 3FD, United Kingdom}
\email{ben.davison@ed.ac.uk}
\begin{document}
\onehalfspacing

\maketitle
\begin{abstract}
Let $Q$ be an arbitrary finite quiver. We use nonabelian stable envelopes to relate representations of the Maulik--Okounkov Lie algebra $\mathfrak{g}^{\MO}_Q$ to representations of the BPS Lie algebra associated to the tripled quiver $\tilde Q$ with its canonical potential. We use this comparison to provide an isomorphism between the Maulik--Okounkov Lie algebra and the BPS Lie algebra.  Via this isomorphism we prove Okounkov's conjecture, equating the graded dimensions of the Lie algebra $\mathfrak{g}^{\MO}_Q$ with the coefficients of Kac polynomials.  Via general results regarding cohomological Hall algebras in dimensions two and three we furthermore give a complete description of $\mathfrak{g}^{\MO}_Q$ as a generalised Kac--Moody Lie algebra with Cartan datum given by intersection cohomology of singular Nakajima quiver varieties, and prove a conjecture of Maulik and Okounkov, stating that their Lie algebra is obtained from a Lie algebra defined over the rationals, by extension of scalars. Finally, we explain how our results suggest the correct definition of critical stable envelopes in vanishing cycle cohomology.
\end{abstract}

\setcounter{tocdepth}{1} 
\tableofcontents
\vspace{-40pt}
\section{Introduction}

\subsection{Background and main results}
Stable envelopes were introduced by Maulik and Okounkov in \cite{MO19}, initially to define and study a geometric R-matrix formalism in the context of cohomology of Nakajima quiver varieties.  Since then they have been extensively developed, across different cohomology theories, and have found beautiful applications wherever the study of geometric representation theory meets symplectic geometry \cite{smirnov2021quantum, danilenko2022stable, danilenko2022quantum} or quantum field theory \cite{SMIRNOVhypertoric, rimanyi20193d, Rimanyi_2019full, kononov2020pursuing, kononov2022pursuing, botta2023bow}.
In addition, stable envelopes are a formidable means to produce new mathematics. Indeed, whenever studied in physically meaningful settings, they have proven a fundamental tool to effectively translate physical predictions to mathematically precise statements, opening the way to outstanding developments in diverse areas such as enumerative geometry \cite{okounkov2017lectures, Okounkov2016QuantumDE}, knot theory \cite{aganagic2023knot}, and the Langlands program \cite{aganagic2018quantum}.

The result of the development of stable envelopes in the book \cite{MO19} is the purely geometric construction of a Yangian-style quantum group associated to any quiver $Q$, defined by applying the R-matrix formalism that arises by considering stable envelopes across variation of torus actions on quiver varieties associated to $Q$.  After the fundamental work \cite{Nak94, Groj96,Nak97,Nak98, Var00} on constructing representations of quantum groups out of the singular cohomology of Nakajima quiver varieties, Maulik and Okounkov showed that one can construct new quantum groups themselves directly out of this singular cohomology and their stable envelopes, using their R-matrices and the formalism of Faddeev, Reshetikhin and Takhtadzhyan \cite{FRT}.

In the event that the underlying quiver Q is of type ADE, the resulting algebra $\Yang^{\MO}_Q$ contains the usual Yangian associated to the underlying graph of $Q$ \cite{mcbreen2013}, while the case of the one-loop quiver recovers quantum affine $\hat{\mathfrak{gl}_1}$, as explained in depth in the second half of \cite{MO19}.  In these cases the theory recovers and sheds new light on the realisation of various classical Lie algebras and their associated Yangians as subalgebras of the endomorphism algebra of cohomology of Nakajima quiver varieties.  For a general quiver $Q$, Maulik and Okounkov demonstrated that there is a Lie subalgebra $\fg^{\MO,T}_Q\subset \Yang^{\MO}_Q$ for which their Yangian satisfies a PBW theorem: there is an isomorphism of underlying graded vector spaces $\Sym(\fg^{\MO,T}_Q[u])\cong \Yang^{\MO}_Q$, just as in the case of ordinary Yangians.  The superscript $T$ refers to a torus: the definition of the Yangian, and the Lie algebra $\fg_Q^{\MO,T}$, involves a choice of torus $T$ which is required to admit a character by which $T$ scales the natural symplectic forms on Nakajima quiver varieties.  In particular, $T$ is required to be nontrivial.

In contrast with the standard source of Yangians, defined in terms of generators and relations, actually accessing the Lie algebra $\fg_Q^{\MO,T}$, finding non-trivial generators, or even calculating the dimensions of its graded components outside of (extended) ADE type, has been an open problem from the beginning of the subject.  Study in this direction has been heavily inspired by the conjecture below, which in turn presupposes a conjecture of Maulik and Okounkov \cite[\S 5.3.2]{MO19} stating that there exists a cohomologically graded, $\BoZ^{Q_0}$-graded Lie algebra $\fg^{\MO}_{Q}$, from which $\fg^{\MO,T}_Q$ is obtained by $\HO_T$-linear extension: $\fg^{\MO,T}_Q\cong \fg^{\MO}_Q\otimes\HO_T$ with $\HO_T\coloneqq \HO^*(\B T,\BoQ)$.  

The conjecture relates the graded dimensions of $\fg^{\MO,T}_Q$ to \textit{Kac polynomials}.  Given a quiver $Q$, dimension vector $\dd\in\BoN^{Q_0}$ and prime power $q=p^n$, Kac defined $\kac_{Q,\dd}(q)$ to be the number of isomorphism classes of $Q$-representations $\rho$ over a field of cardinality $q$ with dimension vector $\dd$ for which $\rho\otimes_{\BoF_q} \overline{\BoF}_q$ is indecomposable as a $\overline{\BoF}_qQ$-module.  He showed that this is in fact a polynomial function $\kac_{Q,\dd}(q)\in\BoZ[q]$ -- we refer the reader to \cite{schiffmann2018kac} for a comprehensive survey of these polynomials, including more recent developments.  Okounkov's conjecture relates the graded dimensions of $\fg^{\MO}_Q$ to the coefficients of these polynomials:
\begin{conjecture}[Okounkov \cite{OConj}]
\label{O_conj}
Let $Q$ be an arbitrary finite quiver, and fix a dimension vector $\dd\in\BoN^{Q_0}$.  There is an equality of generating series
\begin{align*}
\chi_{q^{1/2}}(\fg^{\MO}_{Q,\dd})\coloneqq &\sum_{k \in \BoZ}\dim\left((\fg^{\MO}_{Q,\dd})^k\right)q^{k/2}=\kac_{Q,\dd}(q^{-1}).
\end{align*}
In the above equation, $(\fg^{\MO}_{Q,\dd})^k$ is the $k$-th cohomologically graded piece of the $\dd$-th graded piece of $\fg^{\MO}_{Q}$.
\end{conjecture}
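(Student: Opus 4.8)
The plan is to deduce Conjecture~\ref{O_conj} from an isomorphism of $\BoZ^{Q_0}$-graded, cohomologically graded Lie algebras $\fg^{\MO}_Q\cong\fg^{\BPS}_{\tilde Q}$, where $\fg^{\BPS}_{\tilde Q}$ is the BPS Lie algebra of the tripled quiver $\tilde Q$ with its canonical cubic potential $W$, together with the already-known evaluation of the graded dimensions of $\fg^{\BPS}_{\tilde Q}$ in terms of Kac polynomials. Recall that by the cohomological integrality theorem the critical cohomological Hall algebra $\HA_{\tilde Q,W}$ is built, PBW-style, from $\fg^{\BPS}_{\tilde Q}$, whose underlying $\BoZ^{Q_0}\times\BoZ$-graded vector space is the BPS cohomology of $(\tilde Q,W)$; via dimensional reduction the latter agrees, up to an explicit degree shift, with the BPS cohomology of the preprojective algebra $\Pi_Q$, which is controlled by Kac polynomials (Davison, building on Hausel--Letellier--Rodriguez-Villegas, Mozgovoy, Davison--Meinhardt and Bozec--Schiffmann). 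This gives, for every $\dd\in\BoN^{Q_0}$, the identity $\sum_{k}\dim\bigl((\fg^{\BPS}_{\tilde Q,\dd})^{k}\bigr)q^{k/2}=\kac_{Q,\dd}(q^{-1})$, so that all the content is in producing the isomorphism.

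To construct the isomorphism I would compare the two Lie algebras through their geometric representations. For each framing vector $\ff$, the Nakajima quiver varieties $\Msp_{\dd,\ff}$ of $Q$ carry an action of $\fg^{\MO}_Q$: after extending scalars over $\HO_T$, this is precisely how Maulik and Okounkov build their Lie algebra, namely as the subalgebra of $\End\bigl(\bigoplus_{\dd}\HO(\Msp_{\dd,\ff})\bigr)$ generated by the matrix coefficients of the geometric $R$-matrices produced from stable envelopes. The same cohomology also carries an action of $\fg^{\BPS}_{\tilde Q}$, coming from the action of $\HA_{\tilde Q,W}$ on the cohomology of framed moduli via Hecke correspondences and dimensional reduction. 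The bridge between the two is the \emph{nonabelian stable envelope}: one shows that, in the appropriate limit of the K\"ahler and equivariant parameters, it identifies the $R$-matrix operators of \cite{MO19} with the operators implementing CoHA multiplication, so that the positive halves of $\fg^{\MO}_Q$ and of $\fg^{\BPS}_{\tilde Q}$ have the same image inside $\End\bigl(\bigoplus_{\dd,\ff}\HO(\Msp_{\dd,\ff})\bigr)$; the negative halves are matched symmetrically, and the Cartan parts by an explicit computation.

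The isomorphism of Lie algebras then follows from faithfulness of both actions. The map $\fg^{\BPS}_{\tilde Q}\to\End\bigl(\bigoplus_{\dd,\ff}\HO(\Msp_{\dd,\ff})\bigr)$ is injective --- a property of the CoHA action that one proves by passing to sufficiently large framing, where the relevant modules become free and pure --- and the corresponding injectivity for $\fg^{\MO}_Q$ is built into Maulik and Okounkov's construction of the Lie algebra as an algebra of operators. Realising both Lie algebras faithfully as subalgebras of the same endomorphism algebra, and having matched them on the positive, negative and Cartan pieces that together span each of them, one obtains $\fg^{\MO}_Q\cong\fg^{\BPS}_{\tilde Q}$ compatibly with all gradings. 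Finally, $\fg^{\BPS}_{\tilde Q}$ is manifestly defined over $\BoQ$ and, by the structure theory of cohomological Hall algebras in dimensions two and three, is a generalised Kac--Moody Lie algebra whose Cartan datum is read off from the intersection cohomology of the singular Nakajima quiver varieties; transporting these features along the isomorphism yields both the $\BoQ$-rationality of $\fg^{\MO}_Q$ (the conjecture of Maulik and Okounkov) and its generalised Kac--Moody description, and taking graded dimensions in degree $\dd$ gives Conjecture~\ref{O_conj}.

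I expect the main obstacle to be the comparison step: proving that the nonabelian stable envelope intertwines the $R$-matrix formalism of \cite{MO19} with the CoHA action attached to $(\tilde Q,W)$. This requires a precise dictionary between the equivariant data of the quiver variety --- the symplectic-scaling torus $T$, the framing torus, and the K\"ahler parameters --- and the cohomological weight grading on the three-dimensional side, together with control of how stable envelopes interact with the dimensional reduction relating $\HA_{\tilde Q,W}$ to the preprojective CoHA. Upgrading this from a map of Lie algebras in one direction to a bona fide isomorphism --- that is, establishing faithfulness together with coinciding images on both sides --- is the technical heart of the argument, and is where the bulk of the work lies.
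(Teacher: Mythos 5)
Your proposal is correct and follows essentially the same route as the paper: reduce Conjecture~\ref{O_conj} to an isomorphism $\fg^{\MO,T}_Q\cong\fg^T_{\Pi_Q}$ (Theorem~\ref{main_thm}), establish that isomorphism by using nonabelian stable envelopes to intertwine the R-matrix action with the CoHA/BPS action on cohomology of Nakajima quiver varieties for varying framing, invoke faithfulness of both actions, then combine with the deformation-triviality statement (Proposition~\ref{Triv_BPS_ext}) to descend to $\BoQ$ and the Kac-polynomial formula for the BPS Lie algebra (Proposition~\ref{BPS_char_function}) to conclude. The only difference is one of emphasis: you describe descent to $\BoQ$ as ``manifest'' for the BPS side, whereas the paper isolates this as the nontrivial input Proposition~\ref{Triv_BPS_ext}; the substance is the same.
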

A consequence of the main result of the present article is the following.
\begin{thmx}
\label{OC_thm}
Conjecture \ref{O_conj} is true: for every quiver $Q$ the Maulik--Okounkov Lie algebra is defined over $\BoQ$, and for any dimension vector $\dd\in\BoN^{Q_0}$, there is an equality $\chi_{q^{1/2}}(\fg^{\MO}_{Q,\dd})=\kac_{Q,\dd}(q^{-1})$.
\end{thmx}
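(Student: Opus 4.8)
The plan is to deduce Theorem~\ref{OC_thm} from a comparison between $\fg^{\MO}_Q$ and the \emph{BPS Lie algebra} $\fg^{\BPS}_{\tilde Q}$ of the tripled quiver $\tilde Q$ equipped with its canonical cubic potential $W$. By construction $\fg^{\BPS}_{\tilde Q}$ is a $\BoN^{Q_0}\times\BoZ$-graded Lie algebra \emph{over $\BoQ$}, and its graded dimensions are known to recover Kac polynomials: this follows from the cohomological integrality theorem of Davison--Meinhardt for quivers with potential, dimensional reduction, and the computation of the BPS cohomology of the preprojective algebra $\Pi_Q$ (Davison, building on point counts going back to Hua, Mozgovoy, Hausel, and Bozec--Schiffmann--Vasserot), so that $\sum_{k}\dim\big((\fg^{\BPS}_{\tilde Q,\dd})^{k}\big)q^{k/2}=\kac_{Q,\dd}(q^{-1})$ for every $\dd$. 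Granting an isomorphism of bigraded Lie algebras $\fg^{\MO,T}_Q\cong\fg^{\BPS}_{\tilde Q}\otimes_{\BoQ}\HO_T$ — which in particular exhibits $\fg^{\MO}_Q\coloneqq\fg^{\BPS}_{\tilde Q}$ as the rational form predicted in \cite[\S 5.3.2]{MO19}, so that $\fg^{\MO}_Q$ is defined over $\BoQ$ — the equality of generating series in Theorem~\ref{OC_thm} follows by reading off graded dimensions on both sides.

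The heart of the matter is therefore the comparison isomorphism, and the strategy is to realise both Lie algebras inside the endomorphism algebra of one and the same object: the direct sum $V\coloneqq\bigoplus_{\mathbf{w},\dd}\HO_T\big(\Msp_{\theta}(\dd,\mathbf{w})\big)$ of equivariant cohomologies of Nakajima quiver varieties over all framing and dimension vectors. On the Maulik--Okounkov side this is almost tautological: $\fg^{\MO,T}_Q$ is cut out of $\End(V)$ by the Faddeev--Reshetikhin--Takhtadzhyan construction applied to the geometric $R$-matrices, and its action on $V$ is faithful once one ranges over all framings $\mathbf{w}$. On the BPS side one uses that, after dimensional reduction, $\fg^{\BPS}_{\tilde Q}$ is identified with the BPS Lie algebra of $\Pi_Q$, which sits inside the two-dimensional cohomological Hall algebra $\HCoha_{\Pi_Q}$; this algebra acts on $V$ through Hecke correspondences, and the structural inputs needed are the PBW theorem for $\HCoha_{\Pi_Q}$, so that $\fg^{\BPS}_{\tilde Q}$ generates the relevant subalgebra, together with the faithfulness of this action on the total module $V$.

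The bridge between the two incarnations is supplied by nonabelian stable envelopes. Abelianising the stable envelope for $\Msp_{\theta}(\dd,\mathbf{w})$ — expressing it via the stable envelope of the associated maximal-torus quotient, in the manner of Aganagic--Okounkov and Botta — puts the Maulik--Okounkov $R$-matrix $R(u)$ into a triangular, shuffle-type normal form which matches the $R$-matrix governing the action of $\HCoha_{\Pi_Q}$ on $V\otimes V$. Extracting the order-$u^{-1}$ part then identifies, inside $\End(V)$, the span of the matrix coefficients of the classical $r$-matrix $r\in\fg^{\MO,T}_Q\mathbin{\widehat{\otimes}}\fg^{\MO,T}_Q$ — which generate $\fg^{\MO,T}_Q$ — with the span of the BPS raising and lowering operators produced by the CoHA. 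Faithfulness on both sides upgrades this to an isomorphism of Lie algebras, visibly compatible with the $\BoN^{Q_0}$-grading and the cohomological grading; a Rees-algebra/deformation argument then separates out the equivariant parameter to yield $\fg^{\MO,T}_Q\cong\fg^{\BPS}_{\tilde Q}\otimes_{\BoQ}\HO_T$, the last step relying on the $\HO_T$-freeness of the equivariant BPS cohomology, which follows from its purity.

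The step I expect to be the main obstacle is precisely this matching of $R$-matrices: the Maulik--Okounkov $R$-matrix and the CoHA structure are built from genuinely different geometry — stable envelopes for symplectic resolutions on one side, correspondences on stacks of representations on the other — so reconciling them requires a careful analysis of the abelianised stable envelope, tight control of its diagonal normalisation, and an identification of the degree-zero Cartan parts of the two Lie algebras, which are governed by the intersection cohomology of singular Nakajima quiver varieties (this is also the Cartan datum in the generalised Kac--Moody description of $\fg^{\MO}_Q$). A subsidiary but delicate point is a clean set-up of the dimensional reduction relating the three-dimensional BPS theory of $(\tilde Q,W)$ to the two-dimensional cohomological Hall algebra of $\Pi_Q$, keeping track of all cohomological shifts so that the final normalisation reproduces the $\kac_{Q,\dd}(q^{-1})$ of Conjecture~\ref{O_conj}.
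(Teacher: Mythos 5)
Your proposal reconstructs the paper's own strategy: deduce the theorem from the comparison isomorphism $\fg^{\MO,T}_Q\cong\fg^{T}_{\Pi_Q}$ (Theorem~\ref{main_thm}), which is in turn proved by realising both Lie algebras inside the endomorphisms of the total equivariant cohomology of Nakajima quiver varieties across all framings and matching them via nonabelian stable envelopes, then read off graded dimensions from Proposition~\ref{BPS_char_function} and the triviality of the $\HO_T$-deformation (Proposition~\ref{Triv_BPS_ext}). This is exactly the two-step reduction given in \S\ref{corrs_sec}, with the heavy lifting you sketch carried out in Theorems~\ref{NaSE_vs_BPS} and~\ref{main_thm_redo} — one small terminological caveat being that the paper's nonabelian stable envelope is the Aganagic--Okounkov map from a quiver variety to the ambient \emph{stack} of representations (built from an abelian stable envelope for a larger framing, not from a maximal-torus quotient in the hypertoric-abelianisation sense), and the matching of the classical $r$-matrix with CoHA operators proceeds through the localised coproduct on $\HO\!\CoHA^T_{\Pi_Q}$ rather than a shuffle normal form of $R(u)$ itself.
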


The Kac polynomials are central objects in the combinatorics of quiver representations.  They were introduced by Victor Kac in \cite{Kac83}, where it was conjectured that their constant coefficients record the root space multiplicities of associated Kac--Moody Lie algebras, and it was asked what kind of Lie theoretic interpretation the higher coefficients might have.  Kac's constant term conjecture was proved by Tamas Hausel in \cite{Hau10}, while Kac's wider question can be answered via the theory of \textit{BPS Lie algebras}, which we come to below.  

The equality in the theorem is a corollary of an isomorphism between $\fn^{\MO,T,+}_Q$, the positive half of the Maulik--Okounkov Lie algebra, and the BPS Lie algebra $\fg_{\tilde{Q},\tilde{W}}^{T}$.  This Lie algebra, first defined in \cite{QEAs}, has somewhat different origins; it comes from the enumerative geometry of 3-Calabi--Yau categories, and more precisely cohomological Donaldson--Thomas theory \cite{KS2,Da13,Sz14,Kin3}.  Before stating a stronger result (Theorem \ref{main_thm}) regarding these Lie algebras, we provide some background on BPS Lie algebras and their links with enumerative geometry.

Given a 3-dimensional Calabi--Yau category $\mathscr{C}$, a Bridgeland stability condition $\zeta$, and a class $\gamma$ in the numerical Grothendieck group of $\mathscr{C}$, the BPS invariant $\omega^{\zeta}_{\gamma}$ is a numerical invariant ``counting'' \textit{primitive} $\zeta$-semistable objects in $\mathscr{C}$ of class $\gamma$. As such, the BPS invariants provide the building blocks of the DT invariants, which count \emph{all} $\zeta$-semistable objects in class $\gamma$.  The archetypal example involves setting $\mathscr{C}$ to be the category of coherent sheaves on a Calabi--Yau 3-fold \cite{Thomas1}.  In mathematical applications these invariants can often be refined, meaning that we can define a natural polynomial invariant $\Omega^{\zeta}_{\gamma}(q)\in\BoZ[q^{\pm 1}]$ satisfying $\Omega^{\zeta}_{\gamma}(1)=\omega^{\zeta}_{\gamma}$.  

Inspired by work of Harvey and Moore \cite{HM98}, Kontsevich and Soibelman defined in \cite {KS2} the critical cohomological Hall algebra $\HO\!\CoHA^{\zeta}_{\mathscr{C}}$ associated to certain 3CY categories $\mathscr{C}$, namely categories of modules over Jacobi algebras, in order to give a mathematical definition of algebras of BPS states.  This critical cohomological Hall algebra, for which the underlying vector space is the vanishing cycle cohomology of the potential for which the moduli space is the critical locus, reveals a richer structure to the enumerative geometry of $\mathscr{C}$. The refined BPS invariants are encoded in the (virtual) Poincar\'e series of $\HO\!\CoHA^{\zeta}_{\mathscr{C}}$, which are obtained by forgetting the algebra structure and counting dimensions of pieces in the Hodge filtration on the underlying vector space.  It was proved by Mozgovoy \cite{Moz11} that, starting with a quiver $Q$, there is a ``tripled'' quiver $\tilde{Q}$ with potential $\tilde{W}$ such that the refined BPS invariants of the associated Jacobi algebra $\Jac(\tilde{Q},\tilde{W})$ precisely recover the Kac polynomials of $Q$, (see \S \ref{quiver_sec} for the definitions) providing the clue that the right hand side of Conjecture \ref{O_conj} can also be studied via Donaldson--Thomas theory.

To get from critical cohomological Hall algebras/refined BPS state counts to Maulik--Okounkov Yangians, we need to pick the right kinds of 3-Calabi--Yau category $\mathscr{C}$.  Such a choice is provided by the 3-Calabi-Yau completion (in the sense of Keller \cite{Kel11}) of the category of perfect modules for the preprojective algebra $\Pi_Q$ for the quiver $Q$.  This 3-Calabi--Yau completion turns out to be precisely the category of perfect modules over Mozgovoy's Jacobi algebra $\Jac(\tilde{Q},\tilde{W})$ (see also \cite[\S 4.3]{ginz}, and, again, \S \ref{quiver_sec} for definitions).  The relation between the cohomological Donaldson--Thomas theory of the resulting 3-Calabi--Yau category and the Borel--Moore homology of the stack of $\Pi_Q$-modules passes via the method of dimensional reduction, which is a powerful method for deriving results regarding stacks of objects in 2-Calabi--Yau categories from the cohomological Donaldson--Thomas theory of the associated 3-Calabi--Yau category (see \cite{BBS} \cite{Da16}, \cite{Kin22}, \cite{KK21} for applications and further developments).  Here, it yields an isomorphism between $\HO\!\CoHA^T_{\Pi_Q}$, for which the underlying graded vector space is $\HO^{\BoMo}(\FM^T(\Pi_Q),\BoQ)$, and the critical cohomological Hall algebra associated to the pair $(\tilde{Q},\tilde{W})$.

The cohomological Hall algebra $\HO\!\CoHA^T_{\Pi_Q}$ was defined and studied independently by Schiffmann and Vasserot in their solution to the AGT conjecture \cite{ScVa13} (see also \cite{YZ18}). Connections between these Hall algebras and generalised Kac--Moody Lie algebras were developed in \cite{Bo16}; in the presence of edge loops, it was observed in [loc. cit.] that the spherical subalgebra of $\HO\!\CoHA^T_{\Pi_Q}$ already takes us from Kac--Moody algebras into the world of \emph{generalised} Kac--Moody algebras (also known as Borcherds algebras \cite{Bor88}).  The deep relations between these Hall algebras and Kac polynomials are further explored in \cite{BSV17}.  There is now an extensive literature devoted to comparing the CoHAs $\HO\!\CoHA^T_{\Pi_Q}$, or the critical cohomological Hall algebras obtained from associated 3-Calabi--Yau categories, to Yangians and vertex algebras \cite{SchVa17,LY20,GaYa20,GLY21,RSYZ20,RSYZ20b,Da22,BuRa23,MNSV23,jindal2024coha}.  

The existing state of the art for general comparisons between preprojective CoHAs and the Maulik--Okounkov Yangian is the following result of Schiffmann and Vasserot \cite{ScVa20}; after applying the functor $(-)_K=-\otimes_{\HO_T} K$, where $K$ is the function field of $\HO_T$, they showed that there is an embedding $\jmath\colon( \HO\!\CoHA_{\Pi_Q}^T)_K\hookrightarrow (\Yang^{\MO,+}_{Q})_K$, using generation of the domain by specific elements of $(\HO\!\CoHA_{\Pi_Q,n\delta_i}^T)_K$. Here, $n\delta_i=(0,\ldots,0,n,0,\ldots,0)$ is an arbitrary dimension vector supported on the $i$-th vertex of the quiver.  The generation statement was subsequently strengthened by Negu\cb{t}, who showed that it is sufficient to choose $n=1$ \cite{neguct2023generators} to generate the algebra $(\HO\!\CoHA_{\Pi_Q}^T)_K$. The target $\Yang^{\MO,+}_{Q}$ is the positive half of the Maulik--Okounkov Yangian, defined with respect to the same torus $T$ appearing on the left hand side; it is generated by the positive half of $\fg^{\MO,T}_Q$ and tautological classes. It was further conjectured in \cite{ScVa20} that the inclusion $\jmath$ is in fact an isomorphism; this conjecture follows from Corollary \ref{Y_cor} below.  We remark here that without localising, both the cohomological Hall algebra, and the Maulik--Okounkov Yangian are almost never spherically generated (see e.g. \cite[Prop.5.7]{Da22} and \cite[Cor.11.5.2]{jindal2024coha}) and identifying the non-spherical generators of the two algebras is an essentially new challenge.  See \cite{schiffmann2018kac} for an overview of the many connections between preprojective CoHAs, Kac polynomials and Yangians, and in particular, background to \cite[Conj.4.10]{schiffmann2018kac}, which we prove below as Theorem \ref{main_thm}.  

Both $\fn^{\MO,T,+}_Q$ and $\fn_{\Pi_Q}^{T,+}\coloneqq \fg_{\tilde{Q},\tilde{W}}^T$ are positive halves in weight space decompositions of bigger Lie algebras $\fg^{\MO,T}_Q$ and $\fg_{\Pi_Q}^T$ respectively, and our second main theorem, from which Theorem \ref{OC_thm} follows, is a comparison between these bigger Lie algebras:

\begin{thmx}
\label{main_thm}
There is an isomorphism of $\BoZ^{Q_0}$-graded, cohomologically graded Lie algebras $\fg^{T}_{\Pi_Q}\cong \fg^{\MO,T}_Q$ intertwining the natural actions on cohomology of Nakajima quiver varieties.
\end{thmx}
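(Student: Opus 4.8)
The plan is to realise both Lie algebras as algebras of operators on one and the same vector space — the direct sum $\mathbb{V}=\bigoplus_{\mathbf{v},\mathbf{w}}\HO_T(\Msp(\mathbf{v},\mathbf{w}))$ of the $T$-equivariant cohomologies of all Nakajima quiver varieties attached to $Q$ — and to prove that these two algebras of operators coincide, each acting faithfully; the identity of $\End(\mathbb{V})$ then restricts to the desired isomorphism, which by its very construction intertwines the two actions. The Maulik--Okounkov Lie algebra acts on $\mathbb{V}$ essentially by definition: it is extracted via the $R$-matrix formalism of \cite{MO19} from abelian stable envelopes for one-parameter subgroups of the framing torus, and is a Lie subalgebra of $\prod_{\mathbf{w}}\End(\bigoplus_{\mathbf{v}}\HO_T(\Msp(\mathbf{v},\mathbf{w})))$, hence acts faithfully. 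The BPS Lie algebra acts through cohomological Hall algebras: the preprojective CoHA $\HO\!\CoHA^T_{\Pi_Q}$ — equivalently, after dimensional reduction, the critical CoHA of $(\tilde Q,\tilde W)$, inside which $\fn^{T,+}_{\Pi_Q}=\fg^T_{\tilde Q,\tilde W}$ is the Lie subalgebra of BPS states — acts on $\mathbb{V}$ by Hecke correspondences raising $\mathbf{v}$, the opposite CoHA acts by lowering $\mathbf{v}$, tautological classes furnish the Cartan, and together these assemble an action of $\fg^T_{\Pi_Q}$; its faithfulness is inherited from that of the preprojective CoHA action on quiver variety cohomology.

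The engine identifying the two images inside $\End(\mathbb{V})$ is the theory of nonabelian stable envelopes. Whereas Maulik--Okounkov's geometric $R$-matrix is assembled from abelian stable envelopes for splittings $\mathbf{w}=\mathbf{w}'+\mathbf{w}''$ of the framing, nonabelian stable envelopes are attached to the Levi $\G_{\mathbf{w}'}\times\G_{\mathbf{w}''}\subset\G_{\mathbf{w}}$; their factorisation properties express the geometric $R$-matrix through correspondences between products of smaller quiver varieties and let me match it, over $\HO_T$ and without any localisation, with the $R$-matrix coming from the coproduct on the preprojective CoHA. Concretely this rewrites the raising operators generating $\fn^{\MO,T,+}_Q$ as CoHA multiplication operators and, conversely, recognises the CoHA Hecke operators among the Maulik--Okounkov operators. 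One containment — that the image of $\fg^T_{\Pi_Q}$ lies in $\fg^{\MO,T}_Q$ — can alternatively be bootstrapped from the localised embedding $(\HO\!\CoHA^T_{\Pi_Q})_K\hookrightarrow(\Yang^{\MO,+}_Q)_K$ of Schiffmann--Vasserot \cite{ScVa20} and Negu\cb{t} \cite{neguct2023generators} by restricting to Lie-like elements and then removing the localisation, using purity and freeness of $\HO\!\CoHA^T_{\Pi_Q}$ over $\HO_T$.

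The genuinely hard direction is the reverse containment: that $\fg^T_{\Pi_Q}$ is not \emph{smaller} than $\fg^{\MO,T}_Q$, i.e.\ that every Maulik--Okounkov operator is realised by the CoHA. This is precisely the problem of identifying non-spherical generators, since away from the localised setting neither algebra is generated by its $\delta_i$-pieces. The strategy is to use that, by the general structure theory of cohomological Hall algebras of $2$-Calabi--Yau categories, $\fg^T_{\Pi_Q}$ is a generalised Kac--Moody Lie algebra whose Chevalley generators are governed by the intersection cohomology of singular Nakajima quiver varieties; transporting these generators through the nonabelian stable envelope comparison produces explicit generators of $\fg^{\MO,T}_Q$ and shows that the two images agree in every $\BoZ^{Q_0}$-degree. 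Finally, one matches up the triangular decompositions and the Cartan parts to obtain the stated isomorphism; feeding in Mozgovoy's theorem \cite{Moz11}, which identifies the refined BPS invariants of $(\tilde Q,\tilde W)$ with the Kac polynomials of $Q$, then yields $\chi_{q^{1/2}}(\fg^{\MO}_{Q,\mathbf{d}})=\kac_{Q,\mathbf{d}}(q^{-1})$ and Theorem \ref{OC_thm}.

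I expect the main obstacle to be exactly this dictionary supplied by nonabelian stable envelopes: translating the stable-envelope/$R$-matrix formalism into the language of CoHA coproducts and BPS Lie algebras with enough precision both to produce a morphism of Lie algebras and to control its image in both directions integrally — in particular, to establish surjectivity onto $\fg^{\MO,T}_Q$. Once that dictionary is in place, faithfulness, the assembly of the triangular decompositions, and the matching of Cartan parts should be comparatively routine.
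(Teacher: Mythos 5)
Your plan is, in its broad outline, the paper's proof: both Lie algebras are realised faithfully inside $\prod_{i\in Q_0}\End_{\HO_T}(\NakMod^T_{Q,\delta_i})$, nonabelian stable envelopes are the dictionary that carries one image onto the other, and the GKM description of $\fg^T_{\Pi_Q}$ (from \cite{DHSM23}) controls generators integrally. The substantive work — the comparison morphism $\Psi_{\ff,\dd}$, the perverse-filtration arguments showing $\Psi_{\ff,\dd}$ preserves the zeroth perverse piece and so lands in the BPS Lie algebra, and the residue/coproduct formalism that rewrites the classical $r$-matrix as a CoHA expression — is what Theorem \ref{NaSE_vs_BPS} packages, and you correctly flag it as the crux.

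Two places where your sketch understates what must actually be proved. First, you describe the $\fg^T_{\Pi_Q}$-action on $\mathbb{V}$ as ``Hecke correspondences raising $\mathbf{v}$, the opposite CoHA lowering $\mathbf{v}$, tautological classes for the Cartan,'' and claim faithfulness is ``inherited.'' This is not how the paper sets it up, and it does not produce a Lie algebra action without further input: the paper first proves (via the nonabelian stable envelope and Corollary \ref{Nak_BPS_comp}) that $\NakMod^{T^0_{\ff}}_{Q,\ff,\dd}\cong\Fg^{T^0_{\ff}}_{\Pi_{Q_{\ff}},(\dd,1)}$ and then lets $\fg^T_{\Pi_Q}$ act by Lie bracket inside the ambient GKM algebra $\fg^T_{\Pi_{Q_{\ff}}}$; faithfulness then has to be argued (the paper uses the $\mathfrak{sl}_2$-triple $\{e_\infty,f_\infty,h_\infty\}$ at the framing vertex). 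Second, your fallback route for one containment — delocalising the Schiffmann--Vasserot embedding $(\HO\!\CoHA^T_{\Pi_Q})_K\hookrightarrow(\Yang^{\MO,+}_Q)_K$ ``using purity and freeness'' — is not what the paper does and would need real work: freeness over $\HO_T$ does not by itself let you remove the localisation on the image, precisely because neither algebra is spherically generated away from the generic point. The paper instead proves this containment directly from Proposition \ref{good_residue_prop} (the residue of the coproduct of a framed BPS class lands in $\hbar\cdot\fg^T_{\Pi_Q}$), which is itself powered by Theorem \ref{NaSE_vs_BPS}. So the nonabelian stable envelope comparison is not an optional bridge for one direction — it is load-bearing for both.
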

We note that the cohomology of quiver varieties is isomorphic to the critical cohomology of certain moduli spaces of $\Jac(\tilde{Q},\tilde{W})$-representations via dimensional reduction, so that our results give a somewhat surprising action of $\fg^T_{\MO}$ in \textit{critical} cohomology; an intriguing question is the extent to which the (doubled) Lie algebra $\fg^T_{\MO}$ acts on the critical cohomology of \textit{other} moduli spaces of $\Jac(\tilde{Q},\tilde{W})$-modules.

We deduce the result of Theorem \ref{main_thm} regarding the full Lie algebras from the analogous result regarding the positive halves, which, in turn, is obtained by combining two main ingredients. The first one is the set of results on the representation theory of (full) BPS Lie algebras from \cite{DHSM23}.  In particular we use from [loc. cit.] the description of the cohomology of Nakajima quiver varieties in terms of direct sums of irreducible lowest weight representations for the full BPS Lie algebra. The second main ingredient is the theory of nonabelian stable envelopes, which we come to below. They form the object of our final main theorem (Theorem \ref{NaSE_vs_BPS}).

Thanks to the papers \cite{preproj3,DHSM22,DHSM23} the structure of $\fg_{\Pi_Q}^T$ is well-understood: it is a generalised Kac--Moody Lie algebra with infinite Cartan datum.  Theorem \ref{main_thm} enables us to transfer the results of these papers into the study of Maulik--Okounkov Yangians, and we summarise the outcome in the following corollaries.
\begin{corollary}
\label{OQ_cor}
The Lie algebra $\fg_Q^{\MO,T}$ is obtained via $\HO_T$-linear extension of scalars from a cohomologically graded, $\BoZ^{Q_0}$-graded Lie algebra $\fg_Q^{\MO}$ with finite-dimensional graded pieces.  Moreover, the Lie algebra $\fg_Q^{\MO}$ is a generalised Kac--Moody Lie algebra with spaces of (positive) Chevalley generators canonically identified with intersection cohomologies of singular quiver varieties.
\end{corollary}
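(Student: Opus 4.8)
The plan is to deduce Corollary~\ref{OQ_cor} by transporting, across the isomorphism of Theorem~\ref{main_thm}, the structural description of the BPS Lie algebra $\fg_{\Pi_Q}^T$ obtained in \cite{preproj3,DHSM22,DHSM23}. From those works we will use three facts. First, the equivariant BPS Lie algebra is obtained by $\HO_T$-linear extension of scalars: there is a $\BoZ^{Q_0}$-graded, cohomologically graded $\BoQ$-Lie algebra $\fg_{\Pi_Q}$ with finite-dimensional graded pieces, together with an isomorphism $\fg_{\Pi_Q}^T\cong\fg_{\Pi_Q}\otimes_{\BoQ}\HO_T$ of $\HO_T$-Lie algebras; this reflects the fact that the BPS sheaf on the coarse moduli space of $\Pi_Q$-modules is defined integrally with respect to the torus. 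Second, $\fg_{\Pi_Q}$ is a generalised Kac--Moody Lie algebra, with triangular decomposition inherited from its $\BoZ^{Q_0}$-weight grading. Third, the space of positive degree-$\dd$ Chevalley generators is canonically the BPS cohomology $\BPSh_{\Pi_Q,\dd}$, which by the cohomological integrality and support theorems is itself canonically identified with the intersection cohomology $\IH(\Msp_\dd(\Pi_Q),\BoQ)$ of the singular quiver variety of dimension vector $\dd$.

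First I would check that the isomorphism $\Phi\colon\fg_{\Pi_Q}^T\xrightarrow{\sim}\fg_Q^{\MO,T}$ of Theorem~\ref{main_thm} is $\HO_T$-linear. This is visible from its construction, as $\Phi$ is assembled -- through the comparison of cohomological Hall algebra representations by nonabelian stable envelopes (Theorem~\ref{NaSE_vs_BPS}) -- out of $\HO_T$-linear maps. Alternatively: both $\fg_{\Pi_Q}^T$ and $\fg_Q^{\MO,T}$ are free $\HO_T$-modules (the former by the first fact above, the latter by the Maulik--Okounkov PBW theorem) acting $\HO_T$-linearly and faithfully enough on the cohomology of Nakajima quiver varieties that the intertwining property forces $\HO_T$-linearity. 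Either way $\Phi$ is the base change of a $\BoQ$-linear Lie algebra isomorphism between the $\BoQ$-forms; setting $\fg_Q^{\MO}\coloneqq\fg_{\Pi_Q}$ then gives the first assertion of the corollary, and in particular the Maulik--Okounkov rationality conjecture $\fg_Q^{\MO,T}\cong\fg_Q^{\MO}\otimes_{\BoQ}\HO_T$ with finite-dimensional graded pieces.

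For the second assertion, observe that being a generalised Kac--Moody Lie algebra -- with a triangular decomposition, a symmetric invariant Cartan datum, and Chevalley generators satisfying the defining relations -- is an intrinsic feature of a Lie algebra once its weight grading is fixed, hence is transported verbatim by the grading-preserving isomorphism $\Phi$. It then remains to identify the positive Chevalley generator spaces of $\fg_Q^{\MO}$ with intersection cohomology of singular quiver varieties: combining the third fact above with $\Phi$, the degree-$\dd$ space is identified with $\IH(\Msp_\dd(\Pi_Q),\BoQ)$, and this identification is canonical because the simple roots and their norms are intrinsic root-space and invariant-form data of the Lie algebra, pinned down on both sides by the geometric realisations on quiver-variety cohomology that $\Phi$ intertwines. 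Together these yield Corollary~\ref{OQ_cor}.

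The main obstacle I expect is ensuring that ``the singular quiver varieties'' arising on the BPS side are canonically the varieties one wants from the Maulik--Okounkov viewpoint: one must track dimension vectors, framings, cohomological degrees and stability parameters through the full chain of identifications -- dimensional reduction relating $\HO\!\CoHA^T_{\Pi_Q}$ to the critical cohomological Hall algebra of $(\tilde Q,\tilde W)$, the nonabelian stable envelope comparison of Theorem~\ref{NaSE_vs_BPS}, and the support decomposition of BPS sheaves -- and verify that no convention (in particular the normalisation relating $\BPSh_{\Pi_Q,\dd}$ to the Kac polynomial $\kac_{Q,\dd}$, cf.\ Conjecture~\ref{O_conj}) is silently altered by $\Phi$. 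Given Theorem~\ref{main_thm} this is essentially bookkeeping rather than a new idea, but it is where the proof must take care.
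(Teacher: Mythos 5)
Your proposal is correct and follows essentially the same route as the paper: deduce the statement by transporting the generalised Kac--Moody description of the BPS Lie algebra (Theorem~\ref{KMA_thm}) and its $\HO_T$-linear triviality (Proposition~\ref{Triv_BPS_ext}) across the isomorphism of Theorem~\ref{main_thm}. One small imprecision to flag: for a non-primitive imaginary isotropic root $\dd=n\dd'$ with $\dd'\in\Sigma_Q$ and $n\geq 2$, the space of positive Chevalley generators in degree $\dd$ is $\IH^*(\CM_{\dd'}(\Pi_Q),\BoQ)$ (via the pushforward along $\Diag_n$), not $\IH^*(\CM_{\dd}(\Pi_Q),\BoQ)$ as your final identification suggests — this is exactly the kind of bookkeeping you anticipate as the ``main obstacle,'' and it is resolved by the definition of $\CG(Q)_{\dd}$ in \S\ref{GKM_thm_sec}.
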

See \S \ref{GKM_thm_sec} for a more thorough description of the spaces of Chevalley generators for the Maulik--Okounkov Lie algebras.
\begin{corollary}
\label{Y_cor}
There is an isomorphism of $\BoN^{Q_0}$-graded, cohomologically graded algebras $\Yang_Q^{\MO,+}\cong \HO\!\CoHA_{\Pi_Q}^T$, intertwining the respective actions on cohomology of Nakajima quiver varieties.
\end{corollary}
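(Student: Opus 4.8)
The plan is to deduce the corollary from Theorem~\ref{main_thm} --- in its positive-half form, the identification $\fn^{\MO,T,+}_Q\cong\fg^T_{\tilde{Q},\tilde{W}}$ --- together with two Poincar\'e--Birkhoff--Witt-type theorems. On the Maulik--Okounkov side, the PBW theorem of \cite{MO19} restricts on positive halves to an isomorphism of $\BoN^{Q_0}$-graded, cohomologically graded $\HO_T$-modules $\Yang^{\MO,+}_Q\cong\Sym_{\HO_T}\bigl(\fn^{\MO,T,+}_Q[u]\bigr)$, and $\Yang^{\MO,+}_Q$ is generated as an $\HO_T$-algebra by $\fn^{\MO,T,+}_Q$ and the tautological classes. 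On the other side, the cohomological integrality theorem for the preprojective cohomological Hall algebra --- obtained by dimensional reduction from the critical cohomological Hall algebra of $(\tilde{Q},\tilde{W})$ --- identifies $\HO\!\CoHA^T_{\Pi_Q}$, as an $\HO_T$-module, with $\Sym_{\HO_T}\bigl(\fg^T_{\tilde{Q},\tilde{W}}\otimes_\QQ\HO(\B\Gm)\bigr)$, and $\HO\!\CoHA^T_{\Pi_Q}$ is generated as an $\HO_T$-algebra by $\fg^T_{\tilde{Q},\tilde{W}}$ and the tautological classes $\HO(\B\Gm)\cong\QQ[u]$. Since, by Corollary~\ref{OQ_cor} and the structural results of \cite{preproj3,DHSM22,DHSM23}, both $\fn^{\MO,T,+}_Q$ and $\fg^T_{\tilde{Q},\tilde{W}}$ are extensions of scalars of $\QQ$-Lie algebras with finite-dimensional bigraded pieces, the positive-half part of Theorem~\ref{main_thm} forces these $\QQ$-forms to have equal bigraded Hilbert series; hence, by the two PBW statements, $\Yang^{\MO,+}_Q$ and $\HO\!\CoHA^T_{\Pi_Q}$ have finite-dimensional pieces in each degree $(\dd,k)\in\BoN^{Q_0}\times\BoZ$, with equal dimensions.

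Next I would construct a surjective homomorphism $\pi\colon\HO\!\CoHA^T_{\Pi_Q}\to\Yang^{\MO,+}_Q$ intertwining the two actions on cohomology of Nakajima quiver varieties, as the restriction of the Schiffmann--Vasserot embedding $\jmath\colon(\HO\!\CoHA^T_{\Pi_Q})_K\hookrightarrow(\Yang^{\MO,+}_Q)_K$. That this restriction is well defined --- that is, lands in the unlocalised $\Yang^{\MO,+}_Q$ --- follows from the cohomological integrality theorem, which says that $\HO\!\CoHA^T_{\Pi_Q}$ is integrally generated by $\fg^T_{\tilde{Q},\tilde{W}}$ and the tautological classes, together with the facts that $\jmath$ sends tautological classes to tautological classes and carries $\fg^T_{\tilde{Q},\tilde{W}}$ isomorphically onto $\fn^{\MO,T,+}_Q\subseteq\Yang^{\MO,+}_Q$ --- this last compatibility being essentially the content of Theorem~\ref{NaSE_vs_BPS}, through which the positive-half part of Theorem~\ref{main_thm} is established. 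The image of $\pi$ is then the $\HO_T$-subalgebra of $\Yang^{\MO,+}_Q$ generated by $\fn^{\MO,T,+}_Q$ and the tautological classes, which is all of $\Yang^{\MO,+}_Q$; so $\pi$ is surjective, and it intertwines the Nakajima actions because $\jmath$ does. (Alternatively, $\pi$ can be built directly inside the endomorphism algebra of the total cohomology of Nakajima quiver varieties, on which $\HO\!\CoHA^T_{\Pi_Q}$ acts by Hecke correspondences and $\Yang^{\MO,+}_Q$ acts by construction: the tautological classes act identically on both sides, Theorem~\ref{main_thm} identifies the operators coming from the two Lie algebras, and the two images therefore coincide.)

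It remains to see that $\pi$ is injective, which is immediate: $\HO\!\CoHA^T_{\Pi_Q}$ is torsion-free over $\HO_T$ by the cohomological integrality theorem, hence embeds in its localisation $(\HO\!\CoHA^T_{\Pi_Q})_K$, on which $\jmath$ is injective. (Equivalently, since $\pi$ preserves both gradings and both sides have equal finite-dimensional pieces in every degree $(\dd,k)$ by the first paragraph, surjectivity already implies injectivity.) Thus $\pi$ is an isomorphism of $\BoN^{Q_0}$-graded, cohomologically graded $\HO_T$-algebras intertwining the actions on cohomology of Nakajima quiver varieties, as the corollary asserts. I expect the main obstacle to lie in the well-definedness of $\pi$: establishing that the identification $\fn^{\MO,T,+}_Q\cong\fg^T_{\tilde{Q},\tilde{W}}$ is compatible with the Schiffmann--Vasserot map --- equivalently, with the associative-algebra structures and the geometric actions on both sides --- rather than being a bare abstract isomorphism of Lie algebras; this is exactly the point at which the theory of nonabelian stable envelopes underlying Theorem~\ref{NaSE_vs_BPS} does the real work. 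The remaining ingredients --- matching up the tautological classes, and securing the finiteness underlying the dimension count --- are comparatively routine.
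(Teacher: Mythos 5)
Your proposal is correct and takes essentially the same route as the paper: both rest on Theorem~\ref{PBW_thm} (CoHA is generated by $\fn^{T,+}_{\Pi_Q}$ and tautological classes) and Proposition~\ref{gen_by_taut} ($\Yang^{\MO,+}_Q$ is generated by $\fn^{\MO,T,+}_Q$ and tautological classes), reduce to the Lie algebra identification of Theorem~\ref{main_thm} inside the endomorphism algebra of total Nakajima cohomology, and conclude via a dimension count or surjectivity argument. Your packaging of the isomorphism as the restriction of the Schiffmann--Vasserot embedding $\jmath$ is a legitimate and fairly explicit version of what the paper leaves implicit, and the potential obstacle you flag at the end --- compatibility of the Lie algebra identification with $\jmath$ --- is in fact automatic, since both $\jmath$ and the isomorphism of Theorem~\ref{main_thm} are defined through the respective actions on cohomology of Nakajima quiver varieties and Theorem~\ref{main_thm} asserts precisely that the actions are intertwined.
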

Shortly after this paper appeared, an independent proof of Corollary \ref{Y_cor}, which implies Conjecture \ref{O_conj} using the calculation of the graded dimensions of $\HO\!\CoHA_{\Pi_Q}^T$ in \cite{preproj}, appeared in \cite{SV23O}.

Conversely, results from \cite{MO19} regarding the representation theory of $\Yang^{\MO}_Q$ allow us to settle open problems regarding the CoHA $\HO\!\CoHA_{\Pi_Q}^T$, for example with the following corollary.  We refer the reader to \S \ref{sec: Nakajima quiver varieties} for the definition of Nakajima quiver varieties and their stack-theoretic quotients, and \S \ref{section: Stable envelopes for quiver varieties} for a discussion of the extended torus $T_{\ff}$.
\begin{corollary}
\label{faithful_cor}
Let $\NakMod^{T_{\bullet}}_Q=\bigoplus_{\ff\in\BoN^{Q_0}}\NakMod^{T_{\ff}}_{Q,\ff}=\bigoplus_{\ff,\dd\in\BoN^{Q_0}}\HO^*(\Nak^{T_{\ff}}_Q(\ff,\dd),\BoQ)$ denote the $T_{\ff}$-equivariant cohomology of all Nakajima quiver varieties $\Nak_Q(\ff,\dd)$  for $Q$, across all choices of framings $\ff\in\BoN^{Q_0}$.  The natural $\HO\!\CoHA_{\Pi_Q}^T$-action on $\NakMod^{T_{\bullet}}_Q$ is faithful.
\end{corollary}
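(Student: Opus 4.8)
The plan is to reduce the statement to a faithfulness property of the Maulik--Okounkov Yangian that is (essentially) built into its construction in \cite{MO19}, and then transport it to the CoHA side via Corollary \ref{Y_cor}. Concretely: by Corollary \ref{Y_cor} there is an isomorphism of $\BoN^{Q_0}$-graded, cohomologically graded algebras $\Yang_Q^{\MO,+}\cong \HO\!\CoHA_{\Pi_Q}^T$ intertwining the two natural actions on $\NakMod^{T_{\bullet}}_Q=\bigoplus_{\ff,\dd}\HO^*(\Nak^{T_{\ff}}_Q(\ff,\dd),\BoQ)$. Faithfulness is an isomorphism-invariant of module structures, so it is equivalent to prove that the $\Yang_Q^{\MO,+}$-action on $\NakMod^{T_{\bullet}}_Q$ is faithful.

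For that, recall that $\Yang_Q^{\MO,+}$ is by construction the positive half in the triangular (PBW) decomposition of the full Maulik--Okounkov Yangian $\Yang^{\MO}_Q$, hence a \emph{subalgebra} of $\Yang^{\MO}_Q$, and the $\Yang_Q^{\MO,+}$-module structure on $\NakMod^{T_{\bullet}}_Q$ is obtained by restricting the $\Yang^{\MO}_Q$-module structure along this inclusion. Since the composite of an injection of algebras with a faithful representation is again faithful, it suffices to show that the full Yangian $\Yang^{\MO}_Q$ acts faithfully on $\NakMod^{T_{\bullet}}_Q=\bigoplus_{\ff}\NakMod^{T_{\ff}}_{Q,\ff}$. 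This is exactly the defining feature of the geometric construction of \cite{MO19}: $\Yang^{\MO}_Q$ is generated by matrix coefficients of the stable-envelope $R$-matrices, i.e.\ by honest (families of) endomorphisms of the spaces $\NakMod^{T_{\ff}}_{Q,\ff}$, and the RTT formalism there realises $\Yang^{\MO}_Q$ as a subalgebra of $\prod_{\ff}\End_{\HO_{T_{\ff}}}\!\left(\NakMod^{T_{\ff}}_{Q,\ff}\right)$ -- equivalently, it acts faithfully on the sum of the vacuum modules over all framings. If one prefers to quote the faithfulness statement of \cite{MO19} only after inverting equivariant parameters, one descends to the integral statement using that each $\HO^*_{T_{\ff}}(\Nak_Q(\ff,\dd),\BoQ)$ is a free (in particular torsion-free) $\HO_{T_{\ff}}$-module, so an operator that is nonzero on $\NakMod^{T_{\ff}}_{Q,\ff}\otimes_{\HO_{T_{\ff}}}\Frac(\HO_{T_{\ff}})$ is already nonzero on $\NakMod^{T_{\ff}}_{Q,\ff}$. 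Combining the two reductions proves the corollary.

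I expect the only non-formal point to be pinning down the precise input from \cite{MO19}, namely that $\Yang^{\MO}_Q$ \emph{embeds} into the endomorphisms of $\bigoplus_{\ff}\NakMod^{T_{\ff}}_{Q,\ff}$ (rather than merely mapping there); this is the representation-theoretic fact about $\Yang^{\MO}_Q$ alluded to above, and once it is cited the remaining steps -- the passage through Corollary \ref{Y_cor} and the subalgebra argument -- are immediate.
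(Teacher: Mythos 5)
Your proof is correct and follows essentially the same route as the paper: transfer faithfulness across the isomorphism of Corollary \ref{Y_cor} and observe that $\Yang^{\MO,+}_Q$ acts faithfully on $\bigoplus_{\ff}\NakMod^{T_{\ff}}_{Q,\ff}$ by construction, since $\Yang^{\MO}_Q$ is \emph{defined} as a subalgebra of $\prod_{\ff}\End_{\HO_{T_{\ff}}}\!\left(\NakMod^{T_{\ff}}_{Q,\ff}\right)$. The detour through the full Yangian and the torsion-freeness remark are harmless elaborations of what the paper dispatches in one sentence.
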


Our proof of Theorem \ref{main_thm} makes essential use of \textit{nonabelian stable envelopes \cite{aganagic2017quasimap, Okounkov_2021}}. This notion traces back to Okounkov's approach to K-theoretic curve counts \cite{okounkov2017lectures} and is closely related to both the representation theoretic and the physical interpretation of the latter. From the mathematical perspective, these counts enumerate \emph{quasimaps} $\mathcal{C} \dashrightarrow X$ from a curve $\mathcal{C}$ to a quiver variety $X$ \cite{CIOCANFONTANINE201417} and provide a possible systematic approach to quantum K-theory that parallels Givental's theory \cite{PUSHKAR219, Koroteev_2021}. In cohomology, the nonabelian stable envelope can be viewed as a choice of section to the Kirwan map from the ring of tautological classes, and thus provides a constructive, explicit approach to Kirwan surjectivity (see e.g. \cite{Kirwan1,MR3773791} for background on Kirwan surjectivity).

In quasimap-theory, marked points on the curve $\mathcal{C}$ correspond to \emph{insertions} in the count and come in three different flavors—descendant, relative, and nonsingular—corresponding to different geometric constraints imposed on the moduli space of quasimaps $\mathsf{MQ}(\mathcal{C},X)$  at each marked point. 
The connection with representation theory follows from the fact that, for $\mathcal{C}=\PP^1$, counting quasimaps to a type $Q$ quiver variety with a relative insertion at $0$ and a nonsingular insertion at $\infty$ gives a fundamental solution of the quantum difference equation and the qKZ equations for the quantum affine algebra $\UEA_{\hbar} (\hat \fg^{\MO}_Q)$ \cite{okounkov2017lectures, Okounkov2016QuantumDE}.

Relative and non-singular insertions on the one side and descendant insertions on the other are drastically different: the former live in the K-theory $\K_T(X)$ of the quiver variety $X$, while the latter live in the K-theory of its ambient stack $\mathfrak{X}\supset X$ of all representations, which is much larger. Counts with descendant insertions directly link physics and quiver varieties, now seen as Higgs branches of $3d$ gauge theories with $\mathcal{N}=4$ supersymmetry. Indeed, it is shown in \cite{aganagic2018quantum} that quasimap counts with descendant insertions can be written as Mellin--Barnes type integrals and identified with the supersymmetric partition function of the underlying QFT \cite{moore_integrating, Nekrasov_counting}.

Despite living in different spaces, relative and descendant counts can be related: the nonabelian stable envelope provides the bridge.
In its original formulation, the nonabelian stable envelope is a map $ \Psi: \K_T(X)\to \K_T(\mathfrak{X})$ providing an extension of a class on $X\subset \mathfrak{X}$ to the ambient stack $\mathfrak{X}$ that \emph{preserves} the counts. In other words, the relative count with insertion $\alpha \in \K_T(X)$ equals the descendant count with insertion $\Psi(\alpha) \in \K_T(\mathfrak{X})$ \cite{aganagic2017quasimap}. As a remarkable consequence, the nonabelian stable envelope turns the Mellin--Barnes type integrals into solutions of difference equations, hence recovering and vastly generalising the integral presentations of qKZ and quantum difference equations known in the representation theory literature.

In this article, we recast the nonabelian stable envelope in the cohomological setting and show that it is the natural answer to another comparison problem. Namely, we show that it identifies the cohomology of the quiver variety $\Nak^T_Q(\ff,\dd)$ with the appropriate graded piece of the BPS Lie algebra for $\Pi_{Q_{\ff}}$, the preprojective algebra of the framed quiver $Q_{\ff}$. 
This graded piece of the BPS Lie algebra is identified with a subspace of the Borel--Moore homology of the stack $\mathfrak{X}=\FM^T_{(\dd,1)}(\Pi_{Q_{\ff}})$ of $\Pi_{Q_{\ff}}$-modules.  More precisely, we prove the following theorem.  As in Corollary \ref{faithful_cor}, we define $T_{\ff}$ to be the fiber product of $T$ with the torus rescaling the framing arrows, while we define $T_{\ff}^0=T_{\ff}/\BoC^*$ to be the quotient by the diagonal torus scaling all of the framing arrows with the same weight, and thus acting trivially on all Nakajima quiver varieties.  We refer the reader to \S \ref{section: Stable envelopes for quiver varieties} for a full discussion of these tori and their actions.
\begin{thmx}[Theorem \ref{NakBPS_prop}]
\label{NaSE_vs_BPS}
Let $Q$ be a quiver, and let $\dd,\ff\in\BoN^{Q_0}$ be dimension vectors with $\ff$ nonzero.  Then the nonabelian stable envelope
\[
\Psi_{\ff,\dd}\colon\HO^*\!\left(\Nak^{T_{\ff}}_Q(\ff,\dd),\BoQ\right)\rightarrow \HO\!\CoHA_{\Pi_{Q_{\ff}},(\dd,1)}^{T_{\ff}^0}
\]
is injective, and the image of its restriction to $\HO^*\!\left(\Nak^{T^0_{\ff}}_Q(\ff,\dd),\BoQ\right)$ is the degree $(\dd,1)$ piece of the BPS Lie algebra for the category of $\Pi_{Q_{\ff}}$-modules: 
\[
\Psi_{\ff,\dd}\!\left(\HO^*\!\left(\Nak^{T_{\ff}^0}_Q(\ff,\dd),\BoQ\right)\right)=\Fn^{{T_{\ff}^0},+}_{\Pi_{Q_{\ff}},(\dd,1)}\subset \HO\!\CoHA_{\Pi_{Q_{\ff}},(\dd,1)}^{T_{\ff}^0}.
\]
Specialising to $T$-equivariant cohomology, we deduce that the nonabelian stable envelope
\[
\Psi'_{\ff,\dd}\colon\HO^*\!\left(\Nak^{T}_Q(\ff,\dd),\BoQ\right)\rightarrow \HO\!\CoHA_{\Pi_{Q_{\ff}},(\dd,1)}^{T} 
\]
is an isomorphism onto its image, which is precisely the BPS cohomology $\Fn^{{T},+}_{\Pi_{Q_{\ff}},(\dd,1)}\subset \HO\!\CoHA_{\Pi_{Q_{\ff}},(\dd,1)}^{T}$.
\end{thmx}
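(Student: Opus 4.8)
The plan is to first make the cohomological nonabelian stable envelope $\Psi_{\ff,\dd}$ concrete, and then extract from it exactly two features: a ``leading term'' normalisation that forces injectivity, and a support bound which, combined with the structure theory of the cohomological Hall algebra, pins down the image. I would begin by recasting the nonabelian stable envelope of \cite{aganagic2017quasimap, Okounkov_2021} in the cohomological setting. Via dimensional reduction, the target $\HO\!\CoHA^{T^0_{\ff}}_{\Pi_{Q_{\ff}},(\dd,1)}$ is identified, up to cohomological shift and Tate twist, with the ($T^0_{\ff}$-equivariant) Borel--Moore homology of the stack $\FX=\FM_{(\dd,1)}(\Pi_{Q_{\ff}})$ of $\Pi_{Q_{\ff}}$-modules; the Nakajima variety $\Nak_Q(\ff,\dd)$ sits inside $\FX$ as (a $\Gm$-gerbe over) the open substack of $\zeta$-stable modules; and $\Psi_{\ff,\dd}$ is the ``wrong way'' extension of a class from this open substack, characterised by cohomological analogues of the Maulik--Okounkov support and normalisation axioms with respect to the cocharacter scaling the framing. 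Once this is set up it is enough to treat the $T^0_{\ff}$-equivariant statement: the assertion for $\Psi'_{\ff,\dd}$ follows by base change along the torus map relating $T$ and $T^0_{\ff}$, under which the decomposition below, all images, and the subspace $\Fn^{+}_{\Pi_{Q_{\ff}},(\dd,1)}$ are preserved, because the cohomology groups in play are free over $\HO_{T^0_{\ff}}$ (the relevant moduli stacks being equivariantly formal).

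Injectivity of $\Psi_{\ff,\dd}$ is the cohomological shadow of the normalisation axiom. Composing with restriction $r$ to the open stable substack sends $\alpha$ to $e\cdot\alpha$ for a fixed class $e$ which, after passage to the bigger torus $T_{\ff}$, is a nonzero product of characters --- an equivariant Euler class of the directions transverse to the stable locus, nonvanishing precisely because the rank-one quotient $T_{\ff}/T^0_{\ff}=\Gm$ scales those directions with nonzero weight. Since $\HO^*_{T_{\ff}}(\Nak_Q(\ff,\dd))$ is a free $\HO_{T_{\ff}}$-module (Nakajima varieties are equivariantly formal and have no odd cohomology) and $\HO_{T_{\ff}}$ is a domain, multiplication by $e$ is injective, hence so is $\Psi_{\ff,\dd}$. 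This is the one place where the enlarged torus $T_{\ff}$ is genuinely needed.

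To identify the image I would combine the cohomological integrality theorem for the Calabi--Yau category of $\Pi_{Q_{\ff}}$-modules \cite{preproj3, DHSM22} with the support axiom. Because the framing coordinate of $(\dd,1)$ equals $1$, every nontrivial decomposition into nonzero dimension vectors has exactly one summand of framing coordinate $1$, and cohomological integrality then gives a decomposition
\[
\HO\!\CoHA^{T^0_{\ff}}_{\Pi_{Q_{\ff}},(\dd,1)}\;\cong\;\Fn^{+}_{\Pi_{Q_{\ff}},(\dd,1)}\ \oplus\ \bigoplus_{\substack{\dd_0+\dd'=\dd\\ \dd'\neq 0}}\Fn^{+}_{\Pi_{Q_{\ff}},(\dd_0,1)}\otimes\Sym\big(\Fn^{+}_{\Pi_Q}\otimes\HO^*(\B\Gm)\big)_{\dd'},
\]
whose second summand is the span of cohomological Hall products by classes of strictly smaller framing-$0$ degree and is supported, under the good moduli space morphism $\pi\colon\FX\to\dMst$, over the deeper strata of decomposable modules. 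The support axiom bounds $\supp\Psi_{\ff,\dd}(\alpha)$ by the closure of the image of the stable locus, where $\pi$ is an isomorphism onto a smooth open; together with the normalisation above this forces $\Psi_{\ff,\dd}(\alpha)$ into the first summand, giving $\Psi_{\ff,\dd}\big(\HO^*(\Nak^{T^0_{\ff}}_Q(\ff,\dd))\big)\subseteq\Fn^{+}_{\Pi_{Q_{\ff}},(\dd,1)}$. To upgrade this to an equality I would invoke the representation theory of BPS Lie algebras of Calabi--Yau categories from \cite{DHSM23}: the framing-degree-one part $\Fn^{+}_{\Pi_{Q_{\ff}},(\dd,1)}$ of the framed BPS Lie algebra is, as a cohomologically graded $\HO_{T^0_{\ff}}$-module, already known to be isomorphic to $\HO^*(\Nak_Q(\ff,\dd),\BoQ)\otimes\HO_{T^0_{\ff}}$. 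An injection of free $\HO_{T^0_{\ff}}$-modules of the same finite rank is an isomorphism, so $\Psi_{\ff,\dd}$ is an isomorphism onto $\Fn^{+}_{\Pi_{Q_{\ff}},(\dd,1)}$; base change as in the first paragraph then yields the statement for $\Psi'_{\ff,\dd}$.

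The crux is the image computation: it reconciles the nonabelian stable envelope --- an object produced from ``$R$-matrix''/correspondence data and pinned down only by its support and normalisation --- with the subspace $\Fn^{+}_{\Pi_{Q_{\ff}},(\dd,1)}$, which is cut out intrinsically inside $\HO\!\CoHA^{T^0_{\ff}}_{\Pi_{Q_{\ff}},(\dd,1)}$ by the perverse filtration relative to $\pi$. Establishing that the support axiom is strong enough to forbid $\Psi_{\ff,\dd}(\alpha)$ from acquiring any component among the lower cohomological Hall products --- i.e.\ that the stable-envelope normalisation selects precisely the ``primitive'' part --- is where the genuine work lies, and where one must use the geometry of Nakajima quiver varieties (hyperbolic localisation, the structure of the $\zeta$-unstable locus, purity) rather than purely formal properties of cohomological Hall algebras.
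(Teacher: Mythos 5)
Your overall strategy --- injectivity from a normalisation axiom, then match the image with $\Fn^{T_\ff^0,+}_{\Pi_{Q_\ff},(\dd,1)}$ and close by a dimension count against the known isomorphism $\Fn^{T_\ff^0,+}_{\Pi_{Q_\ff},(\dd,1)}\cong\HO^*(\Nak^{T^0_\ff}_Q(\ff,\dd),\BoQ^{\vir})$ --- is the right shape, and the dimension-count step is exactly what the paper uses (Corollary \ref{Nak_BPS_comp}).  The injectivity argument via the Euler class restriction is a viable variant; the paper instead uses the specialisation description of $[\NS]$ to show directly that restricting to the open stable locus inside $\FM_{(\dd,1)}(\Pi_{Q_\ff})$ is the \emph{identity} (Corollary \ref{corollary normalization nonabelian stable envelope}), which is cleaner but not essentially different.

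The real gap is in the image computation.  You want to show $\Psi_{\ff,\dd}\bigl(\HO^*(\Nak^{T^0_\ff}_Q(\ff,\dd))\bigr)\subseteq\Fn^{T^0_\ff,+}_{\Pi_{Q_\ff},(\dd,1)}$, and you propose to do so by observing that Hall products of strictly lower pieces are supported (over the good moduli space) on the deeper strata of decomposable modules.  That observation is true, but it cannot select $\Fn^+$ inside $\FP^0\HO\!\CoHA^{T^0_\ff}_{\Pi_{Q_\ff},(\dd,1)}$.  The subspace $\Fn^+_{(\dd,1)}$ itself contains elements supported on those same deeper strata --- e.g.\ any bracket $[e_\infty,\gamma]$ with $\gamma\in\Fg^{T^0_\ff}_{\Pi_Q,\dd}$ lives over the locus $\CM_{(\dd,0)}(\Pi_{Q_\ff})\oplus\CM_{(\ul 0,1)}(\Pi_{Q_\ff})$, which is a proper closed substratum --- and the Hall products $\alpha\ast\beta$ (as opposed to commutators) that you want to exclude also live in $\FP^0$.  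Support and the normalisation condition together give at best $\Image(\Psi_{\ff,\dd})\subseteq\FP^0$ (this is Corollary \ref{NAHTPF} in the paper, obtained by realising the nonabelian stable envelope as a morphism of complexes over the good moduli space), not $\Image\subseteq\Fn^+$.  Distinguishing $\Fn^+$ from the rest of $\FP^0\cong\UEA_\boxdot(\Fn^{T^0_\ff,+}_{\CG(Q_\ff)})$ is a primitivity statement relative to the localised coproduct, which cannot be read off from the geometry of supports alone.

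The paper instead proves the opposite inclusion, $\Fn^{T^0_\ff,+}_{\Pi_{Q_\ff},(\dd,1)}\subseteq\Image(\Psi_{\ff,\dd})$, by induction on $\dd$ (Lemma \ref{half_comp_thm}).  The backbone of that induction is the compatibility of nonabelian stable envelopes with the CoHA product and localised coproduct (Proposition \ref{mult_compat}, Corollary \ref{prod_compat_cor}, Corollary \ref{coprod_compat_c}), which translates the operators $\psi(\overline\alpha)$ coming from the R-matrix expansion into residues of the CoHA coproduct (Lemma \ref{cor_pack}), plus a delicate sign analysis in the GKM algebra (Lemma \ref{weak_F1_lem}) to pin down the bracket images against the two Hall-product summands of $\CoHA^{T^0_\ff,0}_{\Pi_{Q_\ff},(\dd,1)}$ with the relevant support.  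Your proposal has no substitute for this machinery, and you candidly flag the lacuna in your last paragraph; that lacuna is precisely the content of the proof.
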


Since the stability condition that forms part of the definition of the Nakajima quiver varieties is generic, there is a canonical identification between $\HO^*\!\left(\Nak^{T_{\ff}}_Q(\ff,\dd),\BoQ\right)$ and the BPS cohomology (see \S \ref{BPS_coh_sec} for details).  With Corollary \ref{ThmC_cor} we strengthen Theorem \ref{NaSE_vs_BPS} by showing that under this identification, the nonabelian stable envelope $\Psi_{\ff,\dd}$ is equal to the inclusion morphism $\Fn^{{T_{\ff}^0},+}_{\Pi_{Q_{\ff}},(\dd,1)}\hookrightarrow \HO\!\CoHA_{\Pi_{Q_{\ff}},(\dd,1)}^{T_{\ff}^0}$.

Restricting attention to framing dimension vectors $\ff$ of the form $\delta_i=(0,\ldots,0,1,0,\ldots,0)$ for $i\in Q_0$ there is no difference between $T$ and $T^0_{\ff}$.  By construction, the $T$-equivariant cohomology of Nakajima quiver varieties for these dimension vectors form modules for the Lie algebra $\fg^{\MO,T}_{Q}$, while the image $\bigoplus_{\dd\in\BoN^{Q_0}}\Fn^{T,+}_{\Pi_{Q_{\ff}},(\dd,1)}$ forms a natural module for the BPS Lie algebra $\bigoplus_{\dd\in\BoN^{Q_0}}\Fg^{T}_{\Pi_{Q},\dd}\cong \bigoplus_{\dd\in\BoN^{Q_0}}\Fg^{T}_{\Pi_{Q_{\ff}},(\dd,0)}$.  Theorem \ref{NaSE_vs_BPS} thus states that nonabelian stable envelopes provide the means to intertwine the representation theory of the Maulik--Okounkov Lie algebra and the BPS Lie algebra, finally allowing us to directly compare these Lie algebras.

\subsection{The structure of the proofs of Theorems \ref{OC_thm}, \ref{main_thm} and \ref{NaSE_vs_BPS}}
\label{subsec: strategy for thm C}
In the paper, we first prove Theorem \ref{NaSE_vs_BPS}, then use it to prove Theorem \ref{main_thm}.  Theorem \ref{OC_thm} then follows from known facts about the BPS Lie algebra $\fg_{\tilde{Q},\tilde{W}}$ (Propositions \ref{BPS_char_function} and \ref{Triv_BPS_ext} along with Definition \ref{BPSLAdef}).

So we start with Theorem \ref{NaSE_vs_BPS}.  Since it is known that the graded dimensions of the source and target are the same, it is sufficient to prove injectivity and surjectivity of $\Psi_{\ff,\bullet}$.  Injectivity follows from our recasting (in \S \ref{more_SE_sec}) of the nonabelian stable envelope as a correspondence given by a limit of diagonal cycles --- see Corollary \ref{corollary normalization nonabelian stable envelope} for the precise statement.  

The first idea of the proof of surjectivity is to lift the stable envelope map to a morphism of complexes of perverse sheaves on the stack-theoretic $T_{\ff}$-quotient of the affinization of $\Nak_Q(\ff,\dd)$, meaning that we find a morphism of such complexes that recovers the nonabelian stable envelope after passing to derived global sections.  Our goal then is to show that the image of this lifted map is exactly the perverse sheaf lifting $\Fn^{{T},+}_{\Pi_{Q_{\ff}},(\dd,1)}$.  

This is proved in two steps.  Observe that $\Fn^{{T},+}_{\Pi_{Q_{\ff}},(\bullet,1)}\coloneqq \bigoplus_{\dd\in\BoN^{Q_0}} \Fn^{{T},+}_{\Pi_{Q_{\ff}},(\dd,1)}$ may be considered as a module over $\Fn^{{T},+}_{\Pi_Q}=\bigoplus_{\dd\in\BoN^{Q_0}} \Fn^{{T},+}_{\Pi_{Q},\dd}$.
\begin{itemize}
\item{Step 1 (generators):} We show that the image of $\Psi_{\ff,\bullet}$ contains the lowest weight generators of $\Fn^{{T},+}_{\Pi_{Q_{\ff}},(\bullet,1)}$.  This follows from their identification in terms of intersection complexes (see \S \ref{GKM_thm_sec}), and becomes a straightforward consequence of injectivity of $\Psi_{\ff,\bullet}$ (Proposition \ref{ChevG_prop}).
\item{Step 2:} We show that the image of $\Psi_{\ff,\bullet}$ is closed under the $\Fn^{{T},+}_{\Pi_Q}$ action.  This uses an inductive argument, along with compatability between the nonabelian stable envelope, and its (localised) inverse with the product, and coproduct (respectively) in an extended cohomological Hall algebra $\HO\!\CoHA_{\Pi_{Q_{\fr}}}^{T_{\bullet}}$.  
\end{itemize}
This extended cohomological Hall algebra is introduced in \S \ref{SE_to_COHA}, where it is shown in Proposition \ref{mult_compat} that the nonabelian stable envelope intertwines the stable envelope with the multiplication $\vmult$ in $\HO\!\CoHA_{\Pi_{Q_{\fr}}}^{T_{\bullet}}$.  The situation with the coproduct is a little more subtle, and it is generally \textit{not} the case that the diagram obtained from \eqref{eq:diagram compatibility product} by reversing the horizontal arrows and replacing $\Stab_+$ by its (localised) inverse, and the product with the coproduct $\Delta$, commutes.  Lemma \ref{cor_pack}, informally, says that the diagram obtained this way \textit{does} commute up to low order in certain variables.  This gives us the tool required to compare Maulik--Okounkov R matrices with the composition $\Delta\circ \vmult$, at least at the level of \textit{classical} r matrix.  The rest of the proof of Step 2 leverages this compatibility.

Since $\Fn^{{T},+}_{\Pi_{Q_{\ff}},(\bullet,1)}$ can be viewed as a module over the (doubled) Lie algebra $\Fn^{{T}}_{\Pi_Q}$, and by definition, the vector space $\NakMod^{T_{\bullet}}_Q$ forms a module for $\fg_{Q}^{\MO,T}$, after identifying these vector spaces using Theorem \ref{NaSE_vs_BPS} we may consider the two Lie algebras as Lie subalgebras of the same Lie algebra of endomorphisms of a fixed vector space.  The proof of Theorem \ref{main_thm} in \S \ref{main_thm_sec} uses the same analysis as the proof of Theorem \ref{NaSE_vs_BPS} to show that they define the \textit{same} Lie subalgebra.

\subsection{Critical stable envelopes}
The set of critical cohomological Hall algebras that we show are isomorphic to positive halves of Maulik--Okounkov Yangians is very special; in fact for \textit{any} quiver $Q$ with potential $W\in \BoC Q/[\BoC Q,\BoC Q]$, preserved by the action of some torus $T$ there is an equivariant critical cohomological Hall algebra $\HO\!\CoHA^T_{Q,W}$ (see \S \ref{gen_CoHA_sec}).  Moreover, as long as $Q$ is symmetric, the CoHA $\HO\!\CoHA^T_{Q,W}$ satisfies a PBW theorem, like that satisfied for the Maulik--Okounkov Yangian.  More generally we consider instead the CoHA $\HO\!\CoHA^{T,\zeta}_{Q,W,\theta}$ associated to some suitably generic stability condition $\zeta$ and slope $\theta$, and then we may drop the condition that $Q$ is symmetric and we still have the same kind of PBW theorem (Theorem \ref{PBW_thm}).  This suggests the question of whether there is a ``critical'' version of the Maulik--Okounkov stable envelope construction; one of the main consequences of Theorem \ref{NaSE_vs_BPS} is that the answer is yes, and so this paper should be seen as motivating and justifying a general construction of ``critical stable envelopes'' whenever we have a critical cohomological Hall algebra at hand.

For quivers with potential, the construction is as follows.  Let $Q$ be a quiver, let  $W\in\BoC Q/[\BoC Q,\BoC Q]$ be a potential, and pick a vertex $v\in Q_0$ and a sufficiently generic stability condition $\zeta\in\BoQ^{Q_0}$.  For a given positive integer $r$, let us assume that we have a function $\wt\colon Q_1\rightarrow \BoZ^r$ such that $\wt(a)=\pm\delta_m=(0,\ldots,0,\pm 1,0\ldots,0)$ if $a$ is an arrow which is not a loop and has source or target equal to $v$, and $\wt(a)=0$ otherwise. Let us also assume that $W$ has weight $0$ with respect to $\wt$.  We define the ``abelianization'' $Q^{\Abel}$ of $Q$ with respect to this data to be the quiver with vertex set $(Q_0\setminus\{v\}) \cup\{\infty_1,\ldots,\infty_r\}$, and with arrows to and from the new vertices determined by the weighting $\wt$: see \S \ref{extended_quiver_sec} for the exact construction, and the recipe for producing a potential $W^{\Abel}$ for $Q^{\Abel}$ out of a potential $W$ for $Q$.  We furthermore assume that we are given a decomposition of $r$, i.e. $r=r_1+\ldots+r_l$ where $1\leq r_m$ for every $m$, and a decomposition, of the same length, of some fixed dimension vector $\dd\in\BoN^{Q_0\setminus \{v\}}$, i.e. an expression $\dd=\dd^{(1)}+\ldots+\dd^{(l)}$ with $\dd^{(m)}\in\BoN^{Q_0\setminus \{v\}}$.  From this starting data, we define the quiver $Q^{(m)}$ to be the quiver obtained from $Q$ by deleting all of the arrows to and from $v$ that are not loops and do not satisfy $\wt(a)=\pm\delta_m$.  Representations of this quiver are acted on by the torus $A^{(m)}=(\BoC^*)^{r_m}$ acting on the arrows with weights determined by $\wt$; this is the restriction to $Q^{(m)}$ of the $A=(\BoC^*)^r$ action on $Q$, acting with weights also determined by $\wt$.  We define the dimension vector $\dd^{[m]}$ for $Q^{\Abel}$ by extending $\dd^{(m)}$ by setting the value at $\infty_{m'}$ to be $1$ if $r^{(1)}+\ldots +r^{(m-1)}<m'\leq r^{(1)}+\ldots +r^{(m)}$ and $0$ otherwise.  We denote by $(\dd,\underline{1})$ the dimension vector for $Q^{\Abel}$ obtained by extending $\dd$ by setting the value at $\infty_m$ to be $1$ for all $m$.  Since we choose $\zeta$ to be generic, for each $m$ we may identify $\HO^*(\CM^{A^{(m)},\zeta\sst}_{(\dd^{(m)},1)}(Q^{(m)}),\phip{\Tr(W)}\BoQ^{\vir})$ with BPS cohomology for the quiver $Q^{\Abel}$ with potential $W^{\Abel}$ and dimension vector $\dd^{[m]}$.  We have denoted by $\phip{\Tr(W)}\BoQ^{\vir}$ the perverse sheaf of vanishing cycles for the function $\Tr(W)$.  For each $m$ there is an inclusion
\[
\jmath\colon \HO^*(\CM^{A^{(m)},\zeta\sst}_{(\dd^{(m)},1)}(Q^{(m)}),\phip{\Tr(W)}\BoQ^{\vir})\hookrightarrow \HO^*(\FM_{\dd^{[m]}}(Q^{\Abel}),\phip{\Tr(W)}\BoQ^{\vir})
\]
which is defined via the inclusion of BPS cohomology, along with the action of tautological classes: see \S \ref{BPS_coh_sec} for details.
Then we define the \textit{critical stable envelope} associated to this data as the morphism $\CritStab=\mathrm{res}\circ\vmult\circ\jmath^{\otimes l}$ making the following diagram commute:
\[
    \begin{tikzcd}
    \bigotimes_{m=1}^l\HO^*(\CM^{A^{(m)},\zeta\sst}_{(\dd^{(m)},1)}(Q^{(m)}),\phip{\Tr(W)}\BoQ^{\vir})\arrow[d,"\jmath^{\otimes l}"]\arrow[r,"\CritStab"]&\HO^*(\CM^{A,\zeta\sst}_{(\dd,1)}(Q),\phip{\Tr(W)}\BoQ^{\vir})\\
    \bigotimes_{m=1}^l\HO^*(\FM_{\dd^{[m]}}(Q^{\Abel}),\phip{\Tr(W^{\Abel})}\BoQ^{\vir})\arrow[r,"\vmult"]&\HO^*(\FM_{(\dd,\underline{1})}(Q^{\Abel}),\phip{\Tr(W^{\Abel})}\BoQ^{\vir}).\arrow[u,"\mathrm{res}"]
    \end{tikzcd}
\]
The morphism $\vmult$ is the CoHA multiplication recalled in \S \ref{gen_CoHA_sec}, while $\mathrm{res}$ is the restriction map in vanishing cycle cohomology.

Choosing $\tilde{Q}_{\ff}$ to be the tripled quiver obtained from some framed quiver, and $\tilde{W}_{\ff}$ to be the canonical cubic potential on this quiver (see \S \ref{quiver_sec} for exact definitions), we recover the Maulik--Okounkov stable envelope from this construction via dimensional reduction; this is the bridge that we build and then use in this paper in order to compare the Maulik--Okounkov Yangian with certain critical CoHAsin this paper.  More generally, though, this recipe gives a definition of the critical stable envelope for which we can take much more general choices of $Q$ and $W$, since it is built solely from the inclusion of BPS cohomology and the CoHA multiplication in the Kontsevich--Soibelman critical cohomological Hall algebra.
\subsection{Conventions}
\label{conventions_sec}
Unless otherwise specified, all schemes (and, more generally, algebraic stacks) are defined over the complex numbers $\BoC$. Accordingly, $\dim(X)$, $\codim_Y(X)$ etc. are the complex dimensions.

For a fixed torus $T$, let $X$ be a $T$-equivariant variety, or let $\FM$ be a stack presented as a global quotient $Y/(G\times T)$ for some algebraic group $G$, and some variety $Y$.  Then we define the shifted perverse t-structure on $\Dub_c(X/T)$ and $\Dub_c(\FM)$ by setting ${}^{\fp'}\!\vtau^{\leq i}\coloneqq {}^{\fp}\vtau^{\leq i-\dim(T)}$ and ${}^{\fp'}\!\vtau^{\geq i}\coloneqq {}^{\fp}\vtau^{\geq i-\dim(T)}$.  Objects in the heart of this t-structure are precisely those complexes $\CF$ such that $(X\rightarrow X/T)^*\CF$ is perverse, in the variety case, or $(Y/G\rightarrow Y/(G\times T))^*\CF$ is perverse in the stack case.  We denote the heart of this t-structure $\Perv'(X)$ in the variety case, $\Perv'(\FM)$ in the stack case.  We then take the shift of the Verdier duality functor $\BoD$ preserving this heart.

Given a space $X$, we denote by $\Dub^+(\Perv(X))$ the full subcategory of the unbounded derived category of constructible complexes on $X$ containing those objects $\CF$ such that for each connected component $X'\subset X$ the cohomology sheaves ${}^{\Fp}\CH^i(\CF)$ vanish for $i\ll 0$.

Given a vector bundle $V$ on a space $X$ we denote by $\Eu(V)\in\HO^{2\rk(V)}(X,\BoQ)$ the Euler class of $V$.

If the definition of some object $P^T$ depends on a choice of a complex torus $T$, for which it makes sense to pick $T=\{1\}$ the trivial torus, and we write $P$ (i.e. the torus is omitted from the notation) this means that we make the trivial choice $T=\{1\}$.

If the definition of some object $P_{\dd}$ depends on the choice of a dimension vector $\dd\in \BoN^{Q_0}$ for a quiver $Q$, and the dimension vector $\dd$ is omitted, or replaced with $\bullet$, then the direct sum over all $\dd\in\BoN^{Q_0}$ is intended.  If the definition of some object $P^{\zeta}$ depends on a stability condition $\zeta\in\BoQ^{Q_0}$ for a quiver $Q$, and the stability condition $\zeta$ is omitted, the degenerate stability condition $\zeta_{\mathrm{deg}}=(0,\ldots,0)$, for which all $\BoC Q$-modules are semistable, is intended.

Given an algebraic group $G$ we denote by $\B G$ the stack theoretic quotient $\pt/G$.  We write $\HO_G\coloneqq \HO(\B G,\BoQ)$.

If $\CL$ is a local system on a locally closed smooth irreducible substack $Z$ of a stack $X$, possibly defined as a quotient of our fixed torus $T$, we write $\ICS_{\overline{Z}}(\CL)$ for the intermediate extension of $\CL$ to $\overline{Z}$, considered as a complex on $X$ via the direct image along the closed embedding $\overline{Z}\hookrightarrow X$.  To be precise, regarding degree shifts, $\ICS_{\ol{Z}}(\CL)[\mathrm{reldim}_{\B T}(Z)]$ is the intermediate extension of the (equivariant) perverse sheaf $\CL[\mathrm{reldim}_{\B T}(Z)]$.  If $X$ is integral we denote by $\ICS(X)=\ICS_{X}(\BoQ_{X^{\mathrm{sm}}})[\mathrm{reldim}_{\B T}(X)]$ the intersection complex on $X$.  This is a simple perverse sheaf.

Given an algebra $A$ and an $A$-module $M$ we define
\[
\Tens_A(M)\coloneqq\bigoplus_{n\geq 0}M^{\otimes_A n}; \quad \quad\quad M^{\otimes_A n}\coloneqq \underbrace{M\otimes_A\cdots\otimes_A M}_{n\textrm{ times}}
\]
which we consider as a functor from $A$-modules to $A$-algebras in the standard way.

We set $\BoN\coloneqq \BoZ_{\geq 0}$.
\subsection{Acknowledgements}
BD's research is funded by a Royal Society University Research Fellowship. TMB is supported as a part of NCCR SwissMAP, a National Centre of Competence in Research, funded by the Swiss National Science Foundation (grant number 205607) and grant 200021\_196892 of the Swiss National Science Foundation. Special thanks are due to Davesh Maulik and Andrei Okounkov for years of inspiration and encouragement, and for producing the monumental \cite{MO19} in the first place.  Thanks are also due to Giovanni Felder for countless discussions about Yangians and their geometric representation theory.

\section{Quivers, algebras, representations}
\subsection{Quivers and dimension vectors}
A \textit{quiver} $Q=(Q_1,Q_0,s,t)$ is given by a set of arrows $Q_1$, a set of vertices $Q_0$, and two maps $s,t\colon Q_1\rightarrow Q_0$ taking an arrow to its source and target, respectively.  Given a quiver $Q$ we define $Q^{\opp}$ to be the quiver satisfying $Q^{\opp}_0=Q_0$, and for which there is an arrow $a^*$ for each $a\in Q_1$, with $s(a^*)=t(a)$ and $t(a^*)=s(a)$.  We say that a quiver $Q$ is \textit{symmetric} if for all $i,j\in Q_0$ the number of arrows $a\in Q_1$ with $s(a)=i$ and $t(a)=j$ is equal to the number of arrows with $s(a)=j$ and $t(a)=i$.  This is equivalent to the existence of an isomorphism $Q\cong Q^{\opp}$ acting as the identity on $Q_0$.

We define the set of \emph{dimension vectors} to be the monoid $\BoN^{Q_0}$. We give this monoid the natural partial order: $\dd>\dd'$ if $\dd\neq \dd'$ and for all $i\in Q_0$ there is an inequality $\dd_i\geq \dd'_i$.
The \emph{support} $\supp(\dd)$ of a dimension vector $\dd\in\BoN^{Q_0}$ is the set of $i\in Q_0$ such that $\dd_i\neq 0$.  Given a number $n\in\BoN$ we denote by $\underline{n}\in\BoN^{Q_0}$ the constant dimension vector satisfying $\underline{n}_i=n$ for all $i\in Q_0$.  

We define the Euler form
\[
\chi_Q\colon\BoN^{Q_0}\times\BoN^{Q_0}\rightarrow \BoZ;\quad\quad
(\dd,\ee)\mapsto\sum_{i\in Q_0} \dd_i\ee_i-\sum_{a\in Q_1}\dd_{s(a)}\ee_{t(a)}
\]
and we define $(-,-)_Q$ to be its symmetrisation: $(\dd',\dd'')_Q\coloneqq\chi_Q(\dd',\dd'')+\chi_Q(\dd'',\dd')$.  We denote by $\dd'\cdotsh\dd''\coloneqq\sum_{i\in Q_0}\dd'_i\dd''_i$ the usual dot product of dimension vectors.

An edge loop is an arrow $a$ with $s(a)=t(a)$.  We define the set of \emph{simple real roots} to be the set of dimension vectors $\delta_i\in\BoN^{Q_0}$ for $i$ a vertex not supporting any edge loops.  Here and elsewhere we denote by $\delta_i$ the dimension vector satisfying $(\delta_i)_j=1$ if $i=j$ and $0$ otherwise.  

We define the set of \textit{primitive positive roots}
\[
\Sigma_{Q}\coloneqq \left\{\dd\in \BoN^{Q_0}\lvert 2-(\dd,\dd)_Q>\sum_{r=1}^t (2-(\dd^{(r)},\dd^{(r)})_Q) \textrm{ for all }(\dd^{(1)},\ldots,\dd^{(t)})\in S_{\dd}\right\}
\]
where $S_{\dd}$ is the set of all decompositions $\dd=\dd^{(1)}+\ldots +\dd^{(t)}$ with $t>1$ and every $\dd^{(r)}\in \BoN^{Q_0}\setminus \{0\}$.  Equivalently (by \cite[Thm.1.2]{CB01} and \cite[Lem.5.1]{DHSM23}), $\Sigma_Q$ is the set of positive dimension vectors such that there exists a simple $\Pi_Q$-module, with $\Pi_Q$ defined as in \eqref{PA_def}.  We define the set of \textit{simple positive roots}
\[
\Phi_Q^+\coloneqq\Sigma_Q\cup \{l\dd\lvert l\in\BoZ_{\geq 1},\dd\in \Sigma_{Q}\textrm{ and }(\dd,\dd)_Q=0\}.
\]
We decompose $\Phi_Q^+=\Phi_Q^{+,\mathrm{real}}\cup \Phi_Q^{+,\mathrm{im}}$ into real and imaginary roots, where 
\[
\Phi_Q^{+,\mathrm{real}}\coloneqq \{\dd\in\Phi_Q^+\lvert (\dd,\dd)_Q=2\};\quad\quad\Phi_Q^{+,\mathrm{im}}\coloneqq \{\dd\in\Phi_Q^+\lvert (\dd,\dd)_Q<2\}.
\]
We furthermore decompose $\Phi_Q^{+,\mathrm{im}}=\Phi_Q^{+,\mathrm{iso}}\cup \Phi_Q^{+,\mathrm{hyp}}$ into isotropic and hyperbolic roots
\[
\Phi_Q^{+,\mathrm{iso}}\coloneqq\{\dd\in\Phi_Q^+\lvert (\dd,\dd)_Q=0\};\quad\quad\Phi_Q^{+,\mathrm{hyp}}\coloneqq\{\dd\in\Phi_Q^+\lvert (\dd,\dd)_Q<0\}.
\]
Given $\dd\in \Phi_Q^{+,\mathrm{iso}}$ there is a unique $\dd'\in \Sigma_Q$ such that $\dd=n\dd'$ with $n\geq 1$.

\subsection{Quiver algebras}
\label{quiver_sec}
If $K$ is a field, we denote by $KQ$ the free path algebra of $Q$ over $K$.  This algebra has a $K$-basis given by paths in $Q$, including ``lazy'' paths $\lazy_i$ of length zero, for each $i\in Q_0$, which start and end at $i$.  If $p,q$ are two paths in $Q$ we define
\[
p\cdot q\coloneqq \begin{cases} pq&\textrm{if }q\textrm{ ends where }p\textrm{ begins}\\
0&\textrm{otherwise.}
\end{cases}
\]
Here, $pq$ denotes the concatenation of $p$ and $q$, and we read paths from right to left (the same way we compose morphisms in a category).

Given a quiver $Q$ we first define the double $\overline{Q}$ by setting $\overline{Q}_0=Q_0$ and setting $\overline{Q}_1=Q_1\coprod Q^{\opp}_1$.  We define the preprojective algebra
\begin{equation}
\label{PA_def}
\Pi_Q\coloneqq\BoC\overline{Q}/\langle \sum_{a\in Q_1}[a,a^*]\rangle.
\end{equation}
A \textit{potential} for a quiver $Q$ is an element $W\in (\BoC Q)_{\cyc}\coloneqq \BoC Q/[\BoC Q,\BoC Q]_{\mathrm{vect}}$ where the subscript means that we take the quotient by the vector space spanned by commutators.  A potential is thus a $\BoC$-linear combination of cyclic paths in $Q$.  

For an arbitrary quiver $Q$, we form the \emph{tripled quiver} $\tilde{Q}$ by setting
\[
\tilde{Q}_0=Q_0;\quad\quad \tilde{Q}_1=\overline{Q}_1\coprod \{\omega_i\;\lvert \;i\in Q_0\}
\]
with $s(\omega_i)=t(\omega_i)=i$.  The tripled quiver $\tilde{Q}$ carries the canonical cubic potential which we denote
\begin{equation}
\label{CCP}
\tilde{W}\coloneqq \left(\sum_{a\in Q_1} [a,a^*]\right)\left(\sum_{i\in Q_0} \omega_i\right).
\end{equation}
\subsection{Extended algebras}
\label{extended_quiver_sec}

Given a quiver $Q$ and dimension vector $\ff$ we define the \emph{framed quiver} $Q_{\ff}$ by setting
\[
(Q_{\ff})_0=Q_0\coprod \{\infty\};\quad\quad (Q_{\ff})_1=Q_1\coprod \{r_{i,m}\;\lvert \; i\in Q_0,\;1\leq m\leq \ff_i\}.
\]
We set $s(r_{i,n})=\infty$ and $t(r_{i,n})=i$ for all $i\in Q_0$ and $n\in \BoN$.  Given $\dd\in\BoN^{Q_0}$ and $n\in \BoN$ we denote by $(\dd,n)\in\BoN^{Q_{\ff}}$ the dimension vector for $Q_{\ff}$ satisfying $(\dd,n)_{\infty}=n$ and $(\dd,n)_i=\dd_i$ for $i\in Q_0$.

Similarly, if $\ff',\ff''\in\BoN^{Q_0}$ are a pair of dimension vectors, we define 
\[
(Q_{\ff',\ff''})_0=Q_0\coprod \{\infty',\infty''\};\quad\quad (Q_{\ff',\ff''})_1=Q_1\coprod \{r'_{i,m},r''_{i,n}\;\lvert \; i\in Q_0,\;1\leq m\leq \ff'_i,\;1\leq n\leq \ff''_i\}.
\]
We set $s(r'_{i,n})=\infty'$, $s(r''_{i,n})=\infty''$, and $t(r'_{i,n})=i=t(r''_{i,n})$.  Given $\dd\in\BoN^{Q_0}$, and $n',n''\in \BoN$, we define $(\dd,n',n'')\in\BoN^{Q_{\ff',\ff''}}$ to be the dimension vector satisfying $(\dd,n',n'')_{\infty'}=n'$, $(\dd,n',n'')_{\infty''}=n''$ and $(\dd,n',n'')_i=\dd_i$ for $i\in Q_0$.  There are natural inclusions $Q_{\ff'}\hookrightarrow Q_{\ff',\ff''}$ and $Q_{\ff''}\hookrightarrow Q_{\ff',\ff''}$ sending the vertex labelled $\infty$ in the domain to the vertex labelled $\infty'$ or $\infty''$, respectively, in the target.

\begin{remark}
We write $\tilde{Q}_{\ff}$ and $\tilde{Q}_{\ff',\ff''}$ to denote the quivers obtained by framing and \textit{then} tripling, as opposed to the more logical, but also more cumbersome $\widetilde{Q_{\ff}}$ etc.
\end{remark}

We next introduce a kind of partial abelianization of quivers with potential.  Say we have fixed a quiver $Q$ with potential $W$, a vertex $v\in Q_0$ and a non-negative integer $r\geq 1$. Let $L=\{\omega_1,\ldots,\omega_p\}$ be the set of loops based at $v$.  We assume that we are given a weighting function $\wt\colon Q_1\rightarrow \BoZ^r$ such that $\wt(a)=0$ if $a\in L$ or $a$ does not either start or end at $v$, and otherwise $\wt(a)=\pm\delta_m=(0,\ldots,0,\pm 1,0\ldots,0)$ for some $1\leq m\leq r$.  Then we define $Q^{\Abel}$ by setting $Q^{\Abel}_0=(Q_0\setminus \{v\})\cup\{\infty_1,\ldots,\infty_r\}$ and $Q^{\Abel}_1=(Q_1\setminus L )\cup \{\omega_{p',m}\;\lvert\; 1\leq p'\leq p,\; 1\leq m\leq r\}$. 
We define source and target functions $s',t'\colon Q^{\Abel}_1\rightarrow Q^{\Abel}_0$ by
\[
s'(a)=\begin{cases} s(a) &\textrm{if }a\in Q_1\setminus L\textrm{ and }s(a)\neq v\\
\infty_m &\textrm{if }s(a)=v \textrm{ and }\wt(a)=\pm \delta_m\\
\infty_m&\textrm{ if }a=\omega_{p',m}\end{cases};\quad\quad  t'(a)=\begin{cases} t(a) &\textrm{if }a\in Q_1\setminus L\textrm{ and }t(a)\neq v\\
\infty_m &\textrm{if }t(a)=v \textrm{ and }\wt(a)=\pm \delta_m\\
\infty_m&\textrm{ if }a=\omega_{p',m}.\end{cases}
\]
We define the potential $W^{\Abel}$ by replacing each instance of $\omega_{p'}$ with $\sum_{m=1}^r \omega_{p',m}$.  The new $W$ is a linear combination of cyclic paths in $Q^{\Abel}$, which we denote $W^{\Abel}$.

\begin{remark}
\label{fr_from_ab}
Fix a decomposition $\ff=\ff'+\ff''$. Then $Q_{\ff',\ff''}$ is obtained as a partial abelianization of $Q_{\ff}$ in the obvious way: we consider the weighting satisfying $\wt(r_{i,m})=(1,0)$ if $m\leq \ff'_i$ and $\wt(r_{i,m})=(0,1)$ if $m>\ff'_i$.
\end{remark}

\subsection{Quiver representations and Kac polynomials}
Let $M$ be a $KQ$-module.  The \emph{dimension vector} of $M$ is defined to be the $Q_0$-tuple $\dim_Q(M)\coloneqq (\dim_K(\lazy_i\cdot M))_{i\in Q_0}\in\BoN^{Q_0}$.  A left $KQ$-module can be identified with a $Q$-representation over the field $K$ in the natural way: we assign the vector spaces $\lazy_i\cdotsh M$ to the vertices of $Q$ and the linear maps
\[
M(a)\colon \lazy_{s(a)}\cdotsh M\rightarrow \lazy_{t(a)}\cdotsh M;\quad\quad
v\mapsto a\cdot v
\]
to the arrows.

For $q=p^n$ a prime power, let $\BoF_q$ denote the field of order $q$, and let $\ol{\BoF}_q$ be its algebraic closure.  If $M$ is a $\BoF_q Q$-module, we say that it is \emph{absolutely indecomposable} if $M\otimes_{\BoF_q}\overline{\BoF}_q$ is indecomposable as a $\overline{\BoF}_qQ$-module.  Given a quiver $Q$, a dimension vector $\dd\in\BoN^{Q_0}$, and a prime power $q$ Kac \cite{Kac83} defined the numbers
\[
\kac_{Q,\dd}(q)\coloneqq\#\left\{\begin{array}{c}
\textrm{isomorphism classes of absolutely indecomposable }\\
\dd\textrm{-dimensional }\BoF_q Q\textrm{-modules}
\end{array}\right\}.
\]
Furthermore, Kac proved that the assignment $q\mapsto\kac_{Q,\dd}(q)$ is a polynomial function with integer coefficients, and conjectured that in fact $\kac_{Q,\dd}(q)\in\BoN[q]$.  This conjecture was proved in \cite{HLRV13} by identifying Kac polynomials with Poincar\'e polynomials of sign-isotrivial summands (with respect to a natural Weyl group action) of the cohomology of certain Nakajima quiver varieties.

Given a quiver $Q$, a \emph{stability condition} is a tuple $\zeta\in\BoQ^{Q_0}$.  The \emph{slope} of a nonzero $K Q$-module $M$ is defined to be the ratio 
\[
\mu(M)\coloneqq \mu(\dim_Q(M))\coloneqq \dim_Q(M)\cdot \zeta/\dim_K(M).
\]
Given a stability condition $\zeta\in \BoQ^{Q_0}$ and a slope $\theta\in\BoQ$ we define
\begin{align*}
\Lambda_{\theta}^{\zeta,+}\coloneqq &\{0\}\cup \{\dd\in\BoN^{Q_0}\setminus \{0\}\;|\: \mu(\dd)=\theta\}\\
\Lambda_{\theta}^{\zeta}\coloneqq &\{\dd-\ee\;\lvert\; \dd,\ee\in\Lambda_{\theta}^{\zeta,+} \}.
\end{align*}
Given $\theta\in\BoQ$, we say that a stability condition $\zeta\in\BoQ^{Q_0}$ is $\theta$\emph{-generic} if for all $\dd',\dd''\in \Lambda_{\theta}^{\zeta}$ there is equality $\chi_Q(\dd',\dd'')=\chi_Q(\dd'',\dd')$. We say that $\zeta$ is generic for the dimension vector $\dd\in\BoN^{Q_0}$ if for all $\dd'<\dd$ we have $\mu(\dd')\neq \mu(\dd)$.

A $K Q$-module $M$ is $\zeta$-\emph{semistable} if, for all proper nonzero submodules $M'\subset M$, the weak inequality $\mu(M')\leq \mu(M)$ is satisfied, and $M$ is $\zeta$-\textit{stable} if the strong inequality $\mu(M')< \mu(M)$ is satisfied.  The full subcategory of $\zeta$-semistable $K Q$-modules with dimension vector in $\Lambda_{\theta}^{\zeta}$ is a Serre subcategory of the category of $K Q$-modules, with simple objects given by $\zeta$-stable modules of slope $\theta$.

\subsection{Sign twists}
\label{tau_twist_sec}
Let $Q$ be a quiver, let $\theta\in\BoQ$ be a slope, and fix a $\theta$-generic stability condition $\zeta\in\BoQ^{Q_0}$.  We define the $\BoZ/2\cdotsh \BoZ$-bilinear form
\begin{align*}
\tau\colon &\Lambda^{\zeta}_{\theta}\times\Lambda^{\zeta}_{\theta}\rightarrow \BoZ/2\cdotsh\BoZ\\
&(\dd,\dd')\mapsto \chi_{Q}(\dd,\dd)\chi_Q(\dd',\dd')+\chi_Q(\dd,\dd')+ 2\cdotsh\BoZ.
\end{align*}
Let $\mathscr{C}$ be a symmetric monoidal $\BoQ$-linear category, with symmetrising natural transformation $\sw\colon \CF\otimes\CG\rightarrow \CG\otimes\CF$.  We assume that $\mathscr{C}$ satisfies the property that for all objects $\CF\in\Ob(\mathscr{C})$ there is a canonical decomposition $\CF\cong \bigoplus_{\dd\in \Lambda_{\theta}^{\zeta}}\CF_{\dd}$, which is respected by the symmetric monoidal structure, in the sense that $(\CF'\otimes\CF'')_{\dd}=\bigoplus_{\dd',\dd''\in\Lambda_{\theta}^{\zeta}\;\lvert\; \dd'+\dd''=\dd}\CF_{\dd'}\otimes \CF_{\dd''}$, and $\sw\lvert_{\CF_{\dd'}\otimes\CF_{\dd''}}$ factors through a sum of isomorphisms $\sw_{\dd',\dd''}\colon \CF_{\dd'}\otimes\CF_{\dd''}\rightarrow \CF_{\dd''}\otimes\CF_{\dd'}$.  We denote by $\mathscr{C}_{\tau}$ the same symmetric monoidal category, but with the modified symmetrising natural transformation 
\[
\sw_{\tau,\dd',\dd''}=(-1)^{\tau(\dd',\dd'')}\sw_{\dd',\dd''}.
\]
By genericity of $\zeta$, we have $\sw_{\tau,\dd'',\dd'}\circ\sw_{\tau,\dd',\dd''}=\sw_{\dd'',\dd'}\circ\sw_{\dd',\dd''}=\id$.
\begin{example}
\label{gvs_examp}
Let $\Vect_{\Lambda^{\zeta,+}\oplus \BoZ}$ be the category of $\Lambda^{\zeta,+}_{\theta}\oplus \BoZ$-graded vector spaces.  For $V\in\Vect_{\Lambda_{\theta}^{\zeta,+}\oplus \BoZ}$ we write $V_{\dd}^k$ for the $(\dd,k)$-graded piece.  Then we have a natural decomposition $V=\bigoplus_{\dd\in\Lambda^{\zeta}_{\theta}}V_{\dd}$ where $V_{\dd}=\bigoplus_{k\in\BoZ}V_{\dd}^k$ for $\dd\in \Lambda^{\zeta,+}_{\theta}$ and $V_{\dd}=0$ otherwise.  We consider the extra $\BoZ$-grading as a cohomological grading, so that the Koszul sign rule applies in the symmetrising morphism for this category: if $\alpha,\alpha'$ are elements of $V,V'$ with bidegree $(\dd,k)$ and $(\dd',k')$ respectively, then $\sw(\alpha\otimes\alpha')=(-1)^{kk'}(\alpha'\otimes\alpha)$.  Let us furthermore assume that for each $\dd\in\BoN^{Q_0}$ the graded piece $V_{\dd}$ has vanishing odd or even cohomology, depending on whether $\chi_Q(\dd,\dd)$ is even or odd, respectively; this condition holds, for example, for the shuffle algebras $\HO\!\CoHA_Q^T$ considered in \S \ref{shuffle sec}.  In the $\tau$-twisted symmetric monoidal category $(\Vect_{\Lambda^{\zeta,+}\oplus \BoZ})_{\tau}$ we have 
\begin{equation}
\label{twisted_sym}
\sw_{\tau}(\alpha\otimes\alpha')=(-1)^{\chi_Q(\dd,\dd')}(\alpha'\otimes\alpha).
\end{equation}
\end{example}
These $\tau$-twists first appeared in \cite[\S 1]{KS2} and \cite{Efimov}.  With the swap morphism $\tau$-twisted this way, the shuffle algebra $\HO\!\CoHA^T_{Q}$ is a commutative algebra in $(\Vect_{\Lambda^{\zeta,+}\oplus \BoZ})_{\tau}$, as long as $Q$ is graded-symmetric; see \S \ref{shuffle sec} for definitions and details.

\subsection{Weightings and torus actions}
\label{wtngs_ta_sec}

Fix a lattice $N=\BoZ^{r}$, which we call a lattice of weights.  If a lattice of weights has been fixed, we set $T=\Hom_{\Grp}(N,\BoC^*)$.  If $\Ft$ is the Lie algebra of $T$, then there is a natural identification of complex vector spaces $N_{\BoC}\cong \Ft^*$.  Given $\mathbf{n}\in N $ we denote by $\ttt(\mathbf{n})\in\HO^2(\B T,\BoQ)\cong \BoQ[\Ft^*]$ the corresponding cohomology class.

An $N$-graded quiver is a finite quiver $Q$, along with a weighting of the edges, defined by a map of sets $\wt\colon Q_1\rightarrow N$.  Given a path $p=a_1\ldots a_m$ we define $\wt(p)=\sum_{n=1}^m\wt(a_n)$.    We say that a linear combination $\sum_{s\in S} \lambda_s p_s$ of paths, with each $\lambda_s\in \BoC$, is homogeneous if $\wt(p_s)$ does not depend on $s$.  Given a $N$-weighting $\wt$ of $Q$ we define a $N$-weighting $\wt^{\opp}$ of $Q^{\opp}$ by setting $\wt^{\opp}(a^*)=-\wt(a)$ for each $a\in Q_1$.  We say that $(Q,\wt)$ is \textit{graded-symmetric} if there is an isomorphism of graded quivers $(Q,\wt)\cong (Q^{\opp},\wt^{\opp})$ preserving the vertices.  For example this will hold if $Q$ is symmetric in the usual sense and the weighting function $\wt$ is trivial.  
\begin{example}
For a less trivial example, pick any weighting $\wt\colon Q_1\rightarrow N$ of a quiver $Q$ (not necessarily symmetric).  We extend this to a weighting of $\overline{Q}$ by setting $\wt(a^*)=\wt^{\opp}(a^*)=-\wt(a)$.  The quiver $\overline{Q}$ is graded-symmetric for this weighting.
\end{example}

If $t\in T$, and $M$ is a $\BoC Q$-module, we obtain a new $\BoC Q$-module $t\cdot M$ which has the same underlying vector space as $M$, and for which for $a\in Q_1$ we define $(t\cdot M)(a)=t(\wt(a))M(a)$.  If $W\in (\BoC Q)_{\mathrm{cyc}}$ is a potential, we say that $\wt$ is $W$\emph{-preserving} if $\wt(\overline{W})=0$ for $\overline{W}\in\BoC Q$ some homogeneous lift of $W$.  Equivalently we say that $W$ is $T$\emph{-invariant}.

For the purposes of embedding cohomological Hall algebras inside shuffle algebras, we will frequently make the following assumption.
\begin{assumption}
\label{weighting_assumption}
Let $Q$ be a quiver, and let $\tilde{Q}$ be the associated tripled quiver.  Let $\wt\colon\tilde{Q}_1\rightarrow \BoZ^r$ be a $\tilde{W}$-preserving weighting.  We assume that there is a morphism of lattices $\BoZ^r\rightarrow \BoZ^2$ such that, denoting by $\wt'\colon \tilde{Q}_1\rightarrow \BoZ^2$ the composite weighting, $\wt'$ is given by $\wt'(a)=(1,0)$ and $\wt'(a^*)=(0,1)$ for every $a\in Q_1$, and $\wt'(\omega_i)=(-1,-1)$ for every $i\in Q_0$.
\end{assumption}

\subsection{Generalised Kac--Moody algebras}
\label{GKM_sec}
Let $Q$ be a quiver, which as ever we assume to be finite.  Let $A$ be a cohomologically graded commutative algebra, and let $V$ be a free $\BoN^{Q_0}$-graded, cohomologically graded $A$-module, meaning that there is a decomposition
\[
V=\bigoplus_{\dd\in\BoN^{Q_0}}V_{\dd}
\]
where each $V_{\dd}$ is a cohomologically graded vector space, carrying an $A$-action that respects the cohomological degrees, and each $V_{\dd}$ is a free $A$-module. We define
\[
V^{\vee}\coloneqq \bigoplus_{\dd\in-\BoN^{Q_0}}(V^{\vee})_{\dd};\quad\quad(V^{\vee})_{\dd}\coloneqq\Hom_{A}(V_{-\dd},A).
\]
Since $\tilde{Q}_0=Q_0$ we may consider $V$ as an element in the category of $\BoN^{\tilde{Q}_0}$-graded modules, which we give the $\tau$-twisted symmetric monoidal structure defined in \S \ref{tau_twist_sec}, with respect to the quiver $\tilde{Q}$.  We assume that $V_{\dd}=0$ for $\dd\notin \Phi^+_Q$, each $V_{\dd}$ has finite rank as an $A$-module, and $V_{\delta_i}\cong A$ as graded $A$-modules if $\delta_i$ is a simple real root.  We next define a generalised Kac--Moody algebra, and generalised Kac--Moody Lie algebra, associated to this data.

Set $\fh_Q=\BoQ^{Q_0}$. Where there is no risk of ambiguity, we will simply write $\fh=\fh_Q$.   We embed $\fh_Q$ in $\fh_{Q,A}\coloneqq \fh_Q\otimes A$ via $h\mapsto h\otimes 1$.  We define $\fg_{A,V}$ to be the free $A$-linear Lie algebra generated by $\fh_{Q,A}\oplus V\oplus V^{\vee}$, subject to the following relations
\begin{enumerate}
\item
For all $h,h'\in \fh$ we set $[h,h']=0$;
\item
For $g\in (V\oplus V^{\vee})_{\dd}$ and $h\in \fh$ we set $[h,g]=(h,\dd)_Q \cdot g$;
\item
For $v\in V_{\dd}$ and $v'\in (V^{\vee})$ we set $[v,v']=v'(v)\cdot \dd\in\fh$;
\item
For $g\in V_{\dd}$ and $g'\in V_{\dd'}$, or $g\in (V^{\vee})_{-\dd}$ and $g'\in (V^{\vee})_{-\dd'}$ (where $\dd$ and $\dd'$ are not equal to the same real simple root) we set $[g,-]^{1-(\dd,\dd')_Q}(g')=0$ if either $(\dd,\dd')_Q=0$ or $\dd$ is a real simple root.
\end{enumerate}
If $A=\BoQ$ we omit it from the notation.  Since the relations are defined over $\BoQ$, if $V$ is a $\BoN^{Q_0}$-graded, cohomologically graded vector space, we find that $\fg_{A,V\otimes A}=\fg_{V}\otimes A$.

The relations (4) in the list above are called the \textit{Serre relations}.  They play a prominent role due to the following fact \cite{Bor88}: there is a decomposition as $A$-modules $\fg_{A,V}\cong \fn_{A,V}^+\oplus\fh_A\oplus \fn_{A,V}^-$ where $\fn_{A,V}^+$ is the free $A$-linear Lie algebra generated by $V$, modulo the Serre relations for $V$, and $\fn_{A,V}^-$ is the free Lie algebra generated by $V^{\vee}$, modulo the Serre relations for $V^{\vee}$.

\begin{definition}
\label{Chev_def}
We refer to homogeneous elements $e\in V\subset \fn_{A,V}^+$ and $f\in V^{\vee}\subset \fn_{A,V}^-$ as the positive and negative \textit{Chevalley generators} of $\fg_{A,V\otimes A}$, respectively. We further refer to either $\fn_{A,V}^+$ or $\fn_{A,V}^-$ as a half GKM algebra with coefficients in $A$.
\end{definition}		

We denote by $\UEA_A(\Fg_{A,V})$ the quotient of the free unital associative algebra generated over $A$ by $\fh_A\oplus V\oplus V^{\vee}$, subject to the same relations.  This is naturally identified with the universal enveloping algebra of $\Fg_{A,V}$ in the category of $A$-modules, justifying the notation.

If $T$ is a torus and $A=\HO_T$, we define $\fg^T_V\coloneqq \fg_{\HO_T,V}$ and $\fn^{T,\pm}_{V}=\fn_{\HO_T,V}^{\pm}$.

\subsection{Half GKM Lie algebras in perverse sheaves}
\label{GKM_perv_sec}
We recall the construction of half GKM Lie algebras in categories of perverse sheaves defined and considered in \cite{DHSM23}. In contrast with Definition \ref{Chev_def}, we do not actually consider these ``half'' algebras as halves of some bigger algebraic object.

We fix as above an algebraic torus $T$.  We may consider $\BoN^{Q_0}\times \B T$ as a stack, consisting of a copy of the stack $\B T$ for every $\dd\in \BoN^{Q_0}$.  This is a monoid in stacks over $\B T$, with the monoid structure $(\BoN^{Q_0}\times \BoN^{Q_0})\times \B T\rightarrow \BoN^{Q_0}\times \B T$ provided by addition of dimension vectors.

Let $X$ be a commutative monoid in stacks over $\BoN^{Q_0}\times \B T$.  Equivalently, $X$ admits a decomposition $X=\coprod_{\dd\in\BoN^{Q_0}} X_{\dd}$, and is equipped with a monoid map $\mu\colon X\times_{\B T} X\rightarrow X$ such that $\mu\lvert_{X_{\dd'}\times_{\B T} X_{\dd''}}$ factors through the inclusion of $X_{\dd'+\dd''}$, and there is an identification $X_0=\B T$.  Commutativity is the condition that $\mu\circ s=\mu$, where $s$ is the automorphism of $X\times_{\B T} X$ swapping the factors.  We assume furthermore that $X_{\dd}=\B T$ if $\dd$ is a simple real root.  We define a symmetric tensor structure on $\Dub^+(\Perv(X))$ by setting
\[
\CF\boxdot\CG\coloneqq \mu_*(\pi_{1}^*\CF\otimes\pi_2^*\CG)
\]
where $\pi_1,\pi_2\colon X\times_{\B T} X\rightarrow X$ are the two projections.  This induces a symmetric tensor structure on $\Perv'(X)$ if $\mu$ is finite, see e.g. \cite[\S 3]{DHSM22}.  In this section we will assume that the monoid map $\mu\colon X \times_{\B T} X\rightarrow X$ is finite.  

An object $\CF$ of $\Perv'(X)$ admits a canonical direct sum decomposition
\[
\CF=\bigoplus_{\dd\in\BoN^{Q_0}}\CF_{\dd};\quad\quad\CF_{\dd}\coloneqq \CF\lvert_{X_{\dd}},
\]
so that $\Perv'(X)$ satisfies the assumptions of \S \ref{tau_twist_sec}.  We give this category the $\tau$-twist of the above tensor structure, as defined in \S \ref{tau_twist_sec}, with $\tau$ defined with respect to the symmetric quiver $\tilde{Q}$.

Let $\SG\in \Perv'(X)$.  We assume that $\SG_{\dd}=0$ if $\dd\notin \Phi^+_{Q}$, and $\SG_{\dd}=\BoQ_{X_{\dd}}$ if $\dd$ is a simple real root.  We first consider the free algebra object $\Tens_{\boxdot}(\SG)$.  This is the infinite direct sum
\[
\Tens_{\boxdot}(\SG)\coloneqq \bigoplus_{i\geq 0} \SG^{\boxdot i}
\]
where $\SG^{\boxdot i}$ denotes the $i$-fold tensor product of $\SG$ with itself with respect to the tensor structure $\boxdot$.  This infinite direct sum is well defined since for all $\dd\in\BoN^{Q_0}$ there are only finitely many decompositions $\dd=\dd^{(1)}+\ldots +\dd^{(m)}$ with each $\dd^{(j)}\in\Phi^+_Q$.  The perverse sheaf $\Tens_{\boxdot}(\SG)$ carries a natural associative product, and thus a Lie bracket given by the commutator bracket.  We denote by $\Free_{\Lie}(\SG)$ the free Lie subalgebra object generated by $\SG$.  If $\SL$ is a Lie algebra object, we denote by $\UEA_{\boxdot}(\SL)$ the universal enveloping algebra of $\SL$.  It is defined up to isomorphism by the universal property: if $f\colon \SL\rightarrow \SA$ is a morphism of Lie algebra objects, where $\SA$ is an algebra, considered as a Lie algebra via the commutator bracket, then $f$ extends uniquely to an algebra morphism $\UEA_{\boxdot}(\SL)\rightarrow \SA$.  See \cite[\S 3.2]{DHSM23} for a more leisurely account of this construction, and the following one.

If $\SA$ is an algebra object, or Lie algebra object in $\Perv'(X)$, and $\dd',\dd''\in\BoN^{Q_0}$, we denote by $L_{\SA_{\dd'}}(\SA_{\dd''})$ the image of the morphism
\[
[-,-]\lvert_{\SA_{\dd'}\boxdot\SA_{\dd''}}\colon \SA_{\dd'}\boxdot\SA_{\dd''}\rightarrow \SA_{\dd'+\dd''}.
\]
In the algebra case we define the Lie bracket to be the commutator $(m- m\circ \sw_{\tau})$ as usual.  Inductively, for $m\geq 2$, we define $L^m_{\SA_{\dd'}}(\SA_{\dd''})$ to be the image of the morphism $[-,-]\lvert_{\SA_{\dd'}\boxdot\SA_{(m-1)\dd'+\dd''}}$ restricted to $\SA_{\dd'}\boxdot L^{m-1}_{\SA_{\dd'}}(\SA_{\dd''})$.
For $\dd',\dd''\in \Phi^+_Q$ we define
\[
\SR_{\dd',\dd''}(\SA)\coloneqq 
\begin{cases}
L_{\SA_{\dd'}}(\SA_{\dd''})& \textrm{if }(\dd',\dd'')_Q=0\\
L_{\SA_{\dd'}}^{1-(\dd',\dd'')_Q}(\SA_{\dd''})&\textrm{if }\dd'\in\Phi_Q^{+, \mathrm{real}}\\
0&\textrm{otherwise.}
\end{cases}
\]
For $\SG$ as above, we define $\Fn^+_{\SG}$ to be the quotient of $\SA=\Free_{\Lie}(\SG)$ by the Lie ideal generated by $\SR_{\dd',\dd''}(\SA)$ for all $\dd',\dd''\in\Phi^+_Q$.  Equivalently, let $\SA_{\SG}$ be the quotient of the free associative algebra $\Tens_{\boxdot}(\SG)$ by the two-sided ideal generated by $\SR_{\dd',\dd''}(\Tens_{\boxdot}(\SG))$ for all $\dd',\dd''\in\Phi^+_Q$, and define $\Fn^+_{\SG}$ to be the Lie subalgebra of $\SA_{\SG}$ generated by the subobject $\SG\subset \SA_{\SG}$.  As an algebra object in $\Perv'(X)$, we have a natural isomorphism $\SA_{\SG}\cong \UEA_{\boxdot}(\Fn^+_{\SG})$.
\section{Cohomological Hall algebras}
\subsection{Moduli stacks}
\label{moduli_stacks_sec}
Let $Q$ be a quiver which is graded via a weighting $\wt\colon Q_1\rightarrow N=\BoZ^{r}$ for some $r\in\BoN$.  We continue to set $T=\Hom_{\Grp}(N,\BoC^*)$.   Fix a dimension vector $\dd\in\BoN^{Q_0}$.  We denote by $\FM^T_{\dd}(Q)$ the quotient of the stack of $\dd$-dimensional $\BoC Q$-modules by the $T$-action.  Explicitly, we define
\[
\BoA_{\dd}(Q)\coloneqq \prod_{a\in Q_1}\Hom(\BoC^{\dd_{s(a)}},\BoC^{\dd_{t(a)}}).
\]
This variety is acted on by the gauge group $\Gl_{\dd}\coloneqq \prod_{i\in Q_0}\Gl_{\dd_i}(\BoC)$ by change of basis.  The $\Gl_{\dd}$-action commutes with the natural $T$-action defined in \S \ref{wtngs_ta_sec}, so that we obtain a $\Gl_{\dd}\times T$-action on $\BoA_{\dd}(Q)$.  To save space we sometimes write $\Gl_{\dd}^{\wt}=\Gl_{\dd}\times T$.  There is an equivalence of stacks between $\FM^T_{\dd}(Q)$ and the stack-theoretic quotient $\BoA_{\dd}(Q)/\Gl_{\dd}^{\wt}$.  

Given a pair $\dd',\dd''\in \BoN^{Q_0}$ we denote by $\FM^T_{\dd',\dd''}(Q)$ the $T$-quotient of the stack of short exact sequences of $\BoC Q$-modules $0\rightarrow M'\rightarrow M\rightarrow M''\rightarrow 0$ with $M'$ of dimension $\dd'$ and $M''$ of dimension $\dd''$.  Let $\dd=\dd'+\dd''$.  We denote by $\BoA_{\dd',\dd''}(Q)\subset \BoA_{\dd}(Q)$ the subspace of homomorphisms preserving each of a set of fixed flags $0\subset \BoC^{\dd'_i}\subset \BoC^{\dd}$ for $i\in Q_0$.  We define the subgroup $\Gl_{\dd',\dd''}\subset \Gl_{\dd}$ to be the subgroup preserving these same flags, and set $\Gl_{\dd',\dd''}^{\wt}=\Gl_{\dd',\dd''}\times T$.  There is an equivalence of stacks $\FM^T_{\dd',\dd''}(Q)\simeq \BoA_{\dd',\dd''}(Q)/\Gl^{\wt}_{\dd',\dd''}$.

We define the coarse moduli space $\CM_{\dd}(Q)=\mathrm{Spec}(\Gamma(\mathcal{O}_{\BoA_{\dd}(Q)})^{\Gl_{\dd}})$.  Closed points of $\CM_{\dd}(Q)$ are in bijection with isomorphism classes of $\dd$-dimensional semisimple $\BoC Q$-modules.  We denote by $\JH_{\dd}\colon \FM_{\dd}(Q)\rightarrow \CM_{\dd}(Q)$ the affinization morphism.  At the level of points this morphism sends modules to their semisimplifications, i.e. the direct sum of the subquotients appearing in some (equivalently, any) Jordan--H\"older filtration.  The torus $T$ acts on $\CM_{\dd}(Q)$, and we denote by $\CM^T_{\dd}(Q)=\Msp_{\dd}(Q)/T$ the stack-theoretic quotient.   We continue to denote by the same symbols $\JH_{\dd}$ the induced morphism
\[
\JH_{\dd}\colon \FM^T_{\dd}(Q)\rightarrow \CM_{\dd}^T(Q).
\]

We will also consider versions of the above stacks incorporating stability conditions.  Let $\zeta\in\BoQ^{Q_0}$ be a stability condition, then we denote by $\BoA^{\zeta\sst}_{\dd}(Q)\subset \BoA_{\dd}(Q)$ the open subvariety, points of which correspond to $\zeta$-semistable $\dd$-dimensional $\BoC Q$-modules.  We set $\FM^{T,\zeta\sst}_{\dd}(Q)\coloneqq \BoA^{\zeta\sst}_{\dd}(Q)/\Gl_{\dd}^{\wt}$.  The natural morphism $\FM^{T,\zeta\sst}_{\dd}(Q)\rightarrow \FM^{T}_{\dd}(Q)$ is an open embedding.  We construct the coarse moduli space $\CM^{\zeta\sst}_{\dd}(Q)$ of $\zeta$-semistable $\dd$-dimensional $\BoC Q$-modules as in \cite{King}.  Closed points of this quasi-projective variety are in bijection with isomorphism classes of $\zeta$-polystable $\dd$-dimensional $\BoC Q$-modules, and the natural morphism $\pi\colon\CM^{\zeta\sst}_{\dd}(Q)\rightarrow \CM_{\dd}(Q)$ is a GIT quotient map, and hence projective (see \cite{King}).  By construction, the torus $T$ acts on $\CM^{\zeta\sst}_{\dd}(Q)$, and the morphism $\pi$ is $T$-equivariant.  We define the stack $\CM^{T,\zeta\sst}_{\dd}(Q)\coloneqq \CM^{\zeta\sst}_{\dd}(Q)/T$.  We denote by 
\[
\JH^{\zeta}_{\dd}\colon \FM^{T,\zeta\sst}_{\dd}(Q)\rightarrow \CM^{T,\zeta\sst}_{\dd}(Q)
\]
the natural morphism.  

Let $Q$ be a quiver, and assume that $\wt\colon\overline{Q}_1\rightarrow \BoZ^r$ is chosen so that the element $\rho=\sum_{a\in Q_1}[a,a^*]$ is homogeneous, so that we may define $\wt(\rho)=\wt(a)+\wt(a^*)$ independently of the choice of $a\in Q_1$.  We thus obtain a morphism $\lambda'\colon T\rightarrow \BoC^*$, sending $\Hom(N,\BoC^*)\ni t\mapsto t(\wt(\rho))$, and an induced morphism $\lambda\colon \B T\rightarrow \B \BoC^*$.  We denote by $u$ the tautological generator of $\HO^2(\B \BoC^*,\BoQ)=\BoQ[u]$.  Throughout the rest of the paper we define $\hbar\coloneqq\lambda^*(u)$.  This will define the character through which $T$ scales the natural symplectic forms on Nakajima quiver varieties, as in \cite{MO19}.

We may identify $\BoA_{\dd}(\overline{Q})=\Tang\BoA_{\dd}(Q)$.  We denote by $\mathfrak{gl}_{\dd}$ the Lie algebra of $\Gl_{\dd}$.  Then $\BoA_{\dd}(\ol{Q})$ carries a natural symplectic form and admits the $T$-equivariant moment map
\begin{align*}
\mu_{\dd}\colon &\BoA_{\dd}(\overline{Q})\rightarrow \mathfrak{gl}_{\dd};
\quad \quad (M(a),M(a^*))_{a\in Q_1}\mapsto \sum_{a\in Q_1}[M(a),M(a^*)]
\end{align*}
where the target is acted on by scaling via the character $\hbar$. The subvariety $(\mu_{\dd})^{-1}(0)$ thus carries a $T$-action.  We define 
\[
\FM^T_{\dd}(\Pi_Q)=(\mu_{\dd})^{-1}(0)/\Gl_{\dd}^{\wt}\quad\quad\textrm{and} \quad\quad\CM^T_{\dd}(\Pi_Q)=\mathrm{Spec}(\Gamma(\mathcal{O}_{(\mu_{\dd})^{-1}(0)})^{\Gl_{\dd}})/T.
\]

If $Q$ is a quiver and $\ff\in\BoN^{Q_0}$ is a dimension vector we denote by 
\begin{equation}
\label{ldef}
l\colon \CM^T(\Pi_Q)\hookrightarrow \CM^T(\Pi_{Q_{\ff}})
\end{equation}
the extension by zero map.  At the level of points, it takes the semisimple $\Pi_Q$-module $M$ to the semisimple $\Pi_{Q_{\ff}}$-module for which the dimension vector is $(\dim_Q(M),0)$, and the induced $\Pi_Q$-module defined by the inclusion $\Pi_Q\hookrightarrow \Pi_{Q_{\ff}}$ is $M$.  For every $\dd\in\BoN^{Q_0}$ this morphism restricts to an isomorphism $l_{\dd}\colon \CM^T_{\dd}(\Pi_Q)\xrightarrow{\cong} \CM_{(\dd,0)}^T(\Pi_{Q_{\ff}})$.

\subsection{Nakajima quiver varieties and stacks}
\label{sec: Nakajima quiver varieties}

We refer to \cite{Nak94,Nak98,ginz2012} for background on the material in this subsection.  Nakajima quiver varieties play a fundamental role in this paper; we prove Theorems \ref{OC_thm} and \ref{main_thm} by realising both the Lie algebras in Theorem \ref{main_thm} as the same Lie subalgebra of the endomorphism Lie algebra of the cohomology of Nakajima quiver varieties.

Let $Q$ be a quiver and fix a dimension vector $\ff\in \BoN^{Q_0}$.  We denote by $\mu_{\ff,\dd}\colon \BoA_{(\dd,1)}(\overline{Q_{\ff}})\rightarrow \mathfrak{gl}_{\dd}$ the composition of $\mu_{(\dd,1)}\colon \BoA_{(\dd,1)}(\overline{Q_{\ff}})\rightarrow \mathfrak{gl}_{(\dd,1)}$ with the natural projection.  
\begin{lemma}
There is an equality $(\mu_{(\dd,1)})^{-1}(0)=(\mu_{\ff,\dd})^{-1}(0)$ of subvarieties of $\BoA_{(\dd,1)}(\overline{Q_{\ff}})$.
\end{lemma}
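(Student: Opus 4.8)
The plan is to prove the two inclusions of subvarieties of $\BoA_{(\dd,1)}(\overline{Q_{\ff}})$ separately, the non-trivial direction being $(\mu_{\ff,\dd})^{-1}(0)\subseteq (\mu_{(\dd,1)})^{-1}(0)$. The opposite inclusion is immediate from the definitions: by construction $\mu_{\ff,\dd}$ is the composition of $\mu_{(\dd,1)}$ with the projection $\mathfrak{gl}_{(\dd,1)}\rightarrow \mathfrak{gl}_{\dd}$ that forgets the $\mathfrak{gl}_1$-summand indexed by the framing vertex $\infty$, so any representation $M$ with $\mu_{(\dd,1)}(M)=0$ automatically satisfies $\mu_{\ff,\dd}(M)=0$.

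For the reverse inclusion the key input is the elementary trace identity for the moment map of a doubled quiver: for every $\overline{Q_{\ff}}$-representation $M$ of dimension vector $(\dd,1)$ one has $\sum_{v\in (Q_{\ff})_0}\Tr\!\big(\mu_{(\dd,1)}(M)_v\big)=0$. Indeed, each arrow $b\in (Q_{\ff})_1$ contributes the term $M(b)M(b^*)$ to the component of $\mu_{(\dd,1)}(M)$ indexed by $t(b)$ and the term $-M(b^*)M(b)$ to the component indexed by $s(b)$, and cyclicity of the trace makes the sum of these two traces vanish. Now I would use that the framing dimension is $1$: since $(\dd,1)_\infty=1$, the component $\mu_{(\dd,1)}(M)_\infty$ lies in $\mathfrak{gl}_1\cong\BoC$ and hence coincides with its own trace. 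If $M\in(\mu_{\ff,\dd})^{-1}(0)$, then $\mu_{(\dd,1)}(M)_i=0$ for all $i\in Q_0$ by definition, so the trace identity forces $\Tr\!\big(\mu_{(\dd,1)}(M)_\infty\big)=0$, whence $\mu_{(\dd,1)}(M)_\infty=0$ as well. Thus all components of $\mu_{(\dd,1)}(M)$ vanish, i.e. $M\in(\mu_{(\dd,1)})^{-1}(0)$, as desired.

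I do not expect a genuine obstacle: the argument is a one-line trace computation. The only point worth emphasising is that it relies crucially on the framing dimension at $\infty$ being equal to $1$, so that a traceless $1\times 1$ matrix vanishes; for a general framing dimension $n>1$ the loci $(\mu_{(\dd,n)})^{-1}(0)$ and $(\mu_{\ff,\dd})^{-1}(0)$ genuinely differ. This is precisely the dimension vector $(\dd,1)$ relevant to the framed stacks used in the construction of Nakajima quiver varieties.
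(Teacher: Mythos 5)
Your proof is correct and is essentially the same argument as the paper's: the trivial inclusion, plus the observation that the global trace of the moment map vanishes by cyclicity of trace, so the $\infty$-component (a $1\times 1$ matrix, hence equal to its own trace) must vanish once the $Q_0$-components do. The paper packages the same computation as a pair of displayed trace identities rather than stating the "global trace is zero" identity up front, but there is no substantive difference.
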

\begin{proof}
Obviously the left hand side is a subvariety of the right hand side: it is the subvariety cut out by the equation 
\[
\sum_{i\in Q_0,1\leq m\leq \ff_i}\rho_{r^*_{i,m}}\rho_{r_{i,m}}=0
\]
for elements $(\rho_a)_{a\in \overline{Q_{\ff}}}$ of $(\mu_{\ff,\dd})^{-1}(0)\subset \BoA_{(\dd,1)}(\overline{Q_{\ff}})$.  Since $(\dd,1)_{\infty}=1$, this equation is equivalent to
\begin{equation}
\label{redundancy_eq}
\sum_{i\in Q_0,1\leq m\leq \ff_i}\Tr(\rho_{r^*_{i,m}}\rho_{r_{i,m}})=0.
\end{equation}
By cyclic invariance of the trace, and the defining equations of $(\mu_{\ff,\dd})^{-1}(0)$ respectively, we have
\begin{align*}
\sum_{a\in Q_1}\Tr([\rho_{a},\rho_{a^*}])+\sum_{i\in Q_0,1\leq m\leq \ff_i}\Tr([\rho_{r_{i,m}},\rho_{r^*_{i,m}}])&=0\\
\sum_{a\in Q_1}\Tr([\rho_{a},\rho_{a^*}])+\sum_{i\in Q_0,1\leq m\leq \ff_i}\Tr(\rho_{r_{i,m}}\rho_{r^*_{i,m}})&=0,
\end{align*}
implying \eqref{redundancy_eq}.
\end{proof}

Let $\zeta\in\BoQ^{(Q_{\ff})_0}\cong \BoQ^{Q_0}\oplus \BoQ$ be generic for the dimension vector $(\dd,1)$
Since $Q_{\ff}$ and $\overline{Q_{\ff}}$ have the same vertex sets, this also defines a stability condition for $\overline{Q_{\ff}}$. This ensures that $\zeta$-semistability for $(\dd,1)$-dimensional $\Pi_{Q_{\ff}}$-modules is equivalent to $\zeta$-stability.
We define
\[
\mu^{\zeta}_{\ff,\dd}\colon \BoA_{(\dd,1)}^{\zeta\sst}(\overline{Q_{\ff}})\rightarrow \mathfrak{gl}_{\dd}
\]
by restricting $\mu_{\ff,\dd}$ to the stable locus.  We fix a weighting $\wt\colon (\overline{Q}_{\ff})_1\rightarrow \BoZ^{r}$ for which $\sum_{a\in (Q_{\ff})_1}[a,a^*]$ is homogeneous.  It follows that the $T$-action on $\BoA^{\zeta\sst}_{(\dd,1)}(\overline{Q_{\ff}})$ preserves $(\mu^{\zeta}_{\ff,\dd})^{-1}(0)$.  We define the Nakajima quiver stack
\begin{align*}
\Nak^{\zeta, T}_Q(\ff,\dd)\coloneqq &(\mu^{\zeta}_{\ff,\dd})^{-1}(0)/\Gl^{\wt}_{\dd}.
\end{align*}

In the case in which $T$ is trivial $\Nak^{\zeta, T}_Q(\ff,\dd)$ is a \textit{smooth} variety, and it follows that in general $\Nak^{\zeta, T}_Q(\ff,\dd)$ is a smooth stack. Given two generic stability conditions $\zeta$ and $\zeta'$, the associated quiver varieties $\Nak^{\zeta}_Q(\ff,\dd)$ and $\Nak^{\zeta'}_Q(\ff,\dd)$ are in general diffeomorphic as real manifolds but not isomorphic as schemes \cite[Cor. 4.2]{Nak94}.

The Nakajima variety $\Nak^{\zeta}_Q(\ff,\dd)$ may be realised as a GIT quotient (as in \cite{King}, see \cite[\S 3]{Nak98}).  As such, the affinization morphism 
\[
\pi\colon \Nak^{\zeta}_Q(\ff,\dd)\rightarrow \CM_{(\dd,1)}(\Pi_{Q_{\ff}})
\]
is projective.  It follows that, even in the case in which $T$ is nontrivial, the morphism $ \Nak^{ \zeta,T}_Q(\ff,\dd)\rightarrow \CM^T_{(\dd,1)}(\Pi_{Q_{\ff}})$,
obtained by passing to stack-theoretic quotients is representable and projective.  We denote this morphism also by $\pi$ when it is unlikely to cause confusion.

For a general stability condition $\zeta$, generic for $(\dd,1)$, we define
\[
\NakMod^{\zeta, T}_{Q,\ff,\dd}\coloneqq \HO(\Nak^{\zeta, T}_Q(\ff,\dd),\BoQ^{\vir});\quad\quad\NakMod^{\zeta, T}_{Q,\ff}\coloneqq\bigoplus_{\dd\in\BoN^{Q_0}}\NakMod^{\zeta, T}_{Q,\ff,\dd}
\]
where $\BoQ^{\vir}\coloneqq \BoQ[\dim(\Nak^{\zeta}_Q(\ff,\dd))]$ is identified with the constant equivariant perverse sheaf on the $T$-equivariant variety $\Nak^{\zeta}_Q(\ff,\dd)$.

Set $\zeta_{\mathrm{deg}}^+\coloneqq (0,\ldots,0,1)\in \BoQ^{Q_0}\oplus \BoQ\cong \BoQ^{(Q_{\ff})_0}$.  This stability condition is generic for the dimension vector $(\dd,1)$, and will be our default choice for Nakajima varieties throughout the whole article. When we revert to such default choice, we drop $\zeta_{\mathrm{deg}}^+$ from the notation. For example, we will systematically write $\Nak^T(\ff,\dd)$ and $\NakMod^T_{\ff,\dd}$ instead of $\Nak^{\zeta_{\mathrm{deg}}^+,T}(\ff,\dd)$ and $\NakMod^{\zeta_{\mathrm{deg}}^+, T}_{\ff, \dd}$. The objects $\NakMod^T_{Q,\ff}$ will form the underlying $\HO_T$-modules of a collection of modules for both the (doubled) BPS Lie algebra $\fg_{\Pi_Q}^T$ (defined in \S \ref{GKM_thm_sec}) and the Maulik--Okounkov Lie algebra $\Fg^{\MO,T}_Q$ (defined in \S \ref{MO_LA_sec}).

\subsection{Cohomological Hall algebras}
\label{gen_CoHA_sec}
We recall the construction of the critical cohomological Hall algebra associated to a graded quiver with potential, as defined in \cite[\S 7]{KS2} (see also \cite{Da13,preproj}), with required modifications to take account of weightings.   

Given a $N$-graded symmetric quiver $Q$ and a $T$-invariant potential $W$, we define the function $\Tr(W)$ on $\FM^T(Q)$, which takes a point representing a module $M$ to the complex number $\Tr(M(\overline{W}))$, where $\overline{W}$ is any lift of $W$ to an element of $\BoC Q$.  We denote by $\BoQ^{\vir}$ the constant $T$-equivariant perverse sheaf on $\FM^T(Q)$.  Precisely, we shift the constant sheaf via the formula $\BoQ^{\vir}\lvert_{\FM_{\dd}^T(Q)}=\BoQ[-\chi_Q(\dd,\dd)]$.
We define 
\[
\HO\!\CoHA^T_{Q,W,\dd}\coloneqq \HO^*(\FM^T_{\dd}(Q),\phip{\Tr(W)}\BoQ^{\vir});\quad \quad \HO\!\CoHA^T_{Q,W}\coloneqq \bigoplus_{\dd\in\BoN^{Q_0}}\HO\!\CoHA^T_{Q,W,\dd}.
\]
Here, $\phip{\Tr(W)}=\phi_{\Tr(W)}[-1]$ is the perverse-exact shift of the vanishing cycles functor, see \cite[Chapter 8]{KSsheaves} for details.  If $\zeta\in\BoQ^{Q_0}$ is a stability condition, and $\theta\in\BoQ$ is a slope, we likewise define
\[
\HO\!\CoHA^{T,\zeta}_{Q,W,\theta}\coloneqq \bigoplus_{\dd\in\Lambda^{\zeta,+}_{\theta}}\HO\!\CoHA^{T,\zeta}_{Q,W,\dd};\quad \quad\HO\!\CoHA^{T,\zeta}_{Q,W,\dd}\coloneqq \HO^*(\FM^{T,\zeta\sst}_{\dd}(Q),\phip{\Tr(W)}\BoQ^{\vir}).
\]
It can be beneficial to think of $\HO\!\CoHA^T_{Q,W}$ as a special case of $\HO\!\CoHA^{T,\zeta}_{Q,W,\theta}$, obtained by setting $\zeta=(0,\ldots,0)$ and $\theta=0$, so we will spell out everything only in the presence of a stability condition.

For $\dd',\dd''\in\Lambda_{\theta}^{\zeta,+}$ there is a commutative diagram 
\[
\xymatrix{
\FM^{T,\zeta\sst}_{\dd'}(Q)\times_{\B T} \FM^{T,\zeta\sst}_{\dd''}(Q)\ar[d]^{\JH^{\zeta}_{\dd'}\times_{\B T} \JH^{\zeta}_{\dd''}}& \ar[l]_-{q_1\times q_3} \FM^{T,\zeta\sst}_{\dd',\dd''}(Q)\ar[r]^-{q_2}& \FM^{T,\zeta\sst}_{\dd}(Q)\ar[d]^{\JH^{\zeta}_{\dd}}
\\
\CM^{T,\zeta\sst}_{\dd'}(Q)\times_{\B T} \CM^{T,\zeta\sst}_{\dd''}(Q)\ar[rr]^-{\oplus}&&\CM^{T,\zeta\sst}_{\dd}(Q)
}
\]
where $q_1,q_2,q_3$ are the forgetful maps taking a short exact sequence $0\rightarrow M'\rightarrow M\rightarrow M''\rightarrow 0$ to $M',M,M''$ respectively.  Then as in \cite[\S 7]{KS2} and \cite[\S 3]{Da13} we consider the natural adjunction morphisms 
\begin{align*}
\alpha\colon &\JH^{\zeta}_{\dd,*}\left(q_{2,*}\phip{\Tr(W)}\BoQ^{\vir}_{\FM^{T,\zeta\sst}_{\dd',\dd''}(Q)}[(\dd',\dd'')_Q]\rightarrow \phip{\Tr(W)}\BoQ^{\vir}_{\FM^{T,\zeta\sst}_{\dd}(Q)}\right)\\
\beta\colon &\oplus_*(\JH^{\zeta}_{\dd'}\times_{\B T} \JH^{\zeta}_{\dd''})_*\left(\phip{\Tr(W)}\BoQ^{\vir}_{\FM^{T,\zeta\sst}_{\dd'}(Q)\times_{\B T} \FM^{T,\zeta\sst}_{\dd''}(Q)}\xrightarrow{\cong} \phip{\Tr(W)}(q_1\times q_3)_*\BoQ^{\vir}_{\FM^{T,\zeta\sst}_{\dd',\dd''}(Q)}[(\dd',\dd'')_Q]\right)
\end{align*}
Via the structure morphism $\B T\rightarrow \pt$ we obtain the morphism $p\colon \FM^{T,\zeta\sst}_{\dd'}(Q)\times_{\B T} \FM^{T,\zeta\sst}_{\dd''}(Q)\rightarrow \FM^{T,\zeta\sst}_{\dd'}(Q)\times \FM^{T,\zeta\sst}_{\dd''}(Q)$ and the adjunction morphism
\[
\HO(\delta)\colon\HO^*(\FM^{T,\zeta\sst}_{\dd'}(Q)\times \FM^{T,\zeta\sst}_{\dd''}(Q),\phip{\Tr(W)}\BoQ^{\vir})\rightarrow \HO^*(\FM^{T,\zeta\sst}_{\dd'}(Q)\times_{\B T} \FM^{T,\zeta\sst}_{\dd''}(Q),\phip{\Tr(W)}\BoQ^{\vir})
\]
which we compose with the inverse of the Thom--Sebastiani isomorphism \cite{Ma01}
\[
\TS\colon\HO^*(\FM^{T,\zeta\sst}_{\dd'}(Q)\times \FM^{T,\zeta\sst}_{\dd''}(Q),\phip{\Tr(W)}\BoQ^{\vir})\cong \HO^*(\FM^{T,\zeta\sst}_{\dd'}(Q),\phip{\Tr(W)}\BoQ^{\vir})\otimes \HO^*(\FM^{T,\zeta\sst}_{\dd''}(Q),\phip{\Tr(W)}\BoQ^{\vir}).
\]
We set
\begin{align*}
\vmult_{\dd',\dd''}=\HO(\alpha)\HO(\beta)\HO(\delta) \TS^{-1}\colon \HO\!\CoHA^{T,\zeta}_{Q,W,\dd'}\otimes\HO\!\CoHA^{T,\zeta}_{Q,W,\dd''}\rightarrow \HO\!\CoHA^{T,\zeta}_{Q,W,\dd}
\end{align*}
and define the CoHA multiplication $\vmult\colon \HO\!\CoHA^{T,\zeta}_{Q,W,\theta}\otimes\HO\!\CoHA^{T,\zeta}_{Q,W,\theta}\rightarrow \HO\!\CoHA^{T,\zeta}_{Q,W,\theta}$ by summing over all pairs $\dd',\dd''\in\Lambda_{\theta}^{\zeta,+}$.  The proof that this provides an associative multiplication is in \cite[\S 7]{KS2}.  Note that under favourable circumstances (for instance if the mixed Hodge structures on all of these cohomology groups are pure, which will be the case for all examples in our paper) the canonical morphism
\begin{align*}
\gamma\colon &\HO^*(\FM^{T,\zeta\sst}_{\dd'}(Q),\phip{\Tr(W)}\BoQ^{\vir})\otimes_{\HO_T} \HO^*(\FM^{T,\zeta\sst}_{\dd''}(Q),\phip{\Tr(W)}\BoQ^{\vir})\rightarrow \\&\HO^*(\FM^{T,\zeta\sst}_{\dd'}(Q)\times_{\B T} \FM^{T,\zeta\sst}_{\dd''}(Q),\phip{\Tr(W)}\BoQ^{\vir})
\end{align*}
through which $\HO(\delta) \TS^{-1}$ factors is an isomorphism (see \cite[\S 9]{preproj}).

The morphism $\alpha\beta$ itself provides an algebra structure on the object 
\[
\CoHA^{T,\zeta}_{Q,W,\theta}\coloneqq \bigoplus_{\dd\in\Lambda_{\theta}^{\zeta}}\JH^{\zeta}_*\phip{\Tr(W)}\BoQ^{\vir}_{\FM_{\dd}^{T,\zeta\sst}(Q)}
\]
in the tensor category $\Dub_{\mathrm{c}}^+(\coprod_{\dd\in\Lambda^{\zeta}_{\theta}}\CM_{\dd}^{T,\zeta\sst}(Q))$ of locally bounded below constructible complexes, which is referred to as the \textit{relative Hall algebra}.  

The structure morphism $\BoA_\dd^{\zeta\sst}(Q)\rightarrow \pt$ induces the morphism $\omega\colon \FM^{T,\zeta\sst}_{\dd}(Q)\rightarrow \B \Gl_{\dd}^{\wt}$.  The function $\Tr(W)$ on $\FM^{T,\zeta\sst}_{\dd}(Q)$ factors through $\omega\times \id \colon \FM^{T,\zeta\sst}_{\dd}(Q)\rightarrow  \B \Gl_{\dd}^{\wt}\times \FM^{T,\zeta\sst}_{\dd}(Q) $, inducing the morphism $\HO_{\Gl_{\dd}^{\wt}}\otimes\HO\!\CoHA^{T,\zeta}_{Q,W,\dd}\rightarrow \HO\!\CoHA^{T,\zeta}_{Q,W,\dd}$, defining the action of tautological classes on $\HO\!\CoHA^{T,\zeta}_{Q,W,\dd}$.

See \cite[\S 5.1]{QEAs} for details of the relative Hall algebra, and \cite{KS2,Da13, QEAs} for a fuller account of cohomological Hall algebras and vanishing cycle cohomology.
\subsection{BPS cohomology}
\label{BPS_coh_sec}
Let $Q$ be a symmetric quiver, with a weighting $\wt\colon Q_1\rightarrow \BoZ^r$, and a $\wt$-invariant potential $W$.  Fix a dimension vector $\dd\in \BoN^{Q_0}$ and a stability condition $\zeta\in\BoQ^{Q_0}$. Recall the shifted perverse t-structure $\fp'$ introduced in \S\ref{conventions_sec}.  By \cite[Thm.A]{QEAs} the perverse cohomology sheaves ${}^{\fp'}\!\CH^i\!\left(\JH^{\zeta}_*\phip{\Tr(W)}\BoQ_{\FM^{T,\zeta\sst}_{\dd}(Q)}^{\vir}\right)$ vanish for $i\leq 0$.  We define the \textit{BPS sheaf}
\[
\BPSh^{T,\zeta}_{Q,W,\dd}\coloneqq {}^{\fp'}\!\CH^1\!\left(\JH^{\zeta}_*\:\phip{\Tr(W)}\BoQ_{\FM^{T,\zeta\sst}_{\dd}(Q)}^{\vir}\right).
\]
By definition, it is an element of $\Perv'(\CM^{T,\zeta\sst}_{\dd}(Q))$.  We define
\begin{align*}
\Fg^{T,\zeta}_{Q,W,\dd}\coloneqq &\HO^*(\CM^{T,\zeta\sst}_{\dd}(Q),{}^{\fp'}\!\vtau^{\leq 1}\JH^{\zeta}_*\phip{\Tr(W)}\BoQ_{\FM^{T,\zeta\sst}_{\dd}(Q)}^{\vir})\\
=&\HO^*(\CM^{T,\zeta\sst}_{\dd}(Q),\BPSh^{T,\zeta}_{Q,W,\dd}[-1]).
\end{align*}
Via the natural transformation ${}^{\fp'}\!\vtau^{\leq 1}\rightarrow \id$ we obtain the morphism $\iota\colon \Fg^{T,\zeta}_{Q,W,\dd}\rightarrow \HO\!\CoHA^{T,\zeta}_{Q,W,\dd}$.  By \cite[Thm.C]{QEAs} $\iota$ is an injection, and the image is closed under the commutator Lie bracket.  Throughout the rest of the paper we use this fact to consider $\Fg^{T,\zeta}_{Q,W,\theta}\coloneqq \bigoplus_{\dd\in\Lambda^{\zeta}_{\theta}}\Fg^{T,\zeta}_{Q,W,\dd}$ as a Lie subalgebra of $\HO\!\CoHA^{T,\zeta}_{Q,W,\theta}$. For the degenerate stability condition $\zeta_{\mathrm{deg}}=(0,\ldots,0)$ and $\theta_{\mathrm{deg}}=0$ we abbreviate $\Fg_{Q,W}^T\coloneqq \Fg_{Q,W,\theta_{\mathrm{deg}}}^{T,\zeta_{\mathrm{deg}}}$.

The stack $\FM^{T,\zeta\sst}_{\dd}(Q)$ carries a canonical family of $\BoC Q$-modules $\CF$, and so a tautological line bundle, obtained by taking the determinant line bundle of the underlying vector bundle of $\CF$.  As such we obtain the map $\eta\times \id\colon \FM^{T,\zeta\sst}_{\dd}(Q)\rightarrow \B \BoC^*\times \FM^{T,\zeta\sst}_{\dd}(Q) $ where $\eta$ is the classifying morphism of the determinant line bundle. In terms of the morphism $\omega$ from the previous subsection, we can write $\eta=\det \circ \omega$ where $\det\colon \B \Gl_{\dd}^{\wt}\rightarrow \B \BoC^*$ is induced by composing the projection $\Gl_{\dd}^{\wt}\rightarrow \Gl_{\dd}$ with the determinant morphism $\Gl_{\dd}\rightarrow \BoC^*$. Passing to vanishing cycle cohomology, we obtain an action
\[
\vact\colon \HO_{\BoC^*}\otimes \HO\!\CoHA^{T,\zeta}_{Q,W,\dd} \rightarrow \HO\!\CoHA^{T,\zeta}_{Q,W,\dd}.
\] 

We define the morphism 
\[
\Theta\colon \Sym_{\HO_T}\!\left(\HO_{\BoC^*}\otimes \Fg^{T,\zeta}_{Q,W,\theta} \right)\rightarrow \HO\!\CoHA^{T,\zeta}_{Q,W,\theta}
\]
as follows.  Firstly, the domain is defined to be the subspace
\[
\bigoplus_{n\geq 0}\!\left(\left(\HO_{\BoC^*}\otimes \Fg^{T,\zeta}_{Q,W,\theta} \right)^{\otimes_{\HO_T}n}\right)^{\mathfrak{S}_n}\subset\Tens_{\HO_T}\!\left(\HO_{\BoC^*}\otimes\Fg^{T,\zeta}_{Q,W,\theta} \right)
\]
where the symmetric group acts via the symmetric tensor structure on cohomologically graded, $\Lambda_{\theta}^{\zeta,+}$-graded vector spaces $\tau$-twisted as in \S \ref{tau_twist_sec}, and the right hand side is as defined in \S \ref{conventions_sec}.  Secondly, given any element of $(\HO\!\CoHA^{T,\zeta}_{Q,W,\theta})^{\otimes _{\HO_T}n}$ (in particular, any element that is a $\tau$-twisted symetric tensor) we use the iterated Hall algebra multiplication $\vmult$ to evaluate it to an element of $\HO\!\CoHA^{T,\zeta}_{Q,W,\theta}$.  Thirdly, given an element of $\HO_{\BoC^*}\otimes \Fg^{T,\zeta}_{Q,W,\theta} $ we use the inclusion $\Fg^{T,\zeta}_{Q,W,\theta}\subset \HO\!\CoHA^{T,\zeta}_{Q,W,\theta}$ along with the action morphism $\vact$ to obtain an element of $\HO\!\CoHA^{T,\zeta}_{Q,W,\theta}$.  The morphism $\Theta$ is obtained by combining these two operations.
We may now recall the main theorem of \cite{QEAs}.
\begin{theorem} \cite[Thm.C]{QEAs}
\label{PBW_thm}
The PBW morphism $\Theta$ is an isomorphism.
\end{theorem}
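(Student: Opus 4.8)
The plan is to prove that $\Theta$ is an isomorphism by passing to the associated graded for the shifted perverse filtration on the critical CoHA, where the statement reduces to a cohomological integrality theorem in the spirit of Davison--Meinhardt, adapted to the present weighted, $T$-equivariant and relative setting.

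First I would work with the \emph{relative} Hall algebra $\CoHA^{T,\zeta}_{Q,W,\theta}=\bigoplus_{\dd}\JH^{\zeta}_*\phip{\Tr(W)}\BoQ^{\vir}_{\FM^{T,\zeta\sst}_{\dd}(Q)}$, an algebra object for the convolution tensor structure $\boxdot$ on $\Dub^+\!\left(\coprod_{\dd}\CM^{T,\zeta\sst}_{\dd}(Q)\right)$. The direct-sum morphism $\oplus\colon\CM\times_{\B T}\CM\to\CM$ is finite, hence $\oplus_*$ is perverse $t$-exact, and the remaining maps defining $\boxdot$ are $t$-exact for the shifted $t$-structure, so $\boxdot$ is $t$-exact and the shifted perverse filtration $P_n\coloneqq{}^{\fp'}\!\vtau^{\leq n}\CoHA^{T,\zeta}_{Q,W,\theta}$ is a multiplicative filtration by subobjects. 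By \cite[Thm.A]{QEAs} the perverse cohomology of $\CoHA^{T,\zeta}_{Q,W,\dd}$ vanishes in non-positive degrees for $\dd\neq 0$, so $P_0$ is the unit and $\gr^P_1=\bigoplus_{\dd}\BPSh^{T,\zeta}_{Q,W,\dd}$. Together with the $\HO^*(\B\BoC^*,\BoQ)$-action on $\CoHA^{T,\zeta}_{Q,W,\theta}$ induced by the central scalar automorphisms --- whose shadow on cohomology is the action $\vact$ --- this yields a morphism of algebra objects from the free $\tau$-twisted-commutative algebra on $\bigoplus_{\dd}\BPSh^{T,\zeta}_{Q,W,\dd}\otimes\HO^*(\B\BoC^*,\BoQ)$ into $\gr^P\CoHA^{T,\zeta}_{Q,W,\theta}$, the generator $u\in\HO^2(\B\BoC^*,\BoQ)$ raising the perverse degree by one.

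The heart of the proof is to show that this morphism is an isomorphism, i.e. that $\gr^P\CoHA^{T,\zeta}_{Q,W,\theta}$ is \emph{freely} generated, in the $\tau$-twisted-commutative sense, by $\bigoplus_{\dd}\BPSh^{T,\zeta}_{Q,W,\dd}\otimes\HO^*(\B\BoC^*,\BoQ)$. I would argue by induction on the total dimension $\sum_{i\in Q_0}\dd_i$, using the support lemma to control the supports of the perverse-graded pieces of $\CoHA^{T,\zeta}_{Q,W,\dd}$: \'etale-locally near a point of $\CM_{\dd}$ representing a polystable module $\bigoplus_j S_j^{\oplus m_j}$, the coarse moduli space is modelled on a product of symmetric powers of smaller coarse moduli spaces, so the part of $\CoHA^{T,\zeta}_{Q,W,\dd}$ supported over the deeper strata --- those indexed by the nontrivial decompositions $\dd=\sum_r\dd^{(r)}$ with each $\dd^{(r)}\in\Phi_Q^+$ --- is identified, by the inductive hypothesis and the Thom--Sebastiani isomorphism $\TS$, with the degree-$\dd$ component of the free algebra on the lower BPS sheaves, whereas the remaining ``new'' contribution, supported generically, is exactly $\BPSh^{T,\zeta}_{Q,W,\dd}\otimes\HO^*(\B\BoC^*,\BoQ)$. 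Purity of the vanishing-cycle mixed Hodge modules involved --- valid in all cases relevant here, as noted around the isomorphism $\gamma$ in \S\ref{gen_CoHA_sec} --- together with the decomposition theorem guarantees that these pieces split off as direct summands and that no relations beyond $\tau$-commutativity appear; the $\HO^*(\B\BoC^*,\BoQ)=\BoQ[u]$ factor attached to each generator is the cohomology of the $\B\BoC^*$ of scalar automorphisms present at every point of $\FM^{T,\zeta\sst}_{\dd}(Q)$. This is the Davison--Meinhardt cohomological integrality argument, with the weighting $\wt$ and the torus $T$ carried through at notational cost only, and it is the genuine obstacle: it is neither formal nor a routine computation, resting instead on the local structure theory of the coarse moduli spaces and on the purity and decomposition-theorem input just described.

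Finally I would descend from $\gr^P$ to $\HO\!\CoHA^{T,\zeta}_{Q,W,\theta}$ itself. Applying $\HO^*(\CM,-)$ and invoking purity --- which is exactly where the isomorphism $\gamma$ of \S\ref{gen_CoHA_sec} and the Thom--Sebastiani isomorphism enter, so as to commute hypercohomology past $\boxdot$ and past $\gr^P$ --- the isomorphism of algebra objects above yields an isomorphism of graded $\HO_T$-algebras $\Sym_{\HO_T}\!\bigl(\HO^*(\B\BoC^*,\BoQ)\otimes\Fg^{T,\zeta}_{Q,W,\theta}\bigr)\cong\gr^P\HO\!\CoHA^{T,\zeta}_{Q,W,\theta}$, using the injectivity of $\iota$ from \cite[Thm.C]{QEAs} to identify $\gr^P_1\HO\!\CoHA^{T,\zeta}_{Q,W,\theta}$ with $\Fg^{T,\zeta}_{Q,W,\theta}$ and the sign computation of Example \ref{gvs_examp} to match the $\tau$-twisted-commutative algebra with $\Sym_{\HO_T}$. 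The PBW morphism $\Theta$ is filtered, placing $\Fg^{T,\zeta}_{Q,W,\theta}$ in perverse level $1$ with the symmetric powers and the $u$-multiplications contributing additively, and the displayed isomorphism is precisely $\gr^P\Theta$, by the construction of $\Theta$ out of $\vmult$ and $\vact$. Since the perverse filtration is exhaustive and, for each fixed $\dd$ and cohomological degree, finite, a filtered morphism inducing an isomorphism on associated graded is an isomorphism; hence $\Theta$ is an isomorphism.
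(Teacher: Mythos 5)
The paper itself does not prove Theorem \ref{PBW_thm}: it cites it verbatim from \cite{QEAs} and supplies no argument, so there is no internal proof to compare against. What you have written is therefore a reconstruction of the strategy used in that external reference. The outer shell is sound and correctly abstracted: working with the relative algebra object $\CoHA^{T,\zeta}_{Q,W,\theta}$, using finiteness of $\oplus$ to get a multiplicative filtration with $P_0$ the unit and $\gr^P_1=\bigoplus_{\dd}\BPSh^{T,\zeta}_{Q,W,\dd}$, attaching the $\BoQ[u]$-factor via the determinant line bundle, and descending through hypercohomology using the decomposition theorem plus the finiteness of the filtration in each cohomological degree.

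The middle step, which you rightly flag as the real content, is however described in a way that would not close as an argument. First, the \'etale-local model near a polystable point $\bigoplus_j S_j^{\oplus m_j}$ is not a product of symmetric powers of the coarse moduli spaces of the \emph{same} quiver with potential: the correct local model is the moduli of $(m_1,\ldots,m_r)$-dimensional representations of the Ext-quiver of the $S_j$ with its induced potential, which is a different quiver. Consequently the induction cannot be run purely on $\sum_i\dd_i$ for the fixed $(Q,W)$; one must induct over a class of quivers with potential closed under the Ext-quiver construction, and one must verify that the CoHA structure and the $\BoQ[u]$-action are compatible with this change of quiver, which is nontrivial. Second, ``purity together with the decomposition theorem guarantees that these pieces split off as direct summands and that no relations beyond $\tau$-commutativity appear'' is not a valid inference: the decomposition theorem gives a splitting of the perverse filtration, but says nothing about the PBW morphism surjecting onto $\gr^P$ or about the absence of relations. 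What is actually needed is a support lemma --- that the BPS sheaf is generically simple and that the higher perverse cohomologies of the relative CoHA are supported on the small strata where the Ext-quiver local model applies --- together with an injectivity argument for the PBW morphism; neither appears in your sketch. As a strategic outline pointing at the theorem the paper cites, your proposal is a fair summary, but as a proof it has a genuine gap at exactly the step you (correctly) identify as the obstacle.
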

As an immediate corollary, we record the following.
\begin{corollary}
The algebra $\HO\!\CoHA^{T,\zeta}_{Q,W,\theta}$ is generated by $\Fg^{T,\zeta}_{Q,W,\theta}$ and the multiplication by tautological classes.
\end{corollary}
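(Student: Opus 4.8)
The plan is to derive this directly from the surjectivity half of Theorem~\ref{PBW_thm}, essentially just by unwinding the definition of the PBW morphism $\Theta$. First I would record what the image of $\Theta$ looks like: an arbitrary element of the domain of $\Theta$ is a finite $\HO_T$-linear combination of ($\tau$-symmetrised) pure tensors $(\xi_1\otimes g_1)\otimes\cdots\otimes(\xi_n\otimes g_n)$ with $\xi_j\in\HO^*(\B\BoC^*,\BoQ)$ and each $g_j\in\Fg^{T,\zeta}_{Q,W,\dd_j}$ for some dimension vectors $\dd_j$, and by construction $\Theta$ sends such a tensor to the iterated Hall-algebra product $\vmult\bigl(\vact(\xi_1\otimes g_1)\otimes\cdots\otimes\vact(\xi_n\otimes g_n)\bigr)$. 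Hence the image of $\Theta$ is the $\HO_T$-span of all such products of elements of the form $\vact(\xi\otimes g)$.

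Second, I would check that each factor $\vact(\xi\otimes g)$ already lies in the subalgebra $R\subseteq\HO\!\CoHA^{T,\zeta}_{Q,W,\theta}$ generated by $\Fg^{T,\zeta}_{Q,W,\theta}$ together with the operations of multiplication by tautological classes. This is exactly the content of the identity $\eta=\det\circ\omega$ recorded above: the $\HO^*(\B\BoC^*,\BoQ)$-action $\vact$ on $\HO\!\CoHA^{T,\zeta}_{Q,W,\dd}$ factors through the tautological action of $\HO_{\Gl_{\dd}^{\wt}}$ along the ring homomorphism $\det^*\colon\HO^*(\B\BoC^*,\BoQ)\to\HO_{\Gl_{\dd}^{\wt}}$, so that $\vact(\xi\otimes g)$ is nothing but the result of acting on $g\in\Fg^{T,\zeta}_{Q,W,\theta}\subseteq R$ by the tautological class $\det^*(\xi)$; thus $\vact(\xi\otimes g)\in R$. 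Since $R$ is by definition closed under the CoHA multiplication $\vmult$, every product of such factors lies in $R$, and therefore $\operatorname{Image}(\Theta)\subseteq R$.

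Finally, Theorem~\ref{PBW_thm} asserts that $\Theta$ is an isomorphism, in particular surjective, so $R=\HO\!\CoHA^{T,\zeta}_{Q,W,\theta}$, which is precisely the assertion of the corollary. I do not expect any genuine obstacle here: the statement is a repackaging of Theorem~\ref{PBW_thm}, and the only point that needs a line of care is verifying that the auxiliary $\HO^*(\B\BoC^*,\BoQ)$-action appearing in the definition of $\Theta$ is implemented by multiplication by tautological classes, which is immediate from $\eta=\det\circ\omega$.
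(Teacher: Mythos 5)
Your proof is correct and is exactly the intended unwinding of the definition of $\Theta$; the paper states this as an immediate corollary of Theorem~\ref{PBW_thm} without spelling out a proof, and your two-step verification (that $\operatorname{Image}(\Theta)$ lies in the subalgebra generated by $\Fg^{T,\zeta}_{Q,W,\theta}$ and tautological classes because $\vact$ factors through $\det^*$, then surjectivity of $\Theta$) is the argument meant to be supplied by the reader.
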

The following result turns out to be an indispensable tool in calculating cohomology of BPS sheaves.  In particular, in this paper we will use it to identify cohomology of Nakajima quiver varieties with cohomology of certain BPS sheaves (see \S \ref{BPS_to_Nak_sec} for more details).
\begin{proposition}\cite[Lem.4.7]{To17}
\label{Toda_prop}
Let $Q$ be a symmetric quiver, let $W\in \BoC Q/[\BoC Q,\BoC Q]_{\vect}$ be a potential, let $\wt\colon Q_1\rightarrow \BoZ^r$ be a $W$-preserving weighting, and let $\zeta\in\BoQ^{Q_0}$ be a stability condition.  Fix $\dd\in\BoN^{Q_0}$ and consider the commutative diagram
\[
\begin{tikzcd}
\FM^{T,\zeta\sst}_{\dd}(Q) \arrow[d,"\JH^{\zeta}"]\arrow[r,hook,"j"]&\FM^{T}_{\dd}(Q)\arrow[d,"\JH"]
\\
\CM^{T,\zeta\sst}_{\dd}(Q)\arrow[r,"\pi"]&\CM^T_{\dd}(Q).
\end{tikzcd}
\]
There is a canonical isomorphism
\begin{equation}
\label{Toda_iso}
\pi_*\BPSh_{Q,W,\dd}^{T,\zeta}\cong \BPSh^T_{Q,W,\dd}.
\end{equation}

\end{proposition}

\begin{remark}
\label{toda_rem}
Let $\zeta$ be generic, so that $\JH^{\zeta}$ is a $\BoC^*$-gerbe, and the morphism $\Theta$, restricted to degree $\dd\in\BoN^{Q_0}$ becomes the isomorphism
\[
\Theta_{\dd}\colon \HO_{\BoC^*}\otimes \Fg^{T,\zeta}_{Q,W,\theta}\rightarrow  \HO\!\CoHA^{T,\zeta}_{Q,W,\dd}.
\]
Passing to derived global sections, and acting by tautological classes, the isomorphism \eqref{Toda_iso} yields the inclusion
\begin{equation}
\label{jdef}
j\colon \HO_{\BoC^*}\otimes \Fg^{T,\zeta}_{Q,W,\theta}\hookrightarrow \HO\!\CoHA^{T}_{Q,W,\dd}.
\end{equation}
By construction, the morphism 
\[
\jmath=j \circ \Theta_{\dd}^{-1}\colon \HO\!\CoHA^{T,\zeta}_{Q,W,\dd}\rightarrow \HO\!\CoHA^{T}_{Q,W,\dd}
\]
is a section to the restriction morphism $r\colon \HO\!\CoHA^{T}_{Q,W,\dd}\rightarrow  \HO\!\CoHA^{T,\zeta}_{Q,W,\dd}$.
\end{remark}

\subsection{Preprojective CoHAs}
\label{subsection:Preprojective CoHA}
By restriction, a weighting $\wt\colon \tilde{Q}_1\rightarrow \BoZ^r$ induces a weighting $\wt'=\wt\lvert_{\overline{Q}_1}\colon \overline{Q}_1\rightarrow \BoZ^r$.  We assume that $\tilde{W}$ has weight zero with respect to the weighting $\wt$.  If we assume that $Q$ is connected, the preprojective relation $\sum_{a\in Q_1}[a,a^*]$ is homogeneous for the $\wt'$-weighting. We call such a weighting of $\overline{Q}$ \textit{semi-invariant}. Recall that we set $\hbar=\ttt(\wt(a))+\ttt(\wt(a^*))\in\HO^2(\B T,\BoQ)$.  We define the stack $\FM^{T}_{\dd}(\Pi_Q)$ as in \S \ref{moduli_stacks_sec}.  We set
\[
\BoD\BoQ_{\FM^{T}_{\dd}(\Pi_Q)}^{\vir}\coloneqq (\BoD\BoQ_{\FM^{T}_{\dd}(\Pi_Q)})[(\dd,\dd)_Q],
\]
which we may consider as a constructible complex on $\FM^T_{\dd}(\overline{Q})$ via direct image along the inclusion $\FM^T_{\dd}(\Pi_Q)\hookrightarrow\FM^T_{\dd}(\overline{Q})$.  We define the $\BoN^{Q_0}$-graded, cohomologically graded $\HO_T$-module $\HO\!\CoHA_{\Pi_Q}^T$ by setting 
\begin{align*}
\HCoha_{\Pi_Q,\dd}^{T}=&\HO^{\BoMo}(\FM^{T}_{\dd}(\Pi_Q),\BoQ^{\vir})\\
\coloneqq&\HO^*(\FM^{T}_{\dd}(\Pi_Q),\BoD\BoQ_{\FM^{T}_{\dd}(\Pi_Q)}^{\vir}).
\end{align*}
The $\HO_T$-module $\HCoha_{\Pi_Q,\dd}^{T}$ is free by \cite[Cor.9.7]{preproj}.  Let 
\begin{equation}
\label{dim_proj}
p\colon \FM_{\dd}^T(\tilde{Q})\rightarrow \FM_{\dd}^T(\overline{Q})
\end{equation}
be the natural forgetful morphism.  By \cite[Cor.A.9]{Da13} the natural morphism
\begin{equation}
\label{dim_red}
\BoD\BoQ_{\FM^{T}_{\dd}(\Pi_Q)}^{\vir}\rightarrow \pi_*\phip{\Tr(\tilde{W})}\BoQ^{\vir}_{\FM_{\dd}(\tilde{Q})}
\end{equation}
is an isomorphism.  Passing to derived global sections, we obtain the isomorphism of cohomologically graded, $\BoN^{Q_0}$-graded $\HO_T$-modules
\begin{equation}
\label{gdim_red}
\dimred\colon \HCoha^T_{\Pi_Q}\cong \HCoha^T_{\tilde{Q},\tilde{W}}.
\end{equation}
We endow $\HCoha^T_{\Pi_Q}$ with an algebra structure by pulling back the algebra structure on $\HCoha^T_{\tilde{Q},\tilde{W}}$, defined as a special case of the construction in \S \ref{gen_CoHA_sec}, via the isomorphism $\dimred$.  By \cite[Thm.4.4]{KR17}, \cite{YZ16} this\footnote{To be precise, we have that $\dimred^*\vmult_{\dd',\dd''}$ differs from the Schiffmann--Vasserot multiplication by the sign $(-1)^{\dd'\cdotsh\dd''}$.  See \cite[Lem.4.1]{KR17} for the provenance of this sign, which appears again in the proof of Proposition \ref{mult_compat}.} is the same algebra structure as was introduced by Schiffmann and Vasserot in \cite{ScVa13} (and then generalised from the case of the Jordan quiver by Yang and Zhao in \cite{YZ18}).

\begin{definition}
\label{BPSLAdef}
We define the \textit{BPS Lie algebra} for the category of $\Pi_Q$-modules
\[
\Fn_{\Pi_Q}^{T,+}\coloneqq\dimred^{-1}(\Fg_{\tilde{Q},\tilde{W}}^{T}).
\]
\end{definition}
Since we consider $\Fg_{\tilde{Q},\tilde{W}}^{T}$ as a subobject of $\HCoha^T_{\tilde{Q},\tilde{W}}$, we may consider $\Fn_{\Pi_Q}^{T,+}$ as a subobject of $\HCoha^T_{\Pi_Q}$, via $\dimred$ and a Lie subalgebra for the commutator Lie bracket.  As usual, since the stability condition $\zeta$ is omitted, we mean the case $\zeta_{\mathrm{deg}}=(0,\ldots,0)$.

The BPS Lie algebra carries a cohomological grading, so that we can take its characteristic function, yielding the connection with Kac polynomials:
\begin{proposition}\cite[\S 8.1]{preproj}
\label{BPS_char_function}
There is an equality of generating series 
\begin{align*}
\chi_{q^{1/2}}(\Fn^+_{\Pi_Q,\dd})\coloneqq& \sum_{k\in \BoZ}\dim((\Fn^+_{\Pi_Q,\dd})^k)q^{k/2}\\
=&\kac_{Q,\dd}(q^{-1}).
\end{align*}
\end{proposition}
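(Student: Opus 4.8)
The plan is to reduce the statement, via dimensional reduction, to identifying the generating series of the BPS cohomology of the tripled quiver $(\tilde Q,\tilde W)$, and then to pin that series down in two ways --- from above by the cohomological integrality theorem (Theorem~\ref{PBW_thm}), and from below by Hua's combinatorial formula for the stacky point counts of preprojective moduli stacks, in the form it takes after Mozgovoy's reformulation in terms of Kac polynomials. Concretely, the dimensional reduction isomorphism \eqref{gdim_red} is an isomorphism of $\BoN^{Q_0}$-graded, cohomologically graded vector spaces, and by Definition~\ref{BPSLAdef} it restricts to an isomorphism $\Fn^+_{\Pi_Q,\dd}\xrightarrow{\sim}\Fg_{\tilde Q,\tilde W,\dd}$ for every $\dd$ (here the torus is trivial, as dictated by our conventions, since it is absent from the notation in the statement). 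Hence $\chi_{q^{1/2}}(\Fn^+_{\Pi_Q,\dd})=\chi_{q^{1/2}}(\Fg_{\tilde Q,\tilde W,\dd})$, and it suffices to prove $\chi_{q^{1/2}}(\Fg_{\tilde Q,\tilde W,\dd})=\kac_{Q,\dd}(q^{-1})$. I would then pass to the generating series $Z(x)\coloneqq\sum_{\dd}\chi_{q^{1/2}}(\HO\!\CoHA_{\tilde Q,\tilde W,\dd})\,x^{\dd}$ and $Z_{\mathrm{BPS}}(x)\coloneqq\sum_{\dd}\chi_{q^{1/2}}(\Fg_{\tilde Q,\tilde W,\dd})\,x^{\dd}$, regarded as formal power series in the monomials $x^{\dd}$ with coefficients that are Laurent polynomials in $q^{1/2}$.

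The first description of $Z(x)$ is the cohomological integrality theorem. Taking $T=\{1\}$, Theorem~\ref{PBW_thm} identifies the underlying graded vector space of $\HO\!\CoHA_{\tilde Q,\tilde W}$ with $\Sym(\HO^*(\B\BoC^*,\BoQ)\otimes\Fg_{\tilde Q,\tilde W})$, the symmetric power being formed in the $\tau$-twisted symmetric monoidal category of \S\ref{tau_twist_sec} relative to $\tilde Q$. By Example~\ref{gvs_examp} the $\tau$-twisted symmetriser acts by the Koszul sign $(-1)^{\chi_{\tilde Q}(\dd,\dd')}$, which is exactly what makes $\Sym$ correspond, after taking graded dimensions, to the plethystic exponential $\Exp$; since $\HO^*(\B\BoC^*,\BoQ)$ contributes the factor $(1-q)^{-1}$, this yields
\[
Z(x)=\Exp\!\left(\frac{Z_{\mathrm{BPS}}(x)}{1-q}\right).
\]

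The second description of $Z(x)$ is combinatorial. By \eqref{gdim_red} we have $\HO\!\CoHA_{\tilde Q,\tilde W,\dd}\cong\HO^{\BM}(\FM_{\dd}(\Pi_Q),\BoQ^{\vir})$, and the mixed Hodge structure on the right-hand side is pure of Tate type --- this is the purity behind \cite[Cor.~9.7]{preproj}. Therefore its virtual Poincar\'e series coincides with its $E$-polynomial, and hence, by the comparison between $E$-polynomials of polynomial-count stacks and point counts, with the stacky count $|\mu_{\dd}^{-1}(0)(\BoF_q)|/|\Gl_{\dd}(\BoF_q)|$, interpreted as a Laurent polynomial in $q^{1/2}$ via $q=(q^{1/2})^2$. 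Hua's formula --- equivalently, Mozgovoy's identification \cite{Moz11} of the refined invariants of $\Jac(\tilde Q,\tilde W)$ with Kac polynomials --- evaluates the resulting generating series as $\Exp\!\left((1-q)^{-1}\sum_{\dd\neq 0}\kac_{Q,\dd}(q^{-1})\,x^{\dd}\right)$. Comparing this with the displayed identity and applying the plethystic logarithm (inverse to $\Exp$) gives $Z_{\mathrm{BPS}}(x)=\sum_{\dd}\kac_{Q,\dd}(q^{-1})\,x^{\dd}$, which is the claim; in particular each $\Fg_{\tilde Q,\tilde W,\dd}$, and hence each $\Fn^+_{\Pi_Q,\dd}$, is finite-dimensional.

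The main difficulty I anticipate is not conceptual but the bookkeeping of normalisations: tracking the cohomological shifts hidden in the two occurrences of $\BoQ^{\vir}$ on either side of dimensional reduction, verifying that the $\tau$-twisted $\Sym$ really translates into $\Exp$ with exactly the prefactor $(1-q)^{-1}$ and no stray half-integer shift, and writing Hua's formula in exactly the normalisation --- including the substitution $q\leftrightarrow q^{-1}$ and the choice between $(1-q)^{-1}$ and $(1-q^{-1})^{-1}$ --- that matches the integrality side term by term. Once these conventions are aligned, the proof is simply integrality, plus purity, plus Hua/Mozgovoy, combined through the invertibility of the plethystic exponential.
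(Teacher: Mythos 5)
Your proposal reconstructs exactly the argument that the paper delegates to \cite[\S 8.1]{preproj}: dimensional reduction to $(\tilde Q,\tilde W)$, cohomological integrality (Theorem~\ref{PBW_thm}) to write the CoHA generating series as $\Exp\bigl(Z_{\mathrm{BPS}}/(1-q)\bigr)$, purity of $\HO^{\BM}(\FM_{\dd}(\Pi_Q),\BoQ^{\vir})$ to identify its $q^{1/2}$-series with the stacky point count, Hua/Mozgovoy to evaluate that count, and the plethystic logarithm to isolate the Kac polynomials. This is the same route as the cited proof; the one small inaccuracy is that purity itself is not literally the content of \cite[Cor.~9.7]{preproj} (that corollary is the $\HO_T$-freeness consequence), but the underlying purity theorem is indeed in \cite{preproj} and your use of it is correct.
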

The next proposition says that, considering $\Fn^{T,+}_{\Pi_Q}$ as a family of Lie algebras over $\Ft^*$, it is in fact the trivial deformation of the algebra $\Fn^{+}_{\Pi_Q}$ defined by setting all of the equivariant parameters to be zero.
\begin{proposition}\cite[Cor.11.9]{DHSM23}
\label{Triv_BPS_ext}
Let $\wt\colon\tilde{Q}_1\rightarrow \BoZ^r$ be a weighting, with associated torus $T=\Hom_{\Grp}(\BoZ^r,\BoC^*)$, and assume that $\tilde{W}$ is $\wt$-invariant.  There is an isomorphism of Lie algebras
\[
\Fn^{T,+}_{\Pi_Q}\cong \Fn^+_{\Pi_Q}\otimes \HO_T
\]
where the Lie bracket on the right hand side is the $\HO_T$-linear extension of the Lie bracket on $\Fn^+_{\Pi_Q}$: if $\alpha,\beta\in \fn^+_{\Pi_Q}$ and $p,q\in \HO_T$ then $[\alpha\otimes p,\beta\otimes q]\coloneqq [\alpha,\beta]\otimes pq$.
\end{proposition}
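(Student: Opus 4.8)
The plan is to combine two inputs: \textbf{(a)} purity, to pin down $\Fn^{T,+}_{\Pi_Q}$ as a $\BoZ^{Q_0}$-graded, cohomologically graded $\HO_T$-module, and \textbf{(b)} the structure theory of \cite{preproj3,DHSM22,DHSM23}, to pin down the bracket. The reduction is as follows. Both the Hall product $\vmult$, and hence the commutator Lie bracket on $\HCoha^T_{\Pi_Q}$, are $\HO_T$-bilinear, so the bracket on $\Fn^{T,+}_{\Pi_Q}$ is determined $\HO_T$-bilinearly by its values on any set of homogeneous $\HO_T$-module generators. It therefore suffices to exhibit a homogeneous $\HO_T$-basis of $\Fn^{T,+}_{\Pi_Q}$ reducing modulo $\HO_T^{>0}$ to a $\BoQ$-basis of $\Fn^+_{\Pi_Q}$, and to show that in such a basis the structure constants of the bracket lie in $\BoQ\subset\HO_T$ and coincide with the non-equivariant ones.

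\emph{Step (a): the module structure.} The $\HO_T$-module $\HCoha^{T}_{\Pi_Q,\dd}$ is free by \cite[Cor.9.7]{preproj}, and all the relevant mixed Hodge structures are pure (after $\dimred$, the BPS sheaf $\BPSh^{T}_{\tilde Q,\tilde W,\dd}$ is built from intersection complexes of the singular varieties $\CM_\dd(\Pi_Q)$, whose equivariant cohomology is pure and hence free over $\HO_T$). Purity degenerates the equivariant spectral sequences, so $\Fn^{T,+}_{\Pi_Q,\dd}$ is a free $\HO_T$-module with $\Fn^{T,+}_{\Pi_Q,\dd}\otimes_{\HO_T}\BoQ\cong \Fn^{+}_{\Pi_Q,\dd}$, the specialisation map being induced by the algebra map $\HCoha^T_{\Pi_Q}\to\HCoha_{\Pi_Q}$; its graded rank equals $\kac_{Q,\dd}(q^{-1})$ by Proposition \ref{BPS_char_function}. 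Lifting a homogeneous $\BoQ$-basis of $\Fn^+_{\Pi_Q}$ produces the required basis and identifies $\Fn^{T,+}_{\Pi_Q}\cong \Fn^{+}_{\Pi_Q}\otimes\HO_T$ as graded $\HO_T$-modules.

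\emph{Step (b): the bracket.} By \cite{DHSM22,DHSM23}, $\Fn^{+}_{\Pi_Q}$ is the positive half $\fn^+_{\BoQ,V}$ of a generalised Kac--Moody Lie algebra $\fg_{\BoQ,V}$ whose Cartan datum $V$ — the space of positive Chevalley generators — is the sum of intersection cohomologies of the varieties $\CM_\dd(\Pi_Q)$ (cf.\ Corollary \ref{OQ_cor}). Running the same support and recursion arguments $T$-equivariantly, and using Step (a) together with the freeness over $\HO_T$ of the equivariant intersection cohomology of $\CM_\dd(\Pi_Q)$, one identifies $\Fn^{T,+}_{\Pi_Q}$ with the positive half $\fn^+_{\HO_T,V^T}$ of the GKM Lie algebra over $\HO_T$ with Cartan datum $V^T\cong V\otimes\HO_T$; that no relations beyond the Serre relations appear is forced by the graded-rank count of Step (a) (equivalently by Proposition \ref{PBW_thm}, upgrading the tautological surjection $\fn^+_{\HO_T,V\otimes\HO_T}\twoheadrightarrow\Fn^{T,+}_{\Pi_Q}$ to an isomorphism). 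The formal identity $\fg_{A,V\otimes A}=\fg_V\otimes A$ of \S\ref{GKM_sec}, applied with $A=\HO_T$, then gives $\Fn^{T,+}_{\Pi_Q}=\fn^+_{\HO_T,V\otimes\HO_T}=\fn^+_{\BoQ,V}\otimes\HO_T=\Fn^{+}_{\Pi_Q}\otimes\HO_T$ with the bracket on the right the $\HO_T$-linear extension, as claimed.

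\emph{Main obstacle.} Step (a) is essentially bookkeeping with purity; the real content is the equivariant refinement in Step (b), namely identifying the equivariant Chevalley-generator space with $V\otimes\HO_T$ and checking that neither the generators nor the relations acquire $\HO_T$-corrections. Note that a cohomological-degree argument is \emph{not} enough: a correction term could sit in $(\Fn^+_{\Pi_Q})^{j}\otimes\HO_T^{2i}$ with $j<k'+k''$, $i>0$, and since BPS cohomology lives in a range of degrees this is not excluded by degree, nor (as all classes are pure of weight equal to their degree) by weight. One must genuinely exploit the sheaf-theoretic description of the generators as cohomology of small maps to $\CM_\dd(\Pi_Q)$ and the dimension count of Propositions \ref{BPS_char_function} and \ref{PBW_thm}. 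An alternative, more self-contained route would be to classify the $\HO_T^{>0}$-adic deformation of $\Fn^+_{\Pi_Q}\otimes\HO_T$ via the relevant internal degree of the Chevalley--Eilenberg cohomology $\HO^2(\Fn^+_{\Pi_Q};\Fn^+_{\Pi_Q})$ and to show the obstruction classes vanish; but controlling that cohomology again seems to require the GKM presentation, so I expect the structure-theory route to be the efficient one.
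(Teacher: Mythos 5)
The paper does not prove this statement; it is imported verbatim as \cite[Cor.11.9]{DHSM23}, so there is no internal proof to compare against. That said, your proposed argument is correct and uses exactly the ingredients that the cited result rests on: the equivariant generalised Kac--Moody presentation of Theorem~\ref{KMA_thm} (= Thm.~11.2 of \cite{DHSM23}), the identification $\HO^*(\CM^T(\Pi_Q),\CG(Q))\cong \HO^*(\CM(\Pi_Q),\CG(Q))\otimes\HO_T$ coming from purity/equivariant formality of the intersection cohomology of the singular $\CM_\dd(\Pi_Q)$, and the formal base-change identity $\fg_{A,V\otimes A}=\fg_V\otimes A$ recorded in \S\ref{GKM_sec}, which holds because the Serre relations involve only $(-,-)_Q$ and are therefore defined over $\BoQ$.

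One remark on efficiency: your Step~(b) is doing more work than necessary. Since Theorem~\ref{KMA_thm} is already stated for arbitrary $T$ and yields a genuine isomorphism $\fn^{T,+}_{\HO^*(\CM^T(\Pi_Q),\CG(Q))}\xrightarrow{\cong}\Fn^{T,+}_{\Pi_Q}$, there is no "tautological surjection" whose injectivity needs to be verified by a graded-rank count, and no danger of hidden relations beyond Serre. The chain of isomorphisms
\[
\Fn^{T,+}_{\Pi_Q}\cong\fn^{T,+}_{V^T}\cong\fn^{+}_{\HO_T,\,V\otimes\HO_T}=\fn^{+}_{\BoQ,V}\otimes\HO_T\cong\Fn^{+}_{\Pi_Q}\otimes\HO_T
\]
closes the argument once $V^T\cong V\otimes\HO_T$ as graded $\HO_T$-modules is known, which is your Step~(a). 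Relatedly, invoking Proposition~\ref{BPS_char_function} for the equivariant ranks is a little backwards: that proposition is stated non-equivariantly, and the input you actually need — freeness of $\Fn^{T,+}_{\Pi_Q,\dd}$ over $\HO_T$ with reduction $\Fn^{+}_{\Pi_Q,\dd}$ — comes from purity and \cite[Cor.9.7]{preproj} independently of Kac polynomials. The deformation-theoretic alternative you sketch at the end would indeed be circular in spirit, since controlling the relevant Lie algebra cohomology of $\Fn^{+}_{\Pi_Q}$ essentially requires the GKM presentation that makes the direct argument painless.
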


\subsection{Relative and perverse associated graded CoHA}
\label{perverse_CoHA_sec}
The morphism $\oplus\colon \CM(\Pi_Q)\times\CM(\Pi_Q)\rightarrow \CM(\Pi_Q)$ taking a pair of modules to their direct sum is $T$-equivariant and finite by \cite[Lem.2.1]{Meinhardt14}.  We denote by the same symbol the induced finite morphism of stacks
\[
\oplus\colon\CM^T(\Pi_Q)\times_{\B T}\CM^T(\Pi_Q)\rightarrow \CM^T(\Pi_Q).
\]
In this section we denote by $\JH_{\dd}\colon \FM^{T}_{\dd}(\Pi_Q)\rightarrow \CM^T_{\dd}(\Pi_Q)$ the morphism to the stack-theoretic quotient of the coarse moduli space. By \cite[Thm.C]{Da21a} the decomposition theorem holds for the morphism $\JH_{\dd}$, for the complex $\BoD\BoQ_{\FM^T_{\dd}(\Pi_Q)}^{\vir}$.  More precisely, we can write
\begin{align}
\JH_{\dd,*}\BoD\BoQ_{\FM^T_{\dd}(\Pi_Q)}^{\vir}\cong&\bigoplus_{i\in2\cdot \BoZ_{\geq 0}}{}^{\mathfrak{p}'}\!\!\Ho^i(\JH_{\dd,*}\BoD\BoQ_{\FM^T_{\dd}(\Pi_Q)}^{\vir})\label{dec_thm_1}\\
\cong&\bigoplus_{i\in 2\cdot\BoZ_{\geq 0}}\bigoplus_{j\in S_i} \ICS_{\overline{Z_j/T}}(\CL_j)[\dim(Z_j)-i]\nonumber
\end{align}
where $Z_j\subset X_{\dd}(\Pi_Q)$ are $T$-invariant locally closed irreducible smooth subvarieties of $\CM_{\dd}(\Pi_Q)$, and $\CL_j$ are simple local systems on $Z_j/T$.  We define 
\[
\Coha_{\Pi_Q,\dd}^T\coloneqq \JH_{\dd,*}\BoD\BoQ_{\FM^T_{\dd}(\Pi_Q)}^{\vir};\quad\quad\quad\CoHA^T_{\Pi_Q}\coloneqq\bigoplus_{\dd\in\BoN^{Q_0}}\CoHA^T_{\Pi_Q,\dd}.
\]

Consider the commutative diagram
\begin{equation}
\label{forg_diag}
\xymatrix{
\FM^T(\tilde{Q})\ar[d]^{\tilde{\JH}}\ar[r]^{p}&\FM^T(\overline{Q})\ar[d]^{\overline{\JH}}\\
\CM^T(\tilde{Q})\ar[r]^{\varpi}&\CM^T(\overline{Q}),
}
\end{equation}
where the two vertical maps are the usual morphisms, obtained by taking the $T$-quotient of the respective affinization morphisms. Since $\varpi$ is a morphism of monoids, the direct image $\varpi_*\Coha_{\tilde{Q},\tilde{W}}^T$ of the algebra object $\Coha_{\tilde{Q},\tilde{W}}^T$ inherits an algebra structure from the algebra structure $\vmult$ on $\Coha_{\tilde{Q},\tilde{W}}^T$.  Furthermore, via \eqref{dim_red} we have
\begin{align*}
\varpi_*\Coha_{\tilde{Q},\tilde{W}}^T\cong& \ol{\JH}_*p_*\phip{\Tr(\tilde{W})}\BoQ^{\vir}_{\FM(\tilde{Q})}\\
\cong &\Coha_{\Pi_Q}^T.
\end{align*}
So $\Coha_{\Pi_Q}^T$ inherits an algebra structure, and the algebra structure on $\HCoha_{\Pi_Q}^T$ induced by applying the derived global sections functor to $\Coha_{\Pi_Q}^T$ is the same as the one induced via the isomorphism \eqref{gdim_red}.

We set 
\begin{equation}
\label{ZPSA}
\Coha_{\Pi_Q}^{T,0}\coloneqq {}^{\Fp'}\!\!\vtau^{\leq 0}\Coha_{\Pi_Q}^{T}.  
\end{equation}
This carries the multiplication induced by composing the morphisms
\[
\Coha_{\Pi_Q}^{T,0}\boxdot\Coha_{\Pi_Q}^{T,0}\cong {}^{\Fp'}\!\!\vtau^{\leq 0}(\Coha_{\Pi_Q}^T\boxdot \Coha_{\Pi_Q}^T)\xrightarrow{{}^{\Fp'}\!\!\vtau^{\leq 0}\vmult}{}^{\Fp'}\!\!\vtau^{\leq 0}\Coha_{\Pi_Q}^T
\]
where $\vmult$ is the multiplication morphism for $\Coha_{\Pi_Q}^T$.  We set 
\[
\FP^0\!\HCoha_{\Pi_Q}^{T}\coloneqq \HO^*(\CM^T(\Pi_Q),\Coha_{\Pi_Q}^{T,0}),
\]
which is an associative algebra, as it is obtained by taking derived global sections of the algebra object $\Coha_{\Pi_Q}^{T,0}$ in $\Perv'(\CM^T(\Pi_Q))$.  The natural transformation ${}^{\Fp'}\!\!\vtau^{\leq 0}\rightarrow \id$ induces the morphism of algebra objects $\FP^0\!\HCoha_{\Pi_Q}^{T}\hookrightarrow \HCoha_{\Pi_Q}^T$ which is an inclusion by the decomposition \eqref{dec_thm_1}; it is the inclusion of the zeroth piece of the perverse filtration induced by this decomposition.  The inclusion $\Fn_{\Pi_Q,\dd}^{T,+}\hookrightarrow \HCoha_{\Pi_Q}^T$ factors through the inclusion of $\FP^0\!\HCoha_{\Pi_Q}^{T}$.

More generally we define 
\[
\FP^n\HO\!\CoHA_{\Pi_Q,\dd}^T=\HO^*(\CM^T(\Pi_Q),{}^{\Fp'}\!\vtau^{\leq n}\!\CoHA_{\Pi_Q,\dd}^T),
\]
defining an ascending filtration on $\HO\!\CoHA_{\Pi_Q}^T$ (again using \eqref{dec_thm_1}), which is respected by the algebra structure on $\HO\!\CoHA_{\Pi_Q}^T$.  This is the perversely filtered algebra introduced and studied in \cite{Da21a,preproj3}.  We denote by $\Gr_{\FP}\HO\!\CoHA_{\Pi_Q}^T$ the resulting associated graded algebra.

\begin{remark}
\label{less_perv_rem}
There is a second natural perverse filtration on $\HO\!\CoHA_{\Pi_Q}^T$, induced by the isomorphism $\dimred\colon \HO\!\CoHA_{\Pi_Q}^T\cong \HO\!\CoHA_{\tilde{Q},\tilde{W}}^T$.  By \cite{QEAs}, an arbitrary critical cohomological Hall algebra carries a perverse filtration, defined in this instance via the perverse t-structure on the category of constructible complexes on $\CM^T(\tilde{Q})$, and we may pull back the resulting filtration on $\HO\!\CoHA_{\tilde{Q},\tilde{W}}^T$ along $\dimred$.  This is \textit{not} the same as the perverse filtration that we consider in this paper, which is introduced as the ``less perverse'' filtration in \cite{preproj3}.
\end{remark}

\subsection{Realisations of Kac--Moody algebras}
\label{GKM_thm_sec}
We summarise the main results of \cite{DHSM23}.  Firstly, by \cite[Thm.B]{preproj3}, for each primitive positive root $\dd\in\Sigma_Q$ there is a canonical inclusion $\Upsilon_{\dd}\colon \ICS(\CM^T_{\dd}(\Pi_Q))\hookrightarrow \CoHA_{\Pi_Q,\dd}^{T,0}$.  We set $\CG(Q)_{\dd}\coloneqq\ICS(\CM^T_{\dd}(\Pi_Q))$ for $\dd\in\Sigma_Q$. For $\dd\in \Phi_Q^+$ not a primitive positive root, let $\dd'\in \Sigma_{Q}$ be the unique imaginary simple root such that $\dd= n \dd'$ for some $n\in\BoZ_{\geq 2}$.  Let $\Diag_n\colon \CM^T_{\dd'}(\Pi_Q)\rightarrow \CM^T_{\dd}(\Pi_Q)$ be the embedding taking a semisimple module $M$ to its $n$-fold direct sum.  Then by \cite[Thm.7.37]{Da21a} there is a canonical inclusion $\Upsilon_{\dd}\colon\Diag_{n,*}\ICS(\CM^T_{\dd'}(\Pi_Q))\hookrightarrow  \CoHA_{\dd}^{T,0}(\Pi_Q)$.  We set $\CG(Q)_{\dd}\coloneqq \Diag_{n,*}\ICS(\CM^T_{\dd'}(\Pi_Q))$ for such roots.  Then we set
\[
\CG(Q)\coloneqq\bigoplus_{\dd\in\Phi^+_Q}\CG(Q)_{\dd}.
\]
\begin{theorem}\cite[Thm.1.1, Thm.11.2, Thm.1.16]{DHSM23}
\label{KMA_thm}
The embedding $\Upsilon\colon\CG(Q)\hookrightarrow \CoHA^{T,0}_{\Pi_Q}$ extends uniquely to an isomorphism of algebra objects 
\[
\UEA_{\boxdot}\!\left(\Fn^{T,+}_{\CG(Q)}\right)\cong \CoHA_{\Pi_Q}^{T,0}.
\]
Taking derived global sections, we deduce that there is an isomorphism
\[
\UEA_{\HO_T}\left(\Fn^{T,+}_{\HO^*(\CM^T(\Pi_Q),\CG(Q))}\right)\cong \FP^0\!\HO\!\CoHA^T_{\Pi_Q}.
\]
Composing this isomorphism with the natural inclusion of $\Fn^{T,+}_{\HO^*(\CM^T(\Pi_Q),\CG(Q))}$ into the left hand side, we obtain an isomorphism of Lie algebras
\[
h\colon\Fn^{T,+}_{\HO^*(\CM^T(\Pi_Q),\CG(Q))}\cong \fn^{T,+}_{\Pi_Q}.
\]
\end{theorem}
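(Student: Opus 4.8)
The isomorphism $h$ is a formal consequence of the algebra-object isomorphism $\UEA_\boxdot(\Fn^{T,+}_{\CG(Q)})\cong\CoHA^{T,0}_{\Pi_Q}$ in $\Perv'(\CM^T(\Pi_Q))$ asserted above: one applies derived global sections on $\CM^T(\Pi_Q)$ to obtain the second display, then restricts to the Lie subalgebra generated by the Chevalley generators. So I would prove that isomorphism in three stages: (i) build an algebra-object morphism $\psi$ out of the embeddings $\Upsilon_\dd$; (ii) prove $\psi$ is an isomorphism by matching fibrewise Poincar\'e series; (iii) descend to cohomology and read off $h$.

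\emph{Stage (i): the morphism.} For a primitive positive root $\dd\in\Sigma_Q$ the coarse space $\CM^T_\dd(\Pi_Q)$ has a dense open locus of stable $\Pi_Q$-modules, and by \cite[Thm.B]{preproj3} the intersection complex $\CG(Q)_\dd=\ICS(\CM^T_\dd(\Pi_Q))$ is canonically the full-support summand sitting in perverse degree $0$ of $\CoHA^{T}_{\Pi_Q,\dd}$; this gives $\Upsilon_\dd\colon\CG(Q)_\dd\hookrightarrow\CoHA^{T,0}_{\Pi_Q,\dd}$. For $\dd=n\dd'$ with $\dd'\in\Sigma_Q$ imaginary one pushes $\Upsilon_{\dd'}$ forward along the finite diagonal $\Diag_n$, which still lands in $\CoHA^{T,0}$ by \cite[Prop.7.2]{preproj3}. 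Since $\CoHA^{T,0}_{\Pi_Q}$ is an algebra object --- its product being the ${}^{\Fp'}\!\vtau^{\le 0}$-truncation of the relative Hall product as in \eqref{ZPSA} --- the universal property of $\Tens_\boxdot(\CG(Q))$ extends $\Upsilon=\bigoplus_\dd\Upsilon_\dd$ to an algebra morphism $\tilde\psi\colon\Tens_\boxdot(\CG(Q))\to\CoHA^{T,0}_{\Pi_Q}$, and it remains to check that $\tilde\psi$ kills the relators $\SR_{\dd',\dd''}$ of \S\ref{GKM_perv_sec}. For a real simple root $\delta_i$ one has $\CM^T_{\delta_i}(\Pi_Q)=\B T$, the generator $\CG(Q)_{\delta_i}=\BoQ_{\B T}$ generates an $\mathfrak{sl}_2$-triple inside the BPS Lie algebra, and the Serre relation follows --- Gabber--Kac style --- from local finiteness of this $\mathfrak{sl}_2$-action on the cohomology of Nakajima quiver varieties together with faithfulness of the BPS action on $\NakMod^{T_\bullet}_Q$ (cf.\ Corollary~\ref{faithful_cor}); the orthogonality relation $[\,\cdot\,,\,\cdot\,]=0$ for $(\dd',\dd'')_Q=0$ is a support-and-degree computation in $\CoHA^{T,0}$, facilitated by the Schiffmann--Vasserot shuffle presentation of $\HO\!\CoHA^T_{\Pi_Q}$ \cite{ScVa13} available under \eqref{gdim_red}, using that generic stables of dimensions $\dd'$ and $\dd''$ then have no extensions in either direction. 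This produces $\psi$.

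\emph{Stage (ii): $\psi$ is an isomorphism.} Surjectivity is an induction on $\dd$ via the decomposition theorem \eqref{dec_thm_1}: each simple summand of $\CoHA^{T,0}_{\Pi_Q,\dd}$ is either the full-support term $\ICS(\CM^T_\dd(\Pi_Q))$, in $\image(\Upsilon_\dd)$ or in the image of a diagonal pushforward, or is supported on a proper stratum $\image(\CM^T_{\dd^{(1)}}\times_{\B T}\cdots\times_{\B T}\CM^T_{\dd^{(k)}}\to\CM^T_\dd(\Pi_Q))$ and hence lies in the image of the Hall product of strictly smaller pieces, so in $\image(\psi)$ by induction. For injectivity I would compare classes in the Grothendieck group of $\Perv'(\CM^T(\Pi_Q))$. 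On the target, two-dimensional cohomological integrality \cite{Da21a,preproj3} identifies $\CoHA^{T,0}_{\Pi_Q}$ with the enveloping algebra object of the BPS Lie algebra object, whose class in degree $\dd$ is, modulo deeper-stratum corrections, $\ICS(\CM^T_\dd(\Pi_Q))$ with multiplicity space of graded dimension $\kac_{Q,\dd}$ (Proposition~\ref{BPS_char_function}). On the source, the class of $\UEA_\boxdot(\Fn^{T,+}_{\CG(Q)})$ is determined by the generalised Kac--Moody PBW theorem of \cite{Bor88} recalled in \S\ref{GKM_sec}: it is the universal ``inverse GKM denominator'' built from the classes $\CG(Q)_\dd$ with Cartan datum the forms $(\dd',\dd'')_Q$. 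Equality of the two classes is then precisely the assertion that the Kac-polynomial generating function obeys this GKM denominator identity relative to $\Phi^+_Q$ and the intersection cohomologies of the $\CM^T_\dd(\Pi_Q)$ --- a Hua-type identity established in this framework in \cite{BSV17,preproj,preproj3}. A surjection of objects of equal class is an isomorphism, so $\psi$ is one.

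\emph{Stage (iii): descent.} Applying $\HO^*(\CM^T(\Pi_Q),-)$: all the relevant complexes are pure and have $\HO_T$-free hypercohomology (by \cite[Cor.9.7]{preproj} and \eqref{dec_thm_1}), so this functor is strong monoidal on them and carries $\UEA_\boxdot$ to $\UEA_{\HO_T}$, giving the second display. Inside the left-hand side, the Lie subalgebra generated by $\bigoplus_\dd\HO^*(\CM^T_\dd(\Pi_Q),\CG(Q)_\dd)$ is by construction $\Fn^{T,+}_{\HO^*(\CM^T(\Pi_Q),\CG(Q))}$; its image in $\FP^0\HO\!\CoHA^T_{\Pi_Q}\subset\HO\!\CoHA^T_{\Pi_Q}$ is the Lie algebra generated by these classes. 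Since $\psi$ identifies the Lie algebra object $\Fn^{T,+}_{\CG(Q)}$ with the BPS Lie algebra object, whose derived global sections are $\Fg^T_{\tilde Q,\tilde W}$ and hence, after pulling back along $\dimred$, the BPS Lie algebra $\Fn^{T,+}_{\Pi_Q}$ of Definition~\ref{BPSLAdef}, that image is exactly $\fn^{T,+}_{\Pi_Q}$, and tracing the identifications yields $h$.

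\textbf{Main obstacle.} The crux is the injectivity of $\psi$ --- equivalently, that the Serre relations exhaust the relations among the Chevalley generators, equivalently that the fibrewise Poincar\'e series of $\CoHA^{T,0}_{\Pi_Q}$ equals the Borcherds-predicted value. This amounts to the statement that Kac's counting combinatorics coincides \emph{exactly} with generalised Kac--Moody combinatorics for the Cartan datum $\big((\dd,\dd')_Q,\ \dim\IH^*(\CM^T_\dd(\Pi_Q))\big)$ on $\Phi^+_Q$; its geometric proof, together with the verification of the Serre relations through the integrable lowest-weight decomposition of $\NakMod^{T_\bullet}_Q$, is the technical heart of \cite{DHSM23}.
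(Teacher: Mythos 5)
The paper does not prove this theorem; it imports it verbatim from \cite{DHSM23} (the labels Thm.~1.1, Thm.~11.2, Thm.~1.16 in the citation). There is therefore no ``paper's own proof'' to compare against, and your sketch has to stand on its own.

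At the level of architecture your three-stage plan (universal property of the free algebra $\to$ isomorphism via Grothendieck-class comparison $\to$ descent to hypercohomology) is the right shape, and your closing paragraph correctly identifies the technical heart as the coincidence of Kac's combinatorics with GKM combinatorics for the Cartan datum built out of intersection cohomology. However, there is a genuine circularity in Stage~(i): you invoke Corollary~\ref{faithful_cor} (faithfulness of the $\HO\!\CoHA^T_{\Pi_Q}$-action on $\NakMod^{T_\bullet}_Q$) to close the Serre relations. In this paper Corollary~\ref{faithful_cor} is deduced in \S\ref{corrs_sec} from Corollary~\ref{Y_cor}, which rests on Theorem~\ref{main_thm}, which rests on Theorem~\ref{NakBPS_prop}, which rests on Lemma~\ref{weak_F1_lem}, which explicitly uses Theorem~\ref{KMA_thm}. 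So the statement you are proving sits strictly upstream of the corollary you want to use. A non-circular argument has to establish the Serre relations directly on the sheaf side (e.g.\ via the $\mathfrak{sl}_2$-integrability of the action of the real root $\mathfrak{sl}_2$-triple on the relative CoHA $\CoHA^{T,0}_{\Pi_Q}$ itself, or via lowest-weight decomposition as in \cite{DHSM23}, independently of the Maulik--Okounkov side).

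A second, smaller concern: in Stage~(ii) you slide between ``the class of $\CoHA^{T,0}_{\Pi_Q}$ is $\UEA_\boxdot$ of the BPS Lie algebra object'' (which follows from cohomological integrality) and ``the BPS Lie algebra object has class given by the GKM Lie algebra generated by $\CG(Q)$.'' The latter is essentially what you are trying to prove; to get a non-circular class computation you need the BPS sheaf's Poincar\'e series independently (this is where Proposition~\ref{BPS_char_function} and the Kac polynomial enter), and then you must match it against the Borcherds denominator for the Cartan datum $\bigl((\cdot,\cdot)_Q,\ \dim\IH^*(\CM_\dd(\Pi_Q))\bigr)$. You gesture at this but do not supply the identity; citing it back to \cite{DHSM23} is honest but means the crux is still outsourced.
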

In words, the zeroth perverse piece of the preprojective CoHA is isomorphic to the positive half of the universal enveloping algebra of the generalised Kac--Moody Lie algebra generated by derived global sections of $\CG(Q)$, as defined in \S \ref{GKM_sec}.  The morphism $h^{-1}$ provides an embedding of Lie algebras $\fn^{T,+}_{\Pi_Q}\hookrightarrow \fg'^{T,+}_{\Pi_Q}$, where we define
\begin{equation}
\label{gprime_def}
\fg'^{T,+}_{\Pi_Q}\coloneqq \Fg^{T}_{\HO^*(\CM^T(\Pi_Q),\CG(Q))}.
\end{equation}

We refer to homogeneous elements $e'\in \HO^*(\CM^T(\Pi_Q),\CG(Q))$ and $f'\in \HO^*(\CM^T(\Pi_Q),\CG(Q))^{\vee}$ as Chevalley generators, as in Definition \ref{Chev_def}.

In defining generalised Kac--Moody Lie algebras, there is some flexibility over how we define the Cartan subalgebra.  Above, we have followed the convention of \cite{DHSM23}.  In order to match with \cite[\S 1.2.9]{MO19} it is convenient to work with a larger Cartan subalgebra, which we define as follows.  Let $Q$ be a quiver, and let $\overrightarrow{Q}$ be the \textit{principally framed} quiver obtained by adding a new vertex $i'$ and an arrow $i'\rightarrow i$ for every $i\in Q_0$, as in \cite[\S 2.1.1]{MO19}.  We write dimension vectors (with potentially negative entries) for this larger quiver as pairs $(\dd,\ff)\in (\BoZ^{Q_0})^{\oplus 2}$, where $\ff$ is the assignment of numbers to the new vertices $i'$.  Then $\dd\mapsto ((\dd,\ul{0}),-)_{\overrightarrow{Q}}$ defines an injection $\fh^T_Q\hookrightarrow (\fh^T_{\overrightarrow{Q}})^{\vee}$.  We use this to define an embedding of $\HO_T$-modules, extending the Cartan algebra
\[
\fg'^{T}_{\Pi_Q}\hookrightarrow \fg^{T}_{\Pi_Q} \coloneqq \fn^{T,+}_{\Pi_Q}\oplus  (\fh^T_{\overrightarrow{Q}})^{\vee}\oplus\fn^{T,-}_{\Pi_Q}.
\]
We extend the Lie bracket in the obvious way: $(\fh^T_{\overrightarrow{Q}})^{\vee}$ is Abelian, and $[h,g]=h(\dd)g$ for $g\in \fn^{T,\pm}_{\Pi_Q,\dd}$ and $h\in(\fh^T_{\overrightarrow{Q}})^{\vee}$.
\subsection{BPS cohomology from Nakajima quiver varieties}
\label{BPS_to_Nak_sec}
Fix a nonzero framing dimension vector $\ff\in\BoN^{Q_0}$.  We fix the stability condition $\zeta^+=\delta_{\infty}\in\BoQ^{Q_0}$ satisfying $(\zeta^+)_i=0$ for all $i\in Q_0$ and $(\zeta^+)_{\infty}=1$.  Recall that we have defined $\Nak_Q^T(\ff,\dd)=\CM^{T,\zeta^+\sst}_{(\dd,1)}(\Pi_{Q_{\ff}})$.  We extend the analogue of diagram \eqref{forg_diag} in the framed setup, forming the following commutative diagram:
\begin{equation}
\label{bBPS_diag}
\begin{tikzcd}
&\FM_{(\dd,1)}^T(\tilde{Q}_{\ff})\arrow[d,"\tilde{\JH}"]\ar[r,"p"]&\FM_{(\dd,1)}^T(\overline{Q_{\ff}})\ar[d,"\overline{\JH}"]\\
&\CM_{(\dd,1)}^T(\tilde{Q}_{\ff})\ar[r,"\varpi"]&\CM_{(\dd,1)}^T(\overline{Q_{\ff}})&\CM^T_{(\dd,1)}(\Pi_{Q_{\ff}})\arrow[l,hook',swap,"\xi"]\\
\Nak_Q^T(\ff,\dd)\times\BoA^1\ar[hook,r,"\tilde{\xi}"]&\CM_{(\dd,1)}^{T,\zeta^+\sst}(\tilde{Q}_{\ff})\ar[u,swap,"\tilde{\pi}"]\ar[r,"\varpi^+"]&\CM_{(\dd,1)}^{T,\zeta^+\sst}(\overline{Q_{\ff}})\ar[u,swap,"\overline{\pi}"]&\Nak^T_{Q}(\ff,\dd).\arrow[l,hook',swap,"\overline{\xi}"]\arrow[u,swap,"\pi"]
\end{tikzcd}
\end{equation}

We denote by $\overline{\xi}\colon \Nak_Q^T(\ff,\dd)\hookrightarrow \CM^{\zeta^+\sst}_{(\dd,1)}(\overline{Q_{\ff}})$ the obvious embedding, induced by the surjection $\BoC \overline{Q_{\ff}}\rightarrow \Pi_{Q_{\ff}}$.  We denote by $\tilde{\xi}\colon \Nak_Q^T(\ff,\dd)\times\BoA^1\hookrightarrow \CM^{T,\zeta^+\sst}_{(\dd,1)}(\tilde{Q}_{\ff})$ the closed embedding taking a pair consisting of a $\zeta^+$-stable $\Pi_{Q_{\ff}}$-module $\rho$ and a scalar $z\in\BoC$ to the $\zeta^+$-stable $\BoC \tilde{Q}_{\ff}$-module for which the underlying $\overline{Q_{\ff}}$-action is the given $\Pi_{Q_{\ff}}$-action, and the extra loops $\omega_i$ all act by multiplication by $z$.  We denote by 
\[
\tilde{W_{\ff}}=\left(\sum_{a\in Q_{\ff}}[a,a^*]\right)\left(\sum_{i\in (Q_{\ff})_0}\omega_i\right)
\]
the canonical cubic potential on $\tilde{Q}_{\ff}$.
\begin{proposition}\cite[Prop.6.3]{preproj3}
\label{dr_lift}
There is an isomorphism of perverse sheaves
\[
\BPSh^{T,\zeta^+}_{\tilde{Q}_{\ff},\tilde{W}_{\ff},(\dd,1)}\cong \tilde{\xi}_*\ICS(\Nak_Q^T(\ff,\dd)\times\BoA^1).
\]
\end{proposition}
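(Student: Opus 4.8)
\emph{Strategy.} The plan is to deduce the statement from dimensional reduction applied to the canonical cubic potential $\tilde{W}_{\ff}$, carried out only with respect to the loops $\omega_i$ at the vertices $i\in Q_0$ and leaving the loop $\omega_\infty$ at the framing vertex untouched; the affine line of possible values of $\omega_\infty$ is precisely the factor $\BoA^1$ in the statement. Concretely, writing $\mu_v$ for the vertex-$v$ component of the moment map $\mu_{(\dd,1)}$ of $\overline{Q_{\ff}}$, one has $\Tr(\tilde{W}_{\ff})=\sum_{i\in Q_0}\Tr(\mu_i\omega_i)+\mu_\infty\omega_\infty$, which is affine-, but not fibrewise linear, in the fibre coordinates $(\omega_i)_{i\in Q_0}$. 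Since the total trace $\sum_{v\in(Q_{\ff})_0}\Tr(\mu_v)$ is a sum of traces of commutators, it vanishes identically, so $\mu_\infty=-\sum_{i\in Q_0}\Tr(\mu_i)$ \emph{as functions}; this is the computation behind the Lemma identifying $(\mu_{(\dd,1)})^{-1}(0)$ with $(\mu_{\ff,\dd})^{-1}(0)$. Hence $\mu_\infty\omega_\infty=\sum_{i\in Q_0}\Tr\!\bigl(\mu_i\cdot(-\omega_\infty\cdot\id)\bigr)$, and the shear automorphism $\omega_i\mapsto\omega_i+\omega_\infty\cdot\id$ of $\FM^T_{(\dd,1)}(\tilde{Q}_{\ff})$ over $\FM^T_{(\dd,1)}(\overline{Q_{\ff}})\times\BoA^1_{\omega_\infty}$ (which is $T$-equivariant, all the $\omega_v$ sharing the same $\wt$-weight in the situation at hand) turns $\Tr(\tilde{W}_{\ff})$ into the genuinely fibrewise linear function $\sum_{i\in Q_0}\Tr(\mu_i\omega_i)$, with zero locus $(\mu_{\ff,\dd})^{-1}(0)\times\BoA^1=(\mu_{(\dd,1)})^{-1}(0)\times\BoA^1$, again by the Lemma.

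\emph{Dimensional reduction.} Next I would apply the dimensional reduction isomorphism underlying \eqref{dim_red} (\cite[Cor.~A.9]{Da13}) to the bundle $\bigoplus_{i\in Q_0}\mathfrak{gl}_{\dd_i}$ of loops $\omega_i$, $i\in Q_0$, carrying the $\BoA^1_{\omega_\infty}$-direction along as an inert part of the base. Restricting throughout to the $\zeta^+$-stable locus — for dimension vector $(\dd,1)$ with stability $\zeta^+$, semistability coincides with stability, so $\CM^{T,\zeta^+\sst}_{(\dd,1)}(\Pi_{Q_{\ff}})=\Nak^T_Q(\ff,\dd)$ is a smooth geometric quotient and the equations $\mu_i=0$, $i\in Q_0$, form a regular sequence there, this being the smoothness of Nakajima quiver varieties — one obtains that $\Crit(\Tr(\tilde{W}_{\ff}))$ is reduced and, upon forgetting the $\omega_i$, equals $\FM^{T,\zeta^+\sst}_{(\dd,1)}(\Pi_{Q_{\ff}})\times\BoA^1$, along which $\phip{\Tr(\tilde{W}_{\ff})}\BoQ^{\vir}$ restricts to the dualizing complex. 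Thus there is a closed immersion $\iota$ and an isomorphism
\[
\phip{\Tr(\tilde{W}_{\ff})}\BoQ^{\vir}_{\FM^{T,\zeta^+\sst}_{(\dd,1)}(\tilde{Q}_{\ff})}\;\cong\;\iota_*\,\BoD\BoQ^{\vir}_{\FM^{T,\zeta^+\sst}_{(\dd,1)}(\Pi_{Q_{\ff}})\times\BoA^1}.
\]

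\emph{Passing to the coarse space.} Finally I would push forward along $\JH^{\zeta^+}$ and extract ${}^{\fp'}\!\CH^1$. The composite $\JH^{\zeta^+}\circ\iota$ is $\tilde{\xi}$ precomposed with the trivial $\B\BoC^*$-gerbe $\FM^{T,\zeta^+\sst}_{(\dd,1)}(\Pi_{Q_{\ff}})\times\BoA^1\to\Nak^T_Q(\ff,\dd)\times\BoA^1$, the $\BoC^*$ being the scalar automorphisms lying in $\Gl_{(\dd,1)}$ but not in $\Gl_{\dd}$. As $\tilde{\xi}$ is a closed immersion, $\tilde{\xi}_*$ is exact for the (shifted) perverse $t$-structure, so ${}^{\fp'}\!\CH^1$ commutes with it; pushing the dualizing complex down the $\B\BoC^*$-gerbe over the smooth base $\Nak^T_Q(\ff,\dd)\times\BoA^1$ and matching the normalisations ($\BoQ^{\vir}$, the shift $\reldim_{\B T}$, and the $-\dim(T)$ in the definition of ${}^{\fp'}$) shows that ${}^{\fp'}\!\CH^1$ of $\JH^{\zeta^+}_*\phip{\Tr(\tilde{W}_{\ff})}\BoQ^{\vir}$ equals $\tilde{\xi}_*\ICS(\Nak^T_Q(\ff,\dd)\times\BoA^1)$, while ${}^{\fp'}\!\CH^i$ vanishes for $i\le 0$, consistently with \cite[Thm.A]{QEAs}. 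This is exactly the claimed isomorphism of BPS sheaves.

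\emph{Main obstacle.} The delicate points are concentrated in the last two steps. The principal difficulty is making the partial dimensional reduction rigorous: one must verify that the shear really removes $\mu_\infty\omega_\infty$ as an identity of functions (not merely set-theoretically on the critical locus), so that the hypotheses of \cite[Cor.~A.9]{Da13} are met verbatim with $\BoA^1_{\omega_\infty}$ a genuine spectator, and one must invoke the regularity of the moment map on the $\zeta^+$-stable locus to know that $\Crit(\Tr(\tilde{W}_{\ff}))$ is reduced and is exactly the zero section times $\BoA^1$, so that $\phip{\Tr(\tilde{W}_{\ff})}\BoQ^{\vir}$ is literally a pushed-forward dualizing complex rather than only so after a further pushforward. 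The other point requiring care is the shift bookkeeping in the final step, namely checking that pushing down the $\B\BoC^*$-gerbe places the intersection complex in perverse cohomological degree exactly $1$, so that ${}^{\fp'}\!\CH^1$ — and not a neighbouring degree — computes the BPS sheaf; as a cross-check one can compare the outcome with Proposition \ref{Toda_prop} applied to $(\tilde{Q}_{\ff},\tilde{W}_{\ff})$.
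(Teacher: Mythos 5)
The paper does not give its own proof of this statement: Proposition \ref{dr_lift} is imported verbatim from \cite[Prop.6.3]{preproj3}, so there is nothing in the present source to compare against. Judged on its own merits, your proposal captures what is almost certainly the intended argument, and the key idea is right.

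In particular, your shear trick is exactly the step that makes partial dimensional reduction work here. Writing $\eta_i=\omega_i-\omega_\infty\cdot\id$ and using the trace identity $\mu_\infty=-\sum_{i\in Q_0}\Tr(\mu_i)$ (which is the content of the Lemma preceding the construction of Nakajima quiver varieties) one gets
\[
\Tr(\tilde W_{\ff})=\sum_{i\in Q_0}\Tr(\mu_i\eta_i)+\omega_\infty\Bigl(\sum_{i\in Q_0}\Tr(\mu_i)+\mu_\infty\Bigr)=\sum_{i\in Q_0}\Tr(\mu_i\eta_i),
\]
which is fibrewise linear in the $\eta_i$ and entirely independent of $\omega_\infty$, so that $\BoA^1_{\omega_\infty}$ is a genuine product factor and \cite[Cor.A.9]{Da13} applies with the $\omega_i$-directions as fibre. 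Two remarks on this step: there is a sign slip in your write-up (the shear that kills the cross term is $\omega_i\mapsto\omega_i-\omega_\infty\cdot\id$, not $\omega_i\mapsto\omega_i+\omega_\infty\cdot\id$), though this has no bearing on the argument; and the $T$-equivariance of the shear requires $\wt(\omega_\infty)=\wt(\omega_i)$ for all $i$, which does follow from $\tilde W_{\ff}$-invariance of $\wt$ (each of $\omega_i r_{i,m}r^*_{i,m}$ and $\omega_\infty r^*_{i,m}r_{i,m}$ must have weight zero), but is worth saying explicitly since Assumption \ref{weighting_assumption} as stated only constrains $\wt$ on $\tilde Q$, not on the framing loops. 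You should also say a word about why the shear preserves $\zeta^+$-(semi)stability; it does, since adding a scalar to $\omega_i$ does not change which subspaces are invariant, but this is not automatic.

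The remaining two ingredients you invoke are also correct: on the $\zeta^+$-stable locus semistability coincides with stability, the moment map equations form a regular sequence, and $\Crit(\Tr(\tilde W_{\ff}))$ is reduced and equal to $\{\mu_i=0,\eta_i=0\}\times\BoA^1_{\omega_\infty}$, so that $\phip{\Tr(\tilde W_{\ff})}\BoQ^{\vir}$ is the pushforward of a (shifted) dualizing complex; and the map $\JH^{\zeta^+}$ restricted to that locus is the trivial $\B\BoC^*$-gerbe over $\Nak^T_Q(\ff,\dd)\times\BoA^1$ (the $\BoC^*$ being the central stabilizer inside $\Gl_{(\dd,1)}$ that the Nakajima quiver stack does not quotient by). The shift bookkeeping you flag — verifying that the minimal perverse degree of the gerbe pushforward of $\BoD\BoQ^{\vir}$ is exactly $1$ in the $\fp'$-convention, so that $\ICS(\Nak^T_Q(\ff,\dd)\times\BoA^1)$ is picked out by ${}^{\fp'}\!\CH^1$ and not a neighbouring degree — is genuinely the only thing left to do carefully, and the cross-check against Proposition \ref{Toda_prop} and \cite[Thm.A]{QEAs} you suggest is the right sanity check. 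I see no gap in the strategy.
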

\begin{corollary}
\label{dr_lift_cor}
There is an isomorphism of semisimple perverse sheaves
\[
c\colon \varpi^+_*\BPSh^{T,\zeta^+}_{\tilde{Q}_{\ff},\tilde{W}_{\ff},(\dd,1)}[-1]\rightarrow \overline{\xi}_*\ICS(\Nak^T_Q(\ff,\dd))=\overline{\xi}_*\BoQ_{\Nak^T_Q(\ff,\dd)}^{\vir}.
\]
\end{corollary}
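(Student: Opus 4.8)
The plan is to deduce the corollary from Proposition~\ref{dr_lift} by applying $\varpi^+_*$ and then analysing the behaviour on the $\BoA^1$-factor. Applying $\varpi^+_*$ to the isomorphism $\BPSh^{T,\zeta^+}_{\tilde{Q}_{\ff},\tilde{W}_{\ff},(\dd,1)}\cong\tilde{\xi}_*\ICS(\Nak^T_Q(\ff,\dd)\times\BoA^1)$ of Proposition~\ref{dr_lift} gives $\varpi^+_*\BPSh^{T,\zeta^+}_{\tilde{Q}_{\ff},\tilde{W}_{\ff},(\dd,1)}\cong(\varpi^+\circ\tilde{\xi})_*\ICS(\Nak^T_Q(\ff,\dd)\times\BoA^1)$, so everything comes down to identifying the composite $\varpi^+\circ\tilde{\xi}$ and then computing a derived pushforward along a trivial $\BoA^1$-bundle.

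To identify the composite, recall that a point of $\Nak^T_Q(\ff,\dd)\times\BoA^1$ is a pair $(\rho,z)$ with $\rho$ a $\zeta^+$-stable $\Pi_{Q_{\ff}}$-module of dimension $(\dd,1)$, and $\tilde{\xi}(\rho,z)$ is the $\BoC\tilde{Q}_{\ff}$-module with underlying $\overline{Q_{\ff}}$-module $\overline{\xi}(\rho)$ and all $\omega_i$ acting as $z\cdot\id$. Since $\varpi^+$ forgets the loops $\omega_i$, it sends $\tilde{\xi}(\rho,z)$ to the point of $\CM^{T,\zeta^+\sst}_{(\dd,1)}(\overline{Q_{\ff}})$ given by $\overline{\xi}(\rho)$; the latter is already $\zeta^+$-stable, because the $\overline{Q_{\ff}}$-submodules of $\overline{\xi}(\rho)$ coincide with its $\Pi_{Q_{\ff}}$-submodules, so no Jordan--H\"older collapsing intervenes. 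Hence on the support of the BPS sheaf we have $\varpi^+\circ\tilde{\xi}=\overline{\xi}\circ\pr$, where $\pr\colon\Nak^T_Q(\ff,\dd)\times\BoA^1\to\Nak^T_Q(\ff,\dd)$ is the projection; this is precisely the commutativity of the relevant part of \eqref{bBPS_diag}. Therefore $\varpi^+_*\BPSh^{T,\zeta^+}_{\tilde{Q}_{\ff},\tilde{W}_{\ff},(\dd,1)}\cong\overline{\xi}_*\pr_*\ICS(\Nak^T_Q(\ff,\dd)\times\BoA^1)$.

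It remains to evaluate $\pr_*\ICS(\Nak^T_Q(\ff,\dd)\times\BoA^1)$. As $\Nak^T_Q(\ff,\dd)$ is a smooth stack we have $\ICS(\Nak^T_Q(\ff,\dd))=\BoQ^{\vir}_{\Nak^T_Q(\ff,\dd)}$ and $\ICS(\Nak^T_Q(\ff,\dd)\times\BoA^1)=\pr^*\ICS(\Nak^T_Q(\ff,\dd))[1]$, and since $\HO^*(\BoA^1,\BoQ)=\BoQ$ the projection formula yields $\pr_*\pr^*=\id$, hence $\pr_*\ICS(\Nak^T_Q(\ff,\dd)\times\BoA^1)=\ICS(\Nak^T_Q(\ff,\dd))[1]$. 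Putting this together with the previous paragraph and shifting by $[-1]$ gives the asserted isomorphism $c\colon\varpi^+_*\BPSh^{T,\zeta^+}_{\tilde{Q}_{\ff},\tilde{W}_{\ff},(\dd,1)}[-1]\xrightarrow{\cong}\overline{\xi}_*\ICS(\Nak^T_Q(\ff,\dd))=\overline{\xi}_*\BoQ^{\vir}_{\Nak^T_Q(\ff,\dd)}$; both sides are semisimple perverse sheaves, the right-hand side because $\overline{\xi}$ is a closed immersion and $\ICS(\Nak^T_Q(\ff,\dd))$ is semisimple, the left-hand side because it is the shift by $[-1]$ of the pushforward of the perverse sheaf $\BPSh^{T,\zeta^+}_{\tilde{Q}_{\ff},\tilde{W}_{\ff},(\dd,1)}$ along a trivial $\BoA^1$-bundle. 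The one step that is not purely formal is the identification $\varpi^+\circ\tilde{\xi}=\overline{\xi}\circ\pr$: one must check that on the locus cut out by $\tilde{\xi}$ the forgetful morphism genuinely agrees with the $\BoA^1$-projection, with no semisimplification taking place; once this is granted, the bookkeeping of the cohomological shift --- the $[1]$ produced by the $\BoA^1$-pushforward cancelling the $[-1]$ in the statement --- is automatic.
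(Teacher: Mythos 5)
Your proof is correct and takes exactly the route the paper expects: apply $\varpi^+_*$ to Proposition~\ref{dr_lift}, use the commutativity $\varpi^+\circ\tilde{\xi}=\overline{\xi}\circ\pr$ built into diagram~\eqref{bBPS_diag}, and kill the trivial $\BoA^1$-factor. The paper offers no explicit argument for the corollary (its only remark, ``The equality follows from smoothness,'' addresses $\ICS(\Nak^T_Q(\ff,\dd))=\BoQ^{\vir}_{\Nak^T_Q(\ff,\dd)}$, not the map $c$), so you have simply written out the computation the authors left implicit, including the cancellation of the $[1]$ coming from the $\BoA^1$-pushforward against the $[-1]$ in the statement.
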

The equality follows from smoothness of $\Nak^T_Q(\ff,\dd)$.
\begin{corollary}
\label{Nak_BPS_comp}
There is an isomorphism of cohomologically graded $\HO_T$-modules 
\[
\Fg^T_{\Pi_{Q_{\ff}},(\dd,1)}\cong \HO^*(\Nak^T_Q(\ff,\dd),\BoQ^{\vir}).
\]
\end{corollary}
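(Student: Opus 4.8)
The plan is to assemble the results collected in this subsection, keeping careful track of which coarse moduli space and which stability condition underlies each cohomology group. Since $(\dd,1)$ is a positive dimension vector for $Q_{\ff}$, the graded piece $\Fg^T_{\Pi_{Q_{\ff}},(\dd,1)}$ is by construction the degree $(\dd,1)$ part of the positive half of the BPS Lie algebra, i.e. $\Fg^T_{\Pi_{Q_{\ff}},(\dd,1)}=\Fn^{T,+}_{\Pi_{Q_{\ff}},(\dd,1)}$. By Definition \ref{BPSLAdef}, the dimensional reduction isomorphism \eqref{gdim_red}, which is an isomorphism of cohomologically graded $\HO_T$-modules, restricts to $\Fn^{T,+}_{\Pi_{Q_{\ff}},(\dd,1)}\cong \Fg^{T}_{\tilde{Q}_{\ff},\tilde{W}_{\ff},(\dd,1)}$, where the right-hand side is the BPS cohomology of the tripled framed quiver with its cubic potential, computed with respect to the \emph{degenerate} stability condition (the one suppressed from the notation); by definition this equals $\HO^*\!\big(\CM^T_{(\dd,1)}(\tilde{Q}_{\ff}),\BPSh^T_{\tilde{Q}_{\ff},\tilde{W}_{\ff},(\dd,1)}[-1]\big)$.

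Next I would feed this into the machinery of diagram \eqref{bBPS_diag}. Applying Proposition \ref{Toda_prop} with $(Q,W,\zeta)$ replaced by $(\tilde{Q}_{\ff},\tilde{W}_{\ff},\zeta^+)$ — the hypotheses hold, as $\tilde{Q}_{\ff}$ is symmetric and carries a $\tilde{W}_{\ff}$-preserving weighting — yields a canonical isomorphism $\tilde{\pi}_*\BPSh^{T,\zeta^+}_{\tilde{Q}_{\ff},\tilde{W}_{\ff},(\dd,1)}\cong \BPSh^T_{\tilde{Q}_{\ff},\tilde{W}_{\ff},(\dd,1)}$. Since derived global sections over $\CM^T_{(\dd,1)}(\tilde{Q}_{\ff})$ factor through $\tilde{\pi}_*$, this identifies $\Fg^{T}_{\tilde{Q}_{\ff},\tilde{W}_{\ff},(\dd,1)}$ with $\HO^*\!\big(\CM^{T,\zeta^+\sst}_{(\dd,1)}(\tilde{Q}_{\ff}),\BPSh^{T,\zeta^+}_{\tilde{Q}_{\ff},\tilde{W}_{\ff},(\dd,1)}[-1]\big)$. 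Pushing forward along $\varpi^+$ (cohomology over $\CM^{T,\zeta^+\sst}_{(\dd,1)}(\tilde{Q}_{\ff})$ is cohomology of $\varpi^+_*(-)$ over $\CM^{T,\zeta^+\sst}_{(\dd,1)}(\overline{Q_{\ff}})$) and then invoking the isomorphism $c$ of Corollary \ref{dr_lift_cor}, namely $\varpi^+_*\BPSh^{T,\zeta^+}_{\tilde{Q}_{\ff},\tilde{W}_{\ff},(\dd,1)}[-1]\cong \overline{\xi}_*\BoQ^{\vir}_{\Nak^T_Q(\ff,\dd)}$, and using that $\overline{\xi}$ is a closed embedding, gives $\HO^*(\Nak^T_Q(\ff,\dd),\BoQ^{\vir})$. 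Concatenating the displayed isomorphisms produces the desired one; each is $\HO_T$-linear and degree-preserving, so the composite is an isomorphism of cohomologically graded $\HO_T$-modules.

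The whole argument is essentially bookkeeping on top of two substantive inputs, both already available: Toda's vanishing-cycle comparison (Proposition \ref{Toda_prop}), which lets us trade the stacky quotient of the $\zeta^+$-semistable coarse space for that of the degenerate one, and the dimensional-reduction computation of the framed BPS sheaf (Proposition \ref{dr_lift}, repackaged as Corollary \ref{dr_lift_cor}, which also absorbs the spurious $\BoA^1$-factor and the degree shift, since $\dim(\Nak^T_Q(\ff,\dd)\times\BoA^1)=\dim \Nak^T_Q(\ff,\dd)+1$, together with the $[-1]$). I expect the only point needing genuine care — more a subtlety than a true obstacle — to be the interaction of dimensional reduction with the choice of stability condition: one must be sure that $\dimred$ identifies $\Fn^{T,+}_{\Pi_{Q_{\ff}}}$ with the BPS cohomology of $(\tilde{Q}_{\ff},\tilde{W}_{\ff})$ computed for the \emph{degenerate} stability condition, so that Proposition \ref{Toda_prop} is exactly the bridge to the $\zeta^+$-semistable picture in which Proposition \ref{dr_lift} and Corollary \ref{dr_lift_cor} are formulated.
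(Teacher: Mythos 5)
Your argument is correct and follows the same route as the paper's own proof, which is a two-line assembly of Definition \ref{BPSLAdef}, the dimensional reduction isomorphism implicit in it, Proposition \ref{Toda_prop} to pass from the degenerate to the $\zeta^+$-semistable stability condition, and Corollary \ref{dr_lift_cor} to identify the resulting cohomology with $\HO^*(\Nak^T_Q(\ff,\dd),\BoQ^{\vir})$. The ``subtlety'' you flag at the end is in fact a non-issue: by Definition \ref{BPSLAdef} the BPS Lie algebra $\Fn^{T,+}_{\Pi_{Q_{\ff}}}$ is \emph{defined} as $\dimred^{-1}$ of the BPS cohomology of $(\tilde{Q}_{\ff},\tilde{W}_{\ff})$ for the degenerate stability condition, so the compatibility you worry about is built in.
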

\begin{proof}
By Definition \ref{BPSLAdef} and Proposition \ref{Toda_prop} we have isomorphisms
\begin{align*}
\Fg^T_{\Pi_{Q_{\ff}},(\dd,1)}=&\HO\!^*\left(\CM_{(\dd,1)}^T(\tilde{Q}_{\ff}),\BPSh^T_{\tilde{Q}_{\ff},\tilde{W}_{\ff},(\dd,1)}[-1]\right)\\
\cong &\HO\!^*\left(\CM_{(\dd,1)}^{T,\zeta^+\sst}(\tilde{Q}_{\ff}),\BPSh^{T,\zeta^+}_{\tilde{Q}_{\ff},\tilde{W}_{\ff},(\dd,1)}[-1]\right)
\end{align*}
and so the result follows from Corollary \ref{dr_lift_cor}.
\end{proof}
We will also use the following corollary.
\begin{corollary}
\label{KMLA_Nak_comp}
There is an isomorphism of semisimple perverse sheaves
\[
(\xi_*\fn^{T,+}_{\CG(Q_{\ff})})_{(\dd,1)}\cong \overline{\pi}_{*}\overline{\xi}_*\ICS(\Nak_Q^T(\ff,\dd)).
\]
\end{corollary}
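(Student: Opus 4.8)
The plan is to prove the isomorphism by the chain of identifications underlying the proof of Corollary~\ref{Nak_BPS_comp}, but carried out at the level of perverse sheaves rather than after taking derived global sections; the extra input is the sheaf-theoretic identification of the BPS Lie algebra for $\Pi_{Q_{\ff}}$ with the GKM Lie algebra object $\fn^{T,+}_{\CG(Q_{\ff})}$ furnished by Theorem~\ref{KMA_thm} applied to the framed quiver $Q_{\ff}$. Concretely, I will transport $\overline{\pi}_{*}\overline{\xi}_{*}\ICS(\Nak_Q^T(\ff,\dd))$ from $\CM^T_{(\dd,1)}(\overline{Q_{\ff}})$ first to the $\zeta^{+}$-semistable moduli of the tripled framed quiver, then to its degenerate-stability version, and finally across dimensional reduction, where it will be recognised, after pushforward along $\xi$, as $(\xi_{*}\fn^{T,+}_{\CG(Q_{\ff})})_{(\dd,1)}$.

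First I would combine the commutative square in diagram~\eqref{bBPS_diag}, which gives $\varpi\circ\tilde{\pi}=\overline{\pi}\circ\varpi^{+}$, with Corollary~\ref{dr_lift_cor}, to obtain
\[
\overline{\pi}_{*}\overline{\xi}_{*}\ICS(\Nak_Q^T(\ff,\dd))\;\cong\;\overline{\pi}_{*}\varpi^{+}_{*}\BPSh^{T,\zeta^{+}}_{\tilde{Q}_{\ff},\tilde{W}_{\ff},(\dd,1)}[-1]\;\cong\;\varpi_{*}\tilde{\pi}_{*}\BPSh^{T,\zeta^{+}}_{\tilde{Q}_{\ff},\tilde{W}_{\ff},(\dd,1)}[-1].
\]
Next I would apply Proposition~\ref{Toda_prop} to the symmetric quiver $\tilde{Q}_{\ff}$ with its canonical cubic potential $\tilde{W}_{\ff}$ and the stability condition $\zeta^{+}$, which yields $\tilde{\pi}_{*}\BPSh^{T,\zeta^{+}}_{\tilde{Q}_{\ff},\tilde{W}_{\ff},(\dd,1)}\cong\BPSh^{T}_{\tilde{Q}_{\ff},\tilde{W}_{\ff},(\dd,1)}$, so that the right-hand side becomes $\varpi_{*}\BPSh^{T}_{\tilde{Q}_{\ff},\tilde{W}_{\ff},(\dd,1)}[-1]$, the $\varpi$-pushforward of the (shifted) degenerate-stability BPS sheaf of the tripled framed quiver.

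It remains to identify $\varpi_{*}\BPSh^{T}_{\tilde{Q}_{\ff},\tilde{W}_{\ff},(\dd,1)}[-1]$ with $(\xi_{*}\fn^{T,+}_{\CG(Q_{\ff})})_{(\dd,1)}$ on $\CM^T_{(\dd,1)}(\overline{Q_{\ff}})$. For this I would use the sheaf-level dimensional reduction isomorphism $\varpi_{*}\Coha^{T}_{\tilde{Q}_{\ff},\tilde{W}_{\ff}}\cong\xi_{*}\Coha^{T}_{\Pi_{Q_{\ff}}}$, obtained by applying $\overline{\JH}_{*}$ to~\eqref{dim_red} for $Q_{\ff}$: under it, the results of~\cite{DHSM23} summarised in Theorem~\ref{KMA_thm} (applied to $Q_{\ff}$) match the suitably shifted $\tilde{W}_{\ff}$-BPS sheaf in class $(\dd,1)$ with the weight-$(\dd,1)$ graded piece of the Lie subalgebra object $\fn^{T,+}_{\CG(Q_{\ff})}\subseteq\Coha^{T,0}_{\Pi_{Q_{\ff}}}$. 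Extracting the $(\dd,1)$-graded piece closes the chain; the isomorphism exhibits both sides as the object $(\xi_{*}\fn^{T,+}_{\CG(Q_{\ff})})_{(\dd,1)}$, which is perverse as a graded piece of the Lie algebra object of~\S\ref{GKM_perv_sec} and semisimple as a direct summand of the complex in~\eqref{dec_thm_1}, so the statement is of the asserted type; the case $\Nak_Q^T(\ff,\dd)=\varnothing$, where both sides vanish, is subsumed automatically. The main obstacle is precisely this last step: one must track the degree shifts in the various $\BoQ^{\vir}$-normalisations so that the $\tilde{W}_{\ff}$-BPS sheaf lands in exactly the perverse degree making it the generating object of $\fn^{T,+}_{\CG(Q_{\ff})}$ in weight $(\dd,1)$ and not some product thereof, and one must use that it is the ``less perverse'' filtration of~\cite{preproj3} (Remark~\ref{less_perv_rem}), not the one pulled back along $\dimred$, that underlies the definitions of $\Coha^{T,0}_{\Pi_{Q_{\ff}}}$ and of $\fn^{T,+}_{\CG(Q_{\ff})}$.
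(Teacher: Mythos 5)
Your chain of identifications is exactly the paper's, read from right to left: the commutative square of diagram~\eqref{bBPS_diag} together with Corollary~\ref{dr_lift_cor}, then Proposition~\ref{Toda_prop}, then the sheaf-level identification of the framed BPS sheaf with $\xi_*(\fn^{T,+}_{\CG(Q_\ff)})_{(\dd,1)}$. The final ``obstacle'' you flag -- tracking the less perverse filtration and $\BoQ^{\vir}$-shifts so that the BPS sheaf lands precisely on the generating weight-$(\dd,1)$ piece of $\fn^{T,+}_{\CG(Q_\ff)}$ -- is exactly the content of [Thm.1.16]{DHSM23} (one of the three sources cited in Theorem~\ref{KMA_thm}), which the paper invokes directly as the isomorphism~$a$, so no further bookkeeping is needed on your part beyond that citation.
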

\begin{proof}
By \cite[Thm.1.16]{DHSM23} there is an isomorphism $a\colon \omega_*\BPSh^T_{\tilde{Q}_{\ff},\tilde{W}_{\ff},(\dd,1)}[-1]\rightarrow \xi_*(\fn^{T,+}_{\CG(Q_{\ff})})_{(\dd,1)}$.  By Lemma \ref{Toda_prop} there is an isomorphism $b\colon \tilde{\pi}_*\BPSh^{T,\zeta^+}_{\tilde{Q}_{\ff},\tilde{W}_{\ff},\dd}[-1]\rightarrow \BPSh^T_{\tilde{Q}_{\ff},\tilde{W}_{\ff},\dd}[-1]$.  Now we compose isomorphisms, with $c$ as in Corollary \ref{dr_lift_cor}:
\begin{align*}
\xi_*(\fn^{T,+}_{\CG(Q_{\ff})})_{(\dd,1)}&\xrightarrow{a^{-1}}\omega_*\BPSh^T_{\tilde{Q}_{\ff},\tilde{W}_{\ff},(\dd,1)}[-1]\\
&\xrightarrow{\omega_*b^{-1}}\omega_*\tilde{\pi}_*\BPSh^{T,\zeta^+}_{\tilde{Q}_{\ff},\tilde{W}_{\ff},(\dd,1)}[-1]\\
&\xrightarrow{\cong} \overline{\pi}_{*}\omega^+_*\BPSh^{T,\zeta^+}_{\tilde{Q}_{\ff},\tilde{W}_{\ff},(\dd,1)}[-1]\\
&\xrightarrow{\overline{\pi}_{*}c}\overline{\pi}_{*}\overline{\xi}_*\ICS(\Nak_Q^T(\ff,\dd)).
\end{align*}
\end{proof}

The next proposition gives a complete description of the $\HO_T$-modules $\NakMod_{Q,\ff}^T$ as $\fg^T_{\Pi_Q}$-modules, which will be very useful when comparing with the action of $\fg^{\MO,T}_Q$ in the proof of Theorem \ref{main_thm}, given in \S \ref{main_thm_sec}.
\begin{proposition}\cite[\S 9]{DHSM23}
\label{lowest_weight_prop}
The $\BoN^{Q_0}$-graded, cohomologically graded $\HO_T$-module $\NakMod_{Q,\ff}^T$ is a direct sum of irreducible lowest weight representations for the generalised Kac--Moody Lie algebra $\fg^T_{\Pi_Q}$, where the action is provided by the isomorphisms $l_*\colon \fg^T_{\Pi_Q,\dd}\rightarrow \fg^T_{\Pi_Q,(\dd,0)}$ and Corollary \ref{Nak_BPS_comp}.  The inclusion of positive Chevalley generators for the Lie algebra $\fg^T_{\Pi_{Q_{\ff}}}$
\[
\bigoplus_{(\dd,1)\in\Phi^+_{Q_{\ff}}}\IC^*(\CM^T_{(\dd,1)}(\Pi_{Q_{\ff}}),\BoQ^{\vir})\hookrightarrow \NakMod^T_{Q,\ff}
\]
is the inclusion of the lowest weight vectors for the $\fg_{\Pi_Q}^T$-action.
\end{proposition}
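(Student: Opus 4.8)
The plan is to identify $\NakMod^T_{Q,\ff}$, equipped with the $\fg^T_{\Pi_Q}$-action described in the statement, with the ``framing-degree-one slice'' of the BPS Lie algebra $\fg^T_{\Pi_{Q_{\ff}}}$ of the framed quiver, and then to extract its decomposition from the generalised Kac--Moody structure of that Lie algebra supplied by Theorem \ref{KMA_thm}. Concretely, by Corollary \ref{Nak_BPS_comp} together with the isomorphism $l_*$ of \eqref{ldef} one has $\NakMod^T_{Q,\ff,\dd}\cong \fg^T_{\Pi_{Q_{\ff}},(\dd,1)}$, and the same isomorphisms identify $\fg^T_{\Pi_Q}$ with the subalgebra $\bigoplus_{\ee}\fg^T_{\Pi_{Q_{\ff}},(\ee,0)}$ of $\fg^T_{\Pi_{Q_{\ff}}}$; indeed a $\Pi_{Q_{\ff}}$-module of dimension $(\ee,0)$ is just a $\Pi_Q$-module, so $\Sigma_{Q_{\ff}}$ meets $\{(\ee,0):\ee\in\BoN^{Q_0}\}$ exactly in $\Sigma_Q$ and likewise for the BPS sheaves, the only discrepancy being an enlargement of the Cartan subalgebra, which acts by weights and is harmless. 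Grading $\fg^T_{\Pi_{Q_{\ff}}}$ by the coefficient of the simple real root $\delta_\infty$, the adjoint action of the degree-zero part on the degree-one slice $\fg^T_{\Pi_{Q_{\ff}}}[1]\coloneqq\bigoplus_{\dd}\fg^T_{\Pi_{Q_{\ff}},(\dd,1)}$ is, by construction, precisely the $\fg^T_{\Pi_Q}$-module structure in the statement; Corollary \ref{KMLA_Nak_comp} gives the corresponding identification at the level of perverse sheaves, which makes the compatibility transparent.

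Next I would pin down the lowest weight vectors. Since a dimension vector of the form $(\dd,1)$ is never divisible by an integer $\geq 2$, every root in $\Phi^+_{Q_{\ff}}$ of this shape already lies in $\Sigma_{Q_{\ff}}$, so by the construction preceding Theorem \ref{KMA_thm} the space of positive Chevalley generators of $\fg^T_{\Pi_{Q_{\ff}}}$ in degree $(\dd,1)$ is $\HO^*\!\big(\CM^T_{(\dd,1)}(\Pi_{Q_{\ff}}),\ICS(\CM^T_{(\dd,1)}(\Pi_{Q_{\ff}}))\big)=\IC^*\!\big(\CM^T_{(\dd,1)}(\Pi_{Q_{\ff}}),\BoQ^{\vir}\big)$, matching the displayed inclusion. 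If $f$ is a negative Chevalley generator of $\fg^T_{\Pi_{Q_{\ff}}}$ of degree $-\ee$ with $\ee\in\Phi^+_{Q_{\ff}}$ and $\ee_\infty=0$, then $\ee\neq(\dd,1)$, so the defining relations of a generalised Kac--Moody algebra force $[f,e]=0$ for every positive Chevalley generator $e$ of degree $(\dd,1)$; hence each such space of Chevalley generators consists of lowest weight vectors for $\fg^T_{\Pi_Q}$. Conversely, the Lazard elimination theorem applied to the free Lie algebra presentation of $\fn^{T,+}_{\Pi_{Q_{\ff}}}$, graded by the $\delta_\infty$-colour, shows that $\fg^T_{\Pi_{Q_{\ff}}}[1]=\fn^{T,+}_{\Pi_{Q_{\ff}}}[1]$ is generated over $\fn^{T,+}_{\Pi_Q}$, a fortiori over $\fg^T_{\Pi_Q}$, by $\bigoplus_{(\dd,1)\in\Phi^+_{Q_{\ff}}}\IC^*\!\big(\CM^T_{(\dd,1)}(\Pi_{Q_{\ff}}),\BoQ^{\vir}\big)$.

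Finally I would conclude with semisimplicity. The adjoint representation of $\fg^T_{\Pi_{Q_{\ff}}}$ is integrable with respect to the $\mathfrak{sl}_2$ attached to each simple real root, and the lowest weights of $\fg^T_{\Pi_{Q_{\ff}}}[1]$, being simple roots of $\fg^T_{\Pi_{Q_{\ff}}}$, pair non-positively with the coroots of the other simple roots; so $\fg^T_{\Pi_{Q_{\ff}}}[1]$ is an integrable object of category $\mathcal{O}$ over the generalised Kac--Moody Lie algebra $\fg^T_{\Pi_Q}$, and the complete reducibility theorem for such modules exhibits it as a direct sum of irreducible lowest weight modules. As the submodule generated by $\ICS(\CM^T_{(\dd,1)}(\Pi_{Q_{\ff}}))$ has all of its lowest weight vectors in graded degree $(\dd,1)$, distinct values of $\dd$ give submodules with vanishing mutual intersection (a bounded-below submodule without lowest weight vectors is zero); combined with the generation established above, this shows the module is the direct sum of these submodules and that the displayed inclusion is exactly the inclusion of the lowest weight vectors, as claimed. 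I expect the main obstacle to be this last step: verifying that $\fg^T_{\Pi_{Q_{\ff}}}[1]$ lies in the appropriate category $\mathcal{O}$ and establishing complete reducibility of integrable modules in the generalised Kac--Moody setting with infinite Cartan datum is the substantive input, and in a self-contained treatment one would have to develop this structural theory (following \cite{DHSM23}, which ultimately rests on hard Lefschetz and the Weyl-group symmetry of the cohomology of Nakajima quiver varieties) rather than merely invoke it.
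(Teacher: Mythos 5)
The statement is cited from \cite[\S 9]{DHSM23}, so there is no proof in this paper to compare against; I will evaluate your sketch on its own merits.

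Your overall structural idea is the right one and matches the philosophy of \cite{DHSM23}: identify $\NakMod^T_{Q,\ff}$ with the $\delta_\infty$-degree-one slice of $\fg^T_{\Pi_{Q_{\ff}}}$ via Corollary \ref{Nak_BPS_comp}, regard $\fg^T_{\Pi_Q}$ as the $\delta_\infty$-degree-zero part acting by the adjoint bracket (with a harmless enlargement of Cartan), observe that $(\dd,1)$ is indivisible so lies in $\Sigma_{Q_{\ff}}$ and hence the degree-$(\dd,1)$ Chevalley generator space is exactly $\IH^*(\CM^T_{(\dd,1)}(\Pi_{Q_{\ff}}),\BoQ^{\vir})$, and use GKM relation (3) together with Jacobi to see those Chevalley generators are annihilated by $\fn^{T,-}_{\Pi_Q}$. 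Your generation claim is also sound (and does not really need the full force of Lazard elimination in the free setting: for any Lie algebra generated by homogeneous elements, the $\delta_\infty$-degree-one part is generated under the $\delta_\infty$-degree-zero adjoint action by the $\delta_\infty$-degree-one generators, because iterated brackets of the generators with exactly one $\delta_\infty$-positive factor can be rebracketed to the inside by Jacobi; this survives the Serre quotient because the relations are $\delta_\infty$-homogeneous).

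The genuine gap is the step you already flag as the obstacle, and it is a real one: you cannot simply invoke complete reducibility of integrable objects of category $\mathcal{O}$ over a generalised Kac--Moody Lie algebra here. The issues are cumulative. First, $\fg^T_{\Pi_Q}$ has infinite Cartan datum with imaginary simple roots of arbitrary norm; in the Borcherds setting with imaginary simples, complete reducibility of integrable highest/lowest weight modules is considerably more delicate than in the symmetrisable Kac--Moody case and is not an off-the-shelf theorem, let alone one stated over the ring $\HO_T$ rather than a field. Second, and more to the point, the assertion that the submodule $W_{\dd}$ generated by $\IH^*(\CM^T_{(\dd,1)}(\Pi_{Q_{\ff}}),\BoQ^{\vir})$ has all its lowest weight vectors concentrated in degree $(\dd,1)$ is exactly the kind of "no singular vectors" claim that complete reducibility is supposed to deliver, so one cannot use it as an input to the disjointness argument before complete reducibility has been established. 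What actually supplies semisimplicity in \cite{DHSM23} is geometric rather than Lie-theoretic: the map $\pi_{\ff}$ is projective, semismall, with smooth source, so the decomposition theorem gives a canonical direct sum splitting of $(\pi_{\ff})_*\BoQ^{\vir}_{\Nak^T_Q(\ff,\dd)}$ into simple perverse sheaves (this is exactly \eqref{Nak_decomp} in the present paper), and each simple summand is shown to carry an irreducible lowest weight module structure, with lowest weight vector identified via Proposition \ref{CG_prop}-type $\Ext$-computations. In other words, the direct sum decomposition is produced at the sheaf level and then matched to the Lie-theoretic picture, rather than being deduced from abstract category-$\mathcal{O}$ structure theory. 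Your sketch inverts this logic, which is why the key step becomes an unsupported black box; to close the gap one would need to reconstruct that sheaf-theoretic argument rather than appeal to general GKM representation theory.
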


\section{Shuffle bi-algebra}
\label{shuffle sec}
In this section we recall the definition of the shuffle algebra associated to a graded quiver, as well as its (localised) coproduct.  As is clear from the name, in order to define the coproduct, some localisation must first be taken care of, which we spell out in the following subsection.
\subsection{Localised tensor products}
\label{localisation_notation}
Let $Q$ be a symmetric quiver, with weighting $\wt\colon Q_1\rightarrow \BoZ^r$.  Fix $T=\Hom_{\Grp}(\BoZ^r,\BoC^*)$.    We work in the symmetric tensor category $\mathscr{C}=(\Vect_{\BoN^{Q_0}\oplus \BoZ})_{\tau}$ as in Example \ref{gvs_examp}.  Recall that this category has as objects vector spaces equipped with a $\BoN^{Q_0}\oplus \BoZ$-grading.  The $\BoZ$-factor is considered as keeping track of cohomological degrees, so that in the symmetrising isomorphism $V\otimes V'\rightarrow V'\otimes V$ the Koszul sign rule applies with respect to it.  The subscript $\tau$ indicates that we also sign-twist the usual symmetriser by the bilinear form in \S \ref{tau_twist_sec}.  See \eqref{twisted_sym} for the exact formula.

For $\dd\in\BoN^{Q_0}$ we define $\FS_{\dd}=\prod_{i\in Q_0} \FS_{\dd_i}$, the corresponding product of symmetric groups.  Given $\dd',\dd''\in\BoN^{Q_0}$  we define the Euler classes
\begin{align*}
\EU_{\dd',\dd''}^T(Q_1)\coloneqq &\prod_{a\in Q_1}\prod_{\substack{1\leq m\leq \dd'_{s(a)}\\1\leq n\leq \dd''_{t(a)}}}(x_{(2),t(a),n}-x_{(1),s(a),m}-\ttt(\wt(a)))\in \HO_{\Gl^{\wt}_{\dd'}}\otimes_{\HO_T}\HO_{\Gl^{\wt}_{\dd''}}\\
\EU_{\dd',\dd''}^T(Q_0)\coloneqq &\prod_{i\in Q_0}\prod_{\substack{1\leq m\leq \dd'_{i}\\1\leq n\leq \dd''_{i}}}(x_{(2),i,n}-x_{(1),i,m})\in \HO_{\Gl^{\wt}_{\dd'}}\otimes_{\HO_T}\HO_{\Gl^{\wt}_{\dd''}}.
\end{align*}
Here and throughout, where a tensor product of several cohomology rings of the form $\HO_{\Gl^{\wt}_{\ldots}}$ is being considered, we use the notation $x_{(m),\ldots}$ to denote the variables in the $m$th tensor factor.  We identify
\[
\HO_{\Gl_{\dd}^{\wt}}=\HO_T[x_{i,l}\lvert \; i\in Q_0,\; 1\leq l\leq \dd_i]^{\FS_{\dd}}.
\]
Let $\CF,\CG$ be objects of $\Vect_{\BoN^{Q_0}\oplus \BoZ}$, and assume that for each $\dd\in \BoN^{Q_0}$, each of $\CF_{\dd}$ and $\CG_{\dd}$ is given the structure of a $\HO_{\Gl^{\wt}_{\dd}}$-module.  
We write $\Gl_{\dd'\times\dd''}^{\wt}=\Gl_{\dd'}\times\Gl_{\dd''}\times T$.  We define the \emph{localised tensor product}
\begin{align*}
\CF\tilde{\otimes}\CG&\coloneqq\bigoplus_{\dd\in\BoN^{Q_0}}\left(\CF\tilde{\otimes}\CG\right)_{\dd}\\
\left(\CF\tilde{\otimes}\CG\right)_{\dd}&\coloneqq \bigoplus_{\dd'+\dd''=\dd}\left(\CF_{\dd'}\otimes_{\HO_T}\CG_{\dd''}\right)\otimes_{\HO_{\Gl^{\wt}_{\dd'\times \dd''}}}\HO_{\Gl^{\wt}_{\dd'\times\dd''}}[z_{\dd',\dd''}^{-1}]\\
z_{\dd',\dd''}&=\EU_{\dd',\dd''}^T(Q_1)\EU_{\dd',\dd''}^T(Q^{\opp}_1)\in \HO_{\Gl_{\dd'\times\dd''}^{\wt}}.
\end{align*}
In the definition, we give the quiver $Q^{\opp}$ the weighting $\wt^{\opp}$ from \S \ref{wtngs_ta_sec}.  We endow this tensor product with a localised symmetriser: for $\dd',\dd''\in \BoN^{Q_0}$ we define 
\begin{equation}
\label{swap_factor}
\tilde{\sw}_{\dd',\dd''}=\sw_{\dd',\dd''}\circ \left((-1)^{\tau(\dd', \dd'')}\EU^T_{\dd',\dd''}(Q^{\op}_1)^{-1}\EU^T_{\dd',\dd''}(Q_1)\cdot\right)
\end{equation}
where $\sw_{\dd',\dd''}$ is the usual swap $\CF_{\dd'}\otimes \CG_{\dd''}\rightarrow \CG_{\dd''}\otimes \CF_{\dd'}$ (incorporating the usual Koszul sign rule with respect to the cohomological grading).

Let $V^{(1)},\ldots, V^{(n)}$ be objects of $\SC$, and let us assume that for each $\dd\in\BoN^{Q_0}$ and each $1\leq m \leq n$ the cohomologically graded vector space $V_{\dd}^{(m)}$ carries an action of 
\[
\HO_{\Gl_{\dd}^{\wt}}=\HO_T[x_{i,l}\lvert \; i\in Q_0,\; 1\leq l\leq \dd_i]^{\FS_{\dd}},
\]
and $V^{(m)}_{\dd}$ is a free $\HO_T$-module for the induced $\HO_T$-module structure.  For $\dd^{(1)},\ldots,\dd^{(n)}\in\BoN^{Q_0}$ a $n$-tuple of dimension vectors, the cohomologically graded $\HO_T$-module $V^{(1)}_{\dd^{(1)}}\otimes_{\HO_T}\cdots\otimes_{\HO_T} V^{(n)}_{\dd^{(n)}}$ carries an action of 
\begin{equation}
\label{coh_ring_not}
\HO_{\Gl^{\wt}_{\dd^{(1)}}}\otimes_{\HO_T}\cdots\otimes_{\HO_T} \HO_{\Gl^{\wt}_{\dd^{(n)}}}=\HO_T[x_{(m),i,k}\lvert \;1\leq m\leq n,\; i\in Q_0,\; 1\leq k\leq \dd^{(m)}_i]^{\FS_{\dd^{(1)}}\times\cdots\times \FS_{\dd^{(n)}}}.
\end{equation}

For $1\leq m',m''\leq n$ with $m'\neq m''$, we set
\[
z_{(m'm''),\dd',\dd''}\coloneqq z_{\dd',\dd''}\lvert_{\substack{x_{(1),\ldots}\mapsto x_{(m'),\ldots}\\ x_{(2),\ldots}\mapsto x_{(m''),\ldots}}}.
\]
Generalising the construction above, we define $(V^{(1)}\otimes_{\HO_T}\cdots\otimes_{\HO_T} V^{(n)})_{(m'm'')}$ by replacing the summand $(V^{(1)}_{\dd^{(1)}}\otimes_{\HO_T}\cdots\otimes_{\HO_T} V^{(n)}_{\dd^{(n)}})\subset V^{(1)}\otimes_{\HO_T}\cdots\otimes_{\HO_T} V^{(n)}$ with the localisation
\[
(V^{(1)}_{\dd^{(1)}}\otimes_{\HO_T}\cdots\otimes_{\HO_T} V^{(n)}_{\dd^{(n)}})\otimes_B B[(z_{(m'm''),\dd^{(m')},\dd^{(m'')}})^{-1}]
\]
where $B=\HO_{\Gl^{\wt}_{\dd^{(1)}}}\otimes_{\HO_T}\cdots\otimes_{\HO_T} \HO_{\Gl^{\wt}_{\dd^{(n)}}}$.
\begin{remark}
\label{spec_def_rem}
Let $Q$ be graded-symmetric in the sense introduced in \S \ref{wtngs_ta_sec}.    Then $\EU^T_{\dd',\dd''}(Q^{\op}_1)=\EU^T_{\dd',\dd''}(Q_1)$.  Let $\alpha\in V$ and $\alpha'\in V'$ be homogeneous of $\BoN^{Q_0}$-degrees $\dd,\dd'$ respectively, and of cohomological degrees with parity equal to $\chi_Q(\dd,\dd)$ and $\chi_Q(\dd',\dd')$ respectively.
Then
\[
\tilde{\sw}(\alpha \otimes\alpha')=(-1)^{\chi_Q(\dd,\dd')}(\alpha'\otimes \alpha)=\sw_{\tau}(\alpha\otimes\alpha')
\]
cf. Example \ref{gvs_examp}.
This holds, in particular, if $Q$ is symmetric in the usual sense and $T$ is trivial.  We can think of non-symmetric weightings on a symmetric quiver $Q$ as providing spectral deformations of the $\tau$-twisted symmetric monoidal structure on $\SC$, considered as a braided monoidal category. 
\end{remark}

\subsection{The shuffle algebra as a CoHA}
\label{shuffle_sec}
Let $Q$ be a symmetric quiver.  Considering the CoHA associated to the \textit{zero} potential on $\BoC Q$, we recover the shuffle algebra \cite[\S 1]{KS2}, as we now recall.

For a function $f$ on a stack $\FM$, there is a canonical natural transformation $\phip{f}\rightarrow z_*z^*$, where $z\colon f^{-1}(0)\rightarrow \FM$ is the inclusion (see \cite[Chapter 8]{KSsheaves}).  In the special case $f=0$ this induces the natural isomorphism $\phip{f}\BoQ_{\FM}\rightarrow \BoQ_{\FM}$.  Setting $\FM=\FM_{\dd}^T(Q)$ we obtain the isomorphisms 
\[
\HO\!\CoHA_{Q,0,\dd}^T\cong \HO^*(\FM^T_{\dd}(Q),\BoQ^{\vir})\cong \HO_T[x_{i,m}\;\lvert\;i\in Q_0,1\leq m\leq \dd_i]^{\FS_{\dd}}[e]
\]
for the final isomorphism, the cohomological degree of each $x_{i,m}$ is two, and the overall cohomological shift is given by setting $e=-\chi_{Q}(\dd,\dd)$.
\begin{definition}
We define $\HO\!\CoHA_{Q}^T$ to be the CoHA $\HO\!\CoHA_{Q,W}^T$ after setting $W=0$.
\end{definition}
We consider $\HO\!\CoHA_{Q}^T$ as an algebra object of $\Vect_{\BoN^{Q_0}\oplus \BoZ}$, as in the previous subsection.  Note that $\HO\!\CoHA_{Q}^T$ satisfies the cohomological parity condition in Remark \ref{spec_def_rem}.

The algebra $\HO\!\CoHA_{Q}^T$ admits an elegant description as a shuffle algebra, we refer to \cite[\S 1]{KS2} for proofs\footnote{The addition of the torus $T$ associated to the weighting $\wt\colon Q_1\rightarrow \BoZ^r$ generalises the setup in \cite{KS2} very slightly, but does not necessitate new proofs.}.  Given dimension vectors $\dd',\dd''\in\BoN^{Q_0}$ we let $\shuff_{\dd',\dd''}\subset \FS_{\dd'+\dd''}$ be the set of permutations $(\sigma_i)_{i\in Q_0}$ such that for $i\in Q_0$ and $1\leq m<n\leq \dd'_i$ or $\dd'_i+1\leq m<n\leq \dd'_i+\dd''_i$ we have $\sigma_i(m)<\sigma_i(n)$.  Such permutations are commonly referred to as shuffles.  Set $\dd=\dd'+\dd''$.  We define an isomorphism of $\HO_T$-algebras 
\begin{align}
\label{jmath_def}
\jmath\colon&\HO_T[x_{(1),i,m}\lvert i\in Q_0, 1\leq m\leq\dd'_i]\otimes_{\HO_T} \HO_T[x_{(2),i,m}\lvert i\in Q_0, 1\leq m\leq\dd''_i]\rightarrow \\&\HO_T[x_{i,m}\lvert i\in Q_0, 1\leq m\leq\dd_i]\nonumber
\end{align}
sending $x_{(1),i,m}\otimes 1\mapsto x_{i,m}$ and $1\otimes x_{(2),i,m}\mapsto x_{i,\dd'_i+m}$.  Then the multiplication is given by 
\begin{align}
\vmult_{\dd',\dd''}\colon &\HO\!\CoHA_{Q,\dd'}^T\otimes_{\HO_T}\HO\!\CoHA_{Q,\dd''}^T\rightarrow \HO\!\CoHA_{Q,\dd}^T\nonumber \\ \label{shuff_prod}
&f\otimes g\mapsto \sum_{\sigma\in \shuff_{\dd',\dd''}}\sigma\left(\jmath(f\otimes g)\EU^T_{\dd',\dd''}(Q_1)\EU^T_{\dd',\dd''}(Q_0)^{-1}\right).
\end{align}
As explained in \cite[\S 1]{KS2}, despite the rational functions appearing in \eqref{shuff_prod}, the sum is a genuine polynomial.
\subsection{Shuffle algebra coproduct}
\label{Sh_alg_cop}
There is a natural localised coproduct on $\HO\!\CoHA_{Q}^T$, which we now recall. Let $\dd',\dd''\in \BoN^{Q_0}$ and set $\dd=\dd'+\dd''$.  We define the morphism $\imath\colon \HO\!\CoHA_{Q,\dd}^T\rightarrow \HO\!\CoHA_{Q,\dd'}^T\otimes_{\HO_T}\HO\!\CoHA_{Q,\dd''}^T$ by restricting $\jmath^{-1}$ from \eqref{jmath_def} to $\FS_{\dd}$-invariant elements.  Then we define
\begin{align*}
\vDelta_{\dd',\dd''}\colon&\HO\!\CoHA_{Q,\dd}^T\rightarrow \HO\!\CoHA_{Q,\dd'}^T\tilde{\otimes}\HO\!\CoHA_{Q,\dd''}^T\\
&f\mapsto \EU^T_{\dd',\dd''}(Q_0)\EU^T_{\dd',\dd''}(Q_1)^{-1}\cdot\imath(f),
\end{align*}
and define $\vDelta$ by taking the direct sum over all $\dd',\dd''\in\BoN^{Q_0}$.  Then it is an elementary exercise to show that the following diagram commutes
\[
\begin{tikzcd}
\HO\!\CoHA_Q^T\otimes_{\HO_T}\HO\!\CoHA_Q^T\arrow[r,"\vmult"]\arrow[d,"u \circ(\vDelta\otimes_{\HO_T}\vDelta)"]&\HO\!\CoHA_Q^T\arrow[ddd,"\vDelta"]
\\
(\HO\!\CoHA_Q^T\otimes_{\HO_T}\HO\!\CoHA_Q^T\otimes_{\HO_T}\HO\!\CoHA_Q^T\otimes_{\HO_T}\HO\!\CoHA_Q^T)_{(12),(14),(23),(34)}\arrow[d,"(\id\otimes_{\HO_T}\tilde{\sw}\otimes_{\HO_T}\id)_{(12),(14),(23),(34)}"]
\\
(\HO\!\CoHA_Q^T\otimes_{\HO_T}\HO\!\CoHA_Q^T\otimes_{\HO_T}\HO\!\CoHA_Q^T\otimes_{\HO_T}\HO\!\CoHA_Q^T)_{(13),(14),(23),(24)}\ar[d,"="]
\\
(\HO\!\CoHA_Q^T\otimes_{\HO_T}\HO\!\CoHA_Q^T)\tilde{\otimes}(\HO\!\CoHA_Q^T\otimes_{\HO_T}\HO\!\CoHA_Q^T)\ar[r,"\vmult\tilde{\otimes}\vmult"]&\HO\!\CoHA_Q^T\tilde{\otimes}\HO\!\CoHA_Q^T
\end{tikzcd}
\]
where we denote by $u$ the localisation morphism
\[
u\colon (\HO\!\CoHA_Q^T\otimes_{\HO_T}\HO\!\CoHA_Q^T\otimes_{\HO_T}\HO\!\CoHA_Q^T\otimes_{\HO_T}\HO\!\CoHA_Q^T)_{(12),(14)}\rightarrow (\HO\!\CoHA_Q^T\otimes_{\HO_T}\HO\!\CoHA_Q^T\otimes_{\HO_T}\HO\!\CoHA_Q^T\otimes_{\HO_T}\HO\!\CoHA_Q^T)_{(12),(14),(23),(34)}.
\]
We will often abbreviate
\[
\alpha\ast\beta=\vmult(\alpha\otimes \beta);\quad\quad (\alpha\otimes \alpha')\ast (\beta\otimes\beta')=\vmult\tilde{\otimes}\vmult\circ (\id\otimes\tilde{\sw}\otimes \id)(\alpha\otimes \alpha'\otimes \beta\otimes\beta').
\]
The coproduct has a geometric definition, which we recall from \cite{Da13}.  The paper [loc. cit.] deals with the case of \textit{general} potentials, and so the presentation here can be a great deal simpler.  Let ${}^-\FM_{\dd',\dd''}^T(Q)$ be the $T$-quotient of the stack of short exact sequences $0\leftarrow M'\leftarrow M\leftarrow M''\leftarrow 0$ of $\BoC Q$-modules for which $M',M''$ have dimension vectors $\dd'$ and $\dd''$ respectively.  There is a natural isomorphism ${}^-\FM_{\dd',\dd''}^T(Q)\cong \FM_{\dd'',\dd'}^T(Q)$, swapping the roles of the projections $q_1,q_3$ to $\FM^T(Q)$.  We then consider the diagram
\[
\xymatrix{
\FM^{T}_{\dd'}(Q)\times_{\B T} \FM^{T}_{\dd''}(Q)&& \ar[ll]_-{{}^-(q_1\times q_3)} {}^-\FM^{T}_{\dd',\dd''}(Q)\ar[r]^-{{}^-q_2}& \FM^{T}_{\dd}(Q).
}
\]
Taking cohomology yields the diagram
\[
\xymatrix{
C^{\FS_{\dd'}\times\FS_{\dd''}}\ar[rr]^{{}^-(q_1\times q_3)^*}_{=} && C^{\FS_{\dd'}\times\FS_{\dd''}}& \ar@{_{(}->}[l]_-{{}^-q^*_2} C^{\FS_{\dd}}
}
\]
where $C=\HO_T[x_{i,m}\;\lvert\;i\in Q_0,1\leq m\leq \dd_i]$.  Then $\vDelta$ is obtained by composing $({}^-(q_1\times q_3)^*)^{-1}\circ{}^-q^*_2$ with multiplication by $\EU^T_{\dd',\dd''}(Q_1)^{-1}\EU^T_{\dd',\dd''}(Q_0)$, which we remark is the (virtual) Euler characteristic of ${}^-(q_1\times q_3)$.

We finish with an easy but crucial observation:
\begin{proposition}
\label{mult_prop}
Let $Q$ be a symmetric quiver.  Let $\dd',\dd''\in\BoN^{Q_0}$ be dimension vectors with disjoint support, and set $\dd=\dd'+\dd''$.  The source and the target of the multiplication
\[
\vmult_{\dd',\dd''}\colon \HO\!\CoHA^T_{Q,\dd'}\otimes_{\HO_T} \HO\!\CoHA^T_{Q,\dd''}\rightarrow \HO\!\CoHA^T_{Q,\dd}
\]
carry a $B=\HO_{\Gl_{\dd'}}\otimes \HO_{\Gl^{\wt}_{\dd''}}$-action.  Let $\Fm\subset \HO_{\Gl^{\wt}_{\dd''}}$ be the maximal graded ideal.  For a $B$-module $M$ we denote by $M_I$ the localisation at the prime ideal $I=\HO_{\Gl_{\dd'}}\otimes \Fm$.  We consider the morphism induced by the product
\[
\vmult'_{\dd',\dd''}\colon \left(\HO\!\CoHA^T_{Q,\dd'}\otimes_{\HO_T} \HO\!\CoHA^T_{Q,\dd''}\right)_I\rightarrow (\HO\!\CoHA^T_{Q,\dd})_I
\]
as well as the localised coproduct
\[
\vDelta'_{\dd',\dd''}\colon (\HO\!\CoHA^T_{Q,\dd})_I\rightarrow \left(\HO\!\CoHA^T_{Q,\dd'}\otimes_{\HO_T} \HO\!\CoHA^T_{Q,\dd''}\right)_I.
\]
The composition of these two morphisms (in either order) is given by the identity.  In particular, they are both isomorphisms.
\end{proposition}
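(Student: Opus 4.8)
The plan is to unwind the explicit shuffle descriptions of the multiplication \eqref{shuff_prod} and of the coproduct recalled in \S\ref{Sh_alg_cop}, and to observe that the hypothesis $\supp(\dd')\cap\supp(\dd'')=\emptyset$ collapses all of the shuffle combinatorics. Since for every vertex $i\in Q_0$ at least one of $\dd'_i,\dd''_i$ vanishes, I would first record three consequences. First, $\FS_{\dd}=\FS_{\dd'}\times\FS_{\dd''}$, so that the $\HO_T$-algebra map $\jmath$ of \eqref{jmath_def} restricts to an \emph{isomorphism} $\jmath\colon B\xrightarrow{\sim}\HO_{\Gl^{\wt}_{\dd}}$ simply relabelling the variables $x_{(1),i,m},x_{(2),i,m}$ as the $x_{i,m}$; in particular the $B$-action on $\HO\!\CoHA^T_{Q,\dd}$ is transported along this isomorphism. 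Second, the shuffle set $\shuff_{\dd',\dd''}$ consists of the identity alone, because for each $i$ one of the two blocks being shuffled is empty. Third, the Euler class $\EU^T_{\dd',\dd''}(Q_0)$ is an empty product, equal to $1$. Feeding these into \eqref{shuff_prod}, and into the formula $\vDelta_{\dd',\dd''}(f)=\EU^T_{\dd',\dd''}(Q_0)\EU^T_{\dd',\dd''}(Q_1)^{-1}\cdot\imath(f)$ of \S\ref{Sh_alg_cop}, one obtains, writing $E\coloneqq\EU^T_{\dd',\dd''}(Q_1)\in B$, that $\vmult_{\dd',\dd''}=\jmath\circ\big((-)\cdot E\big)$ and $\vDelta_{\dd',\dd''}=\big((-)\cdot E^{-1}\big)\circ\jmath^{-1}$ as $B$-linear maps, the latter a priori taking values in the $\tilde{\otimes}$-localised tensor product, where $E$ has been inverted.

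The second step is to check that $E$, and likewise $\EU^T_{\dd',\dd''}(Q^{\opp}_1)$, become units once we localise at the prime $I=\HO_{\Gl_{\dd'}}\otimes\Fm$. The quotient $B/I$ is an invariant polynomial ring over $\BoQ$, hence a domain, so it suffices to see that the image of $E$ in $B/I$ is nonzero. Every factor $x_{(2),t(a),n}-x_{(1),s(a),m}-\ttt(\wt(a))$ occurring in $E$ has $s(a)\in\supp(\dd')$ and $t(a)\in\supp(\dd'')$; modulo $I$ the variable $x_{(2),t(a),n}$ and the equivariant class $\ttt(\wt(a))$ vanish, so that factor reduces to $-x_{(1),s(a),m}$, and $E$ reduces to a nonzero monomial in the $x_{(1),i,m}$, or to $1$ if no arrow of $Q$ runs from $\supp(\dd')$ to $\supp(\dd'')$. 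Reversing the roles of $\dd'$ and $\dd''$ treats $\EU^T_{\dd',\dd''}(Q^{\opp}_1)$, so $z_{\dd',\dd''}$ is a unit in $B_I$; consequently the $\tilde{\otimes}$-localisation has already been performed once we pass to the localisation at $I$, and $\vDelta'_{\dd',\dd''}$ genuinely takes values in $(\HO\!\CoHA^T_{Q,\dd'}\otimes_{\HO_T}\HO\!\CoHA^T_{Q,\dd''})_I$, as in the statement.

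The conclusion is then immediate: $\jmath$ is a ring isomorphism and multiplication by $E$ is invertible in $B_I$, hence the localised maps $\vmult'_{\dd',\dd''}=\jmath\circ\big((-)\cdot E\big)$ and $\vDelta'_{\dd',\dd''}=\big((-)\cdot E^{-1}\big)\circ\jmath^{-1}$ are mutually inverse isomorphisms, and in particular both $\vmult'_{\dd',\dd''}\circ\vDelta'_{\dd',\dd''}$ and $\vDelta'_{\dd',\dd''}\circ\vmult'_{\dd',\dd''}$ equal the identity. There is no serious obstacle in this argument; the only points requiring a little care are the identification $\FS_{\dd}=\FS_{\dd'}\times\FS_{\dd''}$, so that $\jmath$ really restricts to an isomorphism of the invariant subrings and not merely of the ambient polynomial rings, and the verification that the Euler classes in question lie outside the prime $I$, which is exactly what makes them invertible after localising.
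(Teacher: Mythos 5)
Your argument is correct and is essentially the paper's own proof, just spelled out in more detail: disjoint supports force $\FS_{\dd}=\FS_{\dd'}\times\FS_{\dd''}$, a trivial shuffle set, and $\EU^T_{\dd',\dd''}(Q_0)=1$, so both product and localised coproduct reduce to multiplication by $\EU^T_{\dd',\dd''}(Q_1)$ and by its inverse. The only thing you supply beyond the paper's terse write-up is the explicit verification that $\EU^T_{\dd',\dd''}(Q_1)$ and $\EU^T_{\dd',\dd''}(Q_1^{\opp})$ lie outside the prime $I$ (by reducing modulo $I$ to a nonzero monomial in the $x_{(1),i,m}$), a point the paper leaves implicit but which your argument correctly isolates as the reason the localisation at $I$ suffices.
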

\begin{proof}
Both the source and the target of the multiplication morphism can be identified with the algebra of symmetric functions $\HO_T[x_{i,m}\lvert i\in Q_0, 1\leq m\leq \dd_i]^{\FS_{\dd}}$.  Under this isomorphism, the formula \eqref{shuff_prod} is given by
\[
\vmult_{\dd',\dd''}(f\otimes g)=\EU^T_{\dd',\dd''}(Q_1)\cdot (f\otimes g).
\]
The key point is that since $\supp(\dd')\cap \supp(\dd'')=\emptyset$ the set $\shuff_{\dd',\dd''}$ contains only the trivial permutation.  Likewise, the localised comultiplication $\vDelta_{\dd',\dd''}$ is given explicitly by multiplication by\\ $\EU^T_{\dd',\dd''}(Q_1)^{-1}$, completing the proof.
\end{proof}

\subsection{From the critical CoHA to the shuffle algebra}
\label{sec: From critical CoHA to shuffle algebra}
Let $Q$ be a symmetric quiver, with weighting $\wt\colon Q_1\rightarrow \BoZ^{r}$, and $\wt$-invariant potential $W$.  Let $\wt'\colon Q_1\rightarrow \BoZ$ be a second weighting, which factors through the inclusion $\BoN\hookrightarrow \BoZ$, and for which $W$ is homogeneous, with strictly positive weight $\wt'(W)>0$.  Let $Q'\subset Q$ be the full subquiver containing exactly those edges $a$ such that $\wt'(a)=0$.  Then $\Tr(W)_{\dd}^{-1}(0)\subset \BoA_{\dd}(Q)$ is $\Gl_{\dd}^{\wt}$-equivariantly contractible onto $\BoA_{\dd}(Q')$.  It follows that the restriction morphism in cohomology
\[
\HO^*(\FM^T_{\dd}(Q),\BoQ)\rightarrow \HO^*(\Tr(W)_{\dd}^{-1}(0)/\Gl_{\dd}^{\wt},\BoQ)
\]
is an isomorphism.  

Set $\FM=\FM^T_{\dd}(Q)$.  We abuse notation by also denoting by $\Tr(W)$ the function on $\FM$ induced by $\Tr(W)$.  Let $\imath\colon \Tr(W)^{-1}(0)\hookrightarrow\FM $ be the inclusion.  From the distinguished triangle relating the vanishing cycle functor and the nearby cycle functor, as well as the adjunction morphism for $\imath_*$, we obtain morphisms
\[
\phip{\Tr(W)}\BoQ_{\FM}^{\vir}\rightarrow \imath_*\imath^*\BoQ^{\vir}_{\FM}\leftarrow \BoQ^{\vir}_{\FM}.
\]
Passing to derived global sections we obtain morphisms
\[
\HO\!\CoHA^T_{Q,W,\dd}\xrightarrow{s} \HO^*(\Tr(W)^{-1}(0),\imath^*\BoQ^{\vir}_{\FM})\xleftarrow{r} \HO\!\CoHA^T_{Q,\dd}.
\]
We have seen that $r$ is an isomorphism of $\HO_T$-modules for every $\dd\in \BoN^{Q_0}$.  We define $\xi_{\dd}=r^{-1}s\colon \HO\!\CoHA^T_{Q,W,\dd}\rightarrow \HO\!\CoHA^T_{Q,\dd}$.
\begin{proposition}
\label{map_to_shuff}
The morphism $\xi\colon \HO\!\CoHA^T_{Q,W}\rightarrow \HO\!\CoHA^T_{Q}$ obtained by taking the sum of the $\xi_{\dd}$ defined above across all $\dd\in\BoN^{Q_0}$ is an algebra morphism.
\end{proposition}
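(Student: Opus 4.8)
The plan is to verify that the zig-zag of complexes defining $\xi_{\dd}$ is natural with respect to the operations out of which the Hall multiplications on $\HO\!\CoHA^T_{Q,W}$ and $\HO\!\CoHA^T_{Q}$ are both assembled, so that after passing to derived global sections and inverting the isomorphism $r$ it descends to an algebra map.

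I would first recall that, for $\dd=\dd'+\dd''$, the multiplication $\vmult_{\dd',\dd''}$ is in both cases the composite $\HO(\alpha)\HO(\beta)\HO(\delta)\TS^{-1}$ attached to the span $\FM^T_{\dd'}(Q)\times_{\B T}\FM^T_{\dd''}(Q)\leftarrow\FM^T_{\dd',\dd''}(Q)\rightarrow\FM^T_{\dd}(Q)$ of \S\ref{gen_CoHA_sec} (leftward map $q_1\times q_3$, rightward map $q_2$), the only difference being whether one feeds in $\phip{\Tr(W)}\BoQ^{\vir}$ or its $W=0$ specialisation $\phip{0}\BoQ^{\vir}\cong\BoQ^{\vir}$. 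It therefore suffices to exhibit, on each of the three stacks in the span, morphisms of complexes interpolating between $\phip{\Tr(W)}\BoQ^{\vir}$ and $\imath_*\imath^*\BoQ^{\vir}$, respectively between $\BoQ^{\vir}$ and $\imath_*\imath^*\BoQ^{\vir}$, that are compatible with $\alpha$, $\beta$, $\delta$ and $\TS$. Taking derived global sections and using that $r$ is an isomorphism (by the equivariant contractibility of $\Tr(W)^{-1}(0)$ onto $\BoA_{\dd}(Q')$ recalled above), one then obtains $\xi_{\dd}\circ\vmult_{\dd',\dd''}=\vmult_{\dd',\dd''}\circ(\xi_{\dd'}\otimes_{\HO_T}\xi_{\dd''})$ for all $\dd',\dd''$, the two occurrences of $\vmult$ referring to the critical CoHA and the shuffle algebra respectively; unitality ($\xi_0=\id_{\HO_T}$) is immediate.

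The crucial input is the additivity of the potential trace along extensions: for a short exact sequence $0\to M'\to M\to M''\to 0$ of $\BoC Q$-modules, writing the action of $\overline{W}$ in block upper-triangular form gives $\Tr(M(\overline{W}))=\Tr(M'(\overline{W}))+\Tr(M''(\overline{W}))$, whence on $\FM^T_{\dd',\dd''}(Q)$ one has $\Tr(W)\circ q_2=\Tr(W)\circ q_1+\Tr(W)\circ q_3$. This identity already underlies the definition of the CoHA product for $W$ in \cite[\S7]{KS2}, \cite[\S3]{Da13}; here I would invoke it again to see that $q_1,q_2,q_3$ carry the vanishing loci $\Tr(W)^{-1}(0)$ to one another compatibly, so that the natural transformations $\phip{f}(-)\to\imath_*\imath^*(-)$ (from the vanishing/nearby-cycle triangle) and $\id\to\imath_*\imath^*(-)$ (the unit of adjunction for $\imath_*$) commute with the pullbacks along $q_i$, while properness of $q_2$ and proper base change for vanishing cycles handle $\alpha$. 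Compatibility with $\TS$ rests on the observation that, by Thom--Sebastiani, $\phip{\Tr(W)\boxplus\Tr(W)}(\BoQ^{\vir}\boxtimes\BoQ^{\vir})\cong\phip{\Tr(W)}\BoQ^{\vir}\boxtimes\phip{\Tr(W)}\BoQ^{\vir}$ is supported on $\Tr(W)^{-1}(0)\times\Tr(W)^{-1}(0)$, so on the product stack the relevant copy of $\imath$, and of $\HO^*(\Tr(W)^{-1}(0),\imath^*\BoQ^{\vir})^{\otimes_{\HO_T}2}$, is singled out by the supports, and both $s$ and $r$ land there.

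I expect this last point to be the main obstacle: the function $\Tr(W)\boxplus\Tr(W)$ on $\FM^T_{\dd'}(Q)\times\FM^T_{\dd''}(Q)$ has zero locus strictly larger than $\Tr(W)^{-1}(0)\times\Tr(W)^{-1}(0)$, so one must carefully pin down which zero locus (and which adjunction unit) to use on the product so that $\beta$ and $\TS$ stay compatible with the zig-zag, and check that the cohomological shifts and the sign-twist $\tau$ in the symmetric monoidal structure of \S\ref{tau_twist_sec} enter identically on the $W$ and $W=0$ sides, so cancel. Once these compatibilities are installed the rest is a formal diagram chase: both multiplications are computed by applying $\HO(\alpha)\HO(\beta)\HO(\delta)\TS^{-1}$ to the single interpolating object $\imath_*\imath^*\BoQ^{\vir}$, with $s$ and $r$ entering as morphisms between the resulting diagrams.
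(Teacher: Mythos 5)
Your approach is essentially the one the paper takes: the paper's proof also works by propagating the zig-zag $\phip{\Tr(W)}\BoQ^{\vir}\to\imath_*\imath^*\BoQ^{\vir}\leftarrow\BoQ^{\vir}$ through the span $\FA\leftarrow\FB\rightarrow\FC$ (i.e.\ $\FM^T_{\dd'}\times_{\B T}\FM^T_{\dd''}\leftarrow\FM^T_{\dd',\dd''}\rightarrow\FM^T_{\dd}$), checks that the relevant natural transformations commute with the $q_i$'s, and uses contractibility to invert the restriction map $r$.

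The one place you stop short is exactly the "main obstacle" you flag, and I think you underestimate how easily it dissolves. You correctly observe that $f:=(\Tr W\circ q_1)+(\Tr W\circ q_3)$ on $\FA=\FM^T_{\dd'}(Q)\times_{\B T}\FM^T_{\dd''}(Q)$ has zero locus strictly larger than $f_{\dd'}^{-1}(0)\cap f_{\dd''}^{-1}(0)=\Tr(W)^{-1}(0)\times\Tr(W)^{-1}(0)$, and worry about which adjunction unit/zero locus makes $\beta$ and $\TS$ compatible with the zig-zag. But the same contractibility argument you already invoked for a single stack resolves this: the restriction maps
\[
\HO^*(\FA,\BoQ)\xrightarrow{r^+}\HO^*(f^{-1}(0),\BoQ)\xrightarrow{r'}\HO^*(f_{\dd'}^{-1}(0)\cap f_{\dd''}^{-1}(0),\BoQ)
\]
are \emph{all} isomorphisms, because the $\Gl^{\wt}_{\dd'}\times\Gl^{\wt}_{\dd''}$-equivariant scaling contraction onto $\BoA_{\dd'}(Q')\times\BoA_{\dd''}(Q')$ preserves both $f^{-1}(0)$ and the intersection. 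Hence at the level of cohomology the choice of zero locus is immaterial, and $\xi_{\dd'}\otimes_{\HO_T}\xi_{\dd''}$ is computed as $(r_\cap)^{-1}r'a$ (with $a$ the map from vanishing-cycle cohomology of $\FA$ to ordinary cohomology of the intersection), which the paper then chases through the remaining squares exactly as you sketch. The sign-twist $\tau$ and the shifts indeed enter identically on the $W$ and $W=0$ sides, as you say, so no new issue arises there. So: right approach and correct identification of the one subtlety, but the final contractibility argument resolving it is left unstated.
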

\begin{proof}
To remove clutter, we define stacks $\FA,\FB,\FC$ along with morphisms between them via the following diagram
\[
\xymatrix{
\FA\ar[d]^{\coloneqq}&\ar[l]_{q}\FB\ar[d]^{\coloneqq}\ar[r]^{q_2}&\FC\ar[d]^{\coloneqq}\\
\FM^{T}_{\dd'}(Q)\times_{\B T} \FM^{T}_{\dd''}(Q)& \ar[l]_-{
q_1\times q_3} \FM^{T}_{\dd',\dd''}(Q)\ar[r]^-{q_2}& \FM^{T}_{\dd}(Q).
}
\]
The stack $\FA$ carries two functions $f_{\dd'}$ and $f_{\dd''}$ obtained by pulling back the function $\Tr(W)$ along the projections to $\FM^{T}_{\dd'}(Q)$ and $\FM^{T}_{\dd''}(Q)$ respectively.  We set $f=f_{\dd'}+f_{\dd''}$.  Using contractibility of all relevant stacks, all of the arrows in the commutative diagram
\[
\xymatrix{
&\HO^*(\FA,\BoQ)\ar[dl]_{r^+}\ar[d]^{r_\cap}\\
\HO^*(f^{-1}(0),\BoQ)\ar[r]^-{r'}&\HO^*(f_{\dd'}^{-1}(0)\cap f_{\dd''}^{-1}(0),\BoQ)
}
\]
obtained by taking restriction morphisms in cohomology are isomorphisms.  Since all vertical arrows in the leftmost diagram
\[
\xymatrix{
\BoQ_{\FA}\ar[d]\ar[r]&q_*\BoQ_{\FB}\ar[d]&\HO^*(\FA,\BoQ)\ar[d]^{r^+}\ar[r]&\HO^*(\FB,\BoQ)\ar[d]\\
\BoQ_{f^{-1}(0)}\ar[r]&q_*\BoQ_{(fq)^{-1}(0)}&\HO^*(f^{-1}(0),\BoQ)\ar[r]&\HO^*((fq)^{-1}(0),\BoQ)\\
\phip{f}\BoQ_{\FA}\ar[u]\ar[r]&\phip{f}q_*\BoQ\ar[u]&\HO^*(\FA,\phip{f}\BoQ_{\FA})\ar[u]_{a}\ar[r]&\HO^*(\FB,\phip{fq}\BoQ_{\FB})\ar[u]
}
\]
are obtained from natural transformations, the diagram commutes.  Since the diagram to its right is obtained by taking derived global sections, it also commutes.  Now (ignoring cohomological shifts) we have $\xi_{\dd'}\otimes_{\HO_T}\xi_{\dd''}=(r_{\cap})^{-1}r'a$, so from the commutativity of the above diagrams we deduce that the diagram
\[
\xymatrix{
\HO^*(\FA,\BoQ)\ar[r]&\HO^*(\FB,\BoQ)\ar[d]_b\\
&\HO^*((fq)^{-1}(0),\BoQ)\\
\HO^*(\FA,\phip{f}\BoQ_{\FA})\ar[uu]^{\xi_{\dd'}\otimes_{\HO_T}\xi_{\dd''}}\ar[r]&\HO^*(\FB,\phip{fq}\BoQ_{\FB})\ar[u]^c
}
\]
commutes.  By the same argument, setting $e=-\reldim(q_2)$, the diagrams 
\[
\xymatrix{
(q_2)_*\BoQ_{\FB}\ar[r]\ar[d]&\BoQ_{\FC}[e]\ar[d]&\HO^*(\FB,\BoQ)\ar[d]_b\ar[r]&\HO^*(\FC,\BoQ)[e]\ar[d]^r\\
(q_2)_*\BoQ_{(\Tr(W)(q_2))^{-1}(0)}\ar[r]&\BoQ_{\Tr(W)^{-1}(0)}[e]&\HO^*((fq)^{-1}(0),\BoQ)\ar[r]&\HO^*(\Tr(W)^{-1}(0),\BoQ)[e]\\
(q_2)_*\phip{\Tr(W)}\BoQ_{\FB}\ar[u]\ar[r]&\phip{\Tr(W)}\BoQ_{\FC}\ar[u]&\HO^*(\FB,\phip{fq}\BoQ_{\FB})\ar[u]^c\ar[r]&\HO^*(\FC,\phip{\Tr(W)}\BoQ_{\FC})[e]\ar[u]_s
}
\]
commute, and the proposition follows from the definition $\xi_{\dd}=r^{-1}s$.
\end{proof}

Now we fix our quiver to be the tripled quiver $\tilde{Q}$ associated to some quiver $Q$, and consider its canonical cubic potential $\tilde{W}$ defined as in \eqref{CCP}.  If we let $\wt'\colon \tilde{Q}_1\rightarrow \BoZ$ be the weighting sending all arrows to $1$, it is clear that the conditions at the start of this subsection are met.  Let $p$ be the morphism \eqref{dim_proj}.  Let $\FZ=p^{-1}(\FM^T_{\dd}(\Pi_Q))$.  Since $\FZ\rightarrow \FM^T_{\dd}(\Pi_Q)$ is an affine fibration, it induces an isomorphism $\HO^{\BoMo}(\FM^T_{\dd}(\Pi_Q),\BoQ)\rightarrow \HO^{\BoMo}(\FZ,\BoQ)$; it does not respect cohomological degrees, which we temporarily ignore.  Direct image along the closed embedding $\FZ\hookrightarrow \FM^T(\tilde{Q})$ induces the morphism $\HO^{\BoMo}(\FZ,\BoQ)\rightarrow \HO^*(\FM^T(\tilde{Q}),\BoQ)$, and composing these morphisms, we define the morphism
\[
\ol{\xi}\colon \HO\!\CoHA^T_{\Pi_Q}\rightarrow \HO\!\CoHA^T_{\tilde{Q}}.
\]
\begin{proposition}
\label{prop: embedding preprojective CoHA in shuffle algebra}
The diagram of $\HO_T$-algebras
\[
\xymatrix{
\HO\!\CoHA^T_{\Pi_Q}\ar[dr]_{\overline{\xi}}\ar[r]^-{\dimred}_-{\cong}&\HO\!\CoHA^T_{\tilde{Q},\tilde{W}}\ar[d]^{\xi}\\
&\HO\!\CoHA^T_{\tilde{Q}}
}
\]
commutes.
\end{proposition}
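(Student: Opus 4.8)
The plan is to prove the equality $\overline{\xi}=\xi\circ\dimred$ as morphisms of $\BoN^{Q_0}$-graded, cohomologically graded $\HO_T$-modules; since $\dimred$ is an algebra isomorphism by the very definition of the product on $\HO\!\CoHA^T_{\Pi_Q}$, and $\xi$ is an algebra morphism by Proposition~\ref{map_to_shuff}, this equality simultaneously shows that $\overline{\xi}$ is an $\HO_T$-algebra morphism, which is the remaining content of the proposition. Fix $\dd\in\BoN^{Q_0}$. Each of the three maps in degree $\dd$ is obtained by applying the derived global sections functor to a morphism of complexes on $\FM^T_\dd(\tilde{Q})$ (or, after pushing forward along $p_\dd$, on $\FM^T_\dd(\overline{Q})$): $\dimred_\dd$ to the dimensional reduction isomorphism \eqref{dim_red}; $\xi_\dd=r^{-1}s$ to the pair $\phip{\Tr(\tilde{W})}\BoQ^{\vir}_{\FM_\dd(\tilde{Q})}\to (\imath_\dd)_*\imath_\dd^*\BoQ^{\vir}_{\FM_\dd(\tilde{Q})}\leftarrow \BoQ^{\vir}_{\FM_\dd(\tilde{Q})}$; and $\overline{\xi}_\dd$ to the affine-fibration isomorphism $\HO^{\BoMo}(\FM^T_\dd(\Pi_Q))\cong \HO^{\BoMo}(\FZ_\dd)$ followed by pushforward along the closed embedding $\FZ_\dd\hookrightarrow \FM^T_\dd(\tilde{Q})$, where $\FZ_\dd=p_\dd^{-1}(\FM^T_\dd(\Pi_Q))$. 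Recall also from Section~\ref{sec: From critical CoHA to shuffle algebra} that $r$ is an isomorphism here precisely because $\tilde{Q}'$ has no arrows --- all arrows of $\tilde{Q}$ have $\wt'$-weight $1$ --- so that $\Tr(\tilde{W})^{-1}_\dd(0)$ retracts $\Gl^{\wt}_\dd$-equivariantly onto the origin. Thus it is enough to identify the composite of sheaf morphisms underlying $\xi_\dd\circ\dimred_\dd$ with the one underlying $\overline{\xi}_\dd$.

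The geometric input is that $\tilde{W}$ is linear in the loop variables $\omega_i$: under the identification $\BoA_\dd(\tilde{Q})\cong \BoA_\dd(\overline{Q})\times\bigoplus_{i\in Q_0}\mathfrak{gl}_{\dd_i}$, the function $\Tr(\tilde{W})$ carries $(x,y)$ to $\sum_{i\in Q_0}\Tr(\mu_{\dd,i}(x)\,y_i)$, with $\mu_\dd$ the moment map of Section~\ref{moduli_stacks_sec}. Hence the equations cutting out the critical locus already force $\mu_\dd(x)=0$, so the critical locus of $\Tr(\tilde{W})_\dd$ --- which contains the support of $\phip{\Tr(\tilde{W})}\BoQ^{\vir}_{\FM_\dd(\tilde{Q})}$ --- is contained in $\FZ_\dd$, and $\Tr(\tilde{W})_\dd$ vanishes identically on $\FZ_\dd$. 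Consequently the morphism $\phip{\Tr(\tilde{W})}\BoQ^{\vir}_{\FM_\dd(\tilde{Q})}\to (\imath_\dd)_*\imath_\dd^*\BoQ^{\vir}_{\FM_\dd(\tilde{Q})}$ underlying $s$ factors canonically as $\phip{\Tr(\tilde{W})}\BoQ^{\vir}_{\FM_\dd(\tilde{Q})}\to (\jmath_{\FZ_\dd})_*\BoQ^{\vir}_{\FZ_\dd}\to (\imath_\dd)_*\imath_\dd^*\BoQ^{\vir}_{\FM_\dd(\tilde{Q})}$, the last arrow being further restriction. It then remains to see that the composition of \eqref{dim_red} with $p_{\dd,*}$ of the first of these arrows agrees with the affine-bundle comparison morphism underlying $\overline{\xi}_\dd$ --- and this is exactly how the dimensional reduction isomorphism of \cite[Cor.A.9]{Da13} is built: on the locus $\FZ_\dd$, which carries the entire vanishing-cycle sheaf and on which the potential is identically zero, \eqref{dim_red} restricts to the (dual) Thom isomorphism for the affine bundle $\FZ_\dd\to\FM^T_\dd(\Pi_Q)$. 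Taking derived global sections and using that $r$ is invertible then yields $\xi_\dd\circ\dimred_\dd=\overline{\xi}_\dd$, and summing over $\dd$ finishes the proof.

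The main obstacle is this final identification: the isomorphism \eqref{dim_red} is produced in \cite{Da13} through support estimates and exact triangles rather than written down explicitly, so one has to revisit that construction (or its reformulations in \cite{preproj,Da16,Kin22}) and check its compatibility with the tautological ``forget vanishing cycles'' and ``restrict to the zero locus'' morphisms used above. The remaining point is purely bookkeeping: one must check that the various cohomological shifts and signs line up --- the $\BoQ^{\vir}$-normalizations $[-\chi_{\tilde{Q}}(\dd,\dd)]$ and $[2(\dd,\dd)_Q]$, Verdier duality on $\FM^T_\dd(\Pi_Q)$, and the shift by $2\,\dd\cdotsh\dd=2\sum_{i\in Q_0}\dd_i^2$ coming from the rank of the affine bundle $\FZ_\dd\to\FM^T_\dd(\Pi_Q)$ --- so that $\overline{\xi}$ and $\xi\circ\dimred$ are cohomologically graded in the same way. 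The torus $T$ and the auxiliary weighting $\wt$ introduce no essential difficulty, every functor and natural transformation involved being manifestly compatible with them.
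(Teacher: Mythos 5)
The paper's proof of Proposition~\ref{prop: embedding preprojective CoHA in shuffle algebra} is quite different from, and slicker than, what you propose. It never unwinds the dimensional reduction isomorphism. Instead it starts from the single morphism of complexes on $\FM^T_\dd(\tilde{Q})$
\[
\BoD\BoQ_{\FZ}[e]\longrightarrow\BoD\BoQ^{\vir}_{\FM^T_\dd(\tilde{Q})}\cong\BoQ^{\vir}_{\FM^T_\dd(\tilde{Q})},\qquad e=-(\dd,\dd)_{\tilde{Q}},
\]
which is the Borel--Moore pushforward morphism underlying the closed embedding $\FZ\hookrightarrow\FM^T_\dd(\tilde{Q})$, and then feeds this one morphism through the two natural transformations $\phip{\Tr(\tilde{W})}\to\iota_*\iota^*$ and $\id\to\iota_*\iota^*$. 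Since both are natural transformations applied to the same morphism, one automatically gets two commutative squares; passing to derived global sections produces a three-row ladder whose top edge is $\dimred$, whose bottom edge is $\overline{\xi}$, and whose right-hand column is the pair $s,r$ from the definition of $\xi$. The proposition then follows from $\xi=r^{-1}s$, with no need at all to know how $\dimred$ is built.

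Your proposal correctly isolates the essential geometry --- the critical locus of $\Tr(\tilde{W})$ is contained in $\FZ_\dd$; $\Tr(\tilde{W})$ vanishes identically on $\FZ_\dd$; $\FZ_\dd\to\FM^T_\dd(\Pi_Q)$ is an affine bundle --- and these are exactly the facts the paper exploits. But two steps do not hold up. First, the claimed factoring of the morphism underlying $s$ as $\phip{\Tr(\tilde{W})}\BoQ^{\vir}\to(\jmath_{\FZ_\dd})_*\BoQ^{\vir}_{\FZ_\dd}\to(\imath_\dd)_*\imath_\dd^*\BoQ^{\vir}$, with the last arrow ``further restriction,'' is oriented backwards: restriction from $\Tr(\tilde{W})^{-1}(0)$ to the smaller closed subset $\FZ_\dd$ is a morphism $(\imath_\dd)_*\imath_\dd^*\BoQ^{\vir}\to(\jmath_{\FZ_\dd})_*\BoQ^{\vir}_{\FZ_\dd}$, not the reverse, and there is no canonical arrow going the other way with constant-sheaf coefficients. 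The object that admits a natural morphism \emph{to} $\BoQ^{\vir}_{\FM^T_\dd(\tilde{Q})}$ is the Verdier-dual complex $\BoD\BoQ_{\FZ}[e]$, not the constant sheaf on $\FZ_\dd$, and that is exactly what the paper uses. Second, the step you yourself flag as ``the main obstacle'' --- that \eqref{dim_red} restricts on $\FZ_\dd$ to the dual Thom isomorphism --- is left unproven, and it is a nontrivial assertion about the internal construction of \cite[Cor.A.9]{Da13}. The paper's naturality argument avoids having to prove it: both squares are manufactured automatically from the same pushforward morphism, so the comparison with $\dimred$ comes for free. Until the orientation error is fixed and the Thom-isomorphism claim is supplied, the proposal is not a proof.
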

\begin{proof}
Set $e=-(\dd,\dd)_{\tilde{Q}}$, and let $\iota\colon \Tr(\tilde{W})^{-1}(0)\hookrightarrow \FM^T(\tilde{Q})$ be the inclusion.  Applying the natural transformations $\phip{\Tr(\tilde{W})}\rightarrow \iota_*\iota^*$ and $\id\rightarrow \iota_*\iota^*$ to the morphism $\BoD\BoQ_{\FZ}[e]\rightarrow \BoD\BoQ^{\vir}_{\FM^{T}_{\dd}(\tilde{Q})}\cong \BoQ^{\vir}_{\FM^{T}_{\dd}(\tilde{Q})}$ and passing to global sections we get the top and bottom squares, respectively, of the diagram
\[
\xymatrix{
\HO\!\CoHA^T_{\Pi_Q}\ar[d]^{\cong}\ar[r]^{\dimred}&\HO\!\CoHA^T_{\tilde{Q},\tilde{W}}\ar[d]_s\\
\HO\!\CoHA^T_{\Pi_Q}\ar[r]&\HO^*(\Tr(\tilde{W})^{-1}(0),\iota^*\BoQ^{\vir})\\
\HO\!\CoHA^T_{\Pi_Q}\ar[r]^{\ol{\xi}}\ar[u]_{\cong}&\HO\!\CoHA^T_{\tilde{Q}}.\ar[u]^r
}
\]
It follows that these squares commute, and the proposition follows from the equality $\xi=r^{-1}s$.
\end{proof}
We recall the following result, see \cite[Thm.A(c)]{ScVa20}, though see also \cite[Thm.10.2]{preproj} for the precise version we need here:
\begin{proposition}
\label{shuff_embed}
Let $\wt\colon \tilde{Q}_1\rightarrow \BoZ^r$ be a $\tilde{W}$-preserving weighting of the edges of $\tilde{Q}$ satisfying Assumption \ref{weighting_assumption}.  Then the morphism $\overline{\xi}$ is an embedding of algebras.
\end{proposition}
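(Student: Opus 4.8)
The plan is to reduce the assertion to a torus‑localisation computation. There is nothing to check about the algebra structure: $\dimred$ is an isomorphism of algebras, $\overline{\xi}=\xi\circ\dimred$ by Proposition~\ref{prop: embedding preprojective CoHA in shuffle algebra}, and $\xi$ is an algebra morphism by Proposition~\ref{map_to_shuff}, so $\overline{\xi}$ is automatically a morphism of algebras and only injectivity is at stake. I would work one dimension vector $\dd$ at a time, the case $\dd=0$ being trivial. Both $\HO\!\CoHA^{T}_{\Pi_Q,\dd}$ and $\HO\!\CoHA^{T}_{\tilde{Q},\dd}$ are free $\HO_{T}$-modules: the first by \cite[Cor.9.7]{preproj}, the second because it is a cohomological shift of the polynomial ring $\HO_{\Gl_{\dd}^{\wt}}$ by the shuffle presentation of \S\ref{shuffle_sec}. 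Being free, hence torsion-free, over $\HO_{T}$, they embed into the localisation $(-)[S^{-1}]$ at any multiplicative set $S\subset\HO_{T}\setminus\{0\}$; since localisation is exact, it then suffices to prove that $\overline{\xi}_{\dd}$ is injective after inverting a suitable such $S$.

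The choice of $S$ is where Assumption~\ref{weighting_assumption} is used. The lattice map $\BoZ^{r}\to\BoZ^{2}$ dualises to an embedding of a rank-two subtorus $T'=\Hom_{\Grp}(\BoZ^{2},\BoC^{*})\hookrightarrow T$, and under $T'$ every arrow of $\tilde{Q}$ — in particular every arrow of $\overline{Q}$ — carries a nonzero weight, namely $(1,0)$, $(0,1)$ or $(-1,-1)$. Hence the $T'$-fixed locus of $\BoA_{\dd}(\tilde{Q})$, and of the closed subvariety $\mu_{\dd}^{-1}(0)\subset\BoA_{\dd}(\overline{Q})$, is just the origin. Taking $S\subset\HO_{T'}\subset\HO_{T}$ to be generated by the nonzero characters of $T'$ and applying the $\Gl_{\dd}$-equivariant localisation theorem for the central subtorus $T'\subset\Gl_{\dd}^{\wt}$, pushforward along the inclusion of the origin should identify $\HO\!\CoHA^{T}_{\Pi_Q,\dd}[S^{-1}]=\HO^{\BoMo}_{\Gl_{\dd}^{\wt}}(\mu_{\dd}^{-1}(0))[S^{-1}]$ with a free rank-one $\HO_{\Gl_{\dd}^{\wt}}[S^{-1}]$-module, generated by the class $[\{0\}]$ of the origin.

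It remains to track $\overline{\xi}_{\dd}$ through this picture. By its construction $\overline{\xi}_{\dd}$ is the composite of the (shifted) pullback isomorphism along the affine fibration $\FZ_{\dd}=p^{-1}(\FM^{T}_{\dd}(\Pi_Q))\to\FM^{T}_{\dd}(\Pi_Q)$ with direct image along the closed embedding $\FZ_{\dd}\hookrightarrow\FM^{T}_{\dd}(\tilde{Q})$. Writing $\BoA_{\dd}(\tilde{Q})=\BoA_{\dd}(\overline{Q})\times V_{\omega}$, with $V_{\omega}=\prod_{i\in Q_{0}}\End(\BoC^{\dd_{i}})$ the space of $\omega$-components, so that $\FZ_{\dd}=(\mu_{\dd}^{-1}(0)\times V_{\omega})/\Gl_{\dd}^{\wt}$, a chase of $[\{0\}]$ through these two maps should show that $\overline{\xi}_{\dd}[S^{-1}]$ is, under the above identifications, multiplication by the equivariant Euler class $\Eu_{\Gl_{\dd}^{\wt}}\!\bigl(\BoA_{\dd}(\overline{Q})\bigr)$ — the equivariant Poincar\'e dual of the subspace $\{0\}\times V_{\omega}$. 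This class is a product of linear forms $x_{t(a),n}-x_{s(a),m}-\ttt(\wt(a))$ over $a\in\overline{Q}_{1}$ and $1\le m\le\dd_{s(a)}$, $1\le n\le\dd_{t(a)}$, each nonzero in the integral domain $\HO_{\Gl_{\dd}^{\wt}}[S^{-1}]$: those with $s(a)\ne t(a)$ or $m\ne n$ involve distinct variables, while the ``diagonal'' ones equal $-\ttt(\wt(a))$, which is nonzero precisely because Assumption~\ref{weighting_assumption} forces $\wt(a)\ne 0$ for every $a\in\overline{Q}_{1}$. Hence the product is nonzero, $\overline{\xi}_{\dd}[S^{-1}]$ is injective, and so are $\overline{\xi}_{\dd}$ and $\overline{\xi}$. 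The main obstacle I anticipate is this last identification: reconciling the affine-fibration pullback, the closed pushforward, and the competing $\BoD\BoQ^{\vir}$ and $\BoQ^{\vir}$ degree conventions so that the localised map comes out exactly as multiplication by $\Eu_{\Gl_{\dd}^{\wt}}\!\bigl(\BoA_{\dd}(\overline{Q})\bigr)$, together with applying the localisation theorem carefully at the level of quotient stacks — everything else being formal. It is worth noting that this argument makes transparent why Assumption~\ref{weighting_assumption} cannot be weakened: without it the diagonal factors $\ttt(\wt(a))$ degenerate to $0$ and $\overline{\xi}$ genuinely acquires a kernel.
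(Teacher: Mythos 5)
The paper does not actually prove this proposition: it is explicitly recalled from \cite[Thm.A(c)]{ScVa20} (see also \cite[Thm.10.2]{preproj}), so there is no in-text argument to compare against. Your localization sketch is sound and is, to the best of my knowledge, essentially the argument used in those references; the reduction ``algebra morphism for free via Propositions~\ref{map_to_shuff} and~\ref{prop: embedding preprojective CoHA in shuffle algebra}; injectivity after $\HO_T$-torsion-free localisation; the localised map is multiplication by the equivariant Euler class of $\BoA_{\dd}(\overline{Q})$'' is exactly right, and your identification of Assumption~\ref{weighting_assumption} as supplying the rank-two scaling torus $T'$ with the isolated fixed point is the crux.

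Two small cautions. First, the phrase ``pushforward along the inclusion of the origin should identify $\ldots$ with a free rank-one $\HO_{\Gl_{\dd}^{\wt}}[S^{-1}]$-module'' asserts that $i_*\colon \HO^{\BoMo}_{\Gl^{\wt}_\dd}(\{0\})[S^{-1}]\to\HO^{\BoMo}_{\Gl^{\wt}_\dd}(\mu_{\dd}^{-1}(0))[S^{-1}]$ is an isomorphism. Injectivity of $i_*$ is the delicate half of the localization theorem when the ambient variety is singular at the fixed point (one cannot simply compose with a Gysin restriction), and $\mu_{\dd}^{-1}(0)$ is typically singular at $0$. However, you do not actually need injectivity of $i_*$: you only need that every class in the localised source comes from some $\beta\in \HO_{\Gl_{\dd}^{\wt}}[S^{-1}]$ via $i_*$, and that the composite $(j\circ i)_*$ (where $j\colon\mu_{\dd}^{-1}(0)\hookrightarrow\BoA_{\dd}(\overline{Q})$ is the closed embedding whose ambient is smooth) is multiplication by $\Eu_{\Gl_{\dd}^{\wt}}(\BoA_{\dd}(\overline{Q}))$. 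Combined with torsion-freeness of the source over $\HO_T$ and the fact that $\Eu_{\Gl_{\dd}^{\wt}}(\BoA_{\dd}(\overline{Q}))$ is a nonzerodivisor in the integral domain $\HO_{\Gl_{\dd}^{\wt}}[S^{-1}]$, this gives injectivity of $\overline{\xi}_{\dd}$ without needing the strong form of the localization theorem; as a bonus, injectivity of $i_*$ then follows a posteriori. It would be cleaner to observe directly, via smooth base change for the Cartesian square relating $\FZ_\dd\to\FM^T_\dd(\Pi_Q)$ and $\FM^T_\dd(\tilde{Q})\to\FM^T_\dd(\overline{Q})$, that $\overline{\xi}_\dd$ is identified with $j_*$ up to the isomorphism $p^*\colon\HO_{\Gl^{\wt}_\dd}\to\HO_{\Gl^{\wt}_\dd}$; this avoids the $\omega$-factor bookkeeping entirely. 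Second, your closing remark that Assumption~\ref{weighting_assumption} ``cannot be weakened'' is a little strong: the full assumption does more than guarantee $\wt(a)\neq 0$ on edge loops (it pins down the precise composite weights), and only the existence of some $T'$ scaling all of $\overline{Q}_1$ nontrivially with isolated fixed point is used here. It is true that if some diagonal factor degenerates to zero the argument collapses, but this does not show the assumption is tight.
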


\section{The coproduct on $\HO\!\CoHA^T_{\Pi_Q}$}
\subsection{The induced coproduct}
This section is devoted to the coproduct on $\HO\!\CoHA^T_{\Pi_Q}$, which is an essential component of the comparison with algebra structures built out of stable envelopes.  As we have an isomorphism of algebras $\dimred\colon \HCoha^T_{\Pi_Q}\cong \HCoha^T_{\tilde{Q},\tilde{W}}$, where on the right hand side we have an example of a critical cohomological Hall algebra for the quiver $\tilde{Q}$ with potential $\tilde{W}$, we can appeal to the general construction of localised coproducts for such cohomological Hall algebras in \cite{Da13}.  For a slightly more self-contained exposition, in this subsection we obtain this coproduct instead by pulling back along the morphism $\ol{\xi}$ from \S \ref{sec: From critical CoHA to shuffle algebra}.
\begin{proposition}
\label{res_cop_def}
Let $\vDelta_{\dd',\dd''}\colon \HO\!\CoHA^T_{\tilde{Q},\dd}\rightarrow  \HO\!\CoHA^T_{\tilde{Q},\dd'}\tilde{\otimes}\HO\!\CoHA^T_{\tilde{Q},\dd''}$ denote the degree $(\dd',\dd'')$ summand of the localised coproduct on $\HO\!\CoHA^T_{\tilde{Q}}$.  Then $\vDelta_{\dd',\dd''}\lvert_{\Image(\overline{\xi}_{\dd})}$ factors through the inclusion
\[
\overline{\xi}_{\dd'}\tilde{\otimes}\overline{\xi}_{\dd''}\colon \HO\!\CoHA^T_{\Pi_Q,\dd'}\tilde{\otimes}\HO\!\CoHA^T_{\Pi_Q,\dd''}\hookrightarrow \HO\!\CoHA^T_{\tilde{Q},\dd'}\tilde{\otimes}\HO\!\CoHA^T_{\tilde{Q},\dd''}.
\]
\end{proposition}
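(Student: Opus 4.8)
The plan is to argue geometrically, tracking through the correspondence that defines the shuffle coproduct the locus of Borel--Moore homology classes that come from the preprojective stack. Recall from \S\ref{sec: From critical CoHA to shuffle algebra} that $\overline{\xi}_\dd$ is the composite of the isomorphism $\HO\!\CoHA^T_{\Pi_Q,\dd}\xrightarrow{\sim}\HO^{\BoMo}(\FZ_\dd,\BoQ)$ induced by the affine fibration $\FZ_\dd\rightarrow\FM^T_\dd(\Pi_Q)$ --- where $\FZ_\dd=p^{-1}(\FM^T_\dd(\Pi_Q))\subseteq\FM^T_\dd(\tilde{Q})$ with $p$ as in \eqref{dim_proj}, i.e.\ the closed substack of $\tilde{Q}$-representations whose underlying $\overline{Q}$-representation satisfies the preprojective relation --- with the proper pushforward $\iota_{\dd,*}$ along the closed embedding $\iota_\dd\colon\FZ_\dd\hookrightarrow\FM^T_\dd(\tilde{Q})$. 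Since $\FM^T_\dd(\tilde{Q})$ is smooth, writing $j_\dd\colon U_\dd\coloneqq\FM^T_\dd(\tilde{Q})\setminus\FZ_\dd\hookrightarrow\FM^T_\dd(\tilde{Q})$ for the open complement, the localisation long exact sequence gives $\Image(\overline{\xi}_\dd)=\Image(\iota_{\dd,*})=\Ker(j_\dd^*)$. Hence I would reduce the claim to showing that $\vDelta_{\dd',\dd''}$ maps $\Ker(j_\dd^*)$ into $\Image(\overline{\xi}_{\dd'}\otimes_{\HO_T}\overline{\xi}_{\dd''})$, which localises to $\Image(\overline{\xi}_{\dd'}\tilde{\otimes}\overline{\xi}_{\dd''})$; that the latter is honestly an inclusion follows from Proposition \ref{shuff_embed} together with freeness of the relevant $\HO_T$-modules and flatness of localisation.

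I would then use the explicit presentation of $\vDelta_{\dd',\dd''}$ from \S\ref{Sh_alg_cop}: up to multiplication by the ratio of Euler classes, which is an element of the localised ring $(\HO_{\Gl^{\wt}_{\dd'}}\otimes_{\HO_T}\HO_{\Gl^{\wt}_{\dd''}})[z_{\dd',\dd''}^{-1}]$, it is $({}^-(q_1\times q_3)^*)^{-1}\circ{}^-q_2^*$ attached to the correspondence ${}^-\FM^T_{\dd',\dd''}(\tilde{Q})$ lying over $\FM^T_{\dd'}(\tilde{Q})\times_{\B T}\FM^T_{\dd''}(\tilde{Q})$ and over $\FM^T_\dd(\tilde{Q})$. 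Inside that correspondence consider the closed substacks $\FZ^{\mathrm{mid}}\coloneqq({}^-q_2)^{-1}(\FZ_\dd)$ and $\FZ^{\mathrm{out}}\coloneqq({}^-(q_1\times q_3))^{-1}(\FZ_{\dd'}\times_{\B T}\FZ_{\dd''})$, i.e.\ the loci of short exact sequences $0\leftarrow M'\leftarrow M\leftarrow M''\leftarrow 0$ of $\tilde{Q}$-representations whose middle term, respectively whose two outer terms, satisfy the preprojective relation. The one genuine ingredient is the elementary remark that subrepresentations and quotients of $\Pi_Q$-modules are $\Pi_Q$-modules --- the relation $\sum_{a}[M(a),M(a^*)]=0$ restricts to any $\overline{Q}$-subrepresentation and descends to any quotient --- so that $M$ being preprojective forces $M'$ and $M''$ to be, and hence $\FZ^{\mathrm{mid}}\subseteq\FZ^{\mathrm{out}}$. (The reverse inclusion is false, as an extension of preprojective modules need not be preprojective, but it is not needed.)

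The rest is then a support chase. For $f\in\Ker(j_\dd^*)$, functoriality of restriction to open substacks shows that ${}^-q_2^*f$ vanishes on $({}^-q_2)^{-1}(U_\dd)={}^-\FM^T_{\dd',\dd''}(\tilde{Q})\setminus\FZ^{\mathrm{mid}}$, so by the localisation sequence it lies in the image of $\HO^{\BoMo}(\FZ^{\mathrm{mid}},\BoQ)$, hence --- via $\FZ^{\mathrm{mid}}\subseteq\FZ^{\mathrm{out}}$ and functoriality of proper pushforward --- in the image of $\HO^{\BoMo}(\FZ^{\mathrm{out}},\BoQ)$. Since ${}^-(q_1\times q_3)$ is an affine fibration, its pullback is an isomorphism compatible with the localisation sequences of these substacks and their complements, so $({}^-(q_1\times q_3)^*)^{-1}$ carries the image of $\HO^{\BoMo}(\FZ^{\mathrm{out}},\BoQ)$ back onto the image of $\HO^{\BoMo}(\FZ_{\dd'}\times_{\B T}\FZ_{\dd''},\BoQ)$ in $\HO^{\BoMo}(\FM^T_{\dd'}(\tilde{Q})\times_{\B T}\FM^T_{\dd''}(\tilde{Q}),\BoQ)$; after a Künneth decomposition over $\HO_T$ (legitimate since these Borel--Moore homologies are free $\HO_T$-modules, e.g.\ by \cite[Cor.9.7]{preproj}) the latter is exactly $\Image(\overline{\xi}_{\dd'}\otimes_{\HO_T}\overline{\xi}_{\dd''})$. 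Multiplying by the Euler-class ratio keeps us in $\Image(\overline{\xi}_{\dd'}\tilde{\otimes}\overline{\xi}_{\dd''})$, because the latter is a module over $(\HO_{\Gl^{\wt}_{\dd'}}\otimes_{\HO_T}\HO_{\Gl^{\wt}_{\dd''}})[z_{\dd',\dd''}^{-1}]$ (as $\overline{\xi}_{\dd'}\otimes_{\HO_T}\overline{\xi}_{\dd''}$ intertwines the tautological-class actions). The main obstacle here is not any single hard step but the bookkeeping: one must carry along the cohomological shift built into $\overline{\xi}$ (harmless, since only images of $\HO_T$-linear maps are being compared) and check that the affine-fibration pullbacks really intertwine the localisation sequences; the conceptual core is solely the inclusion $\FZ^{\mathrm{mid}}\subseteq\FZ^{\mathrm{out}}$, i.e.\ stability of the preprojective condition under sub-objects and quotients.
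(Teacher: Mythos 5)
Your proof is correct, but it takes a genuinely different route from the one in the paper. The paper's proof is a one-line appeal to the same functorial argument as Proposition~\ref{map_to_shuff}: one reverses the arrows in the commutative diagrams built from the natural transformations $\phip{f}\rightarrow\iota_*\iota^*\leftarrow\id$ applied to the various pushforwards around the correspondence, and the factorisation falls out of the vanishing-cycles formalism together with the dimensional-reduction identification $\dimred$. You instead bypass vanishing cycles entirely. You identify $\Image(\overline{\xi}_\dd)$ with $\Ker(j_\dd^*)$ via the Borel--Moore localisation sequence for $\FZ_\dd\hookrightarrow\FM^T_\dd(\tilde{Q})$, and then chase supports through the correspondence ${}^-\FM^T_{\dd',\dd''}(\tilde{Q})$, using that both ${}^-q_2^*$ and $({}^-(q_1\times q_3)^*)^{-1}$ interact cleanly with the localisation sequences for the preimage substacks $\FZ^{\mathrm{mid}}$ and $\FZ^{\mathrm{out}}$, and that multiplication by the Euler-class ratio is a module operation.

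What makes your argument worth recording is that it isolates the geometric reason for the factorisation in a single elementary inclusion $\FZ^{\mathrm{mid}}\subseteq\FZ^{\mathrm{out}}$, i.e.\ the fact that the preprojective relation on a $\overline{Q}$-representation passes to subobjects and to quotients. In the paper's treatment this fact is present but entirely implicit, buried in the functoriality of vanishing cycles and the contractibility arguments of Proposition~\ref{map_to_shuff}. Your version is longer because of the bookkeeping with localisation sequences and K\"unneth, but it is more self-contained (no $\phip{f}$, no $\dimred$) and more transparent about what is doing the work. The one cosmetic remark is that the statement, unlike the surrounding discussion, does not need Assumption~\ref{weighting_assumption}: the content is a containment of images, which your argument establishes without needing $\overline{\xi}_{\dd'}\tilde{\otimes}\overline{\xi}_{\dd''}$ to be injective; your invocation of Proposition~\ref{shuff_embed} is thus a harmless strengthening of the hypotheses rather than a logical necessity.
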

\begin{proof}
The proof is the same as for Proposition \ref{map_to_shuff}, this time using that the diagrams induced by restriction
\[
\xymatrix{
(q_2)_*\BoQ_{\FB}\ar[d]&\ar[l]\BoQ_{\FC}\ar[d]&\HO^*(\FB,\BoQ)\ar[d]_b&\ar[l]\HO^*(\FC,\BoQ)\ar[d]^r\\
(q_2)_*\BoQ_{(\Tr(\tilde{W})\circ q_2)^{-1}(0)}&\ar[l]\BoQ_{\Tr(\tilde{W})^{-1}(0)}&\HO^*((fq)^{-1}(0),\BoQ)&\ar[l]\HO^*(\Tr(\tilde{W})^{-1}(0),\BoQ)\\
(q_2)_*\phip{\Tr(\tilde{W})}\BoQ_{\FB}\ar[u]&\ar[l]\phip{\Tr(\tilde{W})}\BoQ_{\FC}\ar[u]&\HO^*(\FB,\phip{fq}\BoQ_{\FB})\ar[u]^c&\ar[l]\HO^*(\FC,\phip{\Tr(\tilde{W})}\BoQ_{\FC})\ar[u]_s
}
\]
commute.
\end{proof}
Assume that the weighting $\wt\colon\tilde{Q}_1\rightarrow \BoZ^r$ satisfies Assumption \ref{weighting_assumption}, so that $\overline{\xi}$ is an embedding by Proposition \ref{shuff_embed}.  We abuse notation by denoting also by $\vDelta_{\dd',\dd''}\colon \HO\!\CoHA_{\Pi_Q,\dd}^T\rightarrow \HO\!\CoHA_{\Pi_Q,\dd'}^T\tilde{\otimes}\HO\!\CoHA_{\Pi_Q,\dd''}^T$ the coproduct morphism induced by restricting the localised coproduct on $\HO\!\CoHA_{\tilde{Q}}^T$ along $\ol{\xi}$, which makes sense, due to Proposition \ref{res_cop_def}.

\begin{remark}
It follows formally that this operation induces a localised bialgebra structure on $\HO\!\CoHA_{\Pi_Q}^T$, in the sense of \cite{Da13}.  
\end{remark}
We record here an easy, but essential corollary of Proposition \ref{mult_prop}.  
\begin{corollary}
\label{mult_cor}
Let $Q$ be a quiver, and let $\wt\colon\tilde{Q}_1\rightarrow \BoZ^r$ be a weighting satisfying Assumption \ref{weighting_assumption}.  Let $\dd',\dd''\in\BoN^{Q_0}$ be dimension vectors with disjoint support, and set $\dd=\dd'+\dd''$.  We define localisation at the ideal $I$ as in Proposition \ref{mult_prop}.  We consider the morphism induced by the product
\[
\vmult'_{\dd',\dd''}\colon \left(\HO\!\CoHA^T_{\Pi_Q,\dd'}\otimes_{\HO_T} \HO\!\CoHA^T_{\Pi_Q,\dd''}\right)_I\rightarrow (\HO\!\CoHA^T_{\Pi_Q,\dd})_I
\]
as well as the localised coproduct
\[
\vDelta'_{\dd',\dd''}\colon (\HO\!\CoHA^T_{\Pi_Q,\dd})_I\rightarrow \left(\HO\!\CoHA^T_{\Pi_Q,\dd'}\otimes_{\HO_T} \HO\!\CoHA^T_{\Pi_Q,\dd''}\right)_I.
\]
These two morphisms are inverses of each other.
\end{corollary}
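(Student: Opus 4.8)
The plan is to deduce the corollary from its shuffle-algebra counterpart, Proposition \ref{mult_prop}, applied to the symmetric quiver $\tilde Q$, by transporting everything along the algebra embedding $\ol\xi$ of \S\ref{sec: From critical CoHA to shuffle algebra}. Three inputs are used. First, by Proposition \ref{shuff_embed} the hypothesis that $\wt$ satisfies Assumption \ref{weighting_assumption} ensures that $\ol\xi\colon\HO\!\CoHA^T_{\Pi_Q}\hookrightarrow\HO\!\CoHA^T_{\tilde Q}$ is an embedding of $\HO_T$-algebras; being built from restriction maps in equivariant cohomology and from dimensional reduction, it also respects the action of tautological classes, hence is $B$-linear for the $B=\HO_{\Gl_{\dd'}}\otimes\HO_{\Gl^{\wt}_{\dd''}}$-module structures appearing in Proposition \ref{mult_prop}. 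Second, by the very construction of the coproduct on $\HO\!\CoHA^T_{\Pi_Q}$ described after Proposition \ref{res_cop_def}, the map $\ol\xi$ intertwines the localized coproducts, so that $\vDelta^{\tilde Q}_{\dd',\dd''}\circ\ol\xi_{\dd}=(\ol\xi_{\dd'}\tilde\otimes\ol\xi_{\dd''})\circ\vDelta_{\dd',\dd''}$, where the right-hand $\vDelta$ is the coproduct on $\HO\!\CoHA^T_{\Pi_Q}$. Third, since $\tilde Q_0=Q_0$, the vectors $\dd',\dd''$ are dimension vectors for $\tilde Q$ with disjoint support, so Proposition \ref{mult_prop} applies to $\HO\!\CoHA^T_{\tilde Q}$.

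Before the diagram chase I would dispose of a bookkeeping point: since $\supp(\dd')\cap\supp(\dd'')=\emptyset$, the element $z_{\dd',\dd''}=\EU^T_{\dd',\dd''}(\tilde Q_1)\EU^T_{\dd',\dd''}(\tilde Q^{\op}_1)$ defining the localized tensor product $\tilde\otimes$ is already a unit after localizing at the prime ideal $I$ of Proposition \ref{mult_prop}. Indeed each linear factor of $z_{\dd',\dd''}$ has nonzero image in $B/I\cong\HO_{\Gl_{\dd'}}$, because reducing modulo $I$ kills the $x_{(2),\cdot,\cdot}$-variables but leaves a nonzero linear form in the $x_{(1),\cdot,\cdot}$ and equivariant variables. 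Hence for each $\star\in\{\Pi_Q,\tilde Q\}$ we have $\big(\HO\!\CoHA^T_{\star,\dd'}\tilde\otimes\HO\!\CoHA^T_{\star,\dd''}\big)_I=\big(\HO\!\CoHA^T_{\star,\dd'}\otimes_{\HO_T}\HO\!\CoHA^T_{\star,\dd''}\big)_I$, and so the assertion of the corollary is exactly the localization at $I$, for the quiver $\Pi_Q$, of the mutually-inverse conclusion that Proposition \ref{mult_prop} provides for $\tilde Q$.

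Now the main step. Localization is flat, so $(\ol\xi_{\bullet})_I$ remains injective, and localizing the two compatibilities above at $I$ yields a commuting square
\[
\begin{tikzcd}
\big(\HO\!\CoHA^T_{\Pi_Q,\dd'}\otimes_{\HO_T}\HO\!\CoHA^T_{\Pi_Q,\dd''}\big)_I \arrow[r,"\vmult'_{\dd',\dd''}"]\arrow[d,"\ol\xi"]&\big(\HO\!\CoHA^T_{\Pi_Q,\dd}\big)_I\arrow[d,"\ol\xi"]\\
\big(\HO\!\CoHA^T_{\tilde Q,\dd'}\otimes_{\HO_T}\HO\!\CoHA^T_{\tilde Q,\dd''}\big)_I \arrow[r,"\vmult'_{\dd',\dd''}"]&\big(\HO\!\CoHA^T_{\tilde Q,\dd}\big)_I
\end{tikzcd}
\]
commuting because $\ol\xi$ is an algebra morphism, together with the analogous square for $\vDelta'_{\dd',\dd''}$, commuting by the construction of the coproduct on $\HO\!\CoHA^T_{\Pi_Q}$. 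Pasting the two squares shows that $(\ol\xi_{\dd'}\otimes\ol\xi_{\dd''})_I\circ\big(\vDelta'_{\dd',\dd''}\circ\vmult'_{\dd',\dd''}\big)$ equals the corresponding composite for $\tilde Q$ precomposed with $(\ol\xi_{\dd'}\otimes\ol\xi_{\dd''})_I$, which by Proposition \ref{mult_prop} is just $(\ol\xi_{\dd'}\otimes\ol\xi_{\dd''})_I$ itself; injectivity then forces $\vDelta'_{\dd',\dd''}\circ\vmult'_{\dd',\dd''}=\id$. The reverse identity $\vmult'_{\dd',\dd''}\circ\vDelta'_{\dd',\dd''}=\id$ follows in the same way, now using injectivity of $(\ol\xi_{\dd})_I$.

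I expect the only delicate part to be the bookkeeping of the second paragraph — checking that the localized tensor product $\tilde\otimes$ and the localization at $I$ interact as stated, so that the square above makes sense on the nose with its vertical arrows honestly restrictions of $\ol\xi$. Everything else is formal, given Propositions \ref{shuff_embed}, \ref{res_cop_def}, and \ref{mult_prop}.
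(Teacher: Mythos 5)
Your proof is correct and takes the same approach as the paper's one-line argument, which observes that both $\vmult'_{\dd',\dd''}$ and $\vDelta'_{\dd',\dd''}$ are obtained by restriction along $\ol\xi$ and then invokes Proposition \ref{mult_prop}. You have simply spelled out the bookkeeping the paper leaves implicit — that each linear factor of $z_{\dd',\dd''}$ reduces to a nonzero element of $B/I\cong\HO_{\Gl_{\dd'}}$ (so $\tilde\otimes$ and $\otimes_{\HO_T}$ agree after localizing at $I$), and the injectivity diagram chase with $(\ol\xi)_I$.
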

\begin{proof}
Both the multiplication and the localised comultiplication are obtained from restriction along $\overline{\xi}$, and so this follows immediately from Proposition \ref{mult_prop}.\end{proof}
\begin{remark}
Under the condition $\supp(\dd')\cap \supp(\dd'')=\emptyset$ required in Corollary \ref{mult_cor} there is an equality $\EU^T_{\dd',\dd''}(\tilde{Q}_1)=\EU^T_{\dd',\dd''}(\overline{Q}_1)$.
\end{remark}

\subsection{The raw coproduct}
\label{raw_cop_sec}
In this subsection we recall the definition of the localised coproduct on the cohomological Hall algebra $\HO\!\CoHA_{Q,W}^T$ associated to an arbitrary quiver with potential from \cite{Da13}, and show that the above coproduct on $\HO\!\CoHA^T_{\Pi_Q}$ is a special case, pulled back along the isomorphism $\dimred$.  Since this result is not required for the rest of the paper, we will be brief.  

Let $N=\BoZ^r$ be a lattice, and set $T=\Hom_{\Grp}(N,\BoC^*)$.  Let $Q$ be a $N$-graded quiver, and let $W$ be a $T$-invariant potential on it.  For simplicity, we assume that $Q$ is symmetric, although not necessarily graded-symmetric.  We consider the adjunction morphism
\[
\breve{\alpha}\colon \JH_{\dd,*}\left(\phip{\Tr(W)}\BoQ^{\vir}_{\FM^{T}_{\dd}(Q)}\rightarrow q_{2,*}\phip{\Tr(W)}\BoQ^{\vir}_{\FM^{T}_{\dd',\dd''}(Q)}[-(\dd',\dd'')_Q]\right).
\]
Fix $\dd',\dd''\in\BoN^{Q_0}$.  We will use the morphism defined in terms of tautological classes
\begin{align*}
E\colon &\HO\!\CoHA^{T}_{Q,W,\dd'}\otimes_{\HO_T}\HO\!\CoHA^{T}_{Q,W,\dd''}\rightarrow \HO\!\CoHA^{T}_{Q,W,\dd'}\tilde{\otimes}\HO\!\CoHA^{T}_{Q,W,\dd''}\\
&f\mapsto  \EU^T_{\dd',\dd''}(Q_1)^{-1}\EU^T_{\dd',\dd''}(Q_0)\cdot f.
\end{align*} 
Let $\dd=\dd'+\dd''$.  Then we define the coproduct morphism
\begin{align*}
\vDelta_{\dd',\dd''}\colon &\HO\!\CoHA^{T}_{Q,W,\dd}\rightarrow \HO\!\CoHA^{T}_{Q,W,\dd'}\tilde{\otimes}\HO\!\CoHA^{T}_{Q,W,\dd''}\\
&f\mapsto (E\circ \TS\circ \HO(\beta)^{-1}\circ\HO(\breve{\alpha}))(f)
\end{align*}
with $\beta$ as in \S \ref{gen_CoHA_sec}.  We define $\vDelta\colon \HO\!\CoHA^{T}_{Q,W}\rightarrow \HO\!\CoHA^{T}_{Q,W}\tilde{\otimes}\HO\!\CoHA^{T}_{Q,W}$ by taking the sum of the $\vDelta_{\dd',\dd''}$ over all pairs of dimension vectors $\dd',\dd''\in\BoN^{Q_0}$.

Since the target of $\vDelta$ is localised, the sense in which $\vDelta$ is compatible with the product takes some explaining; we use the localised tensor category structure introduced in \S \ref{localisation_notation}.  In the notation of that section, compatibility between the product and localised coproduct is expressed by the following proposition.
\begin{proposition}\cite[Thm.5.13]{Da13}
The following diagram commutes (with notation as in \S \ref{Sh_alg_cop})
\[
\xymatrix{
\HO\!\CoHA^{T}_{Q,W}\otimes_{\HO_T} \HO\!\CoHA^{T}_{Q,W}\ar[d]^{u \circ(\vDelta\otimes_{\HO_T}\vDelta)}\ar[r]^{\vmult}&\HO\!\CoHA^{T}_{Q,W}\ar[dd]^{\vDelta}
\\
(\HO\!\CoHA^{T}_{Q,W}\otimes_{\HO_T}\HO\!\CoHA^{T}_{Q,W}\otimes_{\HO_T} \HO\!\CoHA^{T}_{Q,W}\otimes_{\HO_T}\HO\!\CoHA^{T}_{Q,W})_{(12),(14),(23),(34)}\ar[d]^{\id\otimes_{\HO_T}\tilde{\sw}\otimes_{\HO_T}\id}&
\\
(\HO\!\CoHA^{T}_{Q,W}\otimes_{\HO_T}\HO\!\CoHA^{T}_{Q,W}\otimes_{\HO_T} \HO\!\CoHA^{T}_{Q,W}\otimes_{\HO_T}\HO\!\CoHA^{T}_{Q,W})_{(13),(14),(23),(24)}\ar[r]^-{\vmult}&\HO\!\CoHA^{T}_{Q,W}\tilde{\otimes}\HO\!\CoHA^{T}_{Q,W}.
}
\]
\end{proposition}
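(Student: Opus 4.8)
The plan is to realise both composites in the diagram as (localised) pull--push operations along explicit correspondences of the moduli stacks $\FM^T_{\bullet}(Q)$, to show that the two correspondences coincide, and then to match the attached sheaf-theoretic data using the Thom--Sebastiani isomorphism together with base change. This is \cite[Thm.5.13]{Da13}; since the statement is not used elsewhere in the paper, I will only indicate the architecture of the argument.

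First I would fix $\dd=\dd'+\dd''=\ee'+\ee''$ and unwind the two composites. On the left, $\vmult_{\dd',\dd''}$ is computed from the correspondence $\FM^T_{\dd'}(Q)\times_{\B T}\FM^T_{\dd''}(Q)\xleftarrow{q_1\times q_3}\FM^T_{\dd',\dd''}(Q)\xrightarrow{q_2}\FM^T_\dd(Q)$, decorated by $\TS$ and by the Euler-class data implicit in the tautological-class action, and $\vDelta_{\ee',\ee''}$ from the opposite correspondence built on ${}^-\FM^T_{\ee',\ee''}(Q)$, decorated by the twist $E$. Hence $\vDelta\circ\vmult$ is pull--push along the fibre product of $q_2$ with the forgetful map of the second correspondence; this fibre product is the stack $\FZ$ parametrising triples $(M,M',P)$, where $M$ is a $\dd$-dimensional $\BoC Q$-module, $M'\subseteq M$ a submodule of dimension $\dd'$, and $P\subseteq M$ a submodule of dimension $\ee'$ (using the canonical identification ${}^-\FM\cong\FM$ that swaps $q_1$ with $q_3$). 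On the right, after summing over the decompositions $\dd'=\dd^{(1)}+\dd^{(3)}$ and $\dd''=\dd^{(2)}+\dd^{(4)}$, the composite $(\vmult\tilde{\otimes}\vmult)\circ(\id\otimes\tilde{\sw}\otimes\id)\circ u\circ(\vDelta\otimes_{\HO_T}\vDelta)$ is pull--push along the correspondence obtained by chaining the two coproduct correspondences, the reordering isomorphism $\tilde{\sw}$, and the two product correspondences. Its underlying stack is again a stack of $\dd$-dimensional modules equipped with two submodules, now presented as glued from four subquotients of dimensions $\dd^{(1)},\dots,\dd^{(4)}$ --- i.e.\ exactly $\FZ$, stratified according to $\bigl(\dim(M'\cap P),\dim((M'+P)/M'),\dim(M'/(M'\cap P)),\dim(M/(M'+P))\bigr)=(\dd^{(1)},\dd^{(2)},\dd^{(3)},\dd^{(4)})$. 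So both composites are pull--push along the same correspondence, and it remains to compare the operations on cohomology.

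For that comparison I would proceed in three steps. (i) \emph{Sheaf theory.} Along each stratum of $\FZ$ the module $M$ carries a filtration whose associated graded is the four subquotients, so $\Tr(W)$ decomposes --- up to the Thom--Sebastiani isomorphism $\TS$ already built into both $\vmult$ and $\vDelta$ --- as a sum over the four pieces; consequently $\phip{\Tr(W)}$ of the constant sheaf restricts as an external product and smooth base change matches the two pull--push operations sheaf-by-sheaf. (ii) \emph{Signs.} Rearranging the four cohomology tensor factors on the right produces Koszul signs and $\tau$-signs; these agree with the signs coming from the geometric reordering precisely because of the choice of $\tau$-twist in \S\ref{tau_twist_sec} (following \cite[\S1]{KS2}): by genericity $\sw_{\tau,\dd'',\dd'}\circ\sw_{\tau,\dd',\dd''}=\id$, and $\tau$ is engineered to absorb exactly this discrepancy. (iii) \emph{Euler classes.} The factor in $E$, the Euler-class data of the two $\vmult$'s, and the ratio $\EU^T_{\dd',\dd''}(Q^{\opp}_1)^{-1}\EU^T_{\dd',\dd''}(Q_1)$ hidden in $\tilde{\sw}$ combine to the same rational function once one uses $\EU^T_{\dd',\dd''}(Q_1)=\EU^T_{\dd'',\dd'}(Q^{\opp}_1)$; the remaining $\EU^T$-factors are produced by the relative dimensions of the affine fibrations of $\FZ$ over $\prod_j\FM^T_{\dd^{(j)}}(Q)$ recording the off-diagonal extension classes --- equivalently, the classes $z_{\dd',\dd''}$ inverted in $\tilde{\otimes}$ are exactly the Euler classes of the non-proper pieces of the correspondence, so the projection formula expresses these as the required localisations.

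The step I expect to be the main obstacle is the identification of the two correspondences together with the bookkeeping in (iii): verifying that the four subquotients of $(M,M',P)$ recombine in exactly the localisation-and-reordering pattern recorded in the diagram, and that the relative dimensions of the corresponding affine fibrations --- a tally of $\chi_Q$-pairings among $\dd^{(1)},\dots,\dd^{(4)}$ --- reproduce every Euler-class factor on both sides. This is where all of the localisation data defining $\tilde{\otimes}$ must be exhibited geometrically, and where the non-properness of the ``wrong way'' maps is converted, via the projection/localisation formula, into the rational functions appearing in the statement.
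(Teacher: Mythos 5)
The paper does not prove this proposition; it cites \cite[Thm.5.13]{Da13}, so there is no internal proof to compare against. What can be said is whether your architectural sketch is a plausible route.

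The Green's-formula architecture you propose is the standard one for CoHA bialgebra compatibility and is almost certainly the shape of the argument in \cite{Da13}. Two points in your sketch, however, are glossed over in a way that hides the actual nontrivial content. First, the claim that ``both composites are pull--push along the same correspondence'' is not literally true and in fact \emph{is} the hard step. The composite $\vDelta\circ\vmult$ is pull--push along $\FZ=\FM_{\dd',\dd''}\times_{\FM_\dd}{}^-\FM_{\ee',\ee''}$, the stack of a module with two filtrations; the other composite is pull--push along a chain of four correspondences whose fiber product is not $\FZ$ but a stack of four modules with a partial glueing datum. Relating the two is Green's formula: one exhibits a map from the chained correspondence to $\FZ$ whose relative dimension is controlled by an $\Ext$-group, and it is exactly this affine-fibration discrepancy that produces the Euler-class factors $z_{\dd',\dd''}$ one then needs in the $\tilde\otimes$ bookkeeping. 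Treating this as an afterthought in step (iii) inverts the logical order: the Euler classes are not a decoration to be matched at the end, they are how the two different correspondences are reconciled. Second, in step (i) you assert that $\Tr(W)$ decomposes along a stratum because ``$M$ carries a filtration''; but $\Tr(W)$ does not decompose on a filtered module, only on its associated graded, and the passage from filtered to graded requires the $\GG_m$-attraction (or, equivalently, the homotopy-invariance of vanishing cycles under the contracting flow) that is built into the very definition of $\vmult$ and $\vDelta$ via $\FM_{\dd',\dd''}$. If this is elided, the Thom--Sebastiani isomorphism does not apply to $\FZ$ itself, only to the attracting stratification of $\FZ$.

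One further minor point: your stratification assigns $\dd'=\dd^{(1)}+\dd^{(3)}$, whereas tracing the localisation subscripts $(12),(14),(23),(34)\to(13),(14),(23),(24)$ through the swap of factors $2$ and $3$ gives $\dd'=\dd^{(1)}+\dd^{(2)}$ and $\ee'=\dd^{(1)}+\dd^{(3)}$; this is a harmless relabelling, but it is worth getting straight since it controls which $\chi_Q$-pairings appear in the Euler class tally.
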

\begin{proposition}
For $T$ induced by an arbitrary $\tilde{W}$-preserving weighting $\wt\colon \tilde{Q}_1\rightarrow \BoZ^r$, the morphism $\overline{\xi}\colon \HO\!\CoHA_{\Pi_Q}^T\rightarrow \HO\!\CoHA_{\tilde{Q}}^T$ is a morphism of localised bialgebras.  In particular, in case $T$ satisfies Assumption \ref{weighting_assumption}, the localised bialgebra structure on $\HO\!\CoHA_{\Pi_Q}^T$ induced by pulling back along the embedding $\overline{\xi}$ is the same as the one defined as in \cite{Da13}.
\end{proposition}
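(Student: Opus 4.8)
The plan is to deduce everything from the comparison morphism $\xi$ of \S\ref{sec: From critical CoHA to shuffle algebra}, running for the coproduct the same naturality argument already used for the product in the proof of Proposition \ref{map_to_shuff}. By Proposition \ref{prop: embedding preprojective CoHA in shuffle algebra} we have $\overline{\xi}=\xi\circ\dimred$, where $\dimred\colon\HO\!\CoHA^T_{\Pi_Q}\xrightarrow{\cong}\HO\!\CoHA^T_{\tilde{Q},\tilde{W}}$ is an isomorphism of algebras; and by construction the localised bialgebra structure that \cite{Da13} puts on $\HO\!\CoHA^T_{\Pi_Q}$ is, by definition, the one transported through $\dimred$ from the ``raw'' localised bialgebra structure of \S\ref{raw_cop_sec} on the critical CoHA $\HO\!\CoHA^T_{\tilde{Q},\tilde{W}}$. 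So it suffices to prove that $\xi$ is a morphism of localised bialgebras from $\HO\!\CoHA^T_{\tilde{Q},\tilde{W}}$ (with its raw coproduct) to $\HO\!\CoHA^T_{\tilde{Q}}$ (with the localised coproduct of \S\ref{Sh_alg_cop}). Pulling this back along $\dimred$, and combining with the product compatibility of Proposition \ref{prop: embedding preprojective CoHA in shuffle algebra}, gives the first assertion; the ``in particular'' follows because under Assumption \ref{weighting_assumption} the map $\overline{\xi}$ is injective (Proposition \ref{shuff_embed}), so the coproduct on $\HO\!\CoHA^T_{\Pi_Q}$ defined in this section as the restriction of the shuffle coproduct along $\overline{\xi}$ is forced to coincide with the one pulled back through $\dimred$.

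To prove that $\xi$ intertwines the two coproducts I would first record that the localised coproduct on $\HO\!\CoHA^T_{\tilde{Q}}$ of \S\ref{Sh_alg_cop} is precisely the instance $W=0$ of the raw-coproduct recipe of \S\ref{raw_cop_sec} applied to $\tilde{Q}$: in both cases $\vDelta_{\dd',\dd''}$ is assembled from the adjunction unit along the forgetful map $q_2$ of the relevant flag stack, the base-change isomorphism $\HO(\beta)^{-1}$ identifying cohomology of the flag stack with cohomology of $\FM^T_{\dd'}\times_{\B T}\FM^T_{\dd''}$, the Thom--Sebastiani isomorphism $\TS$ (which for the zero potential degenerates to the Künneth isomorphism), and finally multiplication by the tautological factor $\EU^T_{\dd',\dd''}(\tilde{Q}_0)\EU^T_{\dd',\dd''}(\tilde{Q}_1)^{-1}$. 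With both coproducts displayed in this uniform way, what remains is the claim that $\xi_{\dd}=r^{-1}s$ is compatible, block by block, with these four operations, where $s$ is induced by the natural transformation $\phip{\Tr(\tilde{W})}\BoQ^{\vir}\to\iota_*\iota^*\BoQ^{\vir}$ and $r$ by $\BoQ^{\vir}\to\iota_*\iota^*\BoQ^{\vir}$, with $\iota$ the inclusion of the (equivariantly contractible) zero locus of $\Tr(\tilde{W})$.

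The verification of that compatibility is a diagram chase of exactly the shape carried out in the proofs of Propositions \ref{map_to_shuff} and \ref{res_cop_def}. The adjunction morphism along $q_2$ and the isomorphism $\HO(\beta)^{-1}$ are obtained by applying a pushforward functor to a natural transformation, so they commute with the restriction maps defining $s$ and $r$; the Thom--Sebastiani isomorphism for the externally split function on $\FM^T_{\dd'}(\tilde{Q})\times\FM^T_{\dd''}(\tilde{Q})$ restricts, over the product of zero loci, to the Künneth isomorphism that is the corresponding block of the coproduct on $\HO\!\CoHA^T_{\tilde{Q}}$; and the Euler-class correction is multiplication by a class pulled back from $\B\Gl^{\wt}_{\dd'}\times_{\B T}\B\Gl^{\wt}_{\dd''}$, hence acts identically whether or not one has already passed through $\phip{\Tr(\tilde{W})}$. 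Stacking these commuting squares, and threading through the localisation morphism $u$ and the twisted symmetriser $\tilde{\sw}$ of \S\ref{localisation_notation} exactly as in \S\ref{Sh_alg_cop}, produces the square asserting that $\xi$ respects the coproducts, which is what is needed.

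The step I expect to demand the most care is the bookkeeping inside the identification of the shuffle coproduct of \S\ref{Sh_alg_cop} with the $W=0$ raw coproduct: one must match the reversed flag stack ${}^-\FM^T_{\dd',\dd''}$ used there with the flag stack $\FM^T_{\dd',\dd''}$ of \S\ref{raw_cop_sec} under the swap ${}^-\FM^T_{\dd',\dd''}\cong\FM^T_{\dd'',\dd'}$, and keep track of the Koszul and $\tau$-twist signs together with the $(-1)^{\dd'\cdotsh\dd''}$ discrepancy recorded in the footnote to \eqref{gdim_red}. None of this is conceptually difficult, but it is where an ordering or a sign could go wrong; everything else is formal naturality, which is why the argument stays short.
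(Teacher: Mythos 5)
Your proposal is correct and follows essentially the same route as the paper's proof: the paper, too, reduces to showing that $\xi\colon \HO\!\CoHA^T_{\tilde{Q},\tilde{W}}\to\HO\!\CoHA^T_{\tilde{Q}}$ intertwines the raw and shuffle coproducts by running the naturality diagram-chase of Proposition \ref{map_to_shuff} again, and then transports the statement to $\overline{\xi}$ via the dimension-reduction isomorphism and Proposition \ref{prop: embedding preprojective CoHA in shuffle algebra}. Your version simply spells out the intermediate steps (identifying the shuffle coproduct with the $W=0$ raw coproduct, and tracking the sign conventions) that the paper leaves implicit.
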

\begin{proof}
The analogous result for the morphism $\xi\colon \HO\!\CoHA_{\tilde{Q},\tilde{W}}^T\rightarrow \HO\!\CoHA_{\tilde{Q}}^T$ follows from the same arguments as in Proposition \ref{map_to_shuff}.  Then the result follows from Proposition \ref{prop: embedding preprojective CoHA in shuffle algebra}, since we obtain all algebraic structures on $\HO\!\CoHA_{\Pi_Q}^T$ by pulling back along the dimension reduction isomorphism $\dimred$.
\end{proof}

\subsection{Coproduct on the perverse associated graded algebra}
\label{PAGC}
In this subsection, we describe how the localised coproduct on $\HO\!\CoHA_{\Pi_Q}^T$ interacts with the perverse filtration introduced in \S \ref{perverse_CoHA_sec}.  
\begin{proposition}
\label{coarse_perv}
Let $\dd^{(1)},\ldots,\dd^{(r)}\in \BoN^{Q_0}$, and let $\CV$ be a vector bundle on $\FM^T_{\dd^{(1)}}(\overline{Q})\times_{\B T}\cdots\times_{\B T} \FM^T_{\dd^{(r)}}(\overline{Q})$.  Set $B=\HCoha_{\Pi_Q,\dd^{(1)}}^T\otimes_{\HO_T}\cdots \otimes_{\HO_T}\HCoha_{\Pi_Q,\dd^{(r)}}^T$.  Let $\alpha\in \End(B)$ be the action of multiplication by $\Eu(\CV)$.  Then $\alpha(\FP^iB)\subset \FP^{i+2\rk(\CV)}B$.
\end{proposition}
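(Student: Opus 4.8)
The plan is to reduce the statement to the formal observation that cup product with an even-degree cohomology class of an \emph{ambient} stack is represented, at the sheaf level, by a morphism to a shift of the (dualizing) sheaf, and is therefore forced to shift any filtration defined by perverse truncation by exactly that cohomological degree. Write $\mathcal{G}\coloneqq\CoHA^T_{\Pi_Q,\dd^{(1)}}\boxtimes\cdots\boxtimes\CoHA^T_{\Pi_Q,\dd^{(r)}}$ for the exterior product over $\B T$, a complex on $\CM\coloneqq\CM^T_{\dd^{(1)}}(\Pi_Q)\times_{\B T}\cdots\times_{\B T}\CM^T_{\dd^{(r)}}(\Pi_Q)$, so that $B=\HO^*(\CM,\mathcal{G})$ (Künneth and purity, using that the $\HCoha^T_{\Pi_Q,\dd^{(j)}}$ are free $\HO_T$-modules with pure mixed Hodge structures, \cite[Cor.9.7]{preproj}) and, by definition, $\FP^nB$ is the image of $\HO^*(\CM,{}^{\Fp'}\!\vtau^{\leq n}\mathcal{G})$ in $B$. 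By the decomposition \eqref{dec_thm_1} applied to each factor, $\mathcal{G}$ splits as the direct sum of its shifted perverse cohomology sheaves, so $\FP^\bullet B$ is an honest (exhaustive, separated) filtration, equal to the tensor product of the filtrations $\FP^\bullet\HCoha^T_{\Pi_Q,\dd^{(j)}}$. Finally, $\alpha$ is cup product with the restriction of $\Eu(\CV)$ along the closed embedding $\prod_{j,\B T}\FM^T_{\dd^{(j)}}(\Pi_Q)\hookrightarrow\prod_{j,\B T}\FM^T_{\dd^{(j)}}(\overline{Q})$, a class of cohomological degree $2\rk(\CV)$.

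Set $k=\rk(\CV)$. First I would realise $\alpha$ at the level of sheaves on $\CM$: the restricted class lies in $\HO^{2k}(\prod_{j,\B T}\FM^T_{\dd^{(j)}}(\Pi_Q),\BoQ)=\Hom_{\Dub}(\BoQ,\BoQ[2k])$; applying Verdier duality and the fixed virtual shift gives a morphism $\BoD\BoQ^{\vir}\to\BoD\BoQ^{\vir}[2k]$, and pushing it forward along $\JH_{\dd^{(1)}}\times_{\B T}\cdots\times_{\B T}\JH_{\dd^{(r)}}$ yields a morphism $e\colon\mathcal{G}\to\mathcal{G}[2k]$ on $\CM$ whose effect on derived global sections is $\alpha$ (up to a sign, which is irrelevant here). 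Now apply the truncation functor. Since ${}^{\Fp'}$ is a fixed shift of the standard perverse t-structure one has ${}^{\Fp'}\!\vtau^{\leq n}(\mathcal{H}[m])=({}^{\Fp'}\!\vtau^{\leq n+m}\mathcal{H})[m]$, and by naturality of the truncation counit $e$ fits into a commutative square whose top edge is the counit ${}^{\Fp'}\!\vtau^{\leq n}\mathcal{G}\to\mathcal{G}$, whose bottom edge is the counit ${}^{\Fp'}\!\vtau^{\leq n}(\mathcal{G}[2k])\to\mathcal{G}[2k]$, i.e.\ $({}^{\Fp'}\!\vtau^{\leq n+2k}\mathcal{G})[2k]\to\mathcal{G}[2k]$, whose right edge is $e$, and whose left edge is ${}^{\Fp'}\!\vtau^{\leq n}(e)$. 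Taking $\HO^*(\CM,-)$ of this square shows that $\alpha$ carries the image of $\HO^*(\CM,{}^{\Fp'}\!\vtau^{\leq n}\mathcal{G})$ into the image of $\HO^{*+2k}(\CM,{}^{\Fp'}\!\vtau^{\leq n+2k}\mathcal{G})$, that is, $\alpha(\FP^nB)\subset\FP^{n+2k}B$, which is the assertion.

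The argument is essentially formal, so there is no real obstacle; the only delicate points are bookkeeping ones. First, the relevant perverse filtration is the ``less perverse'' one of Remark \ref{less_perv_rem}, built from the shifted perverse t-structure ${}^{\Fp'}$ on the product of the coarse moduli spaces $\CM^T_{\dd^{(j)}}(\Pi_Q)$ rather than the finer one pulled back from $\CM^T(\tilde Q)$; it is precisely with respect to this t-structure that $\FP^\bullet B$ is a sheaf-level truncation filtration, which is what makes the mechanism above apply, and one must check compatibility of the tautological ($\HO^*$-of-the-ambient-stack) module structure on $B$ with this picture. Second, the Künneth identification of $\FP^\bullet B$ with the tensor product filtration on the factors is where freeness over $\HO_T$ and purity are used. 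If one prefers to avoid exterior products of complexes altogether, one can instead prove the $r=1$ case verbatim as above and deduce the general case by writing $\Eu(\CV)=\sum_s c^{(s)}_1\boxtimes\cdots\boxtimes c^{(s)}_r$ in Künneth form with each summand of total degree $2\rk(\CV)$ — possible because each $\HO^*(\FM^T_{\dd^{(j)}}(\overline{Q}),\BoQ)$ is a polynomial ring over $\HO_T$ concentrated in even degrees — and applying the $r=1$ statement to each tensor factor, noting that the $r=1$ argument used nothing about $\CV$ beyond the fact that $\Eu(\CV)$ is an even cohomology class of the ambient stack.
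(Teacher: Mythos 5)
Your proof is correct and follows the same essential approach as the paper: realize the Euler-class action as a morphism of complexes on the coarse moduli stack, push it down, and invoke naturality of the perverse truncation counit to get the commutative square. The only difference is bookkeeping: the paper realizes the morphism on the tripled-quiver stack by applying $\JH_*p_*\phip{\Tr(\tilde{W})}$ to an Euler-class morphism of constant sheaves (following the proof of \cite[Lem.5.8]{QEAs}), whereas you work directly with $\BoD\BoQ^{\vir}$ on the preprojective moduli stack via the module structure over the constant sheaf — an equivalent but slightly cleaner packaging that avoids the dimensional-reduction detour.
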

\begin{proof}
This is similar to proof of \cite[Lem.5.8]{QEAs}, to which we refer for extra explanation.  Let $\tilde{\CV}=p^*\CV$, where $p$ is as below: 
\begin{align*}
p\colon &\FM^T_{\dd^{(1)}}(\tilde{Q})\times_{\B T}\cdots\times_{\B T} \FM^T_{\dd^{(r)}}(\tilde{Q})\rightarrow \FM^T_{\dd^{(1)}}(\overline{Q})\times_{\B T}\cdots\times_{\B T} \FM^T_{\dd^{(r)}}(\overline{Q})\\
\JH\colon &\FM^T_{\dd^{(1)}}(\overline{Q})\times_{\B T}\cdots\times_{\B T} \FM^T_{\dd^{(r)}}(\overline{Q})\rightarrow \CM^T_{\dd^{(1)}}(\overline{Q})\times_{\B T}\cdots\times_{\B T} \CM^T_{\dd^{(r)}}(\overline{Q}).
\end{align*}
Denote by $\tilde{\FM}$ the domain of $p$.  Set $d=-\sum_{i=1}^r\chi_{\tilde{Q}}(\dd^{(i)},\dd^{(i)})$.  Then $\alpha$ is induced by applying $\JH_*p_*\phip{\Tr(\tilde{W})}$ to the morphism of complexes $l\colon \BoQ_{\FM}[d]\rightarrow \BoQ_{\FM}[d+2\rk(\CV)]$ determined by the Euler class of $\tilde{\CV}$.  In particular, $\alpha$ is induced by applying the global sections functor to a morphism
\[
\CoHA^T_{\Pi_Q,\dd^{(1)}}\boxdot\cdots\boxdot\CoHA^T_{\Pi_Q,\dd^{(r)}}\xrightarrow{l'}\CoHA^T_{\Pi_Q,\dd^{(1)}}\boxdot\cdots\boxdot\CoHA^T_{\Pi_Q,\dd^{(r)}}[2\rk(\CV)]
\]
with $l'=\JH_*\pi_*\phip{\Tr(\tilde{W})}l$.  The result follows from the commutativity of the following diagram:
\[
\xymatrix{
\CoHA^T_{\Pi_Q,\dd^{(1)}}\boxdot\cdots\boxdot\CoHA^T_{\Pi_Q,\dd^{(r)}}\ar[r]^-{l'}&\CoHA^T_{\Pi_Q,\dd^{(1)}}\boxdot\cdots\boxdot\CoHA^T_{\Pi_Q,\dd^{(r)}}[2\rk(\CV)]
\\
\ar[u]{}^{\fp'}\!\vtau^{\leq i}\left(\CoHA^T_{\Pi_Q,\dd^{(1)}}\boxdot\cdots\boxdot\CoHA^T_{\Pi_Q,\dd^{(r)}}\right)\ar[r]^-{{}^{\fp'}\!\vtau^{\leq i}l'}&\left({}^{\fp'}\!\vtau^{\leq i+2\rk(\CV)}\left(\CoHA^T_{\Pi_Q,\dd^{(1)}}\boxdot\cdots\boxdot\CoHA^T_{\Pi_Q,\dd^{(r)}}\right)\right)[2\rk(\CV)]\ar[u]
}
\]
\end{proof}
The next lemma follows by the same proof as \cite[Lem.6.3]{QEAs}, with only minor modifications to take account of the fact that here we consider the less perverse filtration (see Remark \ref{less_perv_rem}).
\begin{lemma}
\label{AB_lemma}
Let $\CV$ be a $\Gl_{\dd^{(1)}}\times\cdots \times \Gl_{\dd^{(r)}}\times T$-equivariant vector bundle on $\BoA_{\dd^{(1)}}(\ol{Q})\times \cdots\times\BoA_{\dd^{(r)}}(\ol{Q})$ satisfying the property that there is a coweight $\BoC^*\hookrightarrow \Gl_{\dd^{(1)}}\times\cdots \times \Gl_{\dd^{(r)}}\times T$ for which the induced $\BoC^*$-action on the fibre $\CV_0$ over the trivial quiver representation $0\in \BoA_{\dd^{(1)}}(\ol{Q})\times \cdots\times\BoA_{\dd^{(r)}}(\ol{Q})$ has no fixed subspace.  Set $B^i=\Gr_{\FP}^{i}(\HCoha_{\Pi_Q,\dd^{(1)}}^T\otimes_{\HO_T}\cdots \otimes_{\HO_T}\HCoha_{\Pi_Q,\dd^{(r)}}^T)$.  Then the morphism provided by Proposition \ref{coarse_perv}
\[
\Eu(\CV)\cdot \colon B^i\rightarrow B^{i+2\rk(\CV)}
\]
is injective.
\end{lemma}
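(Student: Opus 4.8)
The plan is to follow the proof of \cite[Lem.6.3]{QEAs} more or less verbatim, the one change being that here $\FP$ is the ``less perverse'' filtration of \S\ref{perverse_CoHA_sec} (Remark \ref{less_perv_rem}), built from the decomposition \eqref{dec_thm_1} of $\JH_{\dd,*}\BoD\BoQ^{\vir}$ into pure IC summands rather than from the critical-locus description; since \cite[Thm.C]{Da21a} supplies that decomposition, all of the structural input of \cite[Lem.6.3]{QEAs} is available.

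First I would unwind Proposition \ref{coarse_perv}. Set $\mathcal{A}\coloneqq\CoHA^T_{\Pi_Q,\dd^{(1)}}\boxdot\cdots\boxdot\CoHA^T_{\Pi_Q,\dd^{(r)}}$, a complex on $\CM\coloneqq\CM^T_{\dd^{(1)}}(\ol Q)\times_{\B T}\cdots\times_{\B T}\CM^T_{\dd^{(r)}}(\ol Q)$. By \eqref{dec_thm_1}, and since $\boxdot$ is direct image along a finite map, $\mathcal{A}$ is semisimple: $\mathcal{A}\cong\bigoplus_m{}^{\fp'}\!\Ho^m(\mathcal{A})[-m]$ with each ${}^{\fp'}\!\Ho^m(\mathcal{A})$ a finite direct sum of IC sheaves, hence pure. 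By Proposition \ref{coarse_perv}, $\Eu(\CV)\cdot$ is $\HO^*(\CM,-)$ applied to the morphism $l'\colon\mathcal{A}\to\mathcal{A}[2\rk(\CV)]$ of \emph{loc.\ cit.}; using the splitting, $B^i=\Gr_\FP^i B$ is identified with $\HO^{*-i}(\CM,{}^{\fp'}\!\Ho^i(\mathcal{A}))$, and $\Eu(\CV)\cdot\colon B^i\to B^{i+2\rk(\CV)}$ with $\HO^*(\CM,-)$ applied to the degree-preserving component $\gamma_i\colon{}^{\fp'}\!\Ho^i(\mathcal{A})\to{}^{\fp'}\!\Ho^{i+2\rk(\CV)}(\mathcal{A})$ of $l'$, which is a morphism of perverse sheaves. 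As pure perverse sheaves lie in a semisimple abelian category, every monomorphism among them splits, and $\HO^*(\CM,-)$ of a split monomorphism is a monomorphism; so it is enough to show that each $\gamma_i$ is a monomorphism of perverse sheaves.

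For this I would bring in the cocharacter. Since $\BoA_{\dd^{(1)}}(\ol Q)\times\cdots\times\BoA_{\dd^{(r)}}(\ol Q)$ is $G$-equivariantly contractible onto $0$, where $G=\Gl_{\dd^{(1)}}\times\cdots\times\Gl_{\dd^{(r)}}\times T$, the bundle $\CV$ is pulled back from $\B G$, so $\Eu(\CV)=\Eu_G(\CV_0)\in\HO^{2\rk(\CV)}(\B G)$ and $l'$ is the action of this tautological class. Restricting equivariance along the given cocharacter $\lambda$, the image of $\Eu_G(\CV_0)$ in $\HO^*(\B\BoC^*,\BoQ)=\BoQ[u]$ is $c\,u^{\rk(\CV)}$ with $c\neq 0$, which is precisely the hypothesis that $\CV_0$ has no $\BoC^*$-fixed subspace. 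Then, exactly as in \cite[Lem.6.3]{QEAs}, one shows $\gamma_i$ injective by contradiction: its kernel is a direct summand of ${}^{\fp'}\!\Ho^i(\mathcal{A})$; a nonzero kernel contains a simple summand $\ICS_{\ol Z}(\CL)$, which may be taken with $Z$ minimal in the closure order among those occurring; restricting the whole picture to a generic point of $Z$ realises $\gamma_i$ there through cup product with $\Eu(\CV)$ on a smooth chart, hence, after restricting equivariance along $\lambda$, through cup product with $c\,u^{\rk(\CV)}$; and torsion-freeness of the relevant associated-graded cohomology over the tautological ring — provided for the less perverse filtration by the structure of $\Gr_\FP\HCoha^T_{\Pi_Q}$ in \cite{preproj3}, the $\HO_T$-freeness results of \cite{preproj}, and the shuffle embedding $\ol\xi$ of Proposition \ref{shuff_embed} — forbids such a summand, a contradiction.

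The main obstacle is this last reduction. It is \emph{not} enough to know that $\Eu(\CV)$ is a non-zero-divisor on the unfiltered module $B$ — which follows quickly, e.g.\ since $\ol\xi$ embeds $B=\bigotimes_k\HCoha^T_{\Pi_Q,\dd^{(k)}}$ into the polynomial ring $\bigotimes_k\HCoha^T_{\tilde Q,\dd^{(k)}}\cong\HO^*(\B G)$ on which $\Eu_G(\CV_0)\neq 0$ — because passing to the associated graded of a filtration can destroy injectivity. One must show in addition that cup product with $\Eu(\CV)$ does not strictly lower perverse degree on any simple summand, i.e.\ that the leading component $\gamma_i$ is itself injective; this is where the hypothesis on $\lambda$ is indispensable, and where the local, stratum-by-stratum analysis of \cite[Lem.6.3]{QEAs}, unchanged by the substitution of \eqref{dec_thm_1} for the critical decomposition, carries the weight of the argument.
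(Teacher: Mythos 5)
Your proposal takes the same route as the paper, which itself only says that the lemma ``follows by the same proof as \cite[Lem.6.3]{QEAs}, with only minor modifications'' to accommodate the less perverse filtration; you have correctly identified what those modifications amount to — replacing the critical (vanishing-cycle) decomposition by the decomposition \eqref{dec_thm_1} of $\JH_{\dd,*}\BoD\BoQ^{\vir}$ supplied by \cite[Thm.C]{Da21a}, after which everything runs as in \cite[Lem.6.3]{QEAs}. Your unwinding of Proposition~\ref{coarse_perv}, the reduction to showing that the degree-preserving component $\gamma_i\colon {}^{\fp'}\!\CH^i(\mathcal A)\to{}^{\fp'}\!\CH^{i+2\rk(\CV)}(\mathcal A)$ is a monomorphism (and hence split, by semisimplicity of pure perverse sheaves), and the observation that the cocharacter hypothesis forces $\Eu(\CV)$ to restrict to a nonzero multiple of $u^{\rk(\CV)}$ in $\HO^*(\B\BoC^*)$, are all correct and are exactly the ``minor modifications'' the paper has in mind.

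One small point worth tightening: in your final paragraph the justification for ruling out a simple summand $\ICS_{\ol Z}(\CL)$ in $\Ker(\gamma_i)$ wobbles between two registers. You (rightly) warn that non-zero-divisor behaviour on the unfiltered module $B$ — which the shuffle embedding $\ol\xi$ of Proposition~\ref{shuff_embed} gives you for free — is not by itself enough, and then you invoke ``torsion-freeness of the relevant associated-graded cohomology over the tautological ring,'' citing $\ol\xi$ again; but the shuffle embedding does not obviously filter compatibly, so it cannot be the source of that torsion-freeness. What actually closes the argument, as in \cite[Lem.6.3]{QEAs}, is a stratum-local computation: restrict to the generic point of $Z$, where the stalk of $\gamma_i$ is cup product by $\Eu(\CV)$ on a smooth chart, and use $\BoC^*$-equivariant formality (purity of the $\IC$-summands guarantees freeness of their $\BoC^*$-equivariant cohomology over $\BoQ[u]$) to see that multiplication by $cu^{\rk(\CV)}$ cannot kill it. You gesture at this (``the local, stratum-by-stratum analysis \ldots carries the weight of the argument''), which is the right deferral; I would just drop the appeal to $\ol\xi$ in that sentence, since it only controls the unfiltered case that you had already flagged as insufficient.
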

For $\dd',\dd''\in\BoN^{Q_0}$ consider the degree $(\dd',\dd'')$ piece of the target of the localised coproduct: $\HO\!\CoHA_{\Pi_Q,\dd'}^T\tilde{\otimes}\HO\!\CoHA^T_{\Pi_Q,\dd''}$.  We endow this with a perverse filtration by setting
\[
\FP^n\left(\HO\!\CoHA^T_{\Pi_Q,\dd'}\tilde{\otimes}\HO\!\CoHA^T_{\Pi_Q,\dd''}\right)\coloneqq \sum_{l\geq 0}(\EU^T_{\dd',\dd''}(\tilde{Q}^{\op}_1)\EU^T_{\dd',\dd''}(\tilde{Q}_1))^{-l}\cdot \FP^{n+ 2ld}\left(\HO\!\CoHA^T_{\Pi_Q,\dd'}\otimes_{\HO_T}\HO\!\CoHA^T_{\Pi_Q,\dd''}\right)
\]
where $d=\deg(\EU^T_{\dd'',\dd'}(\tilde{Q}_1))$, and we embed 
\[
\left(\HO\!\CoHA_{\Pi_Q,\dd'}^T\otimes_{\HO_T}\HO\!\CoHA^T_{\Pi_Q,\dd''}\right)\hookrightarrow \left(\HO\!\CoHA^T_{\Pi_Q,\dd'}\tilde{\otimes}\HO\!\CoHA^T_{\Pi_Q,\dd''}\right)
\]
via Lemma \ref{AB_lemma}.  The following proposition follows from the construction of the coproduct:
\begin{proposition}
The (localised) coproduct respects the perverse filtrations.
\end{proposition}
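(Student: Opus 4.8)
The plan is to inspect the construction of $\vDelta_{\dd',\dd''}$ given in \S\ref{raw_cop_sec} and transported to $\HO\!\CoHA^T_{\Pi_Q}$ along $\dimred$, and to check that it is built from two kinds of operation, each controlled on the perverse filtration. Write $\vDelta_{\dd',\dd''}=E\circ\widehat{\vDelta}_{\dd',\dd''}$, where $\widehat{\vDelta}_{\dd',\dd''}=\TS\circ\HO(\beta)^{-1}\circ\HO(\breve{\alpha})$ takes values in the unlocalised tensor product $\HO\!\CoHA^T_{\Pi_Q,\dd'}\otimes_{\HO_T}\HO\!\CoHA^T_{\Pi_Q,\dd''}$ and $E$ is multiplication by $\EU^T_{\dd',\dd''}(\tilde{Q}_1)^{-1}\EU^T_{\dd',\dd''}(\tilde{Q}_0)$. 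The basic principle for the first factor is that for \emph{any} morphism $g\colon\CF\to\CG$ of constructible complexes on a space $Y$, the induced map on hypercohomology sends the image of $\HO^*(Y,{}^{\fp'}\!\vtau^{\leq n}\CF)\to\HO^*(Y,\CF)$ into the image of $\HO^*(Y,{}^{\fp'}\!\vtau^{\leq n}\CG)\to\HO^*(Y,\CG)$ -- immediate from functoriality of ${}^{\fp'}\!\vtau^{\leq n}$ and the natural transformation ${}^{\fp'}\!\vtau^{\leq n}\to\id$ -- so any operation that is $\HO^*$ of a morphism of complexes respects the perverse filtrations, up to the cohomological shift it carries.

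I would then exhibit $\widehat{\vDelta}_{\dd',\dd''}$ as $\HO^*(\CM^T_{\dd}(\overline{Q}),-)$ applied to a composite of morphisms of complexes: the adjunction unit $\breve{\alpha}$ is one such; $\HO(\beta)^{-1}$ is the inverse of the isomorphism $\beta$ coming from the affine-fibration identification for $q_1\times q_3$, hence induced by a morphism of complexes, after transporting along the \emph{finite} morphism $\oplus$ (so that $\HO^*(\CM^T_{\dd}(\overline{Q}),\oplus_*-)=\HO^*(\CM^T_{\dd'}(\overline{Q})\times_{\B T}\CM^T_{\dd''}(\overline{Q}),-)$ and $\oplus_*$ commutes with ${}^{\fp'}\!\vtau^{\leq\bullet}$); and $\TS$ is the Thom--Sebastiani/K\"unneth isomorphism, compatible with the perverse t-structure because the decomposition theorem holds for $\JH$ by \cite[Thm.C]{Da21a}, so ${}^{\fp'}\!\vtau^{\leq n}$ of a box product picks out exactly the summands ${}^{\fp'}\!\Ho^i\boxtimes{}^{\fp'}\!\Ho^j$ with $i+j\leq n$. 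Bookkeeping the shifts carried by $\breve{\alpha}$ and $\beta$ (of opposite sign, expressible through the Euler form of $\tilde{Q}$) then gives $\widehat{\vDelta}_{\dd',\dd''}(\FP^n)\subseteq\FP^{n+c}\big(\HO\!\CoHA^T_{\Pi_Q,\dd'}\otimes_{\HO_T}\HO\!\CoHA^T_{\Pi_Q,\dd''}\big)$ for an explicit constant $c$. This is parallel to the already-known compatibility of the product $\vmult$ with the perverse filtration \cite{preproj3}: the coproduct uses the same correspondence $\FM^T_{\dd'}(\overline{Q})\times_{\B T}\FM^T_{\dd''}(\overline{Q})\xleftarrow{q_1\times q_3}\FM^T_{\dd',\dd''}(\overline{Q})\xrightarrow{q_2}\FM^T_{\dd}(\overline{Q})$, only with the proper pushforward replaced by the corresponding pullback.

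For the second factor, multiplication by the polynomial $\EU^T_{\dd',\dd''}(\tilde{Q}_0)$, viewed as the Euler class of a tautological bundle, shifts $\FP^i$ into $\FP^{i+2(\dd'\cdotsh\dd'')}$ by Proposition \ref{coarse_perv}, and likewise multiplication by the polynomial $\EU^T_{\dd',\dd''}(\tilde{Q}^{\op}_1)$ shifts $\FP^i$ into $\FP^{i+d}$ with $d=\deg\EU^T_{\dd'',\dd'}(\tilde{Q}_1)$; writing $\EU^T_{\dd',\dd''}(\tilde{Q}_1)^{-1}=\big(\EU^T_{\dd',\dd''}(\tilde{Q}^{\op}_1)\EU^T_{\dd',\dd''}(\tilde{Q}_1)\big)^{-1}\EU^T_{\dd',\dd''}(\tilde{Q}^{\op}_1)$, the $l=1$ summand in the very definition of $\FP^n\big(\HO\!\CoHA^T_{\Pi_Q,\dd'}\tilde{\otimes}\HO\!\CoHA^T_{\Pi_Q,\dd''}\big)$ absorbs the remaining inverse Euler class. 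Combining the two factors and checking that the shift $c$ cancels against the degrees of these Euler classes -- the same numerical matching that makes $\vDelta_{\dd',\dd''}$ land in the localised tensor product and be coassociative in the first place -- one obtains $\vDelta_{\dd',\dd''}(\FP^n)\subseteq\FP^n\big(\HO\!\CoHA^T_{\Pi_Q,\dd'}\tilde{\otimes}\HO\!\CoHA^T_{\Pi_Q,\dd''}\big)$, which is the claim.

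The genuine obstacle lies in the first step, in two ways. First, one must ensure that the sheaf-theoretic construction, which is naturally carried out over $\CM^T_{\dd}(\tilde{Q})$ with the potential $\tilde{W}$, descends along $\varpi\colon\CM^T(\tilde{Q})\to\CM^T(\overline{Q})$, so that it is the \emph{less} perverse filtration of Remark \ref{less_perv_rem} -- not the more perverse one coming from $\CM^T(\tilde{Q})$ -- that is being respected; here the dimensional reduction isomorphism \eqref{dim_red}, read as an isomorphism of complexes over $\CM^T(\overline{Q})$ after applying $\varpi_*$, has to be fed through the naturality of $\breve{\alpha}$, $\beta$ and Thom--Sebastiani, exactly as for the product in \cite{preproj3}. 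Second, one must carry out the mechanical but lengthy bookkeeping of the cohomological shifts; everything else is formal.
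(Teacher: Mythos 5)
Your proof follows the same strategy as the paper's: split the coproduct into the sheaf-level piece $\HO(\beta)^{-1}\circ\HO(\breve\alpha)$ (plus Thom--Sebastiani), which is induced by a morphism of complexes over $\CM^T(\overline Q)$ and hence shifts the perverse filtration by the explicit amount $2\chi_{\tilde Q}(\dd',\dd'')$, and then show that multiplication by $\EU^T_{\dd',\dd''}(\tilde Q_0)\EU^T_{\dd',\dd''}(\tilde Q_1)^{-1}$ exactly undoes this shift via Lemma \ref{AB_lemma} and the definition of $\FP^\bullet$ on the localised tensor product. Your extra care in flagging the descent along $\varpi$ (so that it is the less perverse filtration that is respected) and in decomposing $\EU^T_{\dd',\dd''}(\tilde Q_1)^{-1}$ through the $l=1$ summand of the localised filtration is exactly what the paper's terser proof leaves implicit.
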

\begin{proof}
The morphism $\HO(\beta)^{-1}\circ\HO(\breve{\alpha})$ is induced by applying the derived global sections functor to a morphism $\CoHA^T_{\Pi_Q,\dd}\rightarrow \CoHA^T_{\Pi_Q,\dd'}\boxdot\CoHA^T_{\Pi_Q,\dd''}[2\chi_{\tilde{Q}}(\dd',\dd'')]$, so that
\[
(\HO(\beta)^{-1}\circ\HO(\breve{\alpha}))(\FP^i\!\HO\!\CoHA^T_{\Pi_Q,\dd})\subset \left(\FP^{i-2\chi_{\tilde{Q}}(\dd',\dd'')}\!\left( \HO\!\CoHA^T_{\Pi_Q,\dd'}\otimes_{\HO_T}\HO\!\CoHA^T_{\Pi_Q,\dd''}\right)\right)[2\chi_{\tilde{Q}}(\dd',\dd'')].
\]
Then by Lemma \ref{AB_lemma} and the definition of the perverse filtration on the target, the multiplication map
\[
e= \EU^T_{\dd',\dd''}(\tilde{Q}_0)\EU^T_{\dd',\dd''}(\tilde{Q}_1)^{-1}\cdot \colon \left(\HO\!\CoHA_{\Pi_Q,\dd'}^T\otimes_{\HO_T}\HO\!\CoHA^T_{\Pi_Q,\dd''}\right)\rightarrow \left(\HO\!\CoHA_{\Pi_Q,\dd'}^T\tilde{\otimes}\HO\!\CoHA^T_{\Pi_Q,\dd''}\right)
\]
satisfies
\[
e\left(\left( \FP^{i-2\chi_{\tilde{Q}}(\dd',\dd'')}\!\left( \HO\!\CoHA^T_{\Pi_Q,\dd'}\otimes_{\HO_T}\HO\!\CoHA^T_{\Pi_Q,\dd''}\right)\right)[2\chi_{\tilde{Q}}(\dd',\dd'')]\right)\subset\FP^i(\HO\!\CoHA_{\Pi_Q,\dd'}\tilde{\otimes}\HO\!\CoHA_{\Pi_Q,\dd''}).
\]
\end{proof}
We denote by $\vDelta^{\FP}\colon \Gr_{\FP}\HO\!\CoHA_{\Pi_Q}^T\rightarrow \Gr_{\FP}\left(\HO\!\CoHA^T_{\Pi_Q}\tilde{\otimes}\HO\!\CoHA^T_{\Pi_Q}\right)$ the morphism induced by $\vDelta$.

For $X$ a stack, we define an endofunctor for the derived category of constructible sheaves:
\begin{align*}
{}^{\fp}\! \CH^{\bullet}\colon &\Dub_c(X)\rightarrow \Dub_c(X)\\
&\CF\mapsto \bigoplus_{i\in \BoZ}({}^{\fp}\!\CH^{i}\CF)[-i].
\end{align*}
Since the decomposition theorem holds for all the complexes we consider in this paper, the effect of this endofunctor on objects is rather mild; it takes them to (non-canonically) isomorphic objects.  The effect on \textit{morphisms} is more dramatic; it will often send them to zero.  
\begin{example}
Let $\CF=\ICS_Z$ for $Z$ some closed irreducible subvariety of $X$.  We consider $\CF$ as a perverse sheaf on $X$ via direct image along $Z\hookrightarrow X$.  Let $\CG=\bigoplus_{i\in \BoZ} \CG_i[-i]$ be a complex, with each $\CG_i$ a semisimple perverse sheaf, such that $\ICS_Z$ does not feature amongst any of the summands of $\CG_0$.  Then although 
\begin{align*}
\Hom_{\Dub_c(X)}(\CF,\CG)\cong&\Hom_{\Dub_c(X)}({}^{\fp}\! \CH^{\bullet}\CF,{}^{\fp}\! \CH^{\bullet}\CG)\\
\cong &\bigoplus_{i\geq 0}(\Ext^i(\CF,\CG_{-i}))
\end{align*}
may well be nontrivial, if $\beta$ is any homomorphism in $\Hom_{\Dub_c(X)}(\CF,\CG)$, we have ${}^{\fp}\! \CH^{\bullet}\beta=0$.  
\end{example}
The vanishing described in the previous example is key to the following proposition.
\begin{proposition}
\label{CG_prim_prop}
Let $e\in \Fg^{T}_{\Pi_Q}\subset \Gr_{\FP}^0\!\HO\!\CoHA^T_{\Pi_Q}$ be a Chevalley generator. Then $e$ is primitive for $\vDelta^{\FP}$, meaning that $\vDelta^{\FP}(e)=e\otimes 1+1\otimes e$. Equivalently, if $e\in \Fg^{T}_{\Pi_Q,\dd}$ is a Chevalley generator, and $\dd'+\dd''=\dd$ with $\dd'\neq 0\neq \dd''$, then $\vDelta_{\dd',\dd''}(e)\in \FP^{-2}\left(\HO\!\CoHA^T_{\Pi_Q,\dd'}\tilde{\otimes}\HO\!\CoHA^T_{\Pi_Q,\dd''}\right)$.
\end{proposition}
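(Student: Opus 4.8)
The plan is to descend to constructible complexes on the coarse moduli spaces $\CM^T_{\bullet}(\Pi_Q)$ and reduce the statement to a vanishing of perverse cohomology sheaves, exploiting the fact — recorded in the Example preceding the proposition — that the endofunctor ${}^{\fp'}\!\CH^{\bullet}$ (the shifted-$t$-structure analogue of the one defined there) annihilates any morphism into a complex that does not contain the source among the summands of its zeroth perverse cohomology sheaf.

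First I unwind the coproduct. By Proposition \ref{res_cop_def} and \S\ref{raw_cop_sec}, up to the invertible Euler-class normalisation $E$ the summand $\vDelta_{\dd',\dd''}$ is the morphism on derived global sections induced by the adjunction unit $\breve{\alpha}$, which, after identifying its target by means of $\HO(\beta)$, is the morphism of complexes
\[
\Psi_{\dd',\dd''}\colon\CoHA^T_{\Pi_Q,\dd}\longrightarrow\oplus_{*}\bigl(\CoHA^T_{\Pi_Q,\dd'}\boxdot\CoHA^T_{\Pi_Q,\dd''}\bigr)[2\chi_{\tilde{Q}}(\dd',\dd'')],
\]
where $\oplus\colon\CM^T_{\dd'}(\Pi_Q)\times_{\B T}\CM^T_{\dd''}(\Pi_Q)\to\CM^T_{\dd}(\Pi_Q)$ is the finite direct-sum morphism. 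By Theorem \ref{KMA_thm} the Chevalley generator $e$ is a section of the simple perverse sheaf $\CG(Q)_{\dd}$, which is a direct summand of $\CoHA^{T,0}_{\Pi_Q,\dd}={}^{\fp'}\!\CH^{0}\CoHA^T_{\Pi_Q,\dd}$. Since ${}^{\fp'}\!\CH^{\bullet}$ is diagonal for the perverse degree, the class of $\vDelta^{\FP}_{\dd',\dd''}(e)$ in $\Gr^0_{\FP}$ is controlled by the restriction of ${}^{\fp'}\!\CH^{\bullet}\Psi_{\dd',\dd''}$ to the summand $\CG(Q)_{\dd}$, namely by a morphism of semisimple perverse sheaves
\[
\CG(Q)_{\dd}\longrightarrow{}^{\fp'}\!\CH^{2\chi_{\tilde{Q}}(\dd',\dd'')}\bigl(\oplus_{*}(\CoHA^T_{\Pi_Q,\dd'}\boxdot\CoHA^T_{\Pi_Q,\dd''})\bigr);
\]
its vanishing is, after the filtration bookkeeping of \S\ref{PAGC} for $\HO(\breve{\alpha})$ and for $E$, exactly the assertion $\vDelta_{\dd',\dd''}(e)\in\FP^{-2}\bigl(\HO\!\CoHA^T_{\Pi_Q,\dd'}\tilde{\otimes}\HO\!\CoHA^T_{\Pi_Q,\dd''}\bigr)$.

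It remains to see that this perverse cohomology sheaf is zero. Since $\oplus$ is finite and $\boxdot$ takes perverse sheaves to perverse sheaves, the decomposition theorem \eqref{dec_thm_1} shows that $\oplus_{*}(\CoHA^T_{\Pi_Q,\dd'}\boxdot\CoHA^T_{\Pi_Q,\dd''})$ has perverse cohomology only in non-negative degrees; and $\chi_{\tilde{Q}}(\dd',\dd'')=-\sum_{a\in Q_1}(\dd'_{s(a)}\dd''_{t(a)}+\dd'_{t(a)}\dd''_{s(a)})<0$ for every decomposition $\dd=\dd'+\dd''$ with $\dd'\neq 0\neq\dd''$, because $\dd\in\Phi^+_Q$ has connected support, which forces an arrow of $Q$ to run between $\supp(\dd')$ and $\supp(\dd'')$ or to have an endpoint in $\supp(\dd')\cap\supp(\dd'')$. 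Hence the displayed target sheaf, living in a negative perverse degree, vanishes; should one wish to avoid this numerical lemma in the (as just noted, empty) boundary case $\chi_{\tilde{Q}}(\dd',\dd'')=0$, one instead observes that ${}^{\fp'}\!\CH^{0}(\oplus_{*}(\cdots))$ is supported on the closed locus of semisimple modules admitting a direct-sum decomposition into summands of dimensions $\dd'$ and $\dd''$, which does not contain the generic point of $\supp\CG(Q)_{\dd}$ — a simple module when $\dd\in\Sigma_Q$, an $n$-fold sum of a simple module when $\dd=n\dd_0$ — so the simple perverse sheaf $\CG(Q)_{\dd}$ cannot be a summand of it. Together with the evident primitive contributions $\vDelta_{\dd,0}(e)=e\otimes 1$ and $\vDelta_{0,\dd}(e)=1\otimes e$ this gives $\vDelta^{\FP}(e)=e\otimes 1+1\otimes e$. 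The main obstacle is not conceptual but organisational: keeping track of the perverse-filtration shifts introduced by $\HO(\beta)$, the Thom–Sebastiani isomorphism, and $E$ — all already analysed in \S\ref{PAGC} — and checking, as in the Example, that ${}^{\fp'}\!\CH^{\bullet}$ really kills $\breve{\alpha}$ restricted to $\CG(Q)_{\dd}$. Once this is set up the argument is uniform in $\dd$ and needs no case distinction among real, isotropic, and hyperbolic roots, or between primitive and imprimitive ones.
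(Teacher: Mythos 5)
Your overall strategy — descend to constructible complexes on $\CM^T_{\bullet}(\Pi_Q)$ and observe that a morphism of semisimple complexes from the simple summand $\CG(Q)_{\dd}$ vanishes unless $\CG(Q)_{\dd}$ occurs in the target — is the same as the paper's, but your numerical argument for the vanishing rests on a sign error. In \eqref{coprod_supp_arg} the target carries the shift $[n]$ with $n=2\sum_{a\in\overline{Q}}\dd'_{s(a)}\dd''_{t(a)}=-2\chi_{\tilde{Q}}(\dd',\dd'')>0$, not $[2\chi_{\tilde{Q}}(\dd',\dd'')]<0$ as in your display. With the correct sign the morphism of perverse sheaves lands in $\bigoplus_{a+b=n}{}^{\fp'}\!\CH^a\CoHA^T_{\Pi_Q,\dd'}\boxdot{}^{\fp'}\!\CH^b\CoHA^T_{\Pi_Q,\dd''}$ with $n>0$, which is nonzero in general — and it must be: if your target were really $0$, then $\Gr^0_{\FP}(\vDelta_{\dd',\dd''})$ would vanish on all of $\Gr^0_{\FP}\HO\!\CoHA^T_{\Pi_Q,\dd}$, not merely on Chevalley generators, contradicting the Corollary that follows the Proposition, which identifies $\vDelta^{\FP,0}$ with the (certainly nonzero) Hopf coproduct on $\UEA_{\HO_T}(\Fg^T_{\Pi_Q})$ — for a product $e_1\ast e_2$ the class $e_1\otimes e_2$ already survives in $\Gr^0_{\FP}$ of the localised tensor product, by Lemma \ref{AB_lemma}. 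So the real content of the Proposition is precisely that $\CG(Q)_{\dd}$ is not a summand of $\bigoplus_{a+b=n}{}^{\fp'}\!\CH^a\CoHA^T_{\Pi_Q,\dd'}\boxdot{}^{\fp'}\!\CH^b\CoHA^T_{\Pi_Q,\dd''}$, and a degree argument cannot replace it.

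Your fallback support argument is also not enough. It is aimed at ${}^{\fp'}\!\CH^0$ rather than at the relevant degree $n>0$, and, more seriously, the support claim itself fails in the imprimitive isotropic case: if $\dd=n_0\dd_0$ with $\dd_0\in\Sigma_Q$ isotropic, $n_0\geq 2$, and $\dd'=m\dd_0$, $\dd''=(n_0-m)\dd_0$, then the generic point of $\supp\CG(Q)_{\dd}=\Diag_{n_0}(\CM^T_{\dd_0}(\Pi_Q))$ is $M^{\oplus n_0}$ with $M$ simple, which \emph{does} decompose as $M^{\oplus m}\oplus M^{\oplus(n_0-m)}$, so it lies in $\oplus(\CM^T_{\dd'}\times\CM^T_{\dd''})$ after all. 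This is exactly where a bare support inspection breaks down and why the paper invokes Theorem \ref{KMA_thm} — the identification $\CoHA^{T,0}_{\Pi_Q}\cong\UEA_{\boxdot}(\fn^{T,+}_{\CG(Q)})$, which controls exactly which simple perverse sheaves appear in products of lower-dimensional pieces — as the alternative route. Without repairing both the sign and the isotropic case, the argument has a genuine gap.
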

\begin{proof}
Let $e\in \Fg^{T}_{\Pi_Q,\dd}$, and let $\dd',\dd''\in\BoN^{Q_0}$ satisfy $\dd'+\dd''=\dd$, with $\dd'\neq 0\neq \dd''$.  In particular we assume that $\dd$ is not a real simple root (for otherwise the proposition is trivially true).  By definition, the morphism $\vDelta_{\dd',\dd''}^{\FP}$ factors through 
\begin{align*}
\Gr_{\FP}\HO(\varpi_*\breve{\alpha})\colon \Gr_{\FP}\!\HO\!\CoHA^T_{\Pi_Q,\dd}\rightarrow&\HO^*(\CM^T_{\Pi_Q,\dd},{}^{\Fp}\!\Ho^{\bullet}\!\varpi_*\tilde{\JH}_*\phip{\Tr(\tilde{W})}(q_2)_*\BoQ_{\FM_{\dd',\dd''}^{T}(\tilde{Q})})\\
\cong & \HO^*(\CM^T_{\Pi_Q,\dd},{}^{\Fp}\!\Ho^{\bullet}\!\oplus_*(\varpi_*\times\varpi_*)(\tilde{\JH}\times\tilde{\JH})_*\phip{\Tr(\tilde{W})}\BoQ_{\FM^T_{\dd'}(\tilde{Q})\times_{\B T}\FM^T_{\dd''}(\tilde{Q})})\\
\cong &\HO^*(\CM^T_{\Pi_Q,\dd}, {}^{\Fp}\!\Ho^{\bullet}\!\!\CoHA_{\Pi_Q,\dd'}^T\boxdot {}^{\Fp}\!\Ho^{\bullet}\!\CoHA_{\Pi_Q,\dd''}^T)
\end{align*}
where we have omitted some overall cohomological shifts, and $\varpi$ is as in \eqref{forg_diag}.  By definition of the perverse associated graded morphism, the morphism $\Gr_{\FP}\HO(\varpi_*\breve{\alpha})$ is induced by a morphism of graded perverse sheaves
\begin{equation}
\label{coprod_supp_arg}
{}^{\fp}\!\Ho^{\bullet}\!\CoHA^T_{\Pi_Q,\dd}\rightarrow {}^{\Fp}\!\Ho^{\bullet}\!\CoHA_{\Pi_Q,\dd'}^T\boxdot {}^{\Fp}\!\Ho^{\bullet}\!\CoHA_{\Pi_Q,\dd''}^T[n]
\end{equation}
where $n=2\sum_{a\in \overline{Q}}\dd'_{s(a)}\dd''_{t(a)}$ (this accounts for the omitted cohomological shift).  By assumption, $e$ is obtained, after passing to derived global sections, from the inclusion of a simple summand $\CG(Q)_{\dd}\subset {}^{\fp}\!\Ho^0\!\CoHA^T_{\Pi_Q,\dd}$, cf. \S\ref{GKM_thm_sec}.  On the other hand, by Theorem \ref{KMA_thm}, or by direct inspection of possible supports, this simple summand is not contained in the target of \eqref{coprod_supp_arg}, and we deduce that $\vDelta^{\FP}_{\dd',\dd''}(e)=0$.

This shows that $\vDelta_{\dd',\dd''}(e)\in \FP^{-1}\left(\HO\!\CoHA^T_{\Pi_Q,\dd'}\tilde{\otimes}\HO\!\CoHA^T_{\Pi_Q,\dd''}\right)$.  On the other hand, by \eqref{dec_thm_1} we deduce that $\FP^{2n+1}\left(\HO\!\CoHA^T_{\Pi_Q,\dd'}\otimes_{\HO_T}\HO\!\CoHA^T_{\Pi_Q,\dd''}\right)=\FP^{2n}\left(\HO\!\CoHA^T_{\Pi_Q,\dd'}\otimes_{\HO_T}\HO\!\CoHA^T_{\Pi_Q,\dd''}\right)$ for all $n\in\BoN$.  It follows that $\FP^{2n+1}\left(\HO\!\CoHA^T_{\Pi_Q,\dd'}\tilde{\otimes}\HO\!\CoHA^T_{\Pi_Q,\dd''}\right)=\FP^{2n}\left(\HO\!\CoHA^T_{\Pi_Q,\dd'}\tilde{\otimes}\HO\!\CoHA^T_{\Pi_Q,\dd''}\right)$ for all $n\in\BoZ$, in particular for $n=-1$.
\end{proof}

Although we will not use the following corollary, it is helpful to point out regardless.  It illustrates the sense in which taking a \textit{non}-symmetric weighting of $\tilde{Q}$ constitutes a deformation away from the category of cocommutative Hopf algebras, i.e. it provides the passage to quantum groups.  The corollary follows from Proposition \ref{CG_prim_prop}, the fact that under the graded symmetry assumption $\tilde{\sw}$ does not involve inverting any Euler classes, and the fact that $\Gr_{\FP}^0\!\HO\!\CoHA_{\Pi_Q}^T$ is generated by the Chevalley generators.
\begin{corollary}
Let $\wt\colon \tilde{Q}_1\rightarrow \BoZ^{r}$ be a weighting for which $\tilde{Q}$ is graded-symmetric.  Then the morphism 
\[
\vDelta^{\FP,0}\colon \Gr_{\FP}^0\!\HO\!\CoHA_{\Pi_Q}^T\rightarrow \Gr_{\FP}^0\!\left(\HO\!\CoHA_{\Pi_Q}^T\tilde{\otimes}\HO\!\CoHA_{\Pi_Q}^T\right)
\]
factors through the inclusion of $\Gr_{\FP}^0\!\HO\!\CoHA_{\Pi_Q}^T\otimes_{\HO_T}\Gr_{\FP}^0\!\HO\!\CoHA_{\Pi_Q}^T$.  In particular, the localised coproduct is the coproduct induced by the standard coproduct on $\UEA_{\HO_T}(\Fg^{T}_{\Pi_Q})\cong \Gr_{\FP}^0\!\HO\!\CoHA_{\Pi_Q}^T$.
\end{corollary}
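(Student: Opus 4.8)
The plan is to deduce the corollary formally from Proposition~\ref{CG_prim_prop}, Theorem~\ref{KMA_thm}, and the graded-symmetry hypothesis, as indicated in the paragraph preceding the statement. First I would record that, since the decomposition theorem holds for all the complexes in play and, by \eqref{dec_thm_1}, the perverse cohomology sheaves of $\CoHA^T_{\Pi_Q,\dd}$ sit in non-negative perverse degrees, one has $\Gr_{\FP}^0\!\HO\!\CoHA_{\Pi_Q}^T=\FP^0\!\HO\!\CoHA_{\Pi_Q}^T$, which by Theorem~\ref{KMA_thm} is the universal enveloping algebra $\UEA_{\HO_T}(\Fg^T_{\Pi_Q})$ and in particular is generated as an $\HO_T$-algebra by the Chevalley generators $e\in\Fg^T_{\Pi_Q}$.

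Next I would verify that the induced map $\vDelta^{\FP}$ is a morphism of algebras. The localised coproduct $\vDelta$ is compatible with the Hall multiplication $\vmult$ through the localised bialgebra structure on $\HO\!\CoHA_{\Pi_Q}^T$ recalled in \S\ref{raw_cop_sec}, and it respects the perverse filtrations on source and target (\S\ref{PAGC}); passing to associated graded then yields an algebra morphism $\Gr_{\FP}\!\HO\!\CoHA_{\Pi_Q}^T\rightarrow\Gr_{\FP}\!\left(\HO\!\CoHA_{\Pi_Q}^T\tilde{\otimes}\HO\!\CoHA_{\Pi_Q}^T\right)$. This is the point where the graded-symmetry hypothesis is essential: by Remark~\ref{spec_def_rem} the braiding $\tilde{\sw}$ is the plain $\tau$-twisted Koszul swap $\sw_{\tau}$, with no inverted Euler classes, so the product on the localised tensor square restricts to the honest $\tau$-twisted tensor-algebra product on the \emph{unlocalised} tensor square; hence $\Gr_{\FP}^0\!\HO\!\CoHA_{\Pi_Q}^T\otimes_{\HO_T}\Gr_{\FP}^0\!\HO\!\CoHA_{\Pi_Q}^T$ is a subalgebra of $\Gr_{\FP}^0\!\left(\HO\!\CoHA_{\Pi_Q}^T\tilde{\otimes}\HO\!\CoHA_{\Pi_Q}^T\right)$.

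With these two facts in hand the argument closes: by Proposition~\ref{CG_prim_prop} every Chevalley generator $e$ is primitive, $\vDelta^{\FP}(e)=e\otimes 1+1\otimes e$, so its image lies in the subalgebra just identified; since the Chevalley generators generate $\Gr_{\FP}^0\!\HO\!\CoHA_{\Pi_Q}^T$ as an algebra and $\vDelta^{\FP,0}$ is an algebra map, the whole image of $\vDelta^{\FP,0}$ lies in $\Gr_{\FP}^0\!\HO\!\CoHA_{\Pi_Q}^T\otimes_{\HO_T}\Gr_{\FP}^0\!\HO\!\CoHA_{\Pi_Q}^T$, giving the first assertion. For the second assertion, $\vDelta^{\FP,0}$ is then an algebra morphism out of $\UEA_{\HO_T}(\Fg^T_{\Pi_Q})$ that is primitive on the Chevalley generators; since those generate $\Fg^T_{\Pi_Q}$ as a Lie algebra, hence $\UEA_{\HO_T}$ as an associative algebra, and the standard cocommutative coproduct on a universal enveloping algebra is the unique algebra map with this property, $\vDelta^{\FP,0}$ must equal it.

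Since Proposition~\ref{CG_prim_prop} is already available, the corollary is essentially formal and I do not expect a genuine obstacle; the one thing needing care is the bookkeeping in the second step — checking that $\vDelta^{\FP}$ is multiplicative for the product induced on $\Gr_{\FP}\!\left(\HO\!\CoHA_{\Pi_Q}^T\tilde{\otimes}\HO\!\CoHA_{\Pi_Q}^T\right)$ and that, under the graded-symmetry assumption, the unlocalised tensor square is a subalgebra of the localised one in perverse degree $0$. Without graded symmetry the target product would drag in inverse Euler classes, and the ``image of a generating set lies in a subalgebra'' step would fail, which is precisely the sense in which a non-symmetric weighting deforms away from the category of cocommutative Hopf algebras.
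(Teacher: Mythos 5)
Your proposal is correct and takes essentially the same approach as the paper, which itself only gives a one-paragraph sketch citing exactly the three ingredients you use: Proposition~\ref{CG_prim_prop} (primitivity of the Chevalley generators for $\vDelta^{\FP}$), the observation that under the graded-symmetry hypothesis $\tilde{\sw}$ involves no inverted Euler classes so the unlocalised tensor square is a subalgebra in perverse degree zero, and the fact that the Chevalley generators generate $\Gr_{\FP}^0\!\HO\!\CoHA_{\Pi_Q}^T$. Your fleshed-out version (the identification $\Gr_{\FP}^0=\FP^0$, the verification that $\vDelta^{\FP}$ is multiplicative via the bialgebra compatibility of \S\ref{raw_cop_sec} together with the filtration statements of \S\ref{PAGC}, and the universal-property argument identifying $\vDelta^{\FP,0}$ with the standard coproduct) correctly supplies the routine bookkeeping the paper omits.
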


\section{From coproducts to lowering operators}
\label{coprod_to_lowering}
\subsection{Perverse vs order filtration}
Let $Q$ be a quiver, and let $\wt\colon\tilde{Q}_1\rightarrow \BoZ^r$ be a weighting satisfying Assumption \ref{weighting_assumption}, so the natural algebra homomorphism $\overline{\xi}\colon \HO\!\CoHA^T_{\Pi_Q}\rightarrow \HO\!\CoHA^T_{\tilde{Q}}$ is an embedding by Proposition \ref{shuff_embed}.  Let $i\in Q_0$ be a vertex, and let $\dd\in\BoN^{Q_0}$ be a dimension vector.  Via $\overline{\xi}$ we consider elements of $\HO\!\CoHA^T_{\Pi_Q,\delta_i}\tilde{\otimes}\HO\!\CoHA^T_{\Pi_Q,\dd}$ as a subspace of rational functions in the function field of $\BoQ[x_{(1),i,1}]\otimes \HO_T[x_{(2),j,m}\lvert j\in Q_0, 1\leq m\leq\dd_j]$.  For $\alpha\in \HO\!\CoHA^T_{\Pi_Q,\delta_i}\tilde{\otimes}\HO\!\CoHA^T_{\Pi_Q,\dd}$ we denote the $n$th coefficient of the resulting formal expansion in the variable $x_{(1),i,1}^{-1}$ by $\alpha_{(n)}$.  In a little more detail, we write
\[
\overline{\xi}(\alpha)=\sum_{n\in \BoZ}x_{(1),i,1}^{-n} \otimes \ol{\xi}(\alpha)_{(n)}
\]
with $\ol{\xi}(\alpha)_{(n)}\in \HO_T[x_{(2),j,m}\lvert j\in Q_0, 1\leq m\leq\dd_j]$.  Since $\HO\!\CoHA^T_{\Pi_Q,\dd}$ is closed under the action of tautological classes in $\HO_{\Gl_{\dd}^{\wt}}$, each $\ol{\xi}(\alpha)_{(n)}$ is in the image of $\overline{\xi}\colon \HO\!\CoHA^T_{\Pi_Q,\dd}\rightarrow \HO\!\CoHA^T_{\tilde {Q},\dd}$, and so there exist unique $\alpha_{(n)}$ such that we can write
\[
\overline{\xi}(\alpha)=\sum_{n\in \BoZ}x_{(1),i,1}^{-n} \otimes \overline{\xi}(\alpha_{(n)}).
\]
\begin{proposition}
\label{ptrack}
Let $\alpha\in \FP^{-2n}\left(\HO\!\CoHA^T_{\Pi_Q,\delta_i}\tilde{\otimes}\HO\!\CoHA^T_{\Pi_Q,\dd}\right)$.  Then $\alpha_{(m)}=0$ for $m<n$.
\end{proposition}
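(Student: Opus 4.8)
\emph{Overall plan.} The plan is to reduce the statement to a degree bound on $\overline{\xi}(\beta)$ for $\beta$ lying in a step of the \emph{unlocalised} perverse filtration, and then to read off the order of vanishing at $x_{(1),i,1}=\infty$ from the shape of the Euler classes appearing in the denominators. First I would unwind the definition of the perverse filtration on $\HO\!\CoHA^T_{\Pi_Q,\delta_i}\tilde{\otimes}\HO\!\CoHA^T_{\Pi_Q,\dd}$ from \S\ref{PAGC}: an element $\alpha\in\FP^{-2n}$ is a finite sum of terms $(\EU^T_{\delta_i,\dd}(\tilde{Q}^{\op}_1)\EU^T_{\delta_i,\dd}(\tilde{Q}_1))^{-l}\beta_l$ with $l\geq 0$ and $\beta_l$ in the step $\FP^{-2n+2ld}$ of the unlocalised tensor product $\HO\!\CoHA^T_{\Pi_Q,\delta_i}\otimes_{\HO_T}\HO\!\CoHA^T_{\Pi_Q,\dd}$, where $d=\deg\EU^T_{\dd,\delta_i}(\tilde{Q}_1)$. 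Since $\alpha\mapsto\alpha_{(m)}$ is $\BoQ$-linear it suffices to treat a single such term. Here I would use the elementary fact that, regarded as a polynomial in $x_{(1),i,1}$ over the function field of $\HO_T[x_{(2),j,m}\mid j\in Q_0,\ 1\leq m\leq\dd_j]$, the product $\EU^T_{\delta_i,\dd}(\tilde{Q}^{\op}_1)\EU^T_{\delta_i,\dd}(\tilde{Q}_1)$ has degree exactly $d$ in $x_{(1),i,1}$ with invertible leading coefficient: the number of arrows of $\tilde{Q}$ leaving $i$, weighted by $\dd$ at the other endpoint, agrees with the corresponding count for arrows entering $i$ because $\tilde{Q}$ is symmetric. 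Consequently $(\EU^T_{\delta_i,\dd}(\tilde{Q}^{\op}_1)\EU^T_{\delta_i,\dd}(\tilde{Q}_1))^{-l}$, expanded at $x_{(1),i,1}=\infty$, is supported in powers $x_{(1),i,1}^{-m}$ with $m\geq ld$.

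\emph{The core claim.} The key point is: if $\beta\in\FP^{2N}(\HO\!\CoHA^T_{\Pi_Q,\delta_i}\otimes_{\HO_T}\HO\!\CoHA^T_{\Pi_Q,\dd})$ then $\overline{\xi}(\beta)$ is a polynomial in $x_{(1),i,1}$ of degree $\leq N$. To prove it I would first use that the decomposition theorem \eqref{dec_thm_1} makes the perverse filtration on the tensor product the convolution of the perverse filtrations on the two factors (exactly as in the proof of Proposition \ref{CG_prim_prop}, where the total perverse cohomology functor is seen to distribute over the external product); so $\beta$ is a finite sum of tensors $a\otimes b$ with $a\in\FP^{2k}\HO\!\CoHA^T_{\Pi_Q,\delta_i}$ for some $k\leq N$ and $b\in\HO\!\CoHA^T_{\Pi_Q,\dd}$. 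Then I would compute the one-vertex factor directly: for the dimension vector $\delta_i$ the preprojective relation is vacuous, the relevant moduli stacks are classifying stacks of $\Gl_1\times T$ up to affine fibrations, and $\HO\!\CoHA^T_{\Pi_Q,\delta_i}\cong\HO_T[x_{i,1}]$ with $\FP^{2k}$ the $\HO_T$-span of $1,x_{i,1},\dots,x_{i,1}^{k}$; moreover $\overline{\xi}$, through the dimension-reduction isomorphism (which fixes the equivariant variable of $\Gl_1$ at $i$), identifies $x_{i,1}$ with $x_{(1),i,1}$. Since $\overline{\xi}(b)$ lies in $\HO\!\CoHA^T_{\tilde{Q},\dd}$ and hence involves only the variables $x_{(2),\bullet,\bullet}$, we get $\deg_{x_{(1),i,1}}\overline{\xi}(a\otimes b)=\deg_{x_{(1),i,1}}\overline{\xi}(a)\leq k\leq N$, and the claim follows by linearity.

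\emph{Conclusion and main obstacle.} Combining these: applying the claim with $2N=-2n+2ld$ gives that $\overline{\xi}(\beta_l)$ is a polynomial in $x_{(1),i,1}$ of degree $\leq ld-n$, hence supported in powers $x_{(1),i,1}^{-m}$ with $m\geq n-ld$; multiplying by $(\EU^T_{\delta_i,\dd}(\tilde{Q}^{\op}_1)\EU^T_{\delta_i,\dd}(\tilde{Q}_1))^{-l}$, which is supported in $m\geq ld$, makes each summand of $\overline{\xi}(\alpha)$ supported in $x_{(1),i,1}^{-m}$ with $m\geq n$. Summing over $l$ and using that $\overline{\xi}$ is injective (Proposition \ref{shuff_embed}) we obtain $\alpha_{(m)}=0$ for $m<n$. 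The main obstacle is the core claim, and within it the identification of $\overline{\xi}$ on the one-vertex piece $\HO\!\CoHA^T_{\Pi_Q,\delta_i}$ with the naive degree filtration in $x_{(1),i,1}$, together with the convolution description of the perverse filtration on the tensor product; granting these, the remainder is bookkeeping with orders of poles at $x_{(1),i,1}=\infty$.
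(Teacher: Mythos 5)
Your proposal is correct and follows essentially the same route as the paper's proof. Both arguments reduce to (i) the explicit description of the perverse filtration on the unlocalised tensor product via the convolution formula
\[
\FP^{2r}\left(\HO\!\CoHA^T_{\Pi_Q,\delta_i}\otimes_{\HO_T}\HO\!\CoHA^T_{\Pi_Q,\dd}\right)=\sum_{s\geq 0}\left(x_{(1),i,1}^s\otimes_{\HO_T}\FP^{2r-2s}\HO\!\CoHA^T_{\Pi_Q,\dd}\right),
\]
which rests exactly on your observation that $\HO\!\CoHA^T_{\Pi_Q,\delta_i}\cong\HO_T[x_{i,1}]$ with $\FP^{2k}$ the span of $1,\dots,x_{i,1}^k$ under the embedding $\overline{\xi}$, and (ii) a bookkeeping of $x_{(1),i,1}$-degrees against the factor $(\EU^T_{\delta_i,\dd}(\tilde{Q}_1)\EU^T_{\delta_i,\dd}(\tilde{Q}^{\op}_1))^{-l}$, whose polynomial degree in $x_{(1),i,1}$ is precisely half of $2ld$ so that the shift $\FP^{-2n+2ld}$ and the pole order $ld$ cancel and leave the bound $m\geq n$. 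The paper states step (i) in one line and then reads off the leading order from the constraint $2r+4el-2s\geq 0$; you package the same content as a "core claim" and verify it, which is a little more verbose but adds nothing beyond what the paper uses. One small inessential slip: in deducing that $\beta$ is a sum of $a\otimes b$ with $a\in\FP^{2k}$, $k\leq N$, the convolution description actually places $b\in\FP^{2(N-k)}$; your weaker version is still sufficient, since the only constraint used is $k\leq N$, which follows from $\FP^m=0$ for $m<0$.
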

\begin{proof}
We have $\FP^{2r}\left(\HO\!\CoHA^T_{\Pi_Q,\delta_i}\otimes_{\HO_T}\HO\!\CoHA^T_{\Pi_Q,\dd}\right)=\sum_{s\geq 0}\!\left(x_{(1),i,1}^s\otimes_{\HO_T} \FP^{2r-2s}\HO\!\CoHA^T_{\Pi_Q,\dd}\right)$ and so
\[
\FP^{2r}\left(\HO\!\CoHA^T_{\Pi_Q,\delta_i}\tilde{\otimes}\HO\!\CoHA^T_{\Pi_Q,\dd}\right)=\sum_{\substack{s,l\geq 0\\2r+4el-2s\geq 0}}(x_{(1),i,1}^{s}\otimes\FP^{2r+4el-2s}\HO\!\CoHA^T_{\Pi_Q,\dd})\cdot(\EU^T_{\delta_i,\dd}(\tilde{Q}_1)\EU^T_{\delta_i,\dd}(\tilde{Q}^{\op}_1))^{-l}
\]
where $e$ is the homogeneous degree of $\EU^T_{\delta_i,\dd}(\tilde{Q}_1)$, which is also the homogeneous degree of $\EU^T_{\delta_i,\dd}(\tilde{Q}^{\op}_1)$, and is also the order of vanishing of $x_{(1),i,1}^{-1}$ in either of these expressions, considered as formal power series expansion in $x_{(1),i,1}^{-1}$.  In the above sum, after expanding in powers of $x_{(1),i,1}^{-1}$ the summands that contribute the lowest powers are given by setting $2r+4el-2s= 0$, and their expansions all have order $-r$ in the variable $x_{(1),i,1}^{-1}$.
\end{proof}

Let $\dd\in\BoN^{Q_0}$ and set $\dd'=\dd-\delta_i$, with $i\in Q_0$. Assume that $\dd'\in \BoN^{Q_0}$.  Expanding in powers of $x_{(2),i,1}^{-1}$ we find
\begin{align}
\label{constexp}
&\EU^T_{\dd',\delta_i}(\tilde{Q}_1^{\opp})^{-1}\EU^T_{\dd',\delta_i}(\tilde{Q}_1)=\\ \nonumber
&\prod_{\substack{a\in Q_1\lvert t(a)=i\\ 1\leq n\leq \dd_{t(a)}}}(x_{(2),i,1}-x_{(1),s(a),n}-\ttt(\wt(a)))(x_{(2),i,1}-x_{(1),t(a^*),n}+\ttt(\wt(a^*)))^{-1}\cdot\\ 
\nonumber &\prod_{\substack{a\in Q_1\lvert t(a^*)=i\\1\leq n\leq \dd_{s(a)}}}(x_{(2),i,1}-x_{(1),s(a^*),1}-\ttt(\wt(a^*)))(x_{(2),i,1}-x_{(1),t(a),n}+\ttt(\wt(a)))^{-1}\cdot\\
\nonumber&\prod_{1\leq n\leq \dd_i}(x_{(2),i,1}-x_{(1),i,n}-\ttt(\wt(\omega_i)))(x_{(2),i,1}-x_{(1),i,n}+\ttt(\wt(\omega_i)))^{-1}\\
=& 1-(\delta_i,\dd')_Q \hbar x_{(2),i,1}^{-1}+O\left(x_{(2),i,1}^{-2}\right). \nonumber
\end{align}

\begin{proposition}
\label{res_prop}
Let $\dd,\ff\in\BoN^{Q_0}$ be nonzero dimension vectors for the quiver $Q$.  Let $\wt\colon \tilde{Q}_1\rightarrow \BoZ^{ r}$ be a weighting satisfying Assumption \ref{weighting_assumption}, with associated torus $T$.  Let $\alpha\in \Fg^{T}_{\Pi_{Q_{\ff}},(\dd,1)}\subset \HO\!\CoHA^T_{\Pi_{Q_{\ff}}}$.  Then $(\vDelta_{(\ul{0},1),(\dd,0)}(\alpha))_{(m)}=0$ for $m\leq 0$.
\end{proposition}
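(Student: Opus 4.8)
The plan is to combine two inputs: the primitivity statement for Chevalley generators (Proposition \ref{CG_prim_prop}), applied to the framed quiver $Q_{\ff}$, with the order‑tracking estimate of Proposition \ref{ptrack}. First I would recall that by Corollary \ref{Nak_BPS_comp} the space $\Fg^{T}_{\Pi_{Q_{\ff}},(\dd,1)}$ is spanned by images of Chevalley generators together with brackets of these with generators supported at the extra framing vertex; more precisely, by Proposition \ref{lowest_weight_prop} the element $\alpha$ lies in the $\fg^{T}_{\Pi_Q}$‑submodule of $\NakMod^T_{Q,\ff}$ generated by the lowest weight vectors, which are exactly the images of $\IC^*(\CM^T_{(\dd,1)}(\Pi_{Q_{\ff}}))$. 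Thus it suffices to prove the claim for $\alpha$ a Chevalley generator in $\Fg^{T}_{\Pi_{Q_{\ff}},(\dd,1)}$ and then propagate it along the Lie bracket, using that the bracket with elements of $\Fg^T_{\Pi_{Q_{\ff}},(\ee,0)}$ (which do not involve the framing variable $x_{(1),\infty,1}$ at all) is compatible with the coproduct via the bialgebra axiom of \S \ref{raw_cop_sec}.

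For the base case, I would apply Proposition \ref{CG_prim_prop} with the quiver $Q_{\ff}$ in place of $Q$ and the dimension‑vector decomposition $(\dd,1) = (\ul 0,1) + (\dd,0)$, both summands nonzero since $\dd\neq 0$. That proposition gives
\[
\vDelta_{(\ul 0,1),(\dd,0)}(\alpha)\in \FP^{-2}\!\left(\HO\!\CoHA^T_{\Pi_{Q_{\ff}},(\ul 0,1)}\tilde{\otimes}\HO\!\CoHA^T_{\Pi_{Q_{\ff}},(\dd,0)}\right).
\]
Now Proposition \ref{ptrack}, applied with the role of the distinguished vertex $i$ played by the framing vertex $\infty$ (on which $(\ul 0,1)$ is supported, i.e. $\delta_\infty = (\ul 0,1)$), and with $n=1$, says exactly that the coefficients $(\vDelta_{(\ul 0,1),(\dd,0)}(\alpha))_{(m)}$ vanish for $m<1$, i.e. for $m\leq 0$. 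This is the assertion for the base case.

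For the inductive step, suppose $\alpha = [\gamma,\beta]$ with $\gamma\in\Fg^T_{\Pi_{Q_{\ff}},(\ee,0)}$ and $\beta\in\Fg^T_{\Pi_{Q_{\ff}},(\dd-\ee,1)}$ with $\dd-\ee\neq 0$ (the case $\dd=\ee$ does not occur since $\alpha$ carries framing degree $1$ while $(\ee,0)$ does not). Applying the compatibility of $\vDelta$ with $\vmult$ and expanding $\vDelta(\gamma) = \gamma\otimes 1 + \ldots$, one sees that $\vDelta_{(\ul 0,1),(\dd,0)}(\alpha)$ is a sum of $\tilde\otimes$‑products of the form $\vmult\tilde\otimes\vmult$ applied to $\vDelta_{(\ul 0,1),(\ee',0)}$ and $\vDelta_{(\ul 0,1'),(\ee'',0)}$–type pieces; crucially, since $\gamma$ is supported away from the framing vertex $\infty$, the only term in $\vDelta(\gamma)$ that can contribute to the component with first tensor factor of degree $(\ul 0,1)$ is $1\otimes\gamma$, and this introduces no negative powers of $x_{(1),\infty,1}$. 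Hence the expansion of $\vDelta_{(\ul 0,1),(\dd,0)}(\alpha)$ in $x_{(1),\infty,1}^{-1}$ is governed by that of $\vDelta_{(\ul 0,1),(\dd-\ee,0)}(\beta)$, multiplied (in the second $\tilde\otimes$‑slot) by $\gamma$ and by Euler‑class factors which are polynomial in $x_{(1),\infty,1}$ (indeed independent of it, as $\gamma$ and the $(\ul 0,1)$‑piece have disjoint support at $\infty$, cf. the Remark following Corollary \ref{mult_cor} and Proposition \ref{mult_prop}). By the inductive hypothesis $(\vDelta_{(\ul 0,1),(\dd-\ee,0)}(\beta))_{(m)}=0$ for $m\leq 0$, and multiplying by factors with no negative powers of $x_{(1),\infty,1}$ preserves this vanishing; so $(\vDelta_{(\ul 0,1),(\dd,0)}(\alpha))_{(m)}=0$ for $m\leq 0$ as well.

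The main obstacle I anticipate is the bookkeeping in the inductive step: one must check carefully that no ``cross term'' in the iterated coproduct $\vmult\tilde\otimes\vmult\circ(\vDelta\otimes\vDelta)$ sneaks a framing‑vertex factor into the wrong tensor slot — that is, that the localisation and the twisted swap $\tilde\sw$ genuinely do not invert any Euler class involving $x_{(1),\infty,1}$ when $\gamma$ has no support at $\infty$. This reduces to the disjoint‑support computation already packaged in Proposition \ref{mult_prop} and Corollary \ref{mult_cor}, so the obstacle is more one of careful indexing than of genuine mathematical content; once it is dispatched, the estimate from Proposition \ref{ptrack} does all the real work.
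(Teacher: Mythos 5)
Your proof has a genuine gap that tracks a structural omission in your decomposition of $\Fg^{T}_{\Pi_{Q_{\ff}},(\dd,1)}$.

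Your base case (Chevalley generators at $(\dd,1)$ with $\dd\neq\ul 0$, via Proposition \ref{CG_prim_prop} and Proposition \ref{ptrack}) and the spirit of your inductive step (bracketing with framing-degree-zero elements $\gamma\in\Fg^T_{\Pi_{Q_{\ff}},(\ee,0)}$ does not lower the order of vanishing in $x_{(1),\infty,1}^{-1}$) both match what the paper actually does. But you then write ``the case $\dd=\ee$ does not occur since $\alpha$ carries framing degree $1$ while $(\ee,0)$ does not.'' This is false, and it is exactly where your argument breaks. When $\dd=\ee$ the element $\beta$ lives in $\Fg^T_{\Pi_{Q_{\ff}},(\ul 0,1)}$, i.e.\ $\beta$ is (up to scalar and tautological action) the framing Chevalley generator $e_\infty$, and $\alpha=[\gamma,e_\infty]$. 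These elements certainly exist and are unavoidable: the generalised Kac--Moody algebra $\Fn^{T,+}_{\Pi_{Q_\ff}}$ has a Chevalley generator $e_\infty$ at $(\ul 0,1)$, and any Lie word of framing degree one either starts from a $(\dd',1)$-generator with $\dd'\neq\ul 0$ \emph{or} from $e_\infty$ bracketed against framing-degree-zero generators. Lemma \ref{Nakinsc} and Lemma \ref{weak_F1_lem} show the latter kind saturate a nontrivial chunk of $\Fg^T_{\Pi_{Q_{\ff}},(\dd,1)}$.

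Moreover, this case cannot be absorbed into your inductive step, because the inductive hypothesis you would need — that $(\vDelta_{(\ul 0,1),(\ul 0,0)}(e_\infty))_{(m)}=0$ for $m\leq 0$ — is plainly false: $\vDelta_{(\ul 0,1),(\ul 0,0)}(e_\infty)=e_\infty\otimes 1=x_{(1),\infty,1}^0\otimes 1$, which has a nonzero coefficient at order zero. The statement of Proposition \ref{res_prop} only applies for $\dd\neq 0$, so $e_\infty$ is correctly excluded from the proposition, but it \emph{cannot} be fed into your recursion. The vanishing for $[e_\infty,\alpha']$ is instead a small miracle of cancellation: writing $[e_\infty,\alpha']=e_\infty\ast\alpha'-(-1)^{\chi}\alpha'\ast e_\infty$, the two summands produce the same leading term $e_\infty\otimes\alpha'$ at order zero, but the second is multiplied by the Euler-class ratio of \eqref{constexp}, which expands as $1-(\delta_\infty,(\dd,0))_{Q_\ff}\,\hbar\, x_{(1),\infty,1}^{-1}+O(x_{(1),\infty,1}^{-2})$. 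The constant terms cancel, leaving a leading $x_{(1),\infty,1}^{-1}$-term proportional to $\ff\cdotsh\dd$. This is the explicit shuffle computation the paper carries out; it does not follow from primitivity or from your order-tracking bound, and your proof would be incomplete without it.

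Once this second base case is supplied, the remainder of your argument (bracket with $\gamma\in\Fg^T_{\Pi_{Q_{\ff}},(\ee,0)}$ preserves the order of vanishing, because the only contributing term of $\vDelta(\gamma)$ in the relevant degree is $1\otimes\gamma$, and the localised swap factor has the form $1+O(x_{(1),\infty,1}^{-1})$) is sound and agrees with the paper's inductive step.
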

\begin{proof}
Firstly, let us assume that $\alpha$ is a Chevalley generator. By Proposition \ref{CG_prim_prop}  $\vDelta_{(\ul{0},1),(\dd,0)}(\alpha)\in \FP^{-2}\left(\HO\!\CoHA^T_{_{\Pi_{Q_{\ff}}},(\ul{0},1)}\tilde{\otimes}\HO\!\CoHA^T_{_{\Pi_{Q_{\ff}}},(\dd,0)}\right)$.  The result then follows from Proposition \ref{ptrack}.  

Next, assume that there exists $\alpha'\in \fg^T_{\Pi_{Q_{\ff}},(\dd,0)}$ such that 
\[
\alpha=[e_{\infty},\alpha']\coloneqq e_{\infty}\ast\alpha'-(-1)^{\chi_{\tilde{Q}_{\ff}}((\ul{0},1),(\dd,0))}\alpha'\ast e_{\infty}
\]
where $e_{\infty}=x_{\infty,1}^0\in \Fg^{T}_{\Pi_{Q_{\ff}},(\ul{0},1)}=\HO_T[x_{\infty,1}]$ is a Chevalley generator.  Note that $\chi_{\tilde{Q}_{\ff}}((\ul{0},1),(\dd,0))=-\ff\cdotsh\dd$.  Then $\overline{\xi}(\vDelta(e_{\infty}))=x_{(1),\infty,1}^0\otimes 1 +1\otimes x_{(2),\infty,1}^0$ and we calculate
\begin{align*}
\overline{\xi}(\vDelta_{(\ul{0},1),(\dd,0)}(\alpha))=&(x_{(1),\infty,1}^0\otimes 1)\ast(1\otimes \overline{\xi}(\alpha'))-(-1)^{\ff\cdot\dd}(1\otimes \overline{\xi}(\alpha')) \ast (x_{(1),\infty,1}^0\otimes 1)\\
=&\left(1-(\EU^T_{(\dd,0),\delta_{\infty}}((\tilde{Q}^{\opp}_{\ff})_1)^{-1}\EU^T_{(\dd,0),\delta_{\infty}}((\tilde{Q}_{\ff})_1))\right)\cdot(x_{(1),\infty,1}^0\otimes \overline{\xi}(\alpha'))\\
=&-(\ff\cdotsh\dd) \hbar x^{-1}_{(1),\infty,1}\cdot(x_{(1),\infty,1}^0\otimes \overline{\xi}(\alpha'))+O(x^{-2}_{(1),\infty,1})
\end{align*}
as required.
A general element of $\Fg^{T}_{\Pi_{Q_{\ff}},(\dd,1)}$ can be written as a linear combination of elements $\gamma=[\ldotsh[\alpha,\beta_l],\beta_{\l-1}],\ldotsh,\beta_1]$ where $\beta_s\in \Fg^{T}_{\Pi_{Q_{\ff}},(\bullet,0)}$ are Chevalley generators and either $\alpha\in\Fg^{T}_{\Pi_{Q_{\ff}},(\dd',1)}$ is a Chevalley generator, with $\dd'\neq \ul 0$, or $\alpha=[e_{\infty},\alpha']$ as above.  Then we find, using the cases already considered,
\begin{align*}
\vDelta_{(\ul{0},1),(\dd,0)}(\gamma)=&\sum_{m\geq 1} [\ldotsh[x_{(1),\infty,1}^{-m}\otimes\vDelta_{(\ul{0},1),(\dd',0)}(\alpha)_{(m)},1\otimes\beta_l],1\otimes\beta_{l-1}],\ldotsh],1\otimes\beta_1]\\
=&x_{(1),\infty,1}^{-1}\otimes [\ldotsh[\vDelta_{(\ul{0},1),(\dd',0)}(\alpha)_{(1)},\beta_l],\beta_{l-1}],\ldotsh],\beta_1 ]+O(x^{-2}_{(1),\infty,1})
\end{align*}
and the result holds for general $\gamma$.
\end{proof}
The following strengthening of Proposition \ref{res_prop} is proved the same way, by induction on length of Lie words.
\begin{corollary}
\label{perv_prim}
Let $\alpha\in\Fg^T_{\Pi_Q}\subset \HO\!\CoHA_{\Pi_Q,\dd}^T$, and let $\dd',\dd''\in\BoN^{Q_0}$ be nonzero dimension vectors satisfying $\dd'+\dd''=\dd$.  Then $\vDelta_{\dd',\dd''}(\alpha)\in\FP^{-2}\left(\HO\!\CoHA_{\Pi_Q}^T\tilde{\otimes}\HO\!\CoHA_{\Pi_Q}^T\right)$.
\end{corollary}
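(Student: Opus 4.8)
The plan is to argue by induction on the length of $\alpha$, written as an iterated Lie bracket of Chevalley generators of $\fn^{T,+}_{\Pi_Q}$; this is legitimate because, by Theorem \ref{KMA_thm}, $\fn^{T,+}_{\Pi_Q}$ is generated as a Lie algebra by its Chevalley generators, and the assertion is linear in $\alpha$. The base case --- $\alpha$ a Chevalley generator --- is exactly Proposition \ref{CG_prim_prop}. The two structural facts I would use throughout are: (i) $\vDelta$ is a homomorphism of localised bialgebras, i.e. an algebra map for $\vmult$ on the source and for the localised product $\vmult\,\tilde{\otimes}\,\vmult\circ(\id\otimes\tilde{\sw}\otimes\id)$ on the target (\S\ref{raw_cop_sec}); and (ii) $\vmult$, the localised product, and $\vDelta$ are all compatible with the perverse filtrations --- for $\vmult$ by \cite{preproj3}, for $\vDelta$ by the proposition stating that the localised coproduct respects the perverse filtrations, and for the localised product directly from the definition of $\FP^{\bullet}$ on $\HO\!\CoHA^T_{\Pi_Q}\,\tilde{\otimes}\,\HO\!\CoHA^T_{\Pi_Q}$ together with Proposition \ref{coarse_perv}. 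In particular $\vDelta^{\FP}$ is an algebra homomorphism on associated gradeds, and the statement to be proved says precisely that every element of $\fg^T_{\Pi_Q}$ is primitive for $\vDelta^{\FP}$.

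For the inductive step write $\alpha=[\beta,e]$ with $e$ a Chevalley generator of degree $\ee$ and $\beta\in\fg^T_{\Pi_Q,\dd-\ee}$ of strictly smaller length (every iterated bracket can be so presented, using antisymmetry and the Jacobi identity). By the inductive hypothesis applied to $\beta$, and by Proposition \ref{CG_prim_prop} applied to $e$, we may write $\vDelta(\beta)=\beta\otimes 1+1\otimes\beta+\rho_\beta$ and $\vDelta(e)=e\otimes 1+1\otimes e+\rho_e$ with $\rho_\beta$ and $\rho_e$ lying in the respective $\FP^{-2}$. Expanding
\[
\vDelta([\beta,e])=\vDelta(\beta)\ast\vDelta(e)-(-1)^{\chi_{\tilde{Q}}(\dd-\ee,\ee)}\,\vDelta(e)\ast\vDelta(\beta)
\]
and fixing a bidegree $(\dd',\dd'')$ with $\dd'\neq 0\neq\dd''$, I would observe: the group-like products $(\beta\otimes 1)\ast(e\otimes 1)$ and $(1\otimes\beta)\ast(1\otimes e)$ only contribute in bidegrees $(\dd,0)$ and $(0,\dd)$; every product in which a factor of $\rho_\beta$ or $\rho_e$ occurs lies in $\FP^{-2}$ by multiplicativity of the filtration; and the only remaining contribution is the cross term, which in bidegree $(\dd-\ee,\ee)$ is $(\beta\otimes 1)\ast(1\otimes e)-(-1)^{\chi_{\tilde{Q}}(\dd-\ee,\ee)}(1\otimes e)\ast(\beta\otimes 1)$, and symmetrically in bidegree $(\ee,\dd-\ee)$.

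It then remains to show this cross term lies in $\FP^{-2}$. One has $(\beta\otimes 1)\ast(1\otimes e)=\beta\otimes e$, while by \eqref{swap_factor} --- using the cohomological parity condition enjoyed by $\HO\!\CoHA^T_{\Pi_Q}$ together with the symmetry of the quiver $\tilde{Q}$, so that the $\tau$-sign, the Koszul sign, and the bracket sign combine exactly as the sign $(-1)^{\ff\cdot\dd}$ is absorbed in the proof of Proposition \ref{res_prop} --- one gets $(1\otimes e)\ast(\beta\otimes 1)=(-1)^{\chi_{\tilde{Q}}(\dd-\ee,\ee)}\,R\cdot(\beta\otimes e)$, where $R=\EU^T_{\dd',\dd''}(\tilde{Q}^{\op}_1)^{-1}\,\EU^T_{\dd',\dd''}(\tilde{Q}_1)$ for the appropriate dimension vectors. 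The two signs cancel and the cross term becomes $(1-R)\cdot(\beta\otimes e)$. Since $\tilde{Q}$ is symmetric, $\EU^T_{\dd',\dd''}(\tilde{Q}_1)$ and $\EU^T_{\dd',\dd''}(\tilde{Q}^{\op}_1)$ have equal top cohomological degree parts, so $R-1=\EU^T_{\dd',\dd''}(\tilde{Q}^{\op}_1)^{-1}\cdot P$ with $P$ a tautological class of cohomological degree at most $\deg\EU^T_{\dd',\dd''}(\tilde{Q}_1)-2$; feeding $\beta\otimes e\in\FP^0$ through multiplication by $P$ and then by $\EU^T_{\dd',\dd''}(\tilde{Q}^{\op}_1)^{-1}$, and bookkeeping with Proposition \ref{coarse_perv} and the definition of $\FP^{\bullet}$ on the localised tensor product, shows $(1-R)\cdot(\beta\otimes e)\in\FP^{-2}$. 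Hence $\vDelta_{\dd',\dd''}(\alpha)\in\FP^{-1}$, and since $\FP^{2n+1}=\FP^{2n}$ on all the spaces involved (by the evenness in \eqref{dec_thm_1}), in fact $\vDelta_{\dd',\dd''}(\alpha)\in\FP^{-2}$, completing the induction.

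The step I expect to be the main obstacle is exactly the last one: controlling how the inverse Euler-class factors introduced by $\tilde{\sw}$ interact with the perverse filtration on the localised tensor product, and checking that the Koszul sign, the $\tau$-twist sign, and the sign in the definition of the Lie bracket really do conspire so that the leading term $1$ of $R$ cancels and only a tail lowering the perverse index by two survives. This is the multi-variable counterpart of the single-variable expansion \eqref{constexp} used in the proof of Proposition \ref{res_prop}; everything else in the argument is formal manipulation with the localised bialgebra structure and with the filtrations.
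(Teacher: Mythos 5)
Your proof is correct and takes the paper's intended route: the paper declares that the corollary ``is proved the same way [as Proposition \ref{res_prop}], by induction on length of Lie words'', and your argument supplies exactly that induction, with Proposition \ref{CG_prim_prop} as the base case and the localised bialgebra compatibility (plus multiplicativity of the perverse filtrations) carrying the inductive step. Two small points on your last paragraph, which you rightly singled out as the delicate one: tracking the signs through \eqref{swap_factor} the cross term comes out as $(1-R^{-1})(\beta\otimes e)$ rather than $(1-R)(\beta\otimes e)$, which is immaterial since $R=\EU^{\op,-1}\EU$ is a unit that preserves the localised perverse filtration; and the phrase ``$P$ a tautological class of cohomological degree at most $\deg\EU^T_{\dd',\dd''}(\tilde{Q}_1)-2$'' is not literally correct --- $P=\EU-\EU^{\op}$ is homogeneous of cohomological degree exactly $\deg\EU$ --- the precise statement being that, because $\tilde{Q}$ is symmetric, the top $x$-degree parts of $\EU$ and $\EU^{\op}$ coincide, so $P$ is an $\HO_T$-linear combination of monomials in the tautological $x$-variables of cohomological degree $\leq\deg\EU-2$, and it is this (via Proposition \ref{coarse_perv} and the $\HO_T$-linearity of the filtration) that shows multiplication by $P$ raises the perverse index by at most $\deg\EU-2$.
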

For $\alpha\in \HO\!\CoHA^T_{\Pi_Q,\delta_i}\tilde{\otimes}\HO\!\CoHA^T_{\Pi_Q,\dd}$ we write $\Res_{x_{(1),i,1}}\alpha\coloneqq \alpha_{(1)}$.  We record the following important corollaries of the proof of Proposition \ref{res_prop}.
\begin{corollary}
\label{old_lowering_argument}
Let $e_{\infty}\in \Fg^T_{\Pi_{Q_{\ff}},(\ul{0},1)}$ be a Chevalley generator, and let $\alpha\in \Fg^T_{\Pi_{Q_{\ff}}, (\dd,0)}$, then
\[
\Res_{x_{(1),\infty,1}}\circ\vDelta_{(\ul{0},1),(\dd,0)}([e_{\infty},\alpha])=-(\ff\cdotsh \dd)\hbar\alpha. 
\]
In particular, if $\ff\cdot \dd\neq 0$ then $\hbar\cdot \Fg_{\Pi_Q,\dd}^T\subset \Image\!\left(\Res_{x_{(1),\infty,1}}\circ\vDelta_{(\ul{0},1),(\dd,0)}\colon \Fg^T_{\Pi_{Q_{\ff}},(\dd,1)}\rightarrow \HO\!\CoHA^T_{\Pi_Q,\dd}\right)$.
\end{corollary}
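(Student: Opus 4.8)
The plan is to read both assertions off the computation already performed in the second case of the proof of Proposition \ref{res_prop}, after observing that that computation uses only the primitivity of $e_\infty$ and the compatibility of the localised coproduct with the Hall product; it is in no way sensitive to the nature of the second factor, and in particular the Chevalley-generator hypothesis on it is never invoked.

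First I would record that, for arbitrary $\alpha\in\Fg^T_{\Pi_{Q_{\ff}},(\dd,0)}$, the bracket $[e_\infty,\alpha]$ again lies in $\Fg^T_{\Pi_{Q_{\ff}},(\dd,1)}$, because the BPS Lie algebra is closed under the commutator bracket, so that $\Res_{x_{(1),\infty,1}}\circ\vDelta_{(\ul0,1),(\dd,0)}$ is defined on it. Since $(\ul0,1)=\delta_\infty$ admits only the decompositions $\delta_\infty+\ul0$ and $\ul0+\delta_\infty$, the localised coproduct of $e_\infty$ equals $e_\infty\otimes1+1\otimes e_\infty$, and the displayed chain of equalities in the proof of Proposition \ref{res_prop} then applies verbatim to the element $[e_\infty,\alpha]$, yielding
\[
\overline{\xi}\!\left(\vDelta_{(\ul0,1),(\dd,0)}([e_\infty,\alpha])\right)=\left(1-\EU^T_{(\dd,0),\delta_\infty}((\tilde Q_{\ff}^{\opp})_1)^{-1}\EU^T_{(\dd,0),\delta_\infty}((\tilde Q_{\ff})_1)\right)\cdot\left(x_{(1),\infty,1}^0\otimes\overline{\xi}(\alpha)\right).
\]
Expanding the Euler-class ratio in powers of $x_{(1),\infty,1}^{-1}$ as in \eqref{constexp}, and using that the only arrows of $Q_{\ff}$ leaving the framing vertex $\infty$ are the $\ff_i$ arrows $r_{i,m}\colon\infty\to i$, so that $(\delta_\infty,(\dd,0))_{Q_{\ff}}=-\ff\cdotsh\dd$, the coefficient of $x_{(1),\infty,1}^{-1}$ on the right-hand side is $-(\ff\cdotsh\dd)\hbar\overline{\xi}(\alpha)$. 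By definition this coefficient is $\overline{\xi}$ applied to $\Res_{x_{(1),\infty,1}}\circ\vDelta_{(\ul0,1),(\dd,0)}([e_\infty,\alpha])$, and since $\overline{\xi}$ is injective by Proposition \ref{shuff_embed} the first displayed identity follows.

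For the final sentence I would argue as follows. Assume $\ff\cdotsh\dd\neq0$; then $\ff\cdotsh\dd$ is a nonzero integer, hence a unit of $\HO_T$, so multiplication by $\ff\cdotsh\dd$ is an automorphism of the $\HO_T$-module $\Fg^T_{\Pi_Q,\dd}$. Using the isomorphism $l_*\colon\Fg^T_{\Pi_Q,\dd}\xrightarrow{\cong}\Fg^T_{\Pi_{Q_{\ff}},(\dd,0)}$ induced by \eqref{ldef}, the identity just proved exhibits, for each $v\in\Fg^T_{\Pi_Q,\dd}$, the element $\hbar v$ as $\Res_{x_{(1),\infty,1}}\circ\vDelta_{(\ul0,1),(\dd,0)}([e_\infty,w])$ for a suitable $w\in\Fg^T_{\Pi_{Q_{\ff}},(\dd,0)}$; since $[e_\infty,w]\in\Fg^T_{\Pi_{Q_{\ff}},(\dd,1)}$, this gives the stated inclusion. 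I do not expect a genuine obstacle here: the analytic content is entirely contained in Proposition \ref{res_prop}, and the only points that need care are confirming that its proof really is insensitive to the nature of the second factor and getting the sign $-\ff\cdotsh\dd$ of the constant term right.
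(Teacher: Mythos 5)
Your proposal is correct, and it is essentially the paper's own argument: the authors state the corollary without separate proof, explaining only that it is a corollary of the proof of Proposition~\ref{res_prop}, and what you have written out is precisely the content of that pointer. In particular you are right that the displayed computation in the ``second case'' of the proof of Proposition~\ref{res_prop} never uses any hypothesis on $\alpha'$ beyond it lying in $\Fg^T_{\Pi_{Q_{\ff}},(\dd,0)}$ (indeed, in that proof $\alpha'$ is an arbitrary element, not a Chevalley generator), so the identity extends to arbitrary $\alpha$; the expansion \eqref{constexp} together with $(\delta_\infty,(\dd,0))_{Q_{\ff}}=-\ff\cdotsh\dd$ gives the leading coefficient $-(\ff\cdotsh\dd)\hbar$, and injectivity of $\overline{\xi}$ (Proposition~\ref{shuff_embed}) lets you strip $\overline{\xi}$ to obtain the stated identity; the second sentence then follows as you say, using that $\ff\cdotsh\dd$ is a unit and the isomorphism $l_*\colon\Fg^T_{\Pi_Q,\dd}\cong\Fg^T_{\Pi_{Q_{\ff}},(\dd,0)}$.
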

\begin{corollary}
\label{Res_cor_2}
The morphism $\Res_{x_{(1),\infty,1}}\circ \vDelta_{(\ul{0},1),(\bullet,0)}\colon \Fg^T_{\Pi_{Q_{\ff}},(\bullet,1)}\rightarrow \HO\!\CoHA_{\Pi_Q}^T$ is a morphism of right $\Fn^{T,+}_{\Pi_Q}$-modules, where $\Fn^{T,+}_{\Pi_Q}$ acts on the target via the commutator Lie bracket, and on the domain via the embedding $l_*\colon \Fg^T_{\Pi_Q}\hookrightarrow \Fg^T_{\Pi_{Q_{\ff}}}$ and the Lie bracket on $\Fg^T_{\Pi_{Q_{\ff}}}$.
\end{corollary}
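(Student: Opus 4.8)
The plan is to reduce the statement to the fact that the localised coproduct $\vDelta$ is an algebra morphism, together with the residue computation already present in the proof of Proposition~\ref{res_prop}. Write $F\coloneqq\Res_{x_{(1),\infty,1}}\circ\vDelta_{(\ul 0,1),(\bullet,0)}$. By Proposition~\ref{res_prop} the expansion of $\vDelta_{(\ul 0,1),(\dd,0)}(\alpha)$ in powers of $x_{(1),\infty,1}^{-1}$ vanishes in orders $\leq 0$, so on the degree $(\dd,1)$ piece $F$ is well defined, sends $\alpha$ to $\alpha_{(1)}$ in the notation of \S\ref{coprod_to_lowering}, and takes values in $\HO\!\CoHA^T_{\Pi_Q}$. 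It then suffices to prove, for homogeneous $\alpha\in\Fg^T_{\Pi_{Q_{\ff}},(\dd,1)}$ and $\beta\in\Fn^{T,+}_{\Pi_Q}$ of degree $\ee$, the identity $F([\alpha,l_*\beta])=[F(\alpha),\beta]$, where $l_*$ is the algebra embedding onto the degree-$(\bullet,0)$ part of $\HO\!\CoHA^T_{\Pi_{Q_{\ff}}}$ (restricting to $l_*\colon\Fg^T_{\Pi_Q}\hookrightarrow\Fg^T_{\Pi_{Q_{\ff}}}$) and $[-,-]$ on the right is the commutator bracket of $\HO\!\CoHA^T_{\Pi_Q}$; one checks as usual via the Jacobi identity that these data do give right module structures, so this identity is precisely the assertion of the corollary.

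First I would invoke that $\vDelta$ is a morphism of algebra objects in the localised, $\tau$-twisted symmetric monoidal category of \S\ref{localisation_notation} (the bialgebra compatibility recalled in \S\ref{raw_cop_sec}); in particular it intertwines commutator brackets, so $\vDelta([\alpha,l_*\beta])=[\vDelta(\alpha),\vDelta(l_*\beta)]$ in $\HO\!\CoHA^T_{\Pi_{Q_{\ff}}}\tilde{\otimes}\HO\!\CoHA^T_{\Pi_{Q_{\ff}}}$. Then I would project onto the bidegree $((\ul 0,1),(\dd+\ee,0))$ summand and carry out a bookkeeping of dimension-vector degrees. Since $\vDelta_{\dd',\dd''}$ is the identity whenever $\dd'$ or $\dd''$ vanishes, $\vDelta(l_*\beta)=l_*\beta\otimes 1+1\otimes l_*\beta+\rho$, where $\rho$ is a sum of terms of bidegree $((\ee',0),(\ee'',0))$ with $\ee',\ee''$ nonzero, while $\vDelta(\alpha)$ is a sum of terms of bidegree $((\dd',n'),(\dd'',n''))$ with $n'+n''=1$; because dimension vectors are nonnegative and the framing vertex $\infty$ carries total dimension $1$ in $(\dd,1)$, the only pairing of a summand of $\vDelta(\alpha)$ with a summand of $\vDelta(l_*\beta)$ landing in bidegree $((\ul 0,1),(\dd+\ee,0))$ pairs $\vDelta_{(\ul 0,1),(\dd,0)}(\alpha)$ with $1\otimes l_*\beta$. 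Hence $\vDelta_{(\ul 0,1),(\dd+\ee,0)}([\alpha,l_*\beta])=[\vDelta_{(\ul 0,1),(\dd,0)}(\alpha),\,1\otimes l_*\beta]$, the commutator taken in the $\tilde{\otimes}$-algebra.

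Finally I would expand in $x_{(1),\infty,1}^{-1}$. Writing $\vDelta_{(\ul 0,1),(\dd,0)}(\alpha)=\sum_{n\geq 1}x_{(1),\infty,1}^{-n}\otimes\alpha_{(n)}$, the $\tilde{\otimes}$-product of $x_{(1),\infty,1}^{-n}\otimes\alpha_{(n)}$ with $1\otimes l_*\beta$ in either order is computed exactly as in the proof of Proposition~\ref{res_prop}: the braiding $\tilde{\sw}$ moves $l_*\beta$, of degree $(\ee,0)$, past the factor of degree $(\ul 0,1)$, and since these dimension vectors have disjoint support (cf. Corollary~\ref{mult_cor}) this introduces only the Euler-class ratio of \eqref{swap_factor}, which expands as $1+O(x_{(1),\infty,1}^{-1})$ by the calculation underlying \eqref{constexp}; the surviving products $\alpha_{(n)}\ast l_*\beta$ and $l_*\beta\ast\alpha_{(n)}$ are, under $l_*$, the products of $\HO\!\CoHA^T_{\Pi_Q}$. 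Taking the coefficient of $x_{(1),\infty,1}^{-1}$ only the $n=1$ term remains, and the signs produced by $\tilde{\sw}$ reassemble into the $\tau$-twisted commutator of $\HO\!\CoHA^T_{\Pi_Q}$, giving $F([\alpha,l_*\beta])=[\alpha_{(1)},\beta]=[F(\alpha),\beta]$.

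The one genuinely delicate point is this last step: there is no conceptual obstacle, but one must keep careful track of which variables occupy which tensor factor and verify that the signs and Euler-class factors introduced by $\tilde{\sw}$ combine to precisely the $\tau$-twisted commutator of $\HO\!\CoHA^T_{\Pi_Q}$ in the coefficient of $x_{(1),\infty,1}^{-1}$ — this is exactly the computation already performed in the proof of Proposition~\ref{res_prop}, so in practice the argument consists of reorganising that proof module-theoretically. An equivalent route, closer to the proof of Proposition~\ref{res_prop}, is induction on the length of Lie words: that proof already yields $F([\ldotsh[\alpha,\beta_l],\ldotsh,\beta_1])=[\ldotsh[F(\alpha),\beta_l],\ldotsh,\beta_1]$ for Chevalley generators $\beta_s$ of $\Fg^T_{\Pi_{Q_{\ff}},(\bullet,0)}$, which is the intertwining identity on a spanning set; since the $\beta$ for which it holds (for all $\alpha$) form a Lie subalgebra of $\Fn^{T,+}_{\Pi_Q}$ and $\Fn^{T,+}_{\Pi_Q}$ is Lie-generated by its Chevalley generators, this also suffices.
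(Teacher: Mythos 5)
Your proof is correct and in essence matches the paper's: both hinge on the bialgebra compatibility of $\vDelta$, the bidegree bookkeeping that isolates the $\bigl((\ul 0,1),(\dd+\ee,0)\bigr)$ summand, and the residue computation already carried out in the proof of Proposition~\ref{res_prop}. Your direct argument handles arbitrary $\beta\in\Fn^{T,+}_{\Pi_Q}$ in one step, which is merely a streamlined version of what the paper compresses into ``proved the same way, by induction on length of Lie words.''
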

\begin{proof}
Let $\gamma$ be as in the proof of Proposition \ref{res_prop} and let $\beta_0\in \fg^T_{\Pi_{Q_{\ff}},(\bullet,0)}$.  Then calculating as at the end of the proof of Proposition \ref{res_prop} we find
\[
\vDelta_{(\ul{0},1),(\dd,0)}([\gamma,\beta_0])=x_{(1),\infty,1}^{-1}\otimes [\ldotsh[\vDelta_{(\ul{0},1),(\dd',0)}(\alpha)_{(1)},\beta_l],\beta_{l-1}],\ldotsh],\beta_0 ]+O(x^{-2}_{(1),\infty,1})
\]
and so $(\Res_{x_{(1),\infty,1}}\circ \vDelta_{(\ul{0},1),(\bullet,0)})([\gamma,\beta_0])=[(\Res_{x_{(1),\infty,1}}\circ \vDelta_{(\ul{0},1),(\bullet,0)})(\gamma),\beta_0]$.
\end{proof}
Also we note the following proposition.
\begin{proposition}
\label{alpha_one_perv}
The morphism 
\[
\Res_{x_{(1),\infty,1}}\circ\vDelta_{(\ul{0},1),(\dd,0)}\colon \Fg_{\Pi_{Q_{\ff}}, (\dd,1)}^T\rightarrow \HO\!\CoHA_{\Pi_Q}^T
\]
factors through the inclusion $\FP^0\!\HO\!\CoHA_{\Pi_Q}^T\hookrightarrow \HO\!\CoHA_{\Pi_Q}^T$.
\end{proposition}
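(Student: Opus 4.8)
The plan is to deduce this directly from Corollary~\ref{perv_prim}, which controls the perverse degree of the coproduct of a BPS class, together with the explicit description of the perverse filtration on a localised tensor product that appears in the proof of Proposition~\ref{ptrack}.

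We may assume $\dd\neq\ul{0}$, since otherwise $\HO\!\CoHA^T_{\Pi_Q,\ul{0}}=\HO_T=\FP^0\HO\!\CoHA^T_{\Pi_Q,\ul{0}}$ and there is nothing to prove. First I would apply Corollary~\ref{perv_prim} with the quiver there taken to be $Q_{\ff}$, the class taken to be $\alpha\in\Fg^T_{\Pi_{Q_{\ff}},(\dd,1)}\subset\Fg^T_{\Pi_{Q_{\ff}}}$, and the decomposition taken to be $(\dd,1)=(\ul{0},1)+(\dd,0)$ into nonzero dimension vectors for $Q_{\ff}$. This yields
\[
\vDelta_{(\ul{0},1),(\dd,0)}(\alpha)\in\FP^{-2}\left(\HO\!\CoHA^T_{\Pi_{Q_{\ff}},(\ul{0},1)}\tilde{\otimes}\HO\!\CoHA^T_{\Pi_{Q_{\ff}},(\dd,0)}\right).
\]

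The heart of the argument is then the claim that $\Res_{x_{(1),\infty,1}}$ carries this $\FP^{-2}$-piece into $\FP^0\HO\!\CoHA^T_{\Pi_{Q_{\ff}},(\dd,0)}$. To prove it I would unwind the description of the $\FP^{-2}$-piece of the localised tensor product exactly as in the proof of Proposition~\ref{ptrack}, with the simple real root $\delta_{\infty}$ of $Q_{\ff}$ playing the role of $\delta_i$: a general element is a finite sum of terms $(\EU\,\EU^{\opp})^{-l}\cdot(x^s_{(1),\infty,1}\otimes g)$, where $\EU=\EU^T_{\delta_{\infty},(\dd,0)}((\tilde{Q}_{\ff})_{1})$ and $\EU^{\opp}=\EU^T_{\delta_{\infty},(\dd,0)}((\tilde{Q}^{\opp}_{\ff})_{1})$ are each a product of $e$ cohomology classes of degree $2$ that are linear in $x_{(1),\infty,1}$ (here $e$ is the degree recalled in the proof of Proposition~\ref{ptrack}, equal to $\ff\cdotsh\dd$), and where $l,s\geq 0$ and $g\in\FP^{-2+4el-2s}\HO\!\CoHA^T_{\Pi_{Q_{\ff}},(\dd,0)}$, so in particular $s\leq 2el-1$ and $l\geq 1$. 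Expanding $(\EU\,\EU^{\opp})^{-l}=\sum_{k\geq 0}c_k\,x^{-2el-k}_{(1),\infty,1}$, the leading coefficient $c_0$ is a nonzero scalar and $c_k$ is a tautological class of cohomological degree $2k$, so the coefficient of $x^{-1}_{(1),\infty,1}$ in the displayed term equals $c_k\cdot g$ with $k=s-2el+1$, which requires $s\geq 2el-1$. Together with $s\leq 2el-1$ this leaves only $s=2el-1$ and $k=0$, in which case $c_0$ is a scalar and $g\in\FP^{0}\HO\!\CoHA^T_{\Pi_{Q_{\ff}},(\dd,0)}$; hence the residue lies in $\FP^0\HO\!\CoHA^T_{\Pi_{Q_{\ff}},(\dd,0)}$. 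When $\ff\cdotsh\dd=0$ there are no admissible terms, the $\FP^{-2}$-piece vanishes, and the residue is $0$. I expect this paragraph to be the main obstacle, but the bookkeeping is entirely parallel to the proof of Proposition~\ref{ptrack} (with the compatibility of the perverse filtration with multiplication by tautological classes, as in Proposition~\ref{coarse_perv}, in the background).

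Finally I would observe that $\FM^T_{(\dd,0)}(\Pi_{Q_{\ff}})$ coincides with $\FM^T_{\dd}(\Pi_Q)$ (the framing maps act trivially once the framing space is zero), and likewise $\CM^T_{(\dd,0)}(\Pi_{Q_{\ff}})$ with $\CM^T_{\dd}(\Pi_Q)$ via $l_{\dd}$ from \eqref{ldef}, compatibly with the virtual shifts; hence the isomorphism $l$ identifies $\FP^0\HO\!\CoHA^T_{\Pi_{Q_{\ff}},(\dd,0)}$ with $\FP^0\HO\!\CoHA^T_{\Pi_Q,\dd}$. Combining the three steps shows that $\Res_{x_{(1),\infty,1}}\circ\vDelta_{(\ul{0},1),(\dd,0)}(\alpha)$ lies in $\FP^0\HO\!\CoHA^T_{\Pi_Q,\dd}$, which is the assertion.
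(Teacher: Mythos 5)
Your proof is correct, and it uses the same key input as the paper: Corollary~\ref{perv_prim} gives $\vDelta_{(\ul{0},1),(\dd,0)}(\alpha)\in\FP^{-2}$ of the localised tensor product, and then the explicit description of the localised perverse filtration from \S\ref{PAGC} (as unwound in the proof of Proposition~\ref{ptrack}) lets you pin down the perverse degree of the coefficient of $x_{(1),\infty,1}^{-1}$. The paper reaches the same conclusion by a slightly slicker bookkeeping trick: instead of expanding $(\EU\,\EU^{\opp})^{-l}$ term by term and chasing the constraint $s=2el-1$, it first clears the denominator by multiplying $\beta$ through by $\EU^T_{\delta_{\infty},(\dd,0)}(\tilde{Q}_1)$, notes (via Corollary~\ref{perv_prim} and Proposition~\ref{coarse_perv}) that the result lands in $\FP^{2e-2}$ of the \emph{non-localised} tensor product, and then reads off the perverse degree of $\alpha_{(1)}$ directly as the top coefficient $x^{e-1}_{(1),\infty,1}\otimes\alpha_{(1)}$. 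Your version trades the denominator-clearing step for a more explicit inequality-chasing argument ($s\le 2el-1$ from membership in $\FP^{-2}$, $s\ge 2el-1$ from the residue picking out $k\ge 0$), but the two are computationally equivalent; both also rely on the fact that the leading coefficient $c_0$ of $(\EU\,\EU^{\opp})^{-l}$ is a nonzero scalar, which you flag explicitly. The treatment of the degenerate case $\ff\cdotsh\dd=0$ is a harmless addition, and the final identification via $l_{\dd}$ matches the paper's final equality.
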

\begin{proof}
Let $\alpha\in \Fg_{\Pi_{Q_{\ff}}(\dd,1)}^T$.  Set $\beta=\vDelta_{(\ul{0},1),(\dd,0)}(\alpha)$.  By Corollary \ref{perv_prim} and Proposition \ref{coarse_perv} it follows that 
\[
\EU^T_{\delta_{\infty},(\dd,0)}(\tilde{Q}_1)\cdot\beta \in \FP^{2e-2}\left(\HO\!\CoHA^T_{\Pi_{Q_{\ff}},(\ul{0},1)}\otimes_{\HO_T}\HO\!\CoHA^T_{\Pi_{Q_{\ff}},(\dd,0)}\right)
\]
where $e$ is the degree of $\EU^T_{\delta_{\infty},(\dd,0)}(\tilde{Q}_1)$.  Examining highest order terms in $x_{(1),\infty,1}$ we find
\[
\EU^T_{\delta_{\infty},(\dd,0)}(\tilde{Q}_1)\cdot\beta=x_{(1),\infty,1}^{e-1}\otimes\beta_{(1)}+\textrm{lower order terms}
\]
implying that $\beta_{(1)}\in \FP^0\HO\!\CoHA^T_{\Pi_{Q_{\ff}},(\dd,0)}=\FP^0\HO\!\CoHA^T_{\Pi_Q,\dd}$.
\end{proof}

\section{Stable envelopes}

\subsection{Axiomatic definition}
\label{sec: axiomatic definition stab}

Let $X$ be a smooth quasi-projective symplectic variety with an action of a torus $\Tt$ rescaling the symplectic form with weight $\hbar$, and let $\At\subset \ker({\hbar})$. As technical assumptions, we also assume the existence of a $\Tt$-equivariant proper map $X\to X_0$ with $X_0$ affine, and that $X$ is $\Tt$-equivariantly formal. All these assumptions are satisfied in the case when $X=\Nak_Q(\ff,\dd)$ is a quiver variety, which is the main focus of this paper.

By definition, a chamber $\mathfrak{C}$ is a connected component of $(\cochar(\At)\otimes_{\ZZ} \RR) \setminus \Delta$, where $\Delta$ is the hyperplane arrangement determined by the $\At$-weights of the normal bundle of $X^{\At}$. 
One says that a weight $\chi\in \Char(\At)$ is attracting (resp. repelling) with respect to $\mathfrak{C}$ if $ \chi\circ \sigma (t)=t^k$ with $k>0$ (resp. $k<0$) for any $\sigma\in \mathfrak{C}$. The trivial character is called the $\At$-fixed character. 
Accordingly, the restriction of the tangent bundle $TX\in \K_{\Tt}(X)$ to the $\At$-fixed locus decomposes as
\[
TX|_{X^{\At}}=N_{X^{\At}}^+ +TX^{\At}+N_{X^{\At}}^-,
\]
where $N_{X^{\At}}^+=TX|_{X^{\At}}^+$ and $N_{X^{\At}}^-=TX|_{X^{\At}}^-$ are the attracting and repelling parts of $N_{X^{\At}}$, the normal bundle of $X^{\At}$ in $X$.

Let $F\subset X^{\At}$ be a connected component. Since $X$ is symplectic and its symplectic form is rescaled by $\hbar$, it follows that $N_{F}^+=\hbar \otimes (N_{F}^-)^\vee$ in $\K_{\Tt}(F)$. Consequently, the restriction of $N_F$ to any fixed point is $(-1)^{\codim_X(F)/2}$ times a square in $\K_{\At}(\pt)=\Char(\At)$. By definition, a \emph{polarisation} $\varepsilon_{X,\At}$ is a choice of square root for each connected component. Following Maulik and Okounkov, we write $\pm \Eu(N^{-}_F)$ for that multiple of the Euler class  $\Eu(N^{-}_F)$ that coincides with the polarisation $\varepsilon_{X,\At}$ when restricted to any point in $F$. This means that $\pm\Eu(N^{-}_F)$ equals $\Eu(N^{-}_F)$ up to a sign. 

Given a subvariety $Y\subseteq X^{\At}$, we define its attracting set $\Att{C}(Y)$ as:
\[
\Att{C}(Y)=\{x\in X \; | \; \lim_{t\to0} \sigma(t)\cdot x\in Y, \text{ for any $\sigma \in \mathfrak{C}$} \}.
\]
Notice that there is a canonical map of sets $\Att{C}(Y)\to Y$ obtained by taking the limit of the $\BoC^*$-action induced by $\sigma$. Moreover, if $Y=F\subset X^{\At}$ is a connected component, then $\Att{C}(F)\to F$ is an affine bundle of rank $\codim_X(F)/2$.  Similarly, we define $\text{Att}^f_{\mathfrak{C}}(F)$ as the minimal closed subset of $X$ containing $F$ and closed under taking $\Att{C}(-)$. 

Since $X$ is quasiprojective, the choice of a chamber determines a partial ordering on the set of fixed components of $X^{\At}$ \cite[\S 3.1.2]{MO19}, defined by taking the transitive closure of the relation
\[
\ol{\Att{C}(F)}\cap F'\neq \emptyset \quad \Rightarrow \quad F'\leq_{\mathfrak{C}} F.
\]
Equivalently, $F\geq_{\mathfrak{C}} F'$ iff $\Att{C}^f(F)\cap F'\neq \emptyset$.

We now recall Maulik and Okounkov's 
definition of stable envelopes.

\begin{theorem}\cite[ Prop.3.5.1]{MO19}
\label{thm: definition stab}
Let $\mathfrak{C}$ be a chamber for the action of $\At$ on $X$ and let $\varepsilon_X$ be a polarisation. There exists a unique $\Tt$-invariant Lagrangian cycle 
\[
\Sh{L}_{\mathfrak{C}}\in \HO^{\BoMo}_{\Tt} (X^{\At}\times X,\BoQ),
\]
proper over $X$, such that

\begin{enumerate}
    \item[(i)]For every fixed component $ F\subseteq  X^{\At}$, the restriction of $\Sh{L}_{\mathfrak{C}}$ to $F\times F$ equals $\pm\Eu(N^-_F)\cap [\Delta]$, according to polarisation, where $[\Delta]\subset \HO^{\BoMo}_{\Tt}(F\times F,\BoQ)$ is the fundamental class of the diagonal;
    \item[(ii)] For any $F\subset X^{\At}$, the cycle $\Sh{L}_{\mathfrak{C}}\lvert_{F\times X}$ is supported on $F\times \Att{C}^f(F)$, i.e. its restriction to $(F\times X)\setminus (F\times \Att{\mathfrak{C}}^f(F))$ is zero;
    \item[(iii)] For every $F'<_{\mathfrak{C}} F$, the restriction of $\Sh{L}_{\mathfrak{C}}$ to $F\times F'$ has $\At$-degree less than $\codim(F')/2$.
\end{enumerate}
\end{theorem}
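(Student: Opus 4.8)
The plan is to establish uniqueness and existence separately, in both cases exploiting the triangular structure of the restriction maps to products of fixed components together with the symplectic duality $N^+_F\cong\hbar\otimes(N^-_F)^{\vee}$ and the affine bundle structure of attracting sets.

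\textbf{Uniqueness.} Suppose $\Sh{L}_{\mathfrak{C}}$ and $\Sh{L}'_{\mathfrak{C}}$ both satisfy (i)--(iii), and set $L=\Sh{L}_{\mathfrak{C}}-\Sh{L}'_{\mathfrak{C}}$. Then $L$ is proper over $X$, $L\lvert_{F\times X}$ is supported on $F\times\Att{C}^f(F)$ for every fixed component $F$, $L\lvert_{F\times F}=0$ (the leading terms in (i) cancel), and $L\lvert_{F\times F'}$ has $\At$-degree $<\codim(F')/2$ for all $F'\leq_{\mathfrak{C}}F$. I would prove $L\lvert_{F\times X}=0$ by descending induction on $F'$ along the order, peeling off the locally closed strata $F\times\Att{C}(F')\subset F\times\Att{C}^f(F)$ via the long exact sequence in Borel--Moore homology. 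On the open stratum $\Att{C}(F)$ the affine bundle projection $\Att{C}(F)\to F$ gives a Thom isomorphism, so $L\lvert_{F\times\Att{C}(F)}$ is the pullback of its restriction to $F\times F$, which vanishes; hence $L\lvert_{F\times X}$ is supported on $F\times\bigl(\Att{C}^f(F)\setminus\Att{C}(F)\bigr)\subseteq\bigcup_{F'<_{\mathfrak{C}}F}F\times\Att{C}(F')$. For a maximal $F'$ occurring here, the same Thom isomorphism identifies the restriction of $L$ to $F\times\Att{C}(F')$ with the pullback of a class $\gamma\in\HO^{\BoMo}_{\Tt}(F\times F')$, whose further restriction to the zero section $F\times F'$ equals $\gamma$ times the Euler class of the attracting directions of $F'$, a nonzerodivisor of $\At$-degree exactly $\codim(F')/2$; comparing with the bound (iii) forces $\gamma=0$. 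Iterating down the order yields $L\lvert_{F\times X}=0$ for every $F$, and since $X^{\At}=\bigsqcup F$ this gives $L=0$.

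\textbf{Existence.} I would build $\Sh{L}_{\mathfrak{C}}$ by reversing this procedure. On each open cell $\Att{C}(F)$ the normalisation (i) and the Thom isomorphism force the restriction of the sought cycle to $F\times\Att{C}(F)$ to be the pullback of $\pm\Eu(N^-_F)\cap[\Delta_F]$, with the sign fixed by $\varepsilon_X$; concretely this is a correction of the fundamental class $[\overline{\Att{C}(F)}]$. Writing $\Att{C}^f(F)$ as the union of the cells $\Att{C}(F')$ for $F'\leq_{\mathfrak{C}}F$, I would extend across strata by ascending induction on the order: given the cycle already defined on the complement of the closure of the next cell $\Att{C}(F')$, the obstruction to extending it over that closure, and the ambiguity of any extension, lie in relative Borel--Moore homology groups whose relevant $\At$-graded pieces are detected by restriction to the smaller $F''<_{\mathfrak{C}}F'$; imposing the degree constraint (iii), which we are free to do, simultaneously kills the obstruction and pins down a single choice. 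Equivariance and properness over $X$ are built into the construction, and the resulting cycle is Lagrangian because each $\overline{\Att{C}(F)}$ is Lagrangian --- a consequence of $N^+_F\cong\hbar\otimes(N^-_F)^{\vee}$, i.e. the attracting and repelling directions being symplectically dual --- while the correction terms live in the conormal directions.

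\textbf{Main obstacle.} The delicate step is existence: verifying that the inductive extension over boundary strata is genuinely unobstructed, and that (iii) selects a \emph{unique integral Lagrangian cycle} rather than merely a cohomology class. This rests on a careful analysis of the Borel--Moore homology of attracting sets --- the interaction of properness over $X$, the stratification by the cells $\Att{C}(F')$, and the $\At$-weight filtration --- and it is precisely here that the symplectic hypothesis and the polarisation are indispensable: the identity $N^+_F\cong\hbar\otimes(N^-_F)^{\vee}$ supplies the self-duality that converts the $\At$-degree bound into the vanishing of the relevant obstruction classes.
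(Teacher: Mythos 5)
This theorem is not proved in the paper; it is quoted directly from \cite[Prop.3.5.1]{MO19}, so the comparison has to be against the proof in Maulik--Okounkov.

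Your uniqueness argument is essentially correct and follows the same strategy as the original: descending induction on the partial order of fixed components, using the Thom isomorphism for the affine bundle $\Att{C}(F')\to F'$ to convert the degree bound (iii) into the vanishing of a class whose Euler-class multiple is detected on $F\times F'$. The one point you should make explicit is that the $\At$-degree of $\Eu(N^+_{F'})$ is \emph{exactly} $\codim(F')/2$ and that it is a nonzerodivisor in $\HO^*_{\At}(\pt)\otimes\HO^*_{\Tt/\At}(F')$-modules, since $N^+_{F'}$ has no trivial $\At$-weights by the definition of the chamber; this is what makes the comparison with (iii) conclusive.

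The genuine gap is in your existence argument. You describe the right stratification and the right inductive scheme, but the claim that the obstruction to extending across the closure of the next cell, and the ambiguity of an extension, are ``detected by restriction to the smaller $F''$'' and killed by imposing (iii) is exactly the hard part of the theorem; it is stated, not proved. In Maulik--Okounkov this is handled by starting from the explicit fundamental class $[\overline{\Att{C}^f(F)}]$ (which already satisfies the support condition and, after the polarisation twist, the diagonal normalisation) and then iteratively subtracting off classes supported on smaller attracting sets to kill the top $\At$-degree parts of the off-diagonal restrictions; the termination of this process and the fact that the corrections can always be chosen rest on a concrete computation of the Borel--Moore homology of attracting cells, not on an abstract obstruction-theoretic vanishing. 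Simply asserting that ``imposing (iii), which we are free to do, kills the obstruction'' conflates what you want to conclude with what you need to prove.

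You should also be aware that the paper itself recalls (in \S\ref{sec: Stable envelopes via specialisation}, Proposition \ref{prop: stab via specialzation}, quoting \cite[Thm.3.7.4]{MO19}) an alternative, essentially one-step construction of $\Sh{L}_{\mathfrak{C}}$ as the specialisation at $t=0$ of the family of fundamental classes of attracting sets over the affine fibres of the universal deformation $\widetilde{\Nak}_Q(\ff,\dd)\to\AA^{Q_0}$. That route replaces the delicate inductive extension by a flat degeneration and is in practice the construction the paper actually uses. Given that your main worry is precisely the unobstructedness of the stratum-by-stratum extension, routing existence through the specialisation theorem would have sidestepped the weakest point of your proposal.
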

In (iii), the $\At$-degree of a class in $\HO^*_{\Tt}(X^{\At})$ is defined as the cohomological degree in the equivariant variables associated with the torus $\At$. This can be explicitly computed via a splitting
\[
\HO^*_{\Tt}(X^{\At},\BoQ)\cong \HO^*_{\Tt/\At}(X^{\At},\BoQ)\otimes \HO^*_{\At}(\pt,\BoQ)
\]
by determining the cohomological degree in the second term of the tensor product.
Notice that, although the isomorphism above is not canonical, different isomorphisms produce the same filtration by cohomological degree in the $\At$-equivariant parameters. Expressed in terms comparable to \S \ref{perverse_CoHA_sec}: there is a natural morphism of stacks $p\colon X^{\At}/\Tt\rightarrow X^{\At}/(\Tt/\At)$, and the filtration is defined by setting the $i$th piece to be $\HO^*(X^{\At}/(\Tt/\At),\vtau^{\leq 2i}p_*\BoQ_{X^{\At}/\Tt})\subset \HO^*(X^{\At}/\Tt,\BoQ)$.  Here we consider the unshifted locally constant sheaf, and truncation functors for the standard (non-perverse) t-structure. 

For a given $\Tt$-equivariant smooth variety $X$, we denote by $\BoQ^{\vir}$ the constant $\Tt$-equivariant perverse sheaf on $X$. Explicitly, we have that for each $X'\subset X$ a connected component, $\BoQ^{\vir}\lvert_{X'}=\BoQ[\dim(X')]$.
Following Maulik and Okounkov, we define the stable envelope as the $\HO_{\Tt}$-linear map
\[
\StabC{C}: \HO^*_{\Tt}(X^{\At}, \BoQ^{\vir})\to \HO^*_{\Tt}(X,\BoQ^{\vir}), \qquad \gamma\mapsto (p_2)_* (\Sh{L}_{\mathfrak{C}}\cap (p_1)^*\gamma)
\]
defined by the cycle $\Sh{L}_{\mathfrak{C}}$ via convolution. For details about convolution in Borel--Moore homology, see \cite[\S 2.7]{chriss2009representation}.
Notice that, because of the cohomological shift encoded in $\BoQ^{\vir}$, the map $\StabC{C}$ preserves the cohomological degree.

The next lemma is a key result for the application of stable envelopes in the geometric representation theory of Yangians. 

\begin{lemma}\cite[Lem.3.6.1]{MO19}
\label{lma:triangle lemma}
Let $\mathfrak{C}$ be a chamber for the action of $\At$ on $X$ and let $\mathfrak{C}'\subset \mathfrak{C}$ be a face, of some lower dimension. Let $\At'\subset \At$ be the torus whose Lie algebra is the span of $\mathfrak{C}'$ in the Lie algebra of $\At$ and let $\mathfrak{C}/\mathfrak{C}'$ be the projection of $\mathfrak{C}$ on the Lie algebra of $\At/\At'$. We also assume that the polarisations satisfy $\varepsilon_{X,\At}= \varepsilon_{X,\At'} \varepsilon_{X^{\At'},\At/\At'}$.
Then the following diagram
\begin{equation*}
    \begin{tikzcd}
    \HO^*_{\Tt}(X^{\At},  \BoQ^{\vir})\arrow[dr, swap,  "\Stab_{\mathfrak{C}/\mathfrak{C}'}"]\arrow[rr, "\Stab_{\mathfrak{C}}"] & & \HO^*_{\Tt}(X,  \BoQ^{\vir})\\
    & \HO^*_{\Tt}(X^{\At'},  \BoQ^{\vir})\arrow[ur, swap, "\Stab_{\mathfrak{C}'}"] &
    \end{tikzcd}
\end{equation*}
is commutative.
\end{lemma}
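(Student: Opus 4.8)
The plan is to show that the composite $\Stab_{\mathfrak{C}'}\circ\Stab_{\mathfrak{C}/\mathfrak{C}'}$ is again realised by convolution with a $\Tt$-invariant Lagrangian cycle that satisfies the three characterising properties (i)--(iii) of Theorem~\ref{thm: definition stab} for the chamber $\mathfrak{C}$; the uniqueness clause of that theorem then forces this cycle to equal $\mathcal{L}_{\mathfrak{C}}$, whence the two composites of maps coincide. Both factors are convolutions: $\Stab_{\mathfrak{C}/\mathfrak{C}'}$ against $\mathcal{L}_{\mathfrak{C}/\mathfrak{C}'}\in\HO^{\BoMo}_{\Tt}(X^{\At}\times X^{\At'},\BoQ)$ --- the stable envelope for the residual $\At/\At'$-action on $X^{\At'}$, using the identification $(X^{\At'})^{\At/\At'}=X^{\At}$ --- and $\Stab_{\mathfrak{C}'}$ against $\mathcal{L}_{\mathfrak{C}'}\in\HO^{\BoMo}_{\Tt}(X^{\At'}\times X,\BoQ)$. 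I would set $\mathcal{L}'\coloneqq p_{13,*}\!\left(p_{12}^{*}\mathcal{L}_{\mathfrak{C}/\mathfrak{C}'}\cap p_{23}^{*}\mathcal{L}_{\mathfrak{C}'}\right)$ on $X^{\At}\times X^{\At'}\times X$. Properness of each factor over its rightmost variable makes this pushforward defined and proper over $X^{\At}\times X$; $\Tt$-invariance is automatic; and the composition of Lagrangian correspondences is Lagrangian, with the spurious higher-dimensional components excluded by a dimension count against the attracting-set supports of the two factors.

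First I would verify property (ii). Fix a connected component $F\subseteq X^{\At}$; it lies in a unique component $F'$ of $X^{\At'}$ and is a component of $(F')^{\At/\At'}$. Property (ii) for the two factors shows that $\mathcal{L}'|_{F\times X}$ is supported on $F\times\dAtt^{f}_{\mathfrak{C}'}\!\left(\dAtt^{f}_{\mathfrak{C}/\mathfrak{C}'}(F)\right)$, so it suffices to establish the set-theoretic identity $\dAtt^{f}_{\mathfrak{C}'}\!\left(\dAtt^{f}_{\mathfrak{C}/\mathfrak{C}'}(F)\right)=\Att{C}^{f}(F)$. This is the geometric statement that the limit $\lim_{t\to 0}\sigma(t)\cdot x$ along a generic one-parameter subgroup $\sigma\in\mathfrak{C}$ may be computed by first flowing along the $\mathfrak{C}'$-directions into $X^{\At'}$ and then flowing inside $X^{\At'}$ along the complementary $\mathfrak{C}/\mathfrak{C}'$-directions, an identity that persists after taking iterated closures under the attraction operation.

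Next, property (i). With $N_{F/X}=N_{F/F'}\oplus\left(N_{F'/X}\right)|_{F}$, and using that $\mathfrak{C}'$ is a face of $\mathfrak{C}$, the $\mathfrak{C}$-repelling part of $N_{F/X}$ splits as the $(\mathfrak{C}/\mathfrak{C}')$-repelling part of $N_{F/F'}$ plus the restriction of the $\mathfrak{C}'$-repelling part of $N_{F'/X}$. Localising the convolution near the diagonal by the support statement just proved, the restriction $\mathcal{L}'|_{F\times F}$ reduces to the product of the diagonal restrictions of the two factors on $F\times F$ and $F'\times F'$, i.e. to $\left(\pm\Eu(N_{F/F'}^{-})\right)\!\cdot\!\left(\pm\Eu\bigl((N_{F'/X}^{-})|_{F}\bigr)\right)\cap[\Delta_{F}]=\pm\Eu(N_{F/X}^{-})\cap[\Delta_{F}]$, where the overall sign is precisely the product $\varepsilon_{X,\At'}\cdot\varepsilon_{X^{\At'},\At/\At'}$, which by hypothesis equals $\varepsilon_{X,\At}$. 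This is the point at which the polarisation compatibility assumption is used.

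The hard part will be property (iii). I would decompose the $\At$-degree on $\HO^{*}_{\Tt}(X^{\At},\BoQ)$ as a sum of an $\At'$-degree and an $(\At/\At')$-degree, compatibly with the filtrations described after Theorem~\ref{thm: definition stab}, and track it through $\mathcal{L}'=\mathcal{L}_{\mathfrak{C}'}\circ\mathcal{L}_{\mathfrak{C}/\mathfrak{C}'}$. For $F'<_{\mathfrak{C}}F$ and an intermediate component $G\subseteq X^{\At'}$ contributing to the convolution, the three partial orders force $G\leq_{\mathfrak{C}/\mathfrak{C}'}F$ and $F'\leq_{\mathfrak{C}'}G$; combining the strict bound from (iii) for $\mathcal{L}_{\mathfrak{C}/\mathfrak{C}'}$ on $F\times G$ (with equality only for $G=F$, where it is property (i) that contributes the degree $\tfrac12\codim(F)$ exactly) with the bound from (iii) for $\mathcal{L}_{\mathfrak{C}'}$ on $G\times F'$, and using the additivity of codimension along the chain $F'\subseteq G\subseteq X$, the two budgets add to give $\deg_{\At}\!\left(\mathcal{L}'|_{F\times F'}\right)<\tfrac12\codim(F')$. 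The delicate bookkeeping --- cataloguing which intermediate $G$ can occur and matching the degree and codimension additivity across $F'\leq G\leq F$, while keeping track of the sign conventions --- is where the real work lies; I also expect one genuine technical point to be checking that the fibre product underlying the convolution has the expected dimension, so that $\mathcal{L}'$ is a bona fide Lagrangian cycle. Once (i)--(iii) are in hand, uniqueness in Theorem~\ref{thm: definition stab} identifies $\mathcal{L}'$ with $\mathcal{L}_{\mathfrak{C}}$, and passing to the associated convolution maps gives $\Stab_{\mathfrak{C}'}\circ\Stab_{\mathfrak{C}/\mathfrak{C}'}=\Stab_{\mathfrak{C}}$.
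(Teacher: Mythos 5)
The paper does not prove this lemma; it is quoted directly from \cite[Lem.\,3.6.1]{MO19}, and your strategy --- showing that the composite correspondence $\mathcal{L}_{\mathfrak{C}'}\circ\mathcal{L}_{\mathfrak{C}/\mathfrak{C}'}$ is a $\Tt$-invariant Lagrangian cycle satisfying the three axioms of Theorem~\ref{thm: definition stab} for $\mathfrak{C}$, then invoking the uniqueness clause of that theorem --- is precisely the argument Maulik and Okounkov give. The steps you flag as delicate (the factorisation $\Att{C}^f = \Att{C'}^f\circ\dAtt^f_{\mathfrak{C}/\mathfrak{C}'}$ of attracting sets, the expected-dimension check for the fibre product underlying the convolution, and the degree bound in (iii)) are indeed where the work lies in the original as well, and your outline identifies and treats them at the appropriate level of detail.
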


\subsection{Stable envelopes for quiver varieties}
\label{section: Stable envelopes for quiver varieties}
Now let $X=\Nak_Q(\ff,\dd)$ be a quiver variety and set $\Tt= T_{\ff}$, where $T_{\ff}=T\times A_{\ff}$ is the product of the torus $T$ associated with a semi-invariant weighting $\wt: (\overline{Q_{\ff}})_1\to \ZZ^r$ in the sense of \S \ref{subsection:Preprojective CoHA} and the usual rank $|\ff|=\sum_{i\in Q_0} \ff_i$ torus $A_{\ff}$ rescaling the framing arrows $r_{i,m}$ with weight $1$ and the arrows $r_{i,m}^*$ with weight $-1$. The latter can be either thought of as the torus arising from an additional weighting of the framing directions or as the maximal torus of the group $\prod_{i\in Q_0} \Gl_{\ff_i}(\BoC)$ reparametrising the framing  of $\overline{Q_{\ff}}$.
In particular, we have $\hbar=\ttt(\wt(a)+\wt(a^*))$ for any $a\in (Q_{\ff})_1$ and hence $A_{\ff}\subset \ker(\hbar)$.  The torus $A_{\triv}\coloneqq \BoC^*\subset A_{\ff}$ scaling all of the framing edges with weight one may be identified with the factor in the gauge group $\Gl_{(\dd,1)}$ corresponding to the vertex $\infty$, and acts trivially on all Nakajima quiver varieties.  We define $A^0_{\ff}\coloneqq A_{\ff}/A_{\triv}$ and 
\begin{equation}
\label{reduced_T_def}
T_{\ff}^0\coloneqq T\times A_{\ff}^0.
\end{equation}
Throughout this section we assume $\hbar\neq 0$; in particular, the torus $T$ cannot be trivial.

Fix $k\in\BoN$, let $e_1, \dots, e_k$ be the standard basis of $\ZZ^k$, and consider a decomposition $\ff=\sum_{j=1}^k \ff^{(j)}$ with $\ff^{(j)}\in\BoN^{Q_0} \setminus \{0\}$ for all $j=1,\dots, k$. Let $\wt: (\overline{Q_{\ff}})_1\to \ZZ^{k}$ be the weighting defined on the framing arrows $\lbrace r_{i,m}, r^*_{i,m}\rbrace\in (\overline{Q_{\ff}})_1$ by
\[
\wt(r_{i,m})= e_l \qquad \wt(r^*_{i,m})= -e_l  \qquad \forall \quad \sum_{j=1}^{l-1} \ff^{(j)}_i <m \leq\sum_{j=1}^{l+1} \ff^{(j)}_i
\]
and zero on the other arrows of the quiver. Let $A\subset A_{\ff}$ be the rank $k$ subtorus
induced by $\wt$. 
Its action preserves the symplectic form of $\Nak_Q(\ff,\dd)$ and admits $k!$ chambers $\mathfrak{C}_{\sigma}$, corresponding to the elements of the symmetric group $\sigma\in \FS_k$. In the notation of Maulik and Okounkov, $\mathfrak{C}_{\sigma}$ is the chamber $\lbrace a_{\sigma(1)}>\dots>a_{\sigma(k)} \rbrace$.

By \cite[Lem.3.2]{Nakajima_tensorI}, the $A$-fixed locus of $\Nak_Q(\ff,\dd)$ has the form 
\[
\bigsqcup_{\dd^{(1)}+\dots+\dd^{(k)}=\dd}\Nak_Q(\ff^{(1)},\dd^{(1)})\times\dots\times \Nak_{Q}(\ff^{(k)},\dd^{(k)})\hookrightarrow \Nak_Q(\ff,\dd),
\]
where the inclusion is induced by taking direct sums of representations. Passing to cohomology, we deduce that for every choice of chamber $\mathfrak{C}_{\sigma}$ the stable envelope is a $\HO_{T_{\ff}}$-linear map
\begin{equation}
\label{stable envelope nakajima varieties}
    \StabC{C_\sigma}: \bigoplus_{\dd^{(1)}+\dots+\dd^{(k)}=\dd}\NakMod^{T_{\ff^{(1)}}}_{Q,\ff^{(1)},\dd^{(1)}}\otimes_{\HO_T}\dots\otimes_{\HO_T} \NakMod^{T_{\ff^{(k)}}}_{Q, \ff^{(k)},\dd^{(k)}}\to \NakMod^{T_{\ff}}_{Q, \ff,\dd}.
\end{equation}

By Theorem \ref{thm: definition stab}, a choice of polarisation is required, in order to uniquely determine the stable envelopes. One can tune the polarisation to get an arbitrary sign in the diagonal restriction of the stable envelopes only for one possible choice of chamber. The signs for the other chambers are then determined in such a way that the leading order of the R-matrix expansion \eqref{eq:R-matrix expansion} is the identity. In our setting, we choose the following conventions.
For a given quiver variety $X= \Nak_Q(\ff,\dd)$ and any $A$-fixed component $F=\Nak_Q(\ff^{(1)},\dd^{(1)})\times\dots\times \Nak_{Q}(\ff^{(k)},\dd^{(k)})\hookrightarrow \Nak_Q(\ff,\dd)$ as above, we choose the polarisation $\varepsilon_{X,A}$ in such a way that the restriction of $\Sh{L}_{\mathfrak{C}_{\id}}$ on $F\times F$ equals $(-1)^{\delta} \Eu(N^-_F)\cap [\Delta]$ with $\delta=\prod_{i<j} \dd^{(i)}\cdotsh\dd^{(j)}$. We fix this choice of polarisation for the rest of the article.

\begin{remark}
\label{rem: polarisation opposite chamber}
Let $\tau\in \FS_{k}$ be the permutation of maximal length. Then the chamber $(\mathfrak{C}_{\id})^{\text{opp}}$ opposite to $\mathfrak{C}_{\id}$ in the chamber arrangement is equal to $\mathfrak{C}_{\tau}$. Because of the choice of polarisation $\varepsilon_{X,A}$ above, the restriction of $\Sh{L}_{\mathfrak{C}_{\tau}}$ on $F\times F$ equals $(-1)^{\delta+\frac{1}{2}\codim_{X}(F)} \Eu(N^+_F)\cap [\Delta]$.
\end{remark}

\begin{remark}
    If $k=2$, which is the relevant case for us, there are only two chambers $\mathfrak{C_{\pm}}$, corresponding to the two elements of the symmetric group $\FS_2$. In this case, any $A$-fixed component in $(\Nak_Q(\ff,\dd))^{A}$ is of the form
\[
F_{\eta}=\Nak_Q(\ff^{(1)},\eta)\times \Nak_Q(\ff^{(2)},\dd-\eta)\qquad \eta\in \BoQ^{Q_0}
\]
and the partial order determined by $\mathfrak{C_{\pm}}$ on the set of fixed components in $\Nak_Q(\ff,\dd)$ is detected by the dimension vector $\eta\in \NN^{Q_0}$. Indeed, it is shown in \cite[\S 3.2.4]{MO19} that the relation $F_{\eta}>_{\mathfrak{C_{\pm}}}F_{\eta'}$ implies $\pm(\eta-\eta')>0$. However, we stress that the opposite implication is generally false.
\end{remark}

\subsection{Deformation of quiver varieties}
\label{section: Deformation of quiver varieties}

Fix a pair of dimension vectors $\ff,\dd\in \NN^{Q_0}$ such that $\Nak_Q(\ff,\dd)$ is non-empty.
Nakajima quiver varieties admit a deformation $\tilde \mu: \widetilde \Nak_Q(\ff,\dd)\to \AA^{Q_0}$ that is nicely compatible with stable envelopes. In this section we recall its main features. The center $\fz_{\dd}$ of the Lie algebra $\mathfrak{gl}_{\dd}=\Lie(\Gl_{\dd})$ can be naturally identified with an affine subspace of $\BoA^{Q_0}$ of dimension equal to the number of vertices $i\in Q_0$ such that $\dd_i\neq 0$. The moment map $\mu^{\zeta}_{\ff,\dd}\colon \BoA_{(\dd,1)}^{\zeta\sst}(\overline{Q_{\ff}})\rightarrow \mathfrak{gl}_{\dd}$ introduced in \S \ref{sec: Nakajima quiver varieties} is $\Gl_{\dd}$-equivariant, hence it descends to a map
\[
\tilde \mu': (\mu^{\zeta}_{\ff,\dd})^{-1} (\fz_{\dd})/\Gl_{\dd}\to \fz_{\dd}.
\]
This morphism is the universal deformation of the quiver variety $\Nak_Q(\ff,\dd)=(\mu^{\zeta}_{\ff,\dd})^{-1}(0)/\Gl_{\dd}$, see \cite{Kal09} for a comprehensive discussion.  However, for us it will be convenient to consider a deformation whose base space is independent of the dimension vector $\dd$. Hence, we define
\[
\tilde \mu: \widetilde \Nak_Q(\ff,\dd)\to \AA^{Q_0}
\]
as the pullback of $\tilde \mu'$ along the natural projection $p: \AA^{Q_0}\to \fz_{\dd}$. By construction, the map $\tilde \mu$ factors through the affine quotient $\widetilde\CM_{(\dd,1)}(\Pi_{Q_{\ff}}):=\Spec(\Gamma(\Sh{O}_{(\mu_{\ff,\dd})^{-1} (\fz_{\dd})})^{\Gl_{(\dd,1)}})\times_{\fz_{\dd}}\AA^{Q_0}$, i.e. we have a commutative diagram
\[
\begin{tikzcd}
    \widetilde \Nak_Q(\ff,\dd) \arrow[r, "\tilde \pi"]\arrow[d, "\tilde \mu"] & \widetilde\CM_{(\dd,1)}(\Pi_{Q_{\ff}})\arrow[dl]\\
    \AA^{Q_0}&
\end{tikzcd}
\]
The affinization morphism $\pi:\Nak_Q(\ff,\dd)\to \CM_{(\dd,1)}(\Pi_{Q_{\ff}})$ introduced in \S \ref{sec: Nakajima quiver varieties} is the fiber over the origin $0\in \AA^{Q_0}$. More generally, for every $t\in \AA^{Q_0}$ we can consider the fibers
\[
\begin{tikzcd}
	\Nak_{Q,t}(\ff,\dd)\arrow[r, hookrightarrow] \arrow[d]&\widetilde \Nak_Q(\ff,\dd) \arrow[d, "\tilde \mu"]\\
	\lbrace t \rbrace \arrow[r, hookrightarrow] & \AA^{Q_0}.
\end{tikzcd}
\]

It is well known that $\AA^{Q_0}$ has a natural wall-chamber arrangement such that, for $t\in \AA^{Q_0}$ away from the walls, all the points are $\zeta$-stable and hence the map $\pi_t: \Nak_{Q,t}(\ff,\dd)\to \CM_{(\dd,1),t}(\Pi_{Q_{\ff}})$ induced by $\tilde \pi$ is an isomorphism. We denote the complement of the walls in $\AA^{Q_0}$ by $U$. Note that $U$ is dense in $\AA^{Q_0}$ and $\Nak_{Q,t}(\ff,\dd)$ is affine for all $t\in U$. We call the fibers over such $t\in U$ affine fibers. 

As for quiver varieties, we can also consider analogous deformations of the stacks of representations of the preprojective algebra of a quiver $Q$. Namely, we define $\widetilde \FNak_Q(\ff,\dd)\to \AA^{Q_0}$ as the pullback of
\[
(\mu_{\ff,\dd})^{-1}(\fz_{\dd})/\Gl_{\dd}\to \fz_{\dd}
\]
along $p: \AA^{Q_0}\to \fz_{\dd}$.  Given a quiver $Q$, let $\lazy_i\in \BoC Q$ 
be the idempotent associated to some vertex $i\in Q_0$. Notice that the fiber
\[
\begin{tikzcd}
	\FNak_{Q,t}(\ff,\dd)\arrow[r, hookrightarrow] \arrow[d]& \widetilde \FNak_Q(\ff,\dd)\arrow[d]\\
	\lbrace t \rbrace \arrow[r, hookrightarrow] &\AA^{Q_0}
\end{tikzcd}
\]
over $t=(t_i)_{i\in Q_0}$ is the stack quotient by $\Gl_{\dd}$ of the space of $(\dd,1)$-dimensional representations of the deformed preprojective algebra $\Pi^t_{Q_{\ff}}=\BoC\overline{Q_{\ff}}/\langle\rho_t\rangle $ defined by the two-sided ideal generated by
\[
\rho_t=\sum_{a\in (Q_\ff)_1} [a,a^*]-\sum_{i\in Q_0} t_i \lazy_i.
\]
Here, the elements $\lazy_i\in \BoC\overline{Q_{\ff}}$ are the idempotents of the subalgebra $\BoC \ol{Q}\subset \BoC\overline{Q_{\ff}}$. By construction, there is an open inclusion of $\AA^{Q_0}$-stacks 
\begin{equation}
\label{eq: deformation of }
    \widetilde \Nak_Q(\ff,\dd)\hookrightarrow \widetilde \FNak_Q(\ff,\dd).
\end{equation}
In particular, the quiver variety $\Nak_Q(\ff,\dd)$ is an open subspace of the central fiber $\FNak_{Q}(\ff,\dd):= \FNak_{Q,0}(\ff,\dd)$, i.e. the fiber over $t=0$.

As discussed above, over $U\subset \AA^{Q_0}$ all representations of $\overline{Q_{\ff}}$ are stable and the group $\Gl_{\dd}$ acts freely on them. Hence, we deduce the following proposition.
\begin{proposition}
\label{prop: usual resolution vs stacky resol}
    The inclusion \eqref{eq: deformation of } restricts to an isomorphism over $U\subset\AA^{Q_0}$. In particular, for all $t\in U$, the fiber $\Nak_{Q,t}(\ff,\dd)$ is smooth, affine, and canonically isomorphic to the quotient stack $\FNak_{Q,t}(\ff,\dd)$.
\end{proposition}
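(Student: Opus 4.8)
The plan is to establish the two claims of Proposition \ref{prop: usual resolution vs stacky resol} in sequence, using that over the open wall-avoiding locus $U\subset \BoA^{Q_0}$ the GIT-(semi)stability condition $\zeta^+$ becomes vacuous. First I would recall that $\widetilde \Nak_Q(\ff,\dd)$ is, by definition, the pullback of $(\mu^{\zeta^+}_{\ff,\dd})^{-1}(\fz_{\dd})/\Gl_{\dd}$ along $p\colon \BoA^{Q_0}\to \fz_{\dd}$, where the superscript $\zeta^+$ indicates that we have passed to the $\zeta^+$-stable (equivalently $\zeta^+$-semistable, by $\theta$-genericity of $\zeta^+$ on the relevant dimension vectors) locus, whereas $\widetilde \FNak_Q(\ff,\dd)$ is the pullback of $(\mu_{\ff,\dd})^{-1}(\fz_{\dd})/\Gl_{\dd}$ with \emph{no} stability imposed. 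The inclusion \eqref{eq: deformation of } is induced by the open inclusion $\BoA^{\zeta^+\sst}_{(\dd,1)}(\overline{Q_{\ff}})\hookrightarrow \BoA_{(\dd,1)}(\overline{Q_{\ff}})$ of the semistable locus. So the content of the first claim is that over $U$, \emph{every} point of $(\mu_{\ff,\dd})^{-1}(\fz_{\dd})$ mapping to $U$ is already $\zeta^+$-stable.

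The key step is the following deformed-preprojective-algebra observation, which I would isolate as the main lemma: for $t=(t_i)_{i\in Q_0}$ lying away from the walls, every finite-dimensional $(\dd,1)$-dimensional module $M$ over the deformed preprojective algebra $\Pi^t_{Q_{\ff}}=\BoC\overline{Q_{\ff}}/\langle \rho_t\rangle$ is $\zeta^+$-stable. The reason is the trace obstruction built into $\rho_t=\sum_{a\in (Q_{\ff})_1}[a,a^*]-\sum_{i\in Q_0}t_i\lazy_i$: taking the trace of the defining relation on any \emph{sub}module or \emph{quotient} module supported only on $Q_0$ (i.e. of dimension vector $(\ee,0)$) forces $\sum_{i\in Q_0} t_i \ee_i = 0$, since the traces of all commutators $[a,a^*]$ vanish. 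The walls of $\BoA^{Q_0}$ are, by construction, exactly the hyperplanes $\{\sum_i t_i \ee_i = 0\}$ for $0<\ee\le \dd$; thus for $t\in U$ no such $(\ee,0)$-submodule or -quotient exists, which is precisely the $\zeta^+$-stability condition (a submodule has strictly smaller slope unless it contains the framing vertex $\infty$ with full multiplicity $1$, i.e. unless it is all of $M$; dually for quotients). Granting this, over $U$ the semistable locus coincides with the full space of solutions to the moment map equation, so the open inclusion \eqref{eq: deformation of } is an isomorphism over $U$, giving the first sentence of the proposition. The second sentence then follows: $\Gl_{\dd}$ acts freely on $\zeta^+$-stable points (a stable module has only scalar automorphisms, but the scalar at $\infty$ is rigidified to $1$, so the stabiliser in $\Gl_{\dd}$ is trivial), hence $\FNak_{Q,t}(\ff,\dd)=\BoA_{(\dd,1)}(\overline{Q_{\ff}})_t^{\zeta^+\sst}/\Gl_{\dd}$ is a genuine quotient and agrees with the quiver variety $\Nak_{Q,t}(\ff,\dd)$; smoothness of $\Nak_{Q,t}(\ff,\dd)$ is the standard fact (as recalled in \S \ref{sec: Nakajima quiver varieties}, citing \cite{Nak98}) that $0$, and more generally any regular value, is a regular value of $\mu^{\zeta^+}_{\ff,\dd}$ on the stable locus, and affineness of $\Nak_{Q,t}(\ff,\dd)$ for $t\in U$ was already noted in the text since then $\pi_t$ is an isomorphism onto the affine variety $\CM_{(\dd,1),t}(\Pi_{Q_{\ff}})$.

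The main obstacle is the trace/stability lemma: one must be careful that the relevant notion of ``wall'' in $\BoA^{Q_0}$ is exactly the arrangement cut out by the linear forms $\ee\mapsto \sum_i t_i\ee_i$ for $\ee$ ranging over sub-dimension-vectors of $\dd$, and that $U$ is defined as the complement of precisely these walls — this is the wall-and-chamber structure alluded to in \S \ref{section: Deformation of quiver varieties} and I would make the identification explicit, referring to \cite{Kal09} for the comprehensive treatment. A secondary point to handle cleanly is the direction of the implication: it is genuinely true that away from the walls semistability holds for \emph{all} points (not merely generically), because the trace argument is an honest constraint, not a general-position statement; I would spell out the submodule and quotient-module cases separately to make this airtight. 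Once the lemma is in place, everything else is immediate from the definitions and from results already recalled in \S \ref{sec: Nakajima quiver varieties}.
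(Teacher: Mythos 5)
Your proof is correct and follows essentially the same route as the paper's (extremely terse) argument, which simply invokes the preceding sentence that over $U$ all points are $\zeta^+$-stable and $\Gl_{\dd}$ acts freely, with the stability statement cited as ``well known.'' What you add is the standard trace argument for deformed preprojective algebras that makes the ``well known'' assertion explicit: a destabilizing quotient (equivalently, a submodule containing the $\infty$-vertex line) would have dimension vector $(\ee',0)$ with $0<\ee'\le\dd$, and applying the deformed relation to it and taking traces forces $\sum_i t_i\ee'_i=0$, which is excluded for $t\in U$; this, together with the free-action observation, is exactly what the paper uses. One small imprecision worth tidying: when you write ``since the traces of all commutators $[a,a^*]$ vanish,'' you should separate the $Q_1$-arrows (whose restricted operators really are commutators, hence traceless) from the framing arrows $r_{i,m}$, whose contribution vanishes not because it is a commutator but because $r^*_{i,m}$ (respectively $r_{i,m}$) is the zero map on the submodule (respectively quotient) supported away from $\infty$.
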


\begin{remark}
    So far in the current section we have kept the stability condition fixed; however, the deformation construction for quiver varieties can be exploited to relate their cohomologies for different stability conditions $\zeta\in \BoQ^{(Q_{\ff})_0}$. Let $\pi^{\zeta}\colon \Nak^{\zeta}_Q(\ff,\dd)\rightarrow \CM_{(\dd,1)}(\Pi_{Q_{\ff}})$ be the affinization morphism. The previous proposition remains true for arbitrary generic stability conditions, and so given two generic stability conditions $\zeta,\zeta'\in \BoQ^{Q_{\ff}}$, the associated families $\widetilde \Nak^{\zeta}_Q(\ff,\dd)$ and $\widetilde \Nak^{\zeta'}_Q(\ff,\dd)$ over $\AA^{Q_0}$ are flops of each other, and hence give a canonical morphism 
    \begin{equation}
    \label{eq: canonical iso cohomology quiver varieties}
        (\pi^\zeta)_{\ast}\BoQ^{\vir}_{\Nak^{\zeta}(\ff,\dd)}\to (\pi^{\zeta'}_{\ast})\BoQ^{\vir}_{\Nak^{\zeta'}(\ff,\dd)},
    \end{equation}
    which by the topological triviality of both families \cite[\S4.2]{Nak97} is actually an isomorphism. From a slightly different (but equivalent) perspective, this morphism can also be interpreted as a Steinberg correspondence on the product $ \Nak^{\zeta}_Q(\ff,\dd)\times  \Nak^{\zeta'}_Q(\ff,\dd)$, cf. \cite[\S4.10]{MO19}.
\end{remark}

\subsection{Stable envelopes via specialisation}
\label{sec: Stable envelopes via specialisation}
As shown in \cite[\S 3.7]{MO19} the class $\Sh{L}_{\mathfrak{C}}$ defining the stable envelope of an equivariant symplectic resolution can be constructed via specialisation in Borel--Moore homology. We now recall this construction in the setting of quiver varieties, whose universal deformation was described in the previous subsection.

Let the torus $T_{\ff}$ act on $\AA^{Q_0}$ via scaling by the character $\hbar$ defined in \S \ref{section: Stable envelopes for quiver varieties}. For this choice of torus action on the base, the deformation map $\tilde \mu: \widetilde \Nak_Q(\ff,\dd)\to \AA^{Q_0}$ becomes $T_{\ff}$-equivariant. Since $T_{\ff}$ uniformly rescales the base $\AA^{Q_0}$ with weight $\hbar$, the complement $U$ of the hyperplane arrangement in $\AA^{Q_0}$ is preserved by this action. Hence, the preimage $\widetilde \Nak_{Q,U}(\ff,\dd)=\tilde\mu^{-1}( U)$ carries an action of $T_{\ff}$. 

Fix a subtorus $A\subseteq A_{\ff}$ as in \S \ref{section: Stable envelopes for quiver varieties}.
Since all the fibers of $ \widetilde\Nak_{Q,U}(\ff,\dd):=\tilde\mu^{-1}( U)$ are affine, the attracting set $\Att{C_\sigma}(\widetilde \Nak_{Q,U}(\ff,\dd)^{A})$ is closed in $ \widetilde\Nak_{Q,U}(\ff,\dd)$ and hence defines a fundamental class 
\[
[\Att{C_\sigma}(\widetilde\Nak_{Q,U}(\ff,\dd)^{A})]\in \HO^{\BoMo}_{T_{\ff}}(\widetilde\Nak_{Q,U}(\ff,\dd)^{A}\times_{\AA^{Q_0}}\widetilde\Nak_{Q,U}(\ff,\dd), \BoQ).
\]
The cycle $\Sh{L}_{\mathfrak{C}_\sigma}$ defining the stable envelope $\StabC{C_\sigma}$ is then recovered as the specialisation of this class to the central fiber of the deformation space. More precisely, we have the following proposition.
\begin{proposition}\cite[Thm.3.7.4]{MO19}
\label{prop: stab via specialzation}
    The class $\Sh{L}_{\mathfrak{C}_\sigma}$ from Theorem \ref{thm: definition stab} coincides, up to a sign due to the choice of polarisation, with the specialisation
    \[
    \lim_{t\to 0}\;[\Att{C_\sigma}(\widetilde\Nak_{Q,U}(\ff,\dd)^{A})]\in \HO^{\BoMo}_{T_{\ff}}(\Nak_{Q}(\ff,\dd)^{A}\times_{\AA^{Q_0}} \Nak_{Q}(\ff,\dd), \BoQ).
    \]
\end{proposition}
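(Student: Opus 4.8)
The statement is the quiver-variety incarnation of \cite[Thm.~3.7.4]{MO19}, and the plan is to reduce it to that general result by checking its hypotheses for the family $\tilde\mu\colon\widetilde\Nak_Q(\ff,\dd)\to\AA^{Q_0}$ constructed in \S\ref{section: Deformation of quiver varieties}. The mechanism of \cite[\S3.7]{MO19} produces the cycle $\Sh{L}_{\mathfrak{C}_\sigma}$ of Theorem \ref{thm: definition stab} out of a smooth symplectic variety $X$ with proper map to an affine $X_0$, equipped with its (Poisson) deformation over an affine base $B$ carrying a torus action that dilates $B$ with the symplectic weight, provided the fibres over a dense open $U\subset B$ are affine. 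First I would record that all of these inputs are in place: the relative symplectic form on $\widetilde\Nak_Q(\ff,\dd)/\AA^{Q_0}$ is the one descending from the Nakajima construction; by Proposition \ref{prop: usual resolution vs stacky resol} the fibres $\Nak_{Q,t}(\ff,\dd)$ for $t\in U$ are smooth affine varieties; and by construction $T_{\ff}$ acts on $\AA^{Q_0}$ through the single character $\hbar$, making $\tilde\mu$ equivariant and preserving $U$, with the action genuinely dilating since $\hbar\neq0$ throughout \S\ref{section: Stable envelopes for quiver varieties}.

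Granting the general framework, the argument then runs as follows. For $t\in U$ the fibre is affine, hence the attracting set $\Att{C_\sigma}(\widetilde\Nak_{Q,U}(\ff,\dd)^A)$ is closed in $\widetilde\Nak_{Q,U}(\ff,\dd)$ and carries a fundamental class in $\HO^{\BoMo}_{T_{\ff}}(\widetilde\Nak_{Q,U}(\ff,\dd)^A\times_{\AA^{Q_0}}\widetilde\Nak_{Q,U}(\ff,\dd),\BoQ)$, whose specialisation to the central fibre is the Gysin pullback along $\{0\}\hookrightarrow\AA^{Q_0}$. Writing $\Sh{L}'_{\sigma}$ for the resulting class, one verifies the three defining properties of Theorem \ref{thm: definition stab}, so that by uniqueness $\Sh{L}'_{\sigma}=\Sh{L}_{\mathfrak{C}_\sigma}$ up to the global sign fixed by our polarisation convention. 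Property (ii), the support condition, is immediate: attracting sets are by construction closed under $\Att{C_\sigma}(-)$, and this is preserved by specialisation. Property (i) follows because the restriction of $\Att{C_\sigma}(\widetilde\Nak_{Q,U}(\ff,\dd)^A)$ over a fixed component $F$ is the relative attracting affine bundle of rank $\codim_X(F)/2$, whose fundamental class restricts on $F\times F$ to that of the zero section, i.e. $\pm\Eu(N^-_F)\cap[\Delta]$ with the sign dictated by $\varepsilon_{X,A}$; here one uses that $F$, being a product of smaller quiver varieties, deforms flatly, so that restriction to $F\times F$ commutes with the $t\to0$ limit.

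The delicate point, and where I expect the real work to lie, is property (iii): the off-diagonal block of $\Sh{L}'_\sigma$ on $F\times F'$ with $F'<_{\mathfrak{C}_\sigma}F$ must have $A$-degree strictly below $\codim(F')/2$. This is precisely the content of \cite[Thm.~3.7.4]{MO19} and its supporting lemmas, the mechanism being that over $U$ the fibre is \emph{affine}, so in the generic fibre the attracting cycle is forced to be minimal — leaving no room for components contributing top $A$-degree — and a dimension count carries this bound through the flat specialisation to $t=0$. Beyond quoting \cite{MO19}, the only addition needed is the bookkeeping that our base $\AA^{Q_0}$, obtained by pullback along $p\colon\AA^{Q_0}\to\fz_{\dd}$ and hence possibly of larger dimension than the intrinsic Poisson deformation, does not affect the estimate: the extra directions are swept out equivariantly with weight $\hbar$ and contribute nothing to the $A$-degree, so the argument of \cite[\S3.7]{MO19} applies on $\fz_{\dd}$ and pulls back. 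Finally I would reconcile the sign by tracking $\varepsilon_{X,A}$ through the specialisation, identifying the ambiguous global sign with the $(-1)^{\delta}$, $\delta=\prod_{i<j}\dd^{(i)}\cdotsh\dd^{(j)}$, fixed in \S\ref{section: Stable envelopes for quiver varieties}.
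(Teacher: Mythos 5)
The paper does not actually prove this statement; it simply cites \cite[Thm.3.7.4]{MO19} and points the reader to \cite[\S 2.6]{chriss2009representation} for the formalism of specialisation. Your proposal is therefore an elaboration of a step the authors take as a black box, and as such it is a reasonable and essentially correct reconstruction of the Maulik--Okounkov argument: you identify the right strategy (specialise, then verify the three axioms of Theorem \ref{thm: definition stab} and invoke uniqueness), you correctly isolate the support condition (ii) and the diagonal normalisation (i) as the soft parts, and you correctly flag the $A$-degree bound (iii) as the only genuinely nontrivial point, where the affineness of the fibres over $U$ is used and where you defer to \cite[\S 3.7]{MO19}. Your observation about the enlarged base $\AA^{Q_0}$ being a harmless pullback of the intrinsic deformation over $\fz_{\dd}$ is a correct and worthwhile piece of bookkeeping that the paper leaves implicit. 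The one small caution: in (i) it is not quite enough to say that restriction to $F\times F$ ``commutes with the $t\to 0$ limit because $F$ deforms flatly'' --- one needs the fixed locus $\widetilde\Nak_{Q}(\ff,\dd)^A$ itself to be flat over $\AA^{Q_0}$ and its attracting bundle to extend flatly, which again holds here but is a hypothesis one should name rather than gesture at; this is checked in \cite{MO19} as part of Thm.~3.7.4 and you would do well to cite the specific lemma. None of this affects the correctness of the conclusion.
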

For details about the specialisation map, see \cite[\S 2.6]{chriss2009representation}. 

\begin{remark}
Although specialisation in Borel--Moore homology is highly dependent on the geometry of the deformation space $\tilde \mu: \widetilde \Nak_Q(\ff,\dd)\to \AA^{Q_0}$ in an open neighborhood of $0\in \AA^{Q_0}$, it is common to write the statement of Proposition \ref{prop: stab via specialzation} as
\[
\Sh{L}_{\mathfrak{C}_\sigma}=\lim_{t\to 0}\;[\Att{C_\sigma}(\Nak_{Q,t}(\ff,\dd)^{A})],
\]
i.e. by thinking of $\Sh{L}_{\mathfrak{C}_\sigma}$ as the limit of a family of fundamental classes $[\Att{C_\sigma}(\Nak_{Q,t}(\ff,\dd)^{A})]_{t\in U}$ living in the affine fibers of $\widetilde\Nak_Q(\ff,\dd)\times_{\AA^{Q_0}}\widetilde\Nak_Q(\ff,\dd)^A$. In the continuation of the article, we adopt this notation. 
\end{remark}

\section{Maulik-Okounkov Yangians}

\subsection{R-matrices}
\label{sect: R-matrices}
Consider a framing decomposition $\ff=\ff'+\ff''$. Following the analysis from \S \ref{section: Stable envelopes for quiver varieties}, we associate to such a decomposition a two-dimensional torus $A$ acting on $\Nak_Q(\ff,\dd)$ and two chambers $\mathfrak{C}_{+},\mathfrak{C_-}\subset \cochar(A)\otimes_{\ZZ}\RR$, corresponding to the identity element and the nontrivial permutation in the symmetric group $\FS_2$, respectively. Taking direct sums, the stable envelope \eqref{stable envelope nakajima varieties} gives $\HO_{T_{\ff}}$-linear maps 
\begin{equation}
    \label{eq: direct sum of stabs}
    \StabC{\pm}: \NakMod^{T_{\ff'}}_{Q,\ff'}\otimes_{\HO_T} \NakMod^{T_{\ff''}}_{Q, \ff''}\to \NakMod^{T_{\ff}}_{Q, \ff}.
\end{equation}
These maps are injective morphisms of finitely generated free $\HO_{T_{\ff}}$-modules of the same rank, but surjectivity fails for degree reasons. However, they become isomorphisms after inverting
sufficiently many equivariant parameters, so we can define their composition
\[
R_{\ff',\ff''}:=\StabC{+}^{-1}\circ \StabC{-}\in \End_{\HO_{T_{\ff}}}(\NakMod^{T_{\ff'}}_{Q,\ff'}\otimes_{\HO_{T}} \NakMod^{T_{\ff''}}_{Q,\ff''})\otimes_{\HO_{T_{\ff}}} \Frac(\HO_{T_{\ff}})
\]
after applying the functor $-\otimes_{\HO_{T_{\ff}}}\Frac(\HO_{T_{\ff}})$. 
\begin{remark}
\label{loc_rem}
Consider the natural isomorphism $\HO_{T_{\ff}}\cong\HO_{A_{\ff'}}\otimes \HO_{A_{\ff''}}\otimes \HO_T$, and the prime ideal $\mathfrak{p}=\HO_{A_{\ff'}}\otimes\mathfrak{m}$, where $\mathfrak{m}\subset \HO_{A_{\ff''}}\otimes \HO_T$ is the unique maximal graded ideal.  If $M$ is a $\HO_{T_{\ff}}$-module, we will often write $M_{\loc}$ for the localisation at $\mathfrak{p}$.  Then $(\StabC{+})_{\loc}$ is already an isomorphism.  This less drastic localisation (than tensoring with the function field) will prove useful when comparing with the coproduct.
\end{remark}

Whenever the subscripts in $R_{\ff',\ff''}$ are clear from the context we will simply write $R$.  This operator is called the Maulik-Okounkov R-matrix, and its name is justified  by the following pivotal result:
\begin{proposition}\cite[\S 4.1.9]{MO19}
    The Maulik-Okounkov R-matrix is a solution of the Yang-Baxter equation with spectral parameters. Namely, for every decomposition $\ff=\ff^{(1)}+\ff^{(2)}+\ff^{(3)}$ we have\footnote{Here the superscripts indicate the factors on which the operators act: for example, $R^{(12)}=R_{\ff^{(1)},\ff^{(2)}}\otimes \id$. }.
    \begin{equation}
        \label{eq: YBE}
        R^{(12)}R^{(13)}R^{(23)}=R^{(23)}R^{(13)}R^{(12)}
    \end{equation}
    in
    \[
    \End_{\HO_{T_{\ff}}}(\NakMod^{T_{\ff^{(1)}}}_{Q,\ff^{(1)}}\otimes_{\HO_T} \NakMod^{T_{\ff^{(2)}}}_{Q,\ff^{(2)}}\otimes_{\HO_T} \NakMod^{T_{\ff^{(3)}}}_{Q,\ff^{(3)}})\otimes_{\HO_{T_{\ff}}} \Frac(\HO_{T_{\ff}}).
    \]
\end{proposition}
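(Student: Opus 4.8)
The statement to prove is the Yang--Baxter equation \eqref{eq: YBE} for the Maulik--Okounkov R-matrices $R^{(12)}R^{(13)}R^{(23)}=R^{(23)}R^{(13)}R^{(12)}$, acting on the triple tensor product $\NakMod^{T_{\ff^{(1)}}}_{Q,\ff^{(1)}}\otimes_{\HO_T} \NakMod^{T_{\ff^{(2)}}}_{Q,\ff^{(2)}}\otimes_{\HO_T} \NakMod^{T_{\ff^{(3)}}}_{Q,\ff^{(3)}}$ after inverting the equivariant parameters. The strategy is the standard one from \cite[\S 4.1.9]{MO19}: deduce YBE from the triangle/composition property of stable envelopes (Lemma \ref{lma:triangle lemma}) applied to the rank $3$ torus $A\subset A_{\ff}$ associated to the decomposition $\ff=\ff^{(1)}+\ff^{(2)}+\ff^{(3)}$.

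First I would set up the geometry. Let $A\cong (\BoC^*)^3$ be the subtorus of $A_{\ff}$ defined as in \S \ref{section: Stable envelopes for quiver varieties} for the decomposition into three framing pieces, so that $\cochar(A)\otimes_{\ZZ}\RR$ carries a chamber arrangement with $3!=6$ chambers $\mathfrak{C}_{\sigma}$ indexed by $\sigma\in\FS_3$, and the $A$-fixed locus of $\Nak_Q(\ff,\dd)$ is the disjoint union over $\dd^{(1)}+\dd^{(2)}+\dd^{(3)}=\dd$ of products $\Nak_Q(\ff^{(1)},\dd^{(1)})\times\Nak_Q(\ff^{(2)},\dd^{(2)})\times\Nak_Q(\ff^{(3)},\dd^{(3)})$. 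Using the factorisation of stable envelopes through partial fixed loci (Lemma \ref{lma:triangle lemma}), each full stable envelope $\Stab_{\mathfrak{C}_{\sigma}}$ for the rank $3$ torus factors as a composition of two rank $1$ stable envelopes (equivalently, a composition of two of the maps $\StabC{\pm}$ from \eqref{eq: direct sum of stabs}), one for each wall crossing needed to pass from the fully attracting chamber to $\mathfrak{C}_{\sigma}$. Concretely, going from the chamber for the total ordering $1<2<3$ to the reversed one $3<2<1$ can be done along two different galleries of adjacent chambers (the two reduced words for the longest element of $\FS_3$), and Lemma \ref{lma:triangle lemma} guarantees that the composite $\Stab_{\mathfrak{C}_{321}}^{-1}\circ\Stab_{\mathfrak{C}_{123}}$ is computed the same way along either gallery.

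The key step is then to translate this into the R-matrix identity. Writing $R^{(ij)}$ for the rank $1$ stable-envelope ratio associated to swapping the $i$th and $j$th factors (which by construction equals $\StabC{+}^{-1}\circ\StabC{-}$ for the appropriate two-block regrouping of the three framings), the two galleries $s_1 s_2 s_1$ and $s_2 s_1 s_2$ realising the longest element of $\FS_3$ give the two sides of \eqref{eq: YBE}. This requires checking that the polarisation conventions fixed in \S \ref{section: Stable envelopes for quiver varieties} and Remark \ref{rem: polarisation opposite chamber} are consistent along both galleries --- i.e. that the sign $(-1)^{\delta}$ with $\delta=\prod_{i<j}\dd^{(i)}\cdotsh\dd^{(j)}$ and the additional signs from opposite chambers are compatibly chosen so that the hypothesis $\varepsilon_{X,\At}=\varepsilon_{X,\At'}\varepsilon_{X^{\At'},\At/\At'}$ of Lemma \ref{lma:triangle lemma} holds at each wall crossing. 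With this multiplicativity of polarisations verified, the two factorisations of $\Stab_{\mathfrak{C}_{321}}^{-1}\circ\Stab_{\mathfrak{C}_{123}}$ coincide as operators, which is exactly \eqref{eq: YBE}. All of this takes place after applying $-\otimes_{\HO_{T_{\ff}}}\Frac(\HO_{T_{\ff}})$, where every $\StabC{\pm}$ becomes invertible, so the compositions are well defined.

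\textbf{Main obstacle.} The conceptual content --- that YBE is a formal consequence of the ``cocycle'' property of stable envelopes under composition --- is clean, and is essentially \cite[\S 4.1.9]{MO19}. The one place requiring care is the bookkeeping of polarisations and signs: one must check that the fixed choice of polarisation (the $(-1)^{\delta}$ convention) behaves multiplicatively across the two distinct reduced-word galleries, so that Lemma \ref{lma:triangle lemma} applies at every intermediate wall and the two resulting expressions for the long-element stable envelope agree on the nose rather than up to a sign. This is the step I would expect to absorb most of the verification, though it is routine given the setup already in place.
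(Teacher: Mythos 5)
Your proposal is correct and follows the argument of \cite[\S 4.1.9]{MO19}, which is exactly what the paper cites; the paper itself gives no proof. The key inputs — the triangle/factorisation Lemma~\ref{lma:triangle lemma} for the rank-three torus $A$ associated to $\ff=\ff^{(1)}+\ff^{(2)}+\ff^{(3)}$, the observation that wall-crossing operators act only on the two tensor factors labelling the wall, the two reduced words for the longest element of $\FS_3$, and the multiplicativity of polarisations required to apply the triangle lemma — are all identified correctly, and the indicated bookkeeping of signs is indeed the only point requiring verification.
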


Let $\alpha\in \Char(A)$ be the weight whose zero locus is the wall separating the two chambers $\mathfrak{C}_{\pm}\subset \cochar(A)\otimes_{\ZZ}\RR$. It is the unique nontrivial tangent weight in the normal bundle of $\Nak_Q(\ff,\dd)^A\subset \Nak_Q(\ff,\dd)$, and as in \S \ref{sec: axiomatic definition stab} we filter $\NakMod^{T_{\ff'}}_{Q,\ff'}\otimes_{\HO_T} \NakMod^{T_{\ff''}}_{Q,\ff''}$ by degree in $\alpha$.  By \cite[\S 4.1.7]{MO19} the R-matrix can be expanded as a formal power series
\begin{equation}
    \label{eq:R-matrix expansion}
    R(\alpha)= 1+ \hbar \sum_{k\geq 0} {R_k}\alpha^{-k}
\end{equation}
with coefficients
\[
R_k \in  \End_{\HO_{T_{\ff}}}(\NakMod^{T_{\ff'}}_{Q,\ff'}\otimes_{\HO_T} \NakMod^{T_{\ff''}}_{Q,\ff''})
\]
of $\alpha$-degree zero.  Notice in particular that all the $R_k$ live in the non-localised endomorphism ring, and $R_1$ is uniquely determined by the expansion \eqref{eq:R-matrix expansion}.

The operator $\rmat:=R_1$ is called the classical r-matrix. It plays a crucial role in the theory of quantum groups and also in this paper. Plugging the expansion above into the Yang-Baxter equation \eqref{eq: YBE},
it follows that
\[
[\rmat^{(12)}, \rmat^{(13)}+ \rmat^{(23)}]=[\rmat^{(23)}, \rmat^{(12)}+ \rmat^{(13)}]=0.
\]
Setting $\bar\rmat^{(ij)}=\rmat^{(ij)}/{(a_i-a_j)}$, the previous equation can be equivalently rewritten as follows:
\[
[\bar\rmat^{(12)}, \bar\rmat^{(13)}]+[\bar\rmat^{(12)}, \bar\rmat^{(23)}]+[\bar\rmat^{(13)}, \bar\rmat^{(23)}]=0.
\]
This is the well-known classical Yang-Baxter equation with spectral parameters.


\subsection{FRT construction of the Maulik-Okounkov Yangian}

The FRT construction, named after Faddeev, Reshetikhin and Takhtadzhyan, is a general procedure that allows the reconstruction of a quasi-triangular Hopf algebra from the braidings of its category of representations or, equivalently, from its R-matrices. This procedure, which we now review, was exploited by Maulik and Okounkov to produce a Yangian $\Yang_Q^{\MO, T}$ for an arbitrary quiver $Q$ starting from the R-matrices defined in the previous section.  If a torus $T$ is fixed we will employ\footnote{This is not in violation of the convention regarding $T$ superscripts in \S \ref{conventions_sec}, since a priori it does \textit{not} make sense to choose $T=\{1\}$ in the construction of $\Yang_Q^{\MO,T}$.} the abbreviation $\Yang_Q^{\MO}\coloneqq \Yang_Q^{\MO,T}$.

Fix a quiver $Q$ and let $\delta_i\in \NN^{Q_0}$ be a framing vector such that $(\delta_i)_i=1$ and $(\delta_i)_j=0$ for $j\neq i$.  Since the torus $A_{\delta_i}\subset T_{\delta_i}$ acts trivially on $\Nak^{T}_{Q}(\delta_i,\dd)$, we have
\begin{equation}
\label{eq: noncanonical splitting}
    \NakMod^{T_{\delta_i}}_{Q,\delta_i} \cong \NakMod^{T}_{Q,\delta_i}[a].
\end{equation}
In other words, $\NakMod^{T_{\delta_i}}_{Q,\delta_i}$ is isomorphic to the polynomial ring in the variable $a$ with coefficients in $\NakMod^{T}_{Q,\delta_i}$. Here, $a$ is the generator of $\HO_{A_{\delta_i}}=\QQ[a]$.
Set
\[
\NakMod^{T,\vee}_{Q,\delta_i}=\bigoplus_{\dd\in-\BoN^{Q_0}}\Hom_{\HO_T}(\NakMod^T_{Q,\delta_i,-\dd}, \HO_T)
\]
to be the $\BoZ^{Q_0}$-graded dual $\HO_T$-module.  Since the cohomology $\NakMod^{T}_{Q,\delta_i}$ is a free $\HO_T$-module, its dual $\NakMod^{T,\vee}_{Q,\delta_i}$ is also free. Thus, the trace gives a perfect pairing of $\BoZ^{Q_0}$-graded $\HO_T$-modules
\[
\Tr: \NakMod^{T}_{Q,\delta_i}\otimes_{\HO_T} \NakMod^{T,\vee}_{Q,\delta_i}\to \HO_T.
\]
For any $i\in Q_0$, let $m_{i}(a)$ denote a polynomial in $a$ with values in $
\NakMod^{T}_{Q,\delta_i}\otimes_{\HO_T} \NakMod^{T,\vee}_{Q,\delta_i}$.

\begin{definition}\cite[\S 5.2.6]{MO19}
\label{def: MO Yangian}
The Maulik-Okounkov Yangian $\Yang_Q^{\MO}$ is the subalgebra
\[
\Yang_Q^{\MO}\subset \prod_{\ff\in\NN^{Q_0}}\End_{\HO_{T_\ff}}\left(\NakMod^{T_\ff}_{Q,\ff}\right)
\]
generated by the operators
\begin{equation}
\label{E_def}
\E(m_i(a)):=\frac{1}{\hbar} \Res_{a}\left(\Tr_{\delta_i}\left( (m_i(a)\otimes 1) \circ R_{\delta_i,\ff}(a) \right)\right)
\end{equation}
for all $i\in Q_0$ and $m_i(a)\in \NakMod^{T}_{Q,\delta_i}\otimes_{\HO_T} \NakMod^{T,\vee}_{Q,\delta_i}[a]$. 
\end{definition}
Here, $R_{\delta_i,\ff}(a)$ is the expansion in $a$ of the R-matrix defined in \S \ref{sect: R-matrices} and $\Tr_{\delta_i}$ denotes the partial trace over the auxiliary space $\NakMod^{T}_{Q,\delta_i}$. Notice that equation \eqref{eq:R-matrix expansion} implies that $\Res_{a}\left(\Tr\left( (m_i(a))\otimes 1) \circ R_{\delta_i,\ff}(a) \right)\right)$ is divisible by $\hbar$ and hence $\E(m_i(a))\lvert_{\NakMod^{T_\ff}_{Q,\ff}}$ is a well defined element of $\End_{\HO_{T_\ff}}\left(\NakMod^{T_\ff}_{Q,\ff}\right)$.

An important set of relations for the Yangian is directly inherited from the Yang-Baxter equation. Indeed, since the operator $\E$ is linear, we get a map
\[
\E: \text{Tensor Algebra}\left(\bigoplus_{i\in Q_0}\left( \NakMod^{T}_{Q,\delta_i}\otimes_{\HO_T} \NakMod^{T,\vee}_{Q,\delta_i}[a] \right)\right)\twoheadrightarrow \Yang_Q^{\MO} 
\]
factoring through the ideal generated by so-called  RTT=TTR relations
\[
\left(m_i(a_i)\otimes m_j(a_j)\right) \circ R_{\delta_i,\delta_j}= R_{\delta_i,\delta_j}\circ \left(m_j(a_j)\otimes m_i(a_i) \right).
\]
This follows at once by the definition and the Yang-Baxter equation \eqref{eq: YBE}.

\begin{remark}
\label{rem: larger framing elements in the yangian}
    The domain of the map $m(a)\mapsto \E(m(a))$ can be naturally extended by allowing endomorphims 
    \[
    m(a)\in \NakMod^{T}_{Q,\ff}\otimes_{\HO_T} \NakMod^{T,\vee}_{Q,\ff}[a] 
    \]
    for arbitrary framing dimensions $\ff\in \NN^{Q_0}$. The resulting operators $\E(m(a))$ are already contained in $\Yang_Q^{\MO}$ \cite[\S5.2.14]{MO19}, so adding them in the definition of $\Yang_Q^{\MO}$ would simply generate redundance. Nonetheless, considering these more general framings can be useful, cf. the definition of the operators $\psi(\beta)$ and $\psi(\lambda)$ after Proposition \ref{gen_by_taut}, and Remark \ref{redundancy_remark}.
\end{remark}

\begin{remark}
\label{rem: Yangians for different stability conditions}
    Different choices of generic stability condition $\zeta\in\BoN^{Q_0}\oplus \BoN\cong \BoN^{(Q_{\ff})_0}$ give rise to different Yangians. However, these algebras are canonically isomorphic. Indeed, consider two generic stability conditions $\zeta$ and $\zeta'$ and the associated Yangians $\Yang_Q^{\MO, \zeta}$ and $\Yang_Q^{\MO, \zeta'}$. Let $\Xi_{\ff,\dd}: \NakMod^{\zeta,  T_\ff}_{Q, \ff, \dd}\to \NakMod^{\zeta',  T_\ff}_{Q, \ff, \dd}$ be the canonical isomorphism obtained from \eqref{eq: canonical iso cohomology quiver varieties} by taking derived global sections. Set also $\Xi_{\ff}=\bigoplus_{\dd\in \NN^{Q^0}}\Xi_{\ff,\dd}$ and $\Xi=\prod_{\ff\in\NN^{Q_0}}\Xi_{\ff}$.  
    Then $\Xi$ induces the canonical isomorphism 
    \[
    \prod_{\ff\in\NN^{Q_0}}\End_{\HO_{T_\ff}}\left(\NakMod^{\zeta, T_\ff}_{Q,\ff}\right)\to \prod_{\ff\in\NN^{Q_0}}\End_{\HO_{T_\ff}}\left(\NakMod^{\zeta', T_\ff}_{Q,\ff}\right)\qquad \lambda \mapsto \Xi \circ \lambda  \circ \Xi^{-1},
    \] 
    which maps $\Yang_Q^{\MO, \zeta}$ isomorphically onto $\Yang_Q^{\MO, \zeta'}$. The last claim follows at once from Definition \ref{def: MO Yangian} and the compatibility of the stable envelopes with $\Xi_{\ff,\dd}$ \cite[\S4.10.5]{MO19}.
\end{remark}


\subsection{The Maulik-Okounkov Lie algebra}
\label{MO_LA_sec}

The Yangian $\Yang_Q^{\MO}$ admits a natural filtration by the $a$-degree, that is, by setting 
\begin{equation}
    \label{eq:filtration degree yangian}
    \deg(\E(m(a)))=\deg_a(m(a))
\end{equation}
on the generators. The Maulik-Okounkov Lie algebra $\fg^{\MO,T}_{Q}$ is defined as the degree zero piece of $\Yang_Q^{\MO} $, with Lie bracket given by the commutator. In other words, $\fg^{\MO,T}_{Q}\subset Y^{\MO}_Q$ is spanned by operators 
\[
\E(m_i(a))\in \prod_{\ff\in\NN^{Q_0}}\End_{\HO_{T_{\ff}}}\left(\NakMod^{T_{\ff}}_{Q,\ff}\right)
\]
such that $m_i(a)=m_i\in\NakMod^{T}_{Q,\delta_i}\otimes_{\HO_T} \NakMod^{T,\vee}_{Q,\delta_i}[a]$ is a constant polynomial. By the definition of $\E$, this is equivalent to considering the $\HO_{T}$-module spanned by all operators of the form  
\[
\Tr_{\delta_i}\left( (m_i\otimes 1) \circ \rmat_{\delta_i,\ff} \right)\in \End_{\HO_{T_{\ff}}}\left(\NakMod^{T_{\ff}}_{Q,\ff}\right),
\]
where $\rmat_{\delta_i,\ff}\in \End_{\HO_T}(\NakMod^{T}_{Q,\delta_i}\otimes_{\HO_T} \NakMod^{T_{\ff}}_{Q,\ff})$ is the classical r-matrix. Notice in particular that $\fg^{\MO,T}_{Q}$ is defined as a $\HO_{T}$-linear Lie algebra. In the next theorem we record the main properties of the Maulik-Okounkov Lie algebra.
\begin{theorem}\cite[\S 5.3]{MO19}
The Lie algebra $\fg^{\MO,T}_{Q}$ admits a triangular decomposition
\[
\fg^{\MO,T}_{Q}= \fn^{\MO,T,-}_{Q}\oplus \bar\fh^{\MO,T}_{Q}\oplus\fn^{\MO,T,+}_{Q}
\]
where $\bar\fh^{\MO,T}_{Q}$ is a maximal commutative subalgebra and
\[
\fn^{\MO,T,\pm}_{Q}=\bigoplus_{\dd\in \NN^{Q_0}\setminus\{\ul{0}\}}\fg^{\MO,T}_{Q,\pm \dd}.
\]
Each root space $\fg^{\MO,T}_{Q,\pm \dd}$ for $\dd\neq \ul 0$ is a projective $\HO_{T}$-module of finite rank spanned by elements $\xi\in \fg^{\MO,T}_{Q, \pm\dd}$ such that 
\[
\xi: \NakMod^{T_{\ff}}_{\ff, \bullet}\to \NakMod^{T_{\ff}}_{\ff, \bullet\pm \dd}.
\]
The root spaces satisfy $[\fg^{\MO,T}_{Q,\dd'}, \fg^{\MO,T}_{Q,\dd''}]\subseteq \fg^{\MO,T}_{Q,\dd'+\dd''}$. Moreover, there exists a bilinear invariant form $(\cdot,\cdot): \fg^{\MO,T}_{Q}\times \fg^{\MO,T}_{Q}\to \HO_{T}$ whose invariant tensor is the classical r-matrix $\rmat$.
\end{theorem}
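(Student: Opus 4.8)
\textbf{Strategy and the $\BoZ^{Q_0}$-grading.} The plan is to extract every assertion directly from the FRT construction of \S\ref{sect: R-matrices}--\S\ref{MO_LA_sec}, the only non-formal inputs being the triangularity of stable envelopes and the classical Yang--Baxter equation for $\rmat$ derived above. Since $\fg^{\MO,T}_Q=F_0\Yang^{\MO}_Q$ is the degree-zero piece of a multiplicatively filtered algebra, it is automatically a subalgebra of $\prod_{\ff}\End_{\HO_{T_\ff}}(\NakMod^{T_\ff}_{Q,\ff})$, hence a Lie algebra for the commutator, and the Jacobi identity needs no separate argument. Each module $\NakMod^{T_\ff}_{Q,\ff}=\bigoplus_{\dd\in\BoN^{Q_0}}\NakMod^{T_\ff}_{Q,\ff,\dd}$ is $\BoN^{Q_0}$-graded; I would define $\fg^{\MO,T}_{Q,\dd}\subseteq\fg^{\MO,T}_Q$ to be the subspace of operators raising this grading uniformly by $\dd$. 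Decomposing the classical r-matrix $\rmat=\sum_\gamma\rmat[\gamma]$ into its bidegree-$(\gamma,-\gamma)$ components (a locally finite decomposition, with $\gamma$ confined to the cone determined by the chamber orderings of \S\ref{section: Stable envelopes for quiver varieties}) and tracking degrees through the contraction $\Tr_{\delta_i}((m_i\otimes 1)\circ\rmat_{\delta_i,\ff})$ shows that $\fg^{\MO,T}_Q=\bigoplus_\dd\fg^{\MO,T}_{Q,\dd}$ and that $\fg^{\MO,T}_{Q,\dd}=0$ unless $\dd\in\BoN^{Q_0}$ or $-\BoN^{Q_0}$; this gives $\fn^{\MO,T,\pm}_Q=\bigoplus_{\dd>\ul 0}\fg^{\MO,T}_{Q,\pm\dd}$, the triangular decomposition, the compatibility $[\fg^{\MO,T}_{Q,\dd'},\fg^{\MO,T}_{Q,\dd''}]\subseteq\fg^{\MO,T}_{Q,\dd'+\dd''}$ (composition adds degrees), and the description $\xi\colon\NakMod^{T_\ff}_{Q,\ff,\bullet}\to\NakMod^{T_\ff}_{Q,\ff,\bullet\pm\dd}$.

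\textbf{The Cartan subalgebra.} For $\bar\fh^{\MO,T}_Q\coloneqq\fg^{\MO,T}_{Q,\ul 0}$ I would argue as follows. After the normalisation $R(\alpha)=1+\hbar\sum_{k\ge 0}R_k\alpha^{-k}$ and an analysis of the weight-zero geometry of $\Nak_Q(\ff,\dd)^A$ (whose fixed components are products $\prod_j\Nak_Q(\ff^{(j)},\dd^{(j)})$), the bidegree-$(\ul 0,\ul 0)$ block $\rmat[\ul 0]$ contracts to operators acting on each $\NakMod^{T_\ff}_{Q,\ff,\dd}$ by multiplication by tautological characteristic classes; these pairwise commute, so $\bar\fh^{\MO,T}_Q$ is commutative. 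Maximality then follows because for $\dd\neq\ul 0$ and $0\neq\xi\in\fg^{\MO,T}_{Q,\dd}$ one has $[h,\xi]=\lambda(\dd)\,\xi$ for the ``coordinate'' tautological element $h\in\bar\fh^{\MO,T}_Q$ dual to $\dd$ (exactly as in the generalised Kac--Moody formalism of \S\ref{GKM_sec}), which is nonzero.

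\textbf{Finiteness and projectivity of the root spaces.} Fix $\dd\neq\ul 0$. Then $\fg^{\MO,T}_{Q,\dd}$ is the image of the $\HO_T$-linear contraction map $m\mapsto\Tr_{\delta_i}((m\otimes 1)\circ\rmat_{\delta_i,\bullet})$ out of the finitely generated free module $\bigoplus_{i\in Q_0}\bigl(\NakMod^{T}_{Q,\delta_i}\otimes_{\HO_T}\NakMod^{T,\vee}_{Q,\delta_i}\bigr)_\dd$, hence finitely generated of finite rank. Projectivity is the substantive point: one uses that the stable envelopes $\Stab_{\mathfrak{C}_\pm}$ are split injections of free $\HO_{T_\ff}$-modules, so that the matrix-coefficient modules cutting out the root spaces are themselves free; equivalently, projectivity can be read off from the PBW isomorphism $\Sym_{\HO_T}(\fg^{\MO,T}_Q[u])\cong\Yang^{\MO}_Q$ together with freeness of the graded pieces of $\Yang^{\MO}_Q$ (over the polynomial ring $\HO_T$, finitely generated projective even means free).

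\textbf{The invariant form, and the expected obstacle.} Write $\rmat=\rmat[\ul 0]+\sum_{\dd>\ul 0}(\rmat[\dd]+\rmat[-\dd])$ inside the completion $\prod_\dd\fg^{\MO,T}_{Q,\dd}\otimes_{\HO_T}\fg^{\MO,T}_{Q,-\dd}$. Using that $\Stab_{\mathfrak{C}_\pm}$ become isomorphisms after localisation and the leading term of $R$ is $1$, each block $\rmat[\dd]$ is nondegenerate, hence is the canonical element of a perfect $\HO_T$-pairing $\fg^{\MO,T}_{Q,\dd}\times\fg^{\MO,T}_{Q,-\dd}\to\HO_T$; assembling these blocks produces the nondegenerate bilinear form $(\cdot,\cdot)\colon\fg^{\MO,T}_Q\times\fg^{\MO,T}_Q\to\HO_T$, whose invariant tensor is then $\rmat$ by construction. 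Ad-invariance $([\xi,a],b)+(a,[\xi,b])=0$ is the infinitesimal form of the $\mathrm{ad}$-invariance of $\rmat$, which follows from the classical Yang--Baxter equation $[\rmat^{(12)},\rmat^{(13)}+\rmat^{(23)}]=[\rmat^{(23)},\rmat^{(12)}+\rmat^{(13)}]=0$ proved above, together with $\rmat+\rmat^{21}$ being a Casimir (a consequence of the unitarity of the geometric R-matrix). I expect the real difficulty to be concentrated in the geometric inputs --- that $\bar\fh^{\MO,T}_Q$ is \emph{maximal} commutative, that the root spaces have the predicted finite rank and are projective, and the nondegeneracy of $\rmat$ --- each of which needs genuine control of stable envelopes for quiver varieties ($\HO_{T_\ff}$-freeness of equivariant cohomology, localisation isomorphisms, the behaviour of tautological classes) rather than formal manipulation; the grading, bracket closure, triangular decomposition and ad-invariance are comparatively formal consequences of the RTT relations and the classical Yang--Baxter equation.
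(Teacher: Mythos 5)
The paper does not prove this theorem: it is quoted verbatim from Maulik--Okounkov \cite[\S 5.3]{MO19} as background, and no proof is given in the present article. There is therefore no ``paper's own proof'' to compare against, and your sketch can only be assessed on its own terms as a reconstruction of the arguments in \cite{MO19}.

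As such a reconstruction, the broad shape is right: the $\BoZ^{Q_0}$-grading comes from uniform degree-shifting operators, closure of the bracket is immediate from additivity of degrees under composition, the triangularity of the stable envelope correspondences forces the off-diagonal blocks of $\rmat$ into the cone $\pm\BoN^{Q_0}$, and the form is read off from $\rmat$ by using that $\Stab_{\pm}$ become isomorphisms after localisation. Two steps, however, are asserted rather than argued, and one of them as written would not hold up. First, maximality of $\bar\fh^{\MO,T}_Q$ requires knowing that for every $\dd\neq\ul 0$ there is some $h\in\bar\fh^{\MO,T}_Q$ acting on $\fg^{\MO,T}_{Q,\dd}$ by a \emph{nonzero} scalar $\lambda(\dd)$; you gesture at this via ``the coordinate tautological element dual to $\dd$'' but never exhibit it. In \cite{MO19} this comes from an explicit computation of the diagonal block $\rmat[\ul 0]$ and is a genuine geometric fact, not a formal one. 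Second, your two arguments for projectivity of the root spaces are both problematic: the appeal to the PBW isomorphism is circular (that theorem, \cite[Thm.5.5.1]{MO19}, is proved \emph{after} the present statement and presupposes finiteness of the root spaces), and ``stable envelopes are split injections'' is not true --- they are injections of free modules of the same rank that become isomorphisms only after localisation, which does not give a splitting over $\HO_{T_\ff}$. Projectivity in \cite{MO19} is obtained by a finer argument involving the filtration by attracting strata and the support and degree properties of $\Sh{L}_{\mathfrak{C}}$, and cannot be shortcut in the way you suggest.
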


\begin{remark}
    As a $\HO_T$-module, the Lie algebra $\bar\fh^{\MO,T}_{Q}$ is isomorphic to $(\HO_T)^{\oplus\overrightarrow{Q}_0}$, where $\overrightarrow{Q}$ is the principally framed quiver associated to $Q$ introduced in \S \ref{GKM_thm_sec}. As a consequence, its rank is equal to $2|Q_0|$, which is twice as big as one might guess from the theory of generalised Kac-Moody Lie algebras in \S \ref{GKM_sec}. These extra dimensions are spanned by a rank $|Q_0|$ 
    subalgebra $\mathfrak{z}\subset \bar\fh^{\MO}_{Q}$ whose elements act on the modules $\NakMod^{T_{\ff}}_{Q,\ff}$ by multiplication by a linear function of the framing vector $\ff$. As a consequence, $\mathfrak{z}$ is central in $\fg^{\MO,T}_{Q}$. Theorem \ref{main_thm} will identify $\bar\fh^{\MO}_{Q}$ with the Abelian Lie algebra $(\fh^T_{\overrightarrow{Q}})^{\vee}$.
\end{remark}

The Maulik-Okounkov Lie algebra is the fundamental representation theoretic object underlying the Yangian $\Yang_Q^{\MO}$. Indeed, Maulik and Okounkov proved the following result.
\begin{proposition}\cite[Thm.5.5.1]{MO19}
\label{gen_by_taut}
    The Yangian $\Yang^{\MO}_Q$ is generated by the Lie algebra $\fg^{\MO,T}_{Q}$ and the operators of classical multiplication, i.e. the operators of multiplication by the Chern classes of the tautological bundles on $\Nak^{T}_Q(\ff,\dd)$. Moreover, we have a PBW-type isomorphism
    \[
    \gr\left(\Yang^{\MO}_Q\right)\cong  \Sym_{\HO_T}(\fg^{\MO,T}_Q\otimes \HO^*(\B \BoC^*,\BoQ))
    \]
    induced by the filtration \eqref{eq:filtration degree yangian}.
\end{proposition}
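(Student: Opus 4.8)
I would follow Maulik and Okounkov \cite[\S 5.5]{MO19}: first establish the generation statement by an induction on the $a$-degree filtration \eqref{eq:filtration degree yangian}, and then bootstrap to the PBW isomorphism using the coproduct on $\Yang^{\MO}_Q$ and the nondegeneracy of the classical $r$-matrix pairing.

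\textbf{Generation.} By construction $\Yang^{\MO}_Q$ is generated as an $\HO_T$-algebra by the operators $\E(m_i(a))$, and since $\E$ is $\HO_T$-linear in its argument it suffices to treat monomials $m_i(a)=\gamma\,a^{n}$ with $\gamma\in\NakMod^{T}_{Q,\delta_i}\otimes_{\HO_T}\NakMod^{T,\vee}_{Q,\delta_i}$. First I would check that every tautological multiplication operator lies in $\Yang^{\MO}_Q$: feeding a suitable \emph{diagonal} coefficient into the residue formula \eqref{E_def} and using the diagonal restriction axiom of Theorem \ref{thm: definition stab}(i) (namely that $\Sh{L}_{\mathfrak{C}_{\id}}\lvert_{F\times F}$ is $\pm\Eu(N^-_F)\cap[\Delta]$), one reads off multiplication by the Chern classes of the tautological bundle $\mathcal V_i$ as a specific $\E(m_i(a))$, whose filtration degree equals the Chern degree. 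Then I would induct on $n=\deg_a m_i(a)$. The case $n=0$ is, by definition, the statement that $\E(\gamma)\in\fg^{\MO,T}_Q$. For the inductive step I would produce, from the expansion \eqref{eq:R-matrix expansion} of $R_{\delta_i,\ff}(a)$ together with the factorisation of stable envelopes through a face (Lemma \ref{lma:triangle lemma}), a Drinfeld-style relation
\[
[\,\mathfrak{d}_i,\ \E(\gamma\,a^{n-1})\,]=c\cdot\E(\gamma\,a^{n})+(\text{terms of }a\text{-degree}<n),\qquad c\in\BoQ^{\times},
\]
where $\mathfrak{d}_i$ denotes multiplication by $c_1(\mathcal V_i)$; since $\mathfrak{d}_i\in\Yang^{\MO}_Q$ by the preceding remark and the lower-degree terms lie in the claimed subalgebra by induction, so does $\E(\gamma\,a^{n})$. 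This gives the generation statement.

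\textbf{PBW.} The $a$-degree filtration \eqref{eq:filtration degree yangian} together with generation produces, after fixing an $\HO_T$-basis of each root space of $\fg^{\MO,T}_Q$, a spanning set of $\gr(\Yang^{\MO}_Q)$ by ordered monomials in the symbols of the $\E(\gamma\,a^{n})$, $n\ge 0$; here one identifies matrix coefficients with elements of $\fg^{\MO,T}_Q$ via the invariant form whose tensor is $\rmat$ (from the structure theorem for $\fg^{\MO,T}_Q$ above) and uses that the commutator of two generators of $a$-degrees $n,m$ has $a$-degree at most $n+m-1$, so reordering only produces lower-degree corrections. This yields a surjection of $\HO_T$-modules
\[
\Psi\colon \Sym_{\HO_T}\!\big(\fg^{\MO,T}_Q\otimes\HO^*(\B \BoC^*,\BoQ)\big)\twoheadrightarrow\gr(\Yang^{\MO}_Q),\qquad \gamma\otimes u^{n}\mapsto \mathrm{symb}\big(\E(\gamma\,a^{n})\big),
\]
and it remains to show $\Psi$ is injective. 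For that I would invoke the coproduct on $\Yang^{\MO}_Q$ (geometrically, the comultiplication of stable envelopes/R-matrices): iterating it embeds $\Yang^{\MO}_Q$ compatibly into endomorphism algebras of tensor powers of the fundamental modules $\NakMod^{T}_{Q,\delta_i}[a]$, so any relation among the $\mathrm{symb}(\E(\gamma\,a^{n}))$ descends to the abstract Yangian built from the classical $r$-matrix $\rmat$ alone, where nondegeneracy of the pairing forces it to be trivial, as in Drinfeld's reconstruction of a Yangian from a nondegenerate classical $r$-matrix.

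\textbf{Main obstacle.} The delicate step is the injectivity of $\Psi$: generation is a fairly formal consequence of the residue presentation \eqref{E_def} and the triangle lemma, whereas the matching lower bound on $\gr(\Yang^{\MO}_Q)$ requires the full bialgebra structure on $\Yang^{\MO}_Q$ and, crucially, nondegeneracy of the classical $r$-matrix pairing on $\fg^{\MO,T}_Q$ — controlling the latter (in particular after the localisations of Remark \ref{loc_rem}) is precisely what couples the assertion to the simultaneous representation theory of $\fg^{\MO,T}_Q$ on all the modules $\NakMod^{T_{\ff}}_{Q,\ff}$.
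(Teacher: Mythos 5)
The paper does not prove this proposition: it is cited from \cite[Thm.5.5.1]{MO19} and used as a black box, so there is no in-paper argument to compare your sketch against — only the proof in the book you already reference.

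Your generation argument is a reasonable skeleton of what is done in \cite[\S 5.5]{MO19}: diagonal terms of \eqref{E_def} via Theorem \ref{thm: definition stab}(i) produce tautological multiplications, and a bootstrap on $a$-degree via commutators with Cartan-type elements handles the rest, modulo some care about when the constant $c$ in your Drinfeld-style relation vanishes (it is a Cartan pairing, so one needs to vary $i$ and use genericity).

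The PBW half as written has a real gap. You reduce injectivity of the symbol map to the claim that any relation among the $\mathrm{symb}(\E(\gamma a^n))$ ``descends to the abstract Yangian built from the classical $r$-matrix $\rmat$ alone, where nondegeneracy of the pairing forces it to be trivial, as in Drinfeld's reconstruction of a Yangian from a nondegenerate classical $r$-matrix.'' There is no universal Yangian attached to a nondegenerate classical $r$-matrix in the sense you would need: a classical $r$-matrix gives at best a Lie bialgebra, a quantization of which exists only by deep results that do not a priori produce $\Yang^{\MO}_Q$, and even for Drinfeld's own Yangians the PBW theorem is a genuinely independent and nontrivial statement rather than a formal consequence of nondegeneracy. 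What is actually required is a matching lower bound on each graded piece $\gr^n(\Yang^{\MO}_Q)$, and producing that bound requires geometric input — compatibility of the comultiplication with the $a$-filtration together with the representation-theoretic control of the modules $\NakMod^{T_\ff}_{Q,\ff}$ — which is precisely what the argument in \cite[\S 5.5]{MO19} supplies and what your sketch outsources. You correctly identify this as the delicate step, but ``nondegeneracy of $\rmat$ implies injectivity'' is a placeholder, not an argument, and it is exactly the part that cannot be elided.
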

Fix a dimension vector $\ff'\in\BoN^{Q_0}$.  There is a canonical isomorphism $\NakMod^T_{Q,\ff',\ul{0}}\cong \HO_T$ which we use to identify these two $\HO_T$-modules.  An element $\beta\in \NakMod^T_{Q,\ff',\dd}$ determines a unique $\HO_T$-linear homomorphism $\NakMod^T_{Q,\ff',\ul{0}}\rightarrow \NakMod^T_{Q,\ff',\dd}$ taking $1\mapsto \beta$.  Composing with the inclusion $\NakMod^T_{Q,\ff',\dd}\hookrightarrow \NakMod^T_{Q,\ff'}$ and the projection $\NakMod^T_{Q,\ff'}\rightarrow \NakMod^T_{Q,\ff',\ul{0}}$ we obtain an endomorphism $g_{\beta}\in  \End_{\HO_T}(\NakMod_{Q,\ff'})$.

Now fix a decomposition $\ff=\ff'+\ff''$.  We consider $T_{\ff''}^+\coloneqq A_{\triv}\times T_{\ff''}$, with $A_{\triv}$ as at the beginning of \S \ref{section: Stable envelopes for quiver varieties}, as a subtorus of $T_{\ff}$ in the natural way, via the natural embedding $T\times A_{\triv}\times A_{\ff''}\hookrightarrow T\times A_{\ff'}\times A_{\ff''}$ induced by the inclusion $A_{\triv}\subset A_{\ff'}$.  Working in $T^{+}_{\ff''}$ equivariant cohomology of $\Nak_Q(\ff,\dd)$ we define as in \S \ref{sect: R-matrices}
\[
R_{\ff',\ff''}:=\StabC{+}^{-1}\circ \StabC{-}\in \End_{\HO_T}(\NakMod^{T^+}_{Q,\ff'}\otimes_{\HO_T} \NakMod^{T_{\ff''}}_{Q,\ff''})\otimes_{\HO_{T_{\ff''}^+}} \Frac(\HO_{T_{\ff''}^+})
\]
We define 
\begin{equation}
\label{little_psi_def}
\psi(\beta):=\E(g_{\beta})
\end{equation}
as in \eqref{E_def}.  Note that if $\ff'=\delta_i$ for some $i\in Q_0$, and $\beta\in \NakMod^{T}_{Q,\ff',\dd}$, then by definition, $\psi(\beta)\in\Fg^{\MO,T}_{Q, \dd}$.
Given $\lambda\in (\NakMod^T_{Q,\ff,\dd})^{\vee}$ we define $\psi(\lambda)$ the same way.

By Remark \ref{rem: larger framing elements in the yangian}, the operators $\psi(\beta)$ and $\psi(\lambda)$ belong to $\Yang^{\MO}_Q$. They can be written more evocatively as follows. Let $\ket{\ff'}\in \NakMod^T_{Q,\ff',\ul{0}}$ be the vacuum, i.e. the class $1$ under the isomorphism $\NakMod^T_{Q,\ff',\ul{0}}\cong \HO_T$. Let also $\bra{\ff'}\in (\NakMod^T_{Q,\ff'})^\vee$ be its dual. Then for a given $\beta\in \NakMod^T_{\ff',\dd'}$, the class $\psi(\beta)$ is the raising operator:
\begin{equation}
    \label{eq:MO raising operator}
    \psi(\beta): \NakMod^{T_{\ff''}}_{\ff'',\dd''}\to  \NakMod^{T_{\ff''}}_{\ff'',\dd''+\dd'}
    \qquad 
    \alpha\mapsto 
    \left((\bra{\ff'}\otimes \id)\circ \rmat_{\ff',\ff''}\right)(\beta\otimes \alpha).
\end{equation}
Similarly, the operator $\psi(\lambda)$ associated to a class $\lambda\in (\NakMod^T_{Q,\ff',\dd'})^{\vee}$ is the lowering operator
\[
\psi(\lambda): \NakMod^{T_{\ff''}}_{\ff'',\dd''}\to  \NakMod^{T_{\ff''}}_{\ff'',\dd''-\dd'}
\qquad 
\alpha\mapsto 
\left((\lambda\otimes \id)\circ \rmat_{\ff',\ff''}\right)(\ket{\ff'}\otimes \alpha)
\]
and hence is zero if $\dd''<\dd'$. Notice in particular that the vacuum $\ket{\ff''}$ is a lowest weight vector, in the sense that it is killed by the subalgebra $\fn_Q^{\MO,T, -}$ and rescaled by $\bar\fh^{\MO,T}_Q$.  In Proposition \ref{MO_lowest_weights}, we will use the Beilinson--Bernstein--Deligne--Gabber decomposition theorem to construct non-vacuum lowest weight vectors from intersection cohomology.

The following proposition is very useful for beginning to get a handle on the Lie algebra $\fg^{\MO,T}_Q$.
\begin{proposition}\cite[Prop.5.3.4, Prop.5.3.8]{MO19}
\label{MO_span}
Fix $\dd'\in \BoN^{Q_0}$, and pick $\ff'=\delta_i$ with $i\in\supp(\dd')$.  Then the weight space $\fg^{\MO,T}_{Q,\dd'}$ is spanned by operators $\psi(\beta)$ for $\beta\in \NakMod^T_{Q,\ff',\dd'}$, and the weight space $\fg^{\MO,T}_{Q,-\dd'}$ is spanned by operators $\psi(\lambda)$ for $\lambda\in (\NakMod^T_{Q,\ff',\dd'})^{\vee}$.
\end{proposition}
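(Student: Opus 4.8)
\emph{Plan.} I would follow Maulik and Okounkov's argument, organised around three ingredients: the FRT presentation of $\Yang^{\MO}_Q$, the block-triangularity of stable envelopes, and the non-degenerate invariant form on $\fg^{\MO,T}_Q$ whose tensor is the classical r-matrix. First I would unwind what $\fg^{\MO,T}_{Q,\dd'}$ is: by its definition as the weight-$\dd'$ part of the degree-zero piece of $\Yang^{\MO}_Q$ for the filtration \eqref{eq:filtration degree yangian}, and since $\E$ is surjective, $\fg^{\MO,T}_{Q,\dd'}$ is the $\HO_T$-span of the operators $\E(m_j)$ with $m_j\in\NakMod^T_{Q,\delta_j}\otimes_{\HO_T}\NakMod^{T,\vee}_{Q,\delta_j}$ a \emph{constant} polynomial of $\BoZ^{Q_0}$-degree $\dd'$, as $j$ runs over $Q_0$. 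Identifying $\NakMod^T_{Q,\delta_j}\otimes_{\HO_T}\NakMod^{T,\vee}_{Q,\delta_j}\cong\End_{\HO_T}(\NakMod^T_{Q,\delta_j})$ (using freeness), writing $m_j$ as a sum of rank-one endomorphisms $\beta\otimes\lambda$ with $\beta\in\NakMod^T_{Q,\delta_j,\ee}$ and $\lambda\in(\NakMod^T_{Q,\delta_j,\ee-\dd'})^{\vee}$, and unwinding the partial trace together with $\Res_a R_{\delta_j,\ff}(a)=\hbar\,\rmat_{\delta_j,\ff}$ (from \eqref{eq:R-matrix expansion}), one finds that $\E(\beta\otimes\lambda)$ acts on $\NakMod^{T_\ff}_{Q,\ff}$ as the matrix coefficient $\alpha\mapsto(\lambda\otimes\id)\bigl(\rmat_{\delta_j,\ff}(\beta\otimes\alpha)\bigr)$. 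Comparing with \eqref{eq:MO raising operator}, the special case $\ee=\dd'$, where $\lambda$ is forced to be a multiple of the vacuum covector $\bra{\delta_j}$, is exactly $\psi(\beta)=\E(g_\beta)$. So the task is to show that every matrix coefficient $(\lambda\otimes\id)\circ\rmat_{\delta_j,\ff}(\beta\otimes-)$ lies in the $\HO_T$-span of the $\psi(\beta')$ attached to a single fixed $i\in\supp(\dd')$.

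Next I would eliminate the covector. By Theorem \ref{thm: definition stab}(i),(iii) and the normalisation of polarisations fixed in \S\ref{section: Stable envelopes for quiver varieties}, the stable envelopes $\Stab_{\mathfrak{C}_\pm}$ are block-triangular for the $\BoZ^{Q_0}$-weight filtration on the auxiliary module $\NakMod^{T_{\delta_j}}_{Q,\delta_j}$, so $\rmat=R_1$ strictly changes the $\delta_j$-weight; moreover $\NakMod^{T_{\delta_j}}_{Q,\delta_j}$ is a lowest-weight module whose lowest weight space is the line $\HO_T\ket{\delta_j}$, with $\ket{\delta_j}$ annihilated by $\fn^{\MO,T,-}_Q$. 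Feeding this into the RTT=TTR relations, which express the interaction of the operators $\E(\beta\otimes\lambda)$ with the two-fold r-matrices $\rmat_{\delta_j,\delta_k}$, an operator with $\lambda$ dual to a vector of weight $\ee-\dd'>\ul 0$ can be rewritten as a Lie bracket of operators of the same type whose covectors have strictly smaller weight; iterating brings $\lambda$ down to the lowest weight, i.e.\ into $\HO_T\bra{\delta_j}$, so that the operator becomes a $\psi(\beta)$ with $\beta\in\NakMod^T_{Q,\delta_j,\dd'}$. Since $\Nak_Q(\delta_j,\dd')$ is empty whenever $\dd'\neq\ul 0$ and $j\notin\supp(\dd')$ (the framing vertex generates nothing when $V_j=0$), only vertices $j\in\supp(\dd')$ contribute.

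Finally I would remove the dependence on the vertex and obtain the negative half, using the invariant form. By \cite[\S5.3]{MO19} there is a non-degenerate bilinear form $(\cdot,\cdot)\colon\fg^{\MO,T}_{Q,\dd'}\times\fg^{\MO,T}_{Q,-\dd'}\to\HO_T$ with invariant tensor $\rmat$, under which $(\psi(\beta),\xi)$ equals, up to a unit in $\HO_T$, the value on $\beta$ of the lowering operator $\xi|_{\NakMod^{T_{\delta_i}}_{Q,\delta_i}}\colon\NakMod^T_{Q,\delta_i,\dd'}\to\NakMod^T_{Q,\delta_i,\ul 0}=\HO_T$. Fix one $i\in\supp(\dd')$. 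If $\xi\in\fg^{\MO,T}_{Q,-\dd'}$ is orthogonal to every $\psi(\beta)$ with $\beta\in\NakMod^T_{Q,\delta_i,\dd'}$, then $\xi$ kills $\NakMod^T_{Q,\delta_i,\dd'}$; because $\ket{\delta_i}$ is lowest weight and every $\NakMod^{T_\ff}_{Q,\ff}$ is a lowest-weight module recovered, after localisation, from tensor products of fundamental modules via the stable envelopes and Lemma \ref{lma:triangle lemma} (on which $\xi$ acts through the primitive coproduct $\xi\otimes 1+1\otimes\xi$), this forces $\xi$ to act as zero on every $\NakMod^{T_\ff}_{Q,\ff}$, hence $\xi=0$. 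By non-degeneracy the $\psi(\beta)$, $\beta\in\NakMod^T_{Q,\delta_i,\dd'}$, therefore span $\fg^{\MO,T}_{Q,\dd'}$. Exchanging the roles of vectors and covectors throughout, or dualising under $(\cdot,\cdot)$, yields the statement for $\fg^{\MO,T}_{Q,-\dd'}$.

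The delicate point is the last step: showing that a \emph{single} fundamental module $\NakMod^{T_{\delta_i}}_{Q,\delta_i}$ with $i\in\supp(\dd')$ already detects all of $\fg^{\MO,T}_{Q,\pm\dd'}$ --- equivalently, that no nonzero root operator of weight $-\dd'$ can act as zero on it. This is where one genuinely needs the triangularity of $\Stab_{\mathfrak{C}_\pm}$ together with the behaviour of stable envelopes under restriction to $A$-fixed loci (Lemma \ref{lma:triangle lemma}) and the lowest-weight structure of the cohomology of Nakajima quiver varieties, as well as careful bookkeeping of the signs coming from the choice of polarisation. Everything else is formal manipulation inside the FRT construction.
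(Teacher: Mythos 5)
The paper does not give its own proof of this proposition: it is stated with a direct citation to Maulik--Okounkov's book \cite[Prop.5.3.4, Prop.5.3.8]{MO19}, and no argument is reproduced. So the only question is whether your reconstruction of the [MO19] argument is sound. You have correctly identified the right ingredients (the FRT presentation, triangularity of stable envelopes, the invariant form with tensor $\rmat$, lowest-weight structure), and your computation that $(\psi(\beta),\xi)=\bra{\delta_i}\,\xi\beta$ via $\rmat=\sum g_a\otimes g^a$ is the right reduction. However, there are two genuine gaps.

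First, your ``step 2'' (that the RTT=TTR relations allow one to lower the $\delta_j$-weight of the covector $\lambda$ inductively, bringing it to the vacuum covector) is asserted without a mechanism. The RTT relations are quadratic identities between generators of the Yangian; they do not visibly produce an identity expressing $\E(\beta\otimes\lambda)$, with $\lambda$ of positive weight, as a bracket of operators whose covectors have smaller weight. You would need to exhibit the actual rewriting, and I don't see how it goes. (In fact this step is logically redundant: if the non-degeneracy argument in your final paragraph worked, it would already prove spanning without step 2. So the whole weight now rests on the last step.)

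Second, the faithfulness step has a real hole. You correctly deduce that if $\xi\in\fg^{\MO,T}_{Q,-\dd'}$ is orthogonal to all $\psi(\beta)$, then $\xi$ annihilates the weight-$\dd'$ piece $\NakMod^T_{Q,\delta_i,\dd'}$. But to conclude $\xi=0$ you must show $\xi$ kills \emph{every} weight piece $\NakMod^T_{Q,\delta_i,\ee}$ for $\ee>\dd'$, and then every $\NakMod^{T_\ff}_{Q,\ff}$. Your argument via localised tensor decompositions and primitivity of the coproduct only controls how $\xi$ propagates from factors to tensor products; it says nothing about why vanishing on the single weight-$\dd'$ block of one factor should force vanishing on higher weight blocks of that same factor. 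Concretely, $\xi\otimes 1+1\otimes\xi$ applied to $u\otimes v$ with $u$ of weight $\ee>\dd'$ produces $(\xi u)\otimes v+\dots$, and you have given no reason why $\xi u=0$. This is exactly the hard content of \cite[Prop.5.3.4, Prop.5.3.8]{MO19} that your plan acknowledges but does not supply: one needs an inductive argument using cyclicity of the fundamental module over the positive half, together with the block-triangular structure of $\rmat$, to bootstrap vanishing on the $\dd'$-block up to vanishing everywhere. Without it, the spanning claim is not established.

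A minor further caveat: the ``orthogonal complement'' argument is being run over $\HO_T$, not a field, so one should first work over $\Frac(\HO_T)$ and then argue separately that the $\HO_T$-span is saturated; the paper's statement is over $\HO_T$, so this needs attention.
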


By construction, for every $\ff\in\BoN^{Q_0}$ the $\BoN^{Q_0}$-graded vector space $\NakMod^{T^0_{\ff}}_{Q,\ff}$ is a module for $\fg^{\MO,T}_{Q}$.  In light of Proposition \ref{lowest_weight_prop}, the following proposition will be useful in comparing $\fg^{\MO,T}_Q$ and $\fg^T_{\Pi_Q}$.

\begin{proposition}
\label{MO_lowest_weights}
Pick $\ff''\in\BoN^{Q_0}$ and let $(\dd,1)\in\Phi^+_{Q_{\ff''}}$, so that there is a canonical inclusion
\[
\imath\colon \IC^*(\CM^{T^0_{\ff''}}_{(\dd,1)}(\Pi_{Q_{\ff''}}),\BoQ^{\vir})\hookrightarrow \NakMod^{T^0_{\ff''}}_{Q,\ff''}
\]
arising from the inclusion of the summand with full support in the direct image $(\pi_{\ff''})_*\BoQ^{\vir}_{\Nak^{T^0_{\ff''}}_{Q}(\ff'',\dd)}$.  This is an inclusion of a space of lowest weight vectors for the action of $\fg^{\MO,T}_Q$; elements of $\fn^{\MO,T,-}_Q$ annihilate this subspace.
\end{proposition}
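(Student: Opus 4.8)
The plan is to prove the apparently stronger assertion that the whole of $\fn^{\MO,T,-}_Q$ annihilates $\Image(\imath)$. By Proposition~\ref{MO_span}, applied with some $i\in\supp(\dd')$ and with $\ff''$ as the ambient framing vector, the root space $\fg^{\MO,T}_{Q,-\dd'}$ is spanned by the lowering operators $\psi(\lambda)$ for $\lambda\in(\NakMod^T_{Q,\delta_i,\dd'})^{\vee}$, so it suffices to prove $\psi(\lambda)(\Image(\imath))=0$ for every nonzero $\dd'\in\BoN^{Q_0}$ and every such $\lambda$. Recall from \eqref{eq:MO raising operator} and the lowering analogue immediately following it that $\psi(\lambda)$ is the convolution with the classical r-matrix $\rmat_{\delta_i,\ff''}$, which by \eqref{eq:R-matrix expansion} is the first-order coefficient of the R-matrix $R_{\delta_i,\ff''}=\StabC{+}^{-1}\circ\StabC{-}$ associated to the decomposition $\ff=\delta_i+\ff''$; thus $\psi(\lambda)$ is assembled from the stable envelopes of $\Nak_Q(\delta_i+\ff'',\dd)$ for the one-dimensional torus attached to that decomposition.

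The first ingredient I would use is the adjointness built into Maulik--Okounkov's formalism: by Lemma~\ref{lma:triangle lemma}, the diagonal restriction in Theorem~\ref{thm: definition stab}(i), and Remark~\ref{rem: polarisation opposite chamber}, the stable envelopes $\StabC{+}$ and $\StabC{-}$ for the two opposite chambers are, up to the prescribed Euler-class factors, mutually adjoint for the intersection pairings on $\HO^*(\Nak_Q(\delta_i+\ff'',\dd),\BoQ^{\vir})$ and on the cohomology of the relevant fixed locus. Feeding this into the definition of $\psi$ identifies $\psi(\lambda)$, up to an explicit nonzero factor, with the adjoint, with respect to the intersection pairings on the $\NakMod^T_{Q,\ff'',\bullet}$, of the raising operator $\psi(\beta_\lambda)\colon \NakMod^T_{Q,\ff'',\dd-\dd'}\to\NakMod^T_{Q,\ff'',\dd}$, where $\beta_\lambda\in\NakMod^T_{Q,\delta_i,\dd'}$ is dual to $\lambda$; this is the representation-theoretic counterpart of the fact that the classical r-matrix is the invariant tensor of the bilinear form on $\fg^{\MO,T}_Q$. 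Hence the desired vanishing is equivalent to the orthogonality of $\Image(\imath)$ to $\Image(\psi(\beta))$ for every $\beta\in\NakMod^T_{Q,\delta_i,\dd'}$ and every nonzero $\dd'$.

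To finish I would invoke the affinisation $\pi_{\ff''}\colon\Nak_Q(\ff'',\dd)\to\CM_{(\dd,1)}(\Pi_{Q_{\ff''}})$ and the decomposition theorem. The perverse sheaf $\ICS(\CM_{(\dd,1)}(\Pi_{Q_{\ff''}}))$ is simple with full support, occurs with multiplicity one in ${}^{\fp'}\!\CH^0\big((\pi_{\ff''})_*\BoQ^{\vir}\big)$, and admits no nonzero morphism to any other simple summand of $(\pi_{\ff''})_*\BoQ^{\vir}$; since the latter complex is Verdier self-dual, it follows that the subspace $\Image(\imath)=\IC^*(\CM_{(\dd,1)}(\Pi_{Q_{\ff''}}),\BoQ^{\vir})$ is Poincar\'e-orthogonal to the hypercohomology of every other summand. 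It therefore suffices to show that $\Image(\psi(\beta))$ has no nonzero component on the full-support summand $\Image(\imath)$. This is the heart of the argument and the step I expect to be the main obstacle. The plan here is to use the deformation $\widetilde\Nak_Q(\ff'',\dd)\to\AA^{Q_0}$ of \S\ref{section: Deformation of quiver varieties} together with the specialisation description of the r-matrix --- and Proposition~\ref{prop: usual resolution vs stacky resol}, which on the complement of the walls identifies the quiver varieties with the stacks $\FNak_{Q,t}(\ff'',\dd)$ and thereby lets one compute the r-matrix inside the relative cohomological Hall algebra --- to realise $\psi(\beta)$ as a correspondence parametrising inclusions of $\Pi_{Q_{\ff''}}$-modules that enlarge the dimension vector at vertex $i$ by $\dd'\neq\ul 0$, and then, by an argument modelled on Proposition~\ref{coarse_perv}, to track the effect of this weight-raising operator on the perverse filtration of $\HO^*(\Nak_Q(\ff'',\dd),\BoQ^{\vir})$ attached to $\pi_{\ff''}$ and conclude that it misses the full-support summand --- exactly as, in the BPS-module structure of Corollary~\ref{Nak_BPS_comp} and Proposition~\ref{lowest_weight_prop}, the lowest weight vectors $\IC^*(\CM_{(\dd,1)}(\Pi_{Q_{\ff''}}))$ do not lie in the image of any raising operator of positive weight. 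Granting this, the two preceding paragraphs yield $\psi(\lambda)(\Image(\imath))=0$ for all lowering generators, so $\Image(\imath)$ is a lowest weight space for $\fg^{\MO,T}_Q$, as claimed.
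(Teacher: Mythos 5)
The core of the paper's proof is a direct support argument applied to the \emph{lowering} operator $\psi(\lambda)$: by Proposition~\ref{MO_span} it suffices to treat these, then by \cite[Prop.4.8.2]{MO19} the operator $\psi(\lambda)$ is induced by a Steinberg correspondence over the affinization $X=\CM_{(\dd,1,1)}(\overline{Q_{\ff',\ff''}})/T^0_{\ff''}$, so by \cite[Lem.8.6.1]{chriss2009representation} it upgrades to a morphism of semisimple complexes $\tilde f\colon (\pi_1)_*\BoQ^{\vir}_{Y_1}\to(\pi_2)_*\BoQ^{\vir}_{Y_2}$, and since the source has a unique simple summand supported on $\Image(l^{(2)})$ (the one defining $\imath$) while the target has none, that component of $\tilde f$ is zero. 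You take a genuinely different route: you pass to the adjoint raising operator $\psi(\beta_\lambda)$ via the invariant bilinear form, and reduce to showing $\Image(\psi(\beta))$ is Poincar\'e-orthogonal to $\Image(\imath)$, using Verdier self-duality of $(\pi_{\ff''})_*\BoQ^{\vir}$ to set up the orthogonality of the full-support summand with the others.

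The orthogonality claim is sound, but the step you yourself flag as the obstacle --- showing $\psi(\beta)$ has no component on the full-support summand --- is left as a plan, and the proposed mechanism is not the right one. An argument modelled on Proposition~\ref{coarse_perv} tracks the perverse \emph{degree}, not the \emph{support}; a raising operator could well preserve the zeroth perverse piece while still hitting the full-support IC, so filtration-tracking alone cannot rule this out. What is actually needed, and what the paper supplies, is that the r-matrix operator is a genuine Steinberg correspondence over the coarse moduli space (\cite[Prop.4.8.2]{MO19}); this, via \cite[Lem.8.6.1]{chriss2009representation}, promotes the operator to a sheaf-level morphism to which the clean decomposition-theorem support argument applies. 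You gesture at realising $\psi(\beta)$ as a correspondence via specialisation, but you neither cite nor reconstruct the ingredient that makes it a Steinberg correspondence compatible with $\pi_{\ff''}$, so there is a real gap. It is also worth noting that your adjointness detour is unnecessary: applying the same Steinberg/support argument directly to $\psi(\lambda)$, as the paper does, gives the vanishing $\psi(\lambda)\circ\imath=0$ without having to juggle the bilinear form and its normalisations.
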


\begin{proof}
Fix $\dd'\in\BoN^{Q_0}$ and pick $\ff'=\delta_i$ with $i\in\supp(\dd')$.  We write $\dd=\dd'+\dd''$.  We embed $Q_{\ff'}$ and $Q_{\ff''}$ inside $Q_{\ff',\ff''}$ as in \S \ref{quiver_sec}.  We consider points of $\Nak^{T}_Q(\ff',\dd')$ as representations of $\overline{Q_{\ff',\ff''}}$ with dimension vector $(\dd',1,0)$, and points of $\Nak^{T^0_{\ff''}}_Q(\dd'',\ff'')$ as representations with dimension vector $(\dd'',0,1)$.  Set $X=\CM_{(\dd,1,1)}(\overline{Q_{\ff',\ff''}})/T_{\ff''}^0$.  Then we define morphisms 
\begin{align*}
\pi_1\colon &Y_1\coloneqq \Nak^{T}_Q(\ff',0)\times_{\B T} \Nak^{T_{\ff''}^0}_Q(\ff'',\dd)\rightarrow X\\
\pi_2\colon &Y_2\coloneqq \Nak^{T}_Q(\ff',\dd')\times_{\B T} \Nak^{T_{\ff''}^0}_Q(\ff'',\dd'')\rightarrow X
\end{align*}
sending a pair of modules to their direct sum.  Now let $\gamma\in \fn^{\MO,T,-}_Q$. By Proposition \ref{MO_span} we may write $\gamma=\psi(\lambda)$ for some $\lambda \in (\NakMod_{Q,\ff',\dd'}^T)^{\vee}$.  By \cite[Prop.4.8.2]{MO19} the operator $\psi(\lambda)$ is induced by a Steinberg correspondence: convolution with a cycle in $\HO^{\BoMo}(Y_1\times_X Y_2,\BoQ)$ gives the required operation
\[
f\colon \HO^{\BoMo}(Y_1,\BoQ^{\vir})\rightarrow \HO^{\BoMo}(Y_2,\BoQ^{\vir}).
\]
Since $\pi_1$ and $\pi_2$ are proper morphisms from smooth varieties, \cite[Lem.8.6.1]{chriss2009representation} implies that $f$ is induced by applying the derived global sections functor to a morphism
\[
\tilde{f}\colon (\pi_{1})_*\BoQ_{Y_1}^{\vir}\rightarrow (\pi_{2})_*\BoQ_{Y_2}^{\vir}.
\]
Let $l^{(2)}\colon \CM_{(\dd'',1)}^{T_{\ff''}^0}(\Pi_{Q_{\ff''}})\hookrightarrow X$ be the extension by zero morphism.  Then the domain of $\tilde{f}$ contains a unique summand with support $\Image(l^{(2)})$, while the target contains no summands with support $\Image(l^{(2)})$.  The inclusion of the cohomology of this summand is precisely the inclusion $\imath$, and we conclude that $f\circ \imath=0$.  Therefore $\gamma$ annihilates the image of $\imath$.  The proposition follows.
\end{proof}

\section{Nonabelian stable envelopes and comparison statements}
\label{NaSE_sec}

\subsection{Nonabelian stable envelopes}
\label{sec: nonabelian stab}
Any quiver variety $\Nak_Q(\ff,\dd)$ can be seen as a locally closed substack of $\BoA_{(\dd,1)}(\overline{Q_{\ff}})/\Gl_{\dd}$. More generally, we can consider the diagram
\begin{equation}
\label{eq: diagram nonabelian stab}
    \begin{tikzcd}
    \Nak_Q(\ff,\dd)\arrow[r,hookrightarrow] & \FNak_Q(\ff,\dd) \arrow[r,hookrightarrow]\arrow[d] & \BoA_{(\dd,1)}(\overline{Q_{\ff}})/\Gl_{\dd} \arrow[d]\\
    & \FM_{(\dd,1)}(\Pi_{Q_{\ff}}) \arrow[r, hookrightarrow] &\FM_{(\dd,1)}(\overline{Q_{\ff}})
\end{tikzcd}
\end{equation}
where the top horizontal arrows are, respectively, open and closed embeddings and the vertical arrows are obtained by taking quotients by $\Gl_{(\ul{0},1)}\cong \BoC^*$. Throughout this section we assume that $\Nak_Q(\ff,\dd)$ is non-empty.  Moreover, we consider the action of $T_{\ff}:=T\times A_{\ff}$ on the spaces in the top row of the diagram above and of $T^0_{\ff}:= T\times (A_{\ff}/\Gl_{(\ul{0},1)})$ on those in the bottom row. To then define the stack $\FM^{T^0_{\ff}}_{(\dd,1)}(\Pi_Q)$ as a global quotient stack as in \S \ref{moduli_stacks_sec} we choose a section of $T_{\ff}\rightarrow T^0_{\ff}$.  The definition of the stack is obviously independent of the choice of section.

The nonabelian stable envelope is a distinguished map
\begin{equation}
    \label{nonabelian stab}
    \overline \Psi_{\ff,\dd}: \HO^{\BoMo}(\Nak^{T_{\ff}}_{Q}(\ff,\dd), \BoQ)\to \HO^{\BoMo}(\FM^{T^0_\ff}_{(\dd,1)}(\overline{Q_{\ff}}), \BoQ)
\end{equation}
preserving certain counts in the context of Okounkov's theory of quasimaps to quiver varieties \cite{okounkov2017lectures}.  As shown by Aganagic and Okounkov in \cite{aganagic2017quasimap}, nonabelian stable envelopes can be reduced to the ``Abelian'' case. i.e. to the standard stable envelopes considered in this article. We now review their construction.  To simplify the exposition, we assume that the stability condition entering in the definition of the quiver variety is $\zeta=\zeta_{\mathrm{deg}}^+$, defined in \S \ref{sec: Nakajima quiver varieties}. For the general case, see \cite[\S 2.1.3]{aganagic2017quasimap}.

If $\dd=0$, then $\Nak^{T_{\ff}}_{Q}(\ff,\dd)=\FM^{T^0_\ff}_{(\dd,1)} (\overline{Q_{\ff}})= \B T_{\ff}$ and we set $\overline \Psi_{\ff,\dd}=\id$. If instead $\dd\neq0$, set $\hat\ff:=\ff+\dd$ and consider the action of the torus $A_{\hat\ff}$ on the quiver variety $\Nak_{Q}(\hat\ff,\dd)$. Following \S \ref{section: Stable envelopes for quiver varieties}, let $A\subset A_{\hat\ff}$ be the two-dimensional\footnote{If $\dd\neq0$ the framing vector $\ff$ cannot be trivial, otherwise the quiver variety $\Nak_Q(\ff,\dd)$ is empty, contradicting our assumption.} torus associated to the splitting $\hat\ff=\ff+\dd$.
The varieties 
\begin{align*}
    \Nak_{Q}(\ff,\dd) & =\Nak_{Q}(\ff,\dd)\times \Nak_{Q}(\dd,\ul{0})\\
    \Nak_{Q}(\dd,\dd) & =\Nak_{Q}(\ff,\ul{0})\times \Nak_{Q}(\dd,\dd)
\end{align*}
can be seen as two fixed components for the $A$-action on $\Nak_{Q}(\ff+\dd,\dd)$. This action admits two chambers; between these, let $\Ch_+$ be the one such that $\Nak_{Q}(\dd,\dd)<_{\Ch_+} \Nak_{Q}(\ff,\dd)$. Let $\Sh{L}_{\Ch_{+}}$ be the Lagrangian cycle associated to this data by Theorem \ref{thm: definition stab}. 
Let also $\Gl'_{\dd}$ be a copy of $\Gl_{\dd}$ and consider the action of the group $A_{\ff}\times \Gl'_{\dd}$ on the Nakajima variety $\Nak_{Q}(\ff+\dd,\dd)$, with $\Gl'_{\dd}$ acting in the natural way on the framing. 
The class $\Sh{L}_{\mathfrak{C}_{+}}$ is invariant under the action of $\Gl'_{\dd}$, so it defines a $\HO_{\Gl'_{\dd}\times T_{\ff}}$-linear map
\[
\Stab_{+}:\HO^*_{T_{\ff}\times \Gl'_{\dd}}(\Nak_{Q}(\ff,\dd), \BoQ^{\vir})\to \HO^*_{T_{\ff}\times \Gl'_{\dd}}(\Nak_{Q}(\ff+\dd,\dd), \BoQ^{\vir}).
\]
Moreover, since $\Gl'_{\dd}$  acts trivially on $\Nak_{Q}(\ff,\dd)$, the map above descends to a $\HO_{T_{\ff}}$-linear map
\begin{equation}
\label{stab part of nonabelian stable envelope}
    \Stab'_{+}:\HO^*_{T_{\ff}}(\Nak_{Q}(\ff,\dd), \BoQ^{\vir})\to \HO^*_{T_{\ff}\times \Gl'_{\dd}}(\Nak_{Q}(\ff+\dd,\dd), \BoQ^{\vir}).
\end{equation}
By definition \cite{aganagic2017quasimap}, the nonabelian stable envelope $\eqref{nonabelian stab}$ is the composition of \eqref{stab part of nonabelian stable envelope} with the pull back with respect to an open inclusion
\[
\BoA_{(\dd,1)}(\overline{Q_{\ff}})/\Gl_{\dd}\hookrightarrow\Nak_{Q}(\ff+\dd,\dd)/\Gl'_{\dd}
\]
that we now define.
Consider the prequotient $(\mu^{\zeta}_{\ff+\dd,\dd})^{-1}(0)$. It can be visualised vertex-wise as follows:
\[
\begin{tikzcd}
\CC^{\dd_i}\arrow[d, shift left=.75ex, "k_{i}"]\arrow[drr, shift left=.75ex, "\chi_i"] & & \\
\CC^{\ff_i}\arrow[u, shift left=.75ex, "j_{i}"] &  & \CC^{\dd_i}.\arrow[ull, shift left=.75ex, "\phi_i"]
\end{tikzcd}
\]
Here, we represent the framing arrows $\lbrace r_{i,m}\rbrace_{m=1,\dots, \ff_i+\dd_i}\in \overline{Q_{\ff}}$ as a map $\CC^{\ff_i}\oplus \CC^{\dd_i}\to \CC^{\dd_i}$ rather than as $\ff_i+\dd_i$ maps from the one dimensional space $\CC$ placed at the vertex $\infty\in (\overline{Q_{\ff}})_0$ to $\CC^{\dd_i}$. Similarly, the dual framing arrows $\lbrace r^*_{i,m}\rbrace_{m=1,\dots, \ff_i+\dd_i}\in \overline{Q_{\ff}}$ are represented as a map $\CC^{\dd_i}\to\CC^{\ff_i}\oplus \CC^{\dd_i}$. The upshot of this notation is that the action of the group $\Gl_{\dd}\times \Gl'_{\dd}\times T_{\ff}$ on the framing arrows is apparent; we let $V'=\CC^{\dd}$ denote the domain of $\phi$ and let $V=\CC^{\dd}$ denote the target, and let $\Gl_{\dd}$ and $\Gl'_{\dd}$ act by change of basis on $V$ and $V'$ respectively.

Let $(\mu^{\iso}_{\ff+\dd,\dd})^{-1}(0)\subset (\mu_{\ff+\dd,\dd})^{-1}(0)$ be the subscheme consisting of representations such that $\phi_i$ is an isomorphism for all $i\in Q_0$. Because of our choice of stability condition $\zeta$, all these representations are stable, hence $(\mu^{\iso}_{\ff+\dd,\dd})^{-1}(0)\subset(\mu^{\zeta}_{\ff+\dd,\dd})^{-1}(0)$.
Now consider the morphism
\[
\BoA_{(\dd,1)}(\overline{Q_{\ff}})\times \Hom(\CC^{\dd},\CC^{\dd})^{\iso}_{\dd}\to (\mu^{\iso}_{\ff+\dd,\dd})^{-1}(0)
\]
sending a representation $(x,x^*, j,k, \phi)$ to $(x,x^*, j,k, \phi, \chi=-\phi^{-1}\cdot \mu_{\ff,\dd}(x,x^*, j,k))$. Note that this map is well defined, because our choice of $\chi$ forces the moment map condition on the target, and it is an isomorphism, with inverse given by the projection $(x,x^*, j,k, \phi, \chi)\mapsto (x,x^*, j,k, \phi)$. Moreover, it is equivariant with respect to $\Gl_{\dd}\times \Gl'_{\dd}\times T_{\ff}$, with $\Gl_{\dd}$ acting by change of basis on both terms of the fiber product and $\Gl'_{\dd}$ acting on the trivial $\Gl'_{\dd}$-torsor $\Hom(V',V)^{\iso}_{\dd}$ from the right.  Hence, taking quotients by $\Gl_{\dd}\times \Gl'_{\dd}$, we get an isomorphism 
\[
\BoA_{(\dd,1)}(\overline{Q_{\ff}})/\Gl_{\dd} \xrightarrow[]{\cong} \Nak^{\iso}_{Q}(\ff+\dd,\dd)/\Gl'_{\dd}
\]
and hence an open immersion 
\begin{equation}
    \label{pull part of nonabelian stable envelope}
    \iota: \BoA_{(\dd,1)}(\overline{Q_{\ff}})/\Gl_{\dd}\hookrightarrow \Nak_{Q}(\ff+\dd,\dd)/\Gl'_{\dd}.
\end{equation}
\begin{definition}\cite[Def.1]{aganagic2017quasimap}
\label{definition: nonabelian stab}
    The nonabelian stable envelope \eqref{nonabelian stab} is the composition $\overline \Psi_{\ff,\dd} := \iota^*\circ \Stab'_{+}$.
\end{definition}
The following result follows from the support property of the stable envelope $\Stab_{+}$.

\begin{proposition}\cite[Prop.1]{aganagic2017quasimap}
\label{proposition support property and normalization of NS}
    Let $\eta: \Nak^{T_{\ff}}_{Q}(\ff,\dd)\hookrightarrow \FM^{T^0_{\ff}}_{(\dd,1)}(\overline{Q_{\ff}})$ be the inclusion induced by diagram \eqref{eq: diagram nonabelian stab}.  The nonabelian stable envelope $\overline \Psi_{\ff,\dd}$ is supported on $\FM^{T^0_{\ff}}_{(\dd,1)}(\Pi_{Q_{\ff}})\subset \FM^{T^0_{\ff}}_{(\dd,1)}(\overline{Q_{\ff}})$. Moreover, we have
    \begin{equation}
    \label{eq: diagonal condition nonabelian stab}
    \eta^*\circ \overline \Psi_{\ff,\dd}= \Eu(\hbar\otimes \Fg_{\dd}) \cap 
    \end{equation}
where $\Fg_{\dd}$ is the tautological bundle on $\Nak_{Q}(\ff,\dd)$ associated to the Lie algebra of $\Gl_{\dd}$.
\end{proposition}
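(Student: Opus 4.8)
The plan is to obtain both assertions by feeding the two defining axioms (i) and (ii) of Theorem~\ref{thm: definition stab} for the abelian stable envelope $\Stab_+$ --- the one attached in \S\ref{sec: nonabelian stab} to the two-dimensional torus $A$ acting on $\Nak_Q(\hat\ff,\dd)$, $\hat\ff=\ff+\dd$, and to the chamber $\mathfrak{C}_+$ characterised by $\Nak_Q(\dd,\dd)<_{\mathfrak{C}_+}\Nak_Q(\ff,\dd)$ --- through the open immersion $\iota$ of \eqref{pull part of nonabelian stable envelope} and the explicit chart isomorphism $\BoA_{(\dd,1)}(\overline{Q_\ff})/\Gl_\dd\cong\Nak^{\iso}_Q(\hat\ff,\dd)/\Gl'_\dd$ constructed just above Definition~\ref{definition: nonabelian stab}. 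Write $F_0=\Nak_Q(\ff,\dd)\times\Nak_Q(\dd,\ul 0)\cong\Nak_Q(\ff,\dd)$ for the $A$-fixed component carrying the incoming class. A preliminary bookkeeping step records that, since $\Gl'_\dd$ acts freely on $\Nak^{\iso}_Q(\hat\ff,\dd)$, restriction along $\iota$ identifies $\Gl'_\dd$-equivariant Borel--Moore homology of the chart with ordinary Borel--Moore homology of $\BoA_{(\dd,1)}(\overline{Q_\ff})/\Gl_\dd$, so that $\overline\Psi_{\ff,\dd}=\iota^\ast\circ\Stab'_+$ genuinely lands in $\HO^{\BoMo}(\FM^{T^0_\ff}_{(\dd,1)}(\overline{Q_\ff}))$ once the residual $\BoC^*$-quotient and the tori $T^0_\ff\subset T_\ff$ are matched as in \S\ref{sec: nonabelian stab}.

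For the support statement, I would invoke axiom~(ii): the Lagrangian cycle $\Sh{L}_{\mathfrak{C}_+}$ is supported on $\bigsqcup_F F\times\Att{C_+}^f(F)$, so $\Stab'_+(\gamma)$ is represented, for every $\gamma$, by a cycle supported on the iterated attracting set $\Att{C_+}^f(F_0)\subset\Nak_Q(\hat\ff,\dd)$. It then suffices to show that this iterated attracting set, intersected with the open chart $\Nak^{\iso}_Q(\hat\ff,\dd)$ and expressed in the vertex-wise coordinates $(x,x^*,j,k,\phi,\chi)$ of \S\ref{sec: nonabelian stab} --- with $\phi$ invertible and $\chi=-\phi^{-1}\mu_{\ff,\dd}(x,x^*,j,k)$ forced by the moment-map equation --- lies in the locus $\{\chi=0\}$, equivalently $\{\mu_{\ff,\dd}(x,x^*,j,k)=0\}$. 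Under the chart isomorphism this locus is precisely the preimage of $\FM_{(\dd,1)}(\Pi_{Q_\ff})\subset\FM_{(\dd,1)}(\overline{Q_\ff})$, so pushing forward to $\FM^{T^0_\ff}_{(\dd,1)}(\overline{Q_\ff})$ shows $\overline\Psi_{\ff,\dd}$ is supported there.

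For the normalisation, the idea is to use axiom~(i): the diagonal restriction $\Sh{L}_{\mathfrak{C}_+}|_{F_0\times F_0}$ is, according to the polarisation fixed in \S\ref{section: Stable envelopes for quiver varieties} (together with Remark~\ref{rem: polarisation opposite chamber} if $\mathfrak{C}_+$ is the opposite chamber), $\pm\Eu$ of a half of the normal bundle $N_{F_0}$ capped with $[\Delta]$, so $\Stab'_+(\gamma)|_{F_0}$ is $\pm\gamma$ times that Euler class. Under $\iota^{-1}$ the substack $\eta(\Nak^{T_\ff}_Q(\ff,\dd))$ is exactly the copy of $\Nak_Q(\ff,\dd)$ inside the chart cut out by $\phi=\mathrm{id}$, $\chi=0$ --- equivalently, the one obtained by using $\phi$ to identify the source $V'$ of the new $\dd$-framing with the tautological rank-$\dd$ bundle, killing $\Gl'_\dd$ in the process. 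A direct bundle computation then identifies the relevant half of $N_{F_0}$, restricted along this copy, with $\hbar\otimes\Fg_\dd$: the two halves of $N_{F_0}$ are exchanged up to a twist by $\hbar$ by the weight-$\hbar$ symplectic form, and after the $\Gl'_\dd$-reduction at $\phi=\mathrm{id}$ the new $\dd$-framing directions become the endomorphism bundle $\Fg_\dd$ of the tautological bundle; the polarisation convention of \S\ref{section: Stable envelopes for quiver varieties} is chosen exactly so that the accompanying sign is trivial (note $\delta=\dd\cdotsh\ul 0=0$ for the two-block gauge decomposition $(\dd,\ul 0)$). This gives $\eta^\ast\circ\overline\Psi_{\ff,\dd}=\Eu(\hbar\otimes\Fg_\dd)\cap(-)$.

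The hard part will be the inclusion $\Att{C_+}^f(F_0)\cap\Nak^{\iso}_Q(\hat\ff,\dd)\subseteq\{\chi=0\}$ (and its converse, if one wants the sharp description of the support). Establishing it requires analysing the limits $\lim_{t\to0}\sigma(t)\cdot(-)$ for $\sigma\in\mathfrak{C}_+$ directly in the chart coordinates --- in particular pinning down, via the $A$-weights of the framing arrows and the relation $A\subset\ker\hbar$, that $\chi$ really is a repelling direction for $\mathfrak{C}_+$ --- together with the partial order on $A$-fixed components recalled in \S\ref{section: Stable envelopes for quiver varieties} (for which $F_\eta>_{\mathfrak{C}_\pm}F_{\eta'}$ forces $\pm(\eta-\eta')>0$, with the converse potentially failing) to control which components lie below $F_0$ and how their attracting sets meet the chart; the remaining weight and gauge-quotient bookkeeping relating $\Nak_Q(\hat\ff,\dd)/\Gl'_\dd$ to $\FM^{T^0_\ff}_{(\dd,1)}(\overline{Q_\ff})$ is routine. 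All of this is carried out in \cite[Prop.~1]{aganagic2017quasimap}, whose argument I would follow.
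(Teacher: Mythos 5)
Your sketch follows the argument of \cite[Prop.~1]{aganagic2017quasimap}, which is precisely what the paper does: the proposition is stated as a citation and the paper's only comment before it is that it follows from the support property of $\Stab_+$. Your outline of that argument---axiom (ii) of Theorem \ref{thm: definition stab} plus chamber analysis in the $\phi$-chart for the support, axiom (i) transported along the section $\{\phi=\mathrm{id},\chi=0\}$ of the attracting bundle over $F_0$ for the normalisation, with the sign killed because $\delta=\dd\cdotsh\underline{0}=0$---is sound, and you honestly flag and defer the real technical work (the inclusion $\Att{C_+}^f(F_0)\cap\Nak^{\iso}\subseteq\{\chi=0\}$) to the reference.

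It is worth knowing, though, that the paper also records a second, self-contained derivation in the remark following Corollary \ref{corollary normalization nonabelian stable envelope}, via the specialisation construction of \S\ref{more_SE_sec}. There the nonabelian stable envelope is encoded by the correspondence $[\NS]=\lim_{t\to0}[\Delta_t]$, and Proposition \ref{prop:nonabelian stable envelope as a correspondence} shows that $[\NS]$ is supported on (and proper over) $\FM^{T^0_\ff}_{(\dd,1)}(\Pi_{Q_\ff})$, giving the factorisation $\overline\Psi_{\ff,\dd}=j_*\circ\Psi_{\ff,\dd}$ through the closed embedding $j:\FM^{T^0_\ff}_{(\dd,1)}(\Pi_{Q_\ff})\hookrightarrow\FM^{T^0_\ff}_{(\dd,1)}(\overline{Q_\ff})$; the support property is then automatic, with no chamber or attracting-set analysis. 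Writing $\eta=j\circ i$ and using $i^*\circ\Psi_{\ff,\dd}=\id$ (Corollary \ref{corollary normalization nonabelian stable envelope}), the normalisation becomes $\eta^*\circ\overline\Psi_{\ff,\dd}=i^*j^*j_*\Psi_{\ff,\dd}$; on the smooth open locus $i(\Nak^{T_\ff}_Q(\ff,\dd))$ the moment map cutting out $\FM(\Pi_{Q_\ff})$ inside $\FM(\overline{Q_\ff})$ is a submersion, so $j$ is a regular embedding there with normal bundle $\hbar\otimes\Fg_\dd$, and the self-intersection formula gives $\eta^*\circ\overline\Psi_{\ff,\dd}=\Eu(\hbar\otimes\Fg_\dd)\cap(-)$. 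This route trades the ``hard part'' you identify for a single normal-bundle computation and avoids the polarisation sign-tracking entirely; it is also the formulation the paper actually leans on for the subsequent compatibility statements with the CoHA product (Proposition \ref{mult_compat}) and coproduct.
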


Equation \eqref{eq: diagonal condition nonabelian stab} can be interpreted as the nonabelian analogue of (i) in Theorem \ref{thm: definition stab}.

\subsection{Nonabelian stable envelopes via specialisation}
\label{more_SE_sec}
Motivated by the previous proposition and by the definition of the stable envelope as a correspondence in Borel--Moore homology, we now reinterpret the nonabelian stable envelope $\overline\Psi_{\ff,\dd}$ as the morphism induced by a correspondence 
\[
[\NS]\in \HO^{\BoMo}(\Nak^{T_{\ff}}_{Q}(\ff,\dd)\times_{\B T_{\ff}} \FM^{T^0_{\ff}}_{(\dd,1)}(\Pi_{Q_{\ff}}), \BoQ^{\vir}).
\]
As in the Abelian case reviewed in \S \ref{sec: Stable envelopes via specialisation}, we obtain this correspondence as the specialisation of a simpler class defined on the affine fibers of the universal deformation $\mu: \widetilde\Nak_{Q}(\ff,\dd)\to \AA^{Q_0}$ of the quiver variety $\Nak_{Q}(\ff,\dd)$.
Consider the stacky deformation $\widetilde\FNak_Q(\ff,\dd)\to \AA^{Q_0}$ introduced at the end of \S \ref{section: Deformation of quiver varieties} and the fiber product 
\[
\widetilde\Nak_{Q}(\ff,\dd)\times_{\AA^{Q_0}} \widetilde\FNak_Q(\ff,\dd)\to \AA^{Q_0}.
\]
By Proposition \ref{prop: usual resolution vs stacky resol}, we have a canonical isomorphism $\widetilde\Nak_{Q}(\ff,\dd)|_{U}\cong \widetilde\FNak_Q(\ff,\dd)|_{U}$ and hence, taking quotients by $T_{\ff}$, a flat family of diagonals  
\begin{equation}
    \label{eq:diagonal specializing to NS}
    \Delta_t\subset \Nak^{T_{\ff}}_{Q,t}(\ff,\dd)\times_{\B T_{\ff}}\FM^{\T^0_{\ff}}_{(\dd,1),t}(\Pi_{Q_{\ff}})
\end{equation}
over $t\in U$. 
\begin{definition}
\label{def: NS class}
     The nonabelian stable envelope class $[\NS]$ is the specialisation of $\Delta_t$ to the central fiber $t=0$. In other words, we define $[\NS]=\lim_{t\to 0}\; [\Delta_{t}]$. 
\end{definition}
By definition, $[\NS]$ is a class in 
\[
\HO^{\BoMo}_{T_{\ff}}(\Nak_{Q}(\ff,\dd)\times\FNak_Q(\ff,\dd), \BoQ^{\vir})=\HO^{\BoMo}(\Nak^{T_{\ff}}_{Q}(\ff,\dd)\times_{\B T_{\ff}} \FM^{T^0_{\ff}}_{(\dd,1)}(\Pi_{Q_{\ff}}), \BoQ^{\vir}).
\]
We now state the main result of this subsection.  Consider the diagram
\[
\begin{tikzcd}
    & \Nak^{T_\ff}_{Q}(\ff,\dd)\times_{\B T_{\ff}}\FM^{T^0_\ff}_{(\dd,1)}(\Pi_{Q_{\ff}})  \arrow[d, "\id\times  j"]\arrow[r, " p_2"] \arrow[dl, swap, " p_1"] &  \FM^{T^0_\ff}_{(\dd,1)}(\Pi_{Q_{\ff}}) \arrow[d, "j"]\\
   \Nak^{T_\ff}_{Q}(\ff,\dd) & \Nak^{T_\ff}_{Q}(\ff,\dd)\times_{\B T_{\ff}} \FM^{T^0_\ff}_{(\dd,1)}(\overline{Q_{\ff}}) \arrow[r, "\bar p_2"] \arrow[l, swap, " \bar p_1"] &\FM^{T^0_\ff}_{(\dd,1)}(\overline{Q_{\ff}}).
\end{tikzcd}
\]

\begin{proposition}
\label{prop:nonabelian stable envelope as a correspondence}
The class $[\NS]$ is proper over $\FM^{T^0_\ff}_{(\dd,1)}(\Pi_{Q_{\ff}})\subseteq \FM^{T^0_\ff}_{(\dd,1)}(\overline{Q_{\ff}})$. Moreover, the induced map 
\[
\Psi_{\ff,\dd} : \HO^{\BoMo}(\Nak^{T_\ff}_{Q}(\ff,\dd), \BoQ^{\vir})\to \HO^{\BoMo}(\FM^{T^0_\ff}_{(\dd,1)}(\Pi_{Q_{\ff}}), \BoQ^{\vir})\qquad   \Psi_{\ff,\dd}(\alpha)=(p_2)_{\ast}\left( [\NS]\cap \bar p_1^*(\alpha) \right)
\]
provides the factorization $\overline \Psi_{\ff,\dd}=j_{*}\circ \Psi_{\ff,\dd}$. In other words, the correspondence $[\NS]$ induces the nonabelian stable envelope map of Definition \ref{definition: nonabelian stab}.
\end{proposition}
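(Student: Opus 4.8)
The plan is to prove the statement at the level of Borel--Moore cycles over the generic locus $U\subset\AA^{Q_0}$ and then transport it to the central fibre via functoriality of specialisation. Recall from Definition \ref{definition: nonabelian stab} that $\overline\Psi_{\ff,\dd}=\iota^*\circ\Stab'_+$, where $\Stab'_+$ is the $\Gl'_\dd$-equivariant descent of convolution with $\Sh{L}_{\Ch_+}$, restricted to the fixed component $\Nak_Q(\ff,\dd)=\Nak_Q(\ff,\dd)\times\Nak_Q(\dd,\ul 0)$ of $\Nak_Q(\hat\ff,\dd)^A$, and that by Proposition \ref{prop: stab via specialzation} the class $\Sh{L}_{\Ch_+}$ equals $\lim_{t\to 0}[\Att{C_+}(\Nak_{Q,t}(\hat\ff,\dd)^A)]$, the attracting set being closed over $U$ because the fibres there are affine. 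All the operations appearing in $\overline\Psi_{\ff,\dd}$ and in $\Psi_{\ff,\dd}$ --- the open restriction $\iota^*$, the $\Gl'_\dd$-descent, the restriction to a fixed component, the Gysin pullback $\bar p_1^*$ and the proper pushforward $(p_2)_*$ --- commute with $\lim_{t\to 0}$ (see \cite[\S 2.6]{chriss2009representation} and \cite[\S 3.7]{MO19}). So the first step is to reduce the whole proposition to a comparison of the two families of cycles over $U$.

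The second step is that family-level comparison. Over $U$ every $\zeta^+$-semistable representation is stable with free $\Gl_\dd$-action, so by Proposition \ref{prop: usual resolution vs stacky resol} we have $\widetilde\FNak_Q(\ff,\dd)|_U\cong\widetilde\Nak_Q(\ff,\dd)|_U$, and the deformation base of $\Nak_Q(\hat\ff,\dd)$ is again $\AA^{Q_0}$ since it depends only on the $Q$-part $\dd$ of the dimension vector. The isomorphism $\Nak^{\iso}_Q(\hat\ff,\dd)\cong\BoA_{(\dd,1)}(\overline{Q_\ff})\times\Hom(V',V)^{\iso}$ underlying $\iota$ uses only invertibility of $\phi$ and the formula $\chi=-\phi^{-1}\mu_{\ff,\dd}(x,x^*,j,k)$; deforming merely replaces the moment-map value $0$ by $\rho_t$, so the same formulas go through verbatim and $\iota$ upgrades to an open immersion of $\AA^{Q_0}$-families over $U$. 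On the $\iso$-locus the attracting flow of $\Ch_+$ towards the bottom component $\Nak_Q(\ff,\dd)$ can be absorbed into the $\Gl'_\dd$-action --- this is exactly the abelianisation argument in the proof of \cite[Prop.1]{aganagic2017quasimap}, whose verification is manifestly independent of $t\in U$ --- so the $\Gl'_\dd$-descent of $\Att{C_+}(\Nak_{Q,t}(\hat\ff,\dd)^A)|_{\Nak_Q(\ff,\dd)\times(\cdot)}$, pulled back along $\iota$, is precisely the diagonal $\Delta_t$ of $\Nak_{Q,t}(\ff,\dd)\times_{\B T_\ff}\FM_{(\dd,1),t}(\Pi^t_{Q_\ff})$ from \eqref{eq:diagonal specializing to NS}.

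The third step is to feed this back through $\lim_{t\to 0}$. By Definition \ref{def: NS class} the specialisation of $\{[\Delta_t]\}_{t\in U}$ is $[\NS]$, while by Step~1 the specialisation of the other side is $\iota^*$ of the $\Gl'_\dd$-descent of $\Sh{L}_{\Ch_+}|_{\Nak_Q(\ff,\dd)\times(\cdot)}$; hence convolution with $[\NS]$ computes $\iota^*\circ\Stab'_+$ after the extra pushforward along $j$, that is $\overline\Psi_{\ff,\dd}=j_*\circ\Psi_{\ff,\dd}$. For the support and properness claims: $\widetilde\FNak_Q(\ff,\dd)$ is a closed substack of $\BoA_{(\dd,1)}(\overline{Q_\ff})/\Gl_\dd\times\AA^{Q_0}$ with fibre $\FM_{(\dd,1),t}(\Pi^t_{Q_\ff})$, and closed support is preserved under specialisation, so $[\NS]$ is supported on $\Nak_Q(\ff,\dd)\times_{\B T_\ff}\FM_{(\dd,1)}(\Pi_{Q_\ff})$, which is what makes $\Psi_{\ff,\dd}$ well defined and is compatible with Proposition \ref{proposition support property and normalization of NS}; moreover $\Att{C_+}(\widetilde\Nak_{Q,U}(\hat\ff,\dd)^A)$ is closed, hence proper over the deformation, properness descends to the central fibre by Theorem \ref{thm: definition stab}, and restricting along the open $\iota$ and dividing by $\Gl'_\dd$ preserves properness over the image, which by the support property lies in $\FM^{T^0_\ff}_{(\dd,1)}(\Pi_{Q_\ff})$.

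The main obstacle is Step~2: promoting the Aganagic--Okounkov abelianisation identity --- originally a statement about cycles on the quiver varieties --- to an identity of cycles in the deformation families over $U$, compatibly with the $\Gl'_\dd$-descent and the chain of identifications $\Nak^{\iso}_{Q,t}(\hat\ff,\dd)/\Gl'_\dd\cong\BoA_{(\dd,1)}(\overline{Q_\ff})/\Gl_\dd\cong\Nak_{Q,t}(\ff,\dd)$. Keeping track of which fixed component of $\Nak_Q(\hat\ff,\dd)^A$ one restricts to, and verifying that on the $\iso$-locus the attracting flow degenerates exactly to the graph of the projection forgetting $(\phi,\chi)$, is the technical heart; the rest is formal functoriality of specialisation, proper pushforward and refined Gysin maps, for which \cite{chriss2009representation,MO19} suffice.
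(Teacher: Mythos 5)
Your overall strategy matches the paper's: realise $\iota^*$ as convolution with a proper diagonal correspondence $\Delta^{\iota}$, express $[\Stab_+]$ and $[\Delta^{\iota}]$ as specialisations of families over $U$, invoke compatibility of specialisation with convolution (\cite[Prop.7.2.23]{chriss2009representation}), and then verify the needed cycle identity over $U$ where every representation is stable and $\Gl_{\dd}$ acts freely. The properness discussion and the observation that $\iota$ upgrades verbatim to an open immersion of $\AA^{Q_0}$-families are also correct.

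However, the step you flag as the ``technical heart'' is genuinely a gap, and your attribution of it to the ``abelianisation argument in the proof of \cite[Prop.1]{aganagic2017quasimap}'' is misplaced. That result establishes the support property and the diagonal-restriction formula \eqref{eq: diagonal condition nonabelian stab} on the central fibre via the axioms of Theorem \ref{thm: definition stab}; it is \emph{deduced} in this paper from the present proposition (see the remark after Corollary \ref{corollary normalization nonabelian stable envelope}), not a tool for proving it. Citing it here risks circularity. What is actually needed is the explicit identification
\[
(\id\times\iota_t)^{-1}\!\left(\Att{C_+}^t/\Gl'_{\dd}\right)=\Delta_t
\]
for $t\in U$. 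The paper proves this by parametrising $\Att{+}^t$ as the image of the map $p_1\times k$ from $((\mu_{\ff,\dd})^{-1}(t)\times\Hom(V',V))/\Gl_{\dd}$, using a dimension count together with irreducibility of $\Att{+}^t$ to get equality of the closed subscheme and the attracting set, and then tracing the $\Gl'_{\dd}$-quotient and restriction to the $\iso$-locus through a chain of Cartesian squares to land on the graph of the projection forgetting $(\phi,\chi)$. Your phrase ``the attracting flow can be absorbed into the $\Gl'_{\dd}$-action'' is the right heuristic, but the dimension count and the explicit map $k$ (uniquely determined by commutativity of the relevant triangle) are what actually clinch the identity, and they do not appear in your outline. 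Filling that in would complete the argument; without it, the proposal is an outline rather than a proof. As a minor point, your appeal to Theorem \ref{thm: definition stab} for properness is backwards: properness of $[\Stab_+]$ is an axiom built into the stable envelope and is established via the specialisation construction of Proposition \ref{prop: stab via specialzation}, not a consequence of the characterising theorem.
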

\begin{remark}
    Notice that the map $\bar p_1$ is smooth, hence the pullback along $\bar p_1$ is well defined. Moreover, the intersection product $[\NS]\cap \bar p_1^*(\alpha)$ takes place in the smooth ambient space $\Nak^{T_\ff}_{Q}(\ff,\dd)\times_{\B T_{\ff}}\FM^{T^0_\ff}_{(\dd,1)}(\overline{Q_{\ff}})$ and, because of the support of $[\NS]$, gives a class in the Borel--Moore homology of
    \[
    \Nak^{T_\ff}_{Q}(\ff,\dd)\times_{\B T_{\ff}} \FM^{T^0_\ff}_{(\dd,1)}(\Pi_{Q_{\ff}})\xhookrightarrow{\id\times j} \Nak^{T_\ff}_{Q}(\ff,\dd)\times_{\B T_{\ff}}\FM^{T^0_\ff}_{(\dd,1)}(\overline{Q_{\ff}}).
    \]
    Hence, the pushforward $(p_2)_*([\NS]\cap \bar p_1^*(\alpha))$ is well defined too.
\end{remark}

\begin{proof}
Recall Definition \ref{definition: nonabelian stab}. The pullback $\iota^*$ induced by the open immersion \eqref{pull part of nonabelian stable envelope} can be seen as the map induced by the correspondence $\Delta^{\iota}$ associated to the diagonal
\[
\Delta\subset \FM^{T^0_{\ff}}_{(\dd,1)}(\overline{Q_{\ff}})\times_{T_{\ff}} \FM^{T^0_{\ff}}_{(\dd,1)}(\overline{Q_{\ff}})\xrightarrow{\iota\times \id} \Nak^{T_{\ff}\times \Gl_{\dd}'}_{Q}(\ff+\dd,\dd)\times_{T_{\ff}} \FM^{T^0_{\ff}}_{(\dd,1)}(\overline{Q_{\ff}}).
\]
Notice that $\Delta^{\iota}$ is proper over $\FM_{(\dd,1)}(\overline{Q_{\ff}})$, so it really gives a well defined map via convolution in Borel--Moore homology. As a result, the nonabelian stable envelope $\overline \Psi_{\ff,\dd}=\iota^*\circ \Stab'_{+}$ is the map associated to the convolution 
\begin{equation}
\label{nonabelian stab as a convolution}
    [\Delta^{\iota}]\ast [\Stab'_{+}]\in  \HO^{\BoMo}(\Nak^{T_{\ff}}_{Q}(\ff,\dd)\times_{\B T_{\ff}} \FM^{T^0_{\ff}}_{(\dd,1)}(\overline{Q_{\ff}}), \BoQ^{\vir}).
\end{equation} 
This convolution product is well defined because the class $[\Delta^{\iota}]$ is proper over $\FM^{T^0_{\ff}}_{(\dd,1)}(\overline{Q_{\ff}})$. Moreover, by Proposition \ref{prop: stab via specialzation} $[\Stab_+]$ is proper over $\Nak^{T_{\ff}}_{Q}(\ff+\dd,\dd)$, hence the cycle \eqref{nonabelian stab as a convolution} is proper over $\FM^{T^0_{\ff}}_{(\dd,1)}(\overline{Q_{\ff}})$.
Consequently, to prove the proposition it suffices to show that the cycle \eqref{nonabelian stab as a convolution} coincides with $[\NS]=\lim_{t\to 0}\; [\Delta_t]$.  Recall that the cycle $[\Stab_{+}]$ is the specialisation to the $t=0$ fiber of a family of fundamental classes $[\Att{+}^t]$, where $t\in U\subset\AA^{Q_0}$ and 
\[
\Att{+}^t\subset \Nak_{Q,t}(\ff,\dd)\times \Nak_{Q,t}(\ff+\dd,\dd)
\]
is the attracting set of the $A\times \Gl_{\dd}'$-fixed component $ \Nak_{Q,t}(\ff,\dd)= \Nak_{Q, t}(\ff,\dd)\times \Nak_{Q,t}(\dd,\ul{0})$ in $ \Nak_{Q,t}(\ff+\dd,\dd)$. Hence, we have 
\[
[\Stab'_{+}]=\lim_{t\to 0}\;[\Att{+}^t/\Gl'_{\dd}]\in \HO^{\BoMo}(\Nak^{T_{\ff}}_{Q}(\ff,\dd)\times_{\B T_{\ff}} \Nak^{T_\ff\times G'_\dd}_Q(\ff+\dd,\dd), \BoQ^{\vir} ).
\]
The class $\Delta^{\iota}$ can be interpreted as a specialisation to the $t=0$ fiber too. Indeed, let $\widetilde\FM_{(\dd,1)}(\overline{Q_{\ff}})$ be the trivial fibration  $\FM_{(\dd,1)}(\overline{Q_{\ff}})\times \AA^{Q_0} \to \AA^{Q_0}$.  Inspecting the definition of $\iota$ from \S \ref{sec: nonabelian stab}, one can check that it naturally extends fiberwise over $\AA^{Q_0}$ to an open immersion
\[
\tilde\iota: \widetilde{\FM}^{T^0_{\ff}}_{(\dd,1)}(\overline{Q_{\ff}})\hookrightarrow \widetilde{\Nak}^{T_{\ff}\times \Gl'_{\dd}}_{Q}(\ff+\dd,\dd).
\]
This immersion gives a flat family of cycles $[\Delta^\iota_t]$ specialising for $t=0$ to $[\Delta^\iota]$. Since the convolution product is compatible with specialisation, see \cite[Prop.7.2.23]{chriss2009representation}, we deduce that $\overline \Psi_{\ff,\dd}$ is induced by the class
\[
\left(\lim_{t\to 0}\;[\Delta^\iota_t]\right) \ast \left(\lim_{t\to 0}\;  [\Att{+}^t/\Gl'_{\dd}]\right) =\lim_{t\to 0}\left( [\Delta^\iota_t]\ast  [\Att{C_+}^t/\Gl'_{\dd}]\right).
\]
However, the class $[\Delta_{\iota,t}]\ast [\Att{+}^t/\Gl'_{\dd}]$ is equal to the restriction of the fundamental class of
\[
\Att{C_+}^t/ \Gl'_{\dd}\subset  \Nak_{Q,t}(\ff,\dd)\times \Nak_{Q,t}(\ff+\dd,\dd)/\Gl'_{\dd}
\]
to the open substack 
\[
\id \times \iota_t: \Nak_{Q,t}(\ff,\dd)\times \BoA_{(\dd,1)}(\overline{Q_{\ff}})/\Gl_{\dd}\hookrightarrow \Nak_{Q,t}(\ff,\dd)\times \Nak_{Q,t}(\ff+\dd,\dd)/\Gl'_{\dd},
\]
where $\iota_t$ is the fiber of $\tilde \iota$ over $t\in U\subset\AA^{Q_0}$. Hence, to prove the proposition, it suffices to show that $(\id\times \iota_t)^{-1}\Att{C_+}^t/ \Gl'_{\dd}= \Delta_t$ for all $t\in U$. First, recall from \S \ref{section: Deformation of quiver varieties} that for all $t\in U$ every representation is semistable, hence $\Nak_{Q,t}(\ff,\dd)=\FNak_{Q,t}(\ff,\dd)\subset\BoA_{(\dd,1)}(\overline{Q_{\ff}})/\Gl_{\dd}$ for all $\dd,\ff$.   Now let $V=\CC^{\dd}$ (resp. $V'=\CC^{\dd}$) be the fundamental representation of $\Gl_{\dd}$ (resp. of $\Gl_{\dd}'$) and consider the following maps
\[
\begin{tikzcd}
    ((\mu_{\ff,\dd})^{-1}(t)\times\Hom(V',V))/\Gl_{\dd}\arrow[d, "p_1"] \arrow[rd, dashed, hookrightarrow, "k"] \arrow[r, hookrightarrow] & (\BoA_{(\dd,1)}(\overline{Q_{\ff}})\times T^*\Hom(V',V))/\Gl_{\dd}\\
    \Nak_{Q,t}(\ff,\dd)=(\mu_{\ff,\dd})^{-1}(t)/\Gl_{\dd}\arrow[r, hookrightarrow] & \Nak_{Q,t}(\ff+\dd,\dd).\arrow[u, hookrightarrow]
\end{tikzcd}
\]
It is easy to check that there is a unique choice of $k$ making the upper triangle commute.  For all $t\in U$, the image of $p_1\times k$ is a closed subscheme contained in the attracting set $\Att{+}^t$. Since they have the same dimension and $\Att{+}^t$ is irreducible, it follows that $\Att{+}^t$ coincides with the image of $p_1\times k$. Hence, $\Att{+}^t/\Gl'_{\dd}$ is the image of $p_1\times (q\circ k)$, where 
\[
q: \Nak_{Q,t}(\ff+\dd,\dd)\to \Nak_{Q,t}(\ff+\dd,\dd)/\Gl'_{\dd}
\]
is the quotient map. 
To conclude the proof, consider the commutative diagram
\[
\begin{tikzcd}
((\mu_{\ff,\dd})^{-1}(t)\times\Hom(V',V))/\Gl_{\dd}  \arrow[r, hookrightarrow,  " k"]\arrow[d, "q"] & \Nak_{Q,t}(\ff+\dd,\dd)\arrow[d, "q"]\\
((\mu_{\ff,\dd})^{-1}(t)\times\Hom(V',V))/\Gl_{\dd}\times \Gl'_{\dd}\arrow[r, hookrightarrow, "\bar k"]\arrow[dd, bend right=82, shift right=5, swap, "\bar p_1"]& \Nak_{Q,t}(\ff+\dd,\dd)/\Gl'_{\dd}\\
((\mu_{\ff,\dd})^{-1}(t)\times\Hom(V',V)^{\iso})/\Gl_{\dd}\times \Gl'_{\dd}\arrow[u, hookrightarrow]\arrow[r, hookrightarrow] \arrow[d, shift right=2, swap, "\bar p_1^{\iso}"] & \Nak^{\iso}_{Q,t}(\ff+\dd,\dd)/\Gl'_{\dd} \arrow[u, hookrightarrow]\\
(\mu_{\ff,\dd})^{-1}(t)/\Gl_{\dd}=\Nak_{Q,t}(\ff,\dd)\arrow[r, hookrightarrow]\arrow[u, shift right=2, "\cong" labl2] & \BoA_{(\dd,1)}(\overline{Q_{\ff}})/\Gl_{\dd}. \arrow[u, "\cong" labl] \arrow[uu, bend right, swap, shift right=15, "\iota_t"]
\end{tikzcd}
\]
All the vertical  maps on the left side and $k$ are already defined, $\bar p_1$ is the projection to the first factor and $\bar p_1^{\iso}$ its restriction to the iso-locus. All the remaining maps are obtained by taking Cartesian squares. Since $p_1$ is $\Gl'_{\dd}$-invariant, we have $p_1=\bar p_1\circ q$. Consequently, $\Att{+}^t/\Gl'_{\dd}$ is the image of $\bar p_1\times \bar k$, and hence its inverse image by $\id\times \iota_t$ is exactly the image of $\Delta \circ \bar p_1^{\iso}$, which is nothing but the diagonal
\[
\Delta_t\subset \Nak_{Q,t}(\ff,\dd)\times  \Nak_{Q,t}(\ff,\dd)\subset \Nak_{Q,t}(\ff,\dd)\times \BoA_{(\dd,1)}(\overline{Q_{\ff}})/\Gl_{\dd}
\]
This concludes the proof.
\end{proof}

\begin{remark}
    \label{remark NS and stability conditions} Let $\zeta$ and $\zeta'$ be two generic stability conditions for a quiver variety and let $\Psi^{\zeta}_{\ff,\dd}$ and $\Psi^{\zeta'}_{\ff,\dd}$ be the corresponding nonabelian stable envelopes. Let also $\Xi_{\ff,\dd}$ be the isomorphism from Remark \ref{rem: Yangians for different stability conditions}. Then the diagram 
    \[
    \begin{tikzcd}
        \HO^{\BoMo}(\Nak^{\zeta, T_\ff}_{Q}(\ff,\dd), \BoQ^{\vir})\arrow[r, "\Psi^{\zeta}_{\ff,\dd}"]\arrow[d, swap, "\Xi_{\ff,\dd}"] & \HO^{\BoMo}(\FM^{T^0_\ff}_{(\dd,1)}(\Pi_{Q_{\ff}})\BoQ^{\vir})\\
        \HO^{\BoMo}(\Nak^{\zeta', T_\ff}_{Q}(\ff,\dd), \BoQ^{\vir})\arrow[ur, swap, "\Psi^{\zeta'}_{\ff,\dd}"]
    \end{tikzcd}
    \]
    is commutative. This once again follows from the description of $\Xi_{\ff,\dd}$, $\Psi^{\zeta}_{\ff,\dd}$, and $\Psi^{\zeta'}_{\ff,\dd}$ as specialization of fundamental classes in Borel-Moore homology and compatibility of the latter with the convolution product.
\end{remark}

We finish the section with an easy but important corollary.

\begin{corollary}
    \label{corollary normalization nonabelian stable envelope}
    Let $i: \Nak^{T_{\ff}}_Q(\ff,\dd)\hookrightarrow  \FM^{T_{\ff_0}}_{(\dd,1)}(\Pi_{Q_{\ff}})$ be the natural open inclusion. We have $i^*\circ \Psi_{\ff,\dd}=\id$. In particular, $\Psi_{\ff,\dd}$ is injective.
\end{corollary}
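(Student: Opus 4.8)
The plan is to deduce the identity $i^*\circ\Psi_{\ff,\dd}=\id$ from Proposition~\ref{prop:nonabelian stable envelope as a correspondence}, which realises $\Psi_{\ff,\dd}$ as convolution with the class $[\NS]$, by showing that the restriction of $[\NS]$ along the open immersion $i$ is the class of the diagonal. Concretely, pulling the projection $p_2\colon\Nak^{T_{\ff}}_Q(\ff,\dd)\times_{\B T_{\ff}}\FM^{T^0_{\ff}}_{(\dd,1)}(\Pi_{Q_{\ff}})\to\FM^{T^0_{\ff}}_{(\dd,1)}(\Pi_{Q_{\ff}})$ back along $i$ produces a Cartesian square whose upper left corner is $\Nak^{T_{\ff}}_Q(\ff,\dd)\times_{\B T_{\ff}}\Nak^{T_{\ff}}_Q(\ff,\dd)$, embedded via $\id\times i$. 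Since $[\NS]$ is proper over the target by Proposition~\ref{prop:nonabelian stable envelope as a correspondence}, I would apply proper base change together with the compatibility of open pullback with cap products to rewrite $i^*\circ\Psi_{\ff,\dd}$ as convolution with $(\id\times i)^*[\NS]$ against the diagonal of $\Nak^{T_{\ff}}_Q(\ff,\dd)$. Once $(\id\times i)^*[\NS]=[\Delta]$ is established, this convolution is the identity map, and injectivity of $\Psi_{\ff,\dd}$ follows at once since it then has a left inverse.

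To compute $(\id\times i)^*[\NS]$ I would unwind Definition~\ref{def: NS class}, where $[\NS]=\lim_{t\to0}[\Delta_t]$. The map $i$ is the central fibre of the open immersion $\widetilde\Nak_Q(\ff,\dd)\hookrightarrow\widetilde\FNak_Q(\ff,\dd)$ of~\eqref{eq: deformation of }, with fibres $i_t$; by Proposition~\ref{prop: usual resolution vs stacky resol} the map $i_t$ is an isomorphism for $t\in U$, and the cycle $\Delta_t$ is by construction the graph of $i_t$. Hence $(\id\times i_t)^{-1}(\Delta_t)$ is the honest diagonal $\Delta^\circ_t$ of $\Nak^{T_{\ff}}_{Q,t}(\ff,\dd)$, and the classes $[\Delta^\circ_t]$ for $t\in U$ are the restrictions over $U$ of the fundamental class of the relative diagonal of $\widetilde\Nak_Q(\ff,\dd)\to\AA^{Q_0}$, which is defined over all of $\AA^{Q_0}$; by flatness of the universal deformation, $\lim_{t\to0}[\Delta^\circ_t]$ is just the central fibre $[\Delta]$. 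Since restriction to an open substack commutes with specialisation in Borel--Moore homology (cf.~\cite[\S 2.6]{chriss2009representation}), I conclude $(\id\times i)^*[\NS]=\lim_{t\to0}(\id\times i_t)^*[\Delta_t]=\lim_{t\to0}[\Delta^\circ_t]=[\Delta]$, which is exactly what is needed.

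Granting Proposition~\ref{prop:nonabelian stable envelope as a correspondence}, the argument is essentially formal, so I do not expect a serious obstacle; the two points that will need genuine care are the commutation of the specialisation map with restriction to the open substack cut out by $i$, and the bookkeeping required to see that $i$ is a degree-preserving open immersion of stacks. The latter relies on the fact that the rank one torus $A_{\triv}=\BoC^*$ rescaling the framing acts trivially on $\Nak_Q(\ff,\dd)$ and is identified with the $\infty$-component of the gauge group $\Gl_{(\dd,1)}$, so that $\Nak^{T_{\ff}}_Q(\ff,\dd)$ is literally the open substack of $\zeta^+$-stable points inside $\FM^{T^0_{\ff}}_{(\dd,1)}(\Pi_{Q_{\ff}})$; with this identification no cohomological shift intervenes and $i^*$ is plain restriction, which keeps the convolution-with-the-diagonal computation clean.
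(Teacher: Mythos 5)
Your argument is essentially the same as the paper's: both reduce to the affine fibres $t\in U$ of the universal deformation, where stability of all representations makes the statement trivial, and then invoke compatibility of the specialisation map to deduce the $t=0$ case. The only difference is in packaging: the paper realises $i^*$ as its own specialised correspondence $\Delta_t^{(21)}$ and composes it with $[\NS]$ using the compatibility of convolution with specialisation (as in Proposition~\ref{prop:nonabelian stable envelope as a correspondence}), whereas you pull back the single cycle $[\NS]$ along $\id\times i$ and use the commutation of specialisation with open restriction; both are valid and rest on the same standard facts from~\cite[\S 2.6]{chriss2009representation}.
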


\begin{proof}
    As before, we first show that the statement is true on the affine fibers of the deformation space and then we deduce the result by specialising to the central fiber. As a correspondence, the smooth pullback $i^*$ is given by the diagonal
    \[
    \Delta \subset \Nak^{T_{\ff}}_Q(\ff,\dd)\times_{\B T_{\ff}}\Nak^{T_{\ff}}_Q(\ff,\dd) \subset \Nak^{T_{\ff}}_Q(\ff,\dd) \times_{\B T_{\ff}}  \FM^{T_{\ff_0}}_{(\dd,1)}(\Pi_{Q_{\ff}}).
    \]
    As in the previous proof, this class is a specialisation at $t=0$ of a family of correspondences on the affine fibers, i.e. for $t\in U$. Since on the affine fibers all the representations are stable, this family of correspondences is given by $\Delta^{(21)}_t$, where $\Delta_t$ is the diagonal introduced in \eqref{eq:diagonal specializing to NS} and the superscript stands for the fact that the correspondence is meant to be read backwards (although this is purely formal since $\Delta_t$ becomes symmetric after the identification $\Nak^{T_{\ff}}_{Q,t}(\ff,\dd) = \FM^{T_{\ff_0}}_{(\dd,1),t}(\Pi_{Q_{\ff}})$). But then the result is trivial on the affine fibers and hence it also holds after specialising.
\end{proof}

\begin{remark}
    Notice that Proposition \ref{proposition support property and normalization of NS} follows at once by Proposition \ref{prop:nonabelian stable envelope as a correspondence} and Corollary \ref{corollary normalization nonabelian stable envelope}.
\end{remark}

\subsection{From stable envelopes to the preprojective CoHA}
\label{SE_to_COHA}
Throughout this section, we assume that the torus $T$ is associated to the restriction to $\overline{Q_{\ff}}\subset \tilde Q_{\ff}$ of a weighting $\wt: (\tilde Q_{\ff})_1\to \ZZ^r$ satisfying Assumption \ref{weighting_assumption}.
Fix a quiver $Q$, a framing vector $\ff$, and a nontrivial decomposition $\ff=\ff'+\ff''$. 
As in \S \ref{sect: R-matrices} this data defines a two dimensional subtorus $A\subset A_{\ff}$. The action of $A$ on $\Nak_Q(\ff,\dd)$ admits two chambers $\mathfrak{C}_{\pm}$ and hence induces two stable envelopes $\Stab_{\pm}$. We first focus on
\[
\Stab_+: \NakMod^{T_{\ff'}}_{Q,\ff', \dd'}\otimes_{\HO_{T}}\NakMod^{T_{\ff''}}_{Q,\ff'', \dd''}\to \NakMod^{T_{\ff}}_{Q,\ff,\dd}.
\]
These morphisms equip the $\BoN^{Q_0}\times\BoN^{Q_0}$-graded, cohomologically graded vector space
\[
\NakMod^{T_{\bullet}}_Q=\bigoplus_{\dd,\ff} \NakMod^{T_{\ff}}_{Q,\ff,\dd}
\]
with a graded $\HO_T$-algebra structure. The associativity of this multiplication defined via stable envelopes follows from Lemma \ref{lma:triangle lemma}. In the next section, we will compare this algebra with the following special type of CoHA. Consider the $\BoN^{Q_0}\times\BoN^{Q_0}$-graded, cohomologically graded vector space $\HO\!\CoHA_{\Pi_{Q_{\fr}}}^{T_{\bullet}}$ defined as
\[
\HO\!\CoHA_{\Pi_{Q_{\fr}}, \ff,\dd}^{T_{\bullet}}= \HO^{\BoMo}(\FM^{T^0_\ff}_{(\dd,1)}(\Pi_{Q_\ff}),\BoQ^{\vir}) \qquad \HO\!\CoHA_{\Pi_{Q_{\fr}}}^{T_{\bullet}}= \bigoplus_{\dd,\ff\in\BoN^{Q_0}} \HO\!\CoHA_{\Pi_{Q_{\fr}}, \ff,\dd}^{T_{\bullet}}.
\]

We now define a graded algebra structure on $\HO\!\CoHA_{\Pi_{Q_{\fr}}}^{T_{\bullet}}$. Recall the definition of the quiver $Q_{\ff',\ff''}$ from \S \ref{quiver_sec} and set $T^0_{\ff',\ff''}:=T^0_{\ff'}\times_T T^0_{\ff''}$. This torus acts naturally on the moduli space $\FM_{(\dd,1,1)} (\Pi_{Q_{\ff',\ff''}})$ for any $\dd\in \NN^{Q_0}$.  The  inclusions $Q_{\ff'}\hookrightarrow Q_{\ff',\ff''}$ and $Q_{\ff''}\hookrightarrow Q_{\ff',\ff''}$ and the identification $\infty'=\infty''$ induce, respectively, canonical isomorphisms
\[
\FM^{_{T^0_{\ff'}}}_{(\dd',1)}(\Pi_{Q_{\ff'}})\times \B A^{0}_{\ff''}\cong \FM^{T^0_{\ff',\ff''}}_{(\dd',1,0)}(\Pi_{Q_{\ff',\ff''}}) \qquad \FM^{T^0_{\ff''}}_{(\dd'',1)}(\Pi_{Q_{\ff''}})\times \B A^{0}_{\ff'}\cong \FM^{T^0_{\ff',\ff''}}_{(\dd'',0,1)}(\Pi_{Q_{\ff',\ff''}}) 
\]
and 
\[
\FM^{T^0_{\ff'+\ff''}}_{(\dd,1)}(\Pi_{Q_{\ff'+\ff''}})\cong \FM^{T^0_{\ff',\ff''}}_{(\dd,1,1)}(\Pi_{Q_{\ff',\ff''}}).
\]
We are now ready to introduce an algebra structure on $\HO\!\CoHA_{\Pi_{Q_{\fr}}}^{T_{\bullet}}$. We define a multiplication map
\[
\vmult^{\ff',\ff''}_{\dd',\dd''}: \HO\!\CoHA_{\Pi_{Q_{\fr}}, \ff', \dd'}^{T_{\bullet}}\otimes_{\HO_{T}} \HO\!\CoHA_{\Pi_{Q_{\fr}}, \ff'', \dd''}^{T_{\bullet}}\to \HO\!\CoHA_{\Pi_{Q_{\fr}}, \dd'+\dd'',\ff'+\ff''}^{T_{\bullet}}
\]
via the preprojective CoHA multiplication
\[
\vmult_{(\dd',1,0),(\dd'',0,1)}:\HO\!\CoHA^{T^0_{\ff',\ff''}}_{\Pi_{Q_{\ff',\ff''}},(\dd',1,0)}\otimes_{\HO_{T^0_{\ff',\ff''}}}\HO\!\CoHA^{T^0_{\ff',\ff''}}_{\Pi_{Q_{\ff',\ff''}},(\dd'',0,1)}\rightarrow \HO\!\CoHA^{T^0_{\ff',\ff''}}_{\Pi_{Q_{\ff',\ff''}},(\dd'+\dd'',1,1)}
\]
defined in \S \ref{subsection:Preprojective CoHA} and the isomorphisms above. Associativity of this multiplication map is a special case of associativity of the preprojective CoHA, cf. \S\ref{subsection:Preprojective CoHA}.

We can now state the main result of this section. Consider the morphism $\Psi: \NakMod^{T_{\bullet}}_Q\to \HO\!\CoHA_{\Pi_{Q_{\fr}}}^{T_{\bullet}}$ obtained by taking the direct sum across all $\dd,\ff\in\BoN^{Q_0}$ of the nonabelian stable envelopes
\[
\NakMod^{T_{\ff}}_{Q,\ff,\dd}=\HO^{\BoMo}(\Nak^{T_{\ff}}_{Q}(\ff,\dd), \BoQ^{\vir})\xrightarrow{\Psi_{\ff,\dd}} \HO^{\BoMo}(\FM^{T^0_{\ff}}_{(\dd,1)}(\Pi_{Q_{\ff}}), \BoQ^{\vir})=\HO\!\CoHA_{\Pi_{Q_{\fr}, \ff,\dd}}^{T_{\bullet}}.
\]
The following result provides a crucial connection between stable envelopes and CoHAs.
\begin{proposition}
\label{mult_compat}
    The map $\Psi: \NakMod^{T_{\bullet}}_Q\to \HO\!\CoHA_{\Pi_{Q_{\fr}}}^{T_{\bullet}}$ is a morphism of graded algebras. In other words, the diagram
    \begin{equation}
    \label{eq:diagram compatibility product}
    \begin{tikzcd}
        \NakMod^{T_{\ff'}}_{Q,\ff', \dd'}\otimes_{\HO_{T}} \NakMod^{T_{\ff''}}_{Q,\ff'',\dd''}\arrow[d, swap, "\Psi_{\ff',\dd'}\otimes_{\HO_T}\Psi_{\ff'',\dd''} "]\arrow[r, "\Stab_+"] & \NakMod^{T_{\ff}}_{Q,\ff,\dd}\arrow[d, "\Psi_{\ff,\dd}"]\\
        \HO\!\CoHA_{\Pi_{Q_{\fr}, \ff', \dd'}}^{T_{\bullet}} \otimes_{\HO_{T}} \HO\!\CoHA_{\Pi_{Q_{\fr}, \ff'', \dd''}}^{T_{\bullet}} \arrow[r, "\vmult^{\ff',\ff''}_{\dd',\dd''}"]& \HO\!\CoHA_{\Pi_{Q_{\fr}, \ff,\dd}}^{T_{\bullet}} 
    \end{tikzcd}
    \end{equation}
    commutes for all decompositions $\dd=\dd'+\dd''$ and $\ff=\ff'+\ff''$.
\end{proposition}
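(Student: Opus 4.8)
The plan is to realise every arrow in the square \eqref{eq:diagram compatibility product} as the morphism induced by a Borel--Moore correspondence, to express both composite cycles around the square as specialisations to $t=0$ of families of cycles living over the open dense locus $U\subset\AA^{Q_0}$ away from the wall arrangement of \S\ref{section: Deformation of quiver varieties}, and then to check that the two families already agree over $U$. Concretely: $\Stab_+$ is induced by $\lim_{t\to0}[\Att{C_+}^t]$, the specialisation of fundamental classes of attracting sets (Proposition \ref{prop: stab via specialzation}); each nonabelian stable envelope $\Psi_{\ff^{(\bullet)},\dd^{(\bullet)}}$ is induced by $[\NS]=\lim_{t\to0}[\Delta_t]$ of Definition \ref{def: NS class}, built from the stacky deformation $\widetilde\FNak$; and the preprojective CoHA multiplication $\vmult_{(\dd',1,0),(\dd'',0,1)}$ is, by its construction in \S\ref{gen_CoHA_sec}--\S\ref{subsection:Preprojective CoHA} together with the identifications recorded just before the statement, convolution with the class of the stack of short exact sequences of $\Pi_{Q_{\ff',\ff''}}$-modules. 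By Proposition \ref{prop: usual resolution vs stacky resol}, over $U$ all of these stacks are smooth varieties on which the gauge groups act freely, and since convolution in Borel--Moore homology commutes with specialisation (\cite[Prop.7.2.23]{chriss2009representation}, exactly as in the proof of Proposition \ref{prop:nonabelian stable envelope as a correspondence}), it suffices to identify the two families of composite cycles over $U$.

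Over $t\in U$ the picture collapses. First, as in the proof of Corollary \ref{corollary normalization nonabelian stable envelope}, each nonabelian stable envelope degenerates to the identity diagonal under the canonical isomorphism $\Nak_{Q,t}(\ff,\dd)\cong\FNak_{Q,t}(\ff,\dd)$ (and likewise for $\ff',\dd'$ and $\ff'',\dd''$). Hence over $U$ the top--right route is just $\Stab_+^t$, regarded as a map between Borel--Moore homologies of the affine varieties $\FNak_{Q,t}(\ff',\dd')\times_{\B T}\FNak_{Q,t}(\ff'',\dd'')$ and $\FNak_{Q,t}(\ff,\dd)$, while the left--bottom route is convolution with the $t$-fibre of the short exact sequence stack. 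The remaining point is the geometric identification, going back to \cite[Lem.3.2]{Nakajima_tensorI} and \cite[\S 3.4]{MO19}: the attracting set of the fixed component indexed by $(\dd',\dd'')$, for the torus $A$ attached to the framing split $\ff=\ff'+\ff''$, is over $U$ precisely the locus of (now stable, rigid) representations of the deformed algebra $\Pi^t_{Q_{\ff',\ff''}}$ equipped with a short exact sequence $0\to M'\to M\to M''\to0$ with $\dim M'=(\dd',1,0)$ and $\dim M''=(\dd'',0,1)$ --- the sub/quotient roles being dictated by the chamber $\mathfrak{C}_+$ (cf. \S\ref{sect: R-matrices}), and the matching of the single framing vertex of $Q_{\ff'+\ff''}$ with the pair $\infty',\infty''$ of $Q_{\ff',\ff''}$ being the content of the Abelianisation/iso-locus description of \S\ref{more_SE_sec}. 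These two subvarieties coincide, with the affine-bundle projection of the attracting set matching the forgetful projection $q_1\times q_3$ of the SES stack, so their fundamental classes agree. Specialising $t\to0$ then yields the proposition.

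The step that genuinely needs care is the bookkeeping of virtual shifts, Euler-class normalisations, and signs, so that the two cycles above are identified on the nose. \textbf{(a)} The $\BoQ^{\vir}$-twists: the CoHA product carries the cohomological shift by the symmetrised Euler form built into the morphisms $\alpha,\beta$ of \S\ref{gen_CoHA_sec}, which must be reconciled with the $\BoQ^{\vir}$-conventions for Nakajima stacks and with the codimension $\codim(F)/2$ governing normalisation (i) of Theorem \ref{thm: definition stab}. \textbf{(b)} The Euler-class normalisation: the nonabelian stable envelope satisfies $\eta^*\circ\overline{\Psi}_{\ff,\dd}=\Eu(\hbar\otimes\Fg_{\dd})\cap(-)$ (Proposition \ref{proposition support property and normalization of NS}), while dimensional reduction \eqref{dim_red} introduces exactly the extra-loop $\omega_i$ directions contributing such Euler factors, and one must verify these are reconciled through the isomorphisms $\FM^{T^0_{\ff'+\ff''}}_{(\dd,1)}(\Pi_{Q_{\ff'+\ff''}})\cong\FM^{T^0_{\ff',\ff''}}_{(\dd,1,1)}(\Pi_{Q_{\ff',\ff''}})$ and their degree-$(\dd',1,0)$, $(\dd'',0,1)$ analogues. \textbf{(c)} The sign: with the polarisation fixed (\S\ref{section: Stable envelopes for quiver varieties}) so that the diagonal restriction of $\Sh{L}_{\mathfrak{C}_+}$ carries the factor $(-1)^{\delta}$ with $\delta=\dd'\cdotsh\dd''$ --- the $k=2$ instance of $\prod_{i<j}\dd^{(i)}\cdotsh\dd^{(j)}$ --- and with $\dimred^*\vmult_{\dd',\dd''}$ differing from the Schiffmann--Vasserot shuffle product by the sign $(-1)^{\dd'\cdotsh\dd''}$ of \cite[Lem.4.1]{KR17}, these two contributions are designed to cancel; one confirms this by comparing the two products on the fixed loci, using axiom (i) of Theorem \ref{thm: definition stab} on the stable-envelope side and the explicit shuffle formula \eqref{shuff_prod} on the CoHA side. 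I expect item \textbf{(c)}, together with its interplay with \textbf{(b)}, to be the main obstacle; by contrast the set-theoretic identification of the correspondences over $U$ is conceptually transparent from the Abelianisation picture of \S\ref{more_SE_sec}.
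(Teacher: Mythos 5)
Your strategy is essentially the one the paper takes in its second, self-contained proof of this proposition: realise each arrow as a Borel--Moore correspondence, express both composites around the square as specialisations of families over the smooth locus $U\subset\AA^{Q_0}$, observe that over $U$ each nonabelian stable envelope collapses to the diagonal, and compare the remaining cycles fibrewise. Your identification of the attracting set with a moment-map locus of flagged representations is also what the paper uses. (The paper's \emph{primary} proof is shorter still: after passing along $\bar\xi$, commutativity of the square is an instance of \cite[Thm.5.6]{Botta_2023}.)

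Where your sketch does not yet close, and where the paper's route helps, is precisely in the step you isolate as items \textbf{(b)} and \textbf{(c)}, and the two are entangled. You never actually pass to the tripled quiver, but the preprojective CoHA product is only defined by transport along $\dimred$, so comparing it directly with $\Stab_+$ on the preprojective side is what forces you to chase the $\Eu(\hbar\otimes\Fg_\dd)$ factor by hand. The paper instead precomposes with the graph $\Gamma_{\ff,\dd}$ of the affine fibration $\FM(\tilde Q_\ff)\to\FM(\ol{Q_\ff})$ and uses that $\bar\xi$ is an injective algebra map (Proposition \ref{shuff_embed}), so it suffices to check the square after applying $\bar\xi$. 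In the shuffle algebra of $\tilde Q_{\ff',\ff''}$ the product genuinely \emph{is} convolution with the SES stack, and the $\omega_i$-directions contribute the same Euler factor to both sides, so item \textbf{(b)} evaporates. Moreover, you write that the two subvarieties coincide and ``their fundamental classes agree'', which is not quite right --- they agree only up to $(-1)^{\dd'\cdotsh\dd''}$, and this should not be filed as a separate normalisation concern (\textbf{(c)}) because it is a geometric statement about the two cycles. In the paper's setup, after applying $\Gamma_{\ff,\dd}$, both composites over $U$ are fundamental classes of smooth subvarieties $V_1,V_2$ of a single smooth total space $V$, a vector bundle over a base $Z$; one then writes $[V_i]=p^*\bigl(\Eu(V/V_i)\bigr)\cdot[V]$ and observes that $V/V_1$ and $V/V_2$ are trivial bundles with \emph{dual} fibres $\fn$ and $\fn^\vee$, where $\dim\fn=\dd'\cdotsh\dd''$. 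Thus $[V_1]=(-1)^{\dd'\cdotsh\dd''}[V_2]$, and this cancels the polarisation sign of $\Stab_+$ on the nose. So item \textbf{(c)} is not an obstacle you cancel against \textbf{(b)} after the fact; it falls out once you pin down the two cycles inside the tripled-quiver moduli.
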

\begin{proof}
    In analogy with $\HO\!\CoHA_{\Pi_{Q_{\fr}}}^{T_{\bullet}}$, we define an $\HO_{T}$-algebra structure on 
    \[
    \HO\!\CoHA_{\tilde{Q}_{\fr}}^{T_{\bullet}}= \bigoplus_{\dd,\ff} \HO\!\CoHA_{\tilde{Q}_{\fr}, \ff,\dd}^{T_\bullet}; 
    \qquad 
    \HO\!\CoHA_{\tilde{Q}_{\fr}, \ff,\dd}^{T_\bullet} = \HO^*(\FM^{T^0_\ff}_{\dd,1}(\tilde{Q}_{\ff}),\BoQ^{\vir}) 
    \]
    via the isomorphisms 
    \[
\FM^{T^0_{\ff'}}_{(\dd',1)}(\tilde{Q}_{\ff'})\times \B A^{0}_{\ff''}\cong \FM^{T^0_{\ff,\ff''}}_{(\dd',1,0)}(\tilde{Q}_{\ff',\ff''}); \qquad \FM^{T^0_{\ff''}}_{(\dd'',1)}(\tilde{Q}_{\ff''})\times \B  A^{0}_{\ff'}\cong \FM^{T^0_{\ff,\ff''}}_{(\dd'',0,1)}(\tilde{Q}_{\ff',\ff''});
\]
\[
\FM^{T^0_{\ff}}_{(\dd,1)}(\tilde{Q}_{\ff})\cong \FM^{T^0_{\ff',\ff''}}_{(\dd,1,1)}(\tilde{Q}_{\ff',\ff''})
\]
and the multiplication map for the CoHA for the quiver $\tilde{Q}_{\ff',\ff''}$ and the trivial potential $W=0$ (see \S \ref{shuffle_sec}):
\[
\vmult_{(\dd',1,0),(\dd'',0,1)}:\HO\!\CoHA^{T^0_{\ff',\ff''}}_{\tilde{Q}_{\ff',\ff''},(\dd',1,0)}\otimes_{\HO_{T^0_{\ff',\ff''}}}\HO\!\CoHA^{T^0_{\ff',\ff''}}_{\tilde{Q}_{\ff',\ff''},(\dd'',0,1)}\rightarrow \HO\!\CoHA^{T^0_{\ff',\ff''}}_{\tilde{Q}_{\ff',\ff''},(\dd'+\dd'',1,1)}.
\]
We may alternatively realise this morphism as $\vmult_{(\dd',1,0),(\dd'',0,1)}$ for the CoHA for the quiver $\tilde{Q}_{\ff}^{\Abel}$, following Remark \ref{fr_from_ab}.

Now consider the composition
\begin{equation}
    \label{eq: nonabelian stab landing in shuffle algebra}
    \begin{tikzcd}
    \NakMod^{T_{\bullet}}_Q\arrow[r, "\Psi"] & \HO\!\CoHA_{\Pi_{Q_{\fr}}}^{T_{\bullet}}\arrow[r, hookrightarrow, "\bar \xi"] &  \HO\!\CoHA_{\tilde{Q}_{\fr}}^{T_{\bullet}}
    \end{tikzcd}
\end{equation}
where $\bar\xi$ is the framed version of the map from Proposition \ref{prop: embedding preprojective CoHA in shuffle algebra}. Since the action of the torus $T$ satisfies the hypotheses of Assumption \ref{weighting_assumption}, the map $\bar\xi$ is an embedding of algebras, by Proposition \ref{shuff_embed}. Hence, it suffices to show that the composition $\bar\xi\circ \Psi$ is a morphism of graded algebras. Focusing on a single graded component $\bar\xi_{\ff,\dd}\circ \Psi_{\ff,\dd}$, unraveling the definition of $\bar\xi$ from \S \ref{sec: From critical CoHA to shuffle algebra}, and applying Proposition \ref{prop:nonabelian stable envelope as a correspondence} we get the commutative diagram:
\[
\begin{tikzcd}
    \HO^{\BoMo}(\Nak^{T_\ff}_Q(\ff,\dd), \BoQ^{\vir})\arrow[r, "\Psi_{\ff,\dd}"]\arrow[dr, swap, "\overline \Psi_{\ff,\dd}"] &  \HO^{\BoMo}(\FM^{T^0_\ff}_{(\dd,1)}(\Pi_{Q_\ff}), \BoQ^{\vir}) \arrow[d, "j_*"]\arrow[r, "\bar\xi_{\ff,\dd}"] &\HO^*(\FM^{T^0_\ff}_{(\dd,1)}(\tilde{Q}_{\ff}), \BoQ^{\vir}) \\
    & \HO^*(\FM^{T^0_\ff}_{(\dd,1)}(\overline{Q_{\ff}}) ,\BoQ^{\vir}[\rk(\pi_{\ff,\dd})]) \arrow[ur, swap, "(\pi_{\ff,\dd})^*"]&
\end{tikzcd}
\]
where $j_*$ is the proper pushforward associated to the closed embedding $j: \FM_{(\dd,1)}(\Pi_{Q_\ff})\to\FM_{(\dd,1)}(\overline{Q_{\ff}})$ and $(\pi_{\ff,\dd})^*$ is the smooth pullback associated to the affine fibration $\pi_{\ff,\dd}:\FM_{(\dd,1)}(\tilde{Q}_{\ff})\to \FM_{(\dd,1)}(\overline{Q_{\ff}})$.
Then the result follows by \cite[Thm.5.6]{Botta_2023}, where the composition $(\pi_{\ff,\dd})^*\circ \overline\Psi_{\ff,\dd}$ is shown to be a morphism of algebras.  The proof follows.

We now provide an alternative, more direct, proof by exploiting the fact that both $\overline \Psi_{\ff, \dd}$ and the stable envelope $\Stab_+$ are defined via specialisation of easier classes defined on the deformation spaces introduced in \S \ref{section: Deformation of quiver varieties}.  Firstly, notice that $\pi_{\ff,\dd}$ is induced by the correspondence
\[
\Gamma_{\ff,\dd}:=\text{Graph}(\pi_{\ff,\dd})\subset \FM^{T^0_\ff}_{(\dd,1)}(\overline{Q_{\ff}})\times_{\B T^0_{\ff}}\FM^{T^0_\ff}_{(\dd,1)}(\tilde{Q}_{\ff}).
\]
Hence, the composition $(\pi_{\ff,\dd})^*\circ \overline \Psi_{\ff,\dd}\circ \Stab_+$ is induced by the convolution of cycles
\begin{equation}
	\label{eq:first triple convolution}
	[\Gamma_{\ff,\dd}]\ast [\NS] \ast [\Stab_+].
\end{equation}
On the other hand, the composition $\vmult^{\ff',\ff''}_{\dd',\dd''}\circ ((\pi_{\ff',\dd'})^*\otimes_{\HO_T} (\pi_{\ff'',\dd''})^*) \circ (\overline \Psi_{\ff',\dd'}\otimes_{\HO_T}  \overline \Psi_{\ff'',\dd''})$ is also induced by the correspondence
\begin{equation}
	\label{eq:second triple convolution}
	[\FA]\ast ([\Gamma_{\ff',\dd'}]\otimes_{\HO_T} [\Gamma_{\ff'',\dd''}]) \ast ([\NS]\otimes [\NS]),
\end{equation}
where $[\FA]$ is the image of the fundamental class of $\FM_{(\dd',1,0),(\dd'',0,1)}(\tilde{Q}_{\ff',\ff''})$ under the proper morphism
\[
\FM_{(\dd',1,0),(\dd'',0,1)}(\tilde{Q}_{\ff',\ff''})\xrightarrow{(q_1\times q_3)\times q_2}  \left(\FM_{(\dd',1,0)}(\tilde{Q}_{\ff',\ff''})\times \FM_{(\dd',1,0)}(\tilde{Q}_{\ff',\ff''})\right)\times \FM_{(\dd,1,1)}(\tilde{Q}_{\ff',\ff''})
\] 
introduced in \S \ref{gen_CoHA_sec} giving rise to the CoHA multiplication $\vmult^{\ff',\ff''}_{\dd',\dd''}$ for the tripled doubly framed quiver without potential.
As a result, to prove that $\pi_{\ff,\dd}^*\circ \overline\Psi_{\ff,\dd}$ is compatible with the algebra structures it suffices to show that the cycles \eqref{eq:first triple convolution} and \eqref{eq:second triple convolution} coincide in
\begin{equation*}
	\HO^{\BoMo}\left((\Nak^{T_{\ff'}}_Q(\ff',\dd')\times_{\B T}\Nak^{T_{\ff''}}_Q(\ff'',\dd'') )\times_{\B T_{\ff}}\FM^{T^0_{\ff}}_{(\dd,1)}(\tilde{Q}_{\ff}), \BoQ\right).
\end{equation*}

As in the proof of Proposition \ref{prop:nonabelian stable envelope as a correspondence}, we first work on the affine fibers of the deformation diagram of a quiver variety and then we deduce the claim by specialising to the central fiber. All the relevant classes admit deformations for $t\in \AA^{Q_0}$. The deformation of $[\Stab_{+}]$, introduced in \S \ref{sec: Stable envelopes via specialisation}, is $(-1)^{\dd'\cdot \dd''}$ times the fundamental class of the attracting set
\[
\Att{+}^t\subset \left(\Nak_{Q,t}(\ff',\dd')\times\Nak_{Q,t}(\ff'',\dd'')\right)\times \Nak_{Q,t}(\ff,\dd).
\]
The sign twist is due to our choice of polarisation in \S \ref{section: Stable envelopes for quiver varieties}. Over the affine fibers $t\in U$, the nonabelian stable envelope is $[\Delta_{t}]$, the fundamental class of the diagonal, i.e. the identity element in the convolution ring. Finally, we set $\FA_t$ and $\Gamma^t_{\ff,\dd}$ to be the trivial deformations. By compatibility of specialisation with convolution product and our analysis, we deduce that it suffices to prove that 
\begin{equation}
\label{eq: final statement alternative proof comparison}
    (-1)^{\dd'\cdot \dd''} [\Gamma^t_{\ff,\dd}]\ast [\Att{+}^t]=[\FA_{t}]\ast ([\Gamma_{\ff',\dd'}^t]\otimes_{\HO_T} [\Gamma_{\ff'',\dd''}^t])
\end{equation}
over the affine fibers $t\in U$. All the classes in the last equation can be made completely explicit. To describe them, consider the moment map
\[
\mu: \AA_{(\dd',1,0), (\dd'',0,1)}(\overline{Q_{\ff',\ff''}})\to \fg_{(\dd',1,0), (\dd'',0,1)}\to \fg_{\dd',\dd''}.
\]
 Arguing as at the end of the proof of Proposition \ref{prop:nonabelian stable envelope as a correspondence}, one identifies the attracting set $\Att{+}^t$ with $\mu^{-1}(t)/\Gl_{\dd',\dd''}$. By standard transversality arguments, it follows that the left hand side of \eqref{eq: final statement alternative proof comparison} is the fundamental class of 
\[
\widetilde V_1=\left(\mu^{-1}(t)\times \fg_{(\dd,1,1)}\right)/(\Gl_{(\dd',1,0),(\dd'',0,1)}\times T_{\ff}).
\]
Similarly, the right hand side is the fundamental class of
\[
\widetilde V_2=\left(\mu^{-1}(t+\fn)\times \fg_{(\dd',1,0),(\dd'',0,1)}\right)/(\Gl_{(\dd',1,0),(\dd'',0,1)}\times T_{\ff}),
\]
where $\fn$ is such that $\fg_{\dd',\dd''}=\fn\oplus \fg_{\dd'}\oplus \fg_{\dd''}$. To complete the proof, it suffices to show that $[\widetilde V_1]=(-1)^{\dd'\cdot \dd''}[\widetilde V_2]$ as equivariant cycles in $\HO^{\BoMo}((\mu^{-1}(t+\fn)\times \fg_{(\dd,1,1)})/(\Gl_{(\dd',1,0),(\dd'',0,1)}\times T_{\ff}))$.

Set $V=\mu^{-1}(t+\fn)\times \fg_{\dd}$ and consider its $T_{\ff}\times \Gl_{\dd',\dd''}$-equivariant subvarieties 
\[
V_1=\mu^{-1}(t)\times \fg_{\dd} \qquad V_2=\mu^{-1}(t+\fn)\times \fg_{\dd',\dd''}.
\]
Since we are working on the affine fibers of the moment map, $V$, $V_1$ and $V_2$ are smooth. Moreover, from the identities $\fg_{(\dd,1,1)}=\fg_{\dd}\times \fg_{(0,1,1)}$, $\Gl_{(\dd',1,0),(\dd'',0,1)}=\Gl_{\dd',\dd''}\times \Gl_{(0,1,1)}$ and $\fg_{(\dd',1,0),(\dd'',0,1)}= \fg_{\dd',\dd''}\times \fg_{(0,1,1)}$
it follows that 
\begin{align*}
        \widetilde V_1 & =(V_1/(\Gl_{\dd',\dd''}\times T_{\ff}))\times_{\B T_{\ff}} (\fg_{(0,1,1)}/ (\Gl_{(0,1,1)}\times T_{\ff} ))\\
         \widetilde V_2 & =(V_2/(\Gl_{\dd',\dd''}\times T_{\ff}))\times_{\B T_{\ff}} (\fg_{(0,1,1)}/ (\Gl_{(0,1,1)}\times T_{\ff} )).
\end{align*}
Hence, it suffices to show that $[V_1]=(-1)^{\dd'\cdot \dd''}[V_2]$ as equivariant cycles in $\HO^{\BoMo}_{T_{\ff}\times \Gl_{\dd',\dd''}}(V, \BoQ)$. Since the group homomorphism $\Gl_{\dd',\dd''}\to\Gl_{\dd'}\times \Gl_{\dd''}$ is an affine fibration, it is enough to prove the equality in $\HO^{\BoMo}_{T_{\ff}\times \Gl_{\dd'}\times \Gl_{\dd''}}(V, \BoQ)$. 
The $T_{\ff}\times \Gl_{\dd'}\times \Gl_{\dd''}$-equivariant projections 
\[
\mu^{-1}(t+\fn)\rightarrow (\mu_{\ff',\dd'}\times\mu_{\ff'',\dd''})^{-1}(t)\qquad \fg_{\dd}\rightarrow \fg_{\dd',\dd''}
\]
make the variety $V$ the total space of a $T_{\ff}\times \Gl_{\dd'}\times \Gl_{\dd''}$-equivariant vector bundle $p: V\to Z$.  As a consequence, we have that 
\[
[V_i]=p^*\left(\Eu(V/{V_i})\right)\cdot [V]\in \HO^{\BoMo}_{T_{\ff}\times \Gl_{\dd'}\times \Gl_{\dd''}}(V, \BoQ)
\]
for $i=1,2$. Here, $\Eu(V/{V_i})$ denotes the Euler class of the vector bundle $V/V_i$ and the dot product indicates the natural action of the cohomology of $V$ on its Borel--Moore homology \cite[\S 2.6.40]{chriss2009representation}.

To complete the proof, it suffices to show that $\Eu(V/V_1)=(-1)^{\dd'\cdot \dd''} \Eu(V/V_2)$. Indeed, the K-theory class of $V/V_1$ coincides with the class of the trivial bundle with fiber $\fn$. On the other hand, $V/V_2$ is the trivial bundle with fiber $\fg_{\dd}/\fg_{\dd',\dd''}=\fn^\vee$. As the notation suggests, $\fn$ and $\fn^\vee$ are dual $T_{\ff}\times \Gl_{\dd'}\times \Gl_{\dd''}$-representations of dimension $\dd'\cdot \dd''$. Therefore, their Euler classes exactly differ by the overall sign $(-1)^{\dd'\cdot \dd''}$.
\end{proof}

Let $\sw_{\tau}: V\otimes W\to W\otimes V$ be the morphism sending $v\otimes w\mapsto (-1)^{\chi}w\otimes v$, where we assume that $V,W$ are of $\BoN^{(Q_{\ff,\ff'})_0}$-degree $\ee$ and $\ee'$ respectively, and abbreviate $\chi=\chi_{\tilde{Q}_{\ff',\ff''}}(\ee,\ee')$.

The following corollary gives the proposition above for the other possible choice of chamber.
\begin{corollary}
\label{prod_compat_cor}
    The diagram 
    \begin{equation*}
    \begin{tikzcd}[ column sep= 2 cm]
        \NakMod^{T_{\ff'}}_{Q,\ff', \dd'}\otimes_{\HO_{T}} \NakMod^{T_{\ff''}}_{Q,\ff,\dd}\arrow[d, swap, "\Psi_{\ff',\dd'}\otimes_{\HO_T}\Psi_{\ff'',\dd''} "]\arrow[r, "\Stab_-"] & \NakMod^{T_{\ff}}_{Q,\ff,\dd}\arrow[d, "\Psi_{\ff,\dd}"]\\
        \HO\!\CoHA_{\Pi_{Q_{\fr}, \ff', \dd'}}^{T_{\bullet}} \otimes_{\HO_{T}} \HO\!\CoHA_{\Pi_{Q_{\fr}, \ff'', \dd''}}^{T_{\bullet}} \arrow[r, "\vmult^{\ff'',\ff'}_{\dd'',\dd'}\circ \sw_{\tau}"]& \HO\!\CoHA_{\Pi_{Q_{\fr}, \ff,\dd}}^{T_{\bullet}} 
    \end{tikzcd}
    \end{equation*}
    commutes for all decompositions $\dd=\dd'+\dd''$ and $\ff=\ff'+\ff''$.
\end{corollary}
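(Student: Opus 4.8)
The plan is to reduce the statement to Proposition~\ref{mult_compat} by comparing the two chambers $\mathfrak{C}_\pm$ attached to the decomposition $\ff=\ff'+\ff''$. The rank-two subtorus $A\subseteq A_\ff$ of \S\ref{section: Stable envelopes for quiver varieties} depends only on the \emph{unordered} pair $\{\ff',\ff''\}$: passing from the splitting $\ff=\ff'+\ff''$ to the reversed splitting $\ff=\ff''+\ff'$ merely interchanges the two coordinates of $A$, hence its two chambers, so that $\mathfrak{C}_-^{(\ff',\ff'')}=\mathfrak{C}_+^{(\ff'',\ff')}$, and the $A$-fixed component $\Nak_Q(\ff',\dd')\times\Nak_Q(\ff'',\dd'')$ of $\Nak_Q(\ff,\dd)$ coincides with the component $\Nak_Q(\ff'',\dd'')\times\Nak_Q(\ff',\dd')$ entering the construction of $\Stab_+^{(\ff'',\ff')}$, up to reordering the two factors. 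Consequently $\Stab_-^{(\ff',\ff'')}$ and $\Stab_+^{(\ff'',\ff')}\circ\swap$ — with $\swap$ the reordering of the tensor factors of $\NakMod^{T_{\ff'}}_{Q,\ff'}\otimes_{\HO_T}\NakMod^{T_{\ff''}}_{Q,\ff''}$ — are both built from Lagrangian cycles satisfying the axioms of Theorem~\ref{thm: definition stab} for the same chamber, and by the uniqueness asserted there they can differ only through axiom~(i), i.e.\ through their restriction to the diagonal of each fixed component.

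First I would pin down this diagonal discrepancy. Invoking Remark~\ref{rem: polarisation opposite chamber} for $\Stab_-$ and the polarisation convention of \S\ref{section: Stable envelopes for quiver varieties} for $\Stab_+$, the two Lagrangian cycles restrict to $F_\eta\times F_\eta$, where $F_\eta=\Nak_Q(\ff',\eta)\times\Nak_Q(\ff'',\dd-\eta)$, to classes differing by the sign $(-1)^{\frac12\codim_X(F_\eta)}$. Using $\dim\Nak_Q(\ff,\dd)=2\ff\cdotsh\dd-2\chi_Q(\dd,\dd)$ one computes
\[
\tfrac12\codim_X(F_\eta)\equiv (\eta,\dd-\eta)_Q+\ff'\cdotsh(\dd-\eta)+\ff''\cdotsh\eta\pmod 2,
\]
and a direct expansion of the Euler form of the doubly framed tripled quiver gives $\chi_{\tilde{Q}_{\ff',\ff''}}\!\big((\eta,1,0),(\dd-\eta,0,1)\big)\equiv(\eta,\dd-\eta)_Q+\ff'\cdotsh(\dd-\eta)+\ff''\cdotsh\eta\pmod 2$ as well. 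Writing $\chi=\chi_{\tilde{Q}_{\ff',\ff''}}\!\big((\dd',1,0),(\dd'',0,1)\big)$, this yields the identity $\Stab_-^{(\ff',\ff'')}=(-1)^{\chi}\,\Stab_+^{(\ff'',\ff')}\circ\swap$ on the $(\dd',\dd'')$-graded piece.

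Next I would feed this into Proposition~\ref{mult_compat} applied to the decomposition $\ff=\ff''+\ff'$, $\dd=\dd''+\dd'$, which gives $\Psi_{\ff,\dd}\circ\Stab_+^{(\ff'',\ff')}=\vmult^{\ff'',\ff'}_{\dd'',\dd'}\circ(\Psi_{\ff'',\dd''}\otimes_{\HO_T}\Psi_{\ff',\dd'})$. Since every $\Psi_{\ff,\dd}$ is $\HO_T$-linear and bihomogeneous, the reordering $\swap$ commutes past $\Psi_{\ff',\dd'}\otimes_{\HO_T}\Psi_{\ff'',\dd''}$ to the plain reordering of the two CoHA factors, so that
\[
\Psi_{\ff,\dd}\circ\Stab_-^{(\ff',\ff'')}=(-1)^{\chi}\,\vmult^{\ff'',\ff'}_{\dd'',\dd'}\circ\swap\circ(\Psi_{\ff',\dd'}\otimes_{\HO_T}\Psi_{\ff'',\dd''}).
\]
Finally, recall from Remark~\ref{spec_def_rem} that, since the relevant shuffle algebra of $\tilde{Q}_{\ff',\ff''}$ satisfies the cohomological parity condition, the twisted swap $\sw_\tau$ is exactly $(-1)^{\chi}$ times this plain reordering; hence $(-1)^{\chi}\swap=\sw_\tau$ and the previous display is precisely the commutativity of the diagram in the statement.

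I expect the main obstacle to be the sign manipulation of the first two paragraphs: one has to be sure that interchanging the two labels of $A$ does not secretly reorient the Lagrangian cycle, to invoke the uniqueness of Theorem~\ref{thm: definition stab} for polarisations that differ by a sign on a block (equivalently, that such a sign change in axiom~(i) simply rescales $\Sh{L}_{\mathfrak{C}}$ by that sign on the corresponding block of the fixed locus), and to carry out the parity computation matching $\frac12\codim_X(F_\eta)$ with $\chi_{\tilde{Q}_{\ff',\ff''}}$. If one prefers to bypass the uniqueness argument, the deformation-theoretic proof of Proposition~\ref{mult_compat} can be replayed verbatim for $\mathfrak{C}_-$: one presents $[\NS]$-type classes together with $[\Stab_-]$ as the specialisation of the \emph{repelling} set (the attracting set of the opposite chamber), identifies it after the $\Gl'_\dd$-quotient with the graph of $\vmult^{\ff'',\ff'}_{\dd'',\dd'}$, and reads off the sign $\sw_\tau$ from combining the polarisation sign of Remark~\ref{rem: polarisation opposite chamber} with the very same Euler-class ratio $\Eu(V/V_1)/\Eu(V/V_2)$ that produced the sign $(-1)^{\dd'\cdotsh\dd''}$ in that proof.
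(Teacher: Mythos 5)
Your argument is correct and rests on precisely the same ingredients as the paper's: Proposition \ref{mult_compat}, Remark \ref{rem: polarisation opposite chamber}, and the parity identity $\chi_{\tilde{Q}_{\ff',\ff''}}\bigl((\dd',1,0),(\dd'',0,1)\bigr)\equiv\tfrac12\codim_X(F)\pmod 2$. The paper's terse proof is your \emph{alternate} route: replay the deformation argument of Proposition \ref{mult_compat} for the opposite chamber and absorb the polarisation sign. Your \emph{primary} route — establishing $\Stab_-^{(\ff',\ff'')}=(-1)^{\frac12\codim_X(F)}\,\Stab_+^{(\ff'',\ff')}\circ\swap$ via the uniqueness of Theorem \ref{thm: definition stab} and then quoting Proposition \ref{mult_compat} for the decomposition $\ff=\ff''+\ff'$ as a black box — is a cleaner packaging of the same mathematics, since it avoids reopening the deformation-theoretic proof. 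The worry you raise about that uniqueness step is unfounded: axioms (ii) and (iii) of Theorem \ref{thm: definition stab} are insensitive to a componentwise sign change of the polarisation, so by uniqueness, twisting the polarisation by $\epsilon_F$ on a fixed component $F$ simply scales $\Sh{L}_\mathfrak{C}\lvert_{F\times X}$ by $\epsilon_F$; there is no hidden reorientation to track. One small simplification: in your last step you do not need Remark \ref{spec_def_rem} — the identity $\sw_\tau=(-1)^\chi\cdot\swap$ is the definition of $\sw_\tau$ given immediately before the corollary.
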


\begin{proof}
    The proof is exactly the same as that of Proposition \ref{mult_compat}. The additional sign $(-1)^{\chi}$ encoded in the map $\sw_{\tau}$ compensates for the sign of $\Stab_-$, due to our choice of polarisation. Indeed, it is easy to check that $\chi=\frac{1}{2} \codim_X(F) \mod 2$, where $X=\Nak_Q(\ff,\dd)$ and $F=\Nak_Q(\ff',\dd')\times \Nak_Q(\ff'',\dd'')$. Hence, the proof follows from Remark \ref{rem: polarisation opposite chamber}.
\end{proof}

\section{Proof of the main results}
\subsection{Nonabelian stable envelopes and perverse filtrations}
\label{NASE_perv_sec}
We fix throughout this section a quiver $Q$.  We fix a weighting $\wt\colon \tilde{Q}_1\rightarrow \BoZ^r$ satisfying Assumption \ref{weighting_assumption}.  Until Corollary \ref{ThmC_cor2} we will consider degenerate stability conditions; on $Q$ we consider the stability condition $\zeta_{\mathrm{deg}}=\underline{0}$, so that all modules are semistable, and we extend this by considering $\zeta^+_{\mathrm{deg}}\in\BoN^{(Q_{\ff})_0}$ as in the definition of Nakajima quiver varieties given in \S\ref{sec: Nakajima quiver varieties}.

Given a framing dimension vector $\ff\in\BoN^{Q_0}$ we define the extended torus $T_{\ff}=T\times A_{\ff}$ as at the beginning of \S \ref{section: Stable envelopes for quiver varieties}, to which we refer also for the definitions of $A_{\triv}\cong \BoC^*$, $A^0_{\ff}=A_{\ff}/\BoC^*$ and $T_{\ff}^0=T\times A_{\ff}^0$.  The morphism $\pi_{\ff}\colon \Nak^{T_{\ff}}_Q(\ff,\dd)\rightarrow \CM^{T_{\ff}}_{(\dd,1)}(\Pi_{Q_{\ff}})$ is projective, with smooth domain, so that we may apply the Beilinson--Bernstein--Deligne--Gabber decomposition theorem \cite{BBD}.  Furthermore, by \cite[Cor.6.11]{Nak94} the morphism $\pi_{\ff}$ is semismall, meaning that ${}^{\Fp'}\!\!\Ho^{i}\!\left((\pi_{\ff})_{*}\BoQ^{\vir}_{\Nak^{T_{\ff}}_Q(\ff,\dd)}\right)=0$ for $i\neq 0$.  Putting this together, there is a decomposition into simple perverse sheaves:
\begin{equation}
\label{Nak_decomp}
(\pi_{\ff})_*\BoQ^{\vir}_{\Nak^{T_{\ff}}_Q(\ff,\dd)}\cong \bigoplus_{s\in J}\ICS_{\overline{Z_s}/T_{\ff}}(\CL_s).
\end{equation}
In the above decomposition, $Z_s\subset \CM_{(\dd,1)}(\Pi_{Q_{\ff}})$ are $T_{\ff}$-invariant locally closed smooth subvarieties, and each $\CL_s$ is a (shifted) simple local system on $Z_s$, while $\ICS_{\overline{Z_s}/T_{\ff}}(\CL_s)$ is the intermediate extension to $\overline{Z_s}/T_{\ff}$, considered as a perverse sheaf on $\CM^{T_{\ff}}_{(\dd,1)}(\Pi_{Q_{\ff}})$ via direct image along the closed embedding $\ol{Z_s}/T_{\ff} \hookrightarrow \CM^{T_{\ff}}_{(\dd,1)}(\Pi_{Q_{\ff}})$.  If $(\dd,1)\in\Phi^+_{Q_{\ff}}$ then the summand $\ICS(\CM^{T_{\ff}}_{(\dd,1)}(\Pi_Q))$ occurs with multiplicity one in the decomposition \eqref{Nak_decomp}, and is the only summand with full support.  

We define $T^+\coloneqq T\times A_{\triv}$.  Note that $A_{\triv}$ acts trivially on $\Nak_Q(\ff,\dd)$ and $\CM_{(\dd,1)}(\Pi_Q)$, and so there are isomorphisms 
\begin{equation*}
\Nak^{T_{\ff}}_Q(\ff,\dd)\cong \Nak^{T^0_{\ff}}_Q(\ff,\dd)\times \B A_{\triv};\quad\quad \varpi \colon \CM^{T_{\ff}}_{(\dd,1)}(\Pi_{Q_{\ff}})\cong \CM^{T^0_{\ff}}_{(\dd,1)}(\Pi_{Q_{\ff}})\times \B A_{\triv}
\end{equation*}
depending on a splitting $A_{\ff}\cong A_{\ff}^0\times A_{\triv}$.  We may alternatively write $\pi_{\ff}=\pi_{\ff}^0\times \id_{\B A_{\triv}}$, with $\pi_{\ff}^0\colon \Nak^{T^0_{\ff}}_Q(\ff,\dd)\rightarrow \CM^{T^0_{\ff}}_{(\dd,1)}(\Pi_{Q_{\ff}})$ the natural projective morphism.  If we denote by $\pi^+\colon \Nak^{T^+}_Q(\ff,\dd)\rightarrow \CM^{T^+}_{(\dd,1)}(\Pi_{Q_{\ff}})$ the morphism obtained after passing to $T^+$-quotients, we obtain the decomposition $\pi^+_*\BoQ^{\vir}_{\Nak^{T^+}_Q(\ff,\dd)}\cong \bigoplus_{s\in J}\ICS_{\overline{Z_s}/T^+}(\CL_s)$ analogous to \eqref{Nak_decomp}.

We denote $\kappa_{\ff}\colon \CM^{T_{\ff}}_{(\dd,1)}(\Pi_{Q_{\ff}})\rightarrow \CM^{T^0_{\ff}}_{(\dd,1)}(\Pi_{Q_{\ff}})$ the projection induced by the projection of groups $T_{\ff}\twoheadrightarrow T_{\ff}^0$, and $\kappa\colon \CM^{T^+}_{(\dd,1)}(\Pi_{Q_{\ff}})\rightarrow \CM^{T}_{(\dd,1)}(\Pi_{Q_{\ff}})$ the projection induced by the projection $T^+\twoheadrightarrow T$.  Expressed in terms of the isomorphism $\varpi$, the morphism $\kappa_{\ff}$ is simply the projection onto the first factor on the right hand side.  In particular, there is an isomorphism of split complexes of perverse sheaves
\begin{equation}
\label{can_split}
(\kappa_{\ff})_*(\pi_{\ff})_*\BoQ_{\Nak^{T_{\ff}}_Q(\ff,\dd)}^{\vir}\cong (\pi^0_{\ff})_*\BoQ_{\Nak^{T_{\ff}^0}_Q(\ff,\dd)}^{\vir}\otimes\HO_{A_{\triv}}
\end{equation}
and an isomorphism
\begin{equation}
\label{can_GS_split}
\HO^*\!\left(\Nak^{T_{\ff}}_Q(\ff,\dd),\BoQ^{\vir}\right)\cong \HO^*\!\left(\Nak^{T^0_{\ff}}_Q(\ff,\dd),\BoQ^{\vir}\right)\otimes\HO_{A_{\triv}}.
\end{equation}
\begin{remark}
\label{can_emb_rem}
We frequently consider $\HO^*\!\left(\Nak^{T^0_{\ff}}_Q(\ff,\dd),\BoQ^{\vir}\right)$ as a sub-object of $\HO^*\!\left(\Nak^{T_{\ff}}_Q(\ff,\dd),\BoQ^{\vir}\right)$ via the embedding sending $\alpha\mapsto \alpha\otimes 1$, where the target is considered as an element of the target of \eqref{can_GS_split}.  Geometrically, we have seen above that this is the inclusion of the zeroth piece of the perverse filtration defined with respect to the morphism $\kappa_{\ff}\circ\pi_{\ff}$, or pull back along $\kappa_{\ff}$.  In particular, it does not depend on the splitting $A_{\ff}\cong A_{\ff}^0\times A_{\triv}$.
\end{remark}
As in \S \ref{more_SE_sec} we may realise
\[
\Psi_{\ff,\dd}\colon \NakMod^{T_{\ff}}_{Q,\ff,\dd}\rightarrow \FM_{(\dd,1)}^{T_{\ff}^0}(\Pi_{Q_{\ff}})
\]
as the morphism defined by the correspondence $[\NS]$.  Considering $[\NS]$ instead as a $T^+$-equivariant cycle, we obtain a cycle in $\HO^{\BoMo}(\Nak^{T^+}_{Q}(\ff,\dd)\times_{\B T^+} \FM^{T}_{(\dd,1)}(\Pi_{Q_{\ff}}), \BoQ^{\vir})$.  By abuse of notation we denote by the same symbol the resulting morphism 
\[
\Psi_{\ff,\dd}\colon \NakMod^{T^+}_{Q,\ff,\dd}\rightarrow \FM_{(\dd,1)}^{T}(\Pi_{Q_{\ff}}).
\]

We denote by 
\[
z\colon \Nak_{Q}^{T_{\ff}}(\ff,\dd)\times_{\CM^{T_{\ff}}_{(\dd,1)}(\Pi_{Q_{\ff}})} \FM^{T_{\ff}^0}_{\dd,1}(\Pi_{Q_{\ff}})\rightarrow \Nak_{Q}^{T_{\ff}}(\ff,\dd)\times_{\B T_{\ff}} \FM^{T_{\ff}^0}_{(\dd,1)}(\Pi_{Q_{\ff}})
\]
the closed embedding.  We denote by the same symbol the embedding at the level of $T^+$-quotients (instead of $T_{\ff}$-quotients).  The following lemma follows from the construction of $[\NS]$ in \S \ref{more_SE_sec}.
\begin{lemma}
\label{NS_factoring_lemma}
The element $[\NS]$ lies in the image of 
\[
z_*\colon\HO^{\BM}( \Nak_{Q}^{T_{\ff}}(\ff,\dd)\times_{\CM^{T_{\ff}}_{(\dd,1)}(\Pi_{Q_{\ff}})} \FM^{T^0_{\ff}}_{(\dd,1)}(\Pi_{Q_{\ff}}),z^*\BoQ^{\vir})\rightarrow \HO^{\BM}( \Nak_{Q}^{T_{\ff}}(\ff,\dd)\times_{\B T_{\ff}} \FM^{T^0_{\ff}}_{(\dd,1)}(\Pi_{Q_{\ff}}),\BoQ^{\vir}).
\]
\end{lemma}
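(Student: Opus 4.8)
The plan is to track supports through the specialisation construction of $[\NS]$ in Definition \ref{def: NS class}. Recall that $[\NS]=\lim_{t\to 0}[\Delta_t]$, where for $t\in U$ the cycle $\Delta_t$ is the diagonal inside $\Nak^{T_{\ff}}_{Q,t}(\ff,\dd)\times_{\B T_{\ff}}\FM^{T_{\ff}^0}_{(\dd,1),t}(\Pi_{Q_{\ff}})$, formed via the canonical isomorphism $\widetilde\Nak_Q(\ff,\dd)|_U\cong\widetilde\FNak_Q(\ff,\dd)|_U$ of Proposition \ref{prop: usual resolution vs stacky resol}. Over $U$ the affinization morphism $\pi_t$ is an isomorphism (\S \ref{section: Deformation of quiver varieties}), so for each $t\in U$ the two projections of $\Delta_t$ to the coarse moduli space $\CM^{T_{\ff}}_{(\dd,1),t}(\Pi_{Q_{\ff}})$ coincide; equivalently, $\Delta_t$ already lies in the closed subspace $\Nak^{T_{\ff}}_{Q,t}(\ff,\dd)\times_{\CM^{T_{\ff}}_{(\dd,1),t}(\Pi_{Q_{\ff}})}\FM^{T_{\ff}^0}_{(\dd,1),t}(\Pi_{Q_{\ff}})$ of $\Nak^{T_{\ff}}_{Q,t}(\ff,\dd)\times_{\B T_{\ff}}\FM^{T_{\ff}^0}_{(\dd,1),t}(\Pi_{Q_{\ff}})$, and in fact equals it.

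First I would globalise this over $\AA^{Q_0}$. Using the deformation spaces of \S \ref{section: Deformation of quiver varieties}, set
\[
\widetilde Z\coloneqq \widetilde\Nak_Q(\ff,\dd)\times_{\widetilde\CM_{(\dd,1)}(\Pi_{Q_{\ff}})}\widetilde\FNak_Q(\ff,\dd),
\]
where the two maps to $\widetilde\CM_{(\dd,1)}(\Pi_{Q_{\ff}})$ are $\tilde\pi$ and the affinization morphism, and the $T_{\ff}$- and $T_{\ff}^0$-quotients are arranged exactly as in \S \ref{more_SE_sec}. Since $\widetilde\CM_{(\dd,1)}(\Pi_{Q_{\ff}})$ is affine over $\AA^{Q_0}$, hence separated, $\widetilde Z$ is a closed subfamily of $\widetilde\Nak_Q(\ff,\dd)\times_{\B T_{\ff}}\widetilde\FNak_Q(\ff,\dd)$ over $\AA^{Q_0}$. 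By the constructions of \S \ref{section: Deformation of quiver varieties} and \S \ref{more_SE_sec}, the fibre of $\widetilde Z$ over $0$ is precisely the domain of $z$, while its restriction to $U$ is the total space of the family of diagonals $(\Delta_t)_{t\in U}$. Consequently the family of cycles defining $[\NS]$ is the image of the fundamental class $[\widetilde Z|_U]$ under pushforward along the closed embedding $\widetilde Z|_U\hookrightarrow (\widetilde\Nak_Q(\ff,\dd)\times_{\B T_{\ff}}\widetilde\FNak_Q(\ff,\dd))|_U$.

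The last step is to invoke functoriality of the specialisation map with respect to proper pushforward \cite[\S 2.6]{chriss2009representation}: specialising to the fibre over $0$ commutes with pushforward along the closed embedding of $\AA^{Q_0}$-families $\widetilde Z\hookrightarrow \widetilde\Nak_Q(\ff,\dd)\times_{\B T_{\ff}}\widetilde\FNak_Q(\ff,\dd)$. Combined with Definition \ref{def: NS class}, this shows that $[\NS]$ is the pushforward along $z$ of the specialisation of $[\widetilde Z|_U]$, hence lies in $\Image(z_*)$, which is the assertion of the lemma. Running the same argument $T^+$-equivariantly yields the corresponding statement for the $T^+$-version of $z$ recorded in \S \ref{more_SE_sec}.

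I expect the main obstacle to be making this functoriality precise for the specialisation map actually used in Definition \ref{def: NS class}: as remarked after Proposition \ref{prop: stab via specialzation}, that map is not the naive specialisation for a morphism to $\AA^1$ but the one adapted to the geometry of $\widetilde\Nak_Q(\ff,\dd)$ near $0$ (e.g. obtained by restricting along a generic line through the origin, or via deformation to the normal cone), so one must check compatibility of closed pushforward with whichever incarnation one uses. A secondary, purely bookkeeping, point is to verify that the $T_{\ff}$ versus $T_{\ff}^0$ (and trivially-acting $A_{\triv}$) quotients in $\widetilde\FNak_Q(\ff,\dd)$ are set up so that the $0$-fibre of $\widetilde Z$ is literally the domain of $z$; this follows from the constructions of \S \ref{section: Deformation of quiver varieties} and \S \ref{sec: nonabelian stab}, but deserves to be spelled out.
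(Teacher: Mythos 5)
Your proof is correct and fills in, in the expected way, an argument that the paper leaves implicit — the paper dismisses the lemma with the one line that it ``follows from the construction of $[\NS]$ in \S\ref{more_SE_sec}.''  The points you spell out (that the diagonals $\Delta_t$ for $t\in U$ are already cut out by the Steinberg-type fibre product over $\widetilde\CM_{(\dd,1)}(\Pi_{Q_{\ff}})$, globalising to a closed $\AA^{Q_0}$-subfamily $\widetilde Z$, and that specialisation commutes with proper pushforward) are exactly what is being appealed to, and your flagged caveats are harmless here since the same incarnation of the specialisation map appears on both sides of $z_*$.
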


We define $\JH^{\ff}\colon \FM^{T_{\ff}^0}_{(\dd,1)}(\Pi_{Q_{\ff}})\rightarrow \CM^{T_{\ff}^0}_{(\dd,1)}(\Pi_{Q_{\ff}})$ as in \S \ref{moduli_stacks_sec}, and we denote by 
\[
\JH^{\ff,+}\colon \FM^{T_{\ff}^0}_{(\dd,1)}(\Pi_{Q_{\ff}})\rightarrow \CM^{T_{\ff}}_{(\dd,1)}(\Pi_{Q_{\ff}})
\]
the extended map.  Explicitly, writing the target as a Cartesian product via $\varpi$, we define $\JH^{\ff,+}\coloneqq \JH^{\ff}\times \det_{\infty}$, where $\det_{\infty}$ is the morphism to $\B A_{\triv}$ representing the tautological line bundle sending a $(\dd,1)$-dimensional $\Pi_{Q_{\ff}}$-module $M$ to the line $\lazy_{\infty}\cdotsh M$. In particular, $\kappa_{\ff}\circ \JH^{\ff,+}=\JH^{\ff}$. Similarly, we define $\JH^{+}=\JH\times \det_{\infty}\colon \FM^{T}_{(\dd,1)}(\Pi_{Q_{\ff}})\rightarrow \CM^{T^+}_{(\dd,1)}(\Pi_{Q_{\ff}})$ as the morphism obtained by passing to $T^+$-quotients. It satisfies $\kappa\circ \JH^+=\JH$.

The following is a modified version of \cite[Lem.8.6.1]{chriss2009representation}:
\begin{proposition}
\label{CG_prop}
There are natural isomorphisms
\begin{align*}
\HO^{\BM}( \Nak_{Q}^{T_{\ff}}(\ff,\dd)\times_{\CM^{T_{\ff}}_{(\dd,1)}(\Pi_{Q_{\ff}})} \FM^{T^0_{\ff}}_{(\dd,1)}(\Pi_{Q_{\ff}}),z^*\BoQ^{\vir})\cong &\Ext^{\bullet}\left((\pi_{\ff})_*\BoQ^{\vir}_{\Nak_{Q}^{T_{\ff}}(\ff,\dd)},(\JH^{\ff,+})_{*}\BoD\BoQ^{\vir}_{\FM^{T^0_{\ff}}_{(\dd,1)}(\Pi_{Q_{\ff}})}\right)
\\
\HO^{\BM}( \Nak_{Q}^{T^+}(\ff,\dd)\times_{\CM^{T^+}_{(\dd,1)}(\Pi_{Q_{\ff}})} \FM^{T}_{(\dd,1)}(\Pi_{Q_{\ff}}),z^*\BoQ^{\vir})\cong &\Ext^{\bullet}\left(\pi^+_*\BoQ^{\vir}_{\Nak_{Q}^{T^+}(\ff,\dd)},(\JH^+)_{*}\BoD\BoQ^{\vir}_{\FM^{T}_{(\dd,1)}(\Pi_{Q_{\ff}})}\right).
\end{align*}
\end{proposition}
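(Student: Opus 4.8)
The argument is that of \cite[Lem.8.6.1]{chriss2009representation}, adjusted in two respects: all functors are the six operations on the quotient stacks in play, taken in the categories $\Dub^+(\Perv(-))$ fixed in \S\ref{conventions_sec}; and, since among our two structure maps only $\pi_\ff$ is proper (indeed $\JH^{\ff,+}$ is not even representable once the $\det_\infty$-factor is forgotten), we carry the virtual dualizing complex $\BoD\BoQ^{\vir}$ on the $\FM$-side rather than a constant sheaf. Abbreviate $M=\Nak^{T_\ff}_Q(\ff,\dd)$, and let $W$ denote the fibre product $\Nak^{T_\ff}_Q(\ff,\dd)\times_{\CM^{T_\ff}_{(\dd,1)}(\Pi_{Q_\ff})}\FM^{T^0_\ff}_{(\dd,1)}(\Pi_{Q_\ff})$, which is the domain of $z$, with projections $a\colon W\to M$ and $b\colon W\to \FM^{T^0_\ff}_{(\dd,1)}(\Pi_{Q_\ff})$ fitting into a Cartesian square with $\pi_\ff\circ a=\JH^{\ff,+}\circ b$. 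Then $b$ is the base change of the projective representable morphism $\pi_\ff$ — hence itself proper and representable — while $a$ is the base change of $\JH^{\ff,+}$. Since the coarse space $\CM^{T_\ff}_{(\dd,1)}(\Pi_{Q_\ff})$ is separated, $W$ is a closed substack of $\Nak^{T_\ff}_Q(\ff,\dd)\times_{\B T_\ff}\FM^{T^0_\ff}_{(\dd,1)}(\Pi_{Q_\ff})$ and $z^*\BoQ^{\vir}$ is the constant sheaf $\BoQ_W$ up to an overall cohomological shift.

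Next I would run the following chain of isomorphisms, in which shifts are suppressed (they are restored at the end by a direct computation from the definitions of $\BoQ^{\vir}$ in \S\ref{sec: Nakajima quiver varieties} and \S\ref{subsection:Preprojective CoHA}, using that virtual dimension is additive under fibre product over $\B T_\ff$):
\begin{align*}
\HO^{\BM}\!\big(W,z^*\BoQ^{\vir}\big)
&=\Ext^{\bullet}_{W}\!\big(\BoQ_{W},\,\BoD(z^*\BoQ^{\vir})\big)\\
&\cong\Ext^{\bullet}_{W}\!\big(a^*\BoQ_{M},\,b^!\,\BoD\BoQ^{\vir}_{\FM^{T^0_\ff}_{(\dd,1)}(\Pi_{Q_\ff})}\big)\\
&\cong\Ext^{\bullet}_{M}\!\big(\BoQ_{M},\,a_*b^!\,\BoD\BoQ^{\vir}_{\FM^{T^0_\ff}_{(\dd,1)}(\Pi_{Q_\ff})}\big)\\
&\cong\Ext^{\bullet}_{M}\!\big(\BoQ_{M},\,\pi_\ff^!\,(\JH^{\ff,+})_*\BoD\BoQ^{\vir}_{\FM^{T^0_\ff}_{(\dd,1)}(\Pi_{Q_\ff})}\big)\\
&\cong\Ext^{\bullet}_{\CM^{T_\ff}_{(\dd,1)}(\Pi_{Q_\ff})}\!\big((\pi_\ff)_*\BoQ^{\vir}_{M},\,(\JH^{\ff,+})_*\BoD\BoQ^{\vir}_{\FM^{T^0_\ff}_{(\dd,1)}(\Pi_{Q_\ff})}\big).
\end{align*}
Here the second line combines $a^*\BoQ_M=\BoQ_W$ with the biduality relations $\BoD(z^*\BoQ^{\vir})\cong\omega_W$, $\omega_W=a^!\BoD\BoQ_M=b^!\BoD\BoQ_{\FM^{T^0_\ff}_{(\dd,1)}(\Pi_{Q_\ff})}$, and the smoothness of $M$; the third line is the $(a^*,a_*)$-adjunction; the fourth is base change $\pi_\ff^!(\JH^{\ff,+})_*\cong a_*b^!$ for the Cartesian square above; and the last line uses $(\pi_\ff)_*=(\pi_\ff)_!$, valid since $\pi_\ff$ is proper, together with the adjunction $((\pi_\ff)_!,\pi_\ff^!)$. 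This yields the first asserted isomorphism; the second one is obtained by repeating the argument verbatim with $T^+,T,\pi^+,\JH^+$ in place of $T_\ff,T^0_\ff,\pi_\ff,\JH^{\ff,+}$.

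The one genuinely delicate point is to make sure every step is legitimate in the relevant derived categories of the Artin stacks involved. The complex $(\JH^{\ff,+})_*\BoD\BoQ^{\vir}_{\FM^{T^0_\ff}_{(\dd,1)}(\Pi_{Q_\ff})}$ is unbounded in the perverse direction, since $\FM^{T^0_\ff}_{(\dd,1)}(\Pi_{Q_\ff})$ carries cohomology in arbitrarily high degree coming from the $\B\Gl_{(\dd,1)}$-factor; hence the computation must be carried out in the categories $\Dub^+(\Perv(-))$ of \S\ref{conventions_sec}, and one must check that $\pi_\ff^!$, the dualizing functor and the relevant direct images preserve this subcategory, and that base change and adjunction persist there. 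Because $\pi_\ff$ is representable and projective and $\JH^{\ff,+}$ has finite cohomological amplitude after the $\det_\infty$-twist, this is standard, and is precisely where properness of $\pi_\ff$ (rather than mere finiteness) is used; everything else is formal manipulation with the six functors, with no appeal to semismallness of $\pi_\ff$ or to the decomposition \eqref{Nak_decomp}. I would also note at the end that, combined with Lemma \ref{NS_factoring_lemma}, this proposition exhibits the class $[\NS]$ — hence the nonabelian stable envelope $\Psi_{\ff,\dd}$ — as induced by an honest morphism of complexes, which is what the subsequent arguments need.
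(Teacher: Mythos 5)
Your proof is correct and is essentially the argument the paper intends: the paper's proof simply states that \cite[Lem.8.6.1]{chriss2009representation} applies verbatim, using only smoothness of the Nakajima variety and projectivity (hence properness) of $\pi_\ff$, and your chain of adjunctions, biduality, and proper base change (in the form $\pi_\ff^! (\JH^{\ff,+})_* \cong a_* b^!$) is precisely the content of that CG lemma, transported to the relevant quotient stacks and with $\BoD\BoQ^{\vir}$ replacing the constant sheaf on the non-smooth side. You have filled in the details the paper compresses into a citation, including the technically necessary remark that the computation takes place in $\Dub^+(\Perv(-))$.
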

\begin{proof}
We consider the first isomorphism, the second is constructed the same way.  Recall that $\Nak_{Q}^{T_{\ff}}(\ff,\dd)$ is smooth, and $\pi_{\ff}$ is projective.  These are the only two features of the assumptions of \cite[Lem.8.6.1]{chriss2009representation} used in the proof; the proposition admits exactly the same proof. 
\end{proof}
Applying $(\kappa_{\ff})_*$ (respectively $\kappa_*$) we deduce the following corollary:
\begin{corollary}
\label{CG_cor}
Let \begin{align*}
f\colon &\HO^{\BoMo}(\Nak_Q^{T_{\ff}}(\ff,\dd),\BoQ^{\vir})\rightarrow \HO^{\BoMo}(\FM^{T^0_{\ff}}_{(\dd,1)}(\Pi_{Q_{\ff}}),\BoQ^{\vir})\\
g\colon &\HO^{\BoMo}(\Nak_Q^{T^+}(\ff,\dd),\BoQ^{\vir})\rightarrow \HO^{\BoMo}(\FM^{T}_{(\dd,1)}(\Pi_{Q_{\ff}}),\BoQ^{\vir})
\end{align*}
be the morphisms induced by a correspondence $C\in \HO^{\BM}( \Nak_{Q}^{T_{\ff}}(\ff,\dd)\times_{\CM^{T_{\ff}}_{(\dd,1)}(\Pi_{Q_{\ff}})} \FM^{T^0_{\ff}}_{(\dd,1)}(\Pi_{Q_{\ff}}),z^*\BoQ^{\vir})$, then $f$ (respectively $g$) is obtained by applying the derived global sections functor to morphisms of complexes
\begin{align}
\label{sh_l_NASE}
&(\kappa_{\ff})_*(\pi_{\ff})_*\BoQ^{\vir}_{\Nak_{Q}^{T_{\ff}}(\ff,\dd)}\rightarrow (\JH^{\ff})_{*}\BoD\BoQ^{\vir}_{\FM^{T^0_{\ff}}_{(\dd,1)}(\Pi_{Q_{\ff}})}\\
\label{sh_l_NASE_red}
&\kappa_*\pi^+_*\BoQ^{\vir}_{\Nak_{Q}^{T^+}(\ff,\dd)}\rightarrow \JH_{*}\BoD\BoQ^{\vir}_{\FM^{T}_{(\dd,1)}(\Pi_{Q_{\ff}})}.
\end{align}
\end{corollary}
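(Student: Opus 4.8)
The plan is to deduce the corollary from Proposition \ref{CG_prop} by pushing forward along $\kappa_{\ff}$, respectively $\kappa$, so that the argument is essentially bookkeeping. First I would record the extra information carried by the proof of \cite[Lem.8.6.1]{chriss2009representation}, of which Proposition \ref{CG_prop} is the modified version: the isomorphisms there are natural enough that if $C$ is a class in $\HO^{\BM}(\Nak_{Q}^{T_{\ff}}(\ff,\dd)\times_{\CM^{T_{\ff}}_{(\dd,1)}(\Pi_{Q_{\ff}})}\FM^{T^0_{\ff}}_{(\dd,1)}(\Pi_{Q_{\ff}}),z^*\BoQ^{\vir})$ corresponding to $\tilde{C}\in\Ext^{\bullet}((\pi_{\ff})_*\BoQ^{\vir}_{\Nak_{Q}^{T_{\ff}}(\ff,\dd)},(\JH^{\ff,+})_{*}\BoD\BoQ^{\vir}_{\FM^{T^0_{\ff}}_{(\dd,1)}(\Pi_{Q_{\ff}})})$, then the convolution map $f$ determined by $C$ is computed by applying the derived global sections functor on $\CM^{T_{\ff}}_{(\dd,1)}(\Pi_{Q_{\ff}})$ to $\tilde{C}$, viewed as a morphism of complexes. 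The only properties of the situation used in that proof are the properness of $\pi_{\ff}$ and the smoothness of $\Nak^{T_{\ff}}_{Q}(\ff,\dd)$, both of which hold here (cf. the proof of Proposition \ref{CG_prop}); this is the one step that is not purely formal, and in fact it is really just a reading of \cite[Lem.8.6.1]{chriss2009representation}, so I expect no genuine obstacle.

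Granting this, I would then apply $(\kappa_{\ff})_*$ to $\tilde{C}$. The source becomes $(\kappa_{\ff})_*(\pi_{\ff})_*\BoQ^{\vir}_{\Nak_{Q}^{T_{\ff}}(\ff,\dd)}$, and, using the identity $\kappa_{\ff}\circ\JH^{\ff,+}=\JH^{\ff}$ recorded above (which follows from $\JH^{\ff,+}=\JH^{\ff}\times\det_{\infty}$ and $\kappa_{\ff}$ being the projection onto the first factor under $\varpi$), the target becomes $(\JH^{\ff})_{*}\BoD\BoQ^{\vir}_{\FM^{T^0_{\ff}}_{(\dd,1)}(\Pi_{Q_{\ff}})}$. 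Thus $(\kappa_{\ff})_*\tilde{C}$ is precisely a morphism of the shape \eqref{sh_l_NASE}. Since $\kappa_{\ff}$ lies over a point, derived global sections on $\CM^{T^0_{\ff}}_{(\dd,1)}(\Pi_{Q_{\ff}})$ of $(\kappa_{\ff})_*(-)$ agree with derived global sections on $\CM^{T_{\ff}}_{(\dd,1)}(\Pi_{Q_{\ff}})$ of $(-)$, so applying the global sections functor to $(\kappa_{\ff})_*\tilde{C}$ returns $f$. This gives the first assertion.

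For the statement about $g$ I would repeat the argument verbatim with the $T^+$-equivariant versions of all of the above: the class $C$, reinterpreted as a $T^+$-equivariant cycle via $T^+\hookrightarrow T_{\ff}$, corresponds under the second isomorphism of Proposition \ref{CG_prop} to a class $\tilde{C}^+\in\Ext^{\bullet}(\pi^+_*\BoQ^{\vir}_{\Nak_{Q}^{T^+}(\ff,\dd)},(\JH^+)_{*}\BoD\BoQ^{\vir}_{\FM^{T}_{(\dd,1)}(\Pi_{Q_{\ff}})})$ with $g$ the derived global sections of $\tilde{C}^+$; pushing forward along $\kappa$ and invoking $\JH^+=\JH\times\det_{\infty}$ together with $\kappa\circ\JH^+=\JH$ turns the target into $\JH_{*}\BoD\BoQ^{\vir}_{\FM^{T}_{(\dd,1)}(\Pi_{Q_{\ff}})}$, so that $\kappa_*\tilde{C}^+$ has the shape \eqref{sh_l_NASE_red}, and its derived global sections again equal $g$. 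The only point requiring care is the compatibility of the Chriss--Ginzburg-type isomorphism with convolution from the first paragraph; the rest is the functoriality of pushforward together with the explicit factorisations of $\JH^{\ff,+}$ and $\JH^+$.
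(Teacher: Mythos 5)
Your proposal is correct and takes exactly the route the paper takes: the paper's entire proof of this corollary is the single sentence ``Applying $(\kappa_{\ff})_*$ (respectively $\kappa_*$) we deduce the following corollary,'' and your argument is precisely the expansion of that one-liner — using the convolution compatibility built into the Chriss--Ginzburg isomorphism of Proposition \ref{CG_prop}, the factorisations $\kappa_{\ff}\circ\JH^{\ff,+}=\JH^{\ff}$ and $\kappa\circ\JH^+=\JH$, and the invariance of derived global sections under pushforward along $\kappa_{\ff}$ (resp.\ $\kappa$). No gaps; the details you flag as requiring care (the compatibility with convolution) are indeed the substantive content, and your treatment of them is correct.
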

Combining this with Lemma \ref{NS_factoring_lemma} and Proposition \ref{prop:nonabelian stable envelope as a correspondence}, we deduce the following corollary:
\begin{corollary}
\label{NAHTPF}
The nonabelian stable envelope $\Psi_{\ff,\dd}$ preserves perverse filtrations, in the sense that $\Psi_{\ff,\dd}(\FP^n(\NakMod^{T_{\ff}}_{Q,\ff,\dd}))\subset \FP^n(\HO\!\CoHA^{T^0_{\ff}}_{\Pi_{Q_{\ff}},(\dd,1)})$ and $\Psi_{\ff,\dd}(\FP^n(\NakMod^{T^+}_{Q,\ff,\dd}))\subset \FP^n(\HO\!\CoHA^{T}_{\Pi_{Q_{\ff}},(\dd,1)})$, where the perverse filtrations are induced via the perverse truncation functors applied to the complexes of \eqref{sh_l_NASE} and \eqref{sh_l_NASE_red}, respectively.
\end{corollary}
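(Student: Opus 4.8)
The plan is to promote $\Psi_{\ff,\dd}$ from a map of cohomology groups to one obtained by applying the derived global sections functor to a genuine morphism of complexes on the coarse moduli space, and then to conclude by the functoriality of perverse truncation. The geometric content is already available: by Lemma \ref{NS_factoring_lemma} the class $[\NS]$, which a priori lies in the Borel--Moore homology of $\Nak^{T_{\ff}}_Q(\ff,\dd)\times_{\B T_{\ff}}\FM^{T^0_{\ff}}_{(\dd,1)}(\Pi_{Q_{\ff}})$, is the pushforward $z_*(C)$ of a class $C$ living on the fibre product $\Nak^{T_{\ff}}_Q(\ff,\dd)\times_{\CM^{T_{\ff}}_{(\dd,1)}(\Pi_{Q_{\ff}})}\FM^{T^0_{\ff}}_{(\dd,1)}(\Pi_{Q_{\ff}})$ taken over the coarse space.

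First I would note that, by Proposition \ref{prop:nonabelian stable envelope as a correspondence}, $\Psi_{\ff,\dd}$ is the convolution operator determined by $[\NS]$, and since $z$ is a closed embedding this agrees with the convolution operator over $\CM^{T_{\ff}}_{(\dd,1)}(\Pi_{Q_{\ff}})$ determined by $C$. Plugging $C$ into Corollary \ref{CG_cor} — together with its $T^+$-equivariant variant, obtained by regarding $[\NS]$ as a $T^+$-equivariant class — one obtains that $\Psi_{\ff,\dd}$ is $\HO^*(\CM^{T^0_{\ff}}_{(\dd,1)}(\Pi_{Q_{\ff}}),-)$ applied to a morphism of complexes $\phi$ of the shape \eqref{sh_l_NASE}, and that its $T^+$-avatar is $\HO^*(\CM^{T}_{(\dd,1)}(\Pi_{Q_{\ff}}),-)$ applied to a morphism $\phi^+$ of the shape \eqref{sh_l_NASE_red}.

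The remaining step is formal. For any morphism $\phi\colon\CF\to\CG$ in the bounded derived category of a space $X$, functoriality of the shifted perverse truncation gives a commutative square
\[
\begin{tikzcd}
{}^{\fp'}\!\vtau^{\leq n}\CF\ar[r]\ar[d]&{}^{\fp'}\!\vtau^{\leq n}\CG\ar[d]\\
\CF\ar[r,"\phi"]&\CG
\end{tikzcd}
\]
with the vertical arrows the truncation maps. Applying $\HO^*(X,-)$ yields a commutative square of graded modules; the image of its left vertical arrow is, by definition, $\FP^n(\NakMod^{T_{\ff}}_{Q,\ff,\dd})$, and, using the identification $(\JH^{\ff})_*\BoD\BoQ^{\vir}_{\FM^{T^0_{\ff}}_{(\dd,1)}(\Pi_{Q_{\ff}})}=\CoHA^{T^0_{\ff}}_{\Pi_{Q_{\ff}},(\dd,1)}$ and the definition of the perverse filtration on the preprojective CoHA recalled in \S \ref{perverse_CoHA_sec}, the image of its right vertical arrow is $\FP^n(\HO\!\CoHA^{T^0_{\ff}}_{\Pi_{Q_{\ff}},(\dd,1)})$. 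Commutativity of the square therefore gives $\Psi_{\ff,\dd}(\FP^n(\NakMod^{T_{\ff}}_{Q,\ff,\dd}))\subseteq \FP^n(\HO\!\CoHA^{T^0_{\ff}}_{\Pi_{Q_{\ff}},(\dd,1)})$, and the $T^+$-equivariant statement follows in exactly the same way from $\phi^+$ and \eqref{sh_l_NASE_red}. The only point needing attention is that the truncation morphisms are split injective on derived global sections, so that these images genuinely are the intended filtration steps; this is guaranteed by the decomposition theorem, available here by semismallness of $\pi_{\ff}$ and \eqref{Nak_decomp}, \eqref{can_split} on the source and by the decomposition \eqref{dec_thm_1} for the framed affinization on the target. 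I do not anticipate any genuine obstacle: the substantive input, namely that $[\NS]$ descends to the coarse moduli space, is Lemma \ref{NS_factoring_lemma}, and everything else is the bookkeeping just described.
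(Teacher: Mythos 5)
Your proposal is correct and is essentially the argument the paper has in mind; there the corollary is stated without a separate proof, on the grounds that it follows by "combining" Lemma \ref{NS_factoring_lemma}, Proposition \ref{prop:nonabelian stable envelope as a correspondence} and Corollary \ref{CG_cor}, which is precisely what you spell out. One small imprecision: the complexes you truncate do not live in the bounded derived category (the stacks of $\Pi_{Q_{\ff}}$-modules are not of finite type, and the relevant objects sit in $\Dub^+(\Perv(-))$ as set up in the Conventions section), but the functoriality of ${}^{\fp'}\!\vtau^{\leq n}$ and the splitting argument you invoke carry over verbatim to that setting, so the substance of your argument is unaffected.
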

The perverse filtrations on $\NakMod^{T_{\ff}}_{Q,\ff,\dd}$ and $\NakMod^{T^+}_{Q,\ff,\dd}$ are simply the order filtration in $\HO_{A_{\triv}}$ defined via \eqref{can_GS_split} and the canonical splitting $\NakMod^{T^+}_{Q,\ff,\dd}\cong \NakMod^{T}_{Q,\ff,\dd}\otimes \HO_{A_{\triv}}$, respectively.

\subsection{Proof of Theorem \ref{NaSE_vs_BPS}}

Our goal in this section is to prove the following theorem, which implies Theorem \ref{NaSE_vs_BPS}, since injectivity of $\Psi_{\ff,\dd}$ is given by Corollary \ref{corollary normalization nonabelian stable envelope}.  The proof is by induction on $\dd$, follows the strategy outlined in \S\ref{subsec: strategy for thm C}, and will involve establishing a number of preparatory lemmas; the proof is completed after Lemma \ref{half_comp_thm}.
\begin{theorem}
\label{NakBPS_prop}
Let $\dd,\ff\in\BoN^{Q_0}$ be arbitrary dimension vectors with $\ff$ nonzero.  Then $\Psi_{\ff,\dd}(\NakMod^{T^0_{\ff}}_{Q,\ff,\dd})=\Fg^{T^0_{\ff}}_{\Pi_{Q_{\ff}},(\dd,1)}\subset \HO\!\CoHA_{\Pi_{Q_{\ff}},(\dd,1)}^{T^0_{\ff}}$.
\end{theorem}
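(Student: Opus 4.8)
The plan is to prove the theorem by induction on the dimension vector $\dd\in\BoN^{Q_0}$ with respect to the natural partial order, for all framings $\ff$ simultaneously. The base case $\dd=\ul 0$ is immediate: both $\NakMod^{T^0_{\ff}}_{Q,\ff,\ul 0}$ and $\Fg^{T^0_{\ff}}_{\Pi_{Q_{\ff}},(\ul 0,1)}$ are canonically $\HO_{T^0_{\ff}}$, and $\Psi_{\ff,\ul 0}=\id$ by the definition of the nonabelian stable envelope in \S\ref{sec: nonabelian stab}. In the inductive step I would fix $\dd\neq\ul 0$ and assume the statement of the theorem for every $\ee<\dd$ and every framing; this is precisely the hypothesis under which Lemma \ref{cor_pack}, Lemma \ref{weak_F1_lem} and hence Lemma \ref{half_comp_thm} are established.

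First I would record the inclusion $\Fg^{T^0_{\ff}}_{\Pi_{Q_{\ff}},(\dd,1)}\subseteq \Psi_{\ff,\dd}\!\left(\NakMod^{T^0_{\ff}}_{Q,\ff,\dd}\right)$. Lemma \ref{half_comp_thm} already gives $\Fg^{T^0_{\ff}}_{\Pi_{Q_{\ff}},(\dd,1)}\subseteq\Image(\Psi_{\ff,\dd})$, and the point is that the preimages exhibited in its proof all lie in the subspace $\NakMod^{T^0_{\ff}}_{Q,\ff,\dd}\subset \NakMod^{T_{\ff}}_{Q,\ff,\dd}$: the images of Chevalley generators under the map $\iota$ of Proposition \ref{ChevG_prop} and the map $g'_1$ of Lemma \ref{weak_F1_lem} factor through $\HO^*(\Nak^{T^0_{\ff}}_Q(\ff,\dd),\BoQ^{\vir})$ by construction (via Lemma \ref{Nakinsc} and \eqref{can_GS_split}), while the remaining classes $\psi(\overline{\alpha})(\overline{\delta})$ arising in the treatment of the bracketed term $L$ are obtained by acting on $\NakMod^{T^0_{\ff}}_{Q,\ff}$ by raising operators of $\fg^{\MO,T}_Q$ and so stay inside the $T^0_{\ff}$-equivariant cohomology. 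Alternatively one may argue purely filtration-theoretically: by Corollary \ref{NAHTPF} the map $\Psi_{\ff,\dd}$ preserves perverse filtrations, by semismallness of $\pi^0_{\ff}$ and Remark \ref{can_emb_rem} the subspace $\NakMod^{T^0_{\ff}}_{Q,\ff,\dd}$ is the zeroth piece $\FP^0\NakMod^{T_{\ff}}_{Q,\ff,\dd}$, and $\Fg^{T^0_{\ff}}_{\Pi_{Q_{\ff}},(\dd,1)}$ sits inside $\FP^0\HO\!\CoHA^{T^0_{\ff}}_{\Pi_{Q_{\ff}},(\dd,1)}$.

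Then I would promote this to an equality by comparing dimensions in each cohomological degree. By Corollary \ref{corollary normalization nonabelian stable envelope} the map $\Psi_{\ff,\dd}$ is injective, and it preserves the cohomological grading, so the graded $\BoQ$-dimensions of $\Psi_{\ff,\dd}\!\left(\NakMod^{T^0_{\ff}}_{Q,\ff,\dd}\right)$ coincide with those of $\HO^*(\Nak^{T^0_{\ff}}_Q(\ff,\dd),\BoQ^{\vir})$. On the other hand, Corollary \ref{Nak_BPS_comp} — whose proof goes through unchanged when the torus $T$ there is taken to be $T^0_{\ff}$ — supplies an isomorphism of cohomologically graded $\HO_{T^0_{\ff}}$-modules $\Fg^{T^0_{\ff}}_{\Pi_{Q_{\ff}},(\dd,1)}\cong \HO^*(\Nak^{T^0_{\ff}}_Q(\ff,\dd),\BoQ^{\vir})$, so its graded dimensions agree as well. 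Since $\HO\!\CoHA^{T^0_{\ff}}_{\Pi_{Q_{\ff}},(\dd,1)}$ has finite-dimensional graded pieces (being the Borel--Moore homology of a finite-type stack, made equivariant for a torus), an inclusion of graded subspaces with equal graded dimensions is an equality; hence $\Fg^{T^0_{\ff}}_{\Pi_{Q_{\ff}},(\dd,1)}=\Psi_{\ff,\dd}\!\left(\NakMod^{T^0_{\ff}}_{Q,\ff,\dd}\right)$, completing the induction. The remaining assertions of Theorem \ref{NaSE_vs_BPS} concerning the $T$-equivariant map $\Psi'_{\ff,\dd}$ then follow formally, again using Corollary \ref{corollary normalization nonabelian stable envelope} for injectivity.

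I expect the main obstacle to lie not in this final assembly, which is essentially bookkeeping once Lemma \ref{half_comp_thm} is available, but (a) in confirming that the inductive hypothesis is invoked only for dimension vectors strictly smaller than $\dd$ throughout the chain Lemma \ref{cor_pack} $\Rightarrow$ Lemma \ref{weak_F1_lem} $\Rightarrow$ Lemma \ref{half_comp_thm}, and (b) in making watertight the claim that every summand of the decomposition $\gamma=\gamma'+[e_{\infty},\gamma'']+L$ used in Lemma \ref{half_comp_thm} is realised from the $T^0_{\ff}$-equivariant part $\NakMod^{T^0_{\ff}}_{Q,\ff,\dd}$ rather than merely from $\NakMod^{T_{\ff}}_{Q,\ff,\dd}$. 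Both amount to careful readings of the preceding lemmas rather than new mathematics.
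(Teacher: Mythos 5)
Your proposal is correct and follows essentially the same route as the paper: induction on $\dd$, with the base case from the definition of $\Psi_{\ff,\ul 0}$, the inclusion $\Fg^{T^0_{\ff}}_{\Pi_{Q_{\ff}},(\dd,1)}\subseteq\Psi_{\ff,\dd}(\NakMod^{T^0_{\ff}}_{Q,\ff,\dd})$ supplied by Lemma~\ref{half_comp_thm} under the inductive hypothesis, and equality forced by comparing graded dimensions via Corollary~\ref{Nak_BPS_comp} and the injectivity of $\Psi_{\ff,\dd}$ from Corollary~\ref{corollary normalization nonabelian stable envelope}. The only thing you add is an explicit check (both direct, via the construction of the preimages in Proposition~\ref{ChevG_prop}, Lemma~\ref{weak_F1_lem} and the $\psi(\overline\alpha_1)(\overline\delta)$ classes, and alternatively via the perverse-filtration preservation of Corollary~\ref{NAHTPF} together with injectivity) that the preimages land in the $T^0_{\ff}$-equivariant subspace rather than merely in $\NakMod^{T_{\ff}}_{Q,\ff,\dd}$; the paper leaves this implicit by simply stating the map in Lemma~\ref{half_comp_thm} with the $T^0_{\ff}$-domain, and your reading of why that is justified is accurate.
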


For $\dd\in \Phi^+_Q\subset \BoN^{Q_0}$ a simple positive root we use the notation $\CG(Q)_{\dd}\subset \CoHA_{\Pi_Q}^{T,0}$ introduced in \S \ref{GKM_thm_sec} to denote the generating perverse subsheaf of the relative Hall algebra $\CoHA_{\Pi_Q}^{T,0}={}^{\Fp'}\!\vtau^{\leq 0}\!\CoHA_{\Pi_Q}^{T}$ supported on $\CM^T_{\dd}(\Pi_Q)$.  In particular $\CG(Q)_{\dd}$ is a perverse sheaf on $\CM^T_{\dd}(\Pi_Q)$.

By Theorem \ref{KMA_thm} the zeroth perverse subalgebra $\FP^0\HO\!\CoHA^{T'}_{\Pi_Q}\coloneqq\HO^*(\CM^{T'}(\Pi_Q),\CoHA_{\Pi_Q}^{T',0})$ is the positive half of the generalised Kac--Moody algebra with Chevalley generators in degree $\dd\in \Phi^+_Q$ given by $\HO^*(\CM^{T'}_{\dd}(\Pi_Q),\CG(Q)_{\dd})$, and if $\dd\in \Sigma_Q$ then $\CG(Q)_{\dd}=\ICS(\CM^{T'}_{\dd}(\Pi_Q))$.  Here $T'$ is arbitrary.

Let $\dd,\ff\in\BoN^{Q_0}$ be dimension vectors such that there exists a simple $(\dd,1)$-dimensional $\Pi_{Q_{\ff}}$-module.  Equivalently, $(\dd,1)\in\Phi^+_{Q_{\ff}}$, which is in turn equivalent to $(\dd,1)\in\Sigma_{Q_{\ff}}$ since this dimension vector is indivisible.  We set $T'=T_{\ff}^0$.  Combining \eqref{Nak_decomp} and \eqref{can_split} yields a canonical morphism $\ICS(\CM^{T^0_{\ff}}_{(\dd,1)}(\Pi_{Q_{\ff}}))\rightarrow (\kappa_{\ff})_*(\pi_{\ff})_*\BoQ^{\vir}_{\Nak_{Q}^{T_{\ff}}(\ff,\dd)}$.  Passing to derived global sections, we obtain a canonical embedding
\begin{equation}
\label{can_fzero}
\iota\colon \IH^*(\CM^{T^0_{\ff}}_{(\dd,1)}(\Pi_{Q_{\ff}}),\BoQ^{\vir})\hookrightarrow \HO^*(\Nak^{T_{\ff}}_Q(\ff,\dd),\BoQ^{\vir})
\end{equation}
factoring through the inclusion $\HO^*(\Nak^{T^0_{\ff}}_Q(\ff,\dd),\BoQ^{\vir})\subset \HO^*(\Nak^{T_{\ff}}_Q(\ff,\dd),\BoQ^{\vir})$.

\begin{proposition}
\label{ChevG_prop}
Let $Q$ be a quiver, and let $\dd,\ff\in\BoN^{Q_0}$ satisfy $(\dd,1)\in\Sigma_{Q_{\ff}}$.  Then $\Psi_{\ff,\dd}\circ \iota(\IH^*(\CM^{T^0_{\ff}}_{(\dd,1)}(\Pi_{Q_{\ff}}),\BoQ^{\vir}))\subset \Fg^{T^0_{\ff}}_{\Pi_{Q_{\ff}},(\dd,1)}$, and moreover $\Psi_{\ff,\dd}\circ \iota$ identifies $\IH^*(\CM^{T^0_{\ff}}_{(\dd,1)}(\Pi_{Q_{\ff}}),\BoQ^{\vir})$ with the space $\HO^*\!\left(\CM_{(\dd,1)}^{T^0_{\ff}}(\Pi_{Q_{\ff}}),\CG(Q_{\ff})_{(\dd,1)}\right)$ of Chevalley generators of $\Fg^{T^0_{\ff}}_{\Pi_{Q_{\ff}}}$ for the dimension vector $(\dd,1)\in\BoN^{Q_{\ff}}$.
\end{proposition}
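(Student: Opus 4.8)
The plan is to reduce the statement to a pair of facts that are already available in the excerpt: (i) that $\Psi_{\ff,\dd}$ is injective and preserves perverse filtrations (Corollary \ref{corollary normalization nonabelian stable envelope} and Corollary \ref{NAHTPF}); and (ii) that the zeroth perverse piece $\FP^0\HO\!\CoHA^{T^0_{\ff}}_{\Pi_{Q_{\ff}},(\dd,1)}$, together with its canonical summand $\IH^*(\CM^{T^0_{\ff}}_{(\dd,1)}(\Pi_{Q_{\ff}}),\BoQ^{\vir})$, is exactly the space of Chevalley generators when $(\dd,1)\in\Sigma_{Q_{\ff}}$, by Theorem \ref{KMA_thm} and the discussion preceding the proposition. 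The map $\iota$ of \eqref{can_fzero} lands in $\HO^*(\Nak^{T^0_{\ff}}_Q(\ff,\dd),\BoQ^{\vir})$, so the relevant instance of Theorem \ref{NaSE_vs_BPS} to invoke is that $\Psi_{\ff,\dd}$ restricted to the $T^0_{\ff}$-equivariant cohomology has image exactly $\Fn^{T^0_{\ff},+}_{\Pi_{Q_{\ff}},(\dd,1)}$; combined with $(\dd,1)\in\Sigma_{Q_{\ff}}$ being indivisible and an imaginary or real root, one then identifies the distinguished IC-summand inside that BPS cohomology.

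Concretely, I would proceed as follows. First, unwind the construction of $\iota$: it is obtained by applying derived global sections to the inclusion of the unique full-support simple summand $\ICS(\CM^{T^0_{\ff}}_{(\dd,1)}(\Pi_{Q_{\ff}}))$ into $(\kappa_{\ff})_*(\pi_{\ff})_*\BoQ^{\vir}$, which by \eqref{can_split} is identified with $(\pi^0_{\ff})_*\BoQ^{\vir}_{\Nak^{T^0_{\ff}}_Q(\ff,\dd)}\otimes \HO_{A_{\triv}}$; thus $\iota$ factors through the zeroth perverse piece with respect to $\kappa_{\ff}\circ\pi_{\ff}$, i.e. through $\HO^*(\Nak^{T^0_{\ff}}_Q(\ff,\dd),\BoQ^{\vir})$. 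Second, apply $\Psi_{\ff,\dd}$: by Corollary \ref{NAHTPF} the image lies in $\FP^0\HO\!\CoHA^{T^0_{\ff}}_{\Pi_{Q_{\ff}},(\dd,1)}$, and by the $T^0_{\ff}$-equivariant case of Theorem \ref{NaSE_vs_BPS} the image of all of $\HO^*(\Nak^{T^0_{\ff}}_Q(\ff,\dd),\BoQ^{\vir})$ is precisely $\Fn^{T^0_{\ff},+}_{\Pi_{Q_{\ff}},(\dd,1)}=\Fg^{T^0_{\ff}}_{\Pi_{Q_{\ff}},(\dd,1)}$ (the equality of BPS Lie algebra and BPS cohomology in a single degree). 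This already gives the first assertion. Third, for the identification with Chevalley generators: by Corollary \ref{CG_cor}, $\Psi_{\ff,\dd}$ is induced by a morphism of complexes \eqref{sh_l_NASE}, and tracking the full-support summand $\ICS(\CM^{T^0_{\ff}}_{(\dd,1)}(\Pi_{Q_{\ff}}))$ through this morphism shows it is sent to the full-support summand of $\CoHA^{T^0_{\ff}}_{\Pi_{Q_{\ff}},(\dd,1)}$, which for $(\dd,1)\in\Sigma_{Q_{\ff}}$ is exactly the generating subsheaf $\CG(Q_{\ff})_{(\dd,1)}=\ICS(\CM^{T^0_{\ff}}_{(\dd,1)}(\Pi_{Q_{\ff}}))$ described before the proposition; passing to derived global sections identifies $\iota(\IH^*(\CM^{T^0_{\ff}}_{(\dd,1)}(\Pi_{Q_{\ff}}),\BoQ^{\vir}))$ with $\HO^*(\CM^{T^0_{\ff}}_{(\dd,1)}(\Pi_{Q_{\ff}}),\CG(Q_{\ff})_{(\dd,1)})$, using injectivity of $\Psi_{\ff,\dd}$ to get that the identification is an isomorphism onto that subspace and not merely a map into it.

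The main obstacle I anticipate is the third step: showing that $\Psi_{\ff,\dd}$ sends the IC-summand on the Nakajima side to the IC-summand on the CoHA side, rather than to some other complex whose global sections happen to coincide. This is a statement at the level of the morphism of complexes \eqref{sh_l_NASE}, where the functor ${}^{\fp}\!\CH^{\bullet}$ can kill morphisms (cf. the example after Remark \ref{less_perv_rem}), so I need to argue that the component of \eqref{sh_l_NASE} landing in the full-support summand $\ICS(\CM^{T^0_{\ff}}_{(\dd,1)}(\Pi_{Q_{\ff}}))$ of the target is a \emph{nonzero} scalar. The cleanest way to pin this scalar down is via the normalization Corollary \ref{corollary normalization nonabelian stable envelope}: restricting along the open inclusion $i\colon \Nak^{T^0_{\ff}}_Q(\ff,\dd)\hookrightarrow \FM^{T^0_{\ff}}_{(\dd,1)}(\Pi_{Q_{\ff}})$ gives $i^*\circ\Psi_{\ff,\dd}=\id$, and since the IC sheaf is the intermediate extension from the smooth open locus where $i$ is (generically) an isomorphism, this forces the relevant scalar to be $1$. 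So the argument is: restrict to the smooth locus, use $i^*\circ\Psi_{\ff,\dd}=\id$ to see the induced map on the open part is the identity, and then invoke that both $\ICS(\CM^{T^0_{\ff}}_{(\dd,1)}(\Pi_{Q_{\ff}}))$ occurring in \eqref{Nak_decomp} and $\CG(Q_{\ff})_{(\dd,1)}=\ICS(\CM^{T^0_{\ff}}_{(\dd,1)}(\Pi_{Q_{\ff}}))$ are intermediate extensions of the trivial local system, hence the morphism between them extending the identity is itself the identity — giving the desired identification on derived global sections.
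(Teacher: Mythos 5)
Your third step is essentially the paper's proof; your second step is circular and should be removed.

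The circularity in the second step: you invoke ``the $T^0_{\ff}$-equivariant case of Theorem \ref{NaSE_vs_BPS}'' to show $\Psi_{\ff,\dd}$ maps onto $\Fn^{T^0_{\ff},+}_{\Pi_{Q_{\ff}},(\dd,1)}$. But Theorem \ref{NaSE_vs_BPS} is the same statement as Theorem \ref{NakBPS_prop}, and the latter is proved by induction on $\dd$ using Lemma \ref{half_comp_thm}, whose proof cites Proposition \ref{ChevG_prop} \emph{with the same dimension vector} $\dd$ (not a strictly smaller one). So you cannot use Theorem \ref{NaSE_vs_BPS} at dimension vector $\dd$ to prove Proposition \ref{ChevG_prop} at dimension vector $\dd$. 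Since your third step already delivers both assertions of the proposition, the fix is simply to delete the second step rather than repair it.

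The third step is the correct argument, and it is the paper's argument: Corollary \ref{CG_cor} gives a morphism of complexes inducing $\Psi_{\ff,\dd}$; by the decomposition theorem both source and target are direct sums of shifted simple perverse sheaves; the IC-summand with full support is simple, and under the hypothesis $(\dd,1)\in\Sigma_{Q_{\ff}}$ the target $(\JH^{\ff})_*\BoD\BoQ^{\vir}$ has a \emph{unique} copy of $\ICS(\CM^{T^0_{\ff}}_{(\dd,1)}(\Pi_{Q_{\ff}}))$, living in perverse degree zero; so by Schur the composite factors through that summand (which is $\CG(Q_{\ff})_{(\dd,1)}$) up to scalar. The only genuine difference from the paper is the way you fix the scalar: you argue via the normalization $i^*\circ\Psi_{\ff,\dd}=\id$ (Corollary \ref{corollary normalization nonabelian stable envelope}) and intermediate extensions, whereas the paper observes more directly that the scalar is nonzero because $\Psi_{\ff,\dd}$ is injective — again Corollary \ref{corollary normalization nonabelian stable envelope}, but used for injectivity alone. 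Both routes are valid; the paper's is shorter because one only needs nonvanishing, not the exact value of the scalar.

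One thing worth tightening if you want to write this up cleanly: when you say ``tracking the full-support summand through this morphism shows it is sent to the full-support summand,'' this is the step the paper makes precise via simplicity of $\ICS(\CM^{T^0_{\ff}}_{(\dd,1)}(\Pi_{Q_{\ff}}))$ and the uniqueness of the summand and the morphism $\iota$ up to scalar; spell out that a morphism of a simple perverse sheaf into a decomposed complex is (after projecting to each summand) either zero or a scalar multiple of the canonical inclusion, so the a priori ambiguity collapses to a single unknown scalar $g$, which is what injectivity (or normalization) then resolves.
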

\begin{proof}
By the decomposition theorem \eqref{dec_thm_1}, and our assumptions on $\dd,\ff$, there is an isomorphism
\[
(\JH^{\ff})_*\BoD\BoQ_{\FM^{T^0_{\ff}}_{(\dd,1)}(\Pi_{Q_{\ff}})}^{\vir}\cong\bigoplus_{i\in 2\cdot\BoZ_{\geq 0}}{}^{\Fp'}\!\CH^i\!\left((\JH^{\ff})_*\BoD\BoQ_{\FM^{T^0_{\ff}}_{(\dd,1)}(\Pi_{Q_{\ff}})}^{\vir}\right)[-i]
\]
and there is a unique summand $\ICS(\CM^{T^0_{\ff}}_{(\dd,1)}(\Pi_{Q_{\ff}}))\subset {}^{\Fp'}\!\CH^0\!\left((\JH^{\ff})_*\BoD\BoQ_{\FM^{T^0_{\ff}}_{(\dd,1)}(\Pi_{Q_{\ff}})}^{\vir}\right)$ as well as a unique (up to scalar) nonzero morphism $\ICS(\CM^{T^0_{\ff}}_{(\dd,1)}(\Pi_{Q_{\ff}}))\rightarrow (\kappa_{\ff})_*(\pi_{\ff})_*\BoQ^{\vir}_{\Nak^{T_{\ff}}_Q(\ff,\dd)}$.  It follows from Proposition \ref{CG_prop}, and simplicity of the perverse sheaf $\ICS(\CM^{T^0_{\ff}}_{(\dd,1)}(\Pi_{Q_{\ff}}))$ that there is an equality of morphisms $\Psi_{\ff,\dd}\circ \iota=\iota'\circ g$, where 
\[
\iota'\colon \HO^*(\CM^{T^0_{\ff}}_{(\dd,1)}(\Pi_{Q_{\ff}}),\CG(Q_{\ff})_{(\dd,1)})\hookrightarrow \fg^{T^0_{\ff}}_{\Pi_{Q_{\ff}},(\dd,1)}
\]
is the inclusion provided by Theorem \ref{KMA_thm}, and $g\colon \IH^*(\CM^{T^0_{\ff}}_{(\dd,1)}(\Pi_{Q_{\ff}}),\BoQ^{\vir})\rightarrow  \HO^*(\CM^{T^0_{\ff}}_{(\dd,1)}(\Pi_{Q_{\ff}}),\CG(Q_{\ff})_{(\dd,1)})$ is a morphism that is either the zero map or an isomorphism.  By injectivity of $\Psi_{\ff,\dd}$ (Corollary \ref{corollary normalization nonabelian stable envelope}), we deduce that $g\neq 0$ and the result follows.
\end{proof}
We introduce some notation, to be used in the following.  Firstly, fix a pair of framing dimension vectors $\ff',\ff''\in\BoN^{Q_0}$, and subtori $A'\subset A^0_{\ff'}$ and $A''\subset A^0_{\ff''}$.  Set $A=A'\times A''$.  We define
\[
l^{(1)}_*\colon \HO\!\CoHA^{T\times A'}_{\Pi_{Q_{\ff'}}}\hookrightarrow \HO\!\CoHA^{T\times A}_{\Pi_{Q_{\ff',\ff''}}};\quad\quad
l^{(2)}_*\colon \HO\!\CoHA^{T\times A''}_{\Pi_{Q_{\ff''}}}\hookrightarrow \HO\!\CoHA^{T\times A}_{\Pi_{Q_{\ff',\ff''}}}
\]
as follows. We define $(\mu_{\ff',\dd})^{-1}(0)$ as in \S \ref{sec: Nakajima quiver varieties}.  We extend the $T\times A'$-action on $(\mu_{\ff',\dd})^{-1}(0)$ to a $T\times A$-action by letting the $A''$ factor act trivially.  We embed $\HO\!\CoHA^{T\times A'}_{\Pi_{Q_{\ff'}}}\hookrightarrow \HO\!\CoHA^{T\times A}_{\Pi_{Q_{\ff'}}}\cong \HO\!\CoHA^{T\times A'}_{\Pi_{Q_{\ff'}}}\otimes\HO_{A''}$ via the morphism $\alpha\mapsto \alpha\otimes 1$. Then we compose with the morphism 
\[
\bigoplus_{\substack{\dd\in\BoN^{Q_0}\\n\in\BoN}}\left(\HO\!\CoHA^{T\times A}_{\Pi_{Q_{\ff'}}(\dd,n)}\xrightarrow{\cong} \HO\!\CoHA^{T\times A}_{\Pi_{Q_{\ff',\ff''}},(\dd,n,0)}\right)
\]
induced by extension of $\Pi_{Q_{\ff'}}$-modules by zero.  The morphism $l^{(2)}_*$ is defined the same way.  Similarly, we define
\[
l'_*\colon \HO\!\CoHA^T_{\Pi_{Q}}\hookrightarrow \HO\!\CoHA^{T\times A'}_{\Pi_{Q_{\ff'}}};\quad\quad
l''_*\colon \HO\!\CoHA^T_{\Pi_{Q}}\hookrightarrow \HO\!\CoHA^{T\times A''}_{\Pi_{Q_{\ff''}}}
\]
to be the morphisms induced by extension of $\Pi_Q$-modules by zero and extension of scalars.  We define
\[
\Psi_{\ff'}^{(1)}\coloneqq l^{(1)}_*\circ \Psi_{\ff'};\quad\quad \Psi_{\ff''}^{(2)}\coloneqq l^{(2)}_*\circ \Psi_{\ff''}.
\]

Pick $\ff',\ff''\in\BoN^{Q_0}$ and $\dd'+\dd''=\dd$.  For now we make the choice $A'=\{1\}$ and $A''=A_{\ff''}^0$.  Both the stacks $\Nak^{T^+}_Q(\ff',\dd')\times_{\B T}\Nak^{T_{\ff''}}_Q(\ff'',\dd'')$ and $\FM_{(\dd',1)}^{T}(\Pi_{Q_{\ff'}})\times_{\B T}\FM_{(\dd'',1)}^{T^0_{\ff''}}(\Pi_{Q_{\ff''}})$ admit natural morphisms to $\CM_{(\dd,1,1)}^{T^0_{\ff''}}(\Pi_{Q_{\ff',\ff''}})$, taking pairs of modules to the direct sum of their semisimplifications.  These induce perverse filtrations on the Borel--Moore homology of these stacks in the (by now) standard way, via perverse truncation functors for the derived category of constructible complexes on $\CM_{(\dd,1,1)}^{T^0_{\ff''}}(\Pi_{Q_{\ff',\ff''}})$.  

Given a $\HO_{A_{\triv}}\otimes \HO_{A_{\ff''}}\otimes \HO_T$-module $M$ we define $M_{\loc}$ to be the localisation at the prime ideal $\mathfrak{p}=\HO_{A_{\triv}}\otimes\mathfrak{m}$, with $\mathfrak{m}\subset \HO_{A_{\ff''}}\otimes \HO_T$ the maximal graded ideal as in Remark \ref{loc_rem}.  As in \S\ref{PAGC} we extend the perverse filtration to a perverse filtration on e.g. $\left(\NakMod^{T^+}_{Q,\ff', \dd'}\otimes_{\HO_{T}} \NakMod^{T_{\ff''}}_{Q,\ff'',\dd''}\right)_{\loc}$ by declaring that $\FP^n\left(\left(\NakMod^{T^+}_{Q,\ff', \dd'}\otimes_{\HO_{T}} \NakMod^{T_{\ff''}}_{Q,\ff'',\dd''}\right)_{\loc}\right)$ is spanned by elements $\alpha/p$ where $p\in \HO_{T\times A}\setminus \mathfrak{p}$, $\alpha\in \FP^m\left(\left(\NakMod^{T^+}_{Q,\ff', \dd'}\otimes_{\HO_{T}} \NakMod^{T_{\ff''}}_{Q,\ff'',\dd''}\right)\right)$ and $m-2\deg(p)\leq n$.  Using Lemmas \ref{coarse_perv} and \ref{AB_lemma}, it is shown as in \S \ref{PAGC} that the natural morphisms 
\begin{align*}
&\NakMod^{T^+}_{Q,\ff', \dd'}\otimes_{\HO_{T}} \NakMod^{T_{\ff''}}_{Q,\ff'',\dd''}\rightarrow \left(\NakMod^{T^+}_{Q,\ff', \dd'}\otimes_{\HO_{T}} \NakMod^{T_{\ff''}}_{Q,\ff'',\dd''}\right)_{\loc}\\
&\HO\!\CoHA_{\Pi_{Q_{\ff',\ff''}, ( \dd',1,0)}}^{T_{\ff''}^0} \otimes_{\HO_{T_{\ff''}^0}} \HO\!\CoHA_{\Pi_{Q_{\ff',\ff''}, (\dd'',0,1)}}^{T^0_{\ff''}} \rightarrow \left(\HO\!\CoHA_{\Pi_{Q_{\ff',\ff''}, ( \dd',1,0)}}^{T_{\ff''}^0} \otimes_{\HO_{T_{\ff''}^0}} \HO\!\CoHA_{\Pi_{Q_{\ff',\ff''}, (\dd'',0,1)}}^{T^0_{\ff''}} \right)_I
\end{align*}
respect the perverse filtrations, and are moreover injective even after passing to the associated graded objects with respect to these filtrations.  The second localisation morphism is with respect to the ideal $I$ defined as in Proposition \ref{mult_prop}.

For the next lemma we use the operators $\psi(-)\in \Fg_Q^{\MO,T}$ defined in \eqref{little_psi_def}.
\begin{lemma}
\label{cor_pack}
Fix nonzero dimension vectors $\dd',\dd'',\ff',\ff''\in\BoN^{Q_0}$ and set $\dd=\dd'+\dd''$.  Choose 
\begin{align*}
\overline{\alpha}\in \HO^*\!\left(\Nak^{T}_Q(\ff',\dd'),\BoQ^{\vir}\right)\subset \HO^*\!\left(\Nak^{T^+}_Q(\ff',\dd'),\BoQ^{\vir}\right),
\\
\overline{\beta}\in \HO^*\!\left(\Nak^{T_{\ff''}^0}_Q(\ff'',\dd''),\BoQ^{\vir}\right)\subset \HO^*\!\left(\Nak^{T_{\ff''}}_Q(\ff'',\dd''),\BoQ^{\vir}\right).
\end{align*}
Assume that Theorem \ref{NakBPS_prop} holds for all $\ee<\dd$.  Then there is an equality of expansions in $x_{(1),\infty',1}^{-1}$
\begin{equation}
\label{claimed_expansion}
\hbar^{-1}\vDelta_{(\ul{0},1,0),(\dd,0,1)}([\Psi^{(1)}_{\ff',\dd'}(\ol{\alpha}),\Psi^{(2)}_{\ff'',\dd''}(\ol{\beta})])=x_{(1),\infty',1}^{-1}\Psi^{(2)}_{\ff'',\dd}(\psi(\overline{\alpha})(\ol{\beta}))+O(x_{(1),\infty',1}^{-2}).
\end{equation}
\end{lemma}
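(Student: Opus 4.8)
The plan is to combine the two compatibility statements for nonabelian stable envelopes — Proposition \ref{mult_compat} (and its mirror Corollary \ref{prod_compat_cor}) expressing products via stable envelopes as CoHA products, and Corollary \ref{coprod_compat_c} expressing $(\Stab_+)_{\loc}^{-1}$ in terms of the localised coproduct — with the residue-extraction machinery of \S \ref{coprod_to_lowering}, in particular Corollary \ref{old_lowering_argument}. First I would unwind the left-hand side. By definition the element $[\Psi^{(1)}_{\ff',\dd'}(\ol\alpha),\Psi^{(2)}_{\ff'',\dd''}(\ol\beta)]$ is a commutator inside $\HO\!\CoHA^{T\times A}_{\Pi_{Q_{\ff',\ff''}}}$, and both factors are the images under $l^{(1)}_*$, $l^{(2)}_*$ of nonabelian stable envelopes. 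Since $\ol\alpha$ lies in the perverse-degree-zero part $\HO^*(\Nak^{T}_Q(\ff',\dd'),\BoQ^{\vir})$, Corollary \ref{NAHTPF} tells us $\Psi_{\ff',\dd'}(\ol\alpha)\in\FP^0$; I will use this to invoke the argument in the proof of Proposition \ref{res_prop}, which controls the order of vanishing of $\vDelta_{(\ul0,1,0),(\dd,0,1)}$ of such an element in the variable $x_{(1),\infty',1}^{-1}$.

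The core computation is then to identify the coefficient of $x_{(1),\infty',1}^{-1}$. Here I would mimic the "$\alpha=[e_\infty,\alpha']$" case of the proof of Proposition \ref{res_prop}: write $\Psi^{(1)}_{\ff',\dd'}(\ol\alpha)$ as a vacuum-framed class obtained by applying the CoHA product at the vertex $\infty'$, so that $\vDelta_{(\ul0,1,0),(\dd,0,1)}$ applied to the commutator, after expanding the relevant ratio of Euler classes $\EU^{T}_{(\dd,0,1),\delta_{\infty'}}((\tilde Q^{\opp}_{\ff',\ff''})_1)^{-1}\EU^{T}_{(\dd,0,1),\delta_{\infty'}}((\tilde Q_{\ff',\ff''})_1)$ as in \eqref{constexp}, contributes $-(\ff'\cdotsh\dd)\hbar\, x_{(1),\infty',1}^{-1}$ times $\Psi^{(2)}_{\ff'',\dd''}(\ol\beta)$ multiplied by the residue of the remaining coproduct factor. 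Concretely, by Corollary \ref{coprod_compat_c} the operation $\Res_{x_{(1),\infty',1}}\circ\vDelta$ on the CoHA side corresponds, after $(\Stab_+)_{\loc}^{-1}$, to the map $\vkappa$; and by Corollary \ref{old_lowering_argument} the residue of $\vDelta_{(\ul0,1),(\dd,0)}$ applied to a bracket $[e_{\infty'},-]$ is precisely $-(\ff'\cdotsh\dd)\hbar$ times the $\Fn^{T,+}$-action. Matching this with the definition \eqref{eq:MO raising operator} of the Maulik--Okounkov raising operator $\psi(\ol\alpha)$ — which is built from the classical r-matrix, i.e. the order-one term of the R-matrix, hence from $(\bra{\ff'}\otimes\id)\circ\rmat_{\ff',\ff''}$ — gives exactly $\Psi^{(2)}_{\ff'',\dd}(\psi(\ol\alpha)(\ol\beta))$ after dividing by $\hbar$. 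The factor $(\ff'\cdotsh\dd)$ vs. the normalisation built into $\psi$ needs to be tracked, but the inductive hypothesis that Theorem \ref{NakBPS_prop} holds for all $\ee<\dd$ supplies the identification of $\Psi^{(2)}_{\ff'',\ee}$ with the BPS embedding, which is what lets us move the r-matrix computation (valid on cohomology of Nakajima varieties) over to the CoHA side.

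I would organise the argument as follows: (i) reduce to the perverse-filtration statement by invoking Corollary \ref{NAHTPF}, so that only the $x_{(1),\infty',1}^{-1}$-coefficient can be nonzero; (ii) use Proposition \ref{mult_compat}/Corollary \ref{prod_compat_cor} to rewrite the commutator of stable envelopes as the image under $\Psi_{\ff,\dd}$ of $\Stab_-$ applied to $\ol\alpha\otimes\ol\beta$ minus the $\Stab_+$ term — i.e. as $\Psi_{\ff,\dd}$ of $(\StabC{-}-\StabC{+})(\ol\alpha\otimes\ol\beta)$; (iii) apply Corollary \ref{coprod_compat_c} to convert $\vDelta_{(\ul0,1,0),(\dd,0,1)}\circ\Psi_{\ff,\dd}$ into $\vkappa\circ(\Stab_+)_{\loc}^{-1}$ composed with the coproduct, so that the residue in $x_{(1),\infty',1}^{-1}$ becomes an explicit r-matrix expression; (iv) expand that r-matrix expression to first order using \eqref{eq:R-matrix expansion}, identify the classical r-matrix term with $\psi(\ol\alpha)$ via \eqref{eq:MO raising operator}, and conclude using the inductive hypothesis that the output equals $\Psi^{(2)}_{\ff'',\dd}(\psi(\ol\alpha)(\ol\beta))$. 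The main obstacle I anticipate is step (iii)--(iv): keeping the various localisations ($\loc$ versus $I$ versus expansion in $x_{(1),\infty',1}^{-1}$) consistent, and correctly matching the sign and normalisation conventions — the polarisation sign $(-1)^{\dd'\cdotsh\dd''}$ from \S \ref{section: Stable envelopes for quiver varieties}, the sign in $\sw_\tau$ from Corollary \ref{prod_compat_cor}, the $(-1)^{\dd'\cdotsh\dd''}$ in the KR-relation footnote, and the $\hbar^{-1}\Res$ normalisation in \eqref{E_def} — so that everything collapses to the clean formula \eqref{claimed_expansion} with no stray constants. A secondary subtlety is verifying that $\Psi^{(2)}_{\ff'',\dd}(\psi(\ol\alpha)(\ol\beta))$ genuinely lands in the non-localised module, i.e. that the apparent pole at $x_{(1),\infty',1}^{-1}$ is the only one and there is no obstruction to extracting the residue, which again is where the perverse-filtration control from part (i) and Proposition \ref{alpha_one_perv} do the work.
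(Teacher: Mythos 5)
Your high‐level plan (steps (i)–(iv)) mirrors the paper's proof: start with Proposition \ref{mult_compat} and Corollary \ref{prod_compat_cor} to rewrite the commutator as $\Psi_{\ff,\dd}(\Stab_+ - \Stab_-)$, apply Corollary \ref{coprod_compat_c} to convert $\vDelta\circ\Psi_{\ff,\dd}$ into $\vkappa\circ(\Stab_+)_{\loc}^{-1}$, and then expand. That skeleton is correct. However, the "core computation" paragraph — where you propose to write $\Psi^{(1)}_{\ff',\dd'}(\ol{\alpha})$ as a vacuum-framed class of the form $[e_{\infty'},-]$, mimic the $\alpha=[e_\infty,\alpha']$ case of Proposition \ref{res_prop}, and invoke Corollary \ref{old_lowering_argument} to read off the residue coefficient $-(\ff'\cdotsh\dd)\hbar$ — is not the route the paper takes, and it does not go through. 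The class $\Psi^{(1)}_{\ff',\dd'}(\ol{\alpha})$ is not given in the form $[e_{\infty'},\alpha']$; indeed part of the point of this whole circle of lemmas is to \emph{prove} that such descriptions are available, so invoking them here would be circular. Corollary \ref{old_lowering_argument} applies only when the Chevalley generator $e_\infty$ literally appears as one factor of the bracket, which is not the case here.

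What the paper actually does, and what your sketch is missing, is the following. After the reduction to $\hbar^{-1}\vkappa\circ(\Stab_+)_{\loc}^{-1}(\Stab_+ - \Stab_-)(\ol\alpha\otimes\ol\beta)$, decompose $(\Stab_+)_{\loc}^{-1}(\Stab_+-\Stab_-)(\ol\alpha\otimes\ol\beta)$ into its $\BoN^{Q_0}$-graded components $\gamma_{\ee',\ee''}$ (with $\ee'+\ee''=\dd$). The R-matrix expansion \eqref{eq:R-matrix expansion} shows each $\gamma_{\ee',\ee''}\in\FP^{-2}$ (because the first nontrivial coefficient carries the degree-two class $\hbar$). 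Then for $\ee''\neq\dd$ you need an \emph{extra} perverse degree drop of two: this is where the inductive hypothesis (Theorem \ref{NakBPS_prop} for $\ee<\dd$) is really used, together with Corollary \ref{perv_prim}, to show $\vDelta_{(\ul{0},1,0),(\dd,0,1)}\circ\vmult\circ(\Psi_{\ff',\ee'}\otimes\Psi_{\ff'',\ee''})$ lowers $\FP$ by two on the appropriate inputs, so that $\vkappa(\gamma_{\ee',\ee''})\in\FP^{-4}$. Proposition \ref{ptrack} then says those summands contribute only at order $x_{(1),\infty',1}^{-2}$ or higher. The surviving summand $\gamma_{\ul 0,\dd}$ is handled by Corollary \ref{mult_cor}, which gives $\vkappa(\gamma_{\ul 0,\dd}) = (1+O(x_{(1),\infty',1}^{-1}))\cdot \jmath\circ(\Psi_{\ff',\ul 0}\otimes\Psi_{\ff'',\dd})_{\loc}(\gamma_{\ul 0,\dd})$; the leading term is $\Psi^{(2)}_{\ff'',\dd}(\psi(\ol\alpha)(\ol\beta))$ purely by the definition of $\psi$ in \eqref{little_psi_def} and \eqref{eq:MO raising operator} — no explicit Euler-class expansion or Corollary \ref{old_lowering_argument} is needed. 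Your worry about "matching the factor $(\ff'\cdotsh\dd)$ against the normalization of $\psi$" is a symptom of taking the wrong route; with the correct argument no such constant ever appears.
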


\begin{proof}
By Proposition \ref{mult_compat} and Corollary \ref{prod_compat_cor}, the left hand side of the equality claimed in the lemma is equal to
\begin{align*}
&\hbar^{-1}\vDelta_{(\ul{0},1,0),(\dd,0,1)}\Psi_{\ff'+\ff'',\dd}\left(\Stab_+(\ol\alpha\otimes\ol\beta)- \Stab_-(\ol\alpha\otimes\ol\beta)\right)\\
=&\hbar^{-1}\vDelta_{(\ul{0},1,0),(\dd,0,1)}\Psi_{\ff'+\ff'',\dd}\Stab_+\left( 1 -R_{\ff',\ff''}\right)(\ol\alpha\otimes\ol\beta).\\
\end{align*}
By Proposition \ref{mult_compat} this is the same as
\[
    \vDelta_{(\ul{0},1,0),(\dd,0,1)}\sum_{\ee'+\ee''=\dd}\vmult_{(\ee', 1,0), (\ee'', 0,1)}(\Psi_{\ff',\ee'}\otimes \Psi_{\ff'', \ee''})\left( 1 -R_{\ff',\ff''}\right)(\ol\alpha\otimes\ol\beta).
\]
Next, by Proposition \ref{mult_cor}, it follows that $\vDelta_{(\ul{0},1,0),(\dd,0,1)}\vmult_{(0, 1,0), (\dd, 0,1)}=\id$, so by the R-matrix expansion \eqref{eq:R-matrix expansion} the left hand side of \eqref{claimed_expansion} is equal to
\begin{multline}
\label{eq: expansion}
    x_{(1), \infty', 1}^{-1}(\Psi_{\ff',0}\otimes \Psi_{\ff'', \dd})\left(\rmat_{\ff',\ff''}\right)(\ol\alpha\otimes\ol\beta)+ O(x_{(1),\infty',1}^{-2})\\
    +\hbar^{-1 }\sum_{\substack{\ee'\neq 0\\\ee'+\ee''=\dd}}\vDelta_{(\ul{0},1,0),(\dd,0,1)}\vmult_{(\ee', 1,0), (\ee'', 0,1)}(\Psi_{\ff',\ee'}\otimes \Psi_{\ff'', \ee''})\left( 1-R_{\ff',\ff''}\right)(\ol\alpha\otimes\ol\beta)
\end{multline}
Notice that $\Psi_{\ff', 0}=\id$, so the first term is exactly $x_{(1),\infty',1}^{-1}\Psi^{(2)}_{\ff'',\dd}(\psi(\overline{\alpha})(\ol{\beta}))$ by definition of the raising operator $\psi(\ol\alpha)$, cf. \eqref{eq:MO raising operator}. Therefore, to complete the proof, it suffices to show that each term in the last summation contributes to an additional $O(x_{(1), \infty', 1}^{-2})$ term. This uses our inductive assumption regarding Theorem \ref{NakBPS_prop}. Denote by $\gamma_{\ee',\ee''}$ the projection of $(1-R_{\ff',\ff''})(\ol{\alpha}\otimes\ol{\beta})$ onto the summand $\left(\NakMod^{T^+}_{Q,\ff', \ee'}\otimes_{\HO_{T}} \NakMod^{T_{\ff''}}_{Q,\ff'',\ee''}\right)_{\loc} $.  From the expansion \eqref{eq:R-matrix expansion} we deduce
\begin{equation}
\label{first_jump}
\gamma_{\ee',\ee''}\in\FP^{-2}\!\left( \left(\NakMod^{T^+}_{Q,\ff', \ee'}\otimes_{\HO_{T}} \NakMod^{T_{\ff''}}_{Q,\ff'',\ee''}\right)_{\loc} \right).
\end{equation}
Next, we claim that for $\ol{\gamma}\in\NakMod^{T}_{Q,\ff',\ee'} \otimes_{\HO_{T}} \NakMod^{T_{\ff''}^0}_{Q,\ff'',\ee''}$ with $\ee''\neq \dd$, there is an inclusion
\begin{equation}
\label{extra_jump}
\left(\vDelta_{(\ul{0},1,0),(\dd,0,1)} \circ \vmult\circ  (\Psi_{\ff',\ee'}\otimes_{\HO_T}\Psi_{\ff'',\ee''}) \right)(\ol{\gamma})\in \FP^{-2}\!\left(\left(\HO\!\CoHA_{\Pi_{Q_{\ff',\ff''}, (\ul 0,1,0)}}^{T^0_{\ff''}} \otimes_{\HO_{T^0_{\ff''}}} \HO\!\CoHA_{\Pi_{Q_{\ff',\ff''}, (\dd,0,1)}}^{T^0_{\ff''}} \right)_I\right).
\end{equation}
Under the inductive assumption regarding Theorem \ref{NakBPS_prop}, the left hand side of \eqref{extra_jump} is equal to $\vDelta_{(\ul{0},1,0),(\dd,0,1)}(\gamma'\ast \gamma'')$ for $\gamma'\in \Fg^{T_{\ff''}^0}_{\Pi_{Q_{\ff',\ff''}},(\ee',1,0)}$ and $\gamma''\in \Fg^{T^0_{\ff''}}_{\Pi_{Q_{\ff',\ff''}},(\ee'',0,1)}$.  We calculate
\[
\vDelta_{(\ul{0},1,0),(\dd,0,1)}(\gamma'\ast \gamma'')=\vDelta_{(\ul{0},1,0),(\ee',0,0)}(\gamma')\ast(1\otimes \gamma'').
\]
Now $\vDelta_{(\ul{0},1,0),(\ee',0,0)}(\gamma')\in \FP^{-2}\left(\HO\!\CoHA_{\Pi_{Q_{\ff',\ff''}, (\underline{0},1,0)}}^{T^0_{\ff''}}\tilde{\otimes}\HO\!\CoHA_{\Pi_{Q_{\ff',\ff''}, (\ee',0,0)}}^{T^0_{\ff''}}\right)$ by Corollary \ref{perv_prim}, and the claim follows.
Putting \eqref{first_jump} and \eqref{extra_jump} together, we deduce that each term in the second line of \eqref{eq: expansion} belongs to
\[
\FP^{-4}\left(\HO\!\CoHA_{\Pi_{Q_{\ff',\ff''}, (\underline{0},1,0)}}^{T^0_{\ff''}}\otimes_{\HO_{{T^0_{\ff''}}}}\HO\!\CoHA_{\Pi_{Q_{\ff',\ff''}, (\dd,0,1)}}^{T^0_{\ff''}}\right)_I.
\]
Thus, by Proposition \ref{ptrack}, each summand contributes to the $O(x_{(1), \infty', 1}^{-2})$ term. The result follows.
\end{proof}

\begin{lemma}
\label{Nakinsc}
Let $\dd\in\Phi_Q^+$ be a simple positive root, and let $\ff\in\BoN^{Q_0}$ satisfy $\ff\cdotsh\dd\neq 0$.  We extend the $T$-action on $\CM_{\dd}(\Pi_Q)$ to a $T_{\ff}^0=T\times A_{\ff}^0$-action by letting the $A_{\ff}^0$ component act trivially.  Let $l^{\circ}\colon \CM^{T^0_{\ff}}_{\dd}(\Pi_Q)\hookrightarrow \CM^{T^0_{\ff}}_{(\dd,1)}(\Pi_{Q_{\ff}})$ be the embedding induced by extending modules by zero.  Then the semisimple perverse sheaf $(\pi^0_{\ff})_*\BoQ^{\vir}_{\Nak^{T^0_{\ff}}_Q(\ff,\dd)}$ contains a unique copy of the simple perverse sheaf $l^{\circ}_*\CG(Q)_{\dd}$.
\end{lemma}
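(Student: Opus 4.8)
The plan is to transport the statement across the identification of $(\pi^0_{\ff})_*\BoQ^{\vir}_{\Nak^{T^0_{\ff}}_Q(\ff,\dd)}$ with a graded piece of the BPS/GKM Lie algebra object $\fn^{T^0_{\ff},+}_{\CG(Q_{\ff})}$ of \S\ref{GKM_perv_sec}, and then read off the multiplicity from the presentation of that object by Chevalley generators and Serre relations. First I would recall that $\pi^0_{\ff}$ is semismall by \cite[Cor.6.11]{Nak94}, so $(\pi^0_{\ff})_*\BoQ^{\vir}_{\Nak^{T^0_{\ff}}_Q(\ff,\dd)}$ is a single semisimple perverse sheaf on $\CM^{T^0_{\ff}}_{(\dd,1)}(\Pi_{Q_{\ff}})$. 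Running the proof of Corollary \ref{KMLA_Nak_comp} with $T$ replaced by $T^0_{\ff}$ — all the inputs (Proposition \ref{dr_lift}, Corollary \ref{dr_lift_cor}, \cite[Thm.1.16]{DHSM23}) hold for an arbitrary torus acting by a $\tilde{W}_{\ff}$-invariant weighting — and using that the closed embedding $\xi\colon\CM^{T^0_{\ff}}_{(\dd,1)}(\Pi_{Q_{\ff}})\hookrightarrow\CM^{T^0_{\ff}}_{(\dd,1)}(\overline{Q_{\ff}})$ of \eqref{bBPS_diag} induces a fully faithful direct image functor, together with $\overline{\pi}\circ\overline{\xi}=\xi\circ\pi^0_{\ff}$, this yields a canonical isomorphism of perverse sheaves $(\pi^0_{\ff})_*\BoQ^{\vir}_{\Nak^{T^0_{\ff}}_Q(\ff,\dd)}\cong\bigl(\fn^{T^0_{\ff},+}_{\CG(Q_{\ff})}\bigr)_{(\dd,1)}$. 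On the other side, unwinding \eqref{ldef}, the map $l^{\circ}$ factors as the isomorphism $l_{\dd}\colon\CM^{T^0_{\ff}}_{\dd}(\Pi_Q)\xrightarrow{\cong}\CM^{T^0_{\ff}}_{(\dd,0)}(\Pi_{Q_{\ff}})$ followed by the closed ``add $S_\infty$'' embedding, which is the restriction of the direct-sum map $\oplus$; since $\CG(Q_{\ff})_{(\dd,0)}=l_{\dd,*}\CG(Q)_{\dd}$ and $\CG(Q_{\ff})_{\delta_\infty}$ is the constant perverse sheaf ($\delta_\infty$ being a simple real root of $Q_{\ff}$), this identifies $l^{\circ}_*\CG(Q)_{\dd}$ with the Hall product $\CG(Q_{\ff})_{(\dd,0)}\boxdot\CG(Q_{\ff})_{\delta_\infty}$, which, being the pushforward of a simple perverse sheaf along a closed embedding, is simple. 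So the lemma becomes: $\bigl(\fn^{T^0_{\ff},+}_{\CG(Q_{\ff})}\bigr)_{(\dd,1)}$ contains $\CG(Q_{\ff})_{(\dd,0)}\boxdot\CG(Q_{\ff})_{\delta_\infty}$ with multiplicity exactly one.

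Next I would carry out a support analysis. Since $\fn^{+}_{\CG(Q_{\ff})}$ is by definition the Lie subalgebra of $\Tens_{\boxdot}(\CG(Q_{\ff}))/(\textrm{Serre})$ generated by $\CG(Q_{\ff})$, its degree-$(\dd,1)$ piece is a sum of images of iterated brackets of the $\CG(Q_{\ff})_{\ee_j}$ with $\ee_j\in\Phi^+_{Q_{\ff}}$ and $\sum_j\ee_j=(\dd,1)$; the support of such a bracket lies in the image, under the finite (by \cite[Lem.2.1]{Meinhardt14}) iterated direct-sum map, of the product of the irreducible supports of the $\CG(Q_{\ff})_{\ee_j}$. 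As $(\dd,1)_\infty=1$, exactly one $\ee_j=(\bb,1)$ has nonzero $\infty$-coordinate. If $\bb\neq\ul 0$, then $(\bb,1)$ is indivisible, so $\CG(Q_{\ff})_{(\bb,1)}=\ICS(\CM^{T^0_{\ff}}_{(\bb,1)}(\Pi_{Q_{\ff}}))$ is supported on all of $\CM^{T^0_{\ff}}_{(\bb,1)}(\Pi_{Q_{\ff}})$, whose generic point is a simple module of $\infty$-dimension $1$, hence not of the form ``$(\textrm{stuff})\oplus S_\infty$''; so the bracket's support is not contained in $\Image(l^{\circ})$, and such brackets contribute no copy of $\CG(Q_{\ff})_{(\dd,0)}\boxdot\CG(Q_{\ff})_{\delta_\infty}$. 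Therefore $\ee_j=\delta_\infty$, and the remaining factors are $\CG(Q)_{\ee'_i}$ (via $l_*$), $\sum_i\ee'_i=\dd$. Using finiteness of the direct-sum maps and the explicit form of $\CG(Q)_{\dd}$ for $\dd$ a simple positive root — $\supp\CG(Q)_{\dd}$ is either a point (if $\dd$ is a real simple root) or an irreducible surface ($\CM_{\dd}(\Pi_Q)$ if $\dd\in\Sigma_Q$, or $\Diag_l(\CM_{\dd'}(\Pi_Q))$ if $\dd=l\dd'$), with generic point an indecomposable module, respectively an $l$-fold copy of one — a short dimension-and-irreducibility check then shows that the only iterated bracket whose support contains $l^{\circ}(\supp\CG(Q)_{\dd})$ is the length-two bracket $L_{\CG(Q_{\ff})_{(\dd,0)}}\bigl(\CG(Q_{\ff})_{\delta_\infty}\bigr)$.

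It remains to show this bracket contributes exactly one copy. As $\CG(Q_{\ff})_{(\dd,0)}\boxdot\CG(Q_{\ff})_{\delta_\infty}$ is simple, the image of $[-,-]$ restricted to it is $0$ or a single copy of it, so only nonvanishing must be checked — and this is where the hypothesis $\ff\cdotsh\dd\neq 0$ enters. One computes $\bigl((\dd,0),\delta_\infty\bigr)_{Q_{\ff}}=-\,\ff\cdotsh\dd\neq 0$, so the Serre relations (see \S\ref{GKM_sec}) impose no vanishing on the \emph{first} bracket $[g,g']$ of Chevalley generators $g\in\CG(Q_{\ff})_{(\dd,0)}$, $g'\in\CG(Q_{\ff})_{\delta_\infty}$: relation (4) forces $[g,g']=0$ only when the pairing vanishes, while in the real-simple case $\dd=\delta_i$ it only forces $[g,-]^{1+\ff_i}(g')=0$, with $1+\ff_i>1$. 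Via the isomorphism $\UEA_{\boxdot}(\fn^{+}_{\CG(Q_{\ff})})\cong\Tens_{\boxdot}(\CG(Q_{\ff}))/(\textrm{Serre})$ of Theorem \ref{KMA_thm} and the associated PBW theorem, this bracket is a nonzero morphism into $\bigl(\fn^{+}_{\CG(Q_{\ff})}\bigr)_{(\dd,1)}$; combined with the previous paragraph (and semisimplicity, which makes the isotypic multiplicity of $\CG(Q_{\ff})_{(\dd,0)}\boxdot\CG(Q_{\ff})_{\delta_\infty}$ coincide with its multiplicity in the single relevant summand), the multiplicity of $l^{\circ}_*\CG(Q)_{\dd}$ in $(\pi^0_{\ff})_*\BoQ^{\vir}_{\Nak^{T^0_{\ff}}_Q(\ff,\dd)}$ is exactly one, as claimed.

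I expect the main obstacle to be making the dimension-and-irreducibility bookkeeping of the second paragraph precise: ruling out that some longer iterated bracket has support reaching the small locus $l^{\circ}(\supp\CG(Q)_{\dd})$ requires careful control of finiteness of the direct-sum (plethysm) maps together with the explicit structure of $\CG(Q)_{\dd}$ for $\dd$ a simple positive root and of the support of $\CG(Q_{\ff})_{(\bb,1)}$. A more geometric alternative — applying an étale slice of the moment map at a generic point of $\Image(l^{\circ})$ in the spirit of Crawley--Boevey, so as to reduce the whole statement to the multiplicity-one occurrence of $\CG(Q)_{\dd}$ established in \cite{preproj3} — would sidestep the bracket combinatorics, but at the cost of an explicit identification of the transversal local quiver, which is itself nontrivial.
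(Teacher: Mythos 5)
Your strategy is the same as the paper's: reduce to the isomorphism \eqref{stage_pst} furnished by Corollary \ref{KMLA_Nak_comp}, and then argue by supports in the GKM object $\fn^{T^0_\ff,+}_{\CG(Q_{\ff})}$ that exactly one iterated bracket of Chevalley generators can produce a simple summand supported on $l^{\circ}(\supp\CG(Q)_{\dd})$, namely $[\CG(Q_\ff)_{(\dd,0)},\CG(Q_\ff)_{\delta_\infty}]$, with nonvanishing coming from $\ff\cdotsh\dd\neq 0$. That is precisely what the paper does, in one compressed sentence per case; you try to spell it out, and that is where a genuine gap appears.

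The specific slip is in the step ruling out brackets involving a factor $\CG(Q_\ff)_{(\bb,1)}$ with $\bb\neq\ul 0$: from $\supp(B)\not\subset\Image(l^{\circ})$ you conclude that $B$ contributes no copy of $l^{\circ}_*\CG(Q)_{\dd}$. That is the wrong containment. A simple summand $\ICS_{Z}$ can occur in $B$ whenever $\bar Z\subset\supp(B)$, regardless of whether $\supp(B)$ sits inside $\Image(l^{\circ})$; so what must be shown is $l^{\circ}(\supp\CG(Q)_{\dd})\not\subset\supp(B)$. For $\dd\in\Sigma_Q$ this is easily fixed: a generic point of $l^{\circ}(\CM_{\dd}(\Pi_Q))$ is $M\oplus S_{\infty}$ with $M$ simple, whose only nonzero direct summands are $M$, $S_{\infty}$ and $M\oplus S_{\infty}$, so it has no $(\bb,1)$-dimensional piece for $\ul 0<\bb<\dd$ and no decomposition into three or more nonzero pieces, whence $l^{\circ}(\CM_{\dd}(\Pi_Q))\not\subset\supp(B)$ for every bracket other than the claimed one. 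For $\dd=n\dd'\in\Phi^+_Q\setminus\Sigma_Q$ the fix does not go through: the generic point $M^{\oplus n}\oplus S_{\infty}$ of $l^{\circ}(\Diag_n(\CM_{\dd'}))$ decomposes as $M\oplus\bigl(M^{\oplus(n-1)}\oplus S_{\infty}\bigr)$, so whenever $((n-1)\dd',1)\in\Phi^+_{Q_{\ff}}$ — which does happen, e.g. when $\ff\cdotsh\dd'\geq 2$ — the bracket $[\CG(Q_\ff)_{(\dd',0)},\CG(Q_\ff)_{((n-1)\dd',1)}]$ has support \emph{containing} $l^{\circ}(\Diag_n(\CM_{\dd'}))$, and a dimension-and-irreducibility check alone will not rule out a contribution; one must instead analyse which summands of the finite pushforward $\oplus_*$ survive to the image of the bracket. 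You anticipate that the bookkeeping is the main obstacle, but the difficulty is concentrated in the isotropic imaginary case and is of a slightly different nature than the irreducibility argument you sketch.
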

\begin{proof}
By Corollary \ref{KMLA_Nak_comp} there is an isomorphism 
\begin{equation}
\label{stage_pst}
(\pi^0_{\ff})_*\BoQ^{\vir}_{\Nak^{T^0_{\ff}}_Q(\ff,\dd)}\cong (\Fn^{{T^0_{\ff}},+}_{\CG(Q_{\ff})})_{(\dd,1)}.
\end{equation}
Firstly, we assume that $\dd\in\Sigma_Q$.  The only summand on the right hand side of \eqref{stage_pst} having support $l^{\circ}(\CM^{T^0_{\ff}}_{\dd}(\Pi_Q))$ is given by $[-,-]\left(\ICS(\CM^{T^0_{\ff}}_{(\dd,0)}(\Pi_{Q_{\ff}}))\boxdot \ICS(\CM^{T^0_{\ff}}_{(\ul{0},1)}(\Pi_{Q_{\ff}}))\right)\cong l^{\circ}_*\CG(Q)_{\dd}$.
Similarly, if $\dd=n\dd'\in\Phi^+_Q\setminus \Sigma_Q$ with $\dd'\in\Sigma_Q$ and $n\geq 2$, the only summand on the right hand side of \eqref{stage_pst} with support $(l^{\circ}\circ\Diag_n)(\CM^{T^0_{\ff}}_{\dd'}(\Pi_Q))$ is $[-,-]\left(\Diag_{n,*}\ICS(\CM^{T^0_{\ff}}_{(\dd',0)}(\Pi_{Q_{\ff}}))\boxdot \ICS(\CM^{T^0_{\ff}}_{(\ul{0},1)}(\Pi_{Q_{\ff}}))\right)\cong l^{\circ}_*\CG(Q)_{\dd}$.  The result follows.
\end{proof}
Let $\dd\in\Phi^+_Q$, and fix $\ff\in\BoN^{Q_0}$ such that $\ff\cdotsh\dd\neq 0$.  It follows from $\dd\in\Phi^+_Q$ that $(\dd,0)\in\Phi^+_{Q_{\ff}}$.  We denote by 
\[
g'_1\colon \HO^*(\CM^{T^0_{\ff}}_{\dd}(\Pi_Q),\CG(Q)_{\dd})\hookrightarrow \HO^*(\Nak^{T_{\ff}}_Q(\ff,\dd),\BoQ^{\vir})
\]
the inclusion defined (up to multiplication by a nonzero scalar) by Lemma \ref{Nakinsc} and \eqref{can_GS_split}, and then define
\[
g_1\coloneqq \Psi_{\ff,\dd}\circ g'_1\colon\HO^*(\CM^{T^0_{\ff}}_{\dd}(\Pi_Q),\CG(Q)_{\dd})\hookrightarrow\HO\!\CoHA^{T^0_{\ff}}_{\Pi_{Q_{\ff}},(\dd,1)}.
\]
Recall that we define $l\colon \CM^{T^0_{\ff}}_{\dd}(\Pi_{Q})\hookrightarrow \CM^{T^0_{\ff}}_{(\dd,0)}(\Pi_{Q_{\ff}})$ to be the extension by zero map (see \eqref{ldef}).  There is a canonical isomorphism $l_*\CG(Q)_{\dd}\cong \CG(Q_{\ff})_{(\dd,0)}$.  Since, by definition, $\Fn^{{T^0_{\ff}},+}_{\Pi_{Q_{\ff}}}$ is generated by the spaces $\HO^*(\CM^{T^0_{\ff}}_{(\dd,n)}(\Pi_{Q_{\ff}}),\CG(Q_{\ff})_{(\dd,n)})$ for $(\dd,n)\in\Phi^+_{Q_{\ff}}$ we obtain an inclusion
\[
g'_2\colon\HO^*(\CM^{T^0_{\ff}}_{\dd}(\Pi_Q),\CG(Q)_{\dd})\hookrightarrow\Fn^{T^0_{\ff},+}_{\Pi_{Q_{\ff}},(\dd,0)}. 
\]
Let $e_{\infty}\in \Fn^{{T^0_{\ff}},+}_{\Pi_Q,(\ul{0},1)}$ be a Chevalley generator, of cohomological degree zero: this specifies it uniquely, up to multiplication by an element of $\BoQ\setminus\{0\}$.  We define
\[
g_2=[e_{\infty},-]\circ g'_2\colon \HO^*(\CM^{T^0_{\ff}}_{\dd}(\Pi_Q),\CG(Q)_{\dd})\hookrightarrow\Fn^{T^0_{\ff},+}_{\Pi_{Q_{\ff}},(\dd,1)}.
\]
The morphism $g_2$ is an injection by Theorem \ref{KMA_thm}.

\begin{lemma}
\label{weak_F1_lem}
Assume, as above, that $\dd\in\Phi^+_Q$, $\ff\in\BoN^{Q_0}$, and $\ff\cdotsh\dd\neq 0$.  Assume that Theorem \ref{NakBPS_prop} holds for dimension vectors $\ee< \dd$.  The image of $g_1$ lies in $\Fg^{T^0_{\ff}}_{\Pi_{Q_{\ff}},(\dd,1)}$, and considered as morphisms to $\Fg^{T^0_{\ff}}_{\Pi_{Q_{\ff}},(\dd,1)}$, there is an equality $g_1=\lambda g_2$ for some $\lambda\in\BoQ\setminus\{0\}$.
\end{lemma}
\begin{proof}
We define $\CoHA_{\Pi_{Q_{\ff}}}^{{T^0_{\ff}},0}$ as in \eqref{ZPSA} to be the zeroth perverse subalgebra of $\CoHA_{\Pi_{Q_{\ff}}}^{{T^0_{\ff}}}$.  By Theorem \ref{KMA_thm}, there is an isomorphism
\begin{equation}
\label{rem_UEA}
\CoHA_{\Pi_{Q_{\ff}}}^{{T^0_{\ff}},0}\cong \UEA_{\boxdot}\!\left(\fn^{{T^0_{\ff}},+}_{\CG(Q_{\ff})}\right).
\end{equation}
Firstly, we consider the case $\dd\in\Sigma_Q$.  The right hand side of \eqref{rem_UEA} is defined as a quotient of the free unital associative algebra $\Tens_{\boxdot}\!\left(\bigoplus_{(\dd',n)\in\Phi_{Q_{\ff}}^+ }\SG(Q_{\ff})_{(\dd',n)}\right)$ by the Serre relations.  It follows that there are precisely two summands in the decomposition of $\CoHA_{\Pi_{Q_{\ff}}}^{T_{\ff}^0,0}$ into simple summands with support equal to $l^{\circ}(\CM^{T_{\ff}^0}_{\dd}(\Pi_Q))$; they are given by the (isomorphic) summands $\CG(Q_{\ff})_{(\dd,0)}\boxdot \CG(Q_{\ff})_{(\ul{0},1)}$ and $\CG(Q_{\ff})_{(\ul{0},1)}\boxdot\CG(Q_{\ff})_{(\dd,0)}$.  Similarly, if $\dd\in\Phi_Q^+\setminus\Sigma_Q$, with $\dd=n\dd'$, $\dd'\in\Sigma_Q$ and $n\geq 2$ there are precisely two summands in $\Tens_{\boxdot}\!\left(\bigoplus_{(\dd',n)\in\Phi_{Q_{\ff}}^+ }\SG(Q_{\ff})_{(\dd',n)}\right)$ with support equal to $l^{\circ}(\Diag_n(\CM_{\dd'}^{T_{\ff}^0}(\Pi_Q)))$; they are $\CG(Q_{\ff})_{(\dd,0)}\boxdot \CG(Q_{\ff})_{(\ul{0},1)}$ and $\CG(Q_{\ff})_{(\ul{0},1)}\boxdot\CG(Q_{\ff})_{(\dd,0)}$.  After passing to derived global sections, the composition
\[
\CG(Q_{\ff})_{(\dd,0)}\boxdot \CG(Q_{\ff})_{(\ul{0},1)}\hookrightarrow \Tens_{\boxdot}\!\left(\bigoplus_{(\dd',n)\in\Phi_{Q_{\ff}}^+ }\SG(Q_{\ff})_{(\dd',n)}\right)\twoheadrightarrow \UEA_{\boxdot}\!\left(\Fn^{T_{\ff}^0,+}_{\CG(Q_{\ff})}\right)\cong \CoHA_{\Pi_{Q_{\ff}}}^{T_{\ff}^0,0}\rightarrow \CoHA_{\Pi_{Q_{\ff}}}^{T_{\ff}^0}
\]
becomes the morphism 
\[
\HO^*(\CM^{T_{\ff}^0}_{(\dd,0)}(\Pi_{Q_{\ff}}),\CG(Q_{\ff})_{(\dd,0)}))\otimes_{\HO_{T_{\ff}^0}}\HO^*(\CM^{T^0_{\ff}}_{(\ul{0},1)}(\Pi_{Q_{\ff}}),\CG(Q_{\ff})_{(\ul{0},1)}))\xrightarrow{\vmult}\HO\!\CoHA^{T_{\ff}^0}_{\Pi_{Q_{\ff}}},
\]
and the analogous statement holds also for the summand $\CG(Q_{\ff})_{(\ul{0},1)}\boxdot\CG(Q_{\ff})_{(\dd,0)}$.

Note that $\CG(Q_{\ff})_{(\ul{0},1)}=\ICS(\CM^{T_{\ff}^0}_{(\ul{0},1)}(\Pi_{Q_{\ff}}))=\BoQ_{\B T_{\ff}^0}$ after identifying $\CM^{T_{\ff}^0}_{(\ul{0},1)}(\Pi_{Q_{\ff}})=\B T_{\ff}^0$ and we have isomorphisms
\begin{align*}
\HO^*\!\left(\CM^{T_{\ff}^0}_{(\dd,1)}(\Pi_{Q_{\ff}}),\CG(Q_{\ff})_{(\dd,0)}\boxdot \CG(Q_{\ff})_{(\underline{0},1)}\right)\cong &\HO^*\left(\CM^{T_{\ff}^0}_{(\dd,1)}(\Pi_{Q_{\ff}}),l^{\circ}_*\CG(Q)_{\dd}\right)\\
\xrightarrow[\cong]{\;\; r\;\;} &\HO^*(\CM^{T_{\ff}^0}_{\dd}(\Pi_Q),\CG(Q)_{\dd})\\
\HO^*\!\left(\CM^{T_{\ff}^0}_{(\dd,1)}(\Pi_{Q_{\ff}}), \CG(Q_{\ff})_{(\underline{0},1)}\boxdot \CG(Q_{\ff})_{(\dd,0)}\right)\xrightarrow[\cong]{\;\; s\;\;}&\HO^*(\CM^{T_{\ff}^0}_{\dd}(\Pi_Q),\CG(Q)_{\dd}).
\end{align*}
From Lemma \ref{NS_factoring_lemma} and Corollary \ref{CG_cor}, the morphism $g_1$ is obtained as a linear combination of $r^{-1}$ and $s^{-1}$, followed by the inclusions of the left hand sides of the above isomorphisms into $\CoHA^{T_{\ff}^0,0}_{\Pi_Q}$ induced by Theorem \ref{KMA_thm}, and finally the adjunction morphism $\CoHA^{T_{\ff}^0,0}_{\Pi_Q}\hookrightarrow \CoHA^{T_{\ff}^0}_{\Pi_Q}$.  Putting this all together, there exist $\lambda_1,\lambda_2\in\BoQ$ such that $g_1=\lambda_1 m_1+\lambda_2 m_2$, where 
\begin{align*}
m_1\colon \HO^*(\CM^{T_{\ff}^0}_{\dd}(\Pi_Q),\CG(Q)_{\dd})\xrightarrow{\alpha\mapsto \vmult(e_{\infty},\alpha)}\HO\!\CoHA^{T_{\ff}^0}_{\Pi_{Q_{\ff}},(\dd,1)}\\
m_2\colon \HO^*(\CM^{T_{\ff}^0}_{\dd}(\Pi_Q),\CG(Q)_{\dd})\xrightarrow{\alpha\mapsto \vmult(\alpha,e_{\infty})}\HO\!\CoHA^{T_{\ff}^0}_{\Pi_{Q_{\ff}},(\dd,1)}\\
\end{align*}
are defined by left, respectively right, multiplication by our chosen cohomological degree zero Chevalley generator for the dimension vector $(\underline{0},1)$.

By Corollary \ref{corollary normalization nonabelian stable envelope} $\Psi_{\ff,\dd}$ is injective, from which it follows that we cannot have $\lambda_1=0=\lambda_2$.  Note that (by definition) $g_2=m_1-(-1)^{\chi_{\tilde{Q}_{\ff}}((\underline{0},1),(\dd,0))}m_2$, and so the lemma will follow if we can show $\lambda_1=-(-1)^{\chi_{\tilde{Q}_{\ff}}((\underline{0},1),(\dd,0))}\lambda_2$.  It will reduce clutter below to observe that $\chi_{\tilde{Q}_{\ff}}((\underline{0},1),(\dd,0))=-\ff\cdot\dd$.

Consider the diagram
\[
\begin{tikzcd}
\IC^*(\CM^{T_{\ff}^0}_{\dd}(\Pi_Q),\BoQ^{\vir})\arrow[dd, bend right=70, shift right=1, swap, "g_1\otimes e_{\infty''}"]
\arrow[d, "g'_1\otimes \ol{e}_{\infty''}"]
\\
\NakMod^{T_{\ff}}_{Q,\ff,\dd}\otimes_{\HO_{T}} \NakMod^{T_{\ff}}_{Q,\ff,\underline{0}}\arrow[r, "F"]\arrow[d, "\Psi^{(1)}_{\ff,\dd}\otimes_{\HO_T}\Psi^{(2)}_{\ff,\underline{0}} "]& (\NakMod^{T_{\ff}}_{Q,\ff,0}\otimes_{\HO_T}\NakMod^{T_{\ff}}_{Q,\ff,\dd})_{\loc}\arrow[d, "(\Psi^{(1)}_{\ff,\underline{0}}\otimes_{\HO_T}\Psi^{(2)}_{\ff,\dd})_{\loc}"]
\\
\HO\!\CoHA_{\Pi_{Q_{\ff,\ff}, (\dd,1,0)}}^{T_{\ff}^0} \otimes_{\HO_{T}} \HO\!\CoHA_{\Pi_{Q_{\ff,\ff}, (\underline{0},0,1)}}^{T_{\ff}^0}\arrow[r, "G"]&(\HO\!\CoHA_{\Pi_{Q_{\ff,\ff}, (\ul{0},1,0)}}^{T_{\ff}^0}\otimes_{\HO_T}\HO\!\CoHA_{\Pi_{Q_{\ff,\ff}, (\dd,0,1)}}^{T_{\ff}^0})_I
\end{tikzcd}
\]
with $\ol{e}_{\infty''}$ denoting the unique (up to scalar) cohomological degree zero element in $\NakMod^{T_{\ff}}_{Q,\ff,\underline{0}}$ and
\[
F= ((\Stab_+)_{\loc}^{-1})_{\underline{0},\dd}\circ(\Stab_+-\Stab_-);\quad\quad
G=\vDelta_{(\underline{0},1,0),(\dd,0,1)}\circ [-,-].
\]
By Lemma \ref{cor_pack}, the diagram commutes up to order one in $x_{(1),\infty',1}^{-1}$.  In particular, by \eqref{eq:R-matrix expansion} we find that $\Image((\Psi^{(1)}_{\ff,\underline{0}}\otimes_{\HO_T}\Psi^{(2)}_{\ff,\dd})_{\loc}\circ F\circ (g'_1\otimes\ol{e}_{\infty''}))$ is divisible by $x_{(1),\infty',1}^{-1}$, since it is given by an off-diagonal term in the R-matrix.  Hence, the same must be true for $G\circ (g_1\otimes e_{\infty''})$.  Now let $\alpha\in  \Fg^{T_{\ff}^0}_{\Pi_Q,\dd}$.  We calculate
\begin{align*}
\vDelta_{(\underline{0},1,0),(\dd,0,1)}(\alpha\ast e_{\infty'}\ast e_{\infty''})&=(1\otimes \alpha)\ast(e_{\infty'}\otimes 1)\ast(1\otimes e_{\infty''})\\
&=\left(\EU^{T_{\ff,\ff}^0}_{(\dd,0,0),\delta_{\infty'}}((\tilde{Q}_{\ff,\ff})^{\opp}_1)^{-1}\EU^{T_{\ff,\ff}^0}_{(\dd,0,0),\delta_{\infty'}}((\tilde{Q}_{\ff,\ff})_1)\right)_{x_{(2),\ldots}\leftrightarrow x_{(1),\ldots}}\cdot\\
&(-1)^{\ff\cdot\dd}(e_{\infty'}\otimes (\alpha\ast e_{\infty''}))\\
&=(-1)^{\ff\cdot\dd}\cdot e_{\infty'}\otimes (\alpha\ast e_{\infty''})+O(x^{-1}_{(1),\infty',1})\\
\vDelta_{(\underline{0},1,0),(\dd,0,1)}(e_{\infty'}\ast\alpha \ast e_{\infty''})&=(e_{\infty'}\otimes 1)\ast(1\otimes \alpha)\ast(1\otimes e_{\infty''})\\
&=e_{\infty'}\otimes (\alpha\ast e_{\infty''})
\end{align*}
(using \eqref{constexp}) and similarly
\begin{align*}
\vDelta_{(\underline{0},1,0),(\dd,0,1)}(e_{\infty''}\ast\alpha\ast e_{\infty'})&=\left(\EU^{T_{\ff,\ff}^0}_{(\dd,0,0),\delta_{\infty'}}((\tilde{Q}_{\ff,\ff})^{\opp}_1)^{-1}\EU^{T_{\ff,\ff}^0}_{(\dd,0,0),\delta_{\infty'}}((\tilde{Q}_{\ff,\ff})_1)\right)_{x_{(2),\ldots}\leftrightarrow x_{(1),\ldots}}\cdot\\
&(-1)^{\ff\cdot\dd}(e_{\infty'}\otimes  (e_{\infty''}\ast \alpha))\\
\vDelta_{(\underline{0},1,0),(\dd,0,1)}(e_{\infty''}\ast e_{\infty'}\ast\alpha  )&=e_{\infty'}\otimes (e_{\infty''}\ast\alpha).
\end{align*}

We deduce that, since
\[
G\circ (g_1(\alpha)\otimes e_{\infty''})=\lambda_1\vDelta_{(\underline{0},1,0),(\dd,0,1)}([\alpha\ast e_{\infty'} , e_{\infty''}])+\lambda_2\vDelta_{(\underline{0},1,0),(\dd,0,1)}([e_{\infty'}\ast \alpha,e_{\infty''}])
\]
has zero constant term when expanded in $x^{-1}_{(1),\infty',1}$, we must have $\lambda_1+(-1)^{\ff\cdot\dd}\lambda_2=0$, and the lemma follows.
\end{proof}

So far we have shown:
\begin{enumerate}
\item
If $(\dd,1)\in \Phi_{Q_{\ff}}^+$ then $\IH^*(\CM_{(\dd,1)}^{T_{\ff}^0}(\Pi_{Q_{\ff}}),\BoQ^{\vir})$, identified with the space of Chevalley generators in $\Fg^{T_{\ff}^0}_{\Pi_{Q_{\ff}},(\dd,1)}$, lies in the image of $\Psi_{\ff,\dd}$ (Proposition \ref{ChevG_prop}).
\item
If $\dd\in \Phi_{Q}^+$ and Theorem \ref{NakBPS_prop} holds for all dimension vectors $\ee<\dd$, then $\HO^*(\CM^{T_{\ff}^0}_{\dd}(\Pi_Q),\CG(Q)_{\dd})$ is identified with the space of Chevalley generators in $\Fg^{T_{\ff}^0}_{\Pi_{Q_{\ff}},(\dd,0)}$ and for $\ff\in\BoN^{Q_0}$ satisfying $\ff\cdotsh\dd\neq 0$ we have an embedding $\HO^*(\CM^{T_{\ff}^0}_{\dd}(\Pi_Q),\CG(Q)_{\dd})\hookrightarrow \Fg^{T_{\ff}^0}_{\Pi_{Q_{\ff}},(\dd,1)}$ sending each Chevalley generator $\alpha$ to $[e_{\infty},\alpha]$.  The image of this embedding lies in the image of $\Psi_{\ff,\dd}$ (Lemma \ref{weak_F1_lem}).
\end{enumerate}
\begin{lemma}
\label{half_comp_thm}
Let $\dd\in\BoN^{Q_0}$, and assume that Theorem \ref{NakBPS_prop} holds for all dimension vectors strictly less than $\dd$.  For arbitrary $\ff\in\BoN^{Q_0}$, the subspace $\Fg^{T_{\ff}^0}_{\Pi_{Q_{\ff}},(\dd,1)}\subset\FP^0\HO\!\CoHA^{T_{\ff}^0}_{\Pi_{Q_{\ff}}}$ is contained in $\Image(\Psi_{\ff,\dd})$.
\end{lemma}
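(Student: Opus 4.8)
The plan is to prove the lemma by induction on the partial order on $\BoN^{Q_0}$, using the already-established statements (1) and (2) as the base of the argument and the BPS/GKM generation theorem (Theorem \ref{KMA_thm}) to bootstrap from Chevalley generators to the whole space $\Fg^{T_{\ff}^0}_{\Pi_{Q_{\ff}},(\dd,1)}$. Recall that by Theorem \ref{KMA_thm}, the Lie algebra $\fn^{T_{\ff}^0,+}_{\Pi_{Q_{\ff}}}$ is generated, as a Lie algebra, by the spaces of Chevalley generators $\HO^*(\CM^{T_{\ff}^0}_{(\ee,n)}(\Pi_{Q_{\ff}}),\CG(Q_{\ff})_{(\ee,n)})$ for $(\ee,n)\in\Phi^+_{Q_{\ff}}$, and $\Fg^{T_{\ff}^0}_{\Pi_{Q_{\ff}}}=\fn^{T_{\ff}^0,+}_{\Pi_{Q_{\ff}}}$ after the identification of Definition \ref{BPSLAdef}. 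So a general element of $\Fg^{T_{\ff}^0}_{\Pi_{Q_{\ff}},(\dd,1)}$ is a linear combination of iterated brackets of Chevalley generators whose dimension vectors sum to $(\dd,1)$; since the $\infty$-component of $(\dd,1)$ is $1$, exactly one of these generators has $\infty$-degree $1$, say lying in $\Fg^{T_{\ff}^0}_{\Pi_{Q_{\ff}},(\dd^{(0)},1)}$ with $\dd^{(0)}\leq \dd$, and all the others have $\infty$-degree $0$, lying in $\Fg^{T_{\ff}^0}_{\Pi_{Q_{\ff}},(\dd^{(s)},0)}$ for various $\dd^{(s)}$.

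\textbf{Reduction to a single bracket.} First I would show that it suffices to prove: for every $\dd^{(0)}\leq\dd$ with $(\dd^{(0)},1)\in\Phi^+_{Q_{\ff}}$, and every sequence of $\infty$-degree-zero Chevalley generators $\beta_1,\dots,\beta_l$ (lying in $\Fg^{T_{\ff}^0}_{\Pi_{Q_{\ff}},(\dd^{(s)},0)}$) with $\dd^{(0)}+\sum_s\dd^{(s)}=\dd$, the iterated bracket
\[
\gamma=[\dots[[\alpha,\beta_l],\beta_{l-1}],\dots,\beta_1],\qquad \alpha\in\Fg^{T_{\ff}^0}_{\Pi_{Q_{\ff}},(\dd^{(0)},1)}\text{ a Chevalley generator},
\]
lies in $\Image(\Psi_{\ff,\dd})$. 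The point is that, via Corollary \ref{KMLA_Nak_comp}, or equivalently Proposition \ref{lowest_weight_prop}, the module $\NakMod^{T_{\ff}^0}_{Q,\ff}$ realises $\bigoplus_{\ee}\Fg^{T_{\ff}^0}_{\Pi_{Q_{\ff}},(\ee,1)}$ as a module for $\fn^{T_{\ff}^0,+}_{\Pi_Q}$ (acting through $l_*$ and the bracket), and the $\infty$-degree-zero Chevalley generators lie in $\fn^{T_{\ff}^0,+}_{\Pi_Q}$. So I must check that $\Psi_{\ff,\bullet}$, i.e. the direct sum of the nonabelian stable envelopes over all $\dd$, intertwines this $\fn^{T_{\ff}^0,+}_{\Pi_Q}$-action with the natural $\fn^{T_{\ff}^0,+}_{\Pi_Q}$-action on $\HO\!\CoHA^{T_{\ff}^0}_{\Pi_{Q_{\ff}}}$ coming from the commutator bracket inside $\HO\!\CoHA^{T_{\ff}^0}_{\Pi_{Q_{\ff}}}$. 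Granting this intertwining, the base cases $l=0$ are precisely statement (1) (for $\dd^{(0)}=\dd$, i.e. $(\dd,1)\in\Phi^+_{Q_{\ff}}$) together with statement (2) and Lemma \ref{weak_F1_lem} (for $\dd^{(0)}<\dd$, where $\alpha=[e_\infty,\alpha']$ after applying $l_*$, and one uses $\ff'\cdotsh\dd^{(0)}\neq 0$ by choosing $\ff'=\delta_i$ with $i\in\supp(\dd^{(0)})$ — note this requires care since we want statements about $Q_{\ff}$, not $Q_{\ff'}$, so I expect to have to massage the framings; this is where the inductive hypothesis on $\ee<\dd$ enters). The inductive step is then immediate: $\gamma=[\gamma',\beta_1]$ with $\gamma'\in\Image(\Psi_{\ff,\dd-\dd^{(1)}})$ by induction, and then the intertwining property pushes $\gamma$ into $\Image(\Psi_{\ff,\dd})$.

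\textbf{The intertwining property.} The main work, and the main obstacle, is establishing that $\Psi_{\ff,\bullet}$ is a morphism of $\fn^{T_{\ff}^0,+}_{\Pi_Q}$-modules in the sense above. I would derive this from Proposition \ref{mult_compat} / Corollary \ref{prod_compat_cor} (compatibility of $\Psi$ with the stable-envelope product and with the CoHA product) together with the coproduct-compatibility statements Corollary \ref{coprod_compat_c} and Lemma \ref{cor_pack}, exactly as those tools were assembled in Lemma \ref{weak_F1_lem}. Concretely: the bracket $[\,\cdot\,,\beta]$ on the CoHA side, for $\beta$ a $\infty$-degree-zero Chevalley generator, can be realised (up to a nonzero scalar and lower-perversity corrections, controlled by Corollary \ref{perv_prim} and Proposition \ref{ptrack}) as $\Res_{x_{(1),\infty,1}}\circ\vDelta$ applied after bracketing with $e_\infty$ — this is the content of Corollary \ref{old_lowering_argument} and Corollary \ref{Res_cor_2}, which say precisely that $\Res_{x_{(1),\infty,1}}\circ\vDelta_{(\ul 0,1),(\bullet,0)}$ is a morphism of right $\fn^{T,+}_{\Pi_Q}$-modules. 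On the Nakajima side, Lemma \ref{cor_pack} identifies the corresponding operation with the action of the raising operator $\psi(-)\in\Fg^{\MO,T}_Q$, and Proposition \ref{MO_span} plus the structure of $\fg^{\MO,T}_Q$ shows these $\psi(-)$ span the relevant weight spaces. The delicate points I anticipate are: (a) keeping track of the various framings $\ff$, $\ff'$, $\ff''$, $\ff_{\fr}$ and the tori $T_{\ff}^0$, $T^+$, $T^0_{\ff',\ff''}$, and the localisations at $\loc$ and at $I$, so that everything is compared in the same ring; (b) controlling perverse filtrations throughout, using Corollary \ref{NAHTPF} that $\Psi_{\ff,\dd}$ preserves perverse filtrations and Proposition \ref{alpha_one_perv} that the residue-of-coproduct lands in $\FP^0$, so that the comparison on $\FP^0$ is genuine and not just up to higher-perversity junk; and (c) the fact that Lemma \ref{cor_pack} itself is only stated ``up to order one in $x_{(1),\infty',1}^{-1}$'', so I must extract the exact leading term and argue that the scalar is nonzero, again invoking injectivity of $\Psi_{\ff,\dd}$ (Corollary \ref{corollary normalization nonabelian stable envelope}) to rule out degeneration. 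Once the intertwining is in place, the induction closes and the lemma follows.
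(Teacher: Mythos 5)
Your proposal reproduces the paper's argument: both decompose an element of $\Fg^{T_\ff^0}_{\Pi_{Q_\ff},(\dd,1)}$ into iterated brackets with a single $\infty$-degree-one Chevalley generator, handle the depth-zero and $[e_\infty,\gamma'']$ base cases via Proposition \ref{ChevG_prop} and Lemma \ref{weak_F1_lem}, and carry out the inductive step by peeling off a degree-$(\bullet,0)$ Chevalley generator and invoking Lemma \ref{cor_pack}, Corollary \ref{Res_cor_2}, and Corollary \ref{old_lowering_argument}, which are exactly the tools the paper assembles. Your framing of the inductive step as an intertwining of $\fn^{T_{\ff}^0,+}_{\Pi_Q}$-module structures is a fair restatement of the paper's explicit chain of equalities, and your small labeling slip in the base case (describing the $[e_\infty,\gamma'']$ term as an $l=0$, $\dd^{(0)}<\dd$ case, which is degree-incoherent as written) does not affect the substance.
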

\begin{proof}
We prove the lemma by induction on $\dd$, with the base case $\dd=0$ satisfied by definition of $\Psi_{\ff,\ul{0}}$.  

An arbitrary $\gamma\in \Fg^{T_{\ff}^0}_{\Pi_{Q_{\ff}},(\dd,1)}$ can be written as a linear combination
\[
\gamma=\gamma'+[e_{\infty},\gamma'']+L
\]
where $\gamma'$ and $\gamma''$ are Chevalley generators, and $L$ is a linear combination of elements $[\ldotsh[\beta,\alpha_i],\alpha_{i-1}],\ldotsh ],\alpha_1]$ where each of the $\alpha_i$ are Chevalley generators for $\Fg^{T_{\ff}^0}_{\Pi_Q}$ (embedded in $\Fg^{T_{\ff}^0}_{\Pi_{Q_{\ff}}}$ via $l_*$), and $\beta$ is a Chevalley generator in $\Fg^{T_{\ff}^0}_{\Pi_{Q_{\ff}},(\dd',1)}$ for some $0<\dd'<\dd$.  By Proposition \ref{ChevG_prop}, $\gamma'\in\Image(\Psi_{\ff,\dd})$, and by Lemma \ref{weak_F1_lem} $[e_{\infty},\gamma'']\in\Image(\Psi_{\ff,\dd})$, so it remains to deal with the term $L$.

Set $\ff'=\ul{1}$ and denote $\ff''=\ff$.  Set $\delta=[\ldotsh[\beta,\alpha_i],\alpha_{i-1}],\ldotsh ],\alpha_2]$ as in the expression for $L$.  By the inductive hypothesis $\delta=\Psi_{\ff'',\bullet}(\overline{\delta})$ for some $\ol{\delta}$.  By Lemma \ref{weak_F1_lem} there is a $\overline{\alpha}_1\in \HO^*\!\left(\Nak^{T^0_{\ff'}}_Q(\ff',\dd'),\BoQ^{\vir}\right)$, for some nonzero $\dd'<\dd$, satisfying 
\[
\Psi^{(1)}_{\ff',\dd'}(\overline{\alpha}_1)=[e_{\infty'},\alpha_1]\in\Fg_{\Pi_{Q_{\ff',\ff''}},(\dd',1,0)}^{T^0_{\ff'}}.
\]
Furthermore, there are equalities
\begin{align*}
\Psi_{\ff,\dd}(\psi(\overline{\alpha}_1)(\ol{\delta}))=&\hbar^{-1}\Res_{x_{(1),\infty',1}}\vDelta_{(\ul{0},1,0),(\dd,0,1)}([[e_{\infty'},\alpha_1],\delta])\\
=&[\hbar^{-1}\Res_{x_{(1),\infty',1}}\vDelta_{(\ul{0},1,0),(\dd,0,1)}([e_{\infty'},\alpha_1]),\delta]\\
=&(\ff'\cdot\dd')[\delta,\alpha_1]
\end{align*}
where the first equality follows by Lemma \ref{cor_pack}, the second from Corollary  \ref{Res_cor_2}, and the third from Corollary \ref{old_lowering_argument}. The inductive step follows.

\end{proof}

We finally have all of the preparatory results we require in order to complete the proof of Theorem \ref{NakBPS_prop}
\begin{proof}[Proof of Theorem \ref{NakBPS_prop}]
We prove the theorem by induction on $\dd$.  The base case $\dd=\ul{0}$ follows by the definition of $\Psi_{\ff,\ul{0}}$.  So now we assume that the theorem holds for all $\ee<\dd$, so we can apply Lemma \ref{half_comp_thm}.  By Corollary \ref{Nak_BPS_comp} there is an isomorphism of cohomologically graded vector spaces $\NakMod^{T_{\ff}^0}_{Q,\ff,\dd}\cong \Fg^{T_{\ff}^0}_{\Pi_{Q_{\ff}},(\dd,1)}$, so in particular their cohomologically graded pieces have the same (finite) dimensions.  By Lemma \ref{half_comp_thm}, the image of the morphism $\Psi_{\ff,\dd}\colon \NakMod^{T^0_{\ff}}_{Q,\ff,\dd}\rightarrow\HO\!\CoHA_{\Pi_{Q_{\ff}}}^{T^0_{\ff}} $ contains $\Fg^{T^0_{\ff}}_{\Pi_{Q_{\ff}},(\dd,1)}\subset \HO\!\CoHA_{\Pi_{Q_{\ff}}}^{T^0_{\ff}}$.  The theorem follows.
\end{proof}

We can strengthen Theorem \ref{NaSE_vs_BPS}, enabling the direct comparison between nonabelian stable envelopes and inclusion of BPS cohomology, via the following corollary.
\begin{corollary}
\label{ThmC_cor}
The following diagram commutes.
\[
	\begin{tikzcd}
	\Fg^{T_{\ff}^0}_{\Pi_{Q_{\ff}},(\dd,1)}\arrow[d,"d"]\arrow[rd,"j'"]& \\
	\NakMod^{T^0_{\ff}}_{Q,\ff,\dd}\arrow[r,"\Psi_{\ff,\dd}"]&\HO\!\CoHA_{\Pi_{Q_{\ff}}}^{T^0_{\ff}}
	\end{tikzcd}
\]
where the vertical morphism is given in Corollary \ref{Nak_BPS_comp}, $j'$ is the inclusion of BPS cohomology, and we have denoted by $\Psi_{\ff,\dd}$ the restriction of $\Psi_{\ff,\dd}$ to $\NakMod^{T^0_{\ff}}_{Q,\ff,\dd}$.
\end{corollary}
\begin{proof}
The statement is equivalent to the claim that the diagram
\[
	\begin{tikzcd}
	\HO_{\BoC^*}\otimes \Fg^{T_{\ff}^0}_{\Pi_{Q_{\ff}},(\dd,1)}\arrow[d,"\overline{d}"]\arrow[rd,"j"]& \\
	\NakMod^{T_{\ff}}_{Q,\ff,\dd}\arrow[r,"\Psi_{\ff,\dd}"]&\HO\!\CoHA_{\Pi_{Q_{\ff}}}^{T^0_{\ff}}
	\end{tikzcd}
\]
commutes, with $j$ as in \eqref{jdef}.  By Theorem \ref{NakBPS_prop} both $\Psi_{\ff,\dd}$ and $j\circ\overline{d}^{-1}$ have the same image, and combining Remark \ref{toda_rem} and Corollary \ref{corollary normalization nonabelian stable envelope} we see that they are sections to the restriction morphism $\HO\!\CoHA_{\Pi_{Q_{\ff}}}^{T^0_{\ff}}\rightarrow \NakMod^{T_{\ff}}_{Q,\ff,\dd}$.
\end{proof}
The following generalisation of Theorem \ref{NaSE_vs_BPS} follows directly from it and the construction of the nonabelian stable envelope (see Remark \ref{remark NS and stability conditions}).
\begin{corollary}
\label{ThmC_cor2}
Let $\dd,\ff\in\BoN^{Q_0}$ be arbitrary dimension vectors with $\ff$ nonzero.  Let $\zeta\in\BoQ^{(Q_{\ff})_0}$ be generic for the dimension vector $(\dd,1)$.  Then $\Psi^{\zeta}_{\ff,\dd}\colon \HO^{\BoMo}(\Nak^{\zeta, T^0_\ff}_{Q}(\ff,\dd), \BoQ^{\vir})\rightarrow \HO\!\CoHA_{\Pi_{Q_{\ff}}}^{T^0_{\ff}}$ is injective, with image $\Fg^{T^0_{\ff}}_{\Pi_{Q_{\ff}},(\dd,1)}$.
\end{corollary}
\subsection{Proof of Theorem \ref{main_thm}}
\label{main_thm_sec}
Given a dimension vector $\ff\in\BoN^{Q_0}$ we define $\mathbb{E}_{\ff}=\End_{\HO_{T^0_{\ff}}}\left(\NakMod^{T^0_{\ff}}_{Q,\ff}\right)$, which we consider as a $\HO_T$-linear Lie algebra, under the commutator Lie bracket.  For each $\ff$ we have, by construction, a morphism $\imath_{\ff}^{\MO}\colon\Fg^{\MO,T}_{Q}\rightarrow \mathbb{E}_{\ff}$.  We define $\imath^{\MO}=\prod_{i\in Q_0}\imath^{\MO}_{\delta_i}\colon \Fg^{\MO,T}_Q\rightarrow \mathbb{E}\coloneqq \prod_{i\in Q_0}\mathbb{E}_{\delta_{i}}$.  We denote by $\imath^{\MO,+}$ the restriction of $\imath^{\MO}$ to $\Fn^{\MO,T,+}_{Q}$.
\begin{proposition}\cite[Prop.5.3.4]{MO19}
The morphism $\imath^{\MO}$ is an embedding of Lie algebras.
\end{proposition}
Pick $\ff\in\BoN^{Q_0}$.  By Theorem \ref{NaSE_vs_BPS}, the morphisms $\Psi_{\ff,\dd}$ for $\dd\in\BoN^{Q_0}$ provide an isomorphism
\[
\Psi_{\ff,\bullet}\colon \NakMod^{T_{\ff}^0}_{Q,\ff}\xrightarrow{\cong}\Fg^{T_{\ff}^0}_{\Pi_{Q_{\ff}},(\bullet,1)},
\]
and thus an isomorphism 
\begin{align*}
J\colon &\mathbb{E}_{\ff} \xrightarrow{\cong} \End_{\HO_{T_{\ff}^0}}(\Fg^{T_{\ff}^0}_{\Pi_{Q_{\ff}},(\bullet,1)})\\
&g\mapsto \Psi_{\ff,\bullet}\circ g\circ \Psi_{\ff,\bullet}^{-1}.
\end{align*}
We embed $\Fg^T_{\Pi_Q}\subset \Fg^{T_{\ff}^0}_{\Pi_{Q_{\ff}}}$ via extension of scalars and the isomorphism $l_*\colon \Fg^{T_\ff^0}_{\Pi_Q,\bullet}\cong \Fg^{T_\ff^0}_{\Pi_{Q_{\ff}},(\bullet,0)}$.  Then $\Fg^T_{\Pi_Q}$ acts on $\Fg^{T_\ff^0}_{\Pi_{Q_{\ff}},(\bullet,1)}$ via the Lie bracket.  Composing with $J^{-1}$ yields a morphism of Lie algebras \[
\imath^{\BPS}_{\ff}\colon \Fg_{\Pi_Q}^{T}\rightarrow \mathbb{E}_{\ff}.  
\]
We set $\imath^{\BPS}\coloneqq \prod_{i\in Q_0} \imath^{\BPS}_{\delta_i}\colon\Fg_{\Pi_Q}^{T}\rightarrow \mathbb{E}$. We denote by $\imath^{\BPS,+}$ the restriction to $\Fn_{\Pi_Q}^{T,+}$.
\begin{proposition}
The morphism $\imath^{\BPS}$ is injective.
\end{proposition}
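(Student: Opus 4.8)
The strategy is to reduce injectivity of $\imath^{\BPS}$ to the known faithfulness of the $\fg^T_{\Pi_Q}$-action on the collection of cohomologies of Nakajima quiver varieties, which is essentially the content of Proposition \ref{lowest_weight_prop}. More precisely, by construction $\imath^{\BPS}=\prod_{i\in Q_0}\imath^{\BPS}_{\delta_i}$, and $\imath^{\BPS}_{\delta_i}$ is obtained from the action of $\fg^T_{\Pi_Q}$ on $\fg^{T^0_{\delta_i}}_{\Pi_{Q_{\delta_i}},(\bullet,1)}$ (via $l_*$ and the Lie bracket on $\fg^{T^0_{\delta_i}}_{\Pi_{Q_{\delta_i}}}$) transported along the isomorphism $J$ coming from $\Psi_{\delta_i,\bullet}$. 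Since $J$ is an isomorphism, injectivity of $\imath^{\BPS}$ is equivalent to the statement that an element $\gamma\in\fg^T_{\Pi_Q}$ annihilating every $\fg^{T^0_{\delta_i}}_{\Pi_{Q_{\delta_i}},(\bullet,1)}$ (for all $i\in Q_0$) must be zero. Note that for $\ff=\delta_i$ there is no difference between $T$ and $T^0_{\ff}$, so these modules are honestly $\fg^T_{\Pi_Q}$-modules after the identification $l_*\colon\fg^{T}_{\Pi_Q,\bullet}\cong\fg^T_{\Pi_{Q_{\delta_i}},(\bullet,0)}$.

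\textbf{Key steps.} First I would invoke Corollary \ref{Nak_BPS_comp} (together with Corollary \ref{faithful_cor}, or rather the reasoning behind it) to identify $\bigoplus_{\dd}\fg^{T}_{\Pi_{Q_{\delta_i}},(\dd,1)}$ with $\NakMod^{T}_{Q,\delta_i}=\bigoplus_{\dd}\HO^*(\Nak^T_Q(\delta_i,\dd),\BoQ)$ as $\HO_T$-modules, compatibly with the $\fg^T_{\Pi_Q}$-action; this is exactly the action appearing in Proposition \ref{lowest_weight_prop}. Second, by Proposition \ref{lowest_weight_prop} the $\HO_T$-module $\bigoplus_{i\in Q_0}\NakMod^T_{Q,\delta_i}$ is a direct sum of irreducible lowest weight representations for the generalised Kac--Moody Lie algebra $\fg^T_{\Pi_Q}$, with lowest weight vectors given by the intersection cohomology spaces $\IC^*(\CM^T_{(\dd,1)}(\Pi_{Q_{\delta_i}}),\BoQ^{\vir})$. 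Third, I would argue that the simultaneous action on all these modules, across all $i\in Q_0$, is faithful: this follows from the standard fact that a generalised Kac--Moody Lie algebra acts faithfully on the direct sum of its irreducible lowest weight modules whose lowest weights are $-\delta_i$ (these realise the Chevalley generators), combined with the triangular decomposition $\fg^T_{\Pi_Q}=\fn^{T,-}_{\Pi_Q}\oplus(\fh^T_{\overrightarrow{Q}})^\vee\oplus\fn^{T,+}_{\Pi_Q}$ and the fact that the Cartan $(\fh^T_{\overrightarrow{Q}})^\vee$ separates weights on these modules (since the weights $(\dd,1)$ for varying $\dd$ and varying choice of vertex span enough of the weight lattice of the principally framed quiver $\overrightarrow{Q}$). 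Concretely, an element $\gamma$ in the kernel, being $\HO_T$-linear, can be decomposed into weight components; a nonzero positive-degree component would fail to kill a suitable lowest weight vector after applying enough lowering operators to produce a nonzero pairing, a nonzero negative-degree component would fail to annihilate a highest-weight-adjacent vector (or one argues dually using the invariant form), and a nonzero Cartan component acts by a nonzero scalar on some weight space.

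\textbf{Main obstacle.} The delicate point will be handling the enlarged Cartan subalgebra $(\fh^T_{\overrightarrow{Q}})^\vee$, whose rank is $2|Q_0|$ rather than $|Q_0|$ — in particular the central subalgebra $\mathfrak{z}$ acts on $\NakMod^{T_{\ff}}_{Q,\ff}$ by a linear function of the framing vector $\ff$, so restricting only to framings $\ff=\delta_i$ still records the value of $\mathfrak{z}$ at each $\delta_i$, and these $|Q_0|$ values determine $\mathfrak{z}$ since $\{\delta_i\}$ is a basis of $\BoZ^{Q_0}$. I would need to check carefully that the pairing of the Cartan weights $((\dd,\ul 0),-)_{\overrightarrow{Q}}$ together with the $\mathfrak{z}$-values separates all of $(\fh^T_{\overrightarrow{Q}})^\vee$ as we range over $i\in Q_0$ and $\dd\in\Phi^+_Q$; this amounts to a non-degeneracy statement for the bilinear form on the principally framed quiver, which should follow from the construction in \S\ref{GKM_thm_sec} but requires spelling out. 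A cleaner alternative, which I would pursue if the direct argument gets unwieldy, is to note that $\imath^{\BPS}$ is by construction conjugate (via the isomorphisms $\Psi_{\delta_i,\bullet}$, which intertwine the relevant actions by Theorem \ref{NaSE_vs_BPS}) to the tautological action of $\fg^T_{\Pi_Q}$ on $\bigoplus_{i}\NakMod^T_{Q,\delta_i}$, and then cite the injectivity of this action as already implicit in \cite{DHSM23} via Proposition \ref{lowest_weight_prop} — reducing the proof to a one-line deduction.
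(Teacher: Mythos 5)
Your overall strategy is sound — reduce to faithfulness of the $\fg^T_{\Pi_Q}$-action on $\bigoplus_i\NakMod^{T}_{Q,\delta_i}$ — but your route is considerably more abstract than the paper's, and it leans on claims you leave unproved. The paper instead first reduces to $T=\{1\}$ (all relevant objects are free, graded $\HO_T$-modules, so one can work mod the maximal ideal) and then gives a two-line computation: for nonzero $\dd\in\BoN^{Q_0}$ pick $i\in\supp(\dd)$, set $\ff=\delta_i$, and observe that any $\alpha\in\Fg_{\Pi_{Q_{\ff}},(\dd,0)}$ is a lowest weight vector of nonzero weight $-\ff\cdotsh\dd$ for the $\mathfrak{sl}_2$-triple $\{e_{\infty},f_{\infty},h_{\infty}\}\subset\Fg_{\Pi_{Q_{\ff}}}$; hence $[e_{\infty},\alpha]\neq 0$, and since $e_{\infty}$ lies in the module $\Fg_{\Pi_{Q_{\ff}},(\bullet,1)}$ this already witnesses $\imath^{\BPS}(\alpha)\neq 0$. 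The negative half is dispatched by the explicit bracket identity $[[e_{\infty},\alpha],\beta]=\beta(\alpha)(\ff\cdotsh\dd)e_{\infty}$. This entirely bypasses your appeal to a ``standard fact that a generalised Kac--Moody Lie algebra acts faithfully on the direct sum of its irreducible lowest weight modules whose lowest weights are $-\delta_i$'': that statement is not obviously standard in the generality needed here (GKM algebras with infinitely many imaginary simple roots and higher-dimensional spaces of Chevalley generators), and as phrased it also misdescribes the modules, since by Proposition \ref{lowest_weight_prop} each $\NakMod^{T}_{Q,\delta_i}$ is a direct sum of lowest weight modules with a whole range of lowest weights coming from all the $\IC$-summands, not a single one. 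The paper's pointwise $\mathfrak{sl}_2$-witness makes all of this unnecessary.

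Two further cautions. First, beware circularity: you invoke ``Corollary \ref{faithful_cor}, or rather the reasoning behind it'', but that corollary is deduced from Corollary \ref{Y_cor}, which depends on Theorem \ref{main_thm}, which in turn uses the proposition you are now proving; only ``the reasoning behind it'' is admissible, and that reasoning is precisely what has to be supplied here. Second, your instinct that the Cartan $(\fh^T_{\overrightarrow{Q}})^{\vee}$ is the delicate case is a good one and your sketch (the values $h((\dd,\delta_i))$ for $\dd\in\Phi^+_Q$ and $i\in Q_0$ separate the enlarged Cartan, since $\{\delta_i\}$ spans the framing factor and $\Phi^+_Q$ spans $\BoQ^{Q_0}$, as $\delta_j\in\Phi^+_Q$ for all $j$) is the right idea and worth spelling out — the paper's own displayed argument explicitly assumes $\dd\neq 0$ and is silent about the Cartan, so this is a genuine loose end rather than something you've misread.
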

\begin{proof}
Since the morphism is a morphism of free $\HO_T$-modules, it is sufficient to prove the proposition for the special case $T=\{1\}$, equivalently $\HO_T=\BoQ$.  

Fix a dimension vector $\dd\in\BoN^{Q_0}\setminus \{0\}$, and pick $i\in \supp(\dd)$. Set $\ff=\delta_i$. We consider the action of $\mathfrak{sl}_2$ given by the $\mathfrak{sl}_2$-triple $\{e_{\infty},f_{\infty},h_{\infty}\}$ on the left module $\bigoplus_{n\in\BoN}\Fg_{\Pi_{Q_{\ff}}(\dd,n)}$.  The lowest weight space of this $\mathfrak{sl}_2$-module is given by $\Fg_{\Pi_{Q_{\ff}},(\dd,0)}$, and it has weight $(\delta_{\infty},(\dd,0))_{Q_{\ff}}=-\ff\cdot\dd\neq 0$.  In particular, for every nonzero $\alpha\in \Fg_{\Pi_{Q_{\ff}},(\dd,0)}$ we find $[e_{\infty},\alpha]\neq 0$.  Since $e_{\infty}\in\fg_{\Pi_{Q_{\ff}},(\underline{0},1)}$ we deduce that $\imath^{\BPS}(\alpha)\neq 0$.  

If $\beta\in \Fg_{\Pi_Q,-\dd}\cong \Fg_{\Pi_Q,\dd}^{\vee}$ satisfies $\beta(\alpha)\neq 0$, then we calculate $[[e_{\infty},\alpha],\beta]=\beta(\alpha)(\ff\cdot \dd) e_{\infty}$, and so $\beta$ also acts faithfully.
\end{proof}
\begin{proposition}
\label{good_residue_prop}
Let $\dd\in\BoN^{Q_0}$, and let $\ff'\in\BoN^{Q_0}$ satisfy $\ff'\cdot \dd\neq 0$.  Let $\alpha\in\Fg^{T_{\ff'}^0}_{\Pi_{Q_{\ff'}},(\dd,1)}$.  Then 
\[
\alpha_{(1)}\coloneqq\Res_{x_{(1),\infty,1}}\vDelta_{(\underline{0},1),(\dd,0)}(\alpha)\in\hbar \cdot\Fg^{T_{\ff'}^0}_{\Pi_{Q},\dd}\subset \HO\!\CoHA^{T^0_{\ff'}}_{\Pi_Q,\dd}.
\]
\end{proposition}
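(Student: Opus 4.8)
The plan is to deduce the statement from the work already done, by induction on $\dd$, running in parallel with the induction proving Theorem \ref{NakBPS_prop}. The key observation is that Proposition \ref{res_prop} already tells us $(\vDelta_{(\ul0,1),(\dd,0)}(\alpha))_{(m)}=0$ for $m\leq 0$, so that $\alpha_{(1)}$ is genuinely the ``residue'' in the sense of Proposition \ref{res_prop}, and Corollary \ref{old_lowering_argument} identifies such residues of brackets $[e_\infty,\beta]$ with $-(\ff'\cdotsh\dd)\hbar\beta$. So the content of the present proposition is twofold: first, that $\alpha_{(1)}$ lands in the BPS Lie subalgebra $\Fg^{T^0_{\ff'}}_{\Pi_Q,\dd}$ (a priori it only lives in $\HO\!\CoHA^{T^0_{\ff'}}_{\Pi_Q,\dd}$, or by Proposition \ref{alpha_one_perv} in $\FP^0$ of it), and second, that it is divisible by $\hbar$.

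First I would reduce to the case where $\alpha$ is one of the standard Lie words spanning $\Fg^{T_{\ff'}^0}_{\Pi_{Q_{\ff'}},(\dd,1)}$ exhibited in the proof of Lemma \ref{half_comp_thm}: either a Chevalley generator, or $[e_\infty,\gamma'']$ for $\gamma''$ a Chevalley generator, or a longer bracket $[\ldotsh[\beta,\alpha_i],\ldotsh],\alpha_1]$ with $\beta\in\Fg^{T_{\ff'}^0}_{\Pi_{Q_{\ff'}},(\dd',1)}$, $0<\dd'<\dd$, and the $\alpha_j$ Chevalley generators for $\Fg^{T_{\ff'}^0}_{\Pi_Q}$ pulled back via $l_*$. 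For a Chevalley generator $\alpha$, Proposition \ref{CG_prim_prop} gives $\vDelta_{(\ul0,1),(\dd,0)}(\alpha)\in\FP^{-2}$, so by Proposition \ref{ptrack} $\alpha_{(m)}=0$ for $m\leq 1$ and in particular $\alpha_{(1)}=0$, which is trivially in $\hbar\cdot\Fg^{T^0_{\ff'}}_{\Pi_Q,\dd}$. For $\alpha=[e_\infty,\gamma'']$, the explicit computation in the proof of Proposition \ref{res_prop} (the middle display there, specialised to the framed situation) gives $\alpha_{(1)}=-(\ff'\cdotsh\dd)\hbar\cdot\gamma''$ up to sign, which is manifestly divisible by $\hbar$ and lies in $\Fg^{T^0_{\ff'}}_{\Pi_Q,\dd}$ since $\gamma''$ is a Chevalley generator there. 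For the longer bracket, I would use Corollary \ref{Res_cor_2}, which says $\Res_{x_{(1),\infty,1}}\circ\vDelta_{(\ul0,1),(\bullet,0)}$ is a morphism of right $\Fn^{T,+}_{\Pi_Q}$-modules: thus $\alpha_{(1)}=[\ldotsh[\beta_{(1)},\alpha_i],\ldotsh],\alpha_1]$, and by the inductive hypothesis (applied to $\beta$ at the smaller dimension vector $\dd'$) $\beta_{(1)}\in\hbar\cdot\Fg^{T^0_{\ff'}}_{\Pi_Q,\dd'}$; since $\Fg^{T^0_{\ff'}}_{\Pi_Q}$ is closed under the bracket and $\hbar$ is central, the iterated bracket stays in $\hbar\cdot\Fg^{T^0_{\ff'}}_{\Pi_Q,\dd}$.

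The one subtlety, and the step I expect to require the most care, is the appearance of $\ff'\cdotsh\dd\neq 0$: for the Chevalley-generator case the residue vanishes identically, so the hypothesis $\ff'\cdotsh\dd\neq 0$ is not needed there, but for the $[e_\infty,\gamma'']$ case we genuinely produce a \emph{nonzero} multiple of $\gamma''$ only when $\ff'\cdotsh\dd\neq 0$ — and in the decomposition of a general $\alpha$ as a sum of such words, I must be careful that the ``longer bracket'' terms, whose inductive treatment goes through the sub-dimension-vector $\dd'$, do not require $\ff'\cdotsh\dd'\neq 0$; they don't, because for those terms we only invoke the inductive statement and the module-morphism property of the residue map, neither of which needs the pairing condition at level $\dd'$. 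Finally, membership in $\Fg^{T^0_{\ff'}}_{\Pi_Q,\dd}$ (rather than merely $\FP^0\HO\!\CoHA^{T^0_{\ff'}}_{\Pi_Q,\dd}$) for each of the three cases follows because in each case $\alpha_{(1)}$ is exhibited as an iterated Lie bracket of Chevalley generators and elements already known to lie in $\Fg^{T^0_{\ff'}}_{\Pi_Q}$, and $\Fg^{T^0_{\ff'}}_{\Pi_Q}$ is a Lie subalgebra of $\HO\!\CoHA^{T^0_{\ff'}}_{\Pi_Q}$ by \cite[Thm.C]{QEAs}. Assembling these three cases by linearity completes the proof.
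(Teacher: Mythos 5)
Your reduction to the three types of Lie words from Lemma~\ref{half_comp_thm}, and your treatment of the $[e_\infty,\gamma'']$ case via Corollary~\ref{old_lowering_argument} and of the longer brackets via Corollary~\ref{Res_cor_2} and induction, are sound. But the Chevalley-generator case contains a genuine off-by-one error that cannot be patched by the cited propositions. Proposition~\ref{CG_prim_prop} gives $\vDelta_{(\ul 0,1),(\dd,0)}(\alpha)\in \FP^{-2}$, and Proposition~\ref{ptrack} with $n=1$ then yields $\alpha_{(m)}=0$ only for $m<1$, i.e.\ $m\leq 0$ — this is precisely the content of Proposition~\ref{res_prop}, and it says nothing about $\alpha_{(1)}$. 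You have written ``$m\leq 1$'' where the proposition gives ``$m<1$''. And the claim is in fact false in general: for $\alpha$ a Chevalley generator corresponding to the $\IH$-class of $\CM^{T^0_{\ff'}}_{(\dd,1)}(\Pi_{Q_{\ff'}})$, the element $\hbar^{-1}\alpha_{(1)}$ is exactly the element of $\Fg^{T^0_{\ff'}}_{\Pi_Q,\dd}$ whose Lie bracket realises the raising operator $\psi(\overline\alpha)$ (cf.\ the proof of Theorem~\ref{main_thm_redo}), which is nontrivial. So this case requires a real argument, not a vanishing.

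The paper handles all $\alpha\in\Fg^{T^0_{\ff'}}_{\Pi_{Q_{\ff'}},(\dd,1)}$ uniformly, with no case distinction and no parallel induction on $\dd$. After noting via Proposition~\ref{alpha_one_perv} that $\alpha_{(1)}\in\FP^0\HO\!\CoHA^{T^0_{\ff'}}_{\Pi_Q}$, it introduces a \emph{fresh} framing $\ff''=\delta_j$ with $j\in\supp(\dd)$, embeds into $\HO\!\CoHA_{\Pi_{Q_{\ff',\ff''}}}$, and shows that $\gamma=[e_{\infty''},\alpha_{(1)}]$ lies in $\hbar\cdot\Fg^{T^0_{\ff'}}_{\Pi_{Q_{\ff',\ff''}},(\dd,0,1)}$ by identifying it, via Corollary~\ref{Res_cor_2} and Lemma~\ref{cor_pack}, with $-\hbar\cdot\Psi_{\ff'',\dd}(\psi(\overline\alpha)(\overline e_{\infty''}))$ and then invoking the already-proved Theorem~\ref{NaSE_vs_BPS}. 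Bracketing back with $f_{\infty''}$ recovers $(\ff''\cdotsh\dd)\alpha_{(1)}$, and since $\ff''\cdotsh\dd=\dd_j\neq 0$, the conclusion follows. Note that this argument supplies precisely the input you are missing in your first case — the nonabelian stable envelope is what ultimately forces $\alpha_{(1)}$ into $\hbar\cdot\Fg_{\Pi_Q,\dd}$ even for Chevalley generators — and it does so without any case analysis. If you want to keep your word-by-word decomposition, you would need to replace your case-1 argument with something of this flavour, at which point you no longer gain anything from the decomposition.
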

\begin{proof}
Set $\ff''=\delta_j$ with $j\in \supp(\dd)$.  By Proposition \ref{alpha_one_perv} $\alpha_{(1)}\in \FP^0\HO\!\CoHA^{T_{\ff'}^0}_{\Pi_Q}$.  We embed $\FP^0\HO\!\CoHA^{T_{\ff'}^0}_{\Pi_Q}\subset \FP^0\HO\!\CoHA_{\Pi_{Q_{\ff',\ff''}}}^{T_{\ff'}^0}$ via the natural isomorphisms $\FP^0\HO\!\CoHA^{T_{\ff'}^0}_{\Pi_Q,\dd}\cong \FP^0\HO\!\CoHA^{T_{\ff'}^0}_{\Pi_{Q_{\ff',\ff''}},(\dd,0,0)}$.  We denote by $e_{\infty''}$ the unique (up to multiplication by a nonzero scalar) Chevalley generator in $\Fg^{T_{\ff'}^0}_{\Pi_{Q_{\ff',\ff''}},(\ul{0},0,1)}$.  Then we claim that it is sufficient to show that 
\[
\gamma=[e_{\infty''},\alpha_{(1)}]\in \hbar\cdot \Fg^{T_{\ff'}^0}_{\Pi_{Q_{\ff',\ff''}},(\dd,0,1)}.
\]
For assume that this is the case, then $[f_{\infty''},\gamma]\in \hbar\cdot\Fg^{T_{\ff'}^0}_{\Pi_{Q_{\ff',\ff''}},(\dd,0,0)}$, but we calculate $[f_{\infty''},\gamma]=(\ff''\cdot \dd)\alpha_{(1)}$.

By Theorem \ref{NaSE_vs_BPS} we can find $\ol{\alpha}$ satisfying $\Psi_{\ff',\dd'}(\ol{\alpha})=\alpha$.  By construction, there exists $\ol{e}_{\infty''}\in \NakMod^{T_{\ff'}^0}_{Q,\ff'',\ul{0}}$ such that $\Psi_{\ff'',\ul{0}}^{(2)}(\ol{e}_{\infty''})=e_{\infty''}$.  We have the equalities
\[
[e_{\infty''},\alpha_{(1)}]=\Res_{x_{(1),\infty',1}}\vDelta_{(\ul{0},1,0),(\dd,0,1)}([e_{\infty''},\alpha])
=-\hbar\cdotsh\Psi_{\ff'',\dd}(\psi(\ol{\alpha})(\ol{e}_{\infty''})),
\]
where the first follows from Corollary \ref{Res_cor_2} and the second from Lemma \ref{cor_pack}.  Then the inclusion $\gamma\in  \hbar\cdotsh \Fg^{T_{\ff'}^0}_{\Pi_{Q_{\ff',\ff''}},(\dd,0,1)}$ follows by Theorem \ref{NaSE_vs_BPS}.
\end{proof}

We have provided embeddings $\imath^{\MO}\colon \fg^{\MO,T}_{Q}\hookrightarrow \mathbb{E}$ and $\imath^{\BPS}\colon \Fg^T_{\Pi_Q}\hookrightarrow \mathbb{E}$.  The restriction of Theorem \ref{main_thm} to the positive halves of the two Lie algebras we are comparing is a consequence of the following theorem.

\begin{theorem}
\label{main_thm_redo}
There is an equality $\imath^{\MO,+}(\Fn^{\MO,T,+}_{Q})=\imath^{\BPS,+}(\Fn^{T,+}_{\Pi_Q})$ of Lie subalgebras of $\mathbb{E}$. Moreover, the induced isomorphism $\Lambda\colon \Fn^{\MO,T,+}_{Q}\rightarrow \Fn^{T,+}_{\Pi_Q}$ intertwines the actions of these two Lie algebras on $\NakMod^{T_{\ff''}^0}_{Q,\ff''}$ for each $\ff''\in\BoN^{Q_0}$.
\end{theorem}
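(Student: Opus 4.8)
\emph{Proof proposal.} The plan is to compare $\imath^{\MO,+}$ and $\imath^{\BPS,+}$ on an explicit spanning set of $\Fn^{\MO,T,+}_Q$, deduce that the two embeddings have the same image in $\mathbb{E}$, and then upgrade the resulting intertwining from the modules $\NakMod^{T^0_{\delta_i}}_{Q,\delta_i}$ to all modules $\NakMod^{T^0_{\ff''}}_{Q,\ff''}$.

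First I would match the generators. By Proposition \ref{MO_span}, for each nonzero $\dd'\in\BoN^{Q_0}$ and each $i\in\supp(\dd')$ the weight space $\fg^{\MO,T}_{Q,\dd'}$ is spanned by the raising operators $\psi(\beta)$ with $\beta\in\NakMod^T_{Q,\delta_i,\dd'}$, so $\Fn^{\MO,T,+}_Q$ is spanned by such operators. Fix such a $\beta$. Since $|\delta_i|=1$ we have $T^0_{\delta_i}=T$, and $\Psi_{\delta_i,\dd'}(\beta)\in\fg^T_{\Pi_{Q_{\delta_i}},(\dd',1)}$ by Theorem \ref{NaSE_vs_BPS}; as $\delta_i\cdotsh\dd'=\dd'_i\neq 0$, Proposition \ref{good_residue_prop} shows that
\[
\tilde\beta\coloneqq\hbar^{-1}\Res_{x_{(1),\infty,1}}\vDelta_{(\ul 0,1),(\dd',0)}\!\left(\Psi_{\delta_i,\dd'}(\beta)\right)
\]
is a well-defined element of $\fg^T_{\Pi_Q,\dd'}\subseteq\Fn^{T,+}_{\Pi_Q}$. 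I claim $\imath^{\MO}(\psi(\beta))=\imath^{\BPS}(\tilde\beta)$; since both embeddings factor through $\prod_{j\in Q_0}\mathbb{E}_{\delta_j}$, it suffices to prove, for every $j$ and every $\alpha\in\NakMod^T_{Q,\delta_j,\dd''}$, the identity $\Psi_{\delta_j,\dd'+\dd''}(\psi(\beta)(\alpha))=[l_*(\tilde\beta),\Psi_{\delta_j,\dd''}(\alpha)]$. This is obtained by combining Lemma \ref{cor_pack} (applied with $\ff'=\delta_i$, $\ff''=\delta_j$, which gives $\hbar^{-1}\Res_{x_{(1),\infty',1}}\vDelta_{(\ul 0,1,0),(\dd,0,1)}([\Psi^{(1)}_{\delta_i,\dd'}(\beta),\Psi^{(2)}_{\delta_j,\dd''}(\alpha)])=\Psi^{(2)}_{\delta_j,\dd}(\psi(\beta)(\alpha))$) with the coproduct-splitting identity used in the proof of that lemma and the module-morphism property of Corollary \ref{Res_cor_2}: splitting the coproduct off the $\infty'$ vertex rewrites the left-hand side as $\hbar^{-1}[\Psi_{\delta_i,\dd'}(\beta)_{(1)},\Psi^{(2)}_{\delta_j,\dd''}(\alpha)]=[l^{(2)}_*l_*(\tilde\beta),l^{(2)}_*\Psi_{\delta_j,\dd''}(\alpha)]$, and injectivity of $l^{(2)}_*$ yields the claim. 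Consequently $\imath^{\MO,+}(\Fn^{\MO,T,+}_Q)\subseteq\imath^{\BPS,+}(\Fn^{T,+}_{\Pi_Q})$, and more precisely $\imath^{\MO}(\fg^{\MO,T}_{Q,\dd'})=\imath^{\BPS}(V_{\dd'})$ where $V_{\dd'}\subseteq\fg^T_{\Pi_Q,\dd'}$ is the span of the elements $\tilde\beta$.

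Next I would prove $V_{\dd'}=\fg^T_{\Pi_Q,\dd'}$, which gives the reverse inclusion. Fix $i\in\supp(\dd')$ and an arbitrary $\alpha_0\in\fg^T_{\Pi_Q,\dd'}$, identified via $l_*$ with an element of $\fg^T_{\Pi_{Q_{\delta_i}},(\dd',0)}$; then $[e_{\infty},\alpha_0]\in\fg^T_{\Pi_{Q_{\delta_i}},(\dd',1)}$, so by Theorem \ref{NaSE_vs_BPS} there is $\beta$ with $\Psi_{\delta_i,\dd'}(\beta)=[e_{\infty},\alpha_0]$, and Corollary \ref{old_lowering_argument} gives $\Psi_{\delta_i,\dd'}(\beta)_{(1)}=-\dd'_i\hbar\alpha_0$, i.e. $\tilde\beta=-\dd'_i\alpha_0$; since $\dd'_i\neq 0$, rescaling shows $\alpha_0\in V_{\dd'}$. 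Hence $\imath^{\MO,+}(\Fn^{\MO,T,+}_Q)=\imath^{\BPS,+}(\Fn^{T,+}_{\Pi_Q})$, and since both maps are injective Lie algebra homomorphisms respecting the $\BoN^{Q_0}$-grading (the matching $\psi(\beta)\mapsto\tilde\beta$ preserves weight $\dd'$) and the cohomological grading (as $\Psi$, $\vDelta$ and $\hbar^{-1}\Res$ together preserve cohomological degree), the map $\Lambda\coloneqq(\imath^{\BPS,+})^{-1}\circ\imath^{\MO,+}$ is an isomorphism of $\BoN^{Q_0}$-graded, cohomologically graded Lie algebras with $\Lambda(\psi(\beta))=\tilde\beta$.

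Finally, for the intertwining statement, fix an arbitrary $\ff''\in\BoN^{Q_0}$. The $\Fn^{\MO,T,+}_Q$-action on $\NakMod^{T^0_{\ff''}}_{Q,\ff''}$ is spanned by the operators $\psi(\beta)$, and rerunning the computation of the first paragraph verbatim — Lemma \ref{cor_pack} is stated for an arbitrary second framing $\ff''$ — shows that under $\Psi_{\ff'',\bullet}\colon\NakMod^{T^0_{\ff''}}_{Q,\ff''}\xrightarrow{\cong}\fg^{T^0_{\ff''}}_{\Pi_{Q_{\ff''}},(\bullet,1)}$ the operator $\psi(\beta)$ corresponds to the bracket action of $l_*(\tilde\beta)=l_*(\Lambda(\psi(\beta)))$, which is exactly how $\Fn^{T,+}_{\Pi_Q}$ acts on $\NakMod^{T^0_{\ff''}}_{Q,\ff''}$; since the $\psi(\beta)$ span, $\Lambda$ intertwines the two actions. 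I expect the main obstacle to be the bookkeeping in the first paragraph: identifying $\imath^{\MO}(\psi(\beta))$ with $\imath^{\BPS}(\tilde\beta)$ requires threading together Lemma \ref{cor_pack}, the coproduct-splitting identity, Corollary \ref{Res_cor_2} and Proposition \ref{good_residue_prop} while carefully tracking the various extension-by-zero maps $l_*,l^{(1)}_*,l^{(2)}_*$ and the Koszul/$\tau$-signs; the conceptual crux is that the single element $\tilde\beta$, produced by a residue computation in the distinguished framing $\delta_i$, must simultaneously govern the action of $\psi(\beta)$ in \emph{every} framing, and this is precisely what Lemma \ref{cor_pack} delivers.
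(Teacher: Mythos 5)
Your argument is correct and follows essentially the same route as the paper's proof: identifying $\imath^{\MO}(\psi(\beta))=\imath^{\BPS}(\tilde\beta)$ via Lemma \ref{cor_pack}, Corollary \ref{Res_cor_2} and Proposition \ref{good_residue_prop} to define $\Lambda$, and then establishing surjectivity onto $\imath^{\BPS,+}(\Fn^{T,+}_{\Pi_Q})$ via Corollary \ref{old_lowering_argument}, with the intertwining for general $\ff''$ read off from the same computation. The only cosmetic departure is that for surjectivity you invoke Theorem \ref{NaSE_vs_BPS} directly on an arbitrary $\alpha_0\in\fg^T_{\Pi_Q,\dd'}$, where the paper restricts to Chevalley generators and cites Lemma \ref{weak_F1_lem}; this is a harmless simplification since the full Theorem \ref{NaSE_vs_BPS} is already available at that point.
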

\begin{proof}
Fix $\ff'=\delta_i$ for some $i\in Q_0$.  Let $\ol{\alpha}\in \NakMod^T_{Q,\ff',\dd'}$, and let $\ol{\beta}\in \NakMod^{T^0_{\ff''}}_{Q,\ff'',\dd''}$ with $\ff''\in\BoN^{Q_0}$.  Set $\dd=\dd'+\dd''$.  By Lemma \ref{cor_pack} and Corollary \ref{Res_cor_2} we have
\begin{align*}
\Psi_{\ff'',\dd}(\psi(\ol{\alpha})(\ol{\beta}))=&\hbar^{-1}\Res_{x_{(1),\infty',1}}\vDelta_{(\ul{0},1,0),(\dd,0,1)}[\Psi^{(1)}_{\ff',\dd'}(\ol{\alpha}),\Psi^{(2)}_{\ff'',\dd''}(\ol{\beta})]\\
=&\hbar^{-1}[l''_*\Res_{x_{(1),\infty,1}}\vDelta_{(\ul{0},1),(\dd',0)}\Psi_{\ff',\dd'}(\ol{\alpha}),\Psi_{\ff'',\dd''}(\ol{\beta})].
\end{align*}
By Theorem \ref{NakBPS_prop} and Proposition \ref{good_residue_prop} we have $l''_*\Res_{x_{(1),\infty,1}}\vDelta_{(\ul{0},1),(\dd',0)}\Psi_{\ff',\dd'}(\ol{\alpha})\in\hbar\cdot \Fg^{T_{\ff''}^0}_{\Pi_{Q_{\ff''}},(\dd',0)}$ and so $\imath^{\MO,+}$ factors through $\imath^{\BPS,+}$.  Precisely, we define $\Lambda(\psi(\overline{\alpha}))\coloneqq \hbar^{-1}\cdot \Res_{x_{(1),\infty,1}}\vDelta_{(\ul{0},1),(\dd',0)}\Psi_{\ff',\dd'}(\ol{\alpha})$.  Since we chose $\ff''$ arbitrary, the statement regarding intertwining of representations follows from the same calculation and Proposition \ref{lowest_weight_prop}.

Let $\gamma\in \Fg^T_{\Pi_Q,\dd}$ be a Chevalley generator, and fix $\ff=\delta_i$ with $i\in \supp(\dd)$.  By Lemma \ref{weak_F1_lem} there exists a $\ol{\beta}\in \NakMod^T_{Q,\ff,\dd}$ for which $\Psi_{\ff,\dd}(\overline{\beta})=[e_{\infty},\gamma]$.   From the above calculation and Corollary \ref{old_lowering_argument} we deduce that $\imath^{\MO}(\psi(\ol{\beta}))=\imath^{\BPS}(\gamma)$.  This shows that $\imath^{\MO}$ surjects onto the image of $\imath^{\BPS}$, as well as being injective, and the theorem is proved.
\end{proof}

\begin{remark}
\label{redundancy_remark}
In \S \ref{MO_LA_sec} we defined the operators $\psi(\ol{\alpha})$ for $\alpha\in\NakMod^T_{Q,\ff,\dd}$ with $\ff\in\BoN^{Q_0}$ not necessarily equal to $\delta_i$ for some $i\in Q_0$.  The proof of Theorem \ref{main_thm_redo}, shows that these operators are identified with taking the Lie bracket $[\hbar^{-1}\cdotsh l''\Res_{x_{(1),\infty,1}}\vDelta_{(\ul{0},1),(\dd,0)}\Psi_{\ff,\dd}(\ol{\alpha}),-]$.  Then Proposition \ref{good_residue_prop}, along with Theorem \ref{main_thm}, demonstrates that these operators are in $\Fg^{\MO,T}_Q$.  In particular, adding these operators in to the definition of $\Yang^{\MO}_Q$ would be redundant.
\end{remark}

\begin{theorem}[Theorem \ref{main_thm}]
\label{main_thm_rdx}
There is an isomorphism $\fg_Q^{\MO,T}\cong \Fg_{\Pi_Q}^T$ intertwining the actions of these two Lie algebras on $\NakMod^{T^0_{\ff}}_{Q,\ff}$ for arbitrary $\ff\in\BoN^{Q_0}\setminus\{0\}$. Moreover, this isomorphism preserves the cohomological degree.
\end{theorem}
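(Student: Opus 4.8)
The only gap remaining after Theorem~\ref{main_thm_redo} is to promote the isomorphism $\Lambda\colon\Fn^{\MO,T,+}_Q\xrightarrow{\cong}\Fn^{T,+}_{\Pi_Q}$ of positive halves to an isomorphism of the full triangular decompositions, and to check that it respects the cohomological grading. The plan is to keep working inside the common ambient Lie algebra $\mathbb{E}=\prod_{i\in Q_0}\End_{\HO_T}(\NakMod^T_{Q,\delta_i})$, into which both $\fg^{\MO,T}_Q$ and $\Fg^T_{\Pi_Q}$ embed via the injective Lie algebra morphisms $\imath^{\MO}$ and $\imath^{\BPS}$, and to show the two images coincide. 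Theorem~\ref{main_thm_redo} already gives $\imath^{\MO,+}(\Fn^{\MO,T,+}_Q)=\imath^{\BPS,+}(\Fn^{T,+}_{\Pi_Q})$, which we denote $P$, together with the intertwining of module structures on each $\NakMod^{T^0_{\ff}}_{Q,\ff}$. Since each of $\fg^{\MO,T}_Q$ and $\Fg^T_{\Pi_Q}$ is generated, as a Lie algebra, by its positive half, its negative half, and a rank-$|Q_0|$ central complement of the bracket $[\Fn^+,\Fn^-]$ of its positive and negative halves inside its Cartan (for the Maulik--Okounkov side this complement is $\fz\subset\bar\fh^{\MO,T}_Q$ of \S\ref{MO_LA_sec}; for the BPS side it is the piece of $(\fh^T_{\overrightarrow Q})^\vee$ coming from the principal framing), it suffices to match the negative halves and these central complements.

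For the negative halves I would re-run the argument of Theorem~\ref{main_thm_redo} with the opposite chamber. The compatibility of nonabelian stable envelopes with the CoHA product for $\Stab_-$ is Corollary~\ref{prod_compat_cor}, and the compatibility with the localised coproduct is the $\sw_{\tau}$-twisted analogue of Corollary~\ref{coprod_compat_c}; feeding these into the proof of Lemma~\ref{cor_pack} produces, in place of the raising operators $\psi(\overline\alpha)$, the lowering operators $\psi(\lambda)$ for $\lambda\in(\NakMod^T_{Q,\delta_i,\dd})^\vee$, which by Proposition~\ref{MO_span} span $\fg^{\MO,T}_{Q,-\dd}$. On the BPS side these are identified, through the $\vee$-duality built into the definition of the generalised Kac--Moody Lie algebra (\S\ref{GKM_sec}, where $\Fn^{T,-}_{\Pi_Q}$ is the free Lie algebra on $V^\vee$ modulo the Serre relations), with a spanning set of $\Fn^{T,-}_{\Pi_Q,-\dd}$; equivalently one may dualise $\Lambda$ with respect to the $\HO_T$-bilinear invariant forms on the two Lie algebras — the classical $r$-matrix on the Maulik--Okounkov side, the evaluation pairing on the BPS side — which agree on $P$ because both are read off from the same stable-envelope data. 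Either way one obtains $\imath^{\MO,-}(\Fn^{\MO,T,-}_Q)=\imath^{\BPS,-}(\Fn^{T,-}_{\Pi_Q})$, which we denote $N$.

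It remains to match the Cartans. The commutator $[P,N]$ inside $\mathbb{E}$ is, tautologically, equal to both $\imath^{\MO}([\Fn^{\MO,T,+}_Q,\Fn^{\MO,T,-}_Q])$ and $\imath^{\BPS}([\Fn^{T,+}_{\Pi_Q},\Fn^{T,-}_{\Pi_Q}])$, so the non-central part of the Cartan (the image of $\fh^T_Q$) is already matched. For the remaining rank-$|Q_0|$ directions one compares, over all framing vectors $\ff$, the scalars by which the respective central subalgebras act on the vacuum $\ket{\ff}\in\NakMod^{T^0_{\ff}}_{Q,\ff,\underline{0}}$: on both sides these are the $\HO_T$-linear functions of $\ff$ predicted by the theory, so after invoking faithfulness (both $\imath^{\MO}$ and $\imath^{\BPS}$ are injective, and an element of $\mathbb{E}$ is literally a tuple of endomorphisms of the $\NakMod^T_{Q,\delta_i}$) the two central complements of $[P,N]$ in $\mathbb{E}$ coincide; this is the identification of $\bar\fh^{\MO,T}_Q$ with $(\fh^T_{\overrightarrow Q})^\vee$ anticipated in \S\ref{MO_LA_sec}. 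Hence $\imath^{\MO}(\fg^{\MO,T}_Q)=\imath^{\BPS}(\Fg^T_{\Pi_Q})$, and $(\imath^{\BPS})^{-1}\circ\imath^{\MO}$ is the desired isomorphism $\fg^{\MO,T}_Q\xrightarrow{\cong}\Fg^T_{\Pi_Q}$; it intertwines the actions on each $\NakMod^{T^0_{\ff}}_{Q,\ff}$ because the embeddings $\imath^{\MO},\imath^{\BPS}$ are defined precisely through those actions (via $\imath^{\MO}_{\ff}$, resp.\ the transport by $\Psi_{\ff,\bullet}$ of the bracket action, together with Theorem~\ref{NaSE_vs_BPS} and Corollary~\ref{Nak_BPS_comp}). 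Finally, cohomological degree is preserved on $P$ and $N$ because every operation used to build $\Lambda$ and its dual — nonabelian stable envelopes (degree-preserving by the $\BoQ^{\vir}$ normalisation), the localised coproduct, the residue $\Res_{x_{(1),\infty,1}}$, and multiplication by $\hbar\in\HO^2_T$ — is cohomologically homogeneous, the composites being of degree $0$ exactly as in the proof that $\psi(\beta)\in\fg^{\MO,T}_Q$ is well defined (\S\ref{MO_LA_sec}); on the Cartan it holds because $[P,N]$ inherits its grading from the bracket while the central part lies in cohomological degree $0$.

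I expect the main obstacle to be the Cartan step: identifying the two rank-$|Q_0|$ central complements and, relatedly, checking that the invariant pairing governing $[\Fn^{\MO,T,+}_Q,\Fn^{\MO,T,-}_Q]$ and the evaluation pairing defining the generalised Kac--Moody bracket on the BPS side really induce the same Cartan inside $\mathbb{E}$ — this is where the soft statement ``Theorem~\ref{main_thm} will identify $\bar\fh^{\MO,T}_Q$ with $(\fh^T_{\overrightarrow Q})^\vee$'' from \S\ref{MO_LA_sec} must be made precise, presumably by tracking the action on vacua across all framings. The secondary difficulty is sign bookkeeping in the opposite-chamber argument of the second paragraph, where the polarisation sign of Remark~\ref{rem: polarisation opposite chamber} and the $\sw_{\tau}$-twist of Corollary~\ref{prod_compat_cor} have to be followed through the analogue of Lemma~\ref{cor_pack}.
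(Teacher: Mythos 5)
Your overall plan — extend the positive-half identification $\Lambda$ to a full triangular decomposition by matching negative halves and Cartans — is sound in outline, but both of your proposed implementations for the negative half have genuine gaps, and you miss the key ingredient the paper actually uses: Propositions \ref{lowest_weight_prop} and \ref{MO_lowest_weights}.

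The paper does \emph{not} re-run the stable-envelope argument for the opposite chamber, nor does it directly compare the $r$-matrix pairing with the GKM evaluation pairing. Instead it argues module-theoretically: after setting $E'_{\dd}=\Lambda^{-1}(E_{\dd})$ and taking $F'_{-\dd}=(E'_{\dd})^{\vee}$ with respect to the Maulik--Okounkov invariant form, the only thing that needs to be checked is that the operators in $F'_{-\dd}$ act on $\NakMod^{T_{\ff}^0}_{Q,\ff}$ exactly as the BPS lowering operators $\Fn^{T,-}_{\Pi_{Q},-\dd}$ do. By Proposition \ref{lowest_weight_prop} every element of $\NakMod^{T_{\ff}^0}_{Q,\ff}$ is an iterated raising of lowest weight vectors (for the BPS action), and since the raising operators have already been matched via $\Lambda$, the lowering actions agree if and only if those same vectors are also lowest weight for $\fg^{\MO,T}_{Q}$. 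That is precisely Proposition \ref{MO_lowest_weights}, which is the essential and nontrivial step; it is proved via the decomposition theorem and a support argument for the Steinberg correspondence inducing $\psi(\lambda)$. Your proposal never invokes this, and your alternative ``the pairings agree on $P$ because both are read off from the same stable-envelope data'' is exactly the kind of claim the paper's argument is designed to avoid having to prove directly: a priori the $r$-matrix pairing and the GKM evaluation pairing could differ by a nonzero scalar in each degree, and pinning this down by hand would require a computation you do not supply.

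Your opposite-chamber route is plausible but substantially underspecified: you would need $\Stab_-$-analogues of Lemma \ref{cor_pack} and Proposition \ref{res_prop} (and hence Theorem \ref{NaSE_vs_BPS}), not merely Corollary \ref{prod_compat_cor}. The sign bookkeeping you flag as a secondary difficulty is real but the bigger issue is that a whole parallel cascade of lemmas is being invoked without proof. On the Cartan: your remark that $[P,N]$ is tautologically matched is only available \emph{after} $N$ is matched, which is the step in question, so the argument is circular as presented. The paper has no separate Cartan-matching step at all — once positive and negative halves agree as operators on all $\NakMod^{T^0_{\ff}}_{Q,\ff}$, the brackets $[\fn^+,\fn^-]$ agree automatically, and the remaining central directions are matched because both sides are, by construction (see \S\ref{GKM_thm_sec} and the remark following the triangular decomposition theorem in \S\ref{MO_LA_sec}), the same $(\fh^T_{\overrightarrow{Q}})^\vee$ acting by the same weight formula on the modules. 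Your final remark on cohomological degree preservation is fine and agrees with the paper.
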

\begin{proof}
Fix a dimension vector $\dd\in\BoN^{Q_0}$.  We write 
\[
\fg^T_{\Pi_Q,\dd}=\sum_{\dd'+\dd''=\dd}\Image(\fg^T_{\Pi_Q,\dd'}\otimes_{\HO_T}\fg^T_{\Pi_Q,\dd''}\xrightarrow {[-,-]}\fg^T_{\Pi_Q,\dd})\oplus E_{\dd}
\]
where $E_{\dd}=\HO^*(\CM^T_{\dd}(\Pi_Q),\CG(Q)_{\dd})$.  Set $E'_{\dd}=\Lambda^{-1}(E_{\dd})$.  The perfect pairing $\fg_{Q,\dd}^{\MO,T}\otimes_{\HO_T}\Fg_{Q,-\dd}^{\MO,T}\rightarrow \HO_T$ is constructed in \cite[\S 5.3]{MO19} via the commutator pairing.  Set $F'_{-\dd}=(E'_{\dd})^{\vee}\subset \fg^{\MO,T}_{Q,-\dd}$.  Then for $f'\in F'_{-\dd}$ and $e'\in E'_{\dd}$ we get, by construction $[e',f']=f'(e')h_{\dd}\in \bar\fh^{\MO,T}_{Q}\setminus\{0\}$.  By Proposition \ref{lowest_weight_prop}, $\NakMod_{Q,\ff}^{T_{\ff}^0}$ is a direct sum of irreducible lowest weight modules for the action of $\fg_{\Pi_Q}^T$.  So as a $\fg_{Q}^{\MO,T}$-module, we can write arbitrary elements of $\NakMod_{Q,\ff}^{T_{\ff}^0}$ as linear combinations of elements $[\ldots [\alpha,\beta'_l],\ldots],\beta'_1]$ where $\alpha$ is a lowest weight vector for the action of $\Fg_{\Pi_Q}$, and the $\beta'$s belong to spaces $E'_{\dd'}$ for $\dd'\in\BoN^{Q_0}$.  It follows that the action of $F'_{\dd}$ on $\NakMod_{Q,\ff}^{T_{\ff}^0}$ matches that of the Chevalley lowering operators in $\fg_{\Pi_Q}^T$ if and only if each such $\alpha$ is a lowest weight vector for the action of $\fg^{\MO,T}_{Q}$.  This is the statement of Proposition \ref{MO_lowest_weights}.
\end{proof}

\subsection{Proof of Theorem \ref{OC_thm} and Corollaries \ref{OQ_cor} -- \ref{faithful_cor}}
\label{corrs_sec}
From Proposition \ref{Triv_BPS_ext} and Theorem \ref{main_thm} we deduce that there is an isomorphism $\Fg_Q^{\MO,T}\cong \Fg_{\Pi_Q}\otimes \HO_T$, so the Maulik--Okounkov Lie algebra is obtained from a Lie algebra $\Fg_Q^{\MO}$ defined over $\BoQ$, by extending scalars. Then Theorem \ref{OC_thm} is a consequence of Theorem \ref{main_thm} and Proposition \ref{BPS_char_function}. 
 Corollary \ref{OQ_cor} is a consequence of Theorem \ref{KMA_thm}.

In a little more detail: for $\dd\in \Sigma_Q$ set $V_{\dd}=\IH^*(\CM_{\dd}(\Pi_Q),\BoQ^{\vir})$, and for $\dd=n\dd'$ with $\dd\in \Phi^+_Q\setminus \Sigma_Q$ an imaginary isotropic root with $\dd'\in\Sigma_Q$, set $V_{\dd}=\IH^*(\CM_{\dd'}(\Pi_Q),\BoQ)$.  Then we have $\Fg^{\MO}_Q\coloneqq \Fg_V$, the generalised Kac--Moody Lie algebra determined by the $\Phi^+_Q$-graded cohomologically graded vector space $V$ as defined in \S \ref{GKM_sec} and $\Fg_Q^{\MO,T}\cong \Fg_Q^{\MO}\otimes \HO_T$.

By Theorem \ref{PBW_thm}, $\HO\!\CoHA_{\Pi_Q}^T$ is generated by $\fg_{\Pi_Q}^T$ and the action of tautological classes.  Corollary \ref{Y_cor} then follows from Proposition \ref{gen_by_taut}.  Then Corollary \ref{faithful_cor} is itself a corollary of Corollary \ref{Y_cor}: by definition, the action of $\Yang^{\MO,+}_Q$ on $\bigoplus_{\ff\in\BoN^{Q_0}}\NakMod_{Q,\ff}^{T_{\ff}}$ is faithful.

\bibliographystyle{alpha}
\bibliography{Literatur}

\vfill

\end{document}